\documentclass{amsart}%
\usepackage{amsfonts}
\usepackage{graphicx}
\usepackage{amscd}
\usepackage{amsmath}
\usepackage{amssymb}%
\providecommand{\U}[1]{\protect\rule{.1in}{.1in}}
\newtheorem{theorem}{Theorem}
\theoremstyle{plain}

\newtheorem{corollary}{Corollary}

\newtheorem{lemma}{Lemma}

\newtheorem{proposition}{Proposition}
\newtheorem{remark}{Remark}

\numberwithin{equation}{section}

\theoremstyle{theorem}
\newtheorem{theoremA}{Theorem}

\begin{document}
\title[New Fej\'{e}r-type Inequalities]{New Fej\'{e}r-type Inequalities Related to Hermite-Hadamard-type Inequalities
and Their Applications}
\author{Kuei-Lin Tseng}
\address{Department of Computer Science and Information Engineering\\
Aletheia University\\
Tamsui, Taiwan 25103.}
\email{kltseng1@gmail.com}
\subjclass[2000]{ Primary 26D15. Secondary 26D99.}
\keywords{Convex functions, Fej\'{e}r-type inequalities, Hermite-Hadamard-type
inequalities, Beta function, Special means}
\maketitle

\begin{abstract}
In this project, I shall establish a large number of new Fej\'{e}r-type
inequalities which reduce all results of the following Subsections $1.1-1.3.$
\ A number of applications for the extended incomplete Beta function and the
extended special means are given.

\end{abstract}

\section{Introduction}

Throughout this section, let $a\leq x<y\leq y^{\prime}<x^{\prime}\leq b,$
$x+x^{\prime}=y+y^{\prime}$, $\ \Omega=\left[  x,y\right]  \cup\left[
y^{\prime},x^{\prime}\right]  $, $f:[a,b]\rightarrow\mathbb{R}$ be convex and
$g:[a,b]\rightarrow\left[  0,\infty\right)  $ be integrable and symmetric to
$\frac{a+b}{2}.$

Hadamard established the following inequality (see \cite{8b}):

\begin{theoremA}
\label{A1}\cite{8b} Let $f$\ be defined as above. Then:%
\begin{equation}
f\left(  \frac{a+b}{2}\right)  \leq\frac{1}{b-a}\int\nolimits_{a}^{b}f\left(
s\right)  ds\leq\frac{f\left(  a\right)  +f\left(  b\right)  }{2} \label{1.1}%
\end{equation}
is known as Hermite-Hadamard inequality.
\end{theoremA}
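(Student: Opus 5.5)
The plan is to obtain both halves of (\ref{1.1}) from the definition of convexity alone, after parametrizing $[a,b]$ by $s=ta+(1-t)b$ with $t\in[0,1]$. Under this change of variables,
\begin{equation*}
\frac{1}{b-a}\int_{a}^{b}f(s)\,ds=\int_{0}^{1}f\bigl(ta+(1-t)b\bigr)\,dt .
\end{equation*}
The substitution $t\mapsto 1-t$ shows that the same integral also equals $\int_{0}^{1}f\bigl((1-t)a+tb\bigr)\,dt$.

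For the left inequality, I average the two representations. This gives
\begin{equation*}
\frac{1}{b-a}\int_{a}^{b}f(s)\,ds=\int_{0}^{1}\frac{f\bigl(ta+(1-t)b\bigr)+f\bigl((1-t)a+tb\bigr)}{2}\,dt .
\end{equation*}
The two points $ta+(1-t)b$ and $(1-t)a+tb$ are symmetric about $\frac{a+b}{2}$, so their midpoint is exactly $\frac{a+b}{2}$. Midpoint convexity, which follows from convexity, bounds the integrand below by $f\left(\frac{a+b}{2}\right)$ for every $t$. Integrating over $[0,1]$ then gives the left inequality.

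For the right inequality, convexity gives the pointwise bound
\begin{equation*}
f\bigl(ta+(1-t)b\bigr)\leq tf(a)+(1-t)f(b).
\end{equation*}
Integrating over $t\in[0,1]$ and using $\int_0^1 t\,dt=\int_0^1(1-t)\,dt=\frac12$ gives $\frac{f(a)+f(b)}{2}$.

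There is no real obstacle in this argument. The only point needing care is integrability of $f$. A convex function on $[a,b]$ is continuous on $(a,b)$ and bounded on $[a,b]$: it lies below the chord, and it is bounded below by a supporting line at the midpoint. Hence $f$ is Riemann integrable, and every integral above is well defined. I would state this briefly before doing the computation.
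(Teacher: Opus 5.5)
Your proof is correct, and it is essentially the argument the paper relies on. Theorem \ref{A1} is only quoted from \cite{8b}, but (\ref{1.1}) is recovered as the case $x=a$, $y=y^{\prime}=\frac{a+b}{2}$, $x^{\prime}=b$, $p\equiv\frac{1}{b-a}$ of Theorem \ref{t1} and Corollary \ref{c1}. There, Lemma \ref{l1} applied to the symmetric pair $s$, $a+b-s$ gives $2f\left(\frac{a+b}{2}\right)\leq f(s)+f(a+b-s)\leq f(a)+f(b)$ pointwise for $s\in\left[a,\frac{a+b}{2}\right]$, and one integrates.

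Using the unsymmetrized chord bound for the right half is an immaterial variation. Your integrability remark supplies a point the paper leaves implicit.
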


Fej\'{e}r established the following weighted generalization of
Hermite-Hadamard inequality $\left(  \ref{1.1}\right)  $ (see \cite{7b}).

\begin{theoremA}
\label{A2}\cite{7b} Let $f,g$\ be defined as above. Then%
\begin{equation}
f\left(  \frac{a+b}{2}\right)  \int\nolimits_{a}^{b}g\left(  s\right)
ds\leq\int\nolimits_{a}^{b}f\left(  s\right)  g\left(  s\right)  ds\leq
\frac{f\left(  a\right)  +f\left(  b\right)  }{2}\int\nolimits_{a}^{b}g\left(
s\right)  ds \label{1.2}%
\end{equation}
is known as Fej\'{e}r inequality.
\end{theoremA}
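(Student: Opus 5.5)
The plan is to symmetrize with respect to the midpoint. The substitution $s\mapsto a+b-s$ maps $[a,b]$ onto itself and reverses orientation. Since $g(a+b-s)=g(s)$, it gives
\[
\int_a^b f(s)g(s)\,ds=\int_a^b f(a+b-s)g(s)\,ds ,
\]
and therefore
\[
\int_a^b f(s)g(s)\,ds=\frac12\int_a^b\bigl[f(s)+f(a+b-s)\bigr]g(s)\,ds .
\]
Both inequalities in (\ref{1.2}) then reduce to pointwise bounds on the symmetrized function $F(s)=\frac12[f(s)+f(a+b-s)]$, which are integrated against the nonnegative weight $g$.

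For the left inequality, apply convexity to the midpoint of $s$ and $a+b-s$:
\[
f\left(\frac{a+b}{2}\right)=f\left(\frac{s+(a+b-s)}{2}\right)\le F(s).
\]
Multiplying by $g(s)\ge0$ and integrating gives $f\left(\frac{a+b}{2}\right)\int_a^b g\le\int_a^b fg$.

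For the right inequality, write $s=ta+(1-t)b$ with $t\in[0,1]$, so that $a+b-s=(1-t)a+tb$. Convexity gives
\[
f(s)\le tf(a)+(1-t)f(b),\qquad f(a+b-s)\le(1-t)f(a)+tf(b).
\]
Adding these and halving gives $F(s)\le\frac{f(a)+f(b)}{2}$. Multiplying by $g(s)\ge0$ and integrating gives the upper bound.

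The only point that needs care is integrability. The product $fg$ is integrable, because a convex function on $[a,b]$ is bounded on the closed interval, being bounded above by $\max\{f(a),f(b)\}$ and below by a supporting line. It is also measurable, since it is continuous on $(a,b)$. The composition $f(a+b-\cdot)\,g$ is integrable for the same reasons. There is no substantive obstacle beyond recognising the symmetrization step.
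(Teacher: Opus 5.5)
Your argument is correct and complete, including the integrability remark. The paper only cites this classical result and gives no proof of it; it does recover it from Theorem~\ref{t1} (inequality (\ref{2.1}) together with the last inequality of (\ref{2.2}), taking $x=a$, $y=y^{\prime}=\frac{a+b}{2}$, $x^{\prime}=b$, $p=g$), and that proof uses your device of pairing $s$ with its reflection $a+b-s$, applying the convexity comparison of Lemma~\ref{l1} pointwise, and integrating against the nonnegative symmetric weight.
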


See \cite{1b}--\cite{6b} and \cite{9b}--\cite{24b}, the results of which are
the generalization, improvement and extension of the famous integral
inequalities $\left(  \ref{1.1}\right)  $ and $\left(  \ref{1.2}\right)  .$

In Subsections $1.1-1.2,$ we present a number of Hermite-Hadamard-type
inequalities which refine and generalize Hermite-Hadamard inequality $\left(
\ref{1.1}\right)  .$

In Subsection $1.3,$ we present a number of Fej\'{e}r-type inequalities which
refine and generalize Fej\'{e}r inequality $\left(  \ref{1.2}\right)  .$

In Section $2$, we shall establish a number of new Fej\'{e}r-type inequalities
which reduce all results of Subsections $1.1-1.3.$

In Section $3$, we shall establish a number of applications for the extended
incomplete Beta function and the extended special means.

\subsection{Hermite-Hadamard-type Inequalities}

In this subsection, we present nine theorems related Hermite-Hadamard-type
inequalities and which are proved by Dragomir in \cite{1b}, \cite{4b},
Dragomir \textit{et al.} in \cite{6b}, Yang and Hong in \cite{20b} and Tseng
\textit{et al.} in \cite{19b}.

Throughout this subsection, let $f$ be defined as above and define the
following functions on $[0,1].$%
\[
H\left(  t\right)  =\frac{1}{b-a}\int\nolimits_{a}^{b}f\left(  ts+\left(
1-t\right)  \frac{a+b}{2}\right)  ds,
\]%
\begin{align*}
P\left(  t\right)   &  =\frac{1}{2\left(  b-a\right)  }\int\nolimits_{a}%
^{b}\left[  f\left(  \left(  \frac{1+t}{2}\right)  a+\left(  \frac{1-t}%
{2}\right)  s\right)  \right. \\
&  \text{ \ \ \ \ \ \ \ \ \ \ \ \ \ \ \ \ \ \ \ }\left.  +f\left(  \left(
\frac{1+t}{2}\right)  b+\left(  \frac{1-t}{2}\right)  s\right)  \right]  ds,
\end{align*}%
\[
F\left(  t\right)  =\frac{1}{\left(  b-a\right)  ^{2}}\int_{a}^{b}\int_{a}%
^{b}f\left(  ts+\left(  1-t\right)  u\right)  dsdu;
\]%
\[
G\left(  t\right)  =\frac{1}{2}\left[  f\left(  ta+\left(  1-t\right)
\frac{a+b}{2}\right)  +f\left(  tb+\left(  1-t\right)  \frac{a+b}{2}\right)
\right]  ,
\]%
\[
L\left(  t\right)  =\frac{1}{2\left(  b-a\right)  }\int\nolimits_{a}%
^{b}\left[  f\left(  ta+\left(  1-t\right)  s\right)  +f\left(  tb+\left(
1-t\right)  s\right)  \right]  ds
\]
and%
\[
Q\left(  t\right)  =\frac{1}{2}\left[  f\left(  ta+\left(  1-t\right)
b\right)  +f\left(  tb+\left(  1-t\right)  a\right)  \right]  .
\]

Dragomir established the following Hermite-Hadamard-type inequalities related
to the functions $H,F$ which refine the first inequality of $\left(
\ref{1.1}\right)  $ (see \cite{1b}).

\begin{theoremA}
\bigskip\label{A3}\cite{1b} Let $f,H$\ be defined as above. Then $H$\ is
convex, increasing on $\left[  0,1\right]  ,$\ and for all $t\in\left[
0,1\right]  $, we have%
\[
f\left(  \frac{a+b}{2}\right)  =H\left(  0\right)  \leq H\left(  t\right)
\leq H\left(  1\right)  =\frac{1}{b-a}\int\nolimits_{a}^{b}f\left(  s\right)
ds.
\]

\end{theoremA}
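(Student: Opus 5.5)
We prove the three assertions about $H$ (convexity, monotonicity, the chain of inequalities) directly from the convexity of $f$, with the Hermite--Hadamard inequality (\ref{1.1}) used only for the lower bound. Write $m=\frac{a+b}{2}$ and substitute $s=m+(b-a)u/2$, $u\in[-1,1]$, to obtain
\[
H(t)=\frac{1}{2}\int_{-1}^{1} f\left(m+t\,\frac{b-a}{2}\,u\right)du .
\]
The endpoint values are immediate: $H(0)=f(m)$ and $H(1)=\frac{1}{b-a}\int_a^b f(s)\,ds$.

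\textbf{Convexity.} For fixed $s$, the map $t\mapsto ts+(1-t)m$ is affine. Hence $t\mapsto f(ts+(1-t)m)$ is convex on $[0,1]$. Integrating in $s$ preserves the convex-combination inequality, so $H$ is convex.

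\textbf{Lower bound $H(0)\le H(t)$.} By the symmetric form above,
\[
H(t)=\frac12\int_{0}^{1}\left[f\left(m+t\tfrac{b-a}{2}u\right)+f\left(m-t\tfrac{b-a}{2}u\right)\right]du .
\]
By convexity of $f$, the bracket is at least $2f(m)$. This gives $H(t)\ge f(m)=H(0)$ for all $t\in[0,1]$. Alternatively, one can apply the first inequality of (\ref{1.1}) to $f$ on the interval $[m-t\frac{b-a}{2},\,m+t\frac{b-a}{2}]$, since $H(t)$ is exactly the mean of $f$ over that interval when $t>0$.

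\textbf{Monotonicity.} This is the step that needs care. A convex function on $[0,1]$ whose minimum is attained at $t=0$ is nondecreasing. Indeed, for $0\le t_1<t_2\le 1$, write $t_1=\lambda\cdot 0+(1-\lambda)t_2$ with $\lambda=1-t_1/t_2$. Convexity then gives
\[
H(t_1)\le \lambda H(0)+(1-\lambda)H(t_2)\le \lambda H(t_2)+(1-\lambda)H(t_2)=H(t_2),
\]
where the second inequality uses $H(0)\le H(t_2)$ from the lower bound. So $H$ is increasing.

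\textbf{Conclusion.} Monotonicity yields $H(0)\le H(t)\le H(1)$, and combined with the endpoint values this is the stated chain. The only subtle points are the symmetrization that gives $H(0)\le H(t)$ and the convexity argument that turns this into monotonicity; everything else is routine.
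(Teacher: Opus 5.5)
Your proof is correct, but it is organized differently from the paper's. The paper recovers Theorem~\ref{A3} from Theorem~\ref{t1} with $x=a$, $y=y'=m$, $x'=b$ and $p\equiv\frac{1}{b-a}$ (Remark~\ref{r4}).

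\textbf{The paper's route.} Monotonicity is proved directly by a pointwise nested-pair comparison. For $t_1<t_2$ and $s\in[a,m]$, the points $m\pm t_1(m-s)$ lie between $m\pm t_2(m-s)$ and have the same sum. So Lemma~\ref{l2} gives $f(m-t_1(m-s))+f(m+t_1(m-s))\le f(m-t_2(m-s))+f(m+t_2(m-s))$, and integrating in $s$ yields $H(t_1)\le H(t_2)$. The chain $H(0)\le H(t)\le H(1)$ is then read off. Convexity of $H$ is only asserted and is never used.

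\textbf{Your route.} You use only the $t_1=0$ case of that comparison, i.e.\ midpoint convexity, to get $H(0)\le H(t)$. You then upgrade this to monotonicity using the convexity of $H$ and the fact that a convex function on $[0,1]$ minimized at $0$ is nondecreasing.

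\textbf{What each buys.} Your version makes explicit that monotonicity follows from convexity plus the endpoint minimum, and it actually justifies the convexity of $H$. The paper's comparison needs neither the convexity of $H$ nor knowledge of where the minimum sits. That is why the same device is the workhorse in Section~2, e.g.\ for $Hp_2\le Hp_1$ and for the two-sided monotonicity of $Ip_2$ in Theorem~\ref{t9}.
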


\begin{theoremA}
\label{A4}\cite{1b} Let $f,F$\ be defined as above. Then:
\end{theoremA}

\begin{enumerate}
\item $F$\textit{\ is convex on }$\left[  0,1\right]  $\textit{, symmetric to
}$\frac{1}{2}$\textit{, }$F$\textit{\ is decreasing on }$\left[  0,\frac{1}%
{2}\right]  $\textit{\ and increasing on }$\left[  \frac{1}{2},1\right]
,$\textit{\ and we have}%
\[
\sup\limits_{t\in\left[  0,1\right]  }F\left(  t\right)  =F\left(  0\right)
=F\left(  1\right)  =\frac{1}{b-a}\int\nolimits_{a}^{b}f\left(  s\right)  ds
\]
\textit{and }%
\[
\inf\limits_{t\in\left[  0,1\right]  }F\left(  t\right)  =F\left(  \frac{1}%
{2}\right)  =\frac{1}{\left(  b-a\right)  ^{2}}\int_{a}^{b}\int_{a}%
^{b}f\left(  \frac{s+u}{2}\right)  dsdu.
\]

\item \textit{We have: }%
\[
f\left(  \frac{a+b}{2}\right)  \leq F\left(  \frac{1}{2}\right)  \text{;\quad
}H\left(  t\right)  \leq F\left(  t\right)  \text{, \qquad\ }t\in\left[
0,1\right]  \text{.}%
\]
\noindent
\end{enumerate}

Yang and Hong established the following Hermite-Hadamard-type inequality
related to the function $P$\ and which refines the second inequality of
$\left(  \ref{1.1}\right)  $ (see \cite{20b}).

\begin{theoremA}
\label{A5}\cite{20b} Let $f,P$\ be defined as above. Then $P$\ is convex,
increasing on $\left[  0,1\right]  ,$\ and for all $t\in\left[  0,1\right]  $,
we have%
\[
\frac{1}{b-a}\int\nolimits_{a}^{b}f\left(  s\right)  ds=P\left(  0\right)
\leq P\left(  t\right)  \leq P\left(  1\right)  =\frac{f\left(  a\right)
+f\left(  b\right)  }{2}.
\]

\end{theoremA}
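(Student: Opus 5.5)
We prove Theorem \ref{A5} by first rewriting $P$ in a symmetric form and then deriving each claimed property from convexity of $f$.

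\textbf{Step 1: rewrite $P$.} In the first integrand substitute $s\mapsto a+b-s$. Then
\[
\left(\tfrac{1+t}{2}\right)a+\left(\tfrac{1-t}{2}\right)(a+b-s)=a+\tfrac{1-t}{2}(b-s).
\]
Similarly, the second integrand has argument $b-\tfrac{1-t}{2}(b-s)$. Write $u=\tfrac{b-s}{b-a}\in[0,1]$. The arguments become
\[
a+\tfrac{1-t}{2}u(b-a) \quad\text{and}\quad b-\tfrac{1-t}{2}u(b-a),
\]
and these are symmetric about $\frac{a+b}{2}$. Hence
\[
P(t)=\int_0^1 \phi\!\left(\tfrac{1-t}{2}u\right)du,
\]
where
\[
\phi(r)=\tfrac12\bigl[f(a+r(b-a))+f(b-r(b-a))\bigr], \qquad r\in\left[0,\tfrac12\right].
\]

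\textbf{Step 2: properties of $\phi$.} The function $\phi$ is convex, as an average of convex functions of affine maps. It satisfies $\phi(r)=\phi(1-r)$. A convex function symmetric about $\frac12$ is nonincreasing on $[0,\frac12]$; this follows from the convexity inequality between $r$, $r'$ and $1-r$.

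\textbf{Step 3: convexity of $P$.} For each fixed $u$, the map $t\mapsto \phi\!\left(\tfrac{1-t}{2}u\right)$ is convex, being a convex function composed with an affine function. Integrating in $u$ preserves convexity, so $P$ is convex on $[0,1]$.

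\textbf{Step 4: monotonicity of $P$.} As $t$ increases, $\tfrac{1-t}{2}u\in[0,\tfrac12]$ decreases. Since $\phi$ is nonincreasing on $[0,\frac12]$, $\phi\!\left(\tfrac{1-t}{2}u\right)$ is nondecreasing in $t$ for each $u$. Integrating, $P$ is increasing on $[0,1]$.

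\textbf{Step 5: endpoint values.} At $t=1$ the integrand is $\phi(0)=\frac{f(a)+f(b)}{2}$, so $P(1)=\frac{f(a)+f(b)}{2}$. At $t=0$, the original formula gives
\[
P(0)=\frac{1}{2(b-a)}\int_a^b\left[f\!\left(\tfrac{a+s}{2}\right)+f\!\left(\tfrac{b+s}{2}\right)\right]ds.
\]
The substitutions $v=\frac{a+s}{2}$ and $v=\frac{b+s}{2}$ map $[a,b]$ onto $[a,\frac{a+b}{2}]$ and $[\frac{a+b}{2},b]$ respectively, each with $ds=2\,dv$. Therefore
\[
P(0)=\frac{1}{b-a}\int_a^b f(v)\,dv.
\]
The chain of inequalities $P(0)\le P(t)\le P(1)$ then follows from monotonicity.

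\textbf{Main obstacle.} The only delicate point is Step 1, the change of variables that exposes the symmetric structure of $P$, together with the monotonicity of $\phi$ in Step 2. Once that form is in hand, every remaining step is routine.
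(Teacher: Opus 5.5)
Your proof is correct and is essentially the paper's argument. The paper obtains Theorem \ref{A5} as the case $x=a$, $y=y^{\prime}=\frac{a+b}{2}$, $x^{\prime}=b$, $p\equiv\frac{1}{b-a}$ of Theorem \ref{t3} (Remark \ref{r6}), where monotonicity comes from applying Lemma \ref{l2} pointwise to the symmetric pair $ta+(1-t)s$, $tb+(1-t)(a+b-s)$ and integrating, with convexity handled as in your Step 3. That pair is exactly your pair $a+\frac{1-t}{2}u(b-a)$, $b-\frac{1-t}{2}u(b-a)$ under $s=a+\frac{u}{2}(b-a)$, and your monotonicity of the symmetric convex $\phi$ on $[0,\frac12]$ is a self-contained proof of Lemma \ref{l1} for symmetric pairs (just define $\phi$ on all of $[0,1]$ so that $\phi(r)=\phi(1-r)$ makes sense).
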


Dragomir \textit{et al.} established the following Hermite-Hadamard-type
inequalities related to the functions $H,G,L$ (see \cite{6b}).

\begin{theoremA}
\label{A6}\cite{6b} Let $f,H$\ be defined as above. Then:
\end{theoremA}

\begin{enumerate}
\item \textit{The inequality }%
\begin{align*}
f\left(  \frac{a+b}{2}\right)   &  \leq\frac{2}{b-a}\int\nolimits_{\frac
{3a+b}{4}}^{\frac{a+3b}{4}}f\left(  s\right)  ds\\
&  \leq\int\nolimits_{0}^{1}H\left(  t\right)  dt\\
&  \leq\frac{1}{2}\left[  f\left(  \frac{a+b}{2}\right)  +\frac{1}{b-a}%
\int\nolimits_{a}^{b}f\left(  s\right)  ds\right]
\end{align*}
\textit{holds.}

\item \textit{If }$f$\textit{\ is differentiable on }$\left[  a,b\right]
,$\textit{\ then we have the inequalities }%
\begin{align*}
0  &  \leq\frac{1}{b-a}\int\nolimits_{a}^{b}f\left(  s\right)  ds-H\left(
t\right) \\
&  \leq\left(  1-t\right)  \left[  \frac{f\left(  a\right)  +f\left(
b\right)  }{2}-\frac{1}{b-a}\int\nolimits_{a}^{b}f\left(  s\right)  ds\right]
\end{align*}
\textit{and }%
\[
0\leq\frac{f\left(  a\right)  +f\left(  b\right)  }{2}-H\left(  t\right)
\leq\frac{\left(  f^{\prime}\left(  b\right)  -f^{\prime}\left(  a\right)
\right)  \left(  b-a\right)  }{4}%
\]
\textit{for all }$t\in\left[  0,1\right]  .$
\end{enumerate}

\begin{theoremA}
\label{A7}\cite{6b} Let $f,H,G$\ be defined as above. Then:
\end{theoremA}

\begin{enumerate}
\item $G$\textit{\ is convex and increasing on }$\left[  0,1\right]
$\textit{.}

\item \textit{We have }%
\[
\inf\limits_{t\in\left[  0,1\right]  }G\left(  t\right)  =G\left(  0\right)
=f\left(  \frac{a+b}{2}\right)
\]
\textit{and }%
\[
\sup\limits_{t\in\left[  0,1\right]  }G\left(  t\right)  =G\left(  1\right)
=\frac{f\left(  a\right)  +f\left(  b\right)  }{2}\text{.}%
\]

\item \textit{The inequality }%
\begin{equation}
H\left(  t\right)  \leq G\left(  t\right)  \label{1.3}%
\end{equation}
\textit{holds for all }$t\in\left[  0,1\right]  $\textit{.}

\item \textit{The inequality }%
\begin{align}
\frac{2}{b-a}\int_{\frac{3a+b}{4}}^{\frac{a+3b}{4}}f\left(  s\right)  ds  &
\leq\frac{1}{2}\left[  f\left(  \frac{3a+b}{4}\right)  +f\left(  \frac
{a+3b}{4}\right)  \right] \label{1.4}\\
&  \leq\int_{0}^{1}G\left(  t\right)  dt\nonumber\\
&  \leq\frac{1}{2}\left[  f\left(  \frac{a+b}{2}\right)  +\frac{f\left(
a\right)  +f\left(  b\right)  }{2}\right]  .\nonumber
\end{align}

\item \textit{If }$f$\textit{\ is differentiable on }$\left[  a,b\right]
,$\textit{\ then we have the inequality }%
\begin{equation}
0\leq H\left(  t\right)  -f\left(  \frac{a+b}{2}\right)  \leq G\left(
t\right)  -H\left(  t\right)  \label{1.5}%
\end{equation}
\textit{for all }$t\in\left[  0,1\right]  .$
\end{enumerate}

\begin{theoremA}
\label{A8}\cite{6b} Let $f,H,G,L$ be defined as above. Then:
\end{theoremA}

\begin{enumerate}
\item $L$\textit{\ is convex on }$\left[  0,1\right]  $\textit{.}

\item \textit{We have the inequality: }%
\[
G\left(  t\right)  \leq L\left(  t\right)  \leq\frac{1-t}{b-a}\int%
\nolimits_{a}^{b}f\left(  s\right)  ds+t\cdot\frac{f\left(  a\right)
+f\left(  b\right)  }{2}\leq\frac{f\left(  a\right)  +f\left(  b\right)  }{2}%
\]
\textit{for all }$t\in\left[  0,1\right]  $\textit{\ and }%
\[
\sup\limits_{t\in\left[  0,1\right]  }L\left(  t\right)  =\frac{f\left(
a\right)  +f\left(  b\right)  }{2}.
\]

\item \textit{One has the inequalities: }%
\begin{equation}
H\left(  1-t\right)  \leq L\left(  t\right)  \text{\textit{\ }} \label{1.6}%
\end{equation}
\textit{and}%
\begin{equation}
\frac{H\left(  t\right)  +H\left(  1-t\right)  }{2}\leq L\left(  t\right)
\label{1.7}%
\end{equation}
\textit{for all }$t\in\left[  0,1\right]  .$
\end{enumerate}

Tseng \textit{et al.} established the following Hermite-Hadamard-type
inequalities related to the functions $H,P,L,G,Q$ (see \cite{19b}).

\begin{theoremA}
\label{A9}\cite{19b} Let $f,H,P$\ be defined as above. Then we have the
following results:
\end{theoremA}

\begin{enumerate}
\item \textit{The inequality }%
\begin{align*}
\frac{1}{b-a}\int\nolimits_{a}^{b}f\left(  s\right)  ds  &  \leq\frac{2}%
{b-a}\int_{\left[  a,\frac{3a+b}{4}\right]  \cup\left[  \frac{a+3b}%
{4},b\right]  }f\left(  s\right)  ds\\
&  \leq\int\nolimits_{0}^{1}P\left(  t\right)  dt\\
&  \leq\frac{1}{2}\left[  \frac{1}{b-a}\int\nolimits_{a}^{b}f\left(  s\right)
ds+\frac{f\left(  a\right)  +f\left(  b\right)  }{2}\right]
\end{align*}
\textit{holds.}

\item \textit{The inequalities}%
\[
L\left(  t\right)  \leq P\left(  t\right)  \leq\frac{1-t}{b-a}\int%
\nolimits_{a}^{b}f\left(  s\right)  ds+t\cdot\frac{f\left(  a\right)
+f\left(  b\right)  }{2}\leq\frac{f\left(  a\right)  +f\left(  b\right)  }{2}%
\]
\textit{and}%
\[
0\leq P\left(  t\right)  -G\left(  t\right)  \leq\frac{f\left(  a\right)
+f\left(  b\right)  }{2}-P\left(  t\right)
\]
\textit{hold for all }$t\in\left[  0,1\right]  .$

\item \textit{If }$f$\textit{\ is differentiable on }$\left[  a,b\right]
,$\textit{\ then we have the inequalities }%
\[
0\leq t\left[  \frac{1}{b-a}\int\nolimits_{a}^{b}f\left(  s\right)
ds-f\left(  \frac{a+b}{2}\right)  \right]  \leq P\left(  t\right)  -\frac
{1}{b-a}\int\nolimits_{a}^{b}f\left(  s\right)  ds,
\]%
\[
0\leq P\left(  t\right)  -f\left(  \frac{a+b}{2}\right)  \leq\frac{\left(
f^{\prime}\left(  b\right)  -f^{\prime}\left(  a\right)  \right)  \left(
b-a\right)  }{4}%
\]
\textit{and }%
\[
0\leq P\left(  t\right)  -H\left(  t\right)  \leq\frac{\left(  f^{\prime
}\left(  b\right)  -f^{\prime}\left(  a\right)  \right)  \left(  b-a\right)
}{4}%
\]
\textit{for all }$t\in\left[  0,1\right]  .$
\end{enumerate}

\begin{theoremA}
\label{A10}\cite{19b} Let $f,G,Q$\ be defined as above. Then $Q$\ is symmetric
about $\frac{1}{2}$, $Q$ is decreasing on $\left[  0,\frac{1}{2}\right]  $ and
increasing on $\left[  \frac{1}{2},1\right]  $,%
\[
G\left(  2t\right)  \leq Q\left(  t\right)  \text{ \quad}\left(  t\in\left[
0,\frac{1}{4}\right]  \right)  ,
\]%
\[
G\left(  2t\right)  \geq Q\left(  t\right)  \quad\left(  t\in\left[  \frac
{1}{4},\frac{1}{2}\right]  \right)  ,
\]%
\[
G\left(  2\left(  1-t\right)  \right)  \geq Q\left(  t\right)  \text{ \quad
}\left(  t\in\left[  \frac{1}{2},\frac{3}{4}\right]  \right)
\]
\textit{and}%
\[
G\left(  2\left(  1-t\right)  \right)  \leq Q\left(  t\right)  \quad\left(
t\in\left[  \frac{3}{4},1\right]  \right)  .
\]

\end{theoremA}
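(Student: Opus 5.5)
The whole statement reduces to one auxiliary function of a single variable, the distance from the midpoint. Put $m=\frac{a+b}{2}$ and, for $0\leq r\leq\frac{b-a}{2}$, define
\[
\varphi\left(  r\right)  =\frac{1}{2}\left[  f\left(  m-r\right)  +f\left(m+r\right)  \right]  .
\]
The first step is to show that $\varphi$ is increasing on $\left[  0,\frac{b-a}{2}\right]$. Take $0\leq r_{1}<r_{2}$. Then $m\pm r_{1}$ lie in $\left[  m-r_{2},m+r_{2}\right]$, and we can write
\[
m-r_{1}=\lambda\left(  m-r_{2}\right)  +\left(  1-\lambda\right)  \left(m+r_{2}\right), \qquad m+r_{1}=\left(  1-\lambda\right)  \left(  m-r_{2}\right)  +\lambda\left(  m+r_{2}\right),
\]
with $\lambda=\frac{r_{1}+r_{2}}{2r_{2}}$. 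Apply convexity of $f$ to each of these and add the two inequalities. This gives $f\left(  m-r_{1}\right)  +f\left(  m+r_{1}\right)  \leq f\left(m-r_{2}\right)  +f\left(  m+r_{2}\right)$, that is, $\varphi\left(r_{1}\right)  \leq\varphi\left(  r_{2}\right)$.

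Next I rewrite $Q$ and $G$ through $\varphi$. Since $ta+\left(  1-t\right)  b=m+\left(  1-2t\right)  \frac{b-a}{2}$ and $tb+\left(  1-t\right)  a=m-\left(  1-2t\right)  \frac{b-a}{2}$, we get
\[
Q\left(  t\right)  =\varphi\left(  \left\vert 1-2t\right\vert \frac{b-a}{2}\right)  ,\qquad t\in\left[  0,1\right]  .
\]
Likewise $G\left(  s\right)  =\varphi\left(  s\frac{b-a}{2}\right)$ for $s\in\left[  0,1\right]$. The symmetry of $Q$ about $\frac{1}{2}$ is then immediate, because replacing $t$ by $1-t$ just swaps the two terms of $Q$, equivalently leaves $\left\vert 1-2t\right\vert$ unchanged. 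On $\left[  0,\frac{1}{2}\right]$ the argument $\left(  1-2t\right)  \frac{b-a}{2}$ decreases as $t$ increases, so $Q$ is decreasing there. By symmetry (or since $\left(  2t-1\right)  \frac{b-a}{2}$ increases) $Q$ is increasing on $\left[  \frac{1}{2},1\right]$.

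For the four comparisons, all that remains is to compare arguments of $\varphi$. For $t\in\left[  0,\frac{1}{2}\right]$ we have $2t\in\left[0,1\right]$, so $G\left(  2t\right)  =\varphi\left(  2t\frac{b-a}{2}\right)$ while $Q\left(  t\right)  =\varphi\left(  \left(  1-2t\right)  \frac{b-a}{2}\right)$. Now $2t\leq1-2t$ exactly when $t\leq\frac{1}{4}$, so the monotonicity of $\varphi$ gives $G\left(  2t\right)  \leq Q\left(  t\right)$ on $\left[  0,\frac{1}{4}\right]$ and the reverse inequality on $\left[  \frac{1}{4},\frac{1}{2}\right]$. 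For $t\in\left[  \frac{1}{2},1\right]$ we have $2\left(  1-t\right)  \in\left[  0,1\right]$, so $G\left(  2\left(  1-t\right)  \right)  =\varphi\left(  2\left(  1-t\right)  \frac{b-a}{2}\right)$ and $Q\left(  t\right)  =\varphi\left(  \left(  2t-1\right)  \frac{b-a}{2}\right)$. Here $2-2t\geq2t-1$ exactly when $t\leq\frac{3}{4}$, which yields the last two inequalities. (Alternatively these follow from the first two via $t\mapsto1-t$ and the symmetry of $Q$.)

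The only step with genuine content is the monotonicity of $\varphi$, which is a routine convex-combination argument. Everything else is bookkeeping of the distances from $m$.
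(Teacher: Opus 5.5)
Your proof is correct and takes essentially the paper's route. The paper gets this statement as the case $x=a$, $y=y^{\prime}=\frac{a+b}{2}$, $x^{\prime}=b$, $\alpha=\frac{1}{2}$ of Theorem \ref{t20}, whose proof orders the relevant points and applies Lemma \ref{l2} (that $f(C)+f(D)\leq f(A)+f(B)$ for nested pairs with $A+B=C+D$); your monotonicity of $\varphi$ is the same fact for pairs symmetric about $\frac{a+b}{2}$, and parametrizing by distance reduces the paper's four point-orderings to the comparisons $2t$ versus $1-2t$ and $2-2t$ versus $2t-1$.
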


Dragomir established the following Hermite-Hadamard-type inequality related to
the functions $H,F,L$ (see \cite{4b})

\begin{theoremA}
\label{A11}\cite{4b} Let $f,F,H,L$ be defined as above. Then we have the
inequality%
\[
0\leq F\left(  t\right)  -H\left(  t\right)  \leq L\left(  1-t\right)
-F\left(  t\right)
\]
for all $t\in\left[  0,1\right]  .$
\end{theoremA}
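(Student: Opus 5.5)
The plan is to prove the two inequalities separately. Both follow by fixing one integration variable in the double integral defining $F$ and applying a one-dimensional convexity inequality in the other variable. Throughout, $m=\frac{a+b}{2}$.

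\textbf{Left inequality $H(t)\le F(t)$.} This is already part of Theorem \ref{A4}(2), so it may be quoted. It can also be seen directly. Write
\[
F(t)=\frac{1}{b-a}\int_a^b\left[\frac{1}{b-a}\int_a^b f\left(ts+(1-t)u\right)du\right]ds .
\]
For fixed $s$, the map $u\mapsto f(ts+(1-t)u)$ is convex on $[a,b]$. Jensen's inequality (equivalently the left half of (\ref{1.1})) bounds the inner mean from below by $f(ts+(1-t)m)$. Integrating in $s$ then gives $F(t)\ge H(t)$.

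\textbf{Right inequality.} The claim is equivalent to
\[
2F(t)\le H(t)+L(1-t).
\]
Substituting $1-t$ into the definition of $L$ gives
\[
L(1-t)=\frac{1}{2(b-a)}\int_a^b\left[f\left((1-t)a+ts\right)+f\left((1-t)b+ts\right)\right]ds .
\]
For fixed $s\in[a,b]$, set $\varphi_s(u)=f(ts+(1-t)u)$, which is convex on $[a,b]$. Then the three quantities become averages in $s$ of expressions in $\varphi_s$:
\begin{align*}
F(t)&=\frac{1}{b-a}\int_a^b\left[\frac{1}{b-a}\int_a^b\varphi_s(u)\,du\right]ds,\\
H(t)&=\frac{1}{b-a}\int_a^b\varphi_s(m)\,ds,\\
L(1-t)&=\frac{1}{b-a}\int_a^b\frac{\varphi_s(a)+\varphi_s(b)}{2}\,ds .
\end{align*}
So it suffices to prove, for each $s$, Bullen's inequality
\[
\frac{2}{b-a}\int_a^b\varphi_s(u)\,du\le\varphi_s(m)+\frac{\varphi_s(a)+\varphi_s(b)}{2},
\]
and then integrate over $s$ and divide by $b-a$.

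\textbf{Bullen's inequality.} Apply the right half of (\ref{1.1}) to $\varphi_s$ on $[a,m]$ and on $[m,b]$ separately:
\[
\frac{2}{b-a}\int_a^m\varphi_s\le\frac{\varphi_s(a)+\varphi_s(m)}{2},\qquad
\frac{2}{b-a}\int_m^b\varphi_s\le\frac{\varphi_s(m)+\varphi_s(b)}{2}.
\]
Adding these two inequalities gives exactly the required bound.

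The only real step is recognising that $L(1-t)$ and $H(t)$ are the trapezoid and midpoint values of the same convex slice $\varphi_s$. Once the variable roles are matched, with $s$ carrying coefficient $t$ and $u$ carrying coefficient $1-t$, everything else is routine. Measurability and integrability cause no difficulty, since $f$ is continuous on the interior and bounded on $[a,b]$.
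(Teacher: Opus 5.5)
Your proof is correct and follows essentially the paper's route: the paper obtains this statement as the case $x=a$, $y=y'=\frac{a+b}{2}$, $x'=b$, $p\equiv\frac{1}{b-a}$ of (\ref{2.38}) in Theorem~\ref{t7}. There $H\le F$ comes from (\ref{2.21}), which is midpoint convexity of the slice $u\mapsto f(ts+(1-t)u)$ for fixed $s$, and $2F(t)\le H(t)+L(1-t)$ is (\ref{2.40}), obtained by fixing $s$ and applying Lemma~\ref{l2} to reflected pairs of $u$-values within each half of $[a,b]$ — which is exactly the proof of the right Hermite--Hadamard inequality on $[a,\frac{a+b}{2}]$ and $[\frac{a+b}{2},b]$, so your ``Bullen inequality for each slice'' is the same argument stated more compactly.
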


\subsection{New Hermite-Hadamard-type Inequalities}

In this subsection, we present eight theorems related new
Hermite-Hadamard-type inequalities and which reduce Theorems \ref{A3}
--\ \ref{A9} and \ref{A11} (see \cite{12b} and \cite{13b}).

Throughout this subsection, let $a,b.x,y,y^{\prime},x^{\prime},\Omega
,f,H,P,F,G,L$ be defined as above and define the following functions on
$[0,1].$%
\[
H_{1}\left(  t\right)  =\frac{1}{2\left(  y-x\right)  }\int_{x}^{y}\left[
f\left(  ts+\left(  1-t\right)  y\right)  +f\left(  t\left(  y+y^{\prime
}-s\right)  +\left(  1-t\right)  y^{\prime}\right)  \right]  ds,
\]%
\[
H_{2}\left(  t\right)  =\frac{1}{2\left(  y-x\right)  }\int_{x}^{y}\left[
f\left(  ts+\left(  1-t\right)  y^{\prime}\right)  +f\left(  t\left(
y+y^{\prime}-s\right)  +\left(  1-t\right)  y\right)  \right]  ds,
\]%
\[
F_{1}\left(  t\right)  =\frac{1}{4\left(  y-x\right)  ^{2}}\int_{\Omega}%
\int_{\Omega}f\left(  ts+\left(  1-t\right)  u\right)  dsdu,
\]%
\[
P_{1}\left(  t\right)  =\frac{1}{2\left(  y-x\right)  }\int_{x}^{y}\left[
f\left(  tx+\left(  1-t\right)  s\right)  +f\left(  tx^{\prime}+\left(
1-t\right)  \left(  x+x^{\prime}-s\right)  \right)  \right]  ds,
\]%
\[
G_{1}\left(  t\right)  =\frac{1}{2}\left[  f\left(  tx+\left(  1-t\right)
y\right)  +f\left(  tx^{\prime}+\left(  1-t\right)  y^{\prime}\right)
\right]  ,
\]%
\[
G_{2}\left(  t\right)  =\frac{1}{2}\left[  f\left(  tx+\left(  1-t\right)
y^{\prime}\right)  +f\left(  tx^{\prime}+\left(  1-t\right)  y\right)
\right]
\]
and%
\[
L_{1}\left(  t\right)  =\frac{1}{4\left(  y-x\right)  }%
{\displaystyle\int\nolimits_{\Omega}}
\left[  f\left(  tx+\left(  1-t\right)  s\right)  +f\left(  tx^{\prime
}+\left(  1-t\right)  s\right)  \right]  ds.
\]

\begin{remark}
\label{r1}\textit{We note that }$\Omega=\left[  a,b\right]  $\textit{\ and
}$H_{1}\left(  t\right)  =H_{2}\left(  t\right)  =H\left(  t\right)  ,$
$F_{1}\left(  t\right)  =F\left(  t\right)  ,$ $P_{1}\left(  t\right)
=P\left(  t\right)  ,$ $G_{1}\left(  t\right)  =G_{2}\left(  t\right)
=G\left(  t\right)  ,$ $L_{1}\left(  t\right)  =L\left(  t\right)
$\textit{\ on }$\left[  0,1\right]  $\textit{\ as }$x=a,$ $y=y^{\prime}%
=\frac{a+b}{2}$\textit{\ and }$x^{\prime}=b.$
\end{remark}

Tseng \textit{et al.} established the following four theorems related to the
functions $H_{1},H_{2},P_{1},F_{1}$ and which reduce Theorems \ref{A3}
--\ \ref{A5} (see \cite{12b}).

\begin{theoremA}
\label{A12}\cite{12b} Let $x,y,y^{\prime},x^{\prime},\Omega,f,H_{1}$\ be
defined as above. Then:
\end{theoremA}

\begin{enumerate}
\item $H_{1}$\textit{\ is convex on }$\left[  0,1\right]  .$

\item $H_{1}$\textit{\ is increasing on }$\left[  0,1\right]  $\textit{\ and
the following inequalities }%
\[
\frac{f\left(  y\right)  +f\left(  y^{\prime}\right)  }{2}=H_{1}\left(
0\right)  \leq H_{1}\left(  t\right)  \leq H_{1}\left(  1\right)  =\frac
{1}{2\left(  y-x\right)  }\int_{\Omega}f\left(  s\right)  ds\
\]
\textit{and}%
\begin{align*}
H_{1}\left(  t\right)   &  \leq t\cdot\frac{1}{2\left(  y-x\right)  }%
\int_{\Omega}f\left(  s\right)  ds+\left(  1-t\right)  \cdot\frac{f\left(
y\right)  +f\left(  y^{\prime}\right)  }{2}\\
&  \leq\frac{1}{2\left(  y-x\right)  }\int_{\Omega}f\left(  s\right)
ds\leq\frac{f\left(  x\right)  +f\left(  x^{\prime}\right)  }{2}%
\end{align*}
\textit{hold for all }$t\in\left[  0,1\right]  .$
\end{enumerate}

\begin{theoremA}
\label{A13}\cite{12b} Let $x,y,y^{\prime},x^{\prime},\Omega,f,H_{2}$\ be
defined as above. Then:
\end{theoremA}

\begin{enumerate}
\item $H_{2}$\ is convex on $\left[  0,1\right]  .$

\item \textit{The following inequalities }%
\begin{align*}
f\left(  \frac{y+y^{\prime}}{2}\right)   &  \leq H_{2}\left(  t\right) \\
&  \leq t\cdot\frac{1}{2\left(  y-x\right)  }\int_{\Omega}f\left(  s\right)
ds+\left(  1-t\right)  \cdot\frac{f\left(  y\right)  +f\left(  y^{\prime
}\right)  }{2}\\
&  \leq\frac{1}{2\left(  y-x\right)  }\int_{\Omega}f\left(  s\right)  ds
\end{align*}
\textit{and}%
\[
H_{2}\left(  t\right)  \leq H_{1}\left(  t\right)
\]
\textit{hold for all }$t\in\left[  0,1\right]  .$
\end{enumerate}

\begin{theoremA}
\label{A14}\cite{12b} Let $x,y,y^{\prime},x^{\prime},\Omega,f,H_{1},P_{1}$\ be
defined as above. Then we have the following results:
\end{theoremA}

\begin{enumerate}
\item $P_{1}$\textit{\ is convex on }$\left[  0,1\right]  .$

\item $P_{1}$\textit{\ is increasing on }$\left[  0,1\right]  $\textit{\ and
the following inequalities}%
\[
\frac{1}{2\left(  y-x\right)  }\int_{\Omega}f\left(  s\right)  ds=P_{1}\left(
0\right)  \leq P_{1}\left(  t\right)  \leq P_{1}\left(  1\right)
=\frac{f\left(  x\right)  +f\left(  x^{\prime}\right)  }{2}%
\]
\textit{and}%
\begin{align}
P_{1}\left(  t\right)   &  \leq\left(  1-t\right)  \cdot\frac{1}{2\left(
y-x\right)  }\int_{\Omega}f\left(  s\right)  ds+t\cdot\frac{f\left(  x\right)
+f\left(  x^{\prime}\right)  }{2}\label{1.8}\\
&  \leq\frac{f\left(  x\right)  +f\left(  x^{\prime}\right)  }{2}\nonumber
\end{align}
\textit{hold for all }$t\in\left[  0,1\right]  .$

\item \textit{The inequality}%
\[
H_{1}\left(  t\right)  \leq P_{1}\left(  t\right)  \text{ \qquad\ \ }\left(
t\in\left[  0,1\right]  \right)
\]
\textit{holds.}
\end{enumerate}

\begin{theoremA}
\label{A15}\cite{12b} Let $x,y,y^{\prime},x^{\prime},\Omega,f,H_{1}%
,H_{2},F_{1}$\ be defined as above. Then we have the following results:
\end{theoremA}

\begin{enumerate}
\item $F_{1}$\textit{\ is convex on }$\left[  0,1\right]  $\textit{\ and
symmetric about }$\frac{1}{2}.$

\item $F_{1}$\textit{\ is decreasing on }$\left[  0,\frac{1}{2}\right]
$\textit{\ and increasing on }$\left[  \frac{1}{2},1\right]  ,$%
\[
\sup\limits_{t\in\left[  0,1\right]  }F_{1}\left(  t\right)  =F_{1}\left(
0\right)  =F_{1}\left(  1\right)  =\frac{1}{2\left(  y-x\right)  }\int%
_{\Omega}f\left(  s\right)  ds\
\]
\textit{and }%
\[
\inf\limits_{t\in\left[  0,1\right]  }F_{1}\left(  t\right)  =F_{1}\left(
\frac{1}{2}\right)  =\frac{1}{4\left(  y-x\right)  ^{2}}\int_{\Omega}%
\int_{\Omega}f\left(  \frac{s+u}{2}\right)  dsdu.
\]

\item \textit{We have:}%
\[
\frac{H_{1}\left(  t\right)  +H_{2}\left(  t\right)  }{2}\leq F_{1}\left(
t\right)
\]
\textit{and}%
\[
\frac{f\left(  y\right)  +2f\left(  \frac{y+y^{\prime}}{2}\right)  +f\left(
y^{\prime}\right)  }{4}\leq F_{1}\left(  \frac{1}{2}\right)
\]
for all $t\in\left[  0,1\right]  .$
\end{enumerate}

Tseng \textit{et al.} established the following four Theorems related to the
functions $H_{1},H_{2},P_{1},F_{1},G_{1},G_{2},L_{1}$ and which reduce
Theorems \ref{A6} --\ \ref{A9} and \ref{A11} (see \cite{13b}).

\begin{theoremA}
\label{A16}\cite{13b} Let $x,y,y^{\prime},x^{\prime},\Omega,f,H_{1},H_{2}$\ be
defined as above. Then:
\end{theoremA}

\begin{enumerate}
\item \textit{The inequalities }%
\begin{align*}
\frac{f\left(  y\right)  +f\left(  y^{\prime}\right)  }{2}  &  \leq\frac
{1}{y-x}\int_{\left[  \frac{x+y}{2},y\right]  \cup\left[  y^{\prime}%
,\frac{x^{\prime}+y^{\prime}}{2}\right]  }f\left(  s\right)  ds\\
&  \leq\int_{0}^{1}H_{1}\left(  t\right)  dt\\
&  \leq\frac{1}{2}\left[  \frac{f\left(  y\right)  +f\left(  y^{\prime
}\right)  }{2}+\frac{1}{2\left(  y-x\right)  }\int_{\Omega}f\left(  s\right)
ds\right]
\end{align*}
\textit{and}%
\begin{align*}
f\left(  \frac{y+y^{\prime}}{2}\right)   &  \leq\frac{1}{y-x}\int%
_{\frac{x+y^{\prime}}{2}}^{\frac{y+x^{\prime}}{2}}f\left(  s\right)  ds\\
&  \leq\int_{0}^{1}H_{2}\left(  t\right)  dt\\
&  \leq\frac{1}{2}\left[  \frac{f\left(  y\right)  +f\left(  y^{\prime
}\right)  }{2}+\frac{1}{2\left(  y-x\right)  }\int_{\Omega}f\left(  s\right)
ds\right]
\end{align*}
\textit{hold.}

\item \textit{If }$f$\textit{\ is differentiable on }$\left[  a,b\right]
,$\textit{\ then the inequalities}%
\begin{align*}
0  &  \leq\frac{1}{2\left(  y-x\right)  }\int_{\Omega}f\left(  s\right)
ds-H_{1}\left(  t\right) \\
&  \leq\left(  1-t\right)  \left[  \frac{f\left(  x\right)  +f\left(
x^{\prime}\right)  }{2}-\frac{1}{2\left(  y-x\right)  }\int_{\Omega}f\left(
s\right)  ds\right]  ,
\end{align*}%
\[
0\leq\frac{f\left(  x\right)  +f\left(  x^{\prime}\right)  }{2}-H_{1}\left(
t\right)  \leq\left(  y-x\right)  \frac{f^{\prime}\left(  x^{\prime}\right)
-f^{\prime}\left(  x\right)  }{2},
\]
\textit{and}%
\[
0\leq H_{1}\left(  t\right)  -\frac{f\left(  y\right)  +f\left(  y^{\prime
}\right)  }{2}\leq\left(  y-x\right)  \frac{f^{\prime}\left(  x^{\prime
}\right)  -f^{\prime}\left(  x\right)  }{2}%
\]
\textit{hold for all }$t\in\left[  0,1\right]  .$
\end{enumerate}

\begin{theoremA}
\textit{\label{A17}\cite{13b} Let }$x,y,y^{\prime},x^{\prime},\Omega
,f,H_{1},P_{1}$\textit{\ be defined as above. Then we have the following
results:}
\end{theoremA}

\begin{enumerate}
\item \textit{The inequality}%
\begin{align*}
\frac{1}{2\left(  y-x\right)  }\int_{\Omega}f\left(  s\right)  ds  &
\leq\frac{1}{y-x}\int_{\left[  x,\frac{x+y}{2}\right]  \cup\left[
\frac{x^{\prime}+y^{\prime}}{2},x^{\prime}\right]  }f\left(  s\right)  ds\\
&  \leq\int_{0}^{1}P_{1}\left(  t\right)  dt\\
&  \leq\frac{1}{2}\left[  \frac{f\left(  x\right)  +f\left(  x^{\prime
}\right)  }{2}+\frac{1}{2\left(  y-x\right)  }\int_{\Omega}f\left(  s\right)
ds\right]
\end{align*}
\textit{holds.}

\item \textit{If }$f$\textit{\ is differentiable on }$\left[  a,b\right]
,$\textit{\ then the inequalities}%
\begin{align*}
0  &  \leq t\left[  \frac{1}{2\left(  y-x\right)  }\int_{\Omega}f\left(
s\right)  ds-\frac{f\left(  y\right)  +f\left(  y^{\prime}\right)  }{2}\right]
\\
&  \leq P_{1}\left(  t\right)  -\frac{1}{2\left(  y-x\right)  }\int_{\Omega
}f\left(  s\right)  ds,
\end{align*}%
\[
0\leq P_{1}\left(  t\right)  -\frac{f\left(  y\right)  +f\left(  y^{\prime
}\right)  }{2}\leq\left(  y-x\right)  \frac{f^{\prime}\left(  x^{\prime
}\right)  -f^{\prime}\left(  x\right)  }{2},
\]%
\[
0\leq\frac{f\left(  x\right)  +f\left(  x^{\prime}\right)  }{2}-P_{1}\left(
t\right)  \leq\left(  y-x\right)  \frac{f^{\prime}\left(  x^{\prime}\right)
-f^{\prime}\left(  x\right)  }{2}%
\]
\textit{and}%
\[
0\leq P_{1}\left(  t\right)  -H_{1}\left(  t\right)  \leq\left(  y-x\right)
\frac{f^{\prime}\left(  x^{\prime}\right)  -f^{\prime}\left(  x\right)  }{2}%
\]
\textit{hold for all }$t\in\left[  0,1\right]  .$
\end{enumerate}

\begin{theoremA}
\label{A18}\cite{13b} Let $x,y,y^{\prime},x^{\prime},f,H_{1},P_{1},G_{1}%
,G_{2}$\ be defined as above. Then we have the following results:
\end{theoremA}

\begin{enumerate}
\item $G_{1}$\textit{\ and }$G_{2}$\textit{\ are convex on }$\left[
0,1\right]  .$

\item $G_{1}$\textit{\ is increasing on }$\left[  0,1\right]  ,$%
\textit{\ }$G_{2}$\textit{\ is decreasing on }$\left[  0,\frac{y^{\prime}%
-y}{2\left(  y^{\prime}-x\right)  }\right]  $\textit{\ and increasing on
}$\left[  \frac{y^{\prime}-y}{2\left(  y^{\prime}-x\right)  },1\right]
,$\textit{\ and the inequalities}%
\[
\frac{f\left(  y\right)  +f\left(  y^{\prime}\right)  }{2}=G_{1}\left(
0\right)  \leq G_{1}\left(  t\right)  \leq G_{1}\left(  1\right)
=\frac{f\left(  x\right)  +f\left(  x^{\prime}\right)  }{2},
\]%
\[
f\left(  \frac{x+x^{\prime}}{2}\right)  =G_{2}\left(  \frac{y^{\prime}%
-y}{2\left(  y^{\prime}-x\right)  }\right)  \leq G_{2}\left(  t\right)  \leq
G_{2}\left(  1\right)  =\frac{f\left(  x\right)  +f\left(  x^{\prime}\right)
}{2}%
\]
\textit{and}%
\[
G_{2}\left(  t\right)  \leq G_{1}\left(  t\right)
\]
\textit{hold for all }$t\in\left[  0,1\right]  .$

\item \textit{The inequality}%
\[
H_{1}\left(  t\right)  \leq G_{1}\left(  t\right)  \leq P_{1}\left(  t\right)
\]
\textit{holds for all }$t\in\left[  0,1\right]  .$

\item \textit{The inequalities }%
\begin{align*}
\frac{1}{y-x}\int_{\left[  \frac{x+y}{2},y\right]  \cup\left[  y^{\prime
},\frac{x^{\prime}+y^{\prime}}{2}\right]  }f\left(  s\right)  ds  &  \leq
\frac{1}{2}\left[  f\left(  \frac{x+y}{2}\right)  +f\left(  \frac{x^{\prime
}+y^{\prime}}{2}\right)  \right] \\
&  \leq\int_{0}^{1}G_{1}\left(  t\right)  dt\\
&  \leq\frac{f\left(  x\right)  +f\left(  x^{\prime}\right)  +f\left(
y\right)  +f\left(  y^{\prime}\right)  }{4}%
\end{align*}
\textit{and }%
\begin{align*}
\frac{1}{y-x}\int_{\frac{x+y^{\prime}}{2}}^{\frac{x^{\prime}+y}{2}}f\left(
s\right)  ds  &  \leq\frac{1}{2}\left[  f\left(  \frac{x+y^{\prime}}%
{2}\right)  +f\left(  \frac{x^{\prime}+y}{2}\right)  \right] \\
&  \leq\int_{0}^{1}G_{2}\left(  t\right)  dt\\
&  \leq\frac{f\left(  x\right)  +f\left(  x^{\prime}\right)  +f\left(
y\right)  +f\left(  y^{\prime}\right)  }{4}%
\end{align*}
\textit{hold.}

\item \textit{The inequalities }%
\[
0\leq H_{1}\left(  t\right)  -\frac{f\left(  y\right)  +f\left(  y^{\prime
}\right)  }{2}\leq G_{1}\left(  t\right)  -H_{1}\left(  t\right)
\]
\textit{and }%
\begin{equation}
0\leq P_{1}\left(  t\right)  -G_{1}\left(  t\right)  \leq\frac{f\left(
x\right)  +f\left(  x^{\prime}\right)  }{2}-P_{1}\left(  t\right)  \label{1.9}%
\end{equation}
\textit{hold for all }$t\in\left[  0,1\right]  .$
\end{enumerate}

\begin{theoremA}
\textit{\label{A19}\cite{13b} Let }$x,y,y^{\prime},x^{\prime},\Omega
,f,L_{1},P_{1},G_{1},G_{2},H_{1},H_{2},F_{1}$\textit{\ be defined as above.
Then we have the following results:}
\end{theoremA}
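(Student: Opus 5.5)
Since the list of results of Theorem~\ref{A19} is modelled on Theorems~\ref{A8} and \ref{A11} via Remark~\ref{r1}, I expect its items to be: convexity of $L_{1}$; a chain $G_{1}(t)\leq L_{1}(t)\leq (1-t)\frac{1}{2(y-x)}\int_{\Omega}f+t\frac{f(x)+f(x')}{2}\leq\frac{f(x)+f(x')}{2}$; inequalities of the type $H_{1}(1-t)\leq L_{1}(t)$ and $\frac{H_{1}(t)+H_{1}(1-t)}{2}\leq L_{1}(t)$; and the Dragomir-type estimate $0\leq F_{1}(t)-\frac{H_{1}(t)+H_{2}(t)}{2}\leq L_{1}(1-t)-F_{1}(t)$. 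I would prove them in that order. Two tools are used throughout: the change of variables $s\mapsto x+x'-s$, which maps $[x,y]$ onto $[y',x']$ because $x+x'=y+y'$, and the elementary fact that for convex $f$ and a fixed sum $u+v=c$, the quantity $f(u)+f(v)$ is nondecreasing in $|u-v|$.

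\emph{Convexity and the upper bounds.} For each fixed $s$, the map $t\mapsto f(tx+(1-t)s)$ is convex, as an affine change of variable inside $f$. Integrating over $\Omega$ therefore gives convexity of $L_{1}$. For the upper bound, apply $f(tx+(1-t)s)\leq tf(x)+(1-t)f(s)$ together with the analogous inequality at $x'$. Integrate over $\Omega$, whose measure is $2(y-x)$. This yields the middle term $(1-t)\frac{1}{2(y-x)}\int_{\Omega}f+t\frac{f(x)+f(x')}{2}$. The last inequality then follows from Theorem~\ref{A12}, which gives $\frac{1}{2(y-x)}\int_{\Omega}f\leq\frac{f(x)+f(x')}{2}$.

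\emph{Lower bounds.} Split each $\Omega$-integral into its parts over $[x,y]$ and $[y',x']$, and reflect the second part into $[x,y]$. This decomposes $L_{1}(t)$ as $\frac12 P_{1}(t)$ plus one half of a cross term
\[
C(t)=\frac{1}{2(y-x)}\int_{x}^{y}\left[f\left(tx+(1-t)(x+x'-s)\right)+f\left(tx'+(1-t)s\right)\right]ds .
\]
Theorem~\ref{A18}(3) already gives $G_{1}\leq P_{1}$, so it suffices to show $C(t)\geq G_{1}(t)$ in an averaged sense. The two arguments inside $C$ always sum to $x+x'$. I would apply the monotonicity fact pointwise where possible; where not, I would first apply Hermite--Hadamard in $s$ to replace the integrand by its value at the midpoint $s=\frac{x+y}{2}$, and then compare the resulting symmetric pair with the pair $\left(tx+(1-t)y,\ tx'+(1-t)y'\right)$ that defines $G_{1}$.

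For $H_{1}(1-t)\leq L_{1}(t)$, I would write both sides as averages over $s\in[x,y]$ of symmetric pairs with sum $x+x'$ and compare their spreads. The averaged version $\frac{H_{1}(t)+H_{1}(1-t)}{2}\leq L_{1}(t)$ needs the same comparison after also substituting $t\mapsto 1-t$.

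\emph{The $F_{1}$ estimate.} The left inequality is exactly Theorem~\ref{A15}(3). For the right inequality, I would imitate Dragomir's proof of Theorem~\ref{A11}. For fixed $u\in\Omega$, the function $s\mapsto f(ts+(1-t)u)$ is convex, so it lies below its chord and above its value at the symmetric point. Integrating over $u\in\Omega$ then yields $2F_{1}(t)\leq L_{1}(1-t)+\frac{H_{1}(t)+H_{2}(t)}{2}$.

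I expect the main obstacle to be the lower bound $G_{1}\leq L_{1}$ and the $H_{1}(1-t)\leq L_{1}(t)$ comparisons. The spreads of the cross pairs in $C(t)$ are not pointwise larger than the spread $t(x'-x)+(1-t)(y'-y)$ of the pair in $G_{1}$. So the argument must genuinely use the averaging over $s$, for instance through Hermite--Hadamard on $[x,y]$ first, or by combining $C$ with $P_{1}$ rather than bounding it separately.
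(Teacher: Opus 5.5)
Your plan targets a statement reconstructed by analogy with Theorems~\ref{A8} and \ref{A11}. The reconstruction is wrong exactly where $y<y'$ matters. Theorem~\ref{A19} actually asserts:
(2) $\frac{G_1(t)+G_2(t)}{2}\le L_1(t)\le P_1(t)$ and $\sup_{t\in[0,1]}L_1(t)=L_1(1)=\frac{f(x)+f(x')}{2}$;
(3) $\frac{H_1(1-t)+H_2(1-t)}{2}\le F_1(t)\le L_1(t)$, the same with $t$ in place of $1-t$, and the four-term average;
(4) the estimate (\ref{1.13}).

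The inequalities you set out to prove, $G_1(t)\le L_1(t)$, $H_1(1-t)\le L_1(t)$ and $\frac{H_1(t)+H_1(1-t)}{2}\le L_1(t)$, are false in general. Take $x=0$, $y=\frac{1}{10}$, $y'=\frac{9}{10}$, $x'=1$, $f(s)=\left|s-\frac12\right|$ and $t=\frac12$. Then $G_1(\frac12)=\frac{9}{20}$ and $H_1(\frac12)=\frac{17}{40}$, but $L_1(\frac12)=\frac14$. So the obstacle you flag at the end is an impossibility, not something Hermite--Hadamard averaging can overcome.

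Meanwhile the true bounds $L_1\le P_1$ and $F_1\le L_1$ are never proved. Your upper bound for $L_1$ is weaker than $L_1\le P_1$, by (\ref{1.8}). It does yield the $\sup$ statement, and your convexity argument is fine.

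The repair uses your four-term representation of $L_1$ over $s\in[x,y]$ together with Lemma~\ref{l1}.

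\emph{Upper bound in (2).} Keep your split $L_1=\frac12P_1+\frac12C$. The cross pair $\bigl(tx'+(1-t)s,\ tx+(1-t)(x+x'-s)\bigr)$ has the same sum $x+x'$ as the $P_1$ pair $\bigl(tx+(1-t)s,\ tx'+(1-t)(x+x'-s)\bigr)$ and lies between its entries. Hence $C\le P_1$ pointwise.

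\emph{Lower bound in (2).} Group the four terms by endpoint instead. Since $s\le y\le y'\le x+x'-s$, one has $f(tx+(1-t)s)+f(tx+(1-t)(x+x'-s))\ge f(tx+(1-t)y)+f(tx+(1-t)y')$, and likewise with $x'$ in place of $x$. Integrating gives exactly $\frac{G_1+G_2}{2}$, which is why $G_2$ must appear.

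\emph{Item (3).} In $F_1$, reflect $s\mapsto x+x'-s$ on $[y',x']$. For $s\in[x,y]$ and $u\in\Omega$, compare $f(ts+(1-t)u)+f(t(x+x'-s)+(1-t)u)$ with $f(tx+(1-t)u)+f(tx'+(1-t)u)$. This gives $F_1\le L_1$. Theorem~\ref{A15}(3) together with $F_1(t)=F_1(1-t)$ supplies the lower halves.

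\emph{Item (4).} Your Dragomir-type argument matches the paper's, provided the chord is taken on each component of $\Omega$ separately. The paper pairs $u$ with $x+y-u$, and $x'-y+u$ with $y+y'-u$, in the variable of weight $1-t$. Carried out in the variable of weight $t$ as you wrote it, the argument gives $2F_1(t)\le L_1(t)+\frac{H_1(1-t)+H_2(1-t)}{2}$. You then need the symmetry of $F_1$ to reach the stated form.
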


\begin{enumerate}
\item $L_{1}$\textit{\ is convex on }$\left[  0,1\right]  $\textit{.}

\item \textit{The following inequalities}%
\begin{equation}
\frac{G_{1}\left(  t\right)  +G_{2}\left(  t\right)  }{2}\leq L_{1}\left(
t\right)  \leq P_{1}\left(  t\right)  \text{ \qquad\ }\left(  t\in\left[
0,1\right]  \right)  \label{1.10}%
\end{equation}
\textit{and}%
\[
\sup\limits_{t\in\left[  0,1\right]  }L_{1}\left(  t\right)  =L_{1}\left(
1\right)  =\frac{f\left(  x\right)  +f\left(  x^{\prime}\right)  }{2}%
\]
\textit{hold.}

\item \textit{The following inequalities}%
\begin{equation}
\frac{H_{1}\left(  1-t\right)  +H_{2}\left(  1-t\right)  }{2}\leq F_{1}\left(
t\right)  \leq L_{1}\left(  t\right)  , \label{1.11}%
\end{equation}%
\[
\frac{H_{1}\left(  t\right)  +H_{2}\left(  t\right)  }{2}\leq F_{1}\left(
t\right)  \leq L_{1}\left(  t\right)
\]
\textit{and}%
\begin{equation}
\frac{H_{1}\left(  t\right)  +H_{2}\left(  t\right)  +H_{1}\left(  1-t\right)
+H_{2}\left(  1-t\right)  }{4}\leq F_{1}\left(  t\right)  \leq L_{1}\left(
t\right)  \label{1.12}%
\end{equation}
\textit{hold for all }$t\in\left[  0,1\right]  .$

\item \textit{The following inequality}%
\begin{equation}
0\leq F_{1}\left(  t\right)  -\frac{H_{1}\left(  t\right)  +H_{2}\left(
t\right)  }{2}\leq L_{1}\left(  1-t\right)  -F_{1}\left(  t\right)
\label{1.13}%
\end{equation}
\textit{holds for all }$t\in\left[  0,1\right]  .$
\end{enumerate}

\subsection{Fej\'{e}r-type Inequalities}

In this subsection, we present nineteen theorems related Fej\'{e}r-type
inequalities and which are weighted generalizations of Theorems \ref{A3}
--\ \ref{A9} and \ref{A11} (see \cite{14b} --\ \cite{18b}, \cite{21b}\ and
\cite{24b}).

Throughout this subsection, let $f,g,H,P,F,G,L,Q$ be defined as above and
define the following functions on $[0,1].$%
\[
Hg\left(  t\right)  =\int\nolimits_{a}^{b}f\left(  ts+\left(  1-t\right)
\frac{a+b}{2}\right)  g\left(  s\right)  ds,
\]%
\[
Fg\left(  t\right)  =\int_{a}^{b}\int_{a}^{b}f\left(  ts+\left(  1-t\right)
u\right)  g\left(  s\right)  g\left(  u\right)  dsdu,
\]%
\begin{align*}
Pg\left(  t\right)   &  =\int\nolimits_{a}^{b}\frac{1}{2}\left[  f\left(
\left(  \frac{1+t}{2}\right)  a+\left(  \frac{1-t}{2}\right)  s\right)
g\left(  \frac{s+a}{2}\right)  \right. \\
&  \text{ \ \ \ \ \ \ \ \ }\left.  +f\left(  \left(  \frac{1+t}{2}\right)
b+\left(  \frac{1-t}{2}\right)  s\right)  g\left(  \frac{s+b}{2}\right)
\right]  ds,
\end{align*}%
\[
Lg\left(  t\right)  =\frac{1}{2}\int\nolimits_{a}^{b}\left[  f\left(
ta+\left(  1-t\right)  s\right)  +f\left(  tb+\left(  1-t\right)  s\right)
\right]  g\left(  s\right)  ds,
\]%
\begin{align*}
Ig\left(  t\right)   &  =%
{\displaystyle\int\nolimits_{a}^{b}}
\frac{1}{2}\left[  f\left(  t\frac{s+a}{2}+\left(  1-t\right)  \frac{a+b}%
{2}\right)  \right. \\
&  \text{ \ \ \ \ \ \ \ \ }+\left.  f\left(  t\frac{s+b}{2}+\left(
1-t\right)  \frac{a+b}{2}\right)  \right]  g\left(  s\right)  ds,
\end{align*}%
\begin{align*}
Jg\left(  t\right)   &  =%
{\displaystyle\int\nolimits_{a}^{b}}
\frac{1}{2}\left[  f\left(  t\frac{s+a}{2}+\left(  1-t\right)  \frac{3a+b}%
{4}\right)  \right. \\
&  \text{ \ \ \ \ \ \ \ \ }+\left.  f\left(  t\frac{s+b}{2}+\left(
1-t\right)  \frac{a+3b}{4}\right)  \right]  g\left(  s\right)  ds,
\end{align*}%
\begin{align*}
Mg\left(  t\right)   &  =%
{\displaystyle\int\nolimits_{a}^{\frac{a+b}{2}}}
\frac{1}{2}\left[  f\left(  ta+\left(  1-t\right)  \frac{s+a}{2}\right)
+f\left(  t\frac{a+b}{2}+\left(  1-t\right)  \frac{s+b}{2}\right)  \right]
g\left(  s\right)  ds\\
&  +%
{\displaystyle\int\nolimits_{\frac{a+b}{2}}^{b}}
\frac{1}{2}\left[  f\left(  t\frac{a+b}{2}+\left(  1-t\right)  \frac{s+a}%
{2}\right)  +f\left(  tb+\left(  1-t\right)  \frac{s+b}{2}\right)  \right]
g\left(  s\right)  ds,
\end{align*}%
\[
Ng\left(  t\right)  =%
{\displaystyle\int\nolimits_{a}^{b}}
\frac{1}{2}\left[  f\left(  ta+\left(  1-t\right)  \frac{s+a}{2}\right)
+f\left(  tb+\left(  1-t\right)  \frac{s+b}{2}\right)  \right]  g\left(
s\right)  ds,
\]%
\begin{align*}
Sg\left(  t\right)   &  =\frac{1}{4}\int\nolimits_{a}^{b}\left[  f\left(
ta+\left(  1-t\right)  \frac{s+a}{2}\right)  +f\left(  ta+\left(  1-t\right)
\frac{s+b}{2}\right)  \right. \\
&  \text{ \ \ \ \ \ \ \ \ }\left.  +f\left(  tb+\left(  1-t\right)  \frac
{s+a}{2}\right)  +f\left(  tb+\left(  1-t\right)  \frac{s+b}{2}\right)
\right]  g\left(  s\right)  ds,
\end{align*}
and%
\begin{align*}
Kg\left(  t\right)   &  =\int_{a}^{b}\int_{a}^{b}\frac{1}{4}\left[  f\left(
t\frac{s+a}{2}+\left(  1-t\right)  \frac{u+a}{2}\right)  \right. \\
&  +f\left(  t\frac{s+a}{2}+\left(  1-t\right)  \frac{u+b}{2}\right)
+f\left(  t\frac{s+b}{2}+\left(  1-t\right)  \frac{u+a}{2}\right) \\
&  \text{ \ \ \ \ \ \ \ \ \ \ \ \ \ \ \ \ \ \ \ \ \ \ \ \ \ \ \ \ \ \ \ \ }%
+\left.  f\left(  t\frac{s+b}{2}+\left(  1-t\right)  \frac{u+b}{2}\right)
\right]  g\left(  s\right)  g\left(  u\right)  dsdu.
\end{align*}

\begin{remark}
\label{r2}\textit{We note that }$g\left(  s\right)  \equiv\frac{1}{b-a}%
$\ $\left(  s\in\left[  a,b\right]  \right)  $\textit{\ and }$Hg\left(
t\right)  =Ig\left(  t\right)  =H\left(  t\right)  ,$ $Fg\left(  t\right)
=Kg\left(  t\right)  =F\left(  t\right)  ,$ $Pg\left(  t\right)  =Ng\left(
t\right)  =P\left(  t\right)  ,$ $G\left(  t\right)  =G_{1}\left(  t\right)
=G_{2}\left(  t\right)  ,$ $Lg\left(  t\right)  =Sg\left(  t\right)  =L\left(
t\right)  $\textit{\ on }$\left[  0,1\right]  .$
\end{remark}

Yang and Tseng established the following three theorems related to $Hg,Pg,Fg$
and which are weighted generalizations of Theorems \ref{A3} --\ \ref{A5} (see
\cite{21b} and \cite{24b}).

\begin{theoremA}
\label{A20}\cite{21b} Let $f,g,Hg$\ be defined as above. Then $Hg$\ is convex,
increasing on $\left[  0,1\right]  ,$\ and for all $t\in\left[  0,1\right]  $,
we have%
\[
f\left(  \frac{a+b}{2}\right)  \int\nolimits_{a}^{b}g\left(  s\right)
ds=Hg\left(  0\right)  \leq Hg\left(  t\right)  \leq Hg\left(  1\right)
=\int\nolimits_{a}^{b}f\left(  s\right)  g\left(  s\right)  ds.
\]

\end{theoremA}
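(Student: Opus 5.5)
The plan is to reduce everything to the symmetry of $g$ about $\frac{a+b}{2}$ and to pair each point $s$ with its reflection $a+b-s$. Write $m=\frac{a+b}{2}$. Then
\[
Hg\left(t\right)=\frac{1}{2}\int_{a}^{b}\left[f\left(m+t\left(s-m\right)\right)+f\left(m-t\left(s-m\right)\right)\right]g\left(s\right)ds .
\]
This follows from the substitution $s\mapsto a+b-s$ together with $g\left(a+b-s\right)=g\left(s\right)$. For each fixed $s$, set
\[
\varphi_{s}\left(t\right)=\frac{1}{2}\left[f\left(m+t\left(s-m\right)\right)+f\left(m-t\left(s-m\right)\right)\right].
\]
The whole theorem then follows from properties of $\varphi_{s}$ and integration against the nonnegative weight $g$.

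\textbf{Convexity.} Each map $t\mapsto f\left(m\pm t\left(s-m\right)\right)$ is convex, being $f$ composed with an affine map. Multiplying by $g\left(s\right)\geq0$ and integrating preserves the convexity inequality pointwise in $s$, so $Hg$ is convex on $\left[0,1\right]$. This step requires no symmetry.

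\textbf{Monotonicity.} The function $\psi_{s}\left(\tau\right)=\frac{1}{2}\left[f\left(m+\tau\left(s-m\right)\right)+f\left(m-\tau\left(s-m\right)\right)\right]$ is convex and even on $\left[-1,1\right]$. Hence it is nondecreasing on $\left[0,1\right]$: for $0\leq\tau_{1}<\tau_{2}$, write $\tau_{1}$ as a convex combination of $-\tau_{2}$ and $\tau_{2}$ and use $\psi_{s}\left(-\tau_{2}\right)=\psi_{s}\left(\tau_{2}\right)$. So $\varphi_{s}$ is increasing in $t$ for every $s$, and integrating against $g\geq0$ shows that $Hg$ is increasing. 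Equivalently, $\varphi_{s}\left(t\right)\geq\varphi_{s}\left(0\right)=f\left(m\right)$ by the midpoint convexity inequality, which gives monotonicity from $0$ directly; the even-function argument gives it on all of $\left[0,1\right]$.

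\textbf{Endpoints.} Direct substitution gives $Hg\left(0\right)=f\left(m\right)\int_{a}^{b}g$ and $Hg\left(1\right)=\int_{a}^{b}fg$. The chain of inequalities then follows from monotonicity.

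The only step needing care is the symmetrization, which is where the hypothesis that $g$ is symmetric enters. Without it, $Hg$ need not be monotone, since a single $f\left(m+t\left(s-m\right)\right)$ can decrease in $t$. I therefore expect to state the reflection identity explicitly first and then run the even-convex-function argument.
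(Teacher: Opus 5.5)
Your proof is correct and follows essentially the paper's route. The paper obtains this result as the special case $x=a$, $y=y'=\frac{a+b}{2}$, $x'=b$, $p=g$ of Theorem~\ref{t1} (Remark~\ref{r4}), and that proof likewise uses the symmetry of the weight to pair $s$ with its reflection (folding the integral onto $[x,y]$ rather than averaging over the whole interval), then gets pointwise monotonicity of the paired sum from Lemma~\ref{l1}/Lemma~\ref{l2}, which is exactly what your even-convex-function argument establishes, with convexity and the endpoint values handled the same way.
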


\begin{theoremA}
\label{A21}\cite{21b} Let $f,g,Pg$\ be defined as above. Then $Pg$\ is convex,
increasing on $\left[  0,1\right]  ,$\ and for all $t\in\left[  0,1\right]  $,
we have%
\[
\int\nolimits_{a}^{b}f\left(  s\right)  g\left(  s\right)  ds=Pg\left(
0\right)  \leq Pg\left(  t\right)  \leq Pg\left(  1\right)  =\frac{f\left(
a\right)  +f\left(  b\right)  }{2}\int\nolimits_{a}^{b}g\left(  s\right)  ds.
\]

\end{theoremA}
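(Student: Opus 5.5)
The plan is to rewrite $Pg$ with a change of variables that makes its convex structure visible, and then use the symmetry of $g$ to compare $Pg(t)$ with $Pg(0)$. Put $m=\frac{a+b}{2}$. In the first term of $Pg$, substitute $u=\frac{s+a}{2}$, so $u$ runs over $[a,m]$ and $ds=2\,du$. A direct check gives $\left(\frac{1+t}{2}\right)a+\left(\frac{1-t}{2}\right)s=ta+(1-t)u$. Similarly, in the second term substitute $u=\frac{s+b}{2}\in[m,b]$. This yields
\[
Pg(t)=\int_{a}^{m}f\left(ta+(1-t)u\right)g(u)\,du+\int_{m}^{b}f\left(tb+(1-t)u\right)g(u)\,du .
\]

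\emph{Endpoint values and convexity.} From this form, $Pg(0)=\int_a^b f(u)g(u)\,du$. Also
\[
Pg(1)=f(a)\int_a^m g+f(b)\int_m^b g=\frac{f(a)+f(b)}{2}\int_a^b g,
\]
because symmetry of $g$ about $m$ gives $\int_a^m g=\int_m^b g=\frac12\int_a^b g$. For each fixed $u$, the argument of $f$ is affine in $t$, so $t\mapsto f(\cdot)$ is convex. Integrating against the nonnegative weight $g$ preserves convexity, so $Pg$ is convex on $[0,1]$.

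\emph{Monotonicity.} This is the only real step. In the second integral, substitute $u\mapsto a+b-u$ and use $g(a+b-u)=g(u)$ to get
\[
Pg(t)=\int_a^m\left[f(p_t(u))+f(q_t(u))\right]g(u)\,du,
\]
where $p_t(u)=ta+(1-t)u$ and $q_t(u)=tb+(1-t)(a+b-u)$. We have $p_t(u)+q_t(u)=a+b$ and $p_t(u)\le u\le a+b-u\le q_t(u)$ for $u\in[a,m]$. So $[u,a+b-u]\subset[p_t(u),q_t(u)]$, and the two intervals share the midpoint $m$.

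For a convex $f$, the function $r\mapsto f(m-r)+f(m+r)$ is nondecreasing in $r\ge0$. Hence
\[
f(p_t(u))+f(q_t(u))\ge f(u)+f(a+b-u).
\]
Integrating against $g$ gives $Pg(t)\ge Pg(0)$ for every $t\in[0,1]$.

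Now let $0\le s<t\le1$ and write $s=\lambda t$ with $\lambda=s/t\in[0,1)$. Convexity gives
\[
Pg(s)\le\lambda Pg(t)+(1-\lambda)Pg(0)\le Pg(t),
\]
so $Pg$ is increasing. The chain $Pg(0)\le Pg(t)\le Pg(1)$ then follows at once, with the endpoint values computed above.
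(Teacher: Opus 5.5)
Your proof is correct and is essentially the paper's route. The paper obtains this statement from Theorem~\ref{t3} with $x=a$, $y=y^{\prime}=\frac{a+b}{2}$, $x^{\prime}=b$, $p=g$. It uses exactly your rewriting of $Pg$ as $\int_a^{(a+b)/2}[f(p_t(u))+f(q_t(u))]g(u)\,du$, together with the symmetric-spreading inequality of Lemma~\ref{l1}. The only difference is that the paper applies Lemma~\ref{l1} directly to $t_1<t_2$, since $p_t(u)$ decreases and $q_t(u)$ increases in $t$ with constant sum $a+b$; your comparison with $t=0$ followed by the convexity argument is therefore valid but an unnecessary detour.
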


\begin{theoremA}
\label{A22}\cite{24b} Let $f,g,Hg,Fg$\ be defined as above. Then we have the
following results:
\end{theoremA}

\begin{enumerate}
\item $Fg$\textit{\ is convex on }$\left[  0,1\right]  $\textit{\ and
symmetric about }$\frac{1}{2}.$

\item $Fg$\textit{\ is decreasing on }$\left[  0,\frac{1}{2}\right]
$\textit{\ and increasing on }$\left[  \frac{1}{2},1\right]  ,$%
\[
\sup\limits_{t\in\left[  0,1\right]  }Fg\left(  t\right)  =Fg\left(  0\right)
=Fg\left(  1\right)  =\int_{a}^{b}f\left(  s\right)  g\left(  s\right)  ds\
\]
\textit{and }%
\[
\inf\limits_{t\in\left[  0,1\right]  }Fg\left(  t\right)  =Fg\left(  \frac
{1}{2}\right)  =\int_{a}^{b}\int_{a}^{b}f\left(  \frac{s+u}{2}\right)
g\left(  s\right)  g\left(  u\right)  dsdu.
\]

\item \textit{We have:}%
\[
f\left(  \frac{a+b}{2}\right)  \left(  \int_{a}^{b}g\left(  s\right)
ds\right)  ^{2}\leq F_{1}\left(  \frac{1}{2}\right)
\]
\textit{and}%
\[
Hg\left(  t\right)  \int_{a}^{b}g\left(  s\right)  ds\leq F_{1}\left(
t\right)
\]
\textit{for all }$t\in\left[  0,1\right]  .$
\end{enumerate}

Tseng \textit{et al.} established the following three theorems related to
$G,Hg,Fg$ and which are weighted generalizations of Theorems \ref{A6}
--\ \ref{A8} (see \cite{15b}).

\begin{theoremA}
\label{A23}\cite{15b} \textit{Let }$f,g,Hg$\textit{\ be defined as above.
Then} \textit{we have the following Fej\'{e}r-type inequalities:}
\end{theoremA}

\begin{enumerate}
\item \textit{The following inequality holds}:%
\begin{align*}
&  f\left(  \frac{a+b}{2}\right)  \int\nolimits_{a}^{b}g\left(  s\right)  ds\\
&  \leq2\int_{\frac{3a+b}{4}}^{\frac{a+3b}{4}}f\left(  s\right)  g\left(
2s-\frac{a+b}{2}\right)  ds\text{ }\\
&  \leq\int\nolimits_{0}^{1}Hg\left(  t\right)  dt\\
&  \leq\frac{1}{2}\left[  f\left(  \frac{a+b}{2}\right)  \int\nolimits_{a}%
^{b}g\left(  s\right)  ds+\int\nolimits_{a}^{b}f\left(  s\right)  g\left(
s\right)  ds\right]  .\text{\ \ \ \ \ \ \ \ \ \ \ \ }%
\end{align*}
\ \ 

\item \textit{If }$f$ \textit{is differentiable on }$\left[  a,b\right]  $
\textit{and }$g$\textit{\ is bounded on }$\left[  a,b\right]  $\textit{, then,
for all }$t\in\left[  0,1\right]  ,$\textit{\ we have the inequality}%
\begin{align*}
0  &  \leq\int\nolimits_{a}^{b}f\left(  s\right)  g\left(  s\right)
ds-Hg\left(  t\right) \\
&  \leq\left(  1-t\right)  \left[  \frac{f\left(  a\right)  +f\left(
b\right)  }{2}\left(  b-a\right)  -\int\nolimits_{a}^{b}f\left(  s\right)
ds\right]  \left\Vert g\right\Vert _{\infty}%
\end{align*}
\textit{where }$\left\Vert g\right\Vert _{\infty}=\sup\limits_{s\in\left[
a,b\right]  }\left\vert g\left(  s\right)  \right\vert .$

\item \textit{If }$f$ \textit{is differentiable on }$\left[  a,b\right]
$\textit{, then, for all }$t\in\left[  0,1\right]  ,$\textit{\ we have the
inequality}
\begin{align*}
0  &  \leq\frac{f\left(  a\right)  +f\left(  b\right)  }{2}\int\nolimits_{a}%
^{b}g\left(  s\right)  ds-H_{g}\left(  t\right) \\
&  \leq\frac{\left(  f^{\prime}\left(  b\right)  -f^{\prime}\left(  a\right)
\right)  \left(  b-a\right)  }{4}\int\nolimits_{a}^{b}g\left(  s\right)  ds.
\end{align*}

\end{enumerate}

\begin{theoremA}
\label{A24}\cite{15b} \textit{Let }$f,g,G,H,Hg$\textit{\ be defined as above.
Then} \textit{we have the following Fej\'{e}r-type inequalities:}
\end{theoremA}

\begin{enumerate}
\item \textit{The following inequality holds for all }$t\in\left[  0,1\right]
$:%
\begin{equation}
Hg\left(  t\right)  \leq G\left(  t\right)  \int\nolimits_{a}^{b}g\left(
s\right)  ds. \label{1.14}%
\end{equation}

\item \textit{The following inequality holds}:%
\begin{align}
&  2\int_{\frac{3a+b}{4}}^{\frac{a+3b}{4}}f\left(  s\right)  g\left(
2s-\frac{a+b}{2}\right)  ds\label{1.15}\\
&  \leq\frac{1}{2}\left[  f\left(  \frac{3a+b}{4}\right)  +f\left(
\frac{a+3b}{4}\right)  \right]  \int\nolimits_{a}^{b}g\left(  s\right)
ds\nonumber\\
&  \leq\left(  b-a\right)  \int_{0}^{1}G\left(  t\right)  g\left(  \left(
1-t\right)  a+tb\right)  dt\nonumber\\
&  \leq\frac{1}{2}\left[  f\left(  \frac{a+b}{2}\right)  +\frac{f\left(
a\right)  +f\left(  b\right)  }{2}\right]  \int\nolimits_{a}^{b}g\left(
s\right)  ds.\nonumber
\end{align}

\item \textit{If }$f$ \textit{is differentiable on }$\left[  a,b\right]  $
\textit{and }$g$\textit{\ is bounded on }$\left[  a,b\right]  ,$%
\textit{\ then, for all }$t\in\left[  0,1\right]  ,$ \textit{we have the
inequality}
\begin{equation}
0\leq Hg\left(  t\right)  -f\left(  \frac{a+b}{2}\right)  \int\nolimits_{a}%
^{b}g\left(  s\right)  ds\leq\left(  b-a\right)  \left[  G\left(  t\right)
-H\left(  t\right)  \right]  \left\Vert g\right\Vert _{\infty} \label{1.16}%
\end{equation}
\textit{where }$\left\Vert g\right\Vert _{\infty}=\sup\limits_{s\in\left[
a,b\right]  }\left\vert g\left(  s\right)  \right\vert .$
\end{enumerate}

\begin{theoremA}
\label{A25}\cite{15b} \textit{Let }$f,g,G,Hg,Lg$\textit{\ be defined as above.
Then} \textit{we have the following results:}
\end{theoremA}

\begin{enumerate}
\item $Lg$\textit{\ is convex on }$\left[  0,1\right]  .$

\item \textit{The following inequalities hold for all }$t\in\left[
0,1\right]  $:%
\begin{align*}
&  G\left(  t\right)  \int\nolimits_{a}^{b}g\left(  s\right)  ds\\
&  \leq L_{g}\left(  t\right) \\
&  \leq\left(  1-t\right)  \int\nolimits_{a}^{b}f\left(  s\right)  g\left(
s\right)  ds+t\cdot\frac{f\left(  a\right)  +f\left(  b\right)  }{2}%
\int\nolimits_{a}^{b}g\left(  s\right)  ds\\
&  \leq\frac{f\left(  a\right)  +f\left(  b\right)  }{2}\int\nolimits_{a}%
^{b}g\left(  s\right)  ds;
\end{align*}%
\[
Hg\left(  1-t\right)  \leq Lg\left(  t\right)  ;
\]%
\[
\frac{Hg\left(  t\right)  +Hg\left(  1-t\right)  }{2}\leq Lg\left(  t\right)
.
\]

\item \textit{The following inequality holds:}%
\[
\sup\limits_{t\in\left[  0,1\right]  }Lg\left(  t\right)  =\frac{f\left(
a\right)  +f\left(  b\right)  }{2}\int\nolimits_{a}^{b}g\left(  s\right)  ds.
\]

\end{enumerate}

Tseng \textit{et al.} established the following six theorems related to
$Q,G,Hg,Pg,Lg,Ig,Ng,Sg$ and which are weighted generalizations of Theorems
\ref{A3} and \ref{A5} (see \cite{14b}).

\begin{theoremA}
\label{A26}\cite{14b} \textit{Let }$f,g,Ig$\textit{\ be defined as above. Then
}$Ig$\textit{\ is convex, increasing on }$\left[  0,1\right]  ,$\textit{\ and
for all }$t\in\left[  0,1\right]  $\textit{, we have the following
Fej\'{e}r-type inequality}%
\begin{align*}
f\left(  \frac{a+b}{2}\right)  \int\nolimits_{a}^{b}g\left(  s\right)  ds  &
=Ig\left(  0\right)  \leq Ig\left(  t\right)  \leq Ig\left(  1\right) \\
&  =%
{\displaystyle\int\nolimits_{a}^{b}}
\frac{1}{2}\left[  f\left(  \frac{s+a}{2}\right)  +f\left(  \frac{s+b}%
{2}\right)  \right]  g\left(  s\right)  ds.
\end{align*}

\end{theoremA}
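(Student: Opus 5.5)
The plan is to reduce the statement to facts about the two-point weighted averages that make up $Ig$. For $s\in[a,b]$ and $t\in[0,1]$ put
\[
\phi_s(t)=\frac{1}{2}\left[ f\left( t\frac{s+a}{2}+(1-t)\frac{a+b}{2}\right) +f\left( t\frac{s+b}{2}+(1-t)\frac{a+b}{2}\right) \right],
\]
so that $Ig(t)=\int_a^b\phi_s(t)g(s)\,ds$. Since $g\geq 0$, any convexity or monotonicity statement that holds for $\phi_s$ uniformly in $s$ passes to $Ig$. The endpoint values follow by direct substitution: at $t=0$ both arguments equal $\frac{a+b}{2}$, which gives $Ig(0)=f\left(\frac{a+b}{2}\right)\int_a^bg$, and at $t=1$ we obtain exactly the stated integral $Ig(1)$.

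Convexity is the first step. For fixed $s$, each argument of $f$ in $\phi_s$ is an affine function of $t$. A convex function composed with an affine map is convex, so $\phi_s$ is convex on $[0,1]$. Integrating against $g\ge0$ preserves convexity, hence $Ig$ is convex.

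Monotonicity is the second step, and it is the only point that needs care. Write $c=\frac{a+b}{2}$, $p=\frac{s+a}{2}-c=\frac{s-b}{2}$ and $q=\frac{s+b}{2}-c=\frac{s-a}{2}$. Then $\phi_s(t)=\frac12[f(c+tp)+f(c+tq)]$, with $p\le 0\le q$, but in general $|p|\neq|q|$. Because of this asymmetry, $\phi_s$ alone need not be increasing. The remedy is the symmetry of $g$. Under the substitution $s\mapsto a+b-s$ we have $g(a+b-s)=g(s)$, and the pair $(p,q)$ becomes $(-q,-p)$. Averaging the integrand of $Ig$ with its reflection therefore gives
\[
Ig(t)=\int_a^b\frac14\left[ f(c+tp)+f(c-tp)+f(c+tq)+f(c-tq)\right] g(s)\,ds .
\]
For fixed $r$, the function $t\mapsto \frac12[f(c+tr)+f(c-tr)]$ is convex in $t$ and even, since its value is unchanged when $t$ is replaced by $-t$. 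A convex even function of $t$ on $[-1,1]$ has its minimum at $t=0$, so it is nondecreasing on $[0,1]$. Applying this with $r=p$ and with $r=q$ shows that the symmetrized integrand is increasing in $t$. Integrating against $g\ge0$, $Ig$ is increasing on $[0,1]$.

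Finally, because $Ig$ is increasing on $[0,1]$, we get $Ig(0)\le Ig(t)\le Ig(1)$ for every $t\in[0,1]$, which is the chain of inequalities in the statement.
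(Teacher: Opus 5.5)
Your proof is correct and follows essentially the paper's route, since the paper obtains this statement as the case $x=a$, $y=y'=\frac{a+b}{2}$, $x'=b$, $\alpha=\frac12$, $p=g/2$ of Theorem~\ref{t8}: there too the symmetry of $g$ is used to reflect one of the two terms, so that the arguments of $f$ form pairs $c\pm tr$ symmetric about $c=\frac{a+b}{2}$, and monotonicity then comes from Lemma~\ref{l2} (i.e.\ Lemma~\ref{l1}), which is exactly your observation that $t\mapsto\frac12[f(c+tr)+f(c-tr)]$ is nondecreasing on $[0,1]$. Averaging both reflections, as you do, instead of reflecting a single term and rescaling to the half-interval, is only a cosmetic difference.
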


\begin{theoremA}
\label{A27}\cite{14b} \textit{Let }$f,g,Jg$\textit{\ be defined as above. Then
}$Jg$\textit{\ is convex, increasing on }$\left[  0,1\right]  ,$\textit{\ and
for all }$t\in\left[  0,1\right]  $\textit{, we have the following
Fej\'{e}r-type inequality}%
\begin{multline*}
\frac{f\left(  \frac{3a+b}{4}\right)  +f\left(  \frac{a+3b}{4}\right)  }%
{2}\int\nolimits_{a}^{b}g\left(  s\right)  ds=Jg\left(  0\right)  \leq
Jg\left(  t\right)  \leq Jg\left(  1\right) \\
=\frac{1}{2}%
{\displaystyle\int\nolimits_{a}^{b}}
\left[  f\left(  \frac{s+a}{2}\right)  +f\left(  \frac{s+b}{2}\right)
\right]  g\left(  s\right)  ds.
\end{multline*}

\end{theoremA}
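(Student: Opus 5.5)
The statement to prove is Theorem \ref{A27}: $Jg$ is convex and increasing on $[0,1]$, with the stated values at the endpoints. I would prove it directly, exploiting the symmetry of $g$ about $\frac{a+b}{2}$.

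\textbf{Endpoint values.} At $t=0$ the integrand of $Jg$ does not depend on $s$ and equals $\frac12\left[f\left(\frac{3a+b}{4}\right)+f\left(\frac{a+3b}{4}\right)\right]$. Hence $Jg(0)$ is that constant times $\int_a^b g(s)\,ds$. At $t=1$ the integrand is $\frac12\left[f\left(\frac{s+a}{2}\right)+f\left(\frac{s+b}{2}\right)\right]g(s)$, which is exactly the stated value $Jg(1)$.

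\textbf{Convexity.} For each fixed $s$, the maps $t\mapsto t\frac{s+a}{2}+(1-t)\frac{3a+b}{4}$ and $t\mapsto t\frac{s+b}{2}+(1-t)\frac{a+3b}{4}$ are affine in $t$. Composing the convex $f$ with an affine map gives a convex function of $t$. Since $g\ge 0$, integrating against $g(s)\,ds$ preserves convexity, so $Jg$ is convex.

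\textbf{Monotonicity.} By convexity of $Jg$, it suffices to show $Jg(t)\ge Jg(0)$ for all $t\in[0,1]$. Indeed, for a convex function on $[0,1]$ with minimum at $0$, any $0\le t_1<t_2$ satisfies
\[
Jg(t_1)\le \tfrac{t_2-t_1}{t_2}Jg(0)+\tfrac{t_1}{t_2}Jg(t_2)\le Jg(t_2),
\]
so $Jg$ is increasing.

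To get $Jg(t)\ge Jg(0)$, I would pair $s$ with $a+b-s$, using $g(s)=g(a+b-s)$. Set $c=\frac{3a+b}{4}$. Then
\[
t\frac{s+a}{2}+(1-t)c=c+\frac{t}{2}\left(s-\frac{a+b}{2}\right),
\]
and the same term evaluated at $a+b-s$ is $c-\frac{t}{2}\left(s-\frac{a+b}{2}\right)$. Averaging these two values of $f$ and applying Jensen (midpoint convexity) gives at least $f(c)$. The analogous pairing for the second term, with $c'=\frac{a+3b}{4}$, gives at least $f(c')$.

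Concretely, write $Jg(t)$ as half the sum of the integral and the same integral after the substitution $s\mapsto a+b-s$. Symmetry of $g$ makes these equal, so this yields $Jg(t)\ge\frac12\left[f(c)+f(c')\right]\int_a^b g=Jg(0)$.

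\textbf{Main obstacle.} The only real step is setting up the symmetrization correctly: checking that after the substitution each pair of arguments is symmetric about $\frac{3a+b}{4}$ (respectively $\frac{a+3b}{4}$), which is what makes midpoint convexity apply. The remaining steps are routine.
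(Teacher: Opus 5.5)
Your proof is correct. Its core step is the same symmetrization the paper performs in identity (\ref{2.48}) in the proof of Theorem \ref{t10}, which reduces to this statement by Remark \ref{r14}: you pair $s$ with $a+b-s$ using the symmetry of $g$, so that the two arguments of each $f$-term become symmetric about $\frac{3a+b}{4}$ (resp.\ $\frac{a+3b}{4}$).

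Where you differ is in how monotonicity is obtained. The paper compares two arbitrary parameters $t_1<t_2$ directly. At $t_1$ the symmetric pair of arguments is nested inside the pair at $t_2$, with the same midpoint. Lemma \ref{l1} (in the form of Lemma \ref{l2}) then gives a pointwise inequality between the integrands, and integrating yields $Jg(t_1)\le Jg(t_2)$ without using the convexity of $Jg$ at all. You only compare with $t_1=0$, which is the degenerate case $C=D$ of Lemma \ref{l1}, i.e.\ midpoint convexity. You then upgrade $Jg(t)\ge Jg(0)$ to monotonicity through the convexity of $Jg$.

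Since convexity of $Jg$ is part of the claim anyway, your route is marginally more economical here. The paper's pointwise nested-pair comparison is more flexible, which is why it is used throughout Section 2. For instance, $Ip_{2}$ in Theorem \ref{t9} decreases on $[0,m]$ and increases on $[m',1]$, with its minimum somewhere in between. There your bootstrap has no known minimum to start from, whereas comparing nested pairs on each subinterval still works.
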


\begin{theoremA}
\label{A28}\cite{14b} \textit{Let }$f,g,Ig,Jg$\textit{\ be defined as above.
Then }$Ig\left(  t\right)  \leq Jg\left(  t\right)  $\textit{\ on }$\left[
0,1\right]  .$
\end{theoremA}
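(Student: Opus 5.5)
The plan is to prove $Ig(t)\leq Jg(t)$ pointwise in $s$, before integrating against $g$. Fix $t\in\left[0,1\right]$ and $s\in\left[a,b\right]$, and set
\[
A=t\frac{s+a}{2}+\left(1-t\right)\frac{a+b}{2},\qquad B=t\frac{s+b}{2}+\left(1-t\right)\frac{a+b}{2},
\]
\[
C=t\frac{s+a}{2}+\left(1-t\right)\frac{3a+b}{4},\qquad D=t\frac{s+b}{2}+\left(1-t\right)\frac{a+3b}{4}.
\]
Then $Ig(t)=\int_a^b\frac12\left[f(A)+f(B)\right]g(s)\,ds$ and $Jg(t)=\int_a^b\frac12\left[f(C)+f(D)\right]g(s)\,ds$. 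Since $g\geq0$, it suffices to show
\[
f(A)+f(B)\leq f(C)+f(D)\quad\text{for every } s.
\]
The symmetry of $g$ is not needed for this step.

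First I check that the two pairs have a common midpoint and are nested. Since $\frac{3a+b}{4}+\frac{a+3b}{4}=a+b$, we get $A+B=C+D$. Next, $C=A-\left(1-t\right)\frac{b-a}{4}$ and $D=B+\left(1-t\right)\frac{b-a}{4}$, and $B-A=t\frac{b-a}{2}\geq0$. Hence
\[
C\leq A\leq B\leq D .
\]

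The convexity step is the standard majorization fact for two points. If $C<D$, write $A=\lambda C+\left(1-\lambda\right)D$ with $\lambda\in\left[0,1\right]$. The relation $A+B=C+D$ then forces $B=\left(1-\lambda\right)C+\lambda D$. Applying convexity to each and adding gives
\[
f(A)+f(B)\leq\left[\lambda+\left(1-\lambda\right)\right]\left(f(C)+f(D)\right)=f(C)+f(D).
\]
If $C=D$, then all four points coincide and equality holds. One should also check that all points lie in $\left[a,b\right]$: each is a convex combination of points of $\left[a,b\right]$, so this holds.

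Finally, multiply the pointwise inequality by $\frac12 g(s)\geq0$ and integrate over $\left[a,b\right]$ to obtain $Ig(t)\leq Jg(t)$. I see no real obstacle. The only care needed is verifying the nesting $C\leq A\leq B\leq D$ and the equal-sum identity, which are the computations above.
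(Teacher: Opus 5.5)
Your proof is correct, and its engine is the same as the paper's: a pointwise two-point majorization (Lemmas \ref{l1} and \ref{l2}) integrated against the nonnegative weight. The difference is in how the points are paired.

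The paper obtains this statement as the case $x=a$, $y=y'=\frac{a+b}{2}$, $x'=b$, $p=g/2$ of Theorem \ref{t11} (Remark \ref{r15}). That proof first uses the symmetry of $p$ and the substitution $s\mapsto(1-\alpha)x+\alpha s$ (identities \eqref{2.43}, \eqref{2.48}) to pair the first term at $s$ with the second term at $x+x'-s$. In the present setting this pairs the $a$-term at $s$ with the $b$-term at $a+b-s$, so every pair sums to $a+b$. Equating the weights $g(s)$ and $g(a+b-s)$ is exactly where the symmetry of $g$ is used.

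You instead pair the two terms at the same $s$, where the weights agree trivially. Your pairs do not sum to $a+b$, but $A+B=C+D$ together with $C\le A\le B\le D$ is all the majorization step needs. Your route is shorter and shows that the symmetry of $g$ is superfluous for this inequality. The same computation, using $y+y'=x+x'$, also proves Theorem \ref{t11} for nonsymmetric $p$.

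The paper's rewriting buys pairs whose sum does not depend on $t$. That is what drives its monotonicity-in-$t$ results (Theorems \ref{t8} and \ref{t10}). Your same-$s$ pairs have sum $a+b+t\bigl(s-\frac{a+b}{2}\bigr)$, which varies with $t$, so they would not directly serve there. The paper simply reuses those identities for this comparison.
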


\begin{theoremA}
\label{A29}\cite{14b} \textit{Let }$f,g,Mg$\textit{\ be defined as above. Then
}$Mg$\textit{\ is convex, increasing on }$\left[  0,1\right]  ,$\textit{\ and
for all }$t\in\left[  0,1\right]  $\textit{, we have the following
Fej\'{e}r-type inequality}%
\begin{multline*}%
{\displaystyle\int\nolimits_{a}^{b}}
\frac{1}{2}\left[  f\left(  \frac{s+a}{2}\right)  +f\left(  \frac{s+b}%
{2}\right)  \right]  g\left(  s\right)  ds\\
=Mg\left(  0\right)  \leq Mg\left(  t\right)  \leq Mg\left(  1\right)
=\frac{1}{2}\left[  f\left(  \frac{a+b}{2}\right)  +\frac{f\left(  a\right)
+f\left(  b\right)  }{2}\right]  \int\nolimits_{a}^{b}g\left(  s\right)  ds.
\end{multline*}

\end{theoremA}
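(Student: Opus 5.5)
The plan is to reduce everything to a two-point convexity fact by folding the second integral in $Mg$ onto $[a,\frac{a+b}{2}]$ using the symmetry of $g$. Throughout write $m=\frac{a+b}{2}$.

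\emph{Convexity and the endpoint values.} For each fixed $s$, every argument of $f$ in the definition of $Mg$ is an affine function of $t$. So each integrand is convex in $t$, and since $g\ge 0$, integrating preserves convexity; hence $Mg$ is convex on $[0,1]$.

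At $t=0$ the two integrands in $Mg$ coincide with $\frac12\bigl[f(\frac{s+a}{2})+f(\frac{s+b}{2})\bigr]g(s)$. The two integrals therefore combine into the single integral over $[a,b]$, which is the claimed value of $Mg(0)$.

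At $t=1$ the integrands are constant in $s$. This gives
\[
Mg(1)=\frac{f(a)+f(m)}{2}\int_a^m g+\frac{f(m)+f(b)}{2}\int_m^b g .
\]
Since $g$ is symmetric about $m$, we have $\int_a^m g=\int_m^b g=\frac12\int_a^b g$, and this yields the stated value of $Mg(1)$.

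\emph{Monotonicity.} I would substitute $s\mapsto a+b-s$ in the second integral, using $g(a+b-s)=g(s)$. This rewrites $Mg(t)$ as
\[
\int_a^m \tfrac12\Bigl[f\bigl(ta+(1-t)A\bigr)+f\bigl(tm+(1-t)A'\bigr)+f\bigl(tm+(1-t)B\bigr)+f\bigl(tb+(1-t)B'\bigr)\Bigr]g(s)\,ds,
\]
where $A=\frac{s+a}{2}$, $A'=\frac{2a+b-s}{2}$, $B=\frac{s+b}{2}$ and $B'=\frac{a+2b-s}{2}$.

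Pair the first two terms. Their arguments $u=ta+(1-t)A$ and $v=tm+(1-t)A'$ satisfy $u+v=t(a+m)+(1-t)(A+A')$. Since $A+A'=a+m$, this sum is independent of $t$. Their difference has absolute value
\[
|v-u|=t(m-a)+(1-t)(m-s),
\]
where both coefficients are nonnegative because $a\le s\le m$. So $|v-u|$ is a nondecreasing (affine) function of $t$.

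The second pair behaves the same way. Its arguments sum to $m+b$ for all $t$, because $B+B'=m+b$. Its spread is $t(b-m)+(1-t)(m-s)$, again nondecreasing in $t$.

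Now use the elementary fact that for convex $f$ and fixed $c$, the map $d\mapsto f(c-d)+f(c+d)$ is nondecreasing for $d\ge 0$. Applying it to each pair shows that, for every fixed $s\in[a,m]$, the bracket is nondecreasing in $t$. Integrating against $g\ge 0$ shows $Mg$ is increasing, and the chain $Mg(0)\le Mg(t)\le Mg(1)$ follows.

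An alternative route to monotonicity is to show $Mg(t)\ge Mg(0)$ and then use convexity. I would avoid it, since the folding argument already gives monotonicity directly.

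The main obstacle is the folding step itself: choosing the substitution and the pairing so that the sums $u+v$ are independent of $t$. Once that is arranged, the remaining argument is routine.
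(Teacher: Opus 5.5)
Your proof is correct and is essentially the paper's argument. The paper obtains this statement as the special case $x=a$, $y=y'=\frac{a+b}{2}$, $x'=b$, $p=\frac{g}{2}$ of Theorem~\ref{t12}, whose proof likewise uses the symmetry of the weight to fold the integral onto half the interval, groups the four $f$-terms into the same two pairs (each with a $t$-independent sum and a spread that grows with $t$), and applies Lemma~\ref{l2}, which is your convexity fact; convexity and the endpoint values are read off directly there too, and the extra rescaling $s\mapsto(1-\alpha)x+\alpha s$ used there is only cosmetic.
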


\begin{theoremA}
\label{A30}\cite{14b} \textit{Let }$f,g,Ng$\textit{\ be defined as above. Then
}$Ng$\textit{\ is convex, increasing on }$\left[  0,1\right]  ,$\textit{\ and
for all }$t\in\left[  0,1\right]  $\textit{, we have the following
Fej\'{e}r-type inequality}%
\begin{multline*}%
{\displaystyle\int\nolimits_{a}^{b}}
\frac{1}{2}\left[  f\left(  \frac{s+a}{2}\right)  +f\left(  \frac{s+b}%
{2}\right)  \right]  g\left(  s\right)  ds\\
=Ng\left(  0\right)  \leq Ng\left(  t\right)  \leq Ng\left(  1\right)
=\frac{f\left(  a\right)  +f\left(  b\right)  }{2}\int\nolimits_{a}%
^{b}g\left(  s\right)  ds.
\end{multline*}

\end{theoremA}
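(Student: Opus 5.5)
The plan is to work pointwise in $s$ and then integrate against $g\geq0$, exploiting the symmetry of $g$ about $\frac{a+b}{2}$ to pair the two terms of the integrand.

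\emph{Convexity and endpoint values.} For each fixed $s\in[a,b]$, the maps
\[
t\mapsto ta+(1-t)\tfrac{s+a}{2},\qquad t\mapsto tb+(1-t)\tfrac{s+b}{2}
\]
are affine in $t$. Composing the convex $f$ with an affine map gives a convex function of $t$. A nonnegative combination of convex functions, here integration against $g\geq0$, stays convex, so $Ng$ is convex on $[0,1]$. Setting $t=0$ gives $Ng(0)=\int_a^b\frac12\left[f\left(\frac{s+a}{2}\right)+f\left(\frac{s+b}{2}\right)\right]g(s)\,ds$. Setting $t=1$ gives $Ng(1)=\frac{f(a)+f(b)}{2}\int_a^b g(s)\,ds$. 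Both are immediate substitutions.

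\emph{Monotonicity: rewriting the integral.} In the second term substitute $s\mapsto a+b-s$ and use $g(a+b-s)=g(s)$. This gives
\[
Ng(t)=\int_a^b\tfrac12\left[f(u_t(s))+f(a+b-u_t(s))\right]g(s)\,ds,\qquad u_t(s)=ta+(1-t)\tfrac{s+a}{2}.
\]
The reason this works is that $tb+(1-t)\frac{(a+b-s)+b}{2}=a+b-u_t(s)$, which is a direct check.

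\emph{Monotonicity: the pointwise argument.} Let $\varphi(u)=f(u)+f(a+b-u)$.
\begin{itemize}
\item $\varphi$ is convex and symmetric about $\frac{a+b}{2}$, so it attains its minimum at $\frac{a+b}{2}$ and is nonincreasing on $\left[a,\frac{a+b}{2}\right]$.
\item Since $\frac{s+a}{2}\le\frac{a+b}{2}$, we have $u_t(s)\in\left[a,\frac{a+b}{2}\right]$.
\item $u_t(s)=u_0(s)+t\,(a-u_0(s))$ is nonincreasing in $t$.
\end{itemize}
Hence $t\mapsto\varphi(u_t(s))$ is nondecreasing for each fixed $s$. Integrating against $g\geq0$ shows $Ng$ is increasing on $[0,1]$, and this yields $Ng(0)\le Ng(t)\le Ng(1)$.

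The only point requiring care is the symmetrizing substitution, i.e., pairing $f$ at a point with $f$ at its reflection about $\frac{a+b}{2}$. After that the argument is elementary convexity.
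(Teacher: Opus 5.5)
Your proof is correct and is essentially the paper's argument. The paper obtains this statement as the special case $x=a$, $y=y^{\prime}=\frac{a+b}{2}$, $x^{\prime}=b$, $p=\frac{g}{2}$ of Theorem~\ref{t13} (see Remark~\ref{r17}). That proof uses the same symmetrizing change of variables, identity $\left(\ref{2.54}\right)$, and then applies Lemma~\ref{l2} pointwise, which is exactly your observation that $u\mapsto f(u)+f(a+b-u)$ is nonincreasing on $\left[a,\frac{a+b}{2}\right]$. The only cosmetic difference is that the paper also rescales the variable so that the integral runs over $[x,y]$, a step your version does not need.
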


\begin{theoremA}
\label{A31}\cite{14b} \textit{Let }$f,g,Mg,Ng$\textit{\ be defined as above.
Then }$Mg\left(  t\right)  \leq Ng\left(  t\right)  $\textit{\ on }$\left[
0,1\right]  .$
\end{theoremA}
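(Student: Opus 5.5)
The plan is to compare $Mg$ and $Ng$ directly, term by term, without any of the monotonicity results. Write $m=\frac{a+b}{2}$. Note that $Mg(t)$ and $Ng(t)$ share two of their pieces:
\[
\frac12\int_a^m f\Big(ta+(1-t)\tfrac{s+a}{2}\Big)g(s)\,ds
\quad\text{and}\quad
\frac12\int_m^b f\Big(tb+(1-t)\tfrac{s+b}{2}\Big)g(s)\,ds .
\]
Splitting the integral defining $Ng$ at $m$ and cancelling these common pieces gives
\begin{align*}
2\big[Ng(t)-Mg(t)\big]&=\int_a^m\Big[f\Big(tb+(1-t)\tfrac{s+b}{2}\Big)-f\Big(tm+(1-t)\tfrac{s+b}{2}\Big)\Big]g(s)\,ds\\
&\quad+\int_m^b\Big[f\Big(ta+(1-t)\tfrac{s+a}{2}\Big)-f\Big(tm+(1-t)\tfrac{s+a}{2}\Big)\Big]g(s)\,ds .
\end{align*}

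Next, substitute $s\mapsto a+b-s$ in the second integral. Since $g$ is symmetric to $m$, the weight is unchanged and the range becomes $[a,m]$. For $s\in[a,m]$ put $u=\frac{s+b}{2}$, so that $u\in\big[m,\frac{a+3b}{4}\big]$ and $\frac{(a+b-s)+a}{2}=a+b-u$. Introduce
\[
A=tb+(1-t)u,\qquad B=tm+(1-t)u ,
\]
together with their reflections $A'=a+b-A=ta+(1-t)(a+b-u)$ and $B'=a+b-B=tm+(1-t)(a+b-u)$. With this notation the difference becomes
\[
2\big[Ng(t)-Mg(t)\big]=\int_a^m\big[f(A)+f(A')-f(B)-f(B')\big]g(s)\,ds .
\]

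It remains to show that the bracket is nonnegative for every $s\in[a,m]$ and $t\in[0,1]$. Since $u\ge m$, we have $B\ge m$. Since $b\ge m$, we have $A=B+t(b-m)\ge B$. Hence
\[
A'\le B'\le m\le B\le A,\qquad A+A'=B+B'=a+b .
\]
This is exactly the configuration $x\le y\le y'\le x'$ with $x+x'=y+y'$ used throughout the paper.

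The key step is the standard convexity lemma: in that configuration $f(y)+f(y')\le f(x)+f(x')$. To prove it, write $y=\lambda x+(1-\lambda)x'$ and $y'=(1-\lambda)x+\lambda x'$ for some $\lambda\in[0,1]$, apply convexity of $f$ to each, and add. If $A=A'$, all four points coincide and the bracket is zero.

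Since $g\ge0$, the integrand is nonnegative, so $Mg(t)\le Ng(t)$ on $[0,1]$. The main care needed is the bookkeeping in the substitution, namely checking that $B$ and $B'$ really lie between $A'$ and $A$. The inequality itself is then immediate from convexity.
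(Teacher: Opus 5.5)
Your proof is correct and is essentially the paper's own argument. The paper obtains this statement from Theorem~\ref{t14} (Remark~\ref{r18}, with $x=a$, $y=y'=\frac{a+b}{2}$, $x'=b$, $p=g/2$). That proof likewise folds both functions onto half the range using the symmetry of the weight, cancels the same two common terms, and applies Lemma~\ref{l2} to exactly your comparison $f(B)+f(B')\le f(A)+f(A')$. The only difference is presentational: you work in the original variable, while the paper works in the rescaled variable $\sigma=\frac{s+a}{2}$ of its general $\alpha$-setting.
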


Tseng \textit{et al.} established the following three theorems related to
$Q,G,H,Ig,Sg$ and which are weighted generalizations of Theorems \ref{A6}
--\ \ref{A8} (see \cite{16b}).

\begin{theoremA}
\label{A32}\cite{16b} Let $f,g,Ig$\ be defined as above. Then:
\end{theoremA}

\begin{enumerate}
\item \textit{The following inequality holds:}%
\begin{align*}
&  f\left(  \frac{a+b}{2}\right)  \int\nolimits_{a}^{b}g\left(  s\right)  ds\\
&  \leq2\left[  \int_{\frac{3a+b}{4}}^{\frac{a+b}{2}}f\left(  s\right)
g\left(  4s-2a-b\right)  ds+\int_{\frac{a+b}{2}}^{\frac{a+3b}{4}}f\left(
s\right)  g\left(  4s-a-2b\right)  ds\right] \\
&  \leq\int\nolimits_{0}^{1}Ig\left(  t\right)  dt\\
&  \leq\frac{1}{2}\left[  f\left(  \frac{a+b}{2}\right)  \int\nolimits_{a}%
^{b}g\left(  s\right)  ds\right. \\
&  \qquad\qquad+\left.
{\displaystyle\int\nolimits_{a}^{b}}
\frac{1}{2}\left[  f\left(  \frac{s+a}{2}\right)  +f\left(  \frac{s+b}%
{2}\right)  \right]  g\left(  s\right)  ds\right]  .
\end{align*}

\item \textit{If }$f$\textit{\ is differentiable on }$\left[  a,b\right]
$\textit{\ and }$g$\textit{\ is bounded on }$\left[  a,b\right]  $\textit{,
then, for all }$t\in\left[  0,1\right]  ,$\textit{\ we have the inequality}%
\begin{align*}
0  &  \leq%
{\displaystyle\int\nolimits_{a}^{b}}
\frac{1}{2}\left[  f\left(  \frac{s+a}{2}\right)  +f\left(  \frac{s+b}%
{2}\right)  \right]  g\left(  s\right)  ds-Ig\left(  t\right) \\
&  \leq\left(  1-t\right)  \left[  \frac{f\left(  a\right)  +f\left(
b\right)  }{2}\left(  b-a\right)  -\int\nolimits_{a}^{b}f\left(  s\right)
ds\right]  \left\Vert g\right\Vert _{\infty},
\end{align*}
\textit{where }$\left\Vert g\right\Vert _{\infty}=\sup\limits_{s\in\left[
a,b\right]  }\left\vert g\left(  s\right)  \right\vert .$

\item \textit{If }$f$\textit{\ is differentiable on }$\left[  a,b\right]
$\textit{, then, for all }$t\in\left[  0,1\right]  ,$\textit{\ we have the
inequality }%
\begin{align*}
0  &  \leq\frac{f\left(  a\right)  +f\left(  b\right)  }{2}\int\nolimits_{a}%
^{b}g\left(  s\right)  ds-Ig\left(  t\right) \\
&  \leq\frac{\left(  f^{\prime}\left(  b\right)  -f^{\prime}\left(  a\right)
\right)  \left(  b-a\right)  }{4}\int\nolimits_{a}^{b}g\left(  s\right)  ds.
\end{align*}

\end{enumerate}

\begin{theoremA}
\label{A33}\cite{16b} \textit{Let }$f,g,G,Ig$\textit{\ be defined as above.
Then:}
\end{theoremA}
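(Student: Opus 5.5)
Since the list of conclusions of Theorem \ref{A33} is cut off above, I take it to be the $Ig$-analogue of Theorem \ref{A24}. Concretely, I expect it to assert three things:
\begin{itemize}
\item $Ig(t)\le G(t)\int_a^b g(s)\,ds$ on $[0,1]$;
\item a chain of integrated inequalities sandwiching a weighted mean of $f$ on $\left[\frac{3a+b}{4},\frac{a+3b}{4}\right]$ between values built from $G$;
\item for differentiable $f$ and bounded $g$, the estimate $0\le Ig(t)-f\left(\frac{a+b}{2}\right)\int_a^b g\le (b-a)\left[G(t)-H(t)\right]\Vert g\Vert_\infty$.
\end{itemize}
The whole plan rests on a symmetrization that uses the symmetry of $g$. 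The first item is the core; everything else follows from it plus earlier results.

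For the first item, put $m=\frac{a+b}{2}$ and split $\int_a^b=\int_a^m+\int_m^b$. In the second piece substitute $s\mapsto a+b-s$ and use $g(a+b-s)=g(s)$. Then $Ig(t)$ becomes $\frac12\int_a^m\left[f(x_1)+f(x_2)+f(x_3)+f(x_4)\right]g(s)\,ds$, where
\[
x_1=m+\tfrac{t(s-b)}{2},\quad x_2=m+\tfrac{t(s-a)}{2},\quad x_3=m-\tfrac{t(s-a)}{2},\quad x_4=m-\tfrac{t(s-b)}{2}.
\]
The pairs $\{x_1,x_4\}$ and $\{x_2,x_3\}$ are each symmetric about $m$. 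Both pairs lie in $[A,B]$, where $A=ta+(1-t)m$ and $B=tb+(1-t)m$.

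Convexity now gives two bounds for each symmetric pair $\{u,u'\}\subset[A,B]$. The first is $2f(m)\le f(u)+f(u')$. The second is $f(u)+f(u')\le f(A)+f(B)$: write $u$ and $u'$ as mirror convex combinations of $A$ and $B$ and add the two convexity inequalities. Since $\frac{f(A)+f(B)}{2}=G(t)$, integrating against $g$ yields
\[
f(m)\int_a^b g\le Ig(t)\le G(t)\int_a^b g.
\]
This proves the first item and the left inequality of the third.

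For the right inequality of the third item, bound $Ig(t)-f(m)\int g=\int\left(\cdots-f(m)\right)g$, whose integrand is nonnegative by the step above, by $\Vert g\Vert_\infty\int_a^b(\cdots-f(m))\,ds$. The linear changes of variables $s\mapsto\frac{s+a}{2}$ and $s\mapsto\frac{s+b}{2}$ turn $\frac{1}{b-a}\int_a^b\frac12\left[f\left(t\frac{s+a}{2}+(1-t)m\right)+f\left(t\frac{s+b}{2}+(1-t)m\right)\right]ds$ into exactly $H(t)$. This gives the bound $(b-a)\left[H(t)-f(m)\right]\Vert g\Vert_\infty$, and (\ref{1.5}) converts it to $(b-a)\left[G(t)-H(t)\right]\Vert g\Vert_\infty$.

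For the integrated chain, I would integrate $Ig(t)$ against a suitable weight, mimicking (\ref{1.15}) and Theorem \ref{A32}. The inner inequalities then come from applying the Hermite--Hadamard inequality (\ref{1.1}) or the Fej\'er inequality (\ref{1.2}) to $G$, which is convex in $t$, on $[0,1]$. The outer ones come from the pointwise bounds proved above. The main obstacle I expect is this bookkeeping: identifying, after substitution, the correct symmetric weight, namely something like $g(4s-2a-b)$ and $g(4s-a-2b)$ on the two quarter-intervals, so that the Fej\'er weight is symmetric about the right midpoint. I would first verify the case $g\equiv\frac{1}{b-a}$ against (\ref{1.4}) to calibrate the constants.
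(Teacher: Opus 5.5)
Theorem~\ref{A33} consists only of (\ref{1.17}) and (\ref{1.18}). The integrated chain in your second bullet is not part of it; it resembles (\ref{1.15}) and Theorem~\ref{A32}(1). So your sketch for it, which is only a plan, is not needed. For the two actual conclusions your argument is correct.

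The paper obtains Theorem~\ref{A33} by specializing Theorem~\ref{t17} to $x=a$, $y=y^{\prime}=\frac{a+b}{2}$, $x^{\prime}=b$, $\alpha=\frac{1}{2}$, $p=\frac{g}{2}$ (Remark~\ref{r23}). For (\ref{1.17}) its proof is essentially yours. Via the substitution behind (\ref{2.43}), the symmetry of the weight turns $Ig(t)$ into $\int_a^m\left[f(tu+(1-t)m)+f(t(a+b-u)+(1-t)m)\right]g(2u-a)\,du$, with your $m=\frac{a+b}{2}$. This is a single pair symmetric about $m$ inside $[ta+(1-t)m,\,tb+(1-t)m]$, and Lemma~\ref{l2} bounds it by $2G(t)$. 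Your split at $m$ into two pairs is only different bookkeeping.

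For the upper bound in (\ref{1.18}) your route genuinely differs.
\begin{itemize}
\item The paper applies the tangent-line inequality $f(w)-f(m)\le(w-m)f^{\prime}(w)$ at $w=tu+(1-t)m$ and $w^{\prime}=t(a+b-u)+(1-t)m$. It notes that the resulting integrand $t(m-u)\left[f^{\prime}(w^{\prime})-f^{\prime}(w)\right]$ is nonnegative, so the weight can be replaced by its supremum. Integration by parts then gives $(b-a)\left[G(t)-H(t)\right]\Vert g\Vert_\infty$ directly.
\item You instead replace $g$ by $\Vert g\Vert_\infty$ in the nonnegative symmetrized difference. This yields the sharper intermediate bound $(b-a)\left[H(t)-f(m)\right]\Vert g\Vert_\infty$, which is an equality for constant $g$, and you then invoke (\ref{1.5}).
\end{itemize}
Since (\ref{1.5}) is just Bullen's inequality for $f$ on $[ta+(1-t)m,\,tb+(1-t)m]$, your argument actually needs no differentiability. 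The paper's computation, by contrast, is self-contained and does not rely on (\ref{1.5}).

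One point must be stated correctly. Nonnegativity holds for the symmetrized integrand $\frac{1}{2}\sum_{i=1}^{4}f(x_i)-2f(m)$ on $[a,m]$. It does not hold for $\frac{1}{2}\left[f(x_1)+f(x_2)\right]-f(m)$ on $[a,b]$, as your sentence suggests: for $f$ the identity map, the latter equals $\frac{t(s-m)}{2}<0$ when $s<m$. So make the replacement $g\le\Vert g\Vert_\infty$ on $[a,m]$. Only afterwards unfold the constant-weight integral back to $\int_a^b$, where your changes of variables produce $(b-a)H(t)$.
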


\begin{enumerate}
\item \textit{The following inequality holds for all }$t\in\left[  0,1\right]
$:%
\begin{equation}
Ig\left(  t\right)  \leq G\left(  t\right)  \int\nolimits_{a}^{b}g\left(
s\right)  ds. \label{1.17}%
\end{equation}

\item \textit{If }$f$ \textit{is differentiable on }$\left[  a,b\right]  $
\textit{and }$g$\textit{\ is bounded on }$\left[  a,b\right]  ,$%
\textit{\ then, for all }$t\in\left[  0,1\right]  ,$ \textit{we have the
inequality}
\begin{equation}
0\leq Ig\left(  t\right)  -f\left(  \frac{a+b}{2}\right)  \int\nolimits_{a}%
^{b}g\left(  s\right)  ds\leq\left(  b-a\right)  \left[  G\left(  t\right)
-H\left(  t\right)  \right]  \left\Vert g\right\Vert _{\infty} \label{1.18}%
\end{equation}
\textit{where }$\left\Vert g\right\Vert _{\infty}=\sup\limits_{s\in\left[
a,b\right]  }\left\vert g\left(  s\right)  \right\vert .$
\end{enumerate}

\begin{theoremA}
\label{A34}\cite{16b} \textit{Let }$f,g,G,Ig,Sg$\textit{\ be defined as above.
Then} \textit{we have the following results:}
\end{theoremA}

\begin{enumerate}
\item $Sg$\textit{\ is convex on }$\left[  0,1\right]  .$

\item \textit{The following inequalities hold for all }$t\in\left[
0,1\right]  $:
\begin{align*}
G\left(  t\right)  \int\nolimits_{a}^{b}g\left(  x\right)  dx  &  \leq
Sg\left(  t\right) \\
&  \leq\left(  1-t\right)
{\displaystyle\int\nolimits_{a}^{b}}
\frac{1}{2}\left[  f\left(  \frac{s+a}{2}\right)  +f\left(  \frac{s+b}%
{2}\right)  \right]  g\left(  s\right)  ds\\
&  \qquad\qquad+t\cdot\frac{f\left(  a\right)  +f\left(  b\right)  }{2}%
\int\nolimits_{a}^{b}g\left(  s\right)  ds\\
&  \leq\frac{f\left(  a\right)  +f\left(  b\right)  }{2}\int\nolimits_{a}%
^{b}g\left(  s\right)  ds;
\end{align*}%
\[
Ig\left(  1-t\right)  \leq Sg\left(  t\right)  ;
\]%
\[
\frac{Ig\left(  t\right)  +Ig\left(  1-t\right)  }{2}\leq Sg\left(  t\right)
.
\]

\item \textit{The following inequality holds:}%
\[
\sup\limits_{t\in\left[  0,1\right]  }Sg\left(  t\right)  =\frac{f\left(
a\right)  +f\left(  b\right)  }{2}\int\nolimits_{a}^{b}g\left(  s\right)  ds.
\]

\end{enumerate}

Tseng \textit{et al.} established the following two theorems related to
$Q,G,Hg,Pg,Lg,Ig,Ng,Sg$ and which are weighted generalizations of Theorem
\ref{A9} (see \cite{17b}).

\begin{theoremA}
\label{A35}\cite{17b} Let $f,g,G,Hg,Pg,Lg,Ig,Ng$\ be defined as above. Then:
\end{theoremA}

\begin{enumerate}
\item \textit{The inequalities }%
\begin{align*}
&  \int\nolimits_{a}^{b}f\left(  s\right)  g\left(  s\right)  ds\\
&  \leq2\left[  \int\nolimits_{a}^{\frac{3a+b}{4}}f\left(  s\right)  g\left(
2s-a\right)  ds+\int_{\frac{a+3b}{4}}^{b}f\left(  s\right)  g\left(
2s-b\right)  ds\right] \\
&  \leq\int\nolimits_{0}^{1}Pg\left(  t\right)  dt\\
&  \leq\frac{1}{2}\left[  \int\nolimits_{a}^{b}f\left(  s\right)  g\left(
s\right)  ds+\frac{f\left(  a\right)  +f\left(  b\right)  }{2}\int%
\nolimits_{a}^{b}g\left(  s\right)  ds\right]
\end{align*}
\textit{holds.}

\item \textit{The inequalities}%
\begin{align*}
Lg\left(  t\right)   &  \leq Pg\left(  t\right) \\
&  \leq\left(  1-t\right)  \int\nolimits_{a}^{b}f\left(  s\right)  g\left(
s\right)  ds+t\cdot\frac{f\left(  a\right)  +f\left(  b\right)  }{2}%
\int\nolimits_{a}^{b}g\left(  s\right)  ds\\
&  \leq\frac{f\left(  a\right)  +f\left(  b\right)  }{2}\int\nolimits_{a}%
^{b}g\left(  s\right)  ds
\end{align*}
\textit{and}%
\[
0\leq Ng\left(  t\right)  -G\left(  t\right)  \int\nolimits_{a}^{b}g\left(
s\right)  ds\leq\frac{f\left(  a\right)  +f\left(  b\right)  }{2}%
\int\nolimits_{a}^{b}g\left(  s\right)  ds-Ng\left(  t\right)
\]
\textit{hold for all }$t\in\left[  0,1\right]  .$

\item \textit{If }$f$\textit{\ is differentiable on }$\left[  a,b\right]
,$\textit{\ then we have the inequalities}
\begin{align*}
0  &  \leq t\left[  \frac{1}{b-a}\int\nolimits_{a}^{b}f\left(  s\right)
ds-f\left(  \frac{a+b}{2}\right)  \right]  \cdot\inf\limits_{s\in\left[
a,b\right]  }g\left(  s\right) \\
&  \leq Pg\left(  t\right)  -\int\nolimits_{a}^{b}f\left(  s\right)  g\left(
s\right)  ds;
\end{align*}%
\begin{align*}
0  &  \leq Pg\left(  t\right)  -f\left(  \frac{a+b}{2}\right)  \int%
\nolimits_{a}^{b}g\left(  s\right)  ds\\
&  \leq\frac{\left(  f^{\prime}\left(  b\right)  -f^{\prime}\left(  a\right)
\right)  \left(  b-a\right)  }{4}\int\nolimits_{a}^{b}g\left(  s\right)  ds;
\end{align*}%
\[
0\leq Lg\left(  t\right)  -Hg\left(  t\right)  \leq\frac{\left(  f^{\prime
}\left(  b\right)  -f^{\prime}\left(  a\right)  \right)  \left(  b-a\right)
}{4}\int\nolimits_{a}^{b}g\left(  s\right)  ds;
\]%
\[
0\leq Pg\left(  t\right)  -Lg\left(  t\right)  \leq\frac{\left(  f^{\prime
}\left(  b\right)  -f^{\prime}\left(  a\right)  \right)  \left(  b-a\right)
}{4}\int\nolimits_{a}^{b}g\left(  s\right)  ds;
\]%
\[
0\leq Pg\left(  t\right)  -Hg\left(  t\right)  \leq\frac{\left(  f^{\prime
}\left(  b\right)  -f^{\prime}\left(  a\right)  \right)  \left(  b-a\right)
}{4}\int\nolimits_{a}^{b}g\left(  s\right)  ds;
\]%
\[
0\leq Ng\left(  t\right)  -Ig\left(  t\right)  \leq\frac{\left(  f^{\prime
}\left(  b\right)  -f^{\prime}\left(  a\right)  \right)  \left(  b-a\right)
}{4}\int\nolimits_{a}^{b}g\left(  s\right)  ds
\]
\textit{and}%
\[
0\leq Sg\left(  t\right)  -Ig\left(  t\right)  \leq\frac{\left(  f^{\prime
}\left(  b\right)  -f^{\prime}\left(  a\right)  \right)  \left(  b-a\right)
}{4}\int\nolimits_{a}^{b}g\left(  s\right)  ds
\]
\textit{for all }$t\in\left[  0,1\right]  .$
\end{enumerate}

\begin{theoremA}
\label{A36}\cite{17b} Let $f,g,G,Q,Hg,Pg,Sg$\ be defined as above. Then:
\end{theoremA}

\begin{enumerate}
\item \textit{The inequalities}%
\begin{align*}
Hg\left(  t\right)   &  \leq Q\left(  t\right)  \int\nolimits_{a}^{b}g\left(
s\right)  ds\\
&  \leq\frac{f\left(  a\right)  +f\left(  b\right)  }{2}\text{ }%
\int\nolimits_{a}^{b}g\left(  s\right)  ds\qquad\left(  t\in\left[  0,\frac
{1}{3}\right]  \right)
\end{align*}
\textit{and}%
\begin{align*}
f\left(  \frac{a+b}{2}\right)  \int\nolimits_{a}^{b}g\left(  s\right)  dx  &
\leq Q\left(  t\right)  \int\nolimits_{a}^{b}g\left(  s\right)  ds\\
&  \leq Pg\left(  t\right)  \text{ \qquad}\left(  t\in\left[  \frac{1}%
{3},1\right]  \right)
\end{align*}
\textit{hold for all }$t\in\left[  0,1\right]  .$

\item \textit{The inequality }%
\begin{align*}
0  &  \leq Sg\left(  t\right)  -G\left(  t\right)  \int\nolimits_{a}%
^{b}g\left(  s\right)  ds\\
&  \leq\frac{1}{2}\left[  \frac{f\left(  a\right)  +f\left(  b\right)  }%
{2}+Q\left(  t\right)  \right]  \int\nolimits_{a}^{b}g\left(  s\right)
ds-Sg\left(  t\right)
\end{align*}
holds for all $t\in\left[  0,1\right]  .$
\end{enumerate}

Tseng \textit{et al.} established the following two theorems related to
$Ig,Kg,Sg$ and which are weighted generalizations of Theorems \ref{A4}
and\ \ref{A11} (see \cite{18b}).

\begin{theoremA}
\label{A37}\cite{18b} Let $f,g,Ig,Kg$\ be defined as above. Then:
\end{theoremA}

\begin{enumerate}
\item $Kg$\textit{\ is convex on }$\left[  0,1\right]  $\textit{\ and
symmetric about }$\frac{1}{2}.$

\item $Kg$\textit{\ is decreasing on }$\left[  0,\frac{1}{2}\right]
$\textit{\ and increasing on }$\left[  \frac{1}{2},1\right]  ,$%
\begin{align*}
\sup\limits_{t\in\left[  0,1\right]  }Kg\left(  t\right)   &  =Kg\left(
0\right)  =Kg\left(  1\right) \\
&  =%
{\displaystyle\int\nolimits_{a}^{b}}
\frac{1}{2}\left[  f\left(  \frac{s+a}{2}\right)  +f\left(  \frac{s+b}%
{2}\right)  \right]  g\left(  s\right)  ds\cdot\int\nolimits_{a}^{b}g\left(
s\right)  ds
\end{align*}
\textit{and}%
\begin{align*}
\inf\limits_{t\in\left[  0,1\right]  }Kg\left(  t\right)   &  =Kg\left(
\frac{1}{2}\right) \\
&  =\int_{a}^{b}\int_{a}^{b}\frac{1}{4}\left[  f\left(  \frac{s+u+2a}%
{4}\right)  +2f\left(  \frac{s+u+a+b}{4}\right)  \right. \\
&  \qquad\qquad+\left.  f\left(  \frac{s+u+2b}{4}\right)  \right]  g\left(
s\right)  g\left(  u\right)  dsdu.
\end{align*}

\item \textit{We have}%
\[
Ig\left(  t\right)  \int\nolimits_{a}^{b}g\left(  s\right)  ds\leq Kg\left(
t\right)
\]
\textit{and}%
\[
f\left(  \frac{a+b}{2}\right)  \left(  \int\nolimits_{a}^{b}g\left(  s\right)
ds\right)  ^{2}\leq Kg\left(  \frac{1}{2}\right)
\]
\textit{for all }$t\in\left[  0,1\right]  .$
\end{enumerate}

\begin{theoremA}
\label{A38}\cite{18b} Let $f,g,Ig,Kg,Sg$ be defined as above. Then we have the
inequality%
\[
0\leq Kg\left(  t\right)  -Ig\left(  t\right)  \int\nolimits_{a}^{b}g\left(
s\right)  ds\leq Sg\left(  1-t\right)  \int\nolimits_{a}^{b}g\left(  s\right)
ds-Kg\left(  t\right)
\]
for all $t\in\left[  0,1\right]  .$
\end{theoremA}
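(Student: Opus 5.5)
The plan is to prove both inequalities of Theorem \ref{A38} pointwise in the outer variable $s$, working only with the inner integral in $u$. The tools are elementary convexity estimates together with the symmetry $g(u)=g(a+b-u)$. Throughout I write $m=\frac{a+b}{2}$ and, for fixed $s$, let $\alpha$ range over $\frac{s+a}{2}$ and $\frac{s+b}{2}$. Then
\[
Kg\left(t\right)=\int_{a}^{b}\frac{1}{2}\sum_{\alpha}\left[\int_{a}^{b}\frac{1}{2}\left(f\left(t\alpha+\left(1-t\right)\frac{u+a}{2}\right)+f\left(t\alpha+\left(1-t\right)\frac{u+b}{2}\right)\right)g\left(u\right)du\right]g\left(s\right)ds .
\]
I will call the bracket the inner average $A(\alpha)$.

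\emph{Left inequality.} In the second term of $A(\alpha)$ I substitute $u\mapsto a+b-u$, which is allowed because $g$ is symmetric. Since $\frac{(a+b-u)+b}{2}=2m-\frac{u+a}{2}$, the two arguments become $t\alpha+(1-t)m\pm(1-t)\left(\frac{u+a}{2}-m\right)$. These are symmetric about $t\alpha+(1-t)m$, so convexity gives
\[
A(\alpha)\geq f\left(t\alpha+\left(1-t\right)m\right)\int_{a}^{b}g\left(u\right)du .
\]
Integrating against $g(s)$ and averaging over the two values of $\alpha$ yields exactly $Ig(t)\int_a^b g$, which proves $0\le Kg(t)-Ig(t)\int_a^b g$.

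\emph{Right inequality.} This is equivalent to
\[
Kg\left(t\right)\leq\frac{1}{2}\left[Ig\left(t\right)+Sg\left(1-t\right)\right]\int_{a}^{b}g\left(s\right)ds .
\]
Set $\beta=\frac{u+a}{2}\in[a,m]$ and $\beta'=\frac{u+b}{2}\in[m,b]$. The point $t\alpha+(1-t)\beta$ is an affine combination of $t\alpha+(1-t)a$ and $t\alpha+(1-t)m$ with weight $\lambda=\frac{b-u}{b-a}$ on the first. Similarly, $t\alpha+(1-t)\beta'$ combines $t\alpha+(1-t)b$ (weight $\lambda'=\frac{u-a}{b-a}$) and $t\alpha+(1-t)m$. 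Convexity bounds each $f$-value by the corresponding convex combination. Integrating against $g(u)$ and using symmetry of $g$ gives $\int_a^b\lambda g=\int_a^b\lambda' g=\frac12\int_a^b g$. Hence
\[
A(\alpha)\leq\frac{1}{2}\left[\frac{f\left(t\alpha+\left(1-t\right)a\right)+f\left(t\alpha+\left(1-t\right)b\right)}{2}+f\left(t\alpha+\left(1-t\right)m\right)\right]\int_{a}^{b}g\left(u\right)du .
\]
Finally I integrate in $s$ against $g(s)$ and average over $\alpha$. The terms $f(t\alpha+(1-t)m)$ reproduce $Ig(t)$. The terms $f(t\alpha+(1-t)e)$ with $e\in\{a,b\}$ give $\frac14\sum f\left((1-t)e+t\frac{s+e'}{2}\right)$ over $e,e'\in\{a,b\}$, which is precisely $Sg(1-t)$. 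This proves the claim.

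The main obstacle is spotting the decomposition in the right inequality: each interior point must be split toward $m$ and toward the endpoints $a$ or $b$, with weights linear in $u$. Once that is done, the remaining work is just to check that the symmetry of $g$ makes both weights integrate to $\frac12\int_a^b g$, and that the four endpoint terms match the definition of $Sg(1-t)$ after rewriting $t\alpha+(1-t)e$.
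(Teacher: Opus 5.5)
Your proof is correct and is essentially the paper's argument: Theorem A38 is recovered from Theorem \ref{t24} with $x=a$, $y=y'=\frac{a+b}{2}$, $x'=b$, $\alpha=\frac12$, $p=g/2$, where the left inequality is the midpoint-convexity pairing $u\leftrightarrow a+b-u$ behind $\left(\ref{2.99}\right)$, and the right one bounds each interior point by the values at $a,\frac{a+b}{2}$ and at $\frac{a+b}{2},b$. The only cosmetic difference is that the paper first folds the $u$-integral using the symmetry of $g$ and then applies Lemma \ref{l2} to reflected pairs, whereas you use convex combinations with weights linear in $u$ and invoke the symmetry of $g$ afterwards to get $\int_a^b\lambda g=\frac12\int_a^b g$.
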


\section{Main Results}

In this section, we shall establish a number of new Fej\'{e}r-type
inequalities which reduce all results of Subsections $1.1-1.3.$

\subsection{New Fejer-type Inequalities I}

Throughout this subsection, let $x,y,y^{\prime},x^{\prime},$ $\Omega,f,g,$
$H,H_{1},H_{2},Hg,$ $P,P_{1},Pg,$ $F,F_{1},Fg,$ $L,L_{1},Lg,$ $G,G_{1},G_{2}%
$\ be defined as above, the function $p:\left[  x,x^{\prime}\right]
\rightarrow\left[  0,\infty\right)  $ be integrable and symmertic to
$\frac{x+x^{\prime}}{2}$ and define the following functions on $[0,1].$
\[
Hp_{1}\left(  t\right)  =\int_{x}^{y}\left[  f\left(  ts+\left(  1-t\right)
y\right)  +f\left(  t\left(  y+y^{\prime}-s\right)  +\left(  1-t\right)
y^{\prime}\right)  \right]  p\left(  s\right)  ds;
\]%
\[
Hp_{2}\left(  t\right)  =\int_{x}^{y}\left[  f\left(  ts+\left(  1-t\right)
y^{\prime}\right)  +f\left(  t\left(  y+y^{\prime}-s\right)  +\left(
1-t\right)  y\right)  \right]  p\left(  s\right)  ds;
\]%
\[
Fp_{1}\left(  t\right)  =\int_{\Omega}\int_{\Omega}f\left(  ts+\left(
1-t\right)  u\right)  p\left(  s\right)  p\left(  u\right)  dsdu;
\]%
\[
Pp_{1}\left(  t\right)  =\int_{x}^{y}\left[  f\left(  tx+\left(  1-t\right)
s\right)  +f\left(  tx^{\prime}+\left(  1-t\right)  \left(  x+x^{\prime
}-s\right)  \right)  \right]  p\left(  s\right)  ds
\]
and%
\[
Lp_{1}\left(  t\right)  =\frac{1}{2}%
{\displaystyle\int\nolimits_{\Omega}}
\left[  f\left(  tx+\left(  1-t\right)  s\right)  +f\left(  tx^{\prime
}+\left(  1-t\right)  s\right)  \right]  p\left(  s\right)  ds.
\]

\begin{remark}
\label{r3}\textit{We note that :}
\end{remark}

\begin{enumerate}
\item $\int_{x}^{y}p\left(  s\right)  ds=\frac{1}{2}\int_{\Omega}p\left(
s\right)  ds.$

\item $Hp_{1}\left(  t\right)  =H_{1}\left(  t\right)  ,$ $Hp_{2}\left(
t\right)  =H_{2}\left(  t\right)  ,$ $Fp_{1}\left(  t\right)  =F_{1}\left(
t\right)  ,$ $Pp_{1}\left(  t\right)  =P_{1}\left(  t\right)  ,$
$Lp_{1}\left(  t\right)  =L_{1}\left(  t\right)  $\textit{\ on }$\left[
0,1\right]  $\textit{\ as }$p\left(  s\right)  \equiv\frac{1}{2\left(
y-x\right)  }$ \ $\left(  s\in\left[  x,x^{\prime}\right]  \right)  .$

\item $\Omega=\left[  a,b\right]  $\textit{\ and }$Hp_{1}\left(  t\right)
=Hp_{2}\left(  t\right)  =H\left(  t\right)  ,$ $Fp_{1}\left(  t\right)
=F\left(  t\right)  ,$ $Pp_{1}\left(  t\right)  =P\left(  t\right)  ,$
$Lp_{1}\left(  t\right)  =L\left(  t\right)  $\textit{\ on }$\left[
0,1\right]  $\textit{\ as }$x=a,$ $y=y^{\prime}=\frac{a+b}{2},x^{\prime}=b$
\textit{and} $p\left(  s\right)  \equiv\frac{1}{b-a}$ $\left(  s\in\left[
a,b\right]  \right)  .$

\item $\Omega=\left[  a,b\right]  $\textit{\ and }$Hp_{1}\left(  t\right)
=Hp_{2}\left(  t\right)  =Hg\left(  t\right)  ,$ $Fp_{1}\left(  t\right)
=Fg\left(  t\right)  ,$ $Pp_{1}\left(  t\right)  =Pg\left(  t\right)  ,$
$Lp_{1}\left(  t\right)  =Lg\left(  t\right)  $\textit{\ on }$\left[
0,1\right]  $\textit{\ as }$x=a,$ $y=y^{\prime}=\frac{a+b}{2},x^{\prime}=b$
\textit{and }$p\left(  s\right)  =g\left(  s\right)  $ $\left(  s\in\left[
a,b\right]  \right)  .$
\end{enumerate}

To prove the main results of this section, we need the following lemmas:

\begin{lemma}
[see \cite{9b}]\label{l1}Let $f$\ be defined as above and let $a\leq A\leq
C\leq D\leq B\leq b$ with $A+B=C+D.$ Then
\[
f\left(  C\right)  +f\left(  D\right)  \leq f\left(  A\right)  +f\left(
B\right)  .
\]

\end{lemma}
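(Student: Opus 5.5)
The plan is to write the two inner points $C$ and $D$ as convex combinations of the outer points $A$ and $B$, apply the definition of convexity to each, and add the two resulting inequalities.

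First dispose of the degenerate case $A=B$. Then $A+B=C+D$ together with $A\le C\le D\le B$ forces $A=C=D=B$, and the inequality holds with equality.

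Now assume $A<B$ and set
\[
\lambda=\frac{B-C}{B-A}\in\left[ 0,1\right] .
\]
With this choice $C=\lambda A+\left( 1-\lambda\right) B$. The constraint $A+B=C+D$ gives
\[
D=A+B-C=\left( 1-\lambda\right) A+\lambda B,
\]
so $D$ is the mirror convex combination of $A$ and $B$. All four points lie in $[a,b]$, so convexity of $f$ there yields
\[
f\left( C\right) \leq\lambda f\left( A\right) +\left( 1-\lambda\right) f\left( B\right)
\quad\text{and}\quad
f\left( D\right) \leq\left( 1-\lambda\right) f\left( A\right) +\lambda f\left( B\right) .
\]
Adding these, the coefficients of $f(A)$ and of $f(B)$ each sum to $1$, which gives $f(C)+f(D)\le f(A)+f(B)$.

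The only step needing any care is checking that $\lambda\in[0,1]$. This follows from $A\le C\le B$. The hypothesis $C\le D$ is not actually needed for the argument: it only fixes which of the two inner points is called $C$.
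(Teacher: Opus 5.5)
Your proof is correct and complete. Writing $C=\lambda A+(1-\lambda)B$ and $D=(1-\lambda)A+\lambda B$ with $\lambda=(B-C)/(B-A)$, then adding the two convexity inequalities, is the standard argument; the paper itself gives no proof of this lemma and only cites the source.

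Your closing remark is also right: $C\le D$ is inessential, and so is $D\le B$, because $D=A+B-C\in[A,B]$ holds automatically. That observation is exactly the content of the paper's Lemma~\ref{l2}, which drops those hypotheses and multiplies through by the nonnegative factor $p(E)$, so your argument proves that lemma as well.
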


The assumptions in Lemma \ref{l1} can be weakened as in the following lemma:

\begin{lemma}
\label{l2}Let $f,p$\ be defined as above and let $E\in\left[  x,x^{\prime
}\right]  $ and $a\leq A\leq C\leq B\leq b$ with $A+B=C+D.$ Then
\[
\left[  f\left(  C\right)  +f\left(  D\right)  \right]  p\left(  E\right)
\leq\left[  f\left(  A\right)  +f\left(  B\right)  \right]  p\left(  E\right)
.
\]

\end{lemma}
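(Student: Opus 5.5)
The plan is to reduce Lemma \ref{l2} to Lemma \ref{l1} and then use the nonnegativity of $p$. The only difference from Lemma \ref{l1} is that we are no longer told $C\leq D\leq B$. So the first step is to recover the position of $D$ from the hypotheses. Since $A+B=C+D$, we have $D=A+B-C$. From $A\leq C\leq B$ we get $A\leq A+B-C\leq B$, so $D\in\left[A,B\right]\subseteq\left[a,b\right]$. In particular $f\left(D\right)$ is defined.

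Next I would remove the ordering issue between $C$ and $D$ by symmetry. Put $C^{\ast}=\min\left\{C,D\right\}$ and $D^{\ast}=\max\left\{C,D\right\}$. Then $a\leq A\leq C^{\ast}\leq D^{\ast}\leq B\leq b$ and $C^{\ast}+D^{\ast}=C+D=A+B$. These are exactly the hypotheses of Lemma \ref{l1}, which gives
\[
f\left(C\right)+f\left(D\right)=f\left(C^{\ast}\right)+f\left(D^{\ast}\right)\leq f\left(A\right)+f\left(B\right).
\]
(Alternatively, one can argue directly: write $C$ and $D$ as convex combinations $C=\lambda A+\left(1-\lambda\right)B$ and $D=\left(1-\lambda\right)A+\lambda B$, and add the two convexity inequalities. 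This proves Lemma \ref{l1} as well.)

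Finally, since $E\in\left[x,x^{\prime}\right]$, the value $p\left(E\right)$ is defined and nonnegative. Multiplying the inequality above by $p\left(E\right)\geq0$ preserves its direction and yields the claim.

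None of the steps is a serious obstacle. The only point needing care is the first one: checking that the dropped hypothesis on $D$ is automatically recovered, namely $D\in\left[A,B\right]$. After that, the swap to $\min$ and $\max$ is only bookkeeping.
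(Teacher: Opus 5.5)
Your proof is correct and matches what the paper intends. The paper gives no proof of Lemma \ref{l2}; it only remarks that the hypotheses of Lemma \ref{l1} ``can be weakened''. Your argument supplies the missing justification: $D=A+B-C$ automatically lies in $\left[A,B\right]$, so after reordering $C,D$ Lemma \ref{l1} applies, and multiplying by $p\left(E\right)\geq0$ finishes it. In your direct convex-combination alternative, treat the trivial case $A=B$ separately, since $\lambda$ is undefined there.
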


Now, we are ready to state and prove the main results of this section.

\begin{theorem}
\label{t1}Let $x,y,y^{\prime},x^{\prime},\Omega,f,p,Hp_{1},Hp_{2}$\ be defined
as above. Then:

\begin{enumerate}
\item $Hp_{1}$ and $Hp_{2}$\ are convex on $\left[  0,1\right]  .$

\item $Hp_{1}$\ is increasing on $\left[  0,1\right]  $\ and the following
inequalities
\begin{equation}
\frac{f\left(  y\right)  +f\left(  y^{\prime}\right)  }{2}\int_{\Omega
}p\left(  s\right)  ds=Hp_{1}\left(  0\right)  \leq Hp_{1}\left(  t\right)
\leq Hp_{1}\left(  1\right)  =\int_{\Omega}f\left(  s\right)  p\left(
s\right)  ds\ , \label{2.1}%
\end{equation}%
\begin{align}
Hp_{1}\left(  t\right)   &  \leq t\int_{\Omega}f\left(  s\right)  p\left(
s\right)  ds+\left(  1-t\right)  \cdot\frac{f\left(  y\right)  +f\left(
y^{\prime}\right)  }{2}\int_{\Omega}p\left(  s\right)  ds\label{2.2}\\
&  \leq\int_{\Omega}f\left(  s\right)  p\left(  s\right)  ds\leq\frac{f\left(
x\right)  +f\left(  x^{\prime}\right)  }{2}\int_{\Omega}p\left(  s\right)
ds,\nonumber
\end{align}%
\begin{align}
&  f\left(  \frac{y+y^{\prime}}{2}\right)  \int_{\Omega}p\left(  s\right)
ds\label{2.3}\\
&  \leq Hp_{2}\left(  t\right) \nonumber\\
&  \leq t\int_{\Omega}f\left(  s\right)  p\left(  s\right)  ds+\left(
1-t\right)  \cdot\frac{f\left(  y\right)  +f\left(  y^{\prime}\right)  }%
{2}\int_{\Omega}p\left(  s\right)  ds\nonumber\\
&  \leq\int_{\Omega}f\left(  s\right)  p\left(  s\right)  ds\nonumber
\end{align}
and%
\begin{equation}
Hp_{2}\left(  t\right)  \leq Hp_{1}\left(  t\right)  \label{2.4}%
\end{equation}
hold for all $t\in\left[  0,1\right]  .$
\end{enumerate}
\end{theorem}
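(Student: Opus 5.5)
The whole argument is pointwise: for each fixed $s\in[x,y]$ I compare values of $f$ at two points with a fixed sum, then multiply by $p(s)\geq 0$ and integrate. Lemma \ref{l1} (or Lemma \ref{l2} with $E=s$) does the comparison. Two facts about symmetry are used repeatedly. Since $x+x^{\prime}=y+y^{\prime}$, the map $s\mapsto y+y^{\prime}-s$ sends $[x,y]$ onto $[y^{\prime},x^{\prime}]$. Since $p$ is symmetric about $\frac{x+x^{\prime}}{2}=\frac{y+y^{\prime}}{2}$, we have $p(y+y^{\prime}-s)=p(s)$. Together these give
\[
\int_x^y h(y+y^{\prime}-s)p(s)\,ds=\int_{y^{\prime}}^{x^{\prime}}h(s)p(s)\,ds
\]
for any integrable $h$, and also Remark \ref{r3}(1).

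\emph{Part (1) and the endpoint values.} For fixed $s$, each integrand is a sum of $f$ composed with affine functions of $t$, hence convex in $t$. Multiplying by $p(s)\geq0$ and integrating preserves convexity, so $Hp_1$ and $Hp_2$ are convex. At $t=0$, $Hp_1(0)=\bigl(f(y)+f(y^{\prime})\bigr)\int_x^y p=\frac{f(y)+f(y^{\prime})}{2}\int_\Omega p$ by Remark \ref{r3}(1). At $t=1$, the symmetry substitution applied to the second term gives $Hp_1(1)=\int_\Omega fp$.

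\emph{Monotonicity and (2.1), (2.2).} Put $A=ts+(1-t)y$ and $B=t(y+y^{\prime}-s)+(1-t)y^{\prime}$. Then $A+B=y+y^{\prime}$, $A\leq y$ and $B\geq y^{\prime}$. Lemma \ref{l1} gives $f(y)+f(y^{\prime})\leq f(A)+f(B)$, and integrating yields $Hp_1(0)\leq Hp_1(t)$. A convex function on $[0,1]$ attaining its minimum at $0$ is increasing, which gives (2.1).

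The first inequality of (2.2) is convexity applied between $t=0$ and $t=1$. The second follows from $Hp_1(0)\leq Hp_1(1)$. For the last one, write $\int_\Omega fp=\int_x^y\bigl[f(s)+f(x+x^{\prime}-s)\bigr]p(s)\,ds$ and apply Lemma \ref{l1} with $x\leq s\leq x+x^{\prime}-s\leq x^{\prime}$. This gives the bound $\bigl(f(x)+f(x^{\prime})\bigr)\int_x^yp=\frac{f(x)+f(x^{\prime})}{2}\int_\Omega p$.

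\emph{(2.4) and (2.3).} The step needing care is the ordering for (2.4), which then yields the upper bounds in (2.3) directly from (2.2). Put $C=ts+(1-t)y^{\prime}$ and $D=t(y+y^{\prime}-s)+(1-t)y$, so that $C+D=A+B$. Then:
\begin{itemize}
\item $C-A=(1-t)(y^{\prime}-y)\geq0$;
\item $B-C=t(y+y^{\prime}-2s)\geq0$, because $s\leq y\leq y^{\prime}$.
\end{itemize}
So $A\leq C\leq B$ with $D=A+B-C$, which is exactly the hypothesis of Lemma \ref{l2} with $E=s$. It gives $\bigl[f(C)+f(D)\bigr]p(s)\leq\bigl[f(A)+f(B)\bigr]p(s)$, and integrating over $[x,y]$ gives $Hp_2(t)\leq Hp_1(t)$. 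Combining this with (2.2) gives the two upper inequalities in (2.3). For the lower bound in (2.3), $C+D=y+y^{\prime}$, so convexity gives $f(C)+f(D)\geq2f\bigl(\frac{y+y^{\prime}}{2}\bigr)$. Integrating against $p$ and using Remark \ref{r3}(1) yields $f\bigl(\frac{y+y^{\prime}}{2}\bigr)\int_\Omega p\leq Hp_2(t)$.
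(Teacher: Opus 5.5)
Your proof is correct and follows essentially the same route as the paper: you make pointwise two-point comparisons via Lemmas \ref{l1} and \ref{l2} and then integrate against $p$, using its symmetry about $\frac{x+x'}{2}=\frac{y+y'}{2}$. The only deviations are minor --- you deduce monotonicity of $Hp_1$ from $Hp_1(0)\le Hp_1(t)$ plus convexity rather than comparing $t_1<t_2$ directly, and you get the upper bounds in (2.3) by chaining (2.4) with (2.2) rather than from convexity of $f$ directly --- and you usefully make explicit the ordering $A\le C\le B$ that the paper leaves implicit when it invokes Lemma \ref{l2} for (2.4).
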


\begin{proof}
$\left(  1\right)  $ It is easily observed from the convexity of $f$ and the
hypothesis of $p$ that $Hp_{1}$ and $Hp_{2}$ are convex on $\left[
0,1\right]  .$

$\left(  2\right)  $ Let $t_{1}<t_{2}$ in $\left[  0,1\right]  .$ By Lemma
\ref{l2}, the following inequality holds for all $s\in\left[  x,y\right]  :$%
\begin{multline*}
\left[  f\left(  t_{1}s+\left(  1-t_{1}\right)  y\right)  +f\left(
t_{1}\left(  y+y^{\prime}-s\right)  +\left(  1-t_{1}\right)  y^{\prime
}\right)  \right]  p\left(  s\right) \\
\leq\left[  f\left(  t_{2}s+\left(  1-t_{2}\right)  y\right)  +f\left(
t_{2}\left(  y+y^{\prime}-s\right)  +\left(  1-t_{2}\right)  y^{\prime
}\right)  \right]  p\left(  s\right)  .
\end{multline*}

Integrating the above inequality over $s$ on $\left[  x,y\right]  $ and using
the definition of $Hp_{1},$ we have
\[
Hp_{1}\left(  t_{1}\right)  \leq Hp_{1}\left(  t_{2}\right)  .
\]

Thus, $Hp_{1}$ is increasing on $\left[  0,1\right]  $ and $\left(
\ref{2.1}\right)  $ holds. Using the convexity of $f,$ the inequality $\left(
\ref{2.1}\right)  $ and the substitution rule for integration, we obtain the
first and second inequalities of $\left(  \ref{2.2}\right)  $\ and the
inequality $\left(  \ref{2.3}\right)  $. Using simple techniques of
integration and the hypothesis of $p$, we have the following identity%
\[
\int_{x}^{y}f\left(  s\right)  p\left(  s\right)  ds=\int_{x}^{y}f\left(
y+y^{\prime}-s\right)  p\left(  s\right)  ds=\frac{1}{2}\int_{\Omega}f\left(
s\right)  p\left(  s\right)  ds.
\]

By Lemma \ref{l2}, the inequality%
\[
\left[  f\left(  s\right)  +f\left(  y+y^{\prime}-s\right)  \right]  p\left(
s\right)  \leq\left[  f\left(  x\right)  +f\left(  x^{\prime}\right)  \right]
p\left(  s\right)
\]
holds for all $s\in\left[  x,y\right]  $. Integrating the above inequality
over $s$ on $\left[  x,y\right]  $ and using the above identity, we derive the
last inequality of $\left(  \ref{2.2}\right)  .$

Again, using Lemma \ref{l2}, the inequality%
\begin{multline*}
\left[  f\left(  ts+\left(  1-t\right)  y^{\prime}\right)  +f\left(  t\left(
y+y^{\prime}-s\right)  +\left(  1-t\right)  y\right)  \right]  p\left(
s\right) \\
\leq\left[  f\left(  ts+\left(  1-t\right)  y\right)  +f\left(  t\left(
y+y^{\prime}-s\right)  +\left(  1-t\right)  y^{\prime}\right)  \right]
p\left(  s\right)
\end{multline*}
holds for all $t\in\left[  0,1\right]  $ and $s\in\left[  x,y\right]  $.
Integrating the above inequality over $s$ on $\left[  x,y\right]  $ and using
the definitions of $Hp_{1}$ and $Hp_{2}$, we derive $\left(  \ref{2.4}\right)
.$

This completes the proof.
\end{proof}

\begin{remark}
\label{r4}\textit{Using Remark \ref{r3}, }we have the following results:
\end{remark}

\begin{enumerate}
\item \textit{Theorem \ref{t1}} \textit{reduces to Theorem \ref{A3} as }$x=a,
$\textit{\ }$y=y^{\prime}=\frac{a+b}{2},$\textit{\ }$x^{\prime}=b$ \textit{and
}$p\left(  s\right)  \equiv\frac{1}{b-a}$ $\left(  s\in\left[  a,b\right]
\right)  .$

\item \textit{Theorem \ref{t1}} \textit{reduces to Theorems \ref{A12} and
\ref{A13} as }$p\left(  s\right)  \equiv\frac{1}{2\left(  y-x\right)  }%
$\ $\left(  s\in\Omega\right)  .$

\item \textit{Theorem \ref{t1}} \textit{reduces to Theorem \ref{A20} as
}$x=a,$ $y=y^{\prime}=\frac{a+b}{2},x^{\prime}=b$ \textit{and }$p\left(
s\right)  =g\left(  s\right)  $\ $\left(  s\in\left[  a,b\right]  \right)  .$
\end{enumerate}

\begin{theorem}
\label{t2}Let $x,y,y^{\prime},x^{\prime},\Omega,f,p,Hp_{1},Hp_{2}$\ be defined
as above. Then:

\begin{enumerate}
\item The inequalities
\begin{align}
&  \frac{f\left(  y\right)  +f\left(  y^{\prime}\right)  }{2}\int_{\Omega
}p\left(  s\right)  ds\label{2.5}\\
&  \leq\int_{\frac{x+y}{2}}^{y}2f\left(  s\right)  p\left(  2s-y\right)
ds+\int_{y^{\prime}}^{\frac{x^{\prime}+y^{\prime}}{2}}2f\left(  s\right)
p\left(  2s-y^{\prime}\right)  ds\nonumber\\
&  \leq\int_{0}^{1}Hp_{1}\left(  t\right)  dt\nonumber\\
&  \leq\frac{1}{2}\left[  \frac{f\left(  y\right)  +f\left(  y^{\prime
}\right)  }{2}\int_{\Omega}p\left(  s\right)  ds+\int_{\Omega}f\left(
s\right)  p\left(  s\right)  ds\right] \nonumber
\end{align}
and%
\begin{align}
&  f\left(  \frac{y+y^{\prime}}{2}\right)  \int_{\Omega}p\left(  s\right)
ds\label{2.6}\\
&  \leq\int_{\frac{x+y^{\prime}}{2}}^{\frac{y+y^{\prime}}{2}}2f\left(
s\right)  p\left(  2s-y^{\prime}\right)  ds+\int_{\frac{y+y^{\prime}}{2}%
}^{\frac{y+x^{\prime}}{2}}2f\left(  s\right)  p\left(  2s-y\right)
ds\nonumber\\
&  \leq\int_{0}^{1}Hp_{2}\left(  t\right)  dt\nonumber\\
&  \leq\frac{1}{2}\left[  \frac{f\left(  y\right)  +f\left(  y^{\prime
}\right)  }{2}\int_{\Omega}p\left(  s\right)  ds+\int_{\Omega}f\left(
s\right)  p\left(  s\right)  ds\right] \nonumber
\end{align}
hold.

\item If $f$\ is differentiable on $\Omega$ and $p$ is bounded on $\Omega,$
then the inequality
\begin{align}
0  &  \leq\int_{\Omega}f\left(  s\right)  p\left(  s\right)  ds-Hp_{1}\left(
t\right) \label{2.7}\\
&  \leq\left(  1-t\right)  \left[  \left(  f\left(  x\right)  +f\left(
x^{\prime}\right)  \right)  \left(  y-x\right)  -\int_{\Omega}f\left(
s\right)  ds\right]  \sup\limits_{s\in\Omega}p\left(  s\right) \nonumber
\end{align}
holds for all $t\in\left[  0,1\right]  .$

\item If $f$\ is differentiable on $\Omega,$ then the inequalities%
\begin{align}
0  &  \leq\frac{f\left(  x\right)  +f\left(  x^{\prime}\right)  }{2}%
\int_{\Omega}p\left(  s\right)  ds-Hp_{1}\left(  t\right) \label{2.8}\\
&  \leq\left(  y-x\right)  \frac{f^{\prime}\left(  x^{\prime}\right)
-f^{\prime}\left(  x\right)  }{2}\int_{\Omega}p\left(  s\right)  ds\nonumber
\end{align}
and%
\begin{align}
0  &  \leq Hp_{1}\left(  t\right)  -\frac{f\left(  y\right)  +f\left(
y^{\prime}\right)  }{2}\int_{\Omega}p\left(  s\right)  ds\label{2.9}\\
&  \leq\left(  y-x\right)  \frac{f^{\prime}\left(  x^{\prime}\right)
-f^{\prime}\left(  x\right)  }{2}\int_{\Omega}p\left(  s\right)  ds\nonumber
\end{align}
holds for all $t\in\left[  0,1\right]  .$
\end{enumerate}
\end{theorem}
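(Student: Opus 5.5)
For part (1), I would first compute $\int_0^1 Hp_1(t)\,dt$ by Fubini and, for each fixed $s\in[x,y]$, the inner integral in $t$ directly. Write $h=y-s\ge 0$. The first summand $f(ts+(1-t)y)=f(y-th)$ integrates over $t\in[0,1]$ to $\frac1h\int_s^y f(u)\,du$. Likewise the second summand gives $\frac1h\int_{y'}^{y'+h}f(u)\,du$.

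The middle term of (2.5) comes instead from applying Hermite--Hadamard in $t$. For each $s$, the map $t\mapsto f(ts+(1-t)y)$ is convex on $[0,1]$. Its midpoint value is $f\big(\frac{s+y}{2}\big)$, and by (1.1) its integral lies between this midpoint value and $\frac{f(s)+f(y)}{2}$. The same holds for the $y'$-summand, with midpoint $\frac{y+y'-s+y'}{2}$.

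Multiplying by $p(s)$ and integrating over $[x,y]$ gives the following.
\begin{itemize}
\item The lower bound is $\int_x^y\big[f(\frac{s+y}{2})+f(\frac{2y'+y-s}{2})\big]p(s)\,ds$.
\item For the first summand, the substitution $u=\frac{s+y}{2}$ turns it into $\int_{(x+y)/2}^{y}2f(u)p(2u-y)\,du$.
\item For the second summand, the substitution $u=\frac{2y'+y-s}{2}$ gives $s=2y'+y-2u$. The symmetry of $p$ about $\frac{x+x'}{2}=\frac{y+y'}{2}$ gives $p(2y'+y-2u)=p(2u-y')$, yielding $\int_{y'}^{(x'+y')/2}2f(u)p(2u-y')\,du$.
\item The upper bound integrates to $\frac12\big[(f(y)+f(y'))\int_x^y p+\int_x^y (f(s)+f(y+y'-s))p(s)\,ds\big]$. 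By Remark \ref{r3}(1) and the identity $\int_x^y f(s)p(s)ds+\int_x^y f(y+y'-s)p(s)ds=\int_\Omega fp$ established in the proof of Theorem \ref{t1}, this is the right side of (2.5).
\end{itemize}

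The first inequality of (2.5) follows from Lemma \ref{l2}. Take $A=y$, $B=y'$ and $C=\frac{s+y}{2}$, $D=\frac{2y'+y-s}{2}$; here $C\le y$ and $C+D=y+y'$, so $f(y)+f(y')\le f(C)+f(D)$ fails in the wrong direction. I would therefore instead use monotonicity: by Theorem \ref{t1}, $Hp_1(t)\ge Hp_1(0)$, and the middle expression equals the same integral. So the clean route is to observe that for each $s$ the pair $(C,D)$ is an expansion of $(y,y')$ with equal sum. Convexity gives $f(y)+f(y')\le f(C)+f(D)$, since $y,y'$ lie between $C$ and $D$. This is Lemma \ref{l1} with $(A,B)=(C,D)$ and the inner pair $(y,y')$. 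Multiplying by $p(s)$, integrating, and using Remark \ref{r3}(1) finishes (2.5).

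For (2.6) I would repeat the argument with $Hp_2$. Its summands are $f(ts+(1-t)y')$ and $f(t(y+y'-s)+(1-t)y)$, with midpoints $\frac{s+y'}{2}$ and $\frac{2y+y'-s}{2}$. These sum to $y+y'$ and straddle $\frac{y+y'}{2}$, so Lemma \ref{l1} (or just convexity at the midpoint) gives the lower bound $f(\frac{y+y'}{2})$. Substitutions analogous to those above, again using the symmetry of $p$, give the stated middle term. Its upper bound in $t$ is $\frac12[f(s)+f(y')+f(y+y'-s)+f(y)]$, the same as before.

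Part (2): the left inequality is (2.1). For the right, convexity in $t$ gives $Hp_1(t)\ge tHp_1(1)+(1-t)Hp_1(0)$ along the chord from the left endpoint only when combined with the tangent line at $t=1$. Write $Hp_1(1)-Hp_1(t)\le(1-t)Hp_1'(1^-)$, using convexity and differentiability of $f$. Then
\[
Hp_1'(1)=\int_x^y\big[f'(s)(s-y)+f'(y+y'-s)(y-s)\big]p(s)\,ds .
\]
Bound $p\le\sup_\Omega p$ after noting the integrand is nonnegative: with $s\le y$, monotonicity of $f'$ makes $[f'(y+y'-s)-f'(s)](y-s)\ge0$. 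Integrating by parts gives $\int_x^y f'(s)(s-y)ds=(y-x)f(x)-\int_x^y f$, and similarly the other half gives $(y-x)f(x')-\int_{y'}^{x'}f$. This yields (2.7).

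Part (3): the nonnegativity statements are (2.2) and (2.1). For the upper bounds I would bound pointwise in $s$:
\begin{itemize}
\item For (2.8), $f(x)-f(ts+(1-t)y)\le f'(x)(x-\cdot)$ is the wrong direction. Instead use $f(x)-f(z)\le f'(x)(x-z)$ reversed: $f(x)\le f(z)+f'(x)(x-z)$. Since $z\ge x$, this gives $f(x)-f(z)\le -f'(x)(z-x)$. Similarly $f(x')-f(z')\le f'(x')(x'-z')$.
\item Adding these, with $z-x=x'-z'\le y-x$, the sum is at most $(f'(x')-f'(x))(y-x)$. Multiplying by $p$ and integrating gives (2.8) via Remark \ref{r3}(1).
\item For (2.9), use $f(z)-f(y)\le f'(z)(z-y)$ with $z\le y$, so it is at most $-f'(z)(y-z)\le -f'(x)(y-x)$ up to sign care. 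The main obstacle is this sign bookkeeping, which I would handle via $|z-y|\le y-x$ and $f'(x)\le f'(z)\le f'(x')$.
\end{itemize}
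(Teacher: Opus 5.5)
Part (1), inequality (2.7) and the upper bound in (2.8) are correct and follow the paper in substance. Hermite--Hadamard in $t$ for fixed $s$ is exactly what the paper obtains by pairing $t$ with $1-t$ on $[0,\frac12]$. Your final Lemma 1 step, with $(C,D)$ enclosing $(y,y')$, is the paper's first inequality. Your bound through $Hp_1'(1^-)$ is the paper's pointwise estimate $f(s)-f(ts+(1-t)y)\le(1-t)(s-y)f'(s)$ integrated against $p$; stating it pointwise spares you from justifying differentiation under the integral sign. Remove the false starts, though. Lemma 2 does not ``fail in the wrong direction''; you had merely taken $(y,y')$ as the outer pair. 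The chord inequality in part (2) has the wrong sign. Your monotonicity remark can be made rigorous: the middle term of (2.5) is exactly $Hp_1(\frac12)$, so the first inequality of (2.5) is just $Hp_1(0)\le Hp_1(\frac12)$ from (2.1).

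The genuine gap is the upper bound in (2.9). The chain $f(z)-f(y)\le -f'(z)(y-z)\le -f'(x)(y-x)$ is false: for linear $f$ with positive slope it would force $z\le x$. More fundamentally, no estimate of the single term $f(z)-f(y)$, however careful with signs, can produce the factor $f'(x')-f'(x)$. For linear $f$ with nonzero slope that factor vanishes, while $f(z)-f(y)\neq 0$ whenever $z\neq y$. The bound must come from cancellation between the two summands of $Hp_1$.

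With $z=ts+(1-t)y$ and $z'=t(y+y'-s)+(1-t)y'$, one has $y-z=z'-y'=t(y-s)=:d\in[0,y-x]$. The tangent lines at $z$ and $z'$ then give
\[
f(z)+f(z')-f(y)-f(y')\le d\bigl(f'(z')-f'(z)\bigr)\le (y-x)\bigl(f'(x')-f'(x)\bigr),
\]
since $x\le z\le y\le y'\le z'\le x'$ and $f'$ is nondecreasing. Multiplying by $p(s)$ and integrating over $[x,y]$ gives (2.9).

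The paper's route is shorter. By (2.1) and (2.2), both gaps in (2.8) and (2.9) are at most $\frac12\bigl[f(x)+f(x')-f(y)-f(y')\bigr]\int_\Omega p(s)\,ds$. Your own (2.8) computation at $z=y$, $z'=y'$ (tangent lines at $x$ and $x'$, with $x'-y'=y-x$) bounds this by $(y-x)\frac{f'(x')-f'(x)}{2}\int_\Omega p(s)\,ds$. That is the paper's (2.10); it settles (2.8) and (2.9) at once and needs differentiability only at $x$ and $x'$.
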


\begin{proof}
$\left(  1\right)  $ Using simple techniques of integration and the hypothesis
of $p$, we have the following identities
\[
\frac{f\left(  y\right)  +f\left(  y^{\prime}\right)  }{2}\int_{\Omega
}p\left(  s\right)  ds=\int_{x}^{y}\int_{0}^{\frac{1}{2}}2\left[  f\left(
y\right)  +f\left(  y^{\prime}\right)  \right]  p\left(  s\right)  dtds,
\]%
\begin{multline*}
\int_{\frac{x+y}{2}}^{y}2f\left(  s\right)  p\left(  2s-y\right)
ds+\int_{y^{\prime}}^{\frac{x^{\prime}+y^{\prime}}{2}}2f\left(  s\right)
p\left(  2s-y^{\prime}\right)  ds\\
=\int_{x}^{y}\int_{0}^{\frac{1}{2}}2\left[  f\left(  \frac{s+y}{2}\right)
+f\left(  \frac{y+2y^{\prime}-s}{2}\right)  \right]  p\left(  s\right)  dtds,
\end{multline*}%
\begin{multline*}
\int_{0}^{1}Hp_{1}\left(  t\right)  dt=\int_{x}^{y}\int_{0}^{\frac{1}{2}%
}\left[  f\left(  ty+\left(  1-t\right)  s\right)  +f\left(  ts+\left(
1-t\right)  y\right)  \right]  p\left(  s\right)  dtds\\
+\int_{x}^{y}\int_{0}^{\frac{1}{2}}\left[  f\left(  t\left(  y+y^{\prime
}-s\right)  +\left(  1-t\right)  y^{\prime}\right)  \right. \\
\left.  +f\left(  ty^{\prime}+\left(  1-t\right)  \left(  y+y^{\prime
}-s\right)  \right)  \right]  p\left(  s\right)  dtds
\end{multline*}
and%
\begin{multline*}
\frac{1}{2}\left[  \frac{f\left(  y\right)  +f\left(  y^{\prime}\right)  }%
{2}\int_{\Omega}p\left(  s\right)  ds+\int_{\Omega}f\left(  s\right)  p\left(
s\right)  ds\right] \\
=\int_{x}^{y}\int_{0}^{\frac{1}{2}}\left[  f\left(  s\right)  +f\left(
y\right)  \right]  p\left(  s\right)  dtds\\
+\int_{x}^{y}\int_{0}^{\frac{1}{2}}\left[  f\left(  y^{\prime}\right)
+f\left(  y+y^{\prime}-s\right)  \right]  p\left(  s\right)  dtds.
\end{multline*}

By Lemma \ref{l2}, the following inequalities hold for all $t\in\left[
0,\frac{1}{2}\right]  $ and $s\in\left[  x,y\right]  :$%
\[
2\left[  f\left(  y\right)  +f\left(  y^{\prime}\right)  \right]  p\left(
s\right)  \leq2\left[  f\left(  \frac{s+y}{2}\right)  +f\left(  \frac
{y+2y^{\prime}-s}{2}\right)  \right]  p\left(  s\right)  ,
\]%
\[
2f\left(  \frac{s+y}{2}\right)  p\left(  s\right)  \leq\left[  f\left(
ty+\left(  1-t\right)  s\right)  +f\left(  ts+\left(  1-t\right)  y\right)
\right]  p\left(  s\right)  ,
\]%
\begin{align*}
&  2f\left(  \frac{y+2y^{\prime}-s}{2}\right)  p\left(  s\right) \\
&  \leq\left[  f\left(  t\left(  y+\text{ }y^{\prime}-s\right)  +\left(
1-t\right)  y^{\prime}\right)  +f\left(  ty^{\prime}+\left(  1-t\right)
\left(  y+y^{\prime}-s\right)  \right)  \right]  p\left(  s\right)  ,
\end{align*}%
\[
\left[  f\left(  ty+\left(  1-t\right)  s\right)  +f\left(  ts+\left(
1-t\right)  y\right)  \right]  p\left(  s\right)  \leq\left[  f\left(
s\right)  +f\left(  y\right)  \right]  p\left(  s\right)
\]
and%
\begin{multline*}
\left[  f\left(  t\left(  y+y^{\prime}-s\right)  +\left(  1-t\right)
y^{\prime}\right)  +f\left(  ty^{\prime}+\left(  1-t\right)  \left(
y+y^{\prime}-s\right)  \right)  \right]  p\left(  s\right) \\
\leq\left[  f\left(  y^{\prime}\right)  +f\left(  y+y^{\prime}-s\right)
\right]  p\left(  s\right)  .
\end{multline*}

Integrating the above inequalities over $t$ on $\left[  0,\frac{1}{2}\right]
, $ over $s$ on $\left[  x,y\right]  $ and using the above identities, we
derive $\left(  \ref{2.5}\right)  .$

Again, using simple techniques of integration and the hypothesis of $p$, we
have the following identities
\[
f\left(  \frac{y+y^{\prime}}{2}\right)  \int_{\Omega}p\left(  s\right)
ds=\int_{x}^{y}\int_{0}^{\frac{1}{2}}4f\left(  \frac{y+y^{\prime}}{2}\right)
p\left(  s\right)  dtds,
\]%
\begin{align*}
&  \int_{\frac{x+y^{\prime}}{2}}^{\frac{y+y^{\prime}}{2}}2f\left(  s\right)
p\left(  2s-y^{\prime}\right)  ds+\int_{\frac{y+y^{\prime}}{2}}^{\frac
{y+x^{\prime}}{2}}2f\left(  s\right)  p\left(  2s-y\right)  ds\\
&  =\int_{\frac{x+y^{\prime}}{2}}^{\frac{y+y^{\prime}}{2}}2f\left(  s\right)
p\left(  2s-y^{\prime}\right)  ds+\int_{\frac{y+y^{\prime}}{2}}^{\frac
{y+x^{\prime}}{2}}2f\left(  s\right)  p\left(  2y+y^{\prime}-2s\right)  ds\\
&  =\int_{x}^{y}\int_{0}^{\frac{1}{2}}2\left[  f\left(  \frac{s+y^{\prime}}%
{2}\right)  +f\left(  \frac{y^{\prime}+2y-s}{2}\right)  \right]  p\left(
s\right)  dtds,
\end{align*}%
\begin{multline*}
\int_{0}^{1}Hp_{2}\left(  t\right)  dt=\int_{x}^{y}\int_{0}^{\frac{1}{2}%
}\left[  f\left(  ty^{\prime}+\left(  1-t\right)  s\right)  +f\left(
ts+\left(  1-t\right)  y^{\prime}\right)  \right]  p\left(  s\right)  dtds\\
+\int_{x}^{y}\int_{0}^{\frac{1}{2}}\left[  f\left(  t\left(  y+y^{\prime
}-s\right)  +\left(  1-t\right)  y\right)  \right. \\
\left.  +f\left(  ty+\left(  1-t\right)  \left(  y+y^{\prime}-s\right)
\right)  \right]  p\left(  s\right)  dtds
\end{multline*}
and%
\begin{multline*}
\frac{1}{2}\left[  \frac{f\left(  y\right)  +f\left(  y^{\prime}\right)  }%
{2}\int_{\Omega}p\left(  s\right)  ds+\int_{\Omega}f\left(  s\right)  p\left(
s\right)  ds\right] \\
=\int_{x}^{y}\int_{0}^{\frac{1}{2}}\left[  f\left(  s\right)  +f\left(
y^{\prime}\right)  \right]  p\left(  s\right)  dtds\\
+\int_{x}^{y}\int_{0}^{\frac{1}{2}}\left[  f\left(  y\right)  +f\left(
y+y^{\prime}-s\right)  \right]  p\left(  s\right)  dtds.
\end{multline*}

By Lemma \ref{l2}, the following inequalities hold for all $t\in\left[
0,\frac{1}{2}\right]  $ and $s\in\left[  x,y\right]  :$%
\[
4f\left(  \frac{y+y^{\prime}}{2}\right)  p\left(  s\right)  \leq2\left[
f\left(  \frac{s+y^{\prime}}{2}\right)  +f\left(  \frac{y^{\prime}+2y-s}%
{2}\right)  \right]  p\left(  s\right)  ,
\]%
\[
2f\left(  \frac{s+y^{\prime}}{2}\right)  p\left(  s\right)  \leq\left[
f\left(  ty^{\prime}+\left(  1-t\right)  s\right)  +f\left(  ts+\left(
1-t\right)  y^{\prime}\right)  \right]  p\left(  s\right)  ,
\]%
\begin{align*}
&  2f\left(  \frac{y^{\prime}+2y-s}{2}\right)  p\left(  s\right) \\
&  \leq\left[  f\left(  t\left(  y+\text{ }y^{\prime}-s\right)  +\left(
1-t\right)  y\right)  +f\left(  ty+\left(  1-t\right)  \left(  y+y^{\prime
}-s\right)  \right)  \right]  p\left(  s\right)  ,
\end{align*}%
\[
\left[  f\left(  ty^{\prime}+\left(  1-t\right)  s\right)  +f\left(
ts+\left(  1-t\right)  y^{\prime}\right)  \right]  p\left(  s\right)
\leq\left[  f\left(  s\right)  +f\left(  y^{\prime}\right)  \right]  p\left(
s\right)
\]
and%
\begin{multline*}
\left[  f\left(  t\left(  y+y^{\prime}-s\right)  +\left(  1-t\right)
y\right)  +f\left(  ty+\left(  1-t\right)  \left(  y+y^{\prime}-s\right)
\right)  \right]  p\left(  s\right) \\
\leq\left[  f\left(  y\right)  +f\left(  y+y^{\prime}-s\right)  \right]
p\left(  s\right)  .
\end{multline*}

Integrating the above inequalities over $t$ on $\left[  0,\frac{1}{2}\right]
, $ over $s$ on $\left[  x,y\right]  $ and using the above identities, we
derive $\left(  \ref{2.6}\right)  .$

$\left(  2\right)  $ By integration by parts, we have the following identity%
\begin{align*}
&  \int_{x}^{y}\left[  \left(  s-y\right)  f^{\prime}\left(  s\right)
+\left(  y-s\right)  f^{\prime}\left(  y+y^{\prime}-s\right)  \right]  ds\\
&  =\left(  f\left(  x\right)  +f\left(  x^{\prime}\right)  \right)  \left(
y-x\right)  -\int_{\Omega}f\left(  s\right)  ds.
\end{align*}

Now, using the convexity of $f$, the inequalities
\[
f\left(  s\right)  -f\left(  ts+\left(  1-t\right)  y\right)  \leq\left(
1-t\right)  \left(  s-y\right)  f^{\prime}\left(  s\right)
\]
and%
\[
f\left(  y+y^{\prime}-s\right)  -f\left(  t\left(  y+y^{\prime}-s\right)
+\left(  1-t\right)  y^{\prime}\right)  \leq\left(  1-t\right)  \left(
y-s\right)  f^{\prime}\left(  y+y^{\prime}-s\right)
\]
hold for all $t\in\left[  0,1\right]  $ and $s\in\left[  x,y\right]  $. Using
the definition of $Hp_{1},$ the hypothesis of $p,$ the above inequalities, the
convexity of $f$ and the above identity, we have%
\begin{align*}
&  \int_{\Omega}f\left(  s\right)  p\left(  s\right)  ds-Hp_{1}\left(
t\right) \\
&  =\int_{x}^{y}\left[  \left(  f\left(  s\right)  -f\left(  ts+\left(
1-t\right)  y\right)  \right)  p\left(  s\right)  \right. \\
&  \text{ \ \ \ \ \ \ \ \ \ \ \ \ \ \ \ \ \ \ }+\left.  \left(  f\left(
y+y^{\prime}-s\right)  -f\left(  t\left(  y+y^{\prime}-s\right)  +\left(
1-t\right)  y^{\prime}\right)  \right)  p\left(  s\right)  \right]  ds\\
&  \leq\int_{x}^{y}\left[  \left(  1-t\right)  \left(  s-y\right)  f^{\prime
}\left(  s\right)  p\left(  s\right)  \right. \\
&  \text{ \ \ \ \ \ \ \ \ \ \ \ \ \ \ \ \ \ \ \ }+\left.  \left(  1-t\right)
\left(  y-s\right)  f^{\prime}\left(  y+y^{\prime}-s\right)  p\left(
s\right)  \right]  ds\\
&  =\left(  1-t\right)  \int_{x}^{y}\left(  y-s\right)  \left(  f^{\prime
}\left(  y+y^{\prime}-s\right)  -f^{\prime}\left(  s\right)  \right)  p\left(
s\right)  ds\\
&  \leq\left(  1-t\right)  \int_{x}^{y}\left(  y-s\right)  \left(  f^{\prime
}\left(  y+y^{\prime}-s\right)  -f^{\prime}\left(  s\right)  \right)
ds\sup\limits_{s\in\Omega}p\left(  s\right) \\
&  =\left(  1-t\right)  \left[  \left(  f\left(  x\right)  +f\left(
x^{\prime}\right)  \right)  \left(  y-x\right)  -\int_{\Omega}f\left(
s\right)  ds\right]  \sup\limits_{s\in\Omega}p\left(  s\right)  .
\end{align*}
\ 

Using the above inequality and $\left(  \ref{2.1}\right)  $, we have derive
$\left(  \ref{2.7}\right)  .$

On the other hand, we have
\[
\frac{f\left(  x\right)  -f\left(  y\right)  }{2}\leq\frac{1}{2}\left(
x-y\right)  f^{\prime}\left(  x\right)
\]
and%
\[
\frac{f\left(  x^{\prime}\right)  -f\left(  y^{\prime}\right)  }{2}\leq
\frac{1}{2}\left(  x^{\prime}-y^{\prime}\right)  f^{\prime}\left(  x^{\prime
}\right)  .
\]
Multiplying the above inequalities by $\int_{\Omega}p\left(  s\right)  ds$ and
taking their sum:
\begin{align}
&  \frac{f\left(  x\right)  +f\left(  x^{\prime}\right)  }{2}\int_{\Omega
}p\left(  s\right)  ds-\frac{f\left(  y\right)  +f\left(  y^{\prime}\right)
}{2}\int_{\Omega}p\left(  s\right)  ds\label{2.10}\\
&  \leq\left[  \frac{1}{2}\left(  x-y\right)  f^{\prime}\left(  x\right)
+\frac{1}{2}\left(  x^{\prime}-y^{\prime}\right)  f^{\prime}\left(  x^{\prime
}\right)  \right]  \int_{\Omega}p\left(  s\right)  ds\nonumber\\
&  =\left(  y-x\right)  \frac{f^{\prime}\left(  x^{\prime}\right)  -f^{\prime
}\left(  x\right)  }{2}\int_{\Omega}p\left(  s\right)  ds.\nonumber
\end{align}
$\left(  3\right)  $ The inequalities $\left(  \ref{2.8}\right)  $ and
$\left(  \ref{2.9}\right)  $ follow from $\left(  \ref{2.1}\right)  ,$
$\left(  \ref{2.2}\right)  $ and $\left(  \ref{2.10}\right)  .$

This completes the proof.
\end{proof}

\begin{remark}
\label{r5}\textit{Using Remark \ref{r3}, }we have the following results:
\end{remark}

\begin{enumerate}
\item \textit{Theorem \ref{t2}} \textit{reduces to Theorem \ref{A6} as }$x=a,
$\textit{\ }$y=y^{\prime}=\frac{a+b}{2},$\textit{\ }$x^{\prime}=b$ \textit{and
}$p\left(  s\right)  \equiv\frac{1}{b-a}$ $\left(  s\in\left[  a,b\right]
\right)  .$

\item \textit{Theorem \ref{t2}} \textit{reduces to Theorem \ref{A16} as
}$p\left(  s\right)  \equiv\frac{1}{2\left(  y-x\right)  }$\ $\left(
s\in\Omega\right)  .$

\item \textit{Theorem \ref{t2}} \textit{reduces to Theorem \ref{A23} as
}$x=a,$ $y=y^{\prime}=\frac{a+b}{2},x^{\prime}=b$ \textit{and }$p\left(
s\right)  =g\left(  s\right)  $\ $\left(  s\in\left[  a,b\right]  \right)  .$
\end{enumerate}

\begin{theorem}
\label{t3}Let $x,y,y^{\prime},x^{\prime},\Omega,f,p,Hp_{1},Pp_{1}$\ be defined
as above. Then we have the following results:

\begin{enumerate}
\item $Pp_{1}$\ is convex on $\left[  0,1\right]  .$

\item $Pp_{1}$ is increasing on $\left[  0,1\right]  $\ and the following
inequalities%
\begin{equation}
\int_{\Omega}f\left(  s\right)  p\left(  s\right)  ds=Pp_{1}\left(  0\right)
\leq Pp_{1}\left(  t\right)  \leq Pp_{1}\left(  1\right)  =\frac{f\left(
x\right)  +f\left(  x^{\prime}\right)  }{2}\int_{\Omega}p\left(  s\right)  ds,
\label{2.11}%
\end{equation}%
\begin{align}
Pp_{1}\left(  t\right)   &  \leq\left(  1-t\right)  \int_{\Omega}f\left(
s\right)  p\left(  s\right)  ds+t\cdot\frac{f\left(  x\right)  +f\left(
x^{\prime}\right)  }{2}\int_{\Omega}p\left(  s\right)  ds\label{2.12}\\
&  \leq\frac{f\left(  x\right)  +f\left(  x^{\prime}\right)  }{2}\int_{\Omega
}p\left(  s\right)  ds\nonumber
\end{align}
and%
\begin{equation}
Hp_{1}\left(  t\right)  \leq Pp_{1}\left(  t\right)  \text{ } \label{2.13}%
\end{equation}
hold for all $t\in\left[  0,1\right]  .$
\end{enumerate}
\end{theorem}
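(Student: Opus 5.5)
Our approach follows that of Theorem \ref{t1}: everything reduces to pointwise comparisons handled by Lemma \ref{l2}, integrated over $s\in[x,y]$ against the weight $p(s)$. Convexity of $Pp_{1}$ is immediate: for fixed $s$, each of $t\mapsto f(tx+(1-t)s)$ and $t\mapsto f(tx^{\prime}+(1-t)(x+x^{\prime}-s))$ is convex, because it is $f$ composed with an affine map. Multiplying by $p(s)\geq0$ and integrating preserves convexity.

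\emph{Endpoint values and monotonicity.} First compute the endpoints. At $t=0$ we get $Pp_{1}(0)=\int_{x}^{y}[f(s)+f(x+x^{\prime}-s)]p(s)ds$. Substitute $u=x+x^{\prime}-s$ in the second term; since $p$ is symmetric about $\frac{x+x^{\prime}}{2}$ and $x+x^{\prime}-[x,y]=[y^{\prime},x^{\prime}]$, this gives $\int_{\Omega}f(s)p(s)ds$. At $t=1$ we get $[f(x)+f(x^{\prime})]\int_{x}^{y}p(s)ds$, which equals $\frac{f(x)+f(x^{\prime})}{2}\int_{\Omega}p(s)ds$ by Remark \ref{r3}(1). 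For monotonicity, take $t_{1}<t_{2}$ and fix $s\in[x,y]$. Set $A=t_{2}x+(1-t_{2})s$, $C=t_{1}x+(1-t_{1})s$, $D=t_{1}x^{\prime}+(1-t_{1})(x+x^{\prime}-s)$ and $B=t_{2}x^{\prime}+(1-t_{2})(x+x^{\prime}-s)$. Then $A+B=C+D=x+x^{\prime}$. Also $A\leq C$, and $C\leq\frac{x+x^{\prime}}{2}\leq D\leq B$, because the two points are reflections of each other about the midpoint. So Lemma \ref{l2} with $E=s$ gives $[f(C)+f(D)]p(s)\leq[f(A)+f(B)]p(s)$. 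Integrating over $[x,y]$ yields $Pp_{1}(t_{1})\leq Pp_{1}(t_{2})$, and together with the endpoint values this gives (\ref{2.11}).

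\emph{The bound (\ref{2.12}).} Apply convexity of $f$ directly: $f(tx+(1-t)s)\leq tf(x)+(1-t)f(s)$, and similarly for the reflected term. Multiply by $p(s)$, integrate over $[x,y]$, and use the endpoint identities above to get the first inequality. The second inequality follows from (\ref{2.11}), since $\int_{\Omega}fp\leq\frac{f(x)+f(x^{\prime})}{2}\int_{\Omega}p$.

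\emph{The comparison (\ref{2.13}).} This is the step needing care. I would compare integrands pointwise for each $s\in[x,y]$, pairing terms that sum to the same value. The $Hp_{1}$ pair is $C=ts+(1-t)y$ and $D=t(y+y^{\prime}-s)+(1-t)y^{\prime}$, with $C+D=y+y^{\prime}=x+x^{\prime}$. The $Pp_{1}$ pair is $A=tx+(1-t)s$ and $B=tx^{\prime}+(1-t)(x+x^{\prime}-s)$, with the same sum. Since $A$ and $C$ both lie on the left of the midpoint, it suffices to check $A\leq C$, i.e. $tx+(1-t)s\leq ts+(1-t)y$. Using $x\leq s\leq y$ we have $tx\leq ts$ and $(1-t)s\leq(1-t)y$, so this holds. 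Also $C\leq y\leq\frac{y+y^{\prime}}{2}$, so $C\leq D$, and $B\geq D$ follows from $A\leq C$ and the equal sums. Lemma \ref{l2} then gives the pointwise inequality; integrating over $s\in[x,y]$ yields $Hp_{1}(t)\leq Pp_{1}(t)$ for all $t\in[0,1]$.
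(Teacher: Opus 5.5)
Your proof is correct. For convexity, monotonicity, the endpoint values in (\ref{2.11}) and the bound (\ref{2.12}), it is essentially the paper's argument: the same Lemma \ref{l2} comparison between $t_{1}<t_{2}$, then convexity of $f$ together with the endpoint identities. The one place you diverge is (\ref{2.13}). There the paper makes no new pointwise comparison. It simply chains
$Hp_{1}(t)\leq Hp_{1}(1)=\int_{\Omega}f(s)p(s)\,ds=Pp_{1}(0)\leq Pp_{1}(t)$,
using the monotonicity of $Hp_{1}$ from (\ref{2.1}) in Theorem \ref{t1} and the monotonicity of $Pp_{1}$ you had just proved. Your direct pairing $A\leq C\leq D\leq B$, which you verify correctly, has two advantages: it is self-contained, not relying on Theorem \ref{t1}, and it gives the inequality at the level of integrands for each fixed $s$ and $t$. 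The paper's chain is a one-line consequence instead, and it places the intermediate quantity $\int_{\Omega}fp$ between $Hp_{1}(t)$ and $Pp_{1}(t)$ for every $t$; this is what is later recorded in Corollary \ref{c1}. So the step you flagged as ``needing care'' is in fact immediate once (\ref{2.1}) and (\ref{2.11}) are in hand.
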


\begin{proof}
$\left(  1\right)  $ It is easily observed from the convexity of $f$ and the
hypothesis of $p$ that $Pp_{1}$ is convex on $\left[  0,1\right]  .$

$(2)$ Let $t_{1}<t_{2}$ in $\left[  0,1\right]  .$ By Lemma \ref{l2}, the
following inequality holds for all $s\in\left[  x,y\right]  $%
\begin{multline*}
\left[  f\left(  t_{1}x+\left(  1-t_{1}\right)  s\right)  +f\left(
t_{1}x^{\prime}+\left(  1-t_{1}\right)  \left(  x+x^{\prime}-s\right)
\right)  \right]  p\left(  s\right) \\
\leq\left[  f\left(  t_{2}x+\left(  1-t_{2}\right)  s\right)  +f\left(
t_{2}x^{\prime}+\left(  1-t_{2}\right)  \left(  x+x^{\prime}-s\right)
\right)  \right]  p\left(  s\right)  .
\end{multline*}

Integrating the above inequality over $s$ on $\left[  x,y\right]  $ and using
the definition of $Pp_{1},$ we have
\[
Pp_{1}\left(  t_{1}\right)  \leq Pp_{1}\left(  t_{2}\right)  .
\]

Thus, $Pp_{1}$\ is increasing on $\left[  0,1\right]  $ and $\left(
\ref{2.11}\right)  $ holds.

Using the convexity of $f,$ the hypothesis of $p,$ the inequality $\left(
\ref{2.11}\right)  $ and the substitution rule for integration, the inequality
$\left(  \ref{2.12}\right)  $\ holds.

Finally, $\left(  \ref{2.13}\right)  $ follows from $\left(  \ref{2.1}\right)
$ and $\left(  \ref{2.11}\right)  .$

This completes the proof.
\end{proof}

\begin{remark}
\label{r6}\textit{Using Remark \ref{r3}, }we have the following results:
\end{remark}

\begin{enumerate}
\item \textit{Theorem \ref{t3}} \textit{reduces to Theorem \ref{A5} as }$x=a,
$\textit{\ }$y=y^{\prime}=\frac{a+b}{2},$\textit{\ }$x^{\prime}=b$ \textit{and
}$p\left(  s\right)  \equiv\frac{1}{b-a}$ $\left(  s\in\left[  a,b\right]
\right)  .$

\item \textit{Theorem \ref{t3}} \textit{reduces to Theorem \ref{A14} as
}$p\left(  s\right)  \equiv\frac{1}{2\left(  y-x\right)  }$\ $\left(
s\in\Omega\right)  .$

\item \textit{Theorem \ref{t3}} \textit{reduces to Theorem \ref{A21} as
}$x=a,$ $y=y^{\prime}=\frac{a+b}{2},x^{\prime}=b$ \textit{and }$p\left(
s\right)  =g\left(  s\right)  $\ $\left(  s\in\left[  a,b\right]  \right)  .$
\end{enumerate}

\begin{theorem}
\label{t4}Let $x,y,y^{\prime},x^{\prime},\Omega,f,p,Hp_{1},Pp_{1}$\ be defined
as above. Then we have the following results:

\begin{enumerate}
\item The inequality%
\begin{align}
\int_{\Omega}f\left(  s\right)  p\left(  s\right)  ds  &  \leq%
{\displaystyle\int\nolimits_{x}^{\frac{x+y}{2}}}
2f\left(  s\right)  p\left(  2s-x\right)  ds+\int_{\frac{x^{\prime}+y^{\prime
}}{2}}^{x^{\prime}}2f\left(  s\right)  p\left(  2s-x^{\prime}\right)
ds\label{2.14}\\
&  \leq\int_{0}^{1}Pp_{1}\left(  t\right)  dt\nonumber\\
&  \leq\frac{1}{2}\left[  \frac{f\left(  x\right)  +f\left(  x^{\prime
}\right)  }{2}\int_{\Omega}p\left(  s\right)  ds+\int_{\Omega}f\left(
s\right)  p\left(  s\right)  ds\right] \nonumber
\end{align}
\textit{holds.}

\item If $f$\ is differentiable on $\Omega$ and $p$ is bounded on $\Omega,$
then the inequalities%
\begin{align}
0  &  \leq t\left[  \int_{\Omega}f\left(  s\right)  ds-\left(  f\left(
y\right)  +f\left(  y^{\prime}\right)  \right)  \left(  y-x\right)  \right]
\cdot\inf\limits_{s\in\Omega}p\left(  s\right) \label{2.15}\\
&  \leq Pp_{1}\left(  t\right)  -\int_{\Omega}f\left(  s\right)  p\left(
s\right)  ds,\nonumber
\end{align}%
\begin{align}
0  &  \leq Pp_{1}\left(  t\right)  -\frac{f\left(  y\right)  +f\left(
y^{\prime}\right)  }{2}\int_{\Omega}p\left(  s\right)  ds\label{2.16}\\
&  \leq\left(  y-x\right)  \frac{f^{\prime}\left(  x^{\prime}\right)
-f^{\prime}\left(  x\right)  }{2}\int_{\Omega}p\left(  s\right)  ds,\nonumber
\end{align}%
\begin{align}
0  &  \leq\frac{f\left(  x\right)  +f\left(  x^{\prime}\right)  }{2}%
\int_{\Omega}p\left(  s\right)  ds-Pp_{1}\left(  t\right) \label{2.17}\\
&  \leq\left(  y-x\right)  \frac{f^{\prime}\left(  x^{\prime}\right)
-f^{\prime}\left(  x\right)  }{2}\int_{\Omega}p\left(  s\right)  ds\nonumber
\end{align}
and%
\begin{equation}
0\leq Pp_{1}\left(  t\right)  -Hp_{1}\left(  t\right)  \leq\left(  y-x\right)
\frac{f^{\prime}\left(  x^{\prime}\right)  -f^{\prime}\left(  x\right)  }%
{2}\int_{\Omega}p\left(  s\right)  ds \label{2.18}%
\end{equation}
hold for all $t\in\left[  0,1\right]  .$
\end{enumerate}
\end{theorem}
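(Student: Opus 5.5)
The plan is to follow the pattern of the proof of Theorem \ref{t2}. Every quantity in (\ref{2.14}) will be written as a double integral $\int_{x}^{y}\int_{0}^{1/2}(\cdots)\,p(s)\,dt\,ds$. The integrands will then be compared pointwise using Lemma \ref{l1}/Lemma \ref{l2}, and the resulting inequalities integrated. The part (2) estimates will come from the gradient inequality for $f$ together with the bounds already proved in Theorems \ref{t1}--\ref{t3}.

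\textbf{Part (1).} I would first establish four identities. By the symmetry of $p$ about $\frac{x+x^{\prime}}{2}$, $\int_{\Omega}f(s)p(s)\,ds=\int_{x}^{y}\int_{0}^{1/2}2\left[f(s)+f(x+x^{\prime}-s)\right]p(s)\,dt\,ds$. The substitution $u=\frac{s+x}{2}$ in the first middle integral, and $2u-x^{\prime}=x+x^{\prime}-s$ in the second (using $p(x+x^{\prime}-s)=p(s)$), give the middle term of (\ref{2.14}) as $\int_{x}^{y}\int_{0}^{1/2}2\left[f\left(\frac{x+s}{2}\right)+f\left(\frac{2x^{\prime}+x-s}{2}\right)\right]p(s)\,dt\,ds$. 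Splitting $[0,1]$ into $[0,\frac12]$ and $[\frac12,1]$ and replacing $t$ by $1-t$ on the second half gives
\begin{multline*}
\int_{0}^{1}Pp_{1}(t)\,dt=\int_{x}^{y}\int_{0}^{1/2}\bigl[f(tx+(1-t)s)+f((1-t)x+ts)\\
+f(tx^{\prime}+(1-t)(x+x^{\prime}-s))+f((1-t)x^{\prime}+t(x+x^{\prime}-s))\bigr]p(s)\,dt\,ds .
\end{multline*}
The right-hand side of (\ref{2.14}) equals $\int_{x}^{y}\int_{0}^{1/2}\left[f(x)+f(s)+f(x^{\prime})+f(x+x^{\prime}-s)\right]p(s)\,dt\,ds$.

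The pointwise comparisons, for $s\in[x,y]$ and $t\in[0,\frac12]$, are then the following. First, $f(s)+f(x+x^{\prime}-s)\le f\left(\frac{x+s}{2}\right)+f\left(\frac{2x^{\prime}+x-s}{2}\right)$ by Lemma \ref{l2}, since $\frac{x+s}{2}\le s\le x+x^{\prime}-s\le\frac{2x^{\prime}+x-s}{2}$ and both pairs have sum $x+x^{\prime}$. Second, $2f\left(\frac{x+s}{2}\right)\le f(tx+(1-t)s)+f((1-t)x+ts)$, and similarly on the right side, by Lemma \ref{l2}. Third, $f(tx+(1-t)s)+f((1-t)x+ts)\le f(x)+f(s)$, and similarly on the right side, again by Lemma \ref{l2}. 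Multiplying by $p(s)\ge 0$ and integrating yields (\ref{2.14}).

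\textbf{Part (2).} For (\ref{2.15}) I would write $Pp_{1}(t)-\int_{\Omega}fp$ as $\int_{x}^{y}\bigl[f(tx+(1-t)s)-f(s)+f(tx^{\prime}+(1-t)(x+x^{\prime}-s))-f(x+x^{\prime}-s)\bigr]p(s)\,ds$. The supporting-line inequality $f(v)-f(w)\ge(v-w)f^{\prime}(w)$ then bounds this below by
\[
t\int_{x}^{y}(s-x)\left(f^{\prime}(x+x^{\prime}-s)-f^{\prime}(s)\right)p(s)\,ds .
\]
The integrand is nonnegative because $f^{\prime}$ is increasing and $x+x^{\prime}-s\ge s$, so replacing $p$ by $\inf_{\Omega}p$ keeps the inequality. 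Integrating by parts, $\int_{x}^{y}(s-x)\left(f^{\prime}(x+x^{\prime}-s)-f^{\prime}(s)\right)ds=\int_{\Omega}f(s)\,ds-(f(y)+f(y^{\prime}))(y-x)$. This gives (\ref{2.15}), including its left-hand $0$.

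The remaining three estimates use only the bounds already proved. For (\ref{2.16}), the lower bound follows from (\ref{2.1}) and (\ref{2.13}), and the upper bound from (\ref{2.11}) combined with (\ref{2.10}). For (\ref{2.17}), the lower bound is (\ref{2.11}); for the upper bound, $Pp_{1}\ge Hp_{1}\ge\frac{f(y)+f(y^{\prime})}{2}\int_{\Omega}p$, combined with (\ref{2.10}). For (\ref{2.18}), the lower bound is (\ref{2.13}); for the upper bound, $Pp_{1}-Hp_{1}\le\frac{f(x)+f(x^{\prime})}{2}\int_{\Omega}p-\frac{f(y)+f(y^{\prime})}{2}\int_{\Omega}p$, followed by (\ref{2.10}).

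The main obstacle is bookkeeping in part (1): checking the change of variables that rewrites the second middle integral using the symmetry of $p$, and verifying the ordering hypotheses of Lemma \ref{l2} for the first comparison. The sign and integration-by-parts step in (\ref{2.15}) is the other point that needs care.
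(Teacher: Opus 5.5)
Your proposal is correct and follows essentially the same route as the paper: the same four double-integral representations over $[x,y]\times[0,\frac{1}{2}]$ compared pointwise via Lemma \ref{l2} for (\ref{2.14}), the same supporting-line estimate followed by integration by parts for (\ref{2.15}), and (\ref{2.16})--(\ref{2.18}) deduced from (\ref{2.1}), (\ref{2.10}), (\ref{2.11}) and (\ref{2.13}). The only minor difference is that you obtain the left-hand $0$ in (\ref{2.15}) directly from the pointwise nonnegativity of $(s-x)\left(f^{\prime}(x+x^{\prime}-s)-f^{\prime}(s)\right)$, whereas the paper cites Theorem \ref{A12}, so your version is slightly more self-contained.
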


\begin{proof}
$\left(  1\right)  $ Using simple techniques of integration and the hypothesis
of $p$, we have the following identities%
\[
\int_{\Omega}f\left(  s\right)  p\left(  s\right)  ds=\int_{x}^{y}\int%
_{0}^{\frac{1}{2}}2\left[  f\left(  s\right)  +f\left(  x+x^{\prime}-s\right)
\right]  p\left(  s\right)  dtds,
\]%
\begin{align*}
&
{\displaystyle\int\nolimits_{x}^{\frac{x+y}{2}}}
2f\left(  s\right)  p\left(  2s-x\right)  ds+\int_{\frac{x^{\prime}+y^{\prime
}}{2}}^{x^{\prime}}2f\left(  s\right)  p\left(  2s-x^{\prime}\right)  ds\\
&  =%
{\displaystyle\int\nolimits_{x}^{\frac{x+y}{2}}}
2f\left(  s\right)  p\left(  2s-x\right)  ds+\int_{\frac{x^{\prime}+y^{\prime
}}{2}}^{x^{\prime}}2f\left(  s\right)  p\left(  x+2x^{\prime}-2s\right)  ds\\
&  =\int_{x}^{y}\int_{0}^{\frac{1}{2}}2\left[  f\left(  \frac{x+s}{2}\right)
+f\left(  \frac{x+2x^{\prime}-s}{2}\right)  \right]  p\left(  s\right)  dtds,
\end{align*}%
\begin{multline*}
\int_{0}^{1}Pp_{1}\left(  t\right)  dt=\int_{x}^{y}\int_{0}^{\frac{1}{2}%
}\left[  f\left(  ts+\left(  1-t\right)  x\right)  +f\left(  tx+\left(
1-t\right)  s\right)  \right]  p\left(  s\right)  dtds\\
+\int_{x}^{y}\int_{0}^{\frac{1}{2}}\left[  f\left(  tx^{\prime}+\left(
1-t\right)  \left(  x+x^{\prime}-s\right)  \right)  \right. \\
\left.  +f\left(  t\left(  x+x^{\prime}-s\right)  +\left(  1-t\right)
x^{\prime}\right)  \right]  p\left(  s\right)  dtds
\end{multline*}
and
\begin{multline*}
\frac{1}{2}\left[  \frac{f\left(  x\right)  +f\left(  x^{\prime}\right)  }%
{2}\int_{\Omega}g\left(  s\right)  ds+\int_{\Omega}f\left(  s\right)  g\left(
s\right)  ds\right] \\
=\int_{x}^{y}\int_{0}^{\frac{1}{2}}\left[  f\left(  x\right)  +f\left(
s\right)  \right]  p\left(  s\right)  dtds\\
+\int_{x}^{y}\int_{0}^{\frac{1}{2}}\left[  f\left(  x+x^{\prime}-s\right)
+f\left(  x^{\prime}\right)  \right]  p\left(  s\right)  dtds.
\end{multline*}

By Lemma \ref{l2}, the following inequalities hold for all $t\in\left[
0,\frac{1}{2}\right]  $ and $s\in\left[  x,y\right]  $%
\[
2\left[  f\left(  s\right)  +f\left(  x+x^{\prime}-s\right)  \right]  p\left(
s\right)  \leq2\left[  f\left(  \frac{x+s}{2}\right)  +f\left(  \frac
{x+2x^{\prime}-s}{2}\right)  \right]  p\left(  s\right)  ,
\]%
\[
2f\left(  \frac{x+s}{2}\right)  p\left(  s\right)  \leq\left[  f\left(
ts+\left(  1-t\right)  x\right)  +f\left(  tx+\left(  1-t\right)  s\right)
\right]  p\left(  s\right)  ,
\]%
\begin{align*}
&  2f\left(  \frac{x+2x^{\prime}-s}{2}\right)  p\left(  s\right) \\
&  \leq\left[  f\left(  tx^{\prime}+\left(  1-t\right)  \left(  x+x^{\prime
}-s\right)  \right)  +f\left(  t\left(  x+x^{\prime}-s\right)  +\left(
1-t\right)  x^{\prime}\right)  \right]  p\left(  s\right)  ,
\end{align*}%
\[
\left[  f\left(  ts+\left(  1-t\right)  x\right)  +f\left(  tx+\left(
1-t\right)  s\right)  \right]  p\left(  s\right)  \leq\left[  f\left(
x\right)  +f\left(  s\right)  \right]  p\left(  s\right)
\]
and%
\begin{multline*}
\left[  f\left(  tx^{\prime}+\left(  1-t\right)  \left(  x+x^{\prime
}-s\right)  \right)  +f\left(  t\left(  x+x^{\prime}-s\right)  +\left(
1-t\right)  x^{\prime}\right)  \right]  p\left(  s\right) \\
\leq\left[  f\left(  x+x^{\prime}-s\right)  +f\left(  x^{\prime}\right)
\right]  p\left(  s\right)  .
\end{multline*}

Integrating the above inequalities over $t$ on $\left[  0,\frac{1}{2}\right]
, $ over $s$ on $\left[  x,y\right]  $ and using the above identities, we
derive $\left(  \ref{2.14}\right)  .$

$\left(  2\right)  $ Using the integrating by parts, we have the following
identitiy%
\begin{multline*}
\int_{x}^{y}\left[  \left(  x-s\right)  f^{\prime}\left(  s\right)  +\left(
s-x\right)  f^{\prime}\left(  x+x^{\prime}-s\right)  \right]  ds\\
=\int_{\Omega}f\left(  s\right)  ds-\left(  f\left(  y\right)  +f\left(
y^{\prime}\right)  \right)  \left(  y-x\right)  .
\end{multline*}

Now, using the convexity of $f$, the inequalities%
\[
f\left(  tx+\left(  1-t\right)  s\right)  -f\left(  s\right)  \geq t\left(
x-s\right)  f^{\prime}\left(  s\right)
\]
and
\[
f\left(  tx^{\prime}+\left(  1-t\right)  \left(  x+x^{\prime}-s\right)
\right)  -f\left(  x+x^{\prime}-s\right)  \geq t\left(  s-x\right)  f^{\prime
}\left(  x+x^{\prime}-s\right)
\]
hold for all $t\in\left[  0,1\right]  $ and $s\in\left[  x,y\right]  $. Let
$t\in\left[  0,1\right]  .$ Using the definition of $Pp_{1},$ the hypothesis
of $p,$ the above inequalities, the convexity of $f$ and the above identity,
we have%
\begin{align*}
&  Pp_{1}\left(  t\right)  -\int_{\Omega}f\left(  s\right)  p\left(  s\right)
ds\\
&  =\int_{x}^{y}\left[  \left(  f\left(  tx+\left(  1-t\right)  s\right)
-f\left(  s\right)  \right)  p\left(  s\right)  \right. \\
&  \text{ \ \ \ \ \ \ \ \ \ \ }+\left.  \left(  f\left(  tx^{\prime}+\left(
1-t\right)  \left(  x+x^{\prime}-s\right)  \right)  -f\left(  x+x^{\prime
}-s\right)  \right)  p\left(  s\right)  \right]  ds\\
&  \geq\int_{x}^{y}\left[  t\left(  x-s\right)  f^{\prime}\left(  s\right)
g\left(  s\right)  +t\left(  s-x\right)  f^{\prime}\left(  x+x^{\prime
}-s\right)  p\left(  s\right)  \right]  ds\\
&  =\int_{x}^{y}t\left(  s-x\right)  \left(  f^{\prime}\left(  x+x^{\prime
}-s\right)  -f^{\prime}\left(  s\right)  \right)  p\left(  s\right)  ds\\
&  \geq\int_{x}^{y}t\left(  s-x\right)  \left(  f^{\prime}\left(  x+x^{\prime
}-s\right)  -f^{\prime}\left(  s\right)  \right)  ds\cdot\inf\limits_{s\in
\left[  x,y\right]  }p\left(  s\right) \\
&  =t\int_{x}^{y}\left[  \left(  x-s\right)  f^{\prime}\left(  s\right)
g\left(  s\right)  +\left(  s-x\right)  f^{\prime}\left(  x+x^{\prime
}-s\right)  \right]  ds\cdot\inf\limits_{s\in\left[  x,y\right]  }p\left(
s\right) \\
&  =t\left[  \int_{\Omega}f\left(  s\right)  ds-\left(  f\left(  y\right)
+f\left(  y^{\prime}\right)  \right)  \left(  y-x\right)  \right]  \cdot
\inf\limits_{s\in\left[  x,y\right]  }p\left(  s\right)  .
\end{align*}

Using the above inequality and Theorem \ref{A12}, we derive $\left(
\ref{2.15}\right)  .$

Finally, $\left(  \ref{2.16}\right)  -\left(  \ref{2.18}\right)  $ follow from
$\left(  \ref{2.1}\right)  ,$ $\left(  \ref{2.10}\right)  ,$ $\left(
\ref{2.11}\right)  $ and $\left(  \ref{2.13}\right)  .$

This completes the proof.
\end{proof}

\begin{remark}
\label{r7}\textit{Using Remark \ref{r3}, Theorem \ref{t4}} \textit{reduces to
Theorem \ref{A17} as }$p\left(  s\right)  \equiv\frac{1}{2\left(  y-x\right)
}$\ $\left(  s\in\Omega\right)  .$
\end{remark}

\begin{theorem}
\label{t5}Let $x,y,y^{\prime},x^{\prime},\Omega,f,p,Hp_{1},Hp_{2},Fp_{1}$\ be
defined as above. Then we have the following results:

\begin{enumerate}
\item $Fp_{1}$\ is convex on $\left[  0,1\right]  $\ and symmetric about
$\frac{1}{2}.$

\item $Fp_{1}$ is decreasing on $\left[  0,\frac{1}{2}\right]  $ and
increasing on $\left[  \frac{1}{2},1\right]  ,$%
\begin{equation}
\sup\limits_{t\in\left[  0,1\right]  }Fp_{1}\left(  t\right)  =Fp_{1}\left(
0\right)  =Fp_{1}\left(  1\right)  =\int_{\Omega}f\left(  s\right)  p\left(
s\right)  ds\ \int_{\Omega}p\left(  s\right)  ds \label{2.19}%
\end{equation}
and
\begin{equation}
\inf\limits_{t\in\left[  0,1\right]  }Fp_{1}\left(  t\right)  =Fp_{1}\left(
\frac{1}{2}\right)  =\int_{\Omega}\int_{\Omega}f\left(  \frac{s+u}{2}\right)
p\left(  s\right)  p\left(  u\right)  dsdu. \label{2.20}%
\end{equation}

\item We have:%
\begin{equation}
\frac{Hp_{1}\left(  t\right)  +Hp_{2}\left(  t\right)  }{2}\int_{\Omega
}p\left(  s\right)  ds\leq Fp_{1}\left(  t\right)  \text{ \ \ }\left(
t\in\left[  0,1\right]  \right)  \label{2.21}%
\end{equation}
and%
\begin{equation}
\frac{f\left(  y\right)  +2f\left(  \frac{y+y^{\prime}}{2}\right)  +f\left(
y^{\prime}\right)  }{4}\left[  \int_{\Omega}p\left(  s\right)  ds\right]
^{2}\leq Fp_{1}\left(  \frac{1}{2}\right)  . \label{2.22}%
\end{equation}

\end{enumerate}
\end{theorem}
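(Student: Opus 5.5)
For part (1), we write $Fp_{1}(t)$ as a double integral of $f(ts+(1-t)u)\,p(s)p(u)$. Convexity follows at once: for fixed $s,u$ the map $t\mapsto f(ts+(1-t)u)$ is convex and the weight $p(s)p(u)\ge 0$. Symmetry about $\frac12$ comes from swapping the roles of $s$ and $u$ (Fubini), which gives $Fp_{1}(1-t)=Fp_{1}(t)$. A convex function on $[0,1]$ that is symmetric about $\frac12$ is decreasing on $[0,\frac12]$ and increasing on $[\frac12,1]$. Hence its supremum is attained at $t=0,1$ and its infimum at $t=\frac12$. Evaluating, $Fp_{1}(1)=\int_{\Omega}f(s)p(s)ds\int_{\Omega}p(u)du$ and $Fp_{1}(\frac12)$ is the stated double integral, which gives (\ref{2.19}) and (\ref{2.20}). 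So part (2) is immediate from part (1).

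For (\ref{2.21}) we use Jensen in the $u$-variable, in the form of Fej\'{e}r's first inequality applied to the measure $p(u)du$ on $\Omega$. The key observation is that $\Omega$ and $p$ are symmetric about $m=\frac{x+x'}{2}=\frac{y+y'}{2}$. We split $\Omega\times\Omega$ according to whether $s\in[x,y]$ or $s\in[y',x']$, and $u\in[x,y]$ or $u\in[y',x']$. We then pair each $u\in[x,y]$ with its reflection $x+x'-u$, using Lemma \ref{l2} (or plain convexity) to average. This is done for fixed $s\in[x,y]$ together with its reflection $y+y'-s\in[y',x']$ (note $y+y'=x+x'$).

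More concretely, I would aim to prove for each fixed $s\in[x,y]$
\[
\int_{\Omega}\left[f(ts+(1-t)u)+f(t(y+y'-s)+(1-t)u)\right]p(u)\,du \ge \left[f(ts+(1-t)y)+f(t(y+y'-s)+(1-t)y')+f(ts+(1-t)y')+f(t(y+y'-s)+(1-t)y)\right]\tfrac12\int_{\Omega}p.
\]
Multiplying by $p(s)$, integrating over $s\in[x,y]$ and using symmetry of $p$ to rewrite $\int_{\Omega}$ in $s$ as twice the $[x,y]$ part then yields $\frac{Hp_1+Hp_2}{2}\int_\Omega p\le Fp_1$.

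To get the displayed bound we pair $u\in[x,y]$ with $u^*=x+x'-u\in[y',x']$. We also write the integral over $u\in[x,y]$ twice, once with $p(u)$ and once with $p(u^*)=p(u)$. The four points $ts+(1-t)u$, $ts+(1-t)u^*$, $t\bar s+(1-t)u$, $t\bar s+(1-t)u^*$ (with $\bar s=y+y'-s$) have sum $2ts+2t\bar s+2(1-t)(x+x')$, which equals the sum of the four target points. Each target point is obtained by moving $u,u^*$ inward toward $y,y'$. Lemma \ref{l1} (majorization of pairs with equal sums) then gives $f(ts+(1-t)u)+f(t\bar s+(1-t)u^*)\ge f(ts+(1-t)y)+f(t\bar s+(1-t)y')$, since the inner pair lies between the outer pair with the same sum, and similarly for the crossed pair with $y,y'$ interchanged.

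This pairing step is where I expect the only real care is needed: one must check the interval-containment hypotheses of Lemma \ref{l1}, and possibly reorder the pairs depending on $t$. Finally, (\ref{2.22}) follows by taking $t=\frac12$ in (\ref{2.21}) and bounding $Hp_{1}(\frac12)\ge Hp_1(0)=\frac{f(y)+f(y')}{2}\int_\Omega p$ and $Hp_2(\frac12)\ge f(\frac{y+y'}{2})\int_\Omega p$ via (\ref{2.1}) and (\ref{2.3}).
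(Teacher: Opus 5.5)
\textbf{Verdict.} Parts (1), (2) and (\ref{2.22}) are handled exactly as in the paper. Your reduction of (\ref{2.21}) to a pointwise four-term inequality is also the paper's decomposition: fix $s\in[x,y]$, reflect $s\mapsto\bar s=y+y'-s$ and $u\mapsto u^*=x+x'-u$, and use $p(\bar s)=p(s)$, $p(u^*)=p(u)$. The gap is in how you apply Lemma \ref{l1} to that four-term inequality.

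\textbf{Where it fails.} Your first pair is fine. The crossed pair $\bigl(ts+(1-t)u^*,\,t\bar s+(1-t)u\bigr)$ versus $\bigl(ts+(1-t)y',\,t\bar s+(1-t)y\bigr)$ has the right sum, but it is nested as Lemma \ref{l1} requires only when $t(y+y'-2s)\le(1-t)(y'-u)$. That condition fails for $t$ near $1$. For example, take $x=0$, $y=1$, $y'=3$, $x'=4$, $s=u=0$ and $t=0.9$. The outer pair is $(0.4,3.6)$ and the inner pair is $(0.3,3.7)$. With $f(s)=s^{2}$, the claimed pair inequality would read $13.12\ge 13.78$, which is false. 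So ``similarly for the crossed pair'' does not hold. The repair is not a $t$-dependent reordering either. (Plain Jensen/Fej\'{e}r in $u$ would also be too weak here, since it only produces $f\bigl(ts+(1-t)\tfrac{x+x'}{2}\bigr)$.)

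\textbf{The fix.} Group the four terms by their first argument instead of crossing them. For fixed $s$, compare $\bigl(ts+(1-t)u,\,ts+(1-t)u^*\bigr)$ with $\bigl(ts+(1-t)y,\,ts+(1-t)y'\bigr)$, and do the same with $\bar s$ in place of $s$. Both pairs in each comparison have sum $2ts+(1-t)(x+x')$ (respectively with $\bar s$), because $x+x'=y+y'$. Since $u\le y\le y'\le u^*$, you get
\[
ts+(1-t)u\le ts+(1-t)y\le ts+(1-t)y'\le ts+(1-t)u^*
\]
for every $t\in[0,1]$, so Lemma \ref{l1} applies with no case split. Adding the two pair inequalities gives your four-term bound. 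With this pairing, your argument is exactly the paper's proof of (\ref{2.21}).
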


\begin{proof}
$\left(  1\right)  $ It is easily observed from the convexity of $f$ and the
hypothesis of $p$ that $Fp_{1}$ is convex on $\left[  0,1\right]  .$

By changing variables, we have
\[
Fp_{1}\left(  t\right)  =Fp_{1}\left(  1-t\right)  ,\text{ \qquad\ }%
t\in\left[  0,1\right]
\]
and from which we get that $Fp_{1}$ is symmetric about $\frac{1}{2}.$

$\left(  2\right)  $ Let $t_{1}<t_{2}$ in $\left[  0,\frac{1}{2}\right]  .$
Using the symmetry of $F_{1}$, we have%
\begin{equation}
Fp_{1}\left(  t_{1}\right)  =\frac{1}{2}\left[  Fp_{1}\left(  t_{1}\right)
+Fp_{1}\left(  1-t_{1}\right)  \right]  , \label{2.23}%
\end{equation}%
\begin{equation}
Fp_{1}\left(  t_{2}\right)  =\frac{1}{2}\left[  Fp_{1}\left(  t_{2}\right)
+Fp_{1}\left(  1-t_{2}\right)  \right]  \label{2.24}%
\end{equation}
and, by Lemma \ref{l1}, we obtain%
\begin{equation}
\frac{1}{2}\left[  Fp_{1}\left(  t_{2}\right)  +Fp_{1}\left(  1-t_{2}\right)
\right]  \leq\frac{1}{2}\left[  Fp_{1}\left(  t_{1}\right)  +Fp_{1}\left(
1-t_{1}\right)  \right]  . \label{2.25}%
\end{equation}

From $\left(  \ref{2.23}\right)  -\left(  \ref{2.25}\right)  $, we obtain that
$Fp_{1}$ is decreasing on $\left[  0,\frac{1}{2}\right]  .$ Since $Fp_{1}$ is
symmetric about $\frac{1}{2}$ and $Fp_{1}$ is decreasing on $\left[
0,\frac{1}{2}\right]  $, we get that $Fp_{1}$ is increasing on $\left[
\frac{1}{2},1\right]  .$ Using the symmetry and monotonicity of $Fp_{1},$ we
derive $\left(  \ref{2.19}\right)  $ and $\left(  \ref{2.20}\right)  .$

$\left(  3\right)  $ Using the substitution rules for integration and the
hypothesis of $p$, we have the following identities:%
\begin{multline*}
Fp_{1}\left(  t\right)  =\left\{  \int_{x}^{y}\int_{x}^{y}\left[  f\left(
ts+\left(  1-t\right)  u\right)  +f\left(  ts+\left(  1-t\right)  \left(
y+y^{\prime}-u\right)  \right)  \right]  p\left(  s\right)  p\left(  u\right)
dsdu\right. \\
+\int_{x}^{y}\int_{x}^{y}\left[  f\left(  t\left(  y+y^{\prime}-s\right)
+\left(  1-t\right)  u\right)  \right. \\
+\left.  f\left(  t\left(  y+y^{\prime}-s\right)  +\left(  1-t\right)  \left(
y+y^{\prime}-u\right)  \right)  \right]  p\left(  s\right)  p\left(  u\right)
dsdu\Bigg\}
\end{multline*}
and%
\begin{multline*}
\frac{Hp_{1}\left(  t\right)  +Hp_{2}\left(  t\right)  }{2}\int_{\Omega
}p\left(  s\right)  ds\\
=\left\{  \int_{x}^{y}\int_{x}^{y}\left[  f\left(  ts+\left(  1-t\right)
y\right)  +f\left(  ts+\left(  1-t\right)  y^{\prime}\right)  \right]
p\left(  s\right)  p\left(  u\right)  dsdu\right. \\
+\int_{x}^{y}\int_{x}^{y}\left[  f\left(  t\left(  y+y^{\prime}-s\right)
+\left(  1-t\right)  y\right)  \right. \\
+\left.  f\left(  t\left(  y+y^{\prime}-s\right)  +\left(  1-t\right)
y^{\prime}\right)  \right]  p\left(  s\right)  p\left(  u\right)  dsdu\Bigg\}
\end{multline*}
for all $t\in\left[  0,1\right]  .$ By Lemma \ref{l2}, the following
inequalities hold for all $t\in\left[  0,1\right]  ,$ $s\in\left[  x,y\right]
$ and $u\in\left[  x,y\right]  :$%
\begin{align*}
&  \left[  f\left(  ts+\left(  1-t\right)  y\right)  +f\left(  ts+\left(
1-t\right)  y^{\prime}\right)  \right]  p\left(  s\right)  p\left(  u\right)
\\
&  \leq\left[  f\left(  ts+\left(  1-t\right)  u\right)  +f\left(  ts+\left(
1-t\right)  \left(  y+y^{\prime}-u\right)  \right)  \right]  p\left(
s\right)  p\left(  u\right)  ,
\end{align*}
and%
\begin{multline*}
\left[  f\left(  t\left(  y+y^{\prime}-s\right)  +\left(  1-t\right)
y\right)  +f\left(  t\left(  y+y^{\prime}-s\right)  +\left(  1-t\right)
y^{\prime}\right)  \right]  p\left(  s\right)  p\left(  u\right) \\
\leq\left[  f\left(  t\left(  y+y^{\prime}-s\right)  +\left(  1-t\right)
u\right)  +f\left(  t\left(  y+y^{\prime}-s\right)  +\left(  1-t\right)
\left(  y+y^{\prime}-u\right)  \right)  \right]  p\left(  s\right)  p\left(
u\right)  .
\end{multline*}

Integrating the above inequalities over $s$ on $\left[  x,y\right]  $, over
$u$ on $\left[  x,y\right]  $ and using the above identities, we derive the
inequality $\left(  \ref{2.21}\right)  .$

From the inequalities $\left(  \ref{2.1}\right)  ,$ $\left(  \ref{2.3}\right)
,$ $\left(  \ref{2.21}\right)  $ and the monotonicity of $Hp_{1}$, we have%
\begin{align*}
&  \frac{f\left(  y\right)  +2f\left(  \frac{y+y^{\prime}}{2}\right)
+f\left(  y^{\prime}\right)  }{4}\left[  \int_{\Omega}p\left(  s\right)
ds\right]  ^{2}\\
&  \leq\frac{Hp_{1}\left(  \frac{1}{2}\right)  +Hp_{2}\left(  \frac{1}%
{2}\right)  }{2}\int_{\Omega}p\left(  s\right)  ds\\
&  \leq Fp_{1}\left(  \frac{1}{2}\right)
\end{align*}
and from which we derive the inequality $\left(  \ref{2.22}\right)  .$

This completes the proof.
\end{proof}

\begin{remark}
\label{r8}\textit{Using Remark \ref{r3}, }we have the following results:
\end{remark}

\begin{enumerate}
\item \textit{Theorem \ref{t5}} \textit{reduces to Theorem \ref{A4} as }$x=a,
$\textit{\ }$y=y^{\prime}=\frac{a+b}{2},$\textit{\ }$x^{\prime}=b$ \textit{and
}$p\left(  s\right)  \equiv\frac{1}{b-a}$ $\left(  s\in\left[  a,b\right]
\right)  .$

\item \textit{Theorem \ref{t5}} \textit{reduces to Theorem \ref{A15} as
}$p\left(  s\right)  \equiv\frac{1}{2\left(  y-x\right)  }$\ $\left(
s\in\Omega\right)  .$

\item \textit{Theorem \ref{t5}} \textit{reduces to Theorem \ref{A22} as
}$x=a,$ $y=y^{\prime}=\frac{a+b}{2},x^{\prime}=b$ \textit{and }$p\left(
s\right)  =g\left(  s\right)  $\ $\left(  s\in\left[  a,b\right]  \right)  .$
\end{enumerate}

\begin{theorem}
\label{t6}Let $x,y,y^{\prime},x^{\prime},f,p,Hp_{1},Pp_{1},G_{1},G_{2}$\ be
defined as above. Then we have the following results:

\begin{enumerate}
\item The inequality%
\begin{equation}
Hp_{1}\left(  t\right)  \leq G_{1}\left(  t\right)  \int_{\Omega}p\left(
s\right)  ds\leq Pp_{1}\left(  t\right)  \label{2.26}%
\end{equation}
holds for all $t\in\left[  0,1\right]  .$

\item The inequalities
\begin{align}
&  \int_{\frac{x+y}{2}}^{y}2f\left(  s\right)  p\left(  2s-y\right)
ds+\int_{y^{\prime}}^{\frac{x^{\prime}+y^{\prime}}{2}}2f\left(  s\right)
p\left(  2s-y^{\prime}\right)  ds\label{2.27}\\
&  \leq\frac{1}{2}\left[  f\left(  \frac{x+y}{2}\right)  +f\left(
\frac{x^{\prime}+y^{\prime}}{2}\right)  \right]  \int_{\Omega}p\left(
s\right)  ds\nonumber\\
&  \leq\frac{1}{2}\int_{x}^{y}\left[  f\left(  \frac{s+x}{2}\right)  +f\left(
\frac{x+2y-s}{2}\right)  \right]  p\left(  s\right)  ds\nonumber\\
&  \text{ \ \ \ \ \ \ \ \ \ \ \ \ \ \ \ }+\frac{1}{2}\int_{y^{\prime}%
}^{x^{\prime}}\left[  f\left(  \frac{s+x^{\prime}}{2}\right)  +f\left(
\frac{x^{\prime}+2y^{\prime}-s}{2}\right)  \right]  p\left(  s\right)
ds\nonumber\\
&  \leq\frac{1}{4}\left[  f\left(  x\right)  +f\left(  y\right)  +f\left(
y^{\prime}\right)  +f\left(  x^{\prime}\right)  \right]  \int_{\Omega}p\left(
s\right)  ds\nonumber
\end{align}
and
\begin{align}
&  \int_{\frac{x+y^{\prime}}{2}}^{\frac{y+y^{\prime}}{2}}2f\left(  s\right)
p\left(  2s-y^{\prime}\right)  ds+\int_{\frac{y+y^{\prime}}{2}}^{\frac
{x^{\prime}+y}{2}}2f\left(  s\right)  p\left(  2s-y\right)  ds\label{2.28}\\
&  \leq\frac{1}{2}\left[  f\left(  \frac{x+y^{\prime}}{2}\right)  +f\left(
\frac{x^{\prime}+y}{2}\right)  \right]  \int_{\Omega}p\left(  s\right)
ds\nonumber\\
&  \leq\frac{1}{2}\int_{x}^{y}\left[  f\left(  \frac{s+x}{2}\right)  +f\left(
\frac{x+2y^{\prime}-s}{2}\right)  \right]  p\left(  s\right)  ds\nonumber\\
&  \text{ \ \ \ \ \ \ \ \ \ \ \ \ \ \ \ }+\frac{1}{2}\int_{y^{\prime}%
}^{x^{\prime}}\left[  f\left(  \frac{s+x^{\prime}}{2}\right)  +f\left(
\frac{x^{\prime}+2y-s}{2}\right)  \right]  p\left(  s\right)  ds\nonumber\\
&  \leq\frac{1}{4}\left[  f\left(  x\right)  +f\left(  y\right)  +f\left(
y^{\prime}\right)  +f\left(  x^{\prime}\right)  \right]  \int_{\Omega}p\left(
s\right)  ds\nonumber
\end{align}
hold.
\end{enumerate}
\end{theorem}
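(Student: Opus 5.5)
The plan is to reduce every inequality in Theorem \ref{t6} to a pointwise comparison of two pairs of points with the same sum. Lemma \ref{l1} (or Lemma \ref{l2}) handles such comparisons after multiplying by $p\ge0$. I would then integrate over $[x,y]$ (or $[y',x']$) and use three facts:
\[
\int_x^y p(s)\,ds=\int_{y'}^{x'}p(s)\,ds=\frac12\int_\Omega p(s)\,ds,\qquad p(s)=p(x+x'-s),\qquad x+x'=y+y'.
\]

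For (\ref{2.26}), fix $t\in[0,1]$ and $s\in[x,y]$. The points $C=ts+(1-t)y$ and $D=t(y+y'-s)+(1-t)y'$ satisfy $C+D=y+y'$. The points $A=tx+(1-t)y$ and $B=tx'+(1-t)y'$ satisfy $A+B=x+x'=y+y'$. Since $s\ge x$, we have $A\le C\le D\le B$, so Lemma \ref{l1} gives $f(C)+f(D)\le 2G_1(t)$. Multiplying by $p(s)$, integrating over $[x,y]$ and using $\int_x^y p=\frac12\int_\Omega p$ gives the left inequality. For the right inequality I use the integrand of $Pp_1$: the points $tx+(1-t)s$ and $tx'+(1-t)(x+x'-s)$ have sum $x+x'$. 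Since $s\le y$, they lie outside the pair $A,B$. Lemma \ref{l1} then gives $2G_1(t)\le f(tx+(1-t)s)+f(tx'+(1-t)(x+x'-s))$, and integrating against $p$ gives the bound by $Pp_1(t)$.

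For (\ref{2.27}), the first step is to rewrite the left-hand side by substitution. In the first integral I put $u=2s-y$. In the second I put $u=2s-y'$ and then reflect $u\mapsto x+x'-u$, using the symmetry of $p$. The left-hand side then becomes
\[
\int_x^y\Big[f\Big(\tfrac{u+y}{2}\Big)+f\Big(\tfrac{y+2y'-u}{2}\Big)\Big]p(u)\,du .
\]
This pair has sum $y+y'$ and lies inside the pair $\frac{x+y}{2},\frac{x'+y'}{2}$, which has the same sum. Lemma \ref{l1} therefore yields the first inequality.

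For the second inequality I apply the midpoint convexity inequality twice. First, $2f(\frac{x+y}{2})\le f(\frac{s+x}{2})+f(\frac{x+2y-s}{2})$ for $s\in[x,y]$. Second, $2f(\frac{x'+y'}{2})\le f(\frac{s+x'}{2})+f(\frac{x'+2y'-s}{2})$ for $s\in[y',x']$. I integrate each against $p$ and use that each of $\int_x^y p$ and $\int_{y'}^{x'}p$ equals $\frac12\int_\Omega p$.

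For the third inequality, the points $\frac{s+x}{2},\frac{x+2y-s}{2}$ lie in $[x,y]$ with sum $x+y$. Lemma \ref{l1} bounds their $f$-sum by $f(x)+f(y)$, and the analogous bound on $[y',x']$ is $f(x')+f(y')$. Integrating gives the factor $\frac14\int_\Omega p$.

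Inequality (\ref{2.28}) follows the same pattern with the roles of $y$ and $y'$ interchanged. After substitution the left-hand side becomes $\int_x^y[f(\frac{u+y'}{2})+f(\frac{y'+2y-u}{2})]p(u)\,du$. This pair has sum $y+y'$ and lies inside the pair $\frac{x+y'}{2},\frac{x'+y}{2}$. The middle step is again midpoint convexity, now around $\frac{x+y'}{2}$ and $\frac{x'+y}{2}$. In the last step, the points $\frac{s+x}{2},\frac{x+2y'-s}{2}$ lie in $[x,y']$, so their $f$-sum is at most $f(x)+f(y')$. The primed counterpart gives at most $f(x')+f(y)$, and summing yields all four endpoint values.

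The main obstacle is not conceptual. It is the bookkeeping in the substitutions: converting the integrals over $[y',\frac{x'+y'}{2}]$ and $[\frac{y+y'}{2},\frac{x'+y}{2}]$ into integrals over $[x,y]$ via the symmetry of $p$, and checking each ordering $A\le C\le D\le B$ required by Lemma \ref{l1}.
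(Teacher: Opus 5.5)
Your proposal is correct and follows essentially the same route as the paper. In both, the left-hand sides are rewritten as integrals over $[x,y]$ via the substitutions and the symmetry of $p$; pointwise comparisons with equal sums are then made through Lemma \ref{l1}/\ref{l2}, and the result is integrated using $\int_x^y p=\frac12\int_\Omega p$. The only cosmetic difference is that you treat the middle and last steps of (\ref{2.27}) and (\ref{2.28}) directly on $[y',x']$, whereas the paper first reflects those integrals to $[x,y]$.
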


\textit{The inequalities}%
\begin{align}
0  &  \leq Hp_{1}\left(  t\right)  -\frac{f\left(  y\right)  +f\left(
y^{\prime}\right)  }{2}\int_{\Omega}p\left(  s\right)  ds\label{2.29}\\
&  \leq G_{1}\left(  t\right)  \int_{\Omega}p\left(  s\right)  ds-Hp_{1}%
\left(  t\right) \nonumber
\end{align}
\textit{and }%
\begin{align}
0  &  \leq Pp_{1}\left(  t\right)  -G_{1}\left(  t\right)  \int_{\Omega
}p\left(  s\right)  ds\label{2.30}\\
&  \leq\frac{f\left(  x\right)  +f\left(  x^{\prime}\right)  }{2}\int_{\Omega
}p\left(  s\right)  ds-Pp_{1}\left(  t\right) \nonumber
\end{align}
\textit{hold for all }$t\in\left[  0,1\right]  .$

\begin{proof}
$\left(  1\right)  $ By Lemma \ref{l2}, the following inequalities hold for
all $t\in\left[  0,1\right]  $ and $s\in\left[  x,y\right]  :$%
\begin{multline*}
\left[  f\left(  ts+\left(  1-t\right)  y\right)  +f\left(  t\left(
y+y^{\prime}-s\right)  +\left(  1-t\right)  y^{\prime}\right)  \right]
p\left(  s\right) \\
\leq\left[  f\left(  tx+\left(  1-t\right)  y\right)  +f\left(  tx^{\prime
}+\left(  1-t\right)  y^{\prime}\right)  \right]  p\left(  s\right)
\end{multline*}
and%
\begin{align*}
&  \left[  f\left(  tx+\left(  1-t\right)  y\right)  +f\left(  tx^{\prime
}+\left(  1-t\right)  y^{\prime}\right)  \right]  p\left(  s\right) \\
&  \leq\left[  f\left(  tx+\left(  1-t\right)  s\right)  +f\left(  tx^{\prime
}+\left(  1-t\right)  \left(  x+x^{\prime}-s\right)  \right)  \right]
p\left(  s\right)  .
\end{align*}

Integrating the above inequalities over $s$ on $\left[  x,y\right]  $, using
the hypothesis of $p$ and the definitions of $G_{1},Hp_{1}$ and $Pp_{1}$, we
have
\[
Hp_{1}\left(  t\right)  \leq G_{1}\left(  t\right)  \int_{\Omega}p\left(
s\right)  ds\leq Pp_{1}\left(  t\right)
\]
for all $t\in\left[  0,1\right]  .$

$(2)$ Using simple techniques of integration and the hypothesis of $p$, we
have the following identities
\begin{align*}
&  \int_{\frac{x+y}{2}}^{y}2f\left(  s\right)  p\left(  2s-y\right)
ds+\int_{y^{\prime}}^{\frac{x^{\prime}+y^{\prime}}{2}}2f\left(  s\right)
p\left(  2s-y^{\prime}\right)  ds\\
&  =\int_{\frac{x+y}{2}}^{y}2f\left(  s\right)  p\left(  2s-y\right)
ds+\int_{y^{\prime}}^{\frac{x^{\prime}+y^{\prime}}{2}}2f\left(  s\right)
p\left(  y+2y^{\prime}-2s\right)  ds\\
&  =\int_{x}^{y}\left[  f\left(  \frac{s+y}{2}\right)  +f\left(
\frac{y+2y^{\prime}-s}{2}\right)  \right]  p\left(  s\right)  ds,
\end{align*}%
\begin{align*}
&  \frac{1}{2}\left[  f\left(  \frac{x+y}{2}\right)  +f\left(  \frac
{x^{\prime}+y^{\prime}}{2}\right)  \right]  \int_{\Omega}p\left(  s\right)
ds\\
&  =\int_{x}^{y}\left[  f\left(  \frac{x+y}{2}\right)  +f\left(
\frac{x^{\prime}+y^{\prime}}{2}\right)  \right]  p\left(  s\right)  ds,
\end{align*}%
\begin{align*}
&  \frac{1}{2}\int_{x}^{y}\left[  f\left(  \frac{s+x}{2}\right)  +f\left(
\frac{x+2y-s}{2}\right)  \right]  p\left(  s\right)  ds\\
&  +\frac{1}{2}\int_{y^{\prime}}^{x^{\prime}}\left[  f\left(  \frac
{s+x^{\prime}}{2}\right)  +f\left(  \frac{x^{\prime}+2y^{\prime}-s}{2}\right)
\right]  p\left(  s\right)  ds\\
&  =\int_{x}^{y}\frac{1}{2}\left[  f\left(  \frac{s+x}{2}\right)  +f\left(
\frac{x+2y-s}{2}\right)  \right]  p\left(  s\right)  ds\\
&  +\int_{x}^{y}\frac{1}{2}\left[  f\left(  \frac{x+2x^{\prime}-s}{2}\right)
+f\left(  \frac{s-x+2y^{\prime}}{2}\right)  \right]  p\left(  s\right)  ds
\end{align*}
and%
\begin{align*}
&  \frac{1}{4}\left[  f\left(  x\right)  +f\left(  y\right)  +f\left(
y^{\prime}\right)  +f\left(  x^{\prime}\right)  \right]  \int_{\Omega}p\left(
s\right)  ds\\
&  =\int_{x}^{y}\frac{1}{2}\left[  f\left(  x\right)  +f\left(  y\right)
\right]  p\left(  s\right)  ds+\int_{x}^{y}\frac{1}{2}\left[  f\left(
y^{\prime}\right)  +f\left(  x^{\prime}\right)  \right]  p\left(  s\right)
ds.
\end{align*}

By Lemma \ref{l2}, the following inequalities hold for all $s\in\left[
x,y\right]  :$%
\[
\left[  f\left(  \frac{s+y}{2}\right)  +f\left(  \frac{y+2y^{\prime}-s}%
{2}\right)  \right]  p\left(  s\right)  \leq\left[  f\left(  \frac{x+y}%
{2}\right)  +f\left(  \frac{x^{\prime}+y^{\prime}}{2}\right)  \right]
p\left(  s\right)  .
\]%
\begin{align*}
f\left(  \frac{x+y}{2}\right)  p\left(  s\right)   &  =\frac{1}{2}\left[
f\left(  \frac{x+y}{2}\right)  +f\left(  \frac{x+y}{2}\right)  \right]
p\left(  s\right) \\
&  \leq\frac{1}{2}\left[  f\left(  \frac{s+x}{2}\right)  +f\left(
\frac{x+2y-s}{2}\right)  \right]  p\left(  s\right)  .
\end{align*}%
\begin{align*}
f\left(  \frac{x^{\prime}+y^{\prime}}{2}\right)  p\left(  s\right)   &
=\frac{1}{2}\left[  f\left(  \frac{x^{\prime}+y^{\prime}}{2}\right)  +f\left(
\frac{x^{\prime}+y^{\prime}}{2}\right)  \right]  p\left(  s\right) \\
&  \leq\frac{1}{2}\left[  f\left(  \frac{x^{\prime}+s}{2}\right)  +f\left(
\frac{x^{\prime}+2y^{\prime}-s}{2}\right)  \right]  p\left(  s\right)  .
\end{align*}%
\[
\frac{1}{2}\left[  f\left(  \frac{s+x}{2}\right)  +f\left(  \frac{x+2y-s}%
{2}\right)  \right]  p\left(  s\right)  \leq\frac{1}{2}\left[  f\left(
x\right)  +f\left(  y\right)  \right]  p\left(  s\right)  .
\]%
\[
\frac{1}{2}\left[  f\left(  \frac{x+2x^{\prime}-s}{2}\right)  +f\left(
\frac{s-x+2y^{\prime}}{2}\right)  \right]  p\left(  s\right)  \leq\frac{1}%
{2}\left[  f\left(  y^{\prime}\right)  +f\left(  x^{\prime}\right)  \right]
p\left(  s\right)  .
\]

Integrating the above inequalities over $s$ on $\left[  x,y\right]  $ and
using the above identities, we derive $\left(  \ref{2.27}\right)  .$

Again, using simple techniques of integration and the hypothesis of $p$, we
have the following identities%
\begin{align*}
&  \int_{\frac{x+y^{\prime}}{2}}^{\frac{y+y^{\prime}}{2}}2f\left(  s\right)
p\left(  2s-y^{\prime}\right)  ds+\int_{\frac{y+y^{\prime}}{2}}^{\frac
{x^{\prime}+y}{2}}2f\left(  s\right)  p\left(  2s-y\right)  ds\\
&  =\int_{\frac{x+y^{\prime}}{2}}^{\frac{y+y^{\prime}}{2}}2f\left(  s\right)
p\left(  2s-y^{\prime}\right)  ds+\int_{\frac{y+y^{\prime}}{2}}^{\frac
{x^{\prime}+y}{2}}2f\left(  s\right)  p\left(  2y+y^{\prime}-2s\right)  ds\\
&  =\int_{x}^{y}\left[  f\left(  \frac{s+y^{\prime}}{2}\right)  +f\left(
\frac{2y+y^{\prime}-s}{2}\right)  \right]  p\left(  s\right)  ds,
\end{align*}%
\begin{align*}
&  \frac{1}{2}\left[  f\left(  \frac{x+y^{\prime}}{2}\right)  +f\left(
\frac{x^{\prime}+y}{2}\right)  \right]  \int_{\Omega}p\left(  s\right)  ds\\
&  =\int_{x}^{y}\left[  f\left(  \frac{x+y^{\prime}}{2}\right)  +f\left(
\frac{x^{\prime}+y}{2}\right)  \right]  p\left(  s\right)  ds,
\end{align*}%
\begin{align*}
&  \frac{1}{2}\int_{x}^{y}\left[  f\left(  \frac{s+x}{2}\right)  +f\left(
\frac{x+2y^{\prime}-s}{2}\right)  \right]  p\left(  s\right)  ds\\
&  +\frac{1}{2}\int_{y^{\prime}}^{x^{\prime}}\left[  f\left(  \frac
{s+x^{\prime}}{2}\right)  +f\left(  \frac{x^{\prime}+2y-s}{2}\right)  \right]
p\left(  s\right)  ds\\
&  =\int_{x}^{y}\frac{1}{2}\left[  f\left(  \frac{s+x}{2}\right)  +f\left(
\frac{x+2y^{\prime}-s}{2}\right)  \right]  p\left(  s\right)  ds\\
&  +\int_{x}^{y}\frac{1}{2}\left[  f\left(  \frac{x+2x^{\prime}-s}{2}\right)
+f\left(  \frac{s-x+2y}{2}\right)  \right]  p\left(  s\right)  ds
\end{align*}
and%
\begin{align*}
&  \frac{1}{4}\left[  f\left(  x\right)  +f\left(  y\right)  +f\left(
y^{\prime}\right)  +f\left(  x^{\prime}\right)  \right]  \int_{\Omega}p\left(
s\right)  ds\\
&  =\int_{x}^{y}\frac{1}{2}\left[  f\left(  x\right)  +f\left(  y^{\prime
}\right)  \right]  p\left(  s\right)  ds+\int_{x}^{y}\frac{1}{2}\left[
f\left(  y\right)  +f\left(  x^{\prime}\right)  \right]  p\left(  s\right)
ds.
\end{align*}

By Lemma \ref{l2}, the following inequalities hold for all $s\in\left[
x,y\right]  :$%
\[
\left[  f\left(  \frac{s+y^{\prime}}{2}\right)  +f\left(  \frac{2y+y^{\prime
}-s}{2}\right)  \right]  p\left(  s\right)  \leq\left[  f\left(
\frac{x+y^{\prime}}{2}\right)  +f\left(  \frac{x^{\prime}+y}{2}\right)
\right]  p\left(  s\right)  .
\]%
\begin{align*}
f\left(  \frac{x+y^{\prime}}{2}\right)  p\left(  s\right)   &  =\frac{1}%
{2}\left[  f\left(  \frac{x+y^{\prime}}{2}\right)  +f\left(  \frac
{x+y^{\prime}}{2}\right)  \right]  p\left(  s\right) \\
&  \leq\frac{1}{2}\left[  f\left(  \frac{s+x}{2}\right)  +f\left(
\frac{x+2y^{\prime}-s}{2}\right)  \right]  p\left(  s\right)  .
\end{align*}%
\begin{align*}
f\left(  \frac{x^{\prime}+y}{2}\right)  p\left(  s\right)   &  =\frac{1}%
{2}\left[  f\left(  \frac{x^{\prime}+y}{2}\right)  +f\left(  \frac{x^{\prime
}+y}{2}\right)  \right]  p\left(  s\right) \\
&  \leq\frac{1}{2}\left[  f\left(  \frac{x+2x^{\prime}-s}{2}\right)  +f\left(
\frac{s-x+2y}{2}\right)  \right]  p\left(  s\right)  .
\end{align*}%
\[
\frac{1}{2}\left[  f\left(  \frac{s+x}{2}\right)  +f\left(  \frac
{x+2y^{\prime}-s}{2}\right)  \right]  p\left(  s\right)  \leq\frac{1}%
{2}\left[  f\left(  x\right)  +f\left(  y^{\prime}\right)  \right]  p\left(
s\right)  .
\]%
\[
\frac{1}{2}\left[  f\left(  \frac{x+2x^{\prime}-s}{2}\right)  +f\left(
\frac{s-x+2y}{2}\right)  \right]  p\left(  s\right)  \leq\frac{1}{2}\left[
f\left(  y\right)  +f\left(  x^{\prime}\right)  \right]  p\left(  s\right)  .
\]

Integrating the above inequalities over $s$ on $\left[  x,y\right]  $ and
using the above identities, we derive $\left(  \ref{2.28}\right)  .$

$\left(  3\right)  $ Using simple techniques of integration and $x^{\prime
}-y=y^{\prime}-x,$ we have the following identities%
\begin{multline*}
2Hp_{1}\left(  t\right)  =\int_{x}^{y}\left[  f\left(  ts+\left(  1-t\right)
y\right)  +f\left(  t\left(  x+y-s\right)  +\left(  1-t\right)  y\right)
\right. \\
+\left.  f\left(  t\left(  y+y^{\prime}-s\right)  +\left(  1-t\right)
y^{\prime}\right)  +f\left(  t\left(  y^{\prime}-x+s\right)  +\left(
1-t\right)  y^{\prime}\right)  \right]  p\left(  s\right)  ds
\end{multline*}
and%
\begin{multline*}
2Pp_{1}\left(  t\right)  =\int_{x}^{y}\left[  f\left(  tx+\left(  1-t\right)
s\right)  +f\left(  tx+\left(  1-t\right)  \left(  x+y-s\right)  \right)
\right. \\
+\left.  f\left(  tx^{\prime}+\left(  1-t\right)  \left(  x+x^{\prime
}-s\right)  \right)  +f\left(  tx^{\prime}+\left(  1-t\right)  \left(
y^{\prime}-x+s\right)  \right)  \right]  p\left(  s\right)  ds
\end{multline*}
for all $t\in\left[  0,1\right]  .$ Using Lemma \ref{l2} and $x^{\prime
}-y^{\prime}=y-x,$ the following inequalities hold for all $t\in\left[
0,1\right]  $ and $s\in\left[  x,y\right]  :$%
\begin{align*}
&  \left[  f\left(  ts+\left(  1-t\right)  y\right)  +f\left(  t\left(
x+y-s\right)  +\left(  1-t\right)  y\right)  \right]  p\left(  s\right) \\
&  \leq\left[  f\left(  tx+\left(  1-t\right)  y\right)  +f\left(  y\right)
\right]  p\left(  s\right)  ,
\end{align*}%
\begin{multline*}
\left[  f\left(  t\left(  y+y^{\prime}-s\right)  +\left(  1-t\right)
y^{\prime}\right)  +f\left(  t\left(  y^{\prime}-x+s\right)  +\left(
1-t\right)  y^{\prime}\right)  \right]  p\left(  s\right) \\
\leq\left[  f\left(  y^{\prime}\right)  +f\left(  tx^{\prime}+\left(
1-t\right)  y^{\prime}\right)  \right]  p\left(  s\right)  ,
\end{multline*}%
\begin{align*}
&  \left[  f\left(  tx+\left(  1-t\right)  s\right)  +f\left(  tx+\left(
1-t\right)  \left(  x+y-s\right)  \right)  \right]  p\left(  s\right) \\
&  \leq\left[  f\left(  x\right)  +f\left(  tx+\left(  1-t\right)  y\right)
\right]  p\left(  s\right)
\end{align*}
and%
\begin{multline*}
\left[  f\left(  tx^{\prime}+\left(  1-t\right)  \left(  x+x^{\prime
}-s\right)  \right)  +f\left(  tx^{\prime}+\left(  1-t\right)  \left(
y^{\prime}-x+s\right)  \right)  \right]  p\left(  s\right) \\
\leq\left[  f\left(  tx^{\prime}+\left(  1-t\right)  y^{\prime}\right)
+f\left(  x^{\prime}\right)  \right]  p\left(  s\right)  .
\end{multline*}

Integrating the above inequalities over $s$ on $\left[  x,y\right]  $ and
using the above identities, we obtain%
\begin{equation}
2Hp_{1}\left(  t\right)  \leq G_{1}\left(  t\right)  \int_{\Omega}p\left(
s\right)  ds+\frac{f\left(  y\right)  +f\left(  y^{\prime}\right)  }{2}%
\int_{\Omega}p\left(  s\right)  ds \label{2.31}%
\end{equation}
and
\begin{equation}
2Pp_{1}\left(  t\right)  \leq G_{1}\left(  t\right)  \int_{\Omega}p\left(
s\right)  ds+\frac{f\left(  x\right)  +f\left(  x^{\prime}\right)  }{2}%
\int_{\Omega}p\left(  s\right)  ds. \label{2.32}%
\end{equation}

Using $\left(  \ref{2.1}\right)  ,$ $\left(  \ref{2.26}\right)  ,$ $\left(
\ref{2.31}\right)  $ and $\left(  \ref{2.32}\right)  ,$ we derive $\left(
\ref{2.29}\right)  $ and $\left(  \ref{2.30}\right)  .$

This completes the proof.
\end{proof}

\begin{remark}
\label{r9}\textit{Using Remark \ref{r3}} we have the following results:
\end{remark}

\begin{enumerate}
\item \textit{The inequalities }$\left(  \ref{2.26}\right)  ,\left(
\ref{2.27}\right)  $\textit{\ and }$\left(  \ref{2.29}\right)  $%
\textit{\ reduce to the inequalities }$\left(  \ref{1.3}\right)  -\left(
\ref{1.5}\right)  $\textit{\ as }$x=a,$\textit{\ }$y=y^{\prime}=\frac{a+b}%
{2},$\textit{\ }$x^{\prime}=b$ \textit{and }$p\left(  s\right)  \equiv\frac
{1}{b-a}$ $\left(  s\in\left[  a,b\right]  \right)  .$

\item \textit{Theorem \ref{t6}} \textit{reduces to the conclusions} $\left(
3\right)  -\left(  5\right)  $\textit{\ of Theorem \ref{A18} as }$p\left(
s\right)  \equiv\frac{1}{2\left(  y-x\right)  }$\ $\left(  s\in\Omega\right)
. $

\item \textit{The inequalities }$\left(  \ref{2.26}\right)  ,\left(
\ref{2.27}\right)  $\textit{\ and }$\left(  \ref{2.29}\right)  $%
\textit{\ reduce to Theorem \ref{A24} as }$x=a,$ $y=y^{\prime}=\frac{a+b}%
{2},x^{\prime}=b$ \textit{and }$p\left(  s\right)  =g\left(  s\right)
$\ $\left(  s\in\left[  a,b\right]  \right)  .$
\end{enumerate}

\begin{theorem}
\label{t7}Let $x,y,y^{\prime},x^{\prime},\Omega,f,p,Lp_{1},Pp_{1},G_{1}%
,G_{2},Hp_{1},Hp_{2},Fp_{1}$\ be defined as above. Then, we have the following results:

\begin{enumerate}
\item $Lp_{1}$ is convex on $\left[  0,1\right]  $.

\item The following inequalities%
\begin{align}
&  \frac{G_{1}\left(  t\right)  +G_{2}\left(  t\right)  }{2}\int_{\Omega
}p\left(  s\right)  ds\text{
\ \ \ \ \ \ \ \ \ \ \ \ \ \ \ \ \ \ \ \ \ \ \ \ \ \ \ \ \ \ \ }\label{2.33}\\
&  \leq Lp_{1}\left(  t\right)  \leq Pp_{1}\left(  t\right)  \text{
\qquad\ \ \ \ \ \ \ \ \ \ \ \ \ \ \ \ \ \ }\left(  t\in\left[  0,1\right]
\right) \nonumber
\end{align}
and%
\begin{equation}
\sup\limits_{t\in\left[  0,1\right]  }Lp_{1}\left(  t\right)  =Lp_{1}\left(
1\right)  =\frac{f\left(  x\right)  +f\left(  x^{\prime}\right)  }{2}%
\int_{\Omega}p\left(  s\right)  ds \label{2.34}%
\end{equation}
hold.

\item The following inequalities%
\begin{align}
&  \frac{Hp_{1}\left(  1-t\right)  +Hp_{2}\left(  1-t\right)  }{2}\int%
_{\Omega}p\left(  s\right)  ds\text{ \ }\label{2.35}\\
&  \leq Fp_{1}\left(  t\right)  \leq Lp_{1}\left(  t\right)  \int_{\Omega
}p\left(  s\right)  ds,\nonumber
\end{align}%
\begin{equation}
\frac{Hp_{1}\left(  t\right)  +Hp_{2}\left(  t\right)  }{2}\int_{\Omega
}p\left(  s\right)  ds\leq Fp_{1}\left(  t\right)  \leq Lp_{1}\left(
t\right)  \int_{\Omega}p\left(  s\right)  ds \label{2.36}%
\end{equation}
and%
\begin{align}
&  \frac{Hp_{1}\left(  t\right)  +Hp_{2}\left(  t\right)  +Hp_{1}\left(
1-t\right)  +Hp_{2}\left(  1-t\right)  }{4}\int_{\Omega}p\left(  s\right)
ds\label{2.37}\\
&  \leq Fp_{1}\left(  t\right)  \leq Lp_{1}\left(  t\right)  \int_{\Omega
}p\left(  s\right)  ds\nonumber
\end{align}
hold for all $t\in\left[  0,1\right]  .$

\item If $p\left(  s\right)  =p\left(  x+x^{\prime}-s\right)  $ $\left(
s\in\left[  x,y\right]  \right)  $ and $p\left(  s\right)  =p\left(
x+y-s\right)  $ $\left(  s\in\left[  x,\frac{x+y}{2}\right]  \right)  ,$ then
the following inequality%
\begin{align}
0  &  \leq Fp_{1}\left(  t\right)  -\frac{Hp_{1}\left(  t\right)
+Hp_{2}\left(  t\right)  }{2}\int_{\Omega}p\left(  s\right)  ds\label{2.38}\\
&  \leq Lp_{1}\left(  1-t\right)  \int_{\Omega}p\left(  s\right)
ds-Fp_{1}\left(  t\right) \nonumber
\end{align}
holds for all $t\in\left[  0,1\right]  .$
\end{enumerate}
\end{theorem}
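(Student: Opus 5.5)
The plan is to follow the pattern of Theorems \ref{t1}--\ref{t6}. Each quantity is rewritten as an integral over $[x,y]$ (or $[x,y]^2$) weighted by $p(s)$ (or $p(s)p(u)$), using the symmetry $p(s)=p(x+x'-s)$. The comparisons are then made pointwise with Lemma \ref{l2} and integrated.

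\emph{Convexity.} Part (1) is immediate: $t\mapsto f(tx+(1-t)s)$ is convex for each fixed $s$, and $p\ge 0$.

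\emph{Part (2).} First use the reflection $s\mapsto x+x'-s$ to write
\[
Lp_1(t)=\frac12\int_x^y\bigl[f(tx+(1-t)s)+f(tx+(1-t)(x+x'-s))+f(tx'+(1-t)s)+f(tx'+(1-t)(x+x'-s))\bigr]p(s)\,ds .
\]
\begin{itemize}
\item \emph{Lower bound.} For $s\in[x,y]$ the points $tx+(1-t)s$ and $tx'+(1-t)(x+x'-s)$ are symmetric about $\frac{x+x'}{2}$ and lie outside the pair $tx+(1-t)y,\ tx'+(1-t)y'$. The remaining two points $tx+(1-t)(x+x'-s)$ and $tx'+(1-t)s$ are likewise symmetric and lie outside $tx+(1-t)y',\ tx'+(1-t)y$. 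Lemma \ref{l2} then gives $G_1(t)+G_2(t)$ as a lower bound, which yields the left inequality of (\ref{2.33}).
\item \emph{Upper bound.} $Pp_1$ keeps only the outer pair, while $Lp_1$ averages the outer pair with the inner pair. So $Lp_1\le Pp_1$ reduces to showing that the inner pair is dominated by the outer pair, again by Lemma \ref{l2}.
\item \emph{Supremum.} Convexity of $f$ gives $Lp_1(t)\le (1-t)\int_\Omega fp+t\,\frac{f(x)+f(x')}{2}\int_\Omega p\le Lp_1(1)$, using (\ref{2.2}). This proves (\ref{2.34}).
\end{itemize}

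\emph{Part (3).}
\begin{itemize}
\item \emph{Lower bounds.} The left inequality of (\ref{2.36}) is (\ref{2.21}). The left inequality of (\ref{2.35}) follows from (\ref{2.21}) at $1-t$ together with the symmetry $Fp_1(t)=Fp_1(1-t)$ from Theorem \ref{t5}. Averaging the two gives the left inequality of (\ref{2.37}).
\item \emph{Upper bound $Fp_1\le Lp_1\int_\Omega p$.} For fixed $s,u\in\Omega$, the point $ts+(1-t)u$ lies between $tx+(1-t)u$ and $tx'+(1-t)u$. Hence $f(ts+(1-t)u)\le \lambda f(tx+(1-t)u)+(1-\lambda)f(tx'+(1-t)u)$ with $\lambda=\frac{x'-s}{x'-x}$. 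Integrating against $p(s)$ and using that $p$ is symmetric on $\Omega$, the mean of $\lambda$ with respect to $p(s)\,ds$ is $\frac12$. Integrating in $u$ then gives $Fp_1(t)\le Lp_1(t)\int_\Omega p$.
\end{itemize}
The delicate point here is that the weights $\lambda$ depend on $s$ while the bracket depends only on $u$, so the symmetry of $p$ must be applied before integrating in $u$.

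\emph{Part (4), main obstacle.} Inequality (\ref{2.38}) is equivalent to
\[
2Fp_1(t)\le \frac{Hp_1(t)+Hp_2(t)}{2}\int_\Omega p+Lp_1(1-t)\int_\Omega p .
\]
The plan is to imitate the proof of Theorem \ref{A11}/\ref{A38}:
\begin{enumerate}
\item Split $Fp_1$ into four pieces over $[x,y]^2$, as in the proof of Theorem \ref{t5}.
\item Use the extra hypotheses $p(s)=p(x+y-s)$ and $p(s)=p(x+x'-s)$ to reflect $u$ within $[x,y]$, pairing $f(ts+(1-t)u)$ with $f(ts+(1-t)(x+y-u))$.
\item Bound each such pair by Lemma \ref{l2}: the sum is at most the value at the inner point $ts+(1-t)y$ (terms of $Hp_1,Hp_2$) plus the value at the outer point $ts+(1-t)x$ (terms of $Lp_1(1-t)$, with the roles of $t$ and $1-t$ swapped).
\end{enumerate}
I expect the main difficulty to be checking that the two arguments of each pair have the same sum and are nested correctly for every $s,u$, and that the reflected integrals reassemble exactly into $Hp_1+Hp_2$ and $Lp_1(1-t)$ with the factor $\int_\Omega p$. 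The left inequality $0\le\cdots$ is just (\ref{2.21}).
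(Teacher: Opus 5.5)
The left inequality of (\ref{2.33}) is not proved by your pairing. The first half is fine: $tx+(1-t)s$ and $tx'+(1-t)(x+x'-s)$ have sum $x+x'$ and enclose $tx+(1-t)y\le tx'+(1-t)y'$, so Lemma \ref{l2} bounds their $f$-values below by $2G_1(t)$.

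The second half is false in general. The points $tx+(1-t)(x+x'-s)$ and $tx'+(1-t)s$ need not enclose $tx+(1-t)y'$ and $tx'+(1-t)y$. With $c=\frac{x+x'}{2}$, their distances from $c$ are $\bigl|(1-t)(c-s)-t\frac{x'-x}{2}\bigr|$ and $\bigl|(1-t)\frac{y'-y}{2}-t\frac{x'-x}{2}\bigr|$ respectively, and the first can be the smaller one. For $\frac12<t<1$ it is strictly smaller for every $s<y$.

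Concretely, take $x=0$, $y=y'=1$, $x'=2$ and $t=s=\frac12$. Your pair is $\{\frac34,\frac54\}$, which lies strictly inside $\{\frac12,\frac32\}$, so Lemma \ref{l2} gives the \emph{reverse} inequality. For $f(u)=(u-1)^2$ one has $f(\frac34)+f(\frac54)=\frac18<\frac12=2G_2(\frac12)$, so the pointwise lower bound by $2G_2(t)$ that you claim is false.

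The fix, which is exactly the paper's argument, is to pair by the common endpoint rather than by symmetry about $c$. The points $tx+(1-t)s$ and $tx+(1-t)(x+x'-s)$ have sum $2tx+(1-t)(x+x')$ and enclose $tx+(1-t)y$ and $tx+(1-t)y'$, because $s\le y\le y'\le x+x'-s$. The same holds with $x'$ in place of $x$, and the four lower values add up to $2G_1(t)+2G_2(t)$.

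The rest of your proposal is sound. $Lp_1\le Pp_1$, (\ref{2.34}) and part (4) follow essentially the paper's route. Your proof of $Fp_1(t)\le Lp_1(t)\int_\Omega p$ is a different but valid route. You apply the chord inequality in $s$ with weight $\lambda(s)=\frac{x'-s}{x'-x}$ and use $\int_\Omega\lambda p=\frac12\int_\Omega p$. The paper instead reflects $u\mapsto x+x'-u$ and applies Lemma \ref{l2} to $x\le u\le x+x'-u\le x'$. Yours avoids re-indexing $Fp_1$ and is just the unsymmetrized form of the same estimate.
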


\begin{proof}
$\left(  1\right)  $ It is easily observed from the convexity of $f$ and the
hypothesis of $p$ that $Lp_{1}$ is convex on $\left[  0,1\right]  .$

$\left(  2\right)  $ Using simple techniques of integration, we have the
following identities%
\begin{align*}
&  \frac{G_{1}\left(  t\right)  +G_{2}\left(  t\right)  }{2}\int_{\Omega
}p\left(  s\right)  ds\\
&  =\frac{1}{2}\int_{x}^{y}\left[  f\left(  tx+\left(  1-t\right)  y\right)
+f\left(  tx+\left(  1-t\right)  y^{\prime}\right)  \right. \\
&  +\left.  f\left(  tx^{\prime}+\left(  1-t\right)  y\right)  +f\left(
tx^{\prime}+\left(  1-t\right)  y^{\prime}\right)  \right]  p\left(  s\right)
ds
\end{align*}
and%
\begin{align*}
Lp_{1}\left(  t\right)   &  =\frac{1}{2}\int_{x}^{y}\left[  f\left(
tx+\left(  1-t\right)  s\right)  +f\left(  tx+\left(  1-t\right)  \left(
x+x^{\prime}-s\right)  \right)  \right. \\
&  \qquad\qquad\left.  +f\left(  tx^{\prime}+\left(  1-t\right)  s\right)
+f\left(  tx^{\prime}+\left(  1-t\right)  \left(  x+x^{\prime}-s\right)
\right)  \right]  p\left(  s\right)  ds\\
&  =\frac{1}{2}Pp_{1}\left(  t\right)  +\frac{1}{2}\int_{x}^{y}\left[
f\left(  tx^{\prime}+\left(  1-t\right)  s\right)  \right. \\
&  \qquad\qquad+\left.  f\left(  tx+\left(  1-t\right)  \left(  x+x^{\prime
}-s\right)  \right)  \right]  p\left(  s\right)  ds
\end{align*}
for all $t\in\left[  0,1\right]  .$

By Lemma \ref{l2}, the following inequalities hold for all $t\in\left[
0,1\right]  $ and $s\in\left[  x,y\right]  :$%
\begin{align*}
&  \frac{1}{2}\left[  f\left(  tx+\left(  1-t\right)  y\right)  +f\left(
tx+\left(  1-t\right)  y^{\prime}\right)  \right]  p\left(  s\right) \\
&  \leq\frac{1}{2}\left[  f\left(  tx+\left(  1-t\right)  s\right)  +f\left(
tx+\left(  1-t\right)  \left(  x+x^{\prime}-s\right)  \right)  \right]
p\left(  s\right)  ,
\end{align*}%
\begin{align*}
&  \frac{1}{2}\left[  f\left(  tx^{\prime}+\left(  1-t\right)  y\right)
+f\left(  tx^{\prime}+\left(  1-t\right)  y^{\prime}\right)  \right]  p\left(
s\right) \\
&  \leq\frac{1}{2}\left[  f\left(  tx^{\prime}+\left(  1-t\right)  s\right)
+f\left(  tx^{\prime}+\left(  1-t\right)  \left(  x+x^{\prime}-s\right)
\right)  \right]  p\left(  s\right)
\end{align*}
and%
\begin{multline*}
\frac{1}{2}\left[  f\left(  tx^{\prime}+\left(  1-t\right)  s\right)
+f\left(  tx+\left(  1-t\right)  \left(  x+x^{\prime}-s\right)  \right)
\right]  p\left(  s\right) \\
\leq\frac{1}{2}\left[  f\left(  tx+\left(  1-t\right)  s\right)  +f\left(
tx^{\prime}+\left(  1-t\right)  \left(  x+x^{\prime}-s\right)  \right)
\right]  p\left(  s\right)  .
\end{multline*}

Integrating the above inequalities over $s$ on $\left[  x,y\right]  $ and
using the above identities and $\left(  \ref{2.12}\right)  $, we derive
$\left(  \ref{2.33}\right)  $ and $\left(  \ref{2.34}\right)  .$

$\left(  3\right)  $ Using simple techniques of integration, the symmetry of
$Fp_{1}$ and the hypothesis of $p$, we have the following identity%
\[
Fp_{1}\left(  t\right)  =\int_{\Omega}\int_{x}^{y}\left[  f\left(  tu+\left(
1-t\right)  s\right)  +f\left(  t\left(  x+x^{\prime}-u\right)  +\left(
1-t\right)  s\right)  \right]  p\left(  s\right)  p\left(  u\right)  duds
\]
for all $t\in\left[  0,1\right]  .$

By Lemma \ref{l2}, the following inequality holds for all $t\in\left[
0,1\right]  ,u\in\left[  x,y\right]  $ and $s\in\Omega.$%
\begin{align*}
&  \left[  f\left(  tu+\left(  1-t\right)  s\right)  +f\left(  t\left(
x+x^{\prime}-u\right)  +\left(  1-t\right)  s\right)  \right]  p\left(
s\right)  p\left(  u\right) \\
&  \leq\left[  f\left(  tx+\left(  1-t\right)  s\right)  +f\left(  tx^{\prime
}+\left(  1-t\right)  s\right)  \right]  p\left(  s\right)  p\left(  u\right)
.
\end{align*}

Integrating the above inequality over $u$ on $\left[  x,y\right]  $, over $s$
on $\Omega$ and using the above identity, simple techniques of integration and
the definition of $Lp_{1}$, we obtain%
\begin{equation}
Fp_{1}\left(  t\right)  \leq Lp_{1}\left(  t\right)  \int_{\Omega}p\left(
s\right)  ds \label{2.39}%
\end{equation}
for all $t\in\left[  0,1\right]  .$ Using $\left(  \ref{2.21}\right)  ,$
$\left(  \ref{2.39}\right)  $ and the symmetry of $Fp_{1}$, we derive $\left(
\ref{2.35}\right)  -\left(  \ref{2.37}\right)  .$

$\left(  4\right)  $ Using simple techniques of integration, $y^{\prime
}-x=x^{\prime}-y$ and the hypothesis of $p$, we have the following identity%
\begin{multline*}
Fp_{1}\left(  t\right)  =\int_{\Omega}\int_{x}^{\frac{x+y}{2}}\left[  f\left(
ts+\left(  1-t\right)  u\right)  +f\left(  ts+\left(  1-t\right)  \left(
x+y-u\right)  \right)  \right. \\
+\left.  f\left(  ts+\left(  1-t\right)  \left(  x^{\prime}-y+u\right)
\right)  +f\left(  ts+\left(  1-t\right)  \left(  y+y^{\prime}-u\right)
\right)  \right]  p\left(  s\right)  p\left(  u\right)  duds
\end{multline*}
for all $t\in\left[  0,1\right]  .$

By Lemma \ref{l2}, the following inequalities hold for all $t\in\left[
0,1\right]  ,$ $s\in\Omega$ and $u\in\left[  x,\frac{x+y}{2}\right]  :$%
\begin{align*}
&  \left[  f\left(  ts+\left(  1-t\right)  u\right)  +f\left(  ts+\left(
1-t\right)  \left(  x+y-u\right)  \right)  \right]  p\left(  s\right)
p\left(  u\right) \\
&  \leq\left[  f\left(  ts+\left(  1-t\right)  x\right)  +f\left(  ts+\left(
1-t\right)  y\right)  \right]  p\left(  s\right)  p\left(  u\right)
\end{align*}
and%
\begin{multline*}
\left[  f\left(  ts+\left(  1-t\right)  \left(  x^{\prime}-y+u\right)
\right)  +f\left(  ts+\left(  1-t\right)  \left(  y+y^{\prime}-u\right)
\right)  \right]  p\left(  s\right)  p\left(  u\right) \\
\leq\left[  f\left(  ts+\left(  1-t\right)  y^{\prime}\right)  +f\left(
ts+\left(  1-t\right)  x^{\prime}\right)  \right]  p\left(  s\right)  p\left(
u\right)  .
\end{multline*}

Integrating the above inequalities over $s$ on $\Omega,$ over $u$ on $\left[
x,\frac{x+y}{2}\right]  $ and using the above identity and the definitions of
$Hp_{1},Hp_{2}$ and $Lp_{1}$, we obtain%
\begin{equation}
Fp_{1}\left(  t\right)  \leq\frac{1}{2}\left[  Lp_{1}\left(  1-t\right)
+\frac{Hp_{1}\left(  t\right)  +Hp_{2}\left(  t\right)  }{2}\right]
\int_{\Omega}p\left(  s\right)  ds \label{2.40}%
\end{equation}
for all $t\in\left[  0,1\right]  .$ Using $\left(  \ref{2.36}\right)  $ and
$\left(  \ref{2.40}\right)  ,$ we derive $\left(  \ref{2.38}\right)  .$

This completes the proof.
\end{proof}

\begin{remark}
\label{r10}\textit{Using Remark \ref{r3}} we have the following results:
\end{remark}

\begin{enumerate}
\item \textit{Theorems \ref{t4}, \ref{t7}\ and the inequalities }$\left(
\ref{2.12}\right)  ,$ $\left(  \ref{2.30}\right)  $ and $\left(
\ref{2.33}\right)  $\textit{\ reduce to Theorem \ref{A9} as }$x=a,$%
\textit{\ }$y=y^{\prime}=\frac{a+b}{2},$\textit{\ }$x^{\prime}=b$ \textit{and
}$p\left(  s\right)  \equiv\frac{1}{b-a}$ $\left(  s\in\left[  a,b\right]
\right)  .$

\item \textit{The inequality }$\left(  \ref{2.38}\right)  $\textit{\ reduces
to Theorem \ref{A11} as }$x=a,$\textit{\ }$y=y^{\prime}=\frac{a+b}{2}%
,$\textit{\ }$x^{\prime}=b$ \textit{and }$p\left(  s\right)  \equiv\frac
{1}{b-a}$ $\left(  s\in\left[  a,b\right]  \right)  .$

\item \textit{Theorem \ref{t7}} \textit{reduces to \ref{A19} as }$p\left(
s\right)  \equiv\frac{1}{2\left(  y-x\right)  }$\ $\left(  s\in\Omega\right)
. $

\item \textit{The conclusions }$\left(  1\right)  -\left(  3\right)  $
\textit{of Theorem \ref{t7} and the inequalities }$\left(  \ref{2.12}\right)
$\textit{\ reduce to Theorem \ref{A25} as }$x=a,$ $y=y^{\prime}=\frac{a+b}%
{2},x^{\prime}=b$ \textit{and }$p\left(  s\right)  \equiv g\left(  s\right)
$\ $\left(  s\in\left[  a,b\right]  \right)  .$
\end{enumerate}

The following corollary is a natural consequence of Theorems \ref{t1} --
\ref{t7}.

\begin{corollary}
\label{c1}The following inequalities hold for all $t\in\left[  0,1\right]  $%
\begin{align*}
\frac{f\left(  y\right)  +f\left(  y^{\prime}\right)  }{2}\int_{\Omega
}p\left(  s\right)  ds  &  \leq Hp_{1}\left(  t\right)  \leq\int_{\Omega
}f\left(  s\right)  p\left(  s\right)  ds\ \\
&  \leq Pp_{1}\left(  t\right)  \leq\frac{f\left(  x\right)  +f\left(
x^{\prime}\right)  }{2}\int_{\Omega}p\left(  s\right)  ds;
\end{align*}%
\begin{align*}
&  \frac{1}{2}\left[  f\left(  \frac{y+y^{\prime}}{2}\right)  +\frac{f\left(
y\right)  +f\left(  y^{\prime}\right)  }{2}\right]  \left[  \int_{\Omega
}p\left(  s\right)  ds\right]  ^{2}\\
&  \leq\frac{Hp_{1}\left(  t\right)  +Hp_{2}\left(  t\right)  }{2}\int%
_{\Omega}p\left(  s\right)  ds\leq Fp_{1}\left(  t\right)  ;
\end{align*}%
\begin{align*}
Fp_{1}\left(  t\right)   &  \leq\int_{\Omega}f\left(  s\right)  p\left(
s\right)  ds\ \int_{\Omega}p\left(  s\right)  ds\\
&  \leq Pp_{1}\left(  t\right)  \int_{\Omega}p\left(  s\right)  ds\leq
\frac{f\left(  x\right)  +f\left(  x^{\prime}\right)  }{2}\left[  \int%
_{\Omega}p\left(  s\right)  ds\right]  ^{2};
\end{align*}%
\begin{align*}
&  \frac{1}{2}\left[  f\left(  \frac{y+y^{\prime}}{2}\right)  +\frac{f\left(
y\right)  +f\left(  y^{\prime}\right)  }{2}\right]  \left[  \int_{\Omega
}p\left(  s\right)  ds\right]  ^{2}\\
&  \leq\frac{Hp_{1}\left(  1-t\right)  +Hp_{2}\left(  1-t\right)  }{2}%
\int_{\Omega}p\left(  s\right)  ds\leq Fp_{1}\left(  t\right)  ;
\end{align*}%
\begin{align*}
Fp_{1}\left(  t\right)   &  \leq\frac{1}{2}\left[  \frac{Hp_{1}\left(
1-t\right)  +Hp_{2}\left(  1-t\right)  }{2}+Lp_{1}\left(  t\right)  \right]
\int_{\Omega}p\left(  s\right)  ds\\
&  \leq Lp_{1}\left(  t\right)  \int_{\Omega}p\left(  s\right)  ds\leq
Pp_{1}\left(  t\right)  \int_{\Omega}p\left(  s\right)  ds\\
&  \leq\frac{f\left(  x\right)  +f\left(  x^{\prime}\right)  }{2}\left[
\int_{\Omega}p\left(  s\right)  ds\right]  ^{2};
\end{align*}%
\begin{align*}
&  \frac{1}{2}\left[  f\left(  \frac{y+y^{\prime}}{2}\right)  +\frac{f\left(
y\right)  +f\left(  y^{\prime}\right)  }{2}\right]  \int_{\Omega}p\left(
s\right)  ds\\
&  \leq\frac{G_{1}\left(  t\right)  +G_{2}\left(  t\right)  }{2}\int_{\Omega
}p\left(  s\right)  ds\leq Lp_{1}\left(  t\right)  ;
\end{align*}%
\begin{align*}
\frac{f\left(  y\right)  +f\left(  y^{\prime}\right)  }{2}\int_{\Omega
}p\left(  s\right)  ds  &  \leq Hp_{1}\left(  t\right) \\
&  \leq\frac{1}{2}\left[  \frac{f\left(  y\right)  +f\left(  y^{\prime
}\right)  }{2}+G_{1}\left(  t\right)  \right]  \int_{\Omega}p\left(  s\right)
ds\\
&  \leq G_{1}\left(  t\right)  \int_{\Omega}p\left(  s\right)  ds
\end{align*}
and%
\begin{align*}
G_{1}\left(  t\right)  \int_{\Omega}p\left(  s\right)  ds  &  \leq
Pp_{1}\left(  t\right) \\
&  \leq\frac{1}{2}\left[  G_{1}\left(  t\right)  +\frac{f\left(  x\right)
+f\left(  x^{\prime}\right)  }{2}\right]  \int_{\Omega}p\left(  s\right)  ds\\
&  \leq\frac{f\left(  x\right)  +f\left(  x^{\prime}\right)  }{2}\int_{\Omega
}p\left(  s\right)  ds.
\end{align*}

\end{corollary}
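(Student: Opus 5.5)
Corollary~\ref{c1} is a chain-assembly of the inequalities in Theorems~\ref{t1}--\ref{t7}. My plan is to take the displayed chains one at a time and, for each link, name the earlier inequality that supplies it. I will write $\|p\|:=\int_{\Omega}p(s)\,ds$. Nothing new needs to be integrated; the only new ingredients are averaging two known bounds and using the symmetry $Fp_{1}(t)=Fp_{1}(1-t)$.

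\textbf{First chain.} This is (\ref{2.1}) followed by (\ref{2.11}).

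\textbf{Second and fourth chains.} First I need a lower bound for the average of $Hp_{1}$ and $Hp_{2}$. By (\ref{2.1}), $Hp_{1}(t)\geq\frac{f(y)+f(y')}{2}\|p\|$. By (\ref{2.3}), $Hp_{2}(t)\geq f(\frac{y+y'}{2})\|p\|$. Averaging these and multiplying by $\|p\|$ gives the left end of both chains. The right link of the second chain is (\ref{2.21}), and the right link of the fourth is (\ref{2.35}). The fourth chain uses the same bound evaluated at $1-t$.

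\textbf{Third chain.} The first link holds because $Fp_{1}$ attains its supremum at $t=0$, which is (\ref{2.19}). The remaining links are (\ref{2.11}) multiplied by $\|p\|$.

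\textbf{Fifth chain.} This is the main obstacle, since it requires $Fp_{1}(t)\leq\frac12\bigl[\frac{Hp_{1}(1-t)+Hp_{2}(1-t)}{2}+Lp_{1}(t)\bigr]\|p\|$ with no extra hypothesis on $p$. My first attempt is to start from (\ref{2.40}) and replace $t$ by $1-t$, using $Fp_{1}(1-t)=Fp_{1}(t)$. That yields exactly this bound, but (\ref{2.40}) was proved only under the extra symmetry hypothesis of Theorem~\ref{t7}(4). If the corollary is meant unconditionally, I would instead try to re-derive the bound directly, as follows. Split $Fp_{1}$ as in the proof of (\ref{2.39}). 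For each $u\in[x,y]$, bound the pair $\{tu+(1-t)s,\;t(x+x'-u)+(1-t)s\}$ by Lemma~\ref{l2} in two different ways: once against $x,x'$, which produces $Lp_{1}$, and once against $y,y'$, which produces the $Hp$-type term. Then average the two bounds. If that fails, I would state the chain under the hypotheses of Theorem~\ref{t7}(4). The rest of the fifth chain follows from (\ref{2.35}), since the $Hp$-average is at most $Fp_{1}(t)/\|p\|\leq Lp_{1}(t)$; then from (\ref{2.33}); and finally from (\ref{2.11}), each multiplied by $\|p\|$.

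\textbf{Sixth chain.} I need a lower bound for $\frac{G_{1}(t)+G_{2}(t)}{2}$. From Theorem~\ref{A18}(2), $G_{1}(t)\geq\frac{f(y)+f(y')}{2}$ and $G_{2}(t)\geq f(\frac{x+x'}{2})=f(\frac{y+y'}{2})$; averaging gives the left link. The right link is (\ref{2.33}).

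\textbf{Last two chains.} For the chain involving $Hp_{1}$ and $G_{1}$: the left link is (\ref{2.1}), the middle link is (\ref{2.31}) divided by $2$, and the right link holds because (\ref{2.26}) gives $\frac{f(y)+f(y')}{2}\|p\|\leq Hp_{1}(t)\leq G_{1}(t)\|p\|$. For the chain involving $Pp_{1}$ and $G_{1}$: the left link is (\ref{2.26}), the middle link is (\ref{2.32}), and the right link is $G_{1}(t)\leq G_{1}(1)=\frac{f(x)+f(x')}{2}$ from Theorem~\ref{A18}.
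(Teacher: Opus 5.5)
Your assembly is the one the paper intends; the paper gives nothing beyond calling the corollary a consequence of Theorems \ref{t1}--\ref{t7}. Every link you take from (\ref{2.1}), (\ref{2.3}), (\ref{2.11}), (\ref{2.19}), (\ref{2.21}), (\ref{2.26}), (\ref{2.33}), (\ref{2.35}) and Theorem \ref{A18} is valid for all admissible $p$. The gap is in three other links: the first link of the fifth chain, taken from (\ref{2.40}), and the middle links of the last two chains, taken from (\ref{2.31}) and (\ref{2.32}).

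None of these three holds for a general $p$ that is only symmetric about $\frac{x+x'}{2}$, so your plan to re-derive the fifth-chain bound unconditionally cannot succeed. Technically, comparing the pair $\{tu+(1-t)s,\,t(x+x'-u)+(1-t)s\}$ with $\{ty+(1-t)s,\,ty'+(1-t)s\}$ via Lemma \ref{l2} gives a \emph{lower} bound for the integrand of $Fp_1$. That is exactly how (\ref{2.35}) is proved, so there is no upper bound to average with.

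To see the failure, put $t=1$ in the fifth and seventh chains and $t=0$ in the eighth. Use $Fp_1(1)=\int_\Omega fp\int_\Omega p$, $Hp_1(1)=Pp_1(0)=\int_\Omega fp$, $Hp_1(0)=Hp_2(0)=\frac{f(y)+f(y')}{2}\int_\Omega p$, $Lp_1(1)=\frac{f(x)+f(x')}{2}\int_\Omega p$, $G_1(0)=\frac{f(y)+f(y')}{2}$ and $G_1(1)=\frac{f(x)+f(x')}{2}$. All three links then reduce to
\[
\int_\Omega f(s)p(s)\,ds\leq\frac{f(x)+f(y)+f(y')+f(x')}{4}\int_\Omega p(s)\,ds .
\]
Take $x=0$, $y=y'=\frac12$, $x'=1$, $f(s)=(s-\frac12)^2$, and $p=\frac{1}{2\varepsilon}$ on $[0,\varepsilon]\cup[1-\varepsilon,1]$, $p=0$ elsewhere. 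The left side is $\frac14-\frac{\varepsilon}{2}+\frac{\varepsilon^{2}}{3}$ and the right side is $\frac18$. So the inequality fails for every $\varepsilon\leq\frac14$.

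The missing ingredient is the hypothesis of Theorem \ref{t7}(4), i.e.\ $p(s)=p(x+y-s)$ on $[x,y]$. You flagged it for (\ref{2.40}), where the paper states it explicitly. You missed that it is also used silently in the proof of Theorem \ref{t6}(3). There the identities for $2Hp_1(t)$ and $2Pp_1(t)$ come from substituting $s\mapsto x+y-s$ in a second copy of the integral. That substitution returns the weight $p(s)$ only under this symmetry, so (\ref{2.31}) and (\ref{2.32}) are not unconditional either.

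With this hypothesis, the displayed inequality is just Fej\'er's inequality on $[x,y]$ and on $[y',x']$, and your assembly is complete. Without it, the corollary is correct only after those three links are removed.
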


\subsection{New Fejer-type Inequalities II}

Throughout this subsection, let $x,y,y^{\prime},x^{\prime},$ $\Omega,f,g,p,$
$H,H_{1},H_{2},Hg,Hp_{1},Hp_{2},P,P_{1},Pg,Pp_{1},$ $F,F_{1},Fg,Fp_{1},$
$L,L_{1},Lg,Lp_{1},$ $Q,$ $G,G_{1},G_{2},Ig,Jg,Mg,$ $Ng,Kg,Sg$\ be defined as
above and let $y=\left(  1-\alpha\right)  x+\alpha x^{\prime},$ $y^{\prime
}=\alpha x+\left(  1-\alpha\right)  x^{\prime}$ with $0<\alpha\leq\frac{1}{2}$
and define the following functions on $[0,1].$%
\[
Q_{1}\left(  t\right)  =\frac{1}{2}\left[  f\left(  tx+\left(  1-t\right)
x^{\prime}\right)  +f\left(  tx^{\prime}+\left(  1-t\right)  x\right)
\right]  ,
\]%
\begin{align*}
Ip_{1}\left(  t\right)   &  =%
{\displaystyle\int\nolimits_{x}^{x^{\prime}}}
\left[  f\left(  t\left(  \left(  1-\alpha\right)  x+\alpha s\right)  +\left(
1-t\right)  y\right)  \right. \\
&  \text{ \ \ \ \ \ \ \ \ }+\left.  f\left(  t\left(  \left(  1-\alpha\right)
x^{\prime}+\alpha s\right)  +\left(  1-t\right)  y^{\prime}\right)  \right]
p\left(  s\right)  ds,
\end{align*}%
\begin{align*}
Ip_{2}\left(  t\right)   &  =%
{\displaystyle\int\nolimits_{x}^{x^{\prime}}}
\left[  f\left(  t\left(  \left(  1-\alpha\right)  x+\alpha s\right)  +\left(
1-t\right)  y^{\prime}\right)  \right. \\
&  \text{ \ \ \ \ \ \ \ \ }+\left.  f\left(  t\left(  \left(  1-\alpha\right)
x^{\prime}+\alpha s\right)  +\left(  1-t\right)  y\right)  \right]  p\left(
s\right)  ds,
\end{align*}%
\begin{align*}
Jp\left(  t\right)   &  =%
{\displaystyle\int\nolimits_{x}^{x^{\prime}}}
\left[  f\left(  t\left(  \left(  1-\alpha\right)  x+\alpha s\right)  +\left(
1-t\right)  \frac{x+y}{2}\right)  \right. \\
&  \text{ \ \ \ \ \ \ }+\left.  f\left(  t\left(  \left(  1-\alpha\right)
x^{\prime}+\alpha s\right)  +\left(  1-t\right)  \frac{x^{\prime}+y^{\prime}%
}{2}\right)  \right]  p\left(  s\right)  ds,
\end{align*}%
\begin{align*}
Mp\left(  t\right)   &  =%
{\displaystyle\int\nolimits_{x}^{\frac{x+x^{\prime}}{2}}}
\left[  f\left(  tx+\left(  1-t\right)  \left(  \left(  1-\alpha\right)
x+\alpha s\right)  \right)  \right. \\
&  \text{ \ \ \ \ \ \ \ \ }+\left.  f\left(  ty^{\prime}+\left(  1-t\right)
\left(  \left(  1-\alpha\right)  x^{\prime}+\alpha s\right)  \right)  \right]
p\left(  s\right)  ds\\
&  +%
{\displaystyle\int\nolimits_{\frac{x+x^{\prime}}{2}}^{x^{\prime}}}
\left[  f\left(  ty+\left(  1-t\right)  \left(  \left(  1-\alpha\right)
x+\alpha s\right)  \right)  \right. \\
&  \text{ \ \ \ \ \ \ \ \ }+\left.  f\left(  tx^{\prime}+\left(  1-t\right)
\left(  \left(  1-\alpha\right)  x^{\prime}+\alpha s\right)  \right)  \right]
p\left(  s\right)  ds,
\end{align*}%
\begin{align*}
Np\left(  t\right)   &  =%
{\displaystyle\int\nolimits_{x}^{x^{\prime}}}
\left[  f\left(  tx+\left(  1-t\right)  \left(  \left(  1-\alpha\right)
x+\alpha s\right)  \right)  \right. \\
&  \text{ \ \ \ \ \ \ \ }+\left.  f\left(  tx^{\prime}+\left(  1-t\right)
\left(  \left(  1-\alpha\right)  x^{\prime}+\alpha s\right)  \right)  \right]
p\left(  s\right)  ds,
\end{align*}%
\begin{align*}
Sp\left(  t\right)   &  =%
{\displaystyle\int\nolimits_{x}^{x^{\prime}}}
\frac{1}{2}\left[  f\left(  tx+\left(  1-t\right)  \left(  \left(
1-\alpha\right)  x+\alpha s\right)  \right)  \right. \\
&  \text{ \ \ \ }+f\left(  tx+\left(  1-t\right)  \left(  \left(
1-\alpha\right)  x^{\prime}+\alpha s\right)  \right) \\
&  \text{ \ \ \ }+f\left(  tx^{\prime}+\left(  1-t\right)  \left(  \left(
1-\alpha\right)  x+\alpha s\right)  \right) \\
&  \text{ \ \ \ }+\left.  f\left(  tx^{\prime}+\left(  1-t\right)  \left(
\left(  1-\alpha\right)  x^{\prime}+\alpha s\right)  \right)  \right]
p\left(  s\right)  ds
\end{align*}
and%
\begin{align*}
&  Kp\left(  t\right) \\
&  =%
{\displaystyle\int\nolimits_{x}^{x^{\prime}}}
{\displaystyle\int\nolimits_{x}^{x^{\prime}}}
\left[  f\left(  t\left(  \left(  1-\alpha\right)  x+\alpha s\right)  +\left(
1-t\right)  \left(  \left(  1-\alpha\right)  x+\alpha u\right)  \right)
\right. \\
&  \text{ \ \ \ \ \ \ \ \ }+f\left(  t\left(  \left(  1-\alpha\right)
x+\alpha s\right)  +\left(  1-t\right)  \left(  \left(  1-\alpha\right)
x^{\prime}+\alpha u\right)  \right) \\
&  \text{ \ \ \ \ \ \ \ \ }+f\left(  t\left(  \left(  1-\alpha\right)
x^{\prime}+\alpha s\right)  +\left(  1-t\right)  \left(  \left(
1-\alpha\right)  x+\alpha u\right)  \right) \\
&  \text{ \ \ \ \ \ \ \ \ }+\left.  f\left(  t\left(  \left(  1-\alpha\right)
x^{\prime}+\alpha s\right)  +\left(  1-t\right)  \left(  \left(
1-\alpha\right)  x^{\prime}+\alpha u\right)  \right)  \right]  p\left(
s\right)  p\left(  u\right)  dsdu.
\end{align*}

\begin{remark}
\label{r11}\textit{We note that :}
\end{remark}

\begin{enumerate}
\item $Ip_{1}\left(  t\right)  =H_{1}\left(  t\right)  ,$ $Ip_{2}\left(
t\right)  =H_{2}\left(  t\right)  ,$ $Kp\left(  t\right)  =F_{1}\left(
t\right)  ,$ $Np\left(  t\right)  =P_{1}\left(  t\right)  ,$ $Sp\left(
t\right)  =L_{1}\left(  t\right)  $\textit{\ on }$\left[  0,1\right]
$\textit{\ as }$p\left(  s\right)  \equiv\frac{\alpha}{2\left(  y-x\right)  }$
\ $\left(  s\in\left[  x,x^{\prime}\right]  \right)  .$

\item $\Omega=\left[  a,b\right]  $\textit{\ and }$Q_{1}\left(  t\right)
=Q\left(  t\right)  ,$ $Ip_{1}\left(  t\right)  =Ip_{2}\left(  t\right)
=H\left(  t\right)  ,$ $Kp\left(  t\right)  =F\left(  t\right)  ,$ $Np\left(
t\right)  =P\left(  t\right)  ,$ $Sp\left(  t\right)  =L\left(  t\right)
$\textit{\ on }$\left[  0,1\right]  $\textit{\ as }$x=a,$ $y=y^{\prime}%
=\frac{a+b}{2},x^{\prime}=b$ \textit{and} $p\left(  s\right)  \equiv\frac
{1}{2\left(  b-a\right)  }$ $\left(  s\in\left[  a,b\right]  \right)  .$

\item $\Omega=\left[  a,b\right]  $\textit{\ and }$Ip_{1}\left(  t\right)
=Ip_{2}\left(  t\right)  =Ig\left(  t\right)  ,$ $Jp\left(  t\right)
=Jg\left(  t\right)  ,$ $Mp\left(  t\right)  =Mg\left(  t\right)  ,$
$Np\left(  t\right)  =Pg\left(  t\right)  ,$ $Sp\left(  t\right)  =Sg\left(
t\right)  ,$ $Kp\left(  t\right)  =Kg\left(  t\right)  $\textit{\ on }$\left[
0,1\right]  $\textit{\ as }$x=a,$ $y=y^{\prime}=\frac{a+b}{2},x^{\prime}=b$
\textit{and }$p\left(  s\right)  =\frac{g\left(  s\right)  }{2}$ $\left(
s\in\left[  a,b\right]  \right)  .$
\end{enumerate}

\begin{theorem}
\label{t8}\textit{Let }$x,y,y^{\prime},x^{\prime},f,p,Ip_{1}$ \textit{be
defined as above.\ }Then we have the following results:

\begin{enumerate}
\item $Ip_{1}$ is convex on $\left[  0,1\right]  .$

\item $Ip_{1}$\ is increasing on $\left[  0,1\right]  $\ and the following
\textit{Fej\'{e}r-type} inequalities%
\begin{align}
&  \left[  f\left(  y\right)  +f\left(  y^{\prime}\right)  \right]
\int\nolimits_{x}^{x^{\prime}}p\left(  s\right)  ds\label{2.41}\\
&  =Ip_{1}\left(  0\right)  \leq Ip_{1}\left(  t\right)  \leq Ip_{1}\left(
1\right) \nonumber\\
&  =\int\nolimits_{x}^{x^{\prime}}\left[  f\left(  \left(  1-\alpha\right)
x+\alpha s\right)  +f\left(  \left(  1-\alpha\right)  x^{\prime}+\alpha
s\right)  \right]  p\left(  s\right)  ds\nonumber
\end{align}
and%
\begin{align}
Ip_{1}\left(  t\right)   &  \leq\left(  1-t\right)  \left[  f\left(  y\right)
+f\left(  y^{\prime}\right)  \right]  \int\nolimits_{x}^{x^{\prime}}p\left(
s\right)  ds\label{2.42}\\
&  +t\int\nolimits_{x}^{x^{\prime}}\left[  f\left(  \left(  1-\alpha\right)
x+\alpha s\right)  +f\left(  \left(  1-\alpha\right)  x^{\prime}+\alpha
s\right)  \right]  p\left(  s\right)  ds\nonumber\\
&  \leq\int\nolimits_{x}^{x^{\prime}}\left[  f\left(  \left(  1-\alpha\right)
x+\alpha s\right)  +f\left(  \left(  1-\alpha\right)  x^{\prime}+\alpha
s\right)  \right]  p\left(  s\right)  ds\nonumber\\
&  \leq\left[  f\left(  x\right)  +f\left(  x^{\prime}\right)  \right]
\int\nolimits_{x}^{x^{\prime}}p\left(  s\right)  ds\nonumber
\end{align}
hold for all $t\in\left[  0,1\right]  .$
\end{enumerate}
\end{theorem}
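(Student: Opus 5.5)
The plan is to follow the pattern of the proofs of Theorems \ref{t1} and \ref{t3}, with the single pair of points $(1-\alpha)x+\alpha s$ and $(1-\alpha)x^{\prime}+\alpha s$ playing the role that $s$ and $y+y^{\prime}-s$ played there. Write
\[
u(s)=(1-\alpha)x+\alpha s,\qquad v(s)=(1-\alpha)x^{\prime}+\alpha s .
\]
For $s\in[x,x^{\prime}]$ we have $u(s)\in[x,y]$, $v(s)\in[y^{\prime},x^{\prime}]$, $y-u(s)=\alpha(x^{\prime}-s)$ and $v(s)-y^{\prime}=\alpha(s-x)$.

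\textbf{Convexity.} For each fixed $s$, the map $t\mapsto f(tu(s)+(1-t)y)+f(tv(s)+(1-t)y^{\prime})$ is convex, because $f$ is composed with affine functions of $t$. Since $p\geq0$, integrating against $p(s)$ preserves convexity, so $Ip_{1}$ is convex.

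\textbf{Endpoint values.} Evaluating the integrand at $t=0$ gives $Ip_{1}(0)=[f(y)+f(y^{\prime})]\int_{x}^{x^{\prime}}p$, and at $t=1$ gives the stated value of $Ip_{1}(1)$.

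\textbf{Monotonicity and (\ref{2.41}).} The two arguments are not symmetric pointwise in $s$, so I will symmetrize first. Substitute $s\mapsto x+x^{\prime}-s$ and use $p(s)=p(x+x^{\prime}-s)$. This gives
\[
Ip_{1}(t)=\frac12\int_{x}^{x^{\prime}}\bigl[\phi_{s}(t)+\phi_{x+x^{\prime}-s}(t)\bigr]p(s)\,ds,
\]
where $\phi_{s}(t)=f(tu(s)+(1-t)y)+f(tv(s)+(1-t)y^{\prime})$. Now regroup the four terms into the two pairs
\[
f\bigl(tu(s)+(1-t)y\bigr)+f\bigl(tv(x+x^{\prime}-s)+(1-t)y^{\prime}\bigr)
\]
and
\[
f\bigl(tu(x+x^{\prime}-s)+(1-t)y\bigr)+f\bigl(tv(s)+(1-t)y^{\prime}\bigr).
\]
In each pair the two arguments sum to $x+x^{\prime}$, since $u(s)+v(x+x^{\prime}-s)=x+x^{\prime}$. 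Their distance from the midpoint $\frac{x+x^{\prime}}{2}$ is $\frac{y^{\prime}-y}{2}+t\alpha(x^{\prime}-s)$, respectively $\frac{y^{\prime}-y}{2}+t\alpha(s-x)$, and both are nondecreasing in $t$. So for $t_{1}<t_{2}$ the pair at $t_{1}$ is nested inside the pair at $t_{2}$. Lemma \ref{l2} (equivalently Lemma \ref{l1}) with $E=s$ then gives the pairwise inequality. Integrating over $s$ yields $Ip_{1}(t_{1})\leq Ip_{1}(t_{2})$, which proves monotonicity and hence (\ref{2.41}).

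\textbf{The chain (\ref{2.42}).} The first inequality follows from convexity of $f$, $f(tu+(1-t)y)\leq tf(u)+(1-t)f(y)$, applied to each term and integrated. The second follows from $Ip_{1}(0)\leq Ip_{1}(1)$, already contained in (\ref{2.41}). For the last, symmetrize $Ip_{1}(1)$ in the same way, pairing $u(s)$ with $v(x+x^{\prime}-s)$. Each such pair sums to $x+x^{\prime}$ and lies in $[x,x^{\prime}]$, so Lemma \ref{l1} bounds it by $f(x)+f(x^{\prime})$.

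The main obstacle is the monotonicity step. The naive pairing of $f(tu(s)+(1-t)y)$ with $f(tv(s)+(1-t)y^{\prime})$ does not give arguments with a constant sum, so Lemma \ref{l2} cannot be applied directly; the symmetrization using the symmetry of $p$ is what makes the argument work.
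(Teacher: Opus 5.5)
Your proof is correct and is essentially the paper's argument: the paper likewise uses $p(s)=p(x+x^{\prime}-s)$ to rewrite the second term, obtaining $Ip_{1}(t)=\int_{x}^{x^{\prime}}\bigl[f(tu(s)+(1-t)y)+f(t(x+x^{\prime}-u(s))+(1-t)y^{\prime})\bigr]p(s)\,ds$ (your first pair, identity (\ref{2.43})). It then applies Lemma \ref{l2} to these nested pairs and integrates, and bounds $Ip_{1}(1)$ by $[f(x)+f(x^{\prime})]\int_{x}^{x^{\prime}}p(s)\,ds$ with the same pairing; the differences are cosmetic: you average the original and reflected integrals instead of reflecting only one term (your second pair integrates to the same value as the first), and the paper additionally substitutes $w=u(s)$ to write everything as an integral over $[x,y]$ with weight $\frac{1}{\alpha}p\bigl(\frac{1}{\alpha}(w-x)+x\bigr)$.
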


\begin{proof}
$\left(  1\right)  $ It is easily observed from the convexity of $f$ and the
hypothesis of $p$ that $Ip_{1}$ is convex on $\left[  0,1\right]  .$

$\left(  2\right)  $ Using simple integration techniques and under the
hypothesis of $p$, the following identity holds on $\left[  0,1\right]  ,$%
\begin{align}
Ip_{1}\left(  t\right)   &  =%
{\displaystyle\int\nolimits_{x}^{x^{\prime}}}
\left[  f\left(  t\left(  \left(  1-\alpha\right)  x+\alpha s\right)  +\left(
1-t\right)  y\right)  p\left(  s\right)  \right. \label{2.43}\\
&  \left.  +f\left(  t\left(  \alpha x+x^{\prime}-\alpha s\right)  +\left(
1-t\right)  y^{\prime}\right)  p\left(  x+x^{\prime}-x\right)  \right]
ds\nonumber\\
&  =%
{\displaystyle\int\nolimits_{x}^{x^{\prime}}}
\left[  f\left(  t\left(  \left(  1-\alpha\right)  x+\alpha s\right)  +\left(
1-t\right)  y\right)  \right. \nonumber\\
&  \left.  +f\left(  t\left(  \alpha x+x^{\prime}-\alpha s\right)  +\left(
1-t\right)  y^{\prime}\right)  \right]  p\left(  s\right)  ds\nonumber\\
&  =%
{\displaystyle\int\nolimits_{x}^{y}}
\frac{1}{\alpha}\left[  f\left(  ts+\left(  1-t\right)  y\right)  \right.
\nonumber\\
&  +\left.  f\left(  t\left(  x+x^{\prime}-s\right)  +\left(  1-t\right)
y^{\prime}\right)  \right]  p\left(  \frac{1}{\alpha}\left(  s-x\right)
+x\right)  ds.\nonumber
\end{align}

Let $t_{1}<t_{2}$ in $\left[  0,1\right]  .$ By Lemma \ref{l2}, the following
inequality holds for all $s\in\left[  x,y\right]  $:%
\begin{align*}
&  \frac{1}{\alpha}\left[  f\left(  t_{1}s+\left(  1-t_{1}\right)  y\right)
+f\left(  t_{1}\left(  x+x^{\prime}-s\right)  +\left(  1-t_{1}\right)
y^{\prime}\right)  \right]  p\left(  \frac{1}{\alpha}\left(  s-x\right)
+x\right) \\
&  \leq\frac{1}{\alpha}\left[  f\left(  t_{2}s+\left(  1-t_{2}\right)
y\right)  +f\left(  t_{2}\left(  x+x^{\prime}-s\right)  +\left(
1-t_{2}\right)  y^{\prime}\right)  \right]  p\left(  \frac{1}{\alpha}\left(
s-x\right)  +x\right)  .
\end{align*}

Integrating the above inequality over $s$ on $\left[  x,y\right]  $ and using
the above identity, we derive $Ip_{1}\left(  t_{1}\right)  \leq Ip_{1}\left(
t_{2}\right)  .$ Thus $Ip_{1}$ is increasing on $\left[  0,1\right]  $ and
then the inequality $\left(  \ref{2.41}\right)  $ holds. Using the convexity
of $f,$ the inequality $\left(  \ref{2.41}\right)  $ and the substitution rule
for integration, we obtain the first and second inequalities of $\left(
\ref{2.42}\right)  .$ Using simple techniques of integration and the
hypothesis of $p$, we have the following identitiy%
\begin{align*}
&  \int\nolimits_{x}^{x^{\prime}}\left[  f\left(  \left(  1-\alpha\right)
x+\alpha s\right)  +f\left(  \left(  1-\alpha\right)  x^{\prime}+\alpha
s\right)  \right]  p\left(  s\right)  ds\\
&  =\int\nolimits_{x}^{x^{\prime}}\left[  f\left(  \left(  1-\alpha\right)
x+\alpha s\right)  +f\left(  \left(  1-\alpha\right)  x^{\prime}+\alpha\left(
x+x^{\prime}-s\right)  \right)  \right]  p\left(  s\right)  ds.
\end{align*}

By Lemma \ref{l2}, the inequality%
\begin{align*}
&  \left[  f\left(  \left(  1-\alpha\right)  x+\alpha s\right)  +f\left(
\left(  1-\alpha\right)  x^{\prime}+\alpha\left(  x+x^{\prime}-s\right)
\right)  \right]  p\left(  s\right) \\
&  \leq\left[  f\left(  x\right)  +f\left(  x^{\prime}\right)  \right]
p\left(  s\right)
\end{align*}
holds for all $s\in\left[  x,x^{\prime}\right]  $. Integrating the above
inequality over $s$ on $\left[  x,x^{\prime}\right]  $ and using the above
identity, we derive the last inequality of $\left(  \ref{2.42}\right)  .$

This completes the proof.
\end{proof}

\begin{remark}
\label{r12}\textit{Using Remark \ref{r11}} we have the following results:
\end{remark}

\begin{enumerate}
\item \textit{Theorem \ref{t8}} \textit{reduces to Theorem \ref{A3} as }$x=a,
$\textit{\ }$y=y^{\prime}=\frac{a+b}{2},$\textit{\ }$x^{\prime}=b$ \textit{and
}$p\left(  s\right)  \equiv\frac{1}{2\left(  b-a\right)  }$ $\left(
s\in\left[  a,b\right]  \right)  .$

\item \textit{Theorem \ref{t8}} \textit{reduces to Theorem \ref{A12} as
}$p\left(  s\right)  \equiv\frac{\alpha}{2\left(  y-x\right)  }$\ $\left(
s\in\Omega\right)  .$

\item \textit{Theorem \ref{t8}} \textit{reduces to Theorem \ref{A26} as
}$x=a,$ $y=y^{\prime}=\frac{a+b}{2},x^{\prime}=b$ \textit{and }$p\left(
s\right)  =\frac{g\left(  s\right)  }{2}$\ $\left(  s\in\left[  a,b\right]
\right)  .$
\end{enumerate}

\begin{theorem}
\label{t9}\textit{Let }$x,y,y^{\prime},x^{\prime},f,p,Ip_{1},Ip_{2}%
$\textit{\ be defined as above and let}%
\[
m=\frac{y^{\prime}-y}{2\left(  y^{\prime}-x\right)  }\text{ \ and }m^{\prime
}=\left\{
\begin{array}
[c]{cc}%
\frac{1}{2}, & y\neq y^{\prime}\\
0, & y=y^{\prime}%
\end{array}
\right.  .
\]
\textit{\ }Then we have the following results:

\begin{enumerate}
\item $Ip_{2}$\ is convex on $\left[  0,1\right]  .$

\item $Ip_{2}$\ is decreasing on $\left[  0,m\right]  $\ and increasing on
$\left[  m^{\prime},1\right]  $\ and\ the following \textit{Fej\'{e}r-type}
inequality%
\begin{align}
&  2f\left(  \frac{x+x^{\prime}}{2}\right)  \int\nolimits_{x}^{x^{\prime}%
}p\left(  s\right)  ds\label{2.44}\\
&  \leq Ip_{2}\left(  t\right)  \leq\left(  1-t\right)  \left[  f\left(
y\right)  +f\left(  y^{\prime}\right)  \right]  \int\nolimits_{x}^{x^{\prime}%
}p\left(  s\right)  ds\nonumber\\
&  +t\int\nolimits_{x}^{x^{\prime}}\left[  f\left(  \left(  1-\alpha\right)
x+\alpha s\right)  +f\left(  \left(  1-\alpha\right)  x^{\prime}+\alpha
s\right)  \right]  p\left(  s\right)  ds\nonumber\\
&  \leq\int\nolimits_{x}^{x^{\prime}}\left[  f\left(  \left(  1-\alpha\right)
x+\alpha s\right)  +f\left(  \left(  1-\alpha\right)  x^{\prime}+\alpha
s\right)  \right]  p\left(  s\right)  ds\nonumber\\
&  \leq\left[  f\left(  x\right)  +f\left(  x^{\prime}\right)  \right]
\int\nolimits_{x}^{x^{\prime}}p\left(  s\right)  ds\nonumber
\end{align}
holds for all $t\in\left[  0,1\right]  .$

\item The inequality%
\begin{equation}
Ip_{2}\left(  t\right)  \leq Ip_{1}\left(  t\right)  \label{2.45}%
\end{equation}
holds for all $t\in\left[  0,1\right]  .$
\end{enumerate}
\end{theorem}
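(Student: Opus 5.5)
Following the proof of Theorem \ref{t8}, I would first reduce $Ip_{2}$ to an integral over $[x,y]$, so that Lemma \ref{l2} can be applied pointwise. Substitute $u=(1-\alpha)x+\alpha s$ (so $u\in[x,y]$ when $s\in[x,x^{\prime}]$, since $y=(1-\alpha)x+\alpha x^{\prime}$). Together with the symmetry $p(s)=p(x+x^{\prime}-s)$ applied to the second term, this gives
\[
Ip_{2}\left(  t\right)  =\int_{x}^{y}\frac{1}{\alpha}\left[  f\left(
tu+\left(  1-t\right)  y^{\prime}\right)  +f\left(  t\left(  x+x^{\prime
}-u\right)  +\left(  1-t\right)  y\right)  \right]  p\left(  \frac{1}{\alpha
}\left(  u-x\right)  +x\right)  du,
\]
which parallels (\ref{2.43}). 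Convexity of $Ip_{2}$ (part (1)) then follows directly from the convexity of $f$ and $p\geq0$.

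\textbf{Inequality (\ref{2.45}).} For each $u\in\lbrack x,y]$ the two arguments in $Ip_{2}$ and the two in $Ip_{1}$ have the same sum $y+y^{\prime}$. The pair $tu+(1-t)y$, $t(x+x^{\prime}-u)+(1-t)y^{\prime}$ from $Ip_{1}$ encloses the pair from $Ip_{2}$: indeed $tu+(1-t)y\leq\min\{tu+(1-t)y^{\prime},\,t(x+x^{\prime}-u)+(1-t)y\}$ because $u\leq y\leq y^{\prime}\leq x+x^{\prime}-u$. Lemma \ref{l2} therefore gives the pointwise inequality, and integrating yields (\ref{2.45}).

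\textbf{Chain (\ref{2.44}).} The lower bound $2f(\frac{x+x^{\prime}}{2})\int p$ comes from the same representation: each bracket is at least $2f(\frac{y+y^{\prime}}{2})=2f(\frac{x+x^{\prime}}{2})$ by Jensen (Lemma \ref{l2} with $C=D$). Then $\int_{x}^{y}\frac{1}{\alpha}p(\frac{u-x}{\alpha}+x)\,du=\int_{x}^{x^{\prime}}p$. For the upper chain, first apply (\ref{2.45}) and then (\ref{2.42}); this gives every remaining inequality at once.

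\textbf{Monotonicity.} This is the main obstacle. Take $t_{1}<t_{2}$. The bracket for $Ip_{2}(t)$ has midpoint $\frac{y+y^{\prime}}{2}$, and its half-spread is
\[
\delta(t)=\tfrac12\left|\,t(2u-x-x^{\prime})+(1-t)(y^{\prime}-y)\right|,
\]
which is affine inside the absolute value. By Lemma \ref{l2}, the bracket is increasing in $\delta$. The affine expression equals $y^{\prime}-y\geq0$ at $t=0$ and $2u-x-x^{\prime}<0$ at $t=1$, so it vanishes at
\[
t_{u}=\frac{y^{\prime}-y}{(y^{\prime}-y)+(x+x^{\prime}-2u)}.
\]
Since $x+x^{\prime}-2u\in\lbrack y^{\prime}-y,\,y^{\prime}-x+x^{\prime}-x]$, this gives $t_{u}\in\lbrack m,\frac12]$ when $y\neq y^{\prime}$, after checking that the extreme case $u=x$ yields exactly $m=\frac{y^{\prime}-y}{2(y^{\prime}-x)}$ using $x+x^{\prime}=y+y^{\prime}$. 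Consequently, for every $u$, $\delta$ is decreasing on $[0,m]$ and increasing on $[\frac12,1]$. When $y=y^{\prime}$, $t_{u}=0$ and $\delta$ is increasing on all of $[0,1]$, which matches $m^{\prime}=0$. Applying Lemma \ref{l2} pointwise and integrating against the nonnegative weight gives $Ip_{2}(t_{1})\geq Ip_{2}(t_{2})$ on $[0,m]$ and $Ip_{2}(t_{1})\leq Ip_{2}(t_{2})$ on $[m^{\prime},1]$. The delicate part is verifying the bounds $m\leq t_{u}\leq m^{\prime}$ uniformly in $u$ and handling the degenerate case $y=y^{\prime}$ separately.
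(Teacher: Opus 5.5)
Your proof is correct and is essentially the paper's. You use the same reduction (\ref{2.46}) to an integral over $[x,y]$, and you apply Lemma~\ref{l2} to nested pairs with common midpoint $\frac{x+x^{\prime}}{2}$, both for (\ref{2.45}) and for the monotonicity; your crossing time $t_u\in[m,\frac{1}{2}]$ is exactly what the paper's two explicit chains of inequalities for $t\le m$ and $t\ge m^{\prime}$ encode. The only variation is that you get the upper half of (\ref{2.44}) from (\ref{2.45}) together with (\ref{2.42}), rather than directly from the convexity of $f$ as the paper does, and this is equally valid.

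One small slip: for $u\in[x,y]$ one has $x+x^{\prime}-2u\in[y^{\prime}-y,\,x^{\prime}-x]$, not $[y^{\prime}-y,\,y^{\prime}-x+x^{\prime}-x]$. Your endpoint computation $t_x=\frac{y^{\prime}-y}{(y^{\prime}-y)+(x^{\prime}-x)}=m$ is the correct one and is all the argument needs.
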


\begin{proof}
$\left(  1\right)  $ It is easily observed from the convexity of $f$ and the
hypothesis of $p$ that $Ip_{2}$ is convex on $\left[  0,1\right]  .$

$\left(  2\right)  $ Using simple integration techniques and under the
hypothesis of $p$, the following identity holds on $\left[  0,1\right]  ,$%
\begin{align}
Ip_{2}\left(  t\right)   &  =%
{\displaystyle\int\nolimits_{x}^{x^{\prime}}}
\left[  f\left(  t\left(  \left(  1-\alpha\right)  x+\alpha s\right)  +\left(
1-t\right)  y^{\prime}\right)  p\left(  s\right)  \right. \label{2.46}\\
&  \left.  +f\left(  t\left(  \alpha x+x^{\prime}-\alpha s\right)  +\left(
1-t\right)  y\right)  p\left(  x+x^{\prime}-x\right)  \right]  ds\nonumber\\
&  =%
{\displaystyle\int\nolimits_{x}^{x^{\prime}}}
\left[  f\left(  t\left(  \left(  1-\alpha\right)  x+\alpha s\right)  +\left(
1-t\right)  y^{\prime}\right)  \right. \nonumber\\
&  \left.  +f\left(  t\left(  \alpha x+x^{\prime}-\alpha s\right)  +\left(
1-t\right)  y\right)  \right]  p\left(  s\right)  ds\nonumber\\
&  =%
{\displaystyle\int\nolimits_{x}^{y}}
\frac{1}{\alpha}\left[  f\left(  ts+\left(  1-t\right)  y^{\prime}\right)
\right. \nonumber\\
&  +\left.  f\left(  t\left(  x+x^{\prime}-s\right)  +\left(  1-t\right)
y\right)  \right]  p\left(  \frac{1}{\alpha}\left(  s-x\right)  +x\right)
ds.\nonumber
\end{align}

Let $0\leq t_{1}<t_{2}\leq m$ and $m^{\prime}\leq t_{3}<t_{4}\leq1.$ As a
simple calculation shows us that:%
\[
t_{2}\left(  x^{\prime}-y-x+y^{\prime}\right)  =2t_{2}\left(  y^{\prime
}-x\right)  \leq2m\left(  y^{\prime}-x\right)  =y^{\prime}-y
\]
and%
\begin{align*}
&  t_{3}\left(  x+x^{\prime}-2s+y^{\prime}-y\right) \\
&  =2t_{3}\left(  y^{\prime}-s\right)  \geq2t_{3}\left(  y^{\prime}-y\right)
\geq2m^{\prime}\left(  y^{\prime}-y\right)  =y^{\prime}-y.
\end{align*}
for all $s\in\left[  x,y\right]  .$ Then \ we have the following inequalities
for all $s\in\left[  x,y\right]  $:%
\begin{align*}
t_{1}\left(  x+x^{\prime}-s\right)  +\left(  1-t_{1}\right)  y  &  \leq
t_{2}\left(  x+x^{\prime}-s\right)  +\left(  1-t_{2}\right)  y\\
&  \leq t_{2}x^{\prime}+\left(  1-t_{2}\right)  y\leq t_{2}x+\left(
1-t_{2}\right)  y^{\prime}\\
&  \leq t_{2}s+\left(  1-t_{2}\right)  y^{\prime}\leq t_{1}s+\left(
1-t_{1}\right)  y^{\prime}%
\end{align*}
and%
\begin{align*}
t_{4}s+\left(  1-t_{4}\right)  y^{\prime}  &  \leq t_{3}s+\left(
1-t_{3}\right)  y^{\prime}\\
&  \leq t_{3}\left(  x+x^{\prime}-s\right)  +\left(  1-t_{3}\right)  y\leq
t_{4}\left(  x+x^{\prime}-s\right)  +\left(  1-t_{4}\right)  y.
\end{align*}

By Lemma \ref{l2} with the above inequalities, the following inequalities hold
for all $s\in\left[  x,y\right]  $:%
\begin{align*}
&  \frac{1}{\alpha}\left[  f\left(  t_{2}\left(  x+x^{\prime}-s\right)
+\left(  1-t_{2}\right)  y\right)  +f\left(  t_{2}s+\left(  1-t_{2}\right)
y^{\prime}\right)  \right]  p\left(  \frac{1}{\alpha}\left(  s-x\right)
+x\right) \\
&  \leq\frac{1}{\alpha}\left[  f\left(  t_{1}\left(  x+x^{\prime}-s\right)
+\left(  1-t_{1}\right)  y\right)  +f\left(  t_{1}s+\left(  1-t_{1}\right)
y^{\prime}\right)  \right]  p\left(  \frac{1}{\alpha}\left(  s-x\right)
+x\right)  .
\end{align*}%
\begin{align*}
&  \frac{1}{\alpha}\left[  f\left(  t_{3}s+\left(  1-t_{3}\right)  y^{\prime
}\right)  +f\left(  t_{3}\left(  x+x^{\prime}-s\right)  +\left(
1-t_{3}\right)  y\right)  \right]  p\left(  \frac{1}{\alpha}\left(
s-x\right)  +x\right) \\
&  \leq\frac{1}{\alpha}\left[  f\left(  t_{4}s+\left(  1-t_{4}\right)
y^{\prime}\right)  +f\left(  t_{4}\left(  x+x^{\prime}-s\right)  +\left(
1-t_{4}\right)  y\right)  \right]  p\left(  \frac{1}{\alpha}\left(
s-x\right)  +x\right)  .
\end{align*}

Integrating the above inequalities over $s$ on $\left[  x,y\right]  $ and
using the identity $\left(  \ref{2.46}\right)  $, we derive $Ip_{2}\left(
t_{2}\right)  \leq Ip_{2}\left(  t_{1}\right)  $ and $Ip_{2}\left(
t_{3}\right)  \leq Ip_{2}\left(  t_{4}\right)  .$ Thus $Ip_{2}$ is decreasing
on $\left[  0,m\right]  $\ and increasing on $\left[  m^{\prime},1\right]  .$
Using the convexity of $f,$ the identity $\left(  \ref{2.46}\right)  $ and the
inequality $\left(  \ref{2.42}\right)  $, we derive the inequality $\left(
\ref{2.44}\right)  .$

$\left(  3\right)  $ Again, using Lemma \ref{l2}, the inequality%
\begin{align*}
&  \left[  f\left(  t\left(  \left(  1-\alpha\right)  x+\alpha s\right)
+\left(  1-t\right)  y^{\prime}\right)  \right. \\
&  +\left.  f\left(  t\left(  \left(  1-\alpha\right)  x^{\prime}+\alpha
s\right)  +\left(  1-t\right)  y\right)  \right]  p\left(  s\right) \\
&  \leq\left[  f\left(  t\left(  \left(  1-\alpha\right)  x+\alpha s\right)
+\left(  1-t\right)  y\right)  \right. \\
&  +\left.  f\left(  t\left(  \left(  1-\alpha\right)  x^{\prime}+\alpha
s\right)  +\left(  1-t\right)  y^{\prime}\right)  \right]  p\left(  s\right)
\end{align*}
holds for all $t\in\left[  0,1\right]  $ and $s\in\left[  x,x^{\prime}\right]
$. Integrating the above inequality over $s$ on $\left[  x,y\right]  $ and
using the definitions of $Ip_{1}$ and $Ip_{2}$, we derive $\left(
\ref{2.45}\right)  .$

This completes the proof.
\end{proof}

\begin{remark}
\label{r13}\textit{Using Remark \ref{r11}} we have the following results:
\end{remark}

\begin{enumerate}
\item \textit{Theorem \ref{t9}} \textit{reduces to Theorem \ref{A3} as }$x=a,
$\textit{\ }$y=y^{\prime}=\frac{a+b}{2},$\textit{\ }$x^{\prime}=b$ \textit{and
}$p\left(  s\right)  \equiv\frac{1}{2\left(  b-a\right)  }$ $\left(
s\in\left[  a,b\right]  \right)  .$

\item \textit{Theorem \ref{t9}} \textit{reduces to Theorem \ref{A13} as
}$p\left(  s\right)  \equiv\frac{\alpha}{2\left(  y-x\right)  }$\ $\left(
s\in\Omega\right)  .$

\item \textit{Theorem \ref{t9}} \textit{reduces to Theorem \ref{A26} as
}$x=a,$ $y=y^{\prime}=\frac{a+b}{2},x^{\prime}=b$ \textit{and }$p\left(
s\right)  =\frac{g\left(  s\right)  }{2}$\ $\left(  s\in\left[  a,b\right]
\right)  .$
\end{enumerate}

\begin{theorem}
\label{t10}\textit{Let }$x,y,y^{\prime},x^{\prime},f,p,Jp$\textit{\ be defined
as above. Then }$Jp$\textit{\ is convex, increasing on }$\left[  0,1\right]
,$\textit{\ and for all }$t\in\left[  0,1\right]  $\textit{, we have the
following Fej\'{e}r-type inequality}%
\begin{align}
&  f\left(  \frac{x+y}{2}\right)  +f\left(  \frac{x^{\prime}+y^{\prime}}%
{2}\right)  \int\nolimits_{x}^{x^{\prime}}p\left(  s\right)  ds\label{2.47}\\
&  =Jp\left(  0\right)  \leq Jp\left(  t\right)  \leq Jp\left(  1\right)
\nonumber\\
&  =\int\nolimits_{x}^{x^{\prime}}\left[  f\left(  \left(  1-\alpha\right)
x+\alpha s\right)  +f\left(  \left(  1-\alpha\right)  x^{\prime}+\alpha
s\right)  \right]  p\left(  s\right)  ds.\nonumber
\end{align}

\end{theorem}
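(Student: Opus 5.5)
We follow the pattern of the proof of Theorem \ref{t8}: first rewrite $Jp$ through a change of variables so that Lemma \ref{l2} applies pointwise, then read off convexity, monotonicity and the endpoint values.

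\textbf{Convexity.} For each fixed $s$, the arguments of $f$ in the definition of $Jp$ are affine in $t$. Since $f$ is convex and $p\geq0$, each integrand is convex in $t$, and integrating preserves convexity. So $Jp$ is convex on $\left[0,1\right]$.

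\textbf{Change of variables.} Put $c=\frac{x+y}{2}$ and $c^{\prime}=\frac{x^{\prime}+y^{\prime}}{2}$. Since $x+x^{\prime}=y+y^{\prime}$, we have $c+c^{\prime}=x+x^{\prime}$.

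First substitute $s\mapsto x+x^{\prime}-s$ in the second term and use the symmetry of $p$ about $\frac{x+x^{\prime}}{2}$. Because $\left(1-\alpha\right)x^{\prime}+\alpha\left(x+x^{\prime}-s\right)=x+x^{\prime}-\left(\left(1-\alpha\right)x+\alpha s\right)$, this gives
\[
Jp\left(t\right)=\int_{x}^{x^{\prime}}\left[f\left(tw+\left(1-t\right)c\right)+f\left(x+x^{\prime}-tw-\left(1-t\right)c\right)\right]p\left(s\right)ds,
\]
where $w=\left(1-\alpha\right)x+\alpha s$.

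Next substitute $u=w$, which maps $\left[x,x^{\prime}\right]$ onto $\left[x,y\right]$ because $\left(1-\alpha\right)x+\alpha x^{\prime}=y$. Writing $q\left(u\right)=\frac{1}{\alpha}p\left(\frac{u-x}{\alpha}+x\right)$, we get
\[
Jp\left(t\right)=\int_{x}^{y}\left[f\left(c+t\left(u-c\right)\right)+f\left(c^{\prime}-t\left(u-c\right)\right)\right]q\left(u\right)du.
\]

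\textbf{The main obstacle.} For a single $u$, the pair $c+t\left(u-c\right)$, $c^{\prime}-t\left(u-c\right)$ is not nested as $t$ grows when $u>c$: the left point then moves to the right, toward the centre. So monotonicity cannot be read off pointwise from this form.

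The fix is to symmetrize in $u$ about $c$. The reflection $u\mapsto x+y-u$ corresponds to $s\mapsto x+x^{\prime}-s$, so $q$ is symmetric about $c$. Splitting $\left[x,y\right]$ at $c$ and setting $v=c-u\geq0$ for $u\in\left[x,c\right]$, we obtain
\[
Jp\left(t\right)=\int_{x}^{c}\left[f\left(c-tv\right)+f\left(c+tv\right)+f\left(c^{\prime}-tv\right)+f\left(c^{\prime}+tv\right)\right]q\left(u\right)du .
\]

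\textbf{Monotonicity.} Take $t_{1}<t_{2}$. The points satisfy
\[
c-t_{2}v\leq c-t_{1}v\leq c+t_{1}v\leq c+t_{2}v,
\]
and the two pairs have the same sum $2c$. All these points lie in $\left[x,y\right]\subset\left[a,b\right]$. Lemma \ref{l2} therefore gives
\[
\left[f\left(c-t_{1}v\right)+f\left(c+t_{1}v\right)\right]q\left(u\right)\leq\left[f\left(c-t_{2}v\right)+f\left(c+t_{2}v\right)\right]q\left(u\right).
\]
The same argument applies to the pair centred at $c^{\prime}$, whose points lie in $\left[y^{\prime},x^{\prime}\right]$. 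Integrating over $u\in\left[x,c\right]$ yields $Jp\left(t_{1}\right)\leq Jp\left(t_{2}\right)$, so $Jp$ is increasing.

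\textbf{Endpoint values.} Setting $t=0$ in the definition gives
\[
Jp\left(0\right)=\left[f\left(\frac{x+y}{2}\right)+f\left(\frac{x^{\prime}+y^{\prime}}{2}\right)\right]\int_{x}^{x^{\prime}}p\left(s\right)ds ,
\]
which is the left side of (\ref{2.47}) read with the bracket around the two $f$-values. Setting $t=1$ gives exactly the stated expression for $Jp\left(1\right)$. Monotonicity then yields (\ref{2.47}).
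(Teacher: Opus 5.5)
Your proof is correct and follows essentially the same route as the paper. The paper's identity (2.48) makes your two substitutions (the reflection $s\mapsto x+x^{\prime}-s$ via the symmetry of $p$, then $u=(1-\alpha)x+\alpha s$ onto $[x,y]$) and splits at $\frac{x+y}{2}$ using the induced symmetry of the weight $\frac{1}{\alpha}p\left(\frac{1}{\alpha}(s-x)+x\right)$; it then applies Lemma \ref{l2} to the two pairs centred at $\frac{x+y}{2}$ and $\frac{x^{\prime}+y^{\prime}}{2}$, and your reading of $Jp(0)$ with the bracket around the two $f$-values is the intended one.
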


\begin{proof}
$\left(  1\right)  $ It is easily observed from the convexity of $f,$
$x^{\prime}-y=y^{\prime}-x$\ and the hypothesis of $p$ that $Jp$ is convex on
$\left[  0,1\right]  .$

$\left(  2\right)  $ Using simple integration techniques and under the
hypothesis of $p$, the following identity holds on $\left[  0,1\right]  :$%
\begin{align}
&  Jp\left(  t\right) \label{2.48}\\
&  =%
{\displaystyle\int\nolimits_{x}^{x^{\prime}}}
\left[  f\left(  t\left(  \left(  1-\alpha\right)  x+\alpha s\right)  +\left(
1-t\right)  \frac{x+y}{2}\right)  p\left(  s\right)  \right. \nonumber\\
&  \left.  +f\left(  t\left(  \alpha x+x^{\prime}-\alpha s\right)  +\left(
1-t\right)  \frac{x^{\prime}+y^{\prime}}{2}\right)  p\left(  x+x^{\prime
}-x\right)  \right]  ds\nonumber\\
&  =%
{\displaystyle\int\nolimits_{x}^{x^{\prime}}}
\left[  f\left(  t\left(  \left(  1-\alpha\right)  x+\alpha s\right)  +\left(
1-t\right)  \frac{x+y}{2}\right)  \right. \nonumber\\
&  \left.  +f\left(  t\left(  \alpha x+x^{\prime}-\alpha s\right)  +\left(
1-t\right)  \frac{x^{\prime}+y^{\prime}}{2}\right)  \right]  p\left(
s\right)  dx\nonumber\\
&  =%
{\displaystyle\int\nolimits_{x}^{y}}
\frac{1}{\alpha}\left[  f\left(  ts+\left(  1-t\right)  \frac{x+y}{2}\right)
\right. \nonumber\\
&  +\left.  f\left(  t\left(  x+x^{\prime}-s\right)  +\left(  1-t\right)
\frac{x^{\prime}+y^{\prime}}{2}\right)  \right]  p\left(  \frac{1}{\alpha
}\left(  s-x\right)  +x\right)  ds\nonumber\\
&  =%
{\displaystyle\int\nolimits_{x}^{\frac{x+y}{2}}}
\frac{1}{\alpha}\left[  f\left(  ts+\left(  1-t\right)  \frac{x+y}{2}\right)
p\left(  \frac{1}{\alpha}\left(  s-x\right)  +x\right)  \right. \nonumber\\
&  +f\left(  t\left(  x+y-s\right)  +\left(  1-t\right)  \frac{x+y}{2}\right)
p\left(  \frac{1}{\alpha}\left(  y-s\right)  +x\right) \nonumber\\
&  +f\left(  t\left(  x+x^{\prime}-s\right)  +\left(  1-t\right)
\frac{x^{\prime}+y^{\prime}}{2}\right)  p\left(  \frac{1}{\alpha}\left(
s-x\right)  +x\right) \nonumber\\
&  +\left.  f\left(  t\left(  x^{\prime}-y+s\right)  +\left(  1-t\right)
\frac{x^{\prime}+y^{\prime}}{2}\right)  p\left(  \frac{1}{\alpha}\left(
y-s\right)  +x\right)  \right]  ds\nonumber\\
&  =%
{\displaystyle\int\nolimits_{x}^{\frac{x+y}{2}}}
\frac{1}{\alpha}\left[  f\left(  ts+\left(  1-t\right)  \frac{x+y}{2}\right)
+f\left(  t\left(  x+y-s\right)  +\left(  1-t\right)  \frac{x+y}{2}\right)
\right. \nonumber\\
&  +f\left(  t\left(  x+x^{\prime}-s\right)  +\left(  1-t\right)
\frac{x^{\prime}+y^{\prime}}{2}\right) \nonumber\\
&  +\left.  f\left(  t\left(  y^{\prime}-x+s\right)  +\left(  1-t\right)
\frac{x^{\prime}+y^{\prime}}{2}\right)  \right]  p\left(  \frac{1}{\alpha
}\left(  s-x\right)  +x\right)  ds.\nonumber
\end{align}

Let $t_{1}<t_{2}$ in $\left[  0,1\right]  .$ By Lemma \ref{l2}, the following
inequalities hold for all $s\in\left[  x,\frac{x+y}{2}\right]  :$%
\begin{align*}
&  \frac{1}{\alpha}\left[  f\left(  t_{1}s+\left(  1-t_{1}\right)  \frac
{x+y}{2}\right)  \right. \\
&  \left.  +f\left(  t_{1}\left(  x+y-s\right)  +\left(  1-t_{1}\right)
\frac{x+y}{2}\right)  \right]  p\left(  \frac{1}{\alpha}\left(  s-x\right)
+x\right) \\
&  \leq\frac{1}{\alpha}\left[  f\left(  t_{2}s+\left(  1-t_{2}\right)
\frac{x+y}{2}\right)  \right. \\
&  \left.  +f\left(  t_{2}\left(  x+y-s\right)  +\left(  1-t_{2}\right)
\frac{x+y}{2}\right)  \right]  p\left(  \frac{1}{\alpha}\left(  s-x\right)
+x\right)  .
\end{align*}%
\begin{align*}
&  \frac{1}{\alpha}\left[  f\left(  t_{1}\left(  x+x^{\prime}-s\right)
+\left(  1-t_{1}\right)  \frac{x^{\prime}+y^{\prime}}{2}\right)  \right. \\
&  \left.  +f\left(  t_{1}\left(  y^{\prime}-x+s\right)  +\left(
1-t_{1}\right)  \frac{x^{\prime}+y^{\prime}}{2}\right)  \right]  p\left(
\frac{1}{\alpha}\left(  s-x\right)  +x\right) \\
&  \leq\frac{1}{\alpha}\left[  f\left(  t_{2}\left(  x+x^{\prime}-s\right)
+\left(  1-t_{2}\right)  \frac{x^{\prime}+y^{\prime}}{2}\right)  \right. \\
&  \left.  +f\left(  t_{2}\left(  y^{\prime}-x+s\right)  +\left(
1-t_{2}\right)  \frac{x^{\prime}+y^{\prime}}{2}\right)  \right]  p\left(
\frac{1}{\alpha}\left(  s-x\right)  +x\right)  .
\end{align*}

Integrating the above inequalities over $s$ on $\left[  x,\frac{x+y}%
{2}\right]  $ and using the above identity, we derive $Jp\left(  t_{1}\right)
\leq Jp\left(  t_{2}\right)  .$ Thus $Jp$ is increasing on $\left[
0,1\right]  $ and then the inequality $\left(  \ref{2.47}\right)  $ holds.

This completes the proof.
\end{proof}

\begin{remark}
\label{r14}\textit{Using Remark \ref{r11}, Theorem \ref{t10}} \textit{reduces
to Theorem \ref{A27} as }$x=a,$ $y=y^{\prime}=\frac{a+b}{2},x^{\prime}=b$
\textit{and }$p\left(  s\right)  \equiv\frac{g\left(  s\right)  }{2}%
$\ $\left(  s\in\left[  a,b\right]  \right)  .$
\end{remark}

\begin{theorem}
\label{t11}\textit{Let }$Ip_{1},Jp$\textit{\ be defined as above. Then
}$Ip_{1}\left(  t\right)  \leq Jp\left(  t\right)  $\textit{\ on }$\left[
0,1\right]  .$
\end{theorem}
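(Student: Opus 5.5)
The plan is to reuse the reductions already carried out in the proofs of Theorems \ref{t8} and \ref{t10}. Both $Ip_{1}$ and $Jp$ will be written as integrals over the same interval $\left[x,\frac{x+y}{2}\right]$ with the same weight
\[
q\left(s\right)=\frac{1}{\alpha}p\left(\frac{1}{\alpha}\left(s-x\right)+x\right).
\]
Then Lemma \ref{l2} will be applied pointwise, and the resulting inequality integrated.

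\textbf{Step 1: put both functions in a common form.} For $Jp$ this is exactly the last line of $\left(\ref{2.48}\right)$. For $Ip_{1}$ I start from the last line of $\left(\ref{2.43}\right)$, an integral over $\left[x,y\right]$. I split it at $\frac{x+y}{2}$ and substitute $s\mapsto x+y-s$ on the upper half. Since $y-x=\alpha\left(x^{\prime}-x\right)$, the points $\frac{1}{\alpha}\left(s-x\right)+x$ and $\frac{1}{\alpha}\left(y-s\right)+x$ sum to $x+x^{\prime}$, so the symmetry of $p$ gives $q\left(x+y-s\right)=q\left(s\right)$. Also $x+x^{\prime}-\left(x+y-s\right)=y^{\prime}-x+s$. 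This yields
\[
Ip_{1}\left(t\right)=\int_{x}^{\frac{x+y}{2}}\left[f\left(A_{1}\right)+f\left(A_{2}\right)+f\left(A_{3}\right)+f\left(A_{4}\right)\right]q\left(s\right)ds,
\]
where
\[
A_{1}=ts+\left(1-t\right)y,\quad A_{2}=t\left(x+y-s\right)+\left(1-t\right)y,
\]
\[
A_{3}=t\left(x+x^{\prime}-s\right)+\left(1-t\right)y^{\prime},\quad A_{4}=t\left(y^{\prime}-x+s\right)+\left(1-t\right)y^{\prime}.
\]
Likewise $Jp\left(t\right)$ has the same form with points $B_{i}$, obtained from the $A_{i}$ by replacing $y$ by $\frac{x+y}{2}$ and $y^{\prime}$ by $\frac{x^{\prime}+y^{\prime}}{2}$.

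\textbf{Step 2: pairing (the main obstacle).} The natural pairs $\left(A_{1},A_{2}\right)$ versus $\left(B_{1},B_{2}\right)$ do not have equal sums, so Lemma \ref{l2} cannot be applied to them. I will instead pair across the two halves.
\begin{itemize}
\item $A_{1}+A_{3}=t\left(x+x^{\prime}\right)+\left(1-t\right)\left(y+y^{\prime}\right)=x+x^{\prime}$, and $B_{1}+B_{3}=x+x^{\prime}$ as well, using $x+y+x^{\prime}+y^{\prime}=2\left(x+x^{\prime}\right)$.
\item $A_{2}+A_{4}=y+y^{\prime}=B_{2}+B_{4}$.
\end{itemize}

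\textbf{Step 3: check the ordering.} Since $y\geq\frac{x+y}{2}$, we have $B_{1}\leq A_{1}$, and similarly $B_{2}\leq A_{2}$. Moreover $A_{1}\leq\frac{x+x^{\prime}}{2}$, because $s\leq y\leq\frac{x+x^{\prime}}{2}$; likewise $A_{2}\leq\frac{y+y^{\prime}}{2}$. Hence $B_{1}\leq A_{1}\leq A_{3}\leq B_{3}$ and $B_{2}\leq A_{2}\leq A_{4}\leq B_{4}$, and all these points lie in $\left[x,x^{\prime}\right]$.

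\textbf{Step 4: conclude.} Lemma \ref{l2}, applied with $E=\frac{1}{\alpha}\left(s-x\right)+x$ (the factor $\frac{1}{\alpha}$ does not affect the inequality), gives
\[
\left[f\left(A_{1}\right)+f\left(A_{3}\right)\right]q\left(s\right)\leq\left[f\left(B_{1}\right)+f\left(B_{3}\right)\right]q\left(s\right),
\]
and the same for the $\left(2,4\right)$ pair. This holds for all $s\in\left[x,\frac{x+y}{2}\right]$ and $t\in\left[0,1\right]$. Adding the two inequalities and integrating over $\left[x,\frac{x+y}{2}\right]$ gives $Ip_{1}\left(t\right)\leq Jp\left(t\right)$.
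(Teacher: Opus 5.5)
Your proof is correct and is essentially the paper's argument: the paper applies Lemma \ref{l2} to exactly your cross-pairing, comparing $f(ts+(1-t)y)+f(t(x+x^{\prime}-s)+(1-t)y^{\prime})$ with $f(ts+(1-t)\frac{x+y}{2})+f(t(x+x^{\prime}-s)+(1-t)\frac{x^{\prime}+y^{\prime}}{2})$, where both pairs sum to $x+x^{\prime}$. The only difference is that the paper applies this directly for all $s\in[x,y]$, using the unsplit forms in (\ref{2.43}) and (\ref{2.48}); your folding onto $[x,\frac{x+y}{2}]$ and your separate $(2,4)$ pair are the same inequality reflected by $s\mapsto x+y-s$, so that step can be dropped.
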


\begin{proof}
By Lemma \ref{l2}, the following inequality holds for all $t\in\left[
0,1\right]  $ and $s\in\left[  x,y\right]  :$%
\begin{align*}
&  \frac{1}{\alpha}\left[  f\left(  ts+\left(  1-t\right)  y\right)  \right.
\\
&  +\left.  f\left(  t\left(  x+x^{\prime}-s\right)  +\left(  1-t\right)
y^{\prime}\right)  \right]  p\left(  \frac{1}{\alpha}\left(  s-x\right)
+x\right) \\
&  \leq\frac{1}{\alpha}\left[  f\left(  ts+\left(  1-t\right)  \frac{x+y}%
{2}\right)  \right. \\
&  +\left.  f\left(  t\left(  x+x^{\prime}-s\right)  +\left(  1-t\right)
\frac{x^{\prime}+y^{\prime}}{2}\right)  \right]  p\left(  \frac{1}{\alpha
}\left(  s-x\right)  +x\right)  .
\end{align*}

Integrating the above inequality over $s$ on $\left[  x,y\right]  $ and using
the identities $\left(  \ref{2.43}\right)  $\ and $\left(  \ref{2.48}\right)
$, we derive $Ip_{1}\left(  t\right)  \leq Jp\left(  t\right)  \mathit{\ }$on
$\left[  0,1\right]  .$

This completes the proof.
\end{proof}

\begin{remark}
\label{r15}\textit{Using Remark \ref{r11}, Theorem \ref{t11}} \textit{reduces
to Theorem \ref{A28} as }$x=a,$ $y=y^{\prime}=\frac{a+b}{2},x^{\prime}=b$
\textit{and }$p\left(  s\right)  \equiv\frac{g\left(  s\right)  }{2}%
$\ $\left(  s\in\left[  a,b\right]  \right)  .$
\end{remark}

\begin{theorem}
\label{t12}\textit{Let }$x,y,y^{\prime},x^{\prime},f,p,Mp$\textit{\ be defined
as above. Then }$Mp$\textit{\ is convex, increasing on }$\left[  0,1\right]
,$\textit{\ and for all }$t\in\left[  0,1\right]  $\textit{, we have the
following Fej\'{e}r-type inequalities}%
\begin{align}
&  \int\nolimits_{x}^{x^{\prime}}\left[  f\left(  \left(  1-\alpha\right)
x+\alpha s\right)  +f\left(  \left(  1-\alpha\right)  x^{\prime}+\alpha
s\right)  \right]  p\left(  s\right)  ds\label{2.49}\\
&  =Mp\left(  0\right)  \leq Mp\left(  t\right)  \leq Mp\left(  1\right)
=\frac{f\left(  x\right)  +f\left(  y\right)  +f\left(  y^{\prime}\right)
+f\left(  x^{\prime}\right)  }{2}\int\nolimits_{x}^{x^{\prime}}p\left(
s\right)  ds\nonumber
\end{align}
and%
\begin{align}
&  Mp\left(  t\right) \label{2.50}\\
&  \leq\left(  1-t\right)  \int\nolimits_{x}^{x^{\prime}}\left[  f\left(
\left(  1-\alpha\right)  x+\alpha s\right)  +f\left(  \left(  1-\alpha\right)
x^{\prime}+\alpha s\right)  \right]  p\left(  s\right)  ds\nonumber\\
&  +t\left[  \frac{f\left(  x\right)  +f\left(  y\right)  +f\left(  y^{\prime
}\right)  +f\left(  x^{\prime}\right)  }{2}\right]  \int\nolimits_{x}%
^{x^{\prime}}p\left(  s\right)  ds\nonumber\\
&  \leq\frac{f\left(  x\right)  +f\left(  y\right)  +f\left(  y^{\prime
}\right)  +f\left(  x^{\prime}\right)  }{2}\int\nolimits_{x}^{x^{\prime}%
}p\left(  s\right)  ds\nonumber\\
&  \leq\left[  f\left(  x\right)  +f\left(  x^{\prime}\right)  \right]
\int\nolimits_{x}^{x^{\prime}}p\left(  s\right)  ds.\nonumber
\end{align}

\end{theorem}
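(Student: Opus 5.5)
Convexity of $Mp$ is immediate: for fixed $s$ each integrand is $f$ composed with an affine function of $t$, hence convex in $t$, and $p\ge 0$. For monotonicity and the endpoint values, I will rewrite $Mp$ as in (2.43) and (2.48). In the second integral I substitute $s\mapsto x+x^{\prime}-s$ and use $p(s)=p(x+x^{\prime}-s)$, so that both pieces run over $[x,\frac{x+x^{\prime}}{2}]$. Then I put $\sigma=(1-\alpha)x+\alpha s\in[x,\frac{x+y}{2}]$; this uses $y-x=\alpha(x^{\prime}-x)$. Also $(1-\alpha)x^{\prime}+\alpha s=y^{\prime}-x+\sigma$, and the reflected arguments become $x+x^{\prime}-\sigma$ and $x+y-\sigma$. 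The result is
\[
Mp(t)=\int_{x}^{\frac{x+y}{2}}\frac{1}{\alpha}\Big[f(tx+(1-t)\sigma)+f(ty+(1-t)(x+y-\sigma))+f(ty^{\prime}+(1-t)(y^{\prime}-x+\sigma))+f(tx^{\prime}+(1-t)(x+x^{\prime}-\sigma))\Big]p\Big(\tfrac{\sigma-x}{\alpha}+x\Big)d\sigma .
\]
The four terms group into two pairs with equal sums. The first pair has sum $x+y$: the points $tx+(1-t)\sigma$ and $ty+(1-t)(x+y-\sigma)$ are symmetric about $\frac{x+y}{2}$. The second pair has sum $x^{\prime}+y^{\prime}$. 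Within each pair, increasing $t$ moves the two points apart, towards $x,y$ (respectively $y^{\prime},x^{\prime}$). So for $t_{1}<t_{2}$, Lemma~\ref{l2} applied pairwise gives the pointwise inequality between integrands, and integrating yields $Mp(t_{1})\le Mp(t_{2})$.

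For the endpoint values, at $t=0$ I undo the substitution: the expression returns to $\int_{x}^{x^{\prime}}[f((1-\alpha)x+\alpha s)+f((1-\alpha)x^{\prime}+\alpha s)]p(s)\,ds$. This is seen directly from the definition of $Mp$ at $t=0$, since both halves combine into the full integral. At $t=1$, the first half equals $\int_{x}^{\frac{x+x^{\prime}}{2}}[f(x)+f(y^{\prime})]p$ and the second equals $\int_{\frac{x+x^{\prime}}{2}}^{x^{\prime}}[f(y)+f(x^{\prime})]p$. By the symmetry of $p$, each of $\int_{x}^{\frac{x+x^{\prime}}{2}}p$ and $\int_{\frac{x+x^{\prime}}{2}}^{x^{\prime}}p$ equals $\frac12\int_x^{x^{\prime}}p$. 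This gives $Mp(1)=\frac{f(x)+f(y)+f(y^{\prime})+f(x^{\prime})}{2}\int_x^{x^{\prime}}p$, and (2.49) follows.

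For (2.50), convexity of $f$ applied to each integrand, which is affine in $t$ between its $t=0$ and $t=1$ values, gives the chord bound $Mp(t)\le(1-t)Mp(0)+tMp(1)$. The next inequality follows from $Mp(0)\le Mp(1)$. The last one, $\frac{f(x)+f(y)+f(y^{\prime})+f(x^{\prime})}{2}\le f(x)+f(x^{\prime})$, is Lemma~\ref{l1} with $A=x$, $C=y$, $D=y^{\prime}$, $B=x^{\prime}$.

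The main obstacle is the bookkeeping in the change of variables. I must check that each pair of arguments really has a $t$-independent sum, and that the points move outward monotonically, as Lemma~\ref{l2} requires. For the first pair the positions satisfy $\sigma\le\frac{x+y}{2}$, and the points move from $\sigma$ toward $x$ and from $x+y-\sigma$ toward $y$. These are the inequalities I would verify carefully.
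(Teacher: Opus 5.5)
Your proposal is correct and follows the paper's proof essentially step for step. The paper derives the same identity (2.51) by reflecting the second half-integral via $p(s)=p(x+x^{\prime}-s)$ and substituting $\sigma=(1-\alpha)x+\alpha s$. It then applies Lemma~\ref{l2} to the same two pairs, with constant sums $x+y$ and $x^{\prime}+y^{\prime}$, to get monotonicity, and obtains (2.50) from the convexity of $f$ together with (2.41), (2.42) and (2.49). The only cosmetic difference is that you get $f(y)+f(y^{\prime})\le f(x)+f(x^{\prime})$ directly from Lemma~\ref{l1}, whereas the paper routes it through (2.41)--(2.42).
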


\begin{proof}
$\left(  1\right)  $ It is easily observed from the convexity of $f$ and the
hypothesis of $p$ that $Mp$ is convex on $\left[  0,1\right]  .$

$\left(  2\right)  $ Using simple integration techniques, $x^{\prime
}-y=y^{\prime}-x$ and under the hypothesis of $p$, the following identity
holds on $\left[  0,1\right]  :$%
\begin{align}
&  Mp\left(  t\right) \label{2.51}\\
&  =%
{\displaystyle\int\nolimits_{x}^{\frac{x+x^{\prime}}{2}}}
\left[  f\left(  tx+\left(  1-t\right)  \left(  \left(  1-\alpha\right)
x+\alpha s\right)  \right)  \right. \nonumber\\
&  +f\left(  ty^{\prime}+\left(  1-t\right)  \left(  \left(  1-\alpha\right)
x^{\prime}+\alpha s\right)  \right) \nonumber\\
&  +f\left(  ty+\left(  1-t\right)  \left(  \left(  1-\alpha\right)
x+\alpha\left(  x+x^{\prime}-s\right)  \right)  \right) \nonumber\\
&  \text{ \ \ \ \ \ \ \ \ }+\left.  f\left(  tx^{\prime}+\left(  1-t\right)
\left(  \left(  1-\alpha\right)  x^{\prime}+\alpha\left(  x+x^{\prime
}-s\right)  \right)  \right)  \right]  p\left(  s\right)  ds\nonumber\\
&  =%
{\displaystyle\int\nolimits_{x}^{\frac{x+y}{2}}}
\frac{1}{\alpha}\left[  f\left(  tx+\left(  1-t\right)  s\right)  +f\left(
ty^{\prime}+\left(  1-t\right)  \left(  s+y^{\prime}-x\right)  \right)
\right. \nonumber\\
&  \text{ \ \ \ \ }+f\left(  ty+\left(  1-t\right)  \left(  x+y-s\right)
\right) \nonumber\\
&  \text{ \ \ \ \ }\left.  +f\left(  tx^{\prime}+\left(  1-t\right)  \left(
x+x^{\prime}-s\right)  \right)  \right]  p\left(  \frac{1}{\alpha}\left(
s-x\right)  +x\right)  ds.\nonumber
\end{align}

Let $t_{1}<t_{2}$ in $\left[  0,1\right]  .$ By Lemma \ref{l2}, the following
inequalities hold for all $s\in\left[  x,\frac{x+y}{2}\right]  :$%
\begin{align*}
&  \frac{1}{\alpha}\left[  f\left(  t_{1}x+\left(  1-t_{1}\right)  s\right)
\right. \\
&  +\left.  f\left(  t_{1}y+\left(  1-t_{1}\right)  \left(  x+y-s\right)
\right)  \right]  p\left(  \frac{1}{\alpha}\left(  s-x\right)  +x\right) \\
&  \leq\frac{1}{\alpha}\left[  f\left(  t_{2}x+\left(  1-t_{2}\right)
s\right)  \right. \\
&  +\left.  f\left(  t_{2}y+\left(  1-t_{2}\right)  \left(  x+y-s\right)
\right)  \right]  p\left(  \frac{1}{\alpha}\left(  s-x\right)  +x\right)  .
\end{align*}%
\begin{align*}
&  \frac{1}{\alpha}\left[  f\left(  t_{1}y^{\prime}+\left(  1-t_{1}\right)
\left(  s+y^{\prime}-x\right)  \right)  \right. \\
&  +\left.  f\left(  t_{1}x^{\prime}+\left(  1-t_{1}\right)  \left(
x+x^{\prime}-s\right)  \right)  \right]  p\left(  \frac{1}{\alpha}\left(
s-x\right)  +x\right) \\
&  \leq\frac{1}{\alpha}\left[  f\left(  t_{2}y^{\prime}+\left(  1-t_{2}%
\right)  \left(  s+y^{\prime}-x\right)  \right)  \right. \\
&  +\left.  f\left(  t_{2}x^{\prime}+\left(  1-t_{2}\right)  \left(
x+x^{\prime}-s\right)  \right)  \right]  p\left(  \frac{1}{\alpha}\left(
s-x\right)  +x\right)  .
\end{align*}

Integrating the above inequalities over $s$ on $\left[  x,\frac{x+y}%
{2}\right]  $ and using the above identity, we derive $Mp\left(  t_{1}\right)
\leq Mp\left(  t_{2}\right)  .$ Thus $Mp$ is increasing on $\left[
0,1\right]  $ and then the inequality $\left(  \ref{2.49}\right)  $ holds.

Using the convexity of $f$ and the inequalities $\left(  \ref{2.41}\right)
,\left(  \ref{2.42}\right)  $ and$\ \left(  \ref{2.49}\right)  ,$ we obtain
the inequality $\left(  \ref{2.50}\right)  .$

This completes the proof.
\end{proof}

\begin{remark}
\label{r16}\textit{Using Remark \ref{r11}, Theorem \ref{t12}} \textit{reduces
to Theorem \ref{A29} as }$x=a,$ $y=y^{\prime}=\frac{a+b}{2},x^{\prime}=b$
\textit{and }$p\left(  s\right)  \equiv\frac{g\left(  s\right)  }{2}%
$\ $\left(  s\in\left[  a,b\right]  \right)  .$
\end{remark}

\begin{theorem}
\label{t13}\textit{Let }$x,y,y^{\prime},x^{\prime},f,p,Np$\textit{\ be defined
as above. Then }$Np$\textit{\ is convex, increasing on }$\left[  0,1\right]
,$\textit{\ and for all }$t\in\left[  0,1\right]  $\textit{, we have the
following Fej\'{e}r-type inequalities}%
\begin{align}
&  \int\nolimits_{x}^{x^{\prime}}\left[  f\left(  \left(  1-\alpha\right)
x+\alpha s\right)  +f\left(  \left(  1-\alpha\right)  x^{\prime}+\alpha
s\right)  \right]  p\left(  s\right)  ds\label{2.52}\\
&  =Np\left(  0\right)  \leq Np\left(  t\right)  \leq Np\left(  1\right)
=\left[  f\left(  x\right)  +f\left(  x^{\prime}\right)  \right]
\int\nolimits_{x}^{x^{\prime}}p\left(  s\right)  ds\nonumber
\end{align}
and%
\begin{align}
&  Np\left(  t\right) \label{2.53}\\
&  \leq\left(  1-t\right)  \int\nolimits_{x}^{x^{\prime}}\left[  f\left(
\left(  1-\alpha\right)  x+\alpha s\right)  +f\left(  \left(  1-\alpha\right)
x^{\prime}+\alpha s\right)  \right]  p\left(  s\right)  ds\nonumber\\
&  +t\left[  f\left(  x\right)  +f\left(  x^{\prime}\right)  \right]
\int\nolimits_{x}^{x^{\prime}}p\left(  s\right)  ds\nonumber\\
&  \leq\left[  f\left(  x\right)  +f\left(  x^{\prime}\right)  \right]
\int\nolimits_{x}^{x^{\prime}}p\left(  s\right)  ds.\nonumber
\end{align}

\end{theorem}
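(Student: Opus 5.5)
Convexity of $Np$ is immediate, exactly as for $Ip_{1}$, $Jp$ and $Mp$. For fixed $s$, each integrand $t\mapsto f\left(  tx+\left(  1-t\right)  \left(  \left(  1-\alpha\right)  x+\alpha s\right)  \right)$ is the composition of the convex $f$ with an affine map of $t$, hence convex. Multiplying by $p\left(  s\right)  \geq0$ and integrating over $s\in\left[  x,x^{\prime}\right]$ keeps convexity.

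For monotonicity I will rewrite $Np$ in the same style as $\left(  \ref{2.43}\right)$. In the second term I substitute $s\mapsto x+x^{\prime}-s$ and use the symmetry $p\left(  s\right)  =p\left(  x+x^{\prime}-s\right)$. Since $\left(  1-\alpha\right)  x^{\prime}+\alpha\left(  x+x^{\prime}-s\right)  =x+x^{\prime}-\left(  \left(  1-\alpha\right)  x+\alpha s\right)$, this gives
\[
Np\left(  t\right)  =\int_{x}^{x^{\prime}}\left[  f\left(  tx+\left(  1-t\right)  u\right)  +f\left(  tx^{\prime}+\left(  1-t\right)  \left(  x+x^{\prime}-u\right)  \right)  \right]  p\left(  s\right)  ds,
\]
where $u=\left(  1-\alpha\right)  x+\alpha s\in\left[  x,y\right]$. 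Changing variables to $u$ then yields $\int_{x}^{y}\frac{1}{\alpha}\left[  \cdots\right]  p\left(  \frac{1}{\alpha}\left(  u-x\right)  +x\right)  du$.

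Now fix $t_{1}<t_{2}$ and $u\in\left[  x,y\right]$. The two points $t_{i}x+\left(  1-t_{i}\right)  u$ and $t_{i}x^{\prime}+\left(  1-t_{i}\right)  \left(  x+x^{\prime}-u\right)$ sum to $x+x^{\prime}$. Moreover, the pair for $t_{1}$ lies between the pair for $t_{2}$: moving from $t_{1}$ to $t_{2}$ pushes the points outward toward $x$ and $x^{\prime}$. Lemma \ref{l2} therefore gives the pointwise inequality between the bracketed terms, weighted by $p\left(  \cdot\right)$. Integrating over $u$ gives $Np\left(  t_{1}\right)  \leq Np\left(  t_{2}\right)$, so $Np$ is increasing.

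The endpoint values in $\left(  \ref{2.52}\right)$ are read off directly. At $t=0$ the definition gives $Np\left(  0\right)  =\int_{x}^{x^{\prime}}\left[  f\left(  \left(  1-\alpha\right)  x+\alpha s\right)  +f\left(  \left(  1-\alpha\right)  x^{\prime}+\alpha s\right)  \right]  p\left(  s\right)  ds$. At $t=1$ it gives $Np\left(  1\right)  =\left[  f\left(  x\right)  +f\left(  x^{\prime}\right)  \right]  \int_{x}^{x^{\prime}}p\left(  s\right)  ds$.

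For $\left(  \ref{2.53}\right)$ I will apply convexity of $f$ pointwise:
\[
f\left(  tx+\left(  1-t\right)  v\right)  \leq tf\left(  x\right)  +\left(  1-t\right)  f\left(  v\right),
\]
and similarly with $x^{\prime}$ in place of $x$. Multiplying by $p$ and integrating gives the first inequality, $Np\left(  t\right)  \leq\left(  1-t\right)  Np\left(  0\right)  +tNp\left(  1\right)$. The second inequality follows because $Np\left(  0\right)  \leq Np\left(  1\right)$, either from $\left(  \ref{2.52}\right)$ or from the last inequality of $\left(  \ref{2.42}\right)$.

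The only delicate step is checking the symmetry substitution and the ordering of points in Lemma \ref{l2}, and both are routine.
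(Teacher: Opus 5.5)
Your proposal is correct and matches the paper's proof essentially step for step. The paper also uses the symmetry of $p$ and the substitution $u=(1-\alpha)x+\alpha s$ to rewrite $Np$ as the integral over $[x,y]$ in (\ref{2.54}). It then applies Lemma \ref{l2} pointwise for $t_{1}<t_{2}$ to get monotonicity and hence (\ref{2.52}), and obtains (\ref{2.53}) from the convexity of $f$ together with (\ref{2.52}); the ordering of points you need for Lemma \ref{l2} uses $u\le y\le\frac{x+x^{\prime}}{2}$, i.e.\ $\alpha\le\frac{1}{2}$.
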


\begin{proof}
$\left(  1\right)  $ It is easily observed from the convexity of $f$ and the
hypothesis of $p$ that $Np$ is convex on $\left[  0,1\right]  .$

$\left(  2\right)  $ Using simple integration techniques and under the
hypothesis of $p$, the following identity holds on $\left[  0,1\right]  :$%
\begin{align}
&  Np\left(  t\right) \label{2.54}\\
&  =%
{\displaystyle\int\nolimits_{x}^{x^{\prime}}}
\left[  f\left(  tx+\left(  1-t\right)  \left(  \left(  1-\alpha\right)
x+\alpha s\right)  \right)  \right. \nonumber\\
&  \text{ \ \ \ \ \ \ \ \ }+\left.  f\left(  tx^{\prime}+\left(  1-t\right)
\left(  \left(  1-\alpha\right)  x^{\prime}+\alpha\left(  x+x^{\prime
}-s\right)  \right)  \right)  \right]  p\left(  s\right)  ds\nonumber\\
&  =%
{\displaystyle\int\nolimits_{x}^{y}}
\frac{1}{\alpha}\left[  f\left(  tx+\left(  1-t\right)  s\right)  \right.
\nonumber\\
&  \text{ \ \ \ \ }\left.  +f\left(  tx^{\prime}+\left(  1-t\right)  \left(
x+x^{\prime}-s\right)  \right)  \right]  p\left(  \frac{1}{\alpha}\left(
s-x\right)  +x\right)  ds.\nonumber
\end{align}

Let $t_{1}<t_{2}$ in $\left[  0,1\right]  .$ By Lemma \ref{l2}, the following
inequality hold for all $s\in\left[  x,y\right]  :$%
\begin{align*}
&  \frac{1}{\alpha}\left[  f\left(  t_{1}x+\left(  1-t_{1}\right)  s\right)
\right. \\
&  +\left.  f\left(  t_{1}x^{\prime}+\left(  1-t_{1}\right)  \left(
x+x^{\prime}-s\right)  \right)  \right]  p\left(  \frac{1}{\alpha}\left(
s-x\right)  +x\right) \\
&  \leq\frac{1}{\alpha}\left[  f\left(  t_{2}x+\left(  1-t_{2}\right)
s\right)  \right. \\
&  +\left.  f\left(  t_{2}x^{\prime}+\left(  1-t_{2}\right)  \left(
x+x^{\prime}-s\right)  \right)  \right]  p\left(  \frac{1}{\alpha}\left(
s-x\right)  +x\right)  .
\end{align*}
Integrating the above inequality over $s$ on $\left[  x,y\right]  $ and using
the identity $\left(  \ref{2.54}\right)  ,$ we derive $Np\left(  t_{1}\right)
\leq Np\left(  t_{2}\right)  .$ Thus $Np$ is increasing on $\left[
0,1\right]  $ and then the inequality $\left(  \ref{2.52}\right)  $ holds.

Using the convexity of $f$ and the inequality $\left(  \ref{2.52}\right)  ,$
we obtain the inequality $\left(  \ref{2.53}\right)  .$

This completes the proof.
\end{proof}

\begin{remark}
\label{r17}\textit{Using Remark \ref{r11}} we have the following results:
\end{remark}

\begin{enumerate}
\item \textit{Theorem \ref{t13}} \textit{reduces to Theorem \ref{A5} as
}$x=a,$\textit{\ }$y=y^{\prime}=\frac{a+b}{2},$\textit{\ }$x^{\prime}=b$
\textit{and }$p\left(  s\right)  \equiv\frac{1}{2\left(  b-a\right)  }$
$\left(  s\in\left[  a,b\right]  \right)  .$

\item \textit{Theorem \ref{t13}} \textit{reduces to Theorem \ref{A14} as
}$p\left(  s\right)  \equiv\frac{\alpha}{2\left(  y-x\right)  }$\ $\left(
s\in\Omega\right)  .$

\item \textit{Theorem \ref{t13}} \textit{reduces to Theorem \ref{A30} as
}$x=a,$ $y=y^{\prime}=\frac{a+b}{2},x^{\prime}=b$ \textit{and }$p\left(
s\right)  \equiv\frac{g\left(  s\right)  }{2}$\ $\left(  s\in\left[
a,b\right]  \right)  .$
\end{enumerate}

\begin{theorem}
\label{t14}\textit{Let }$Mp,Np$\textit{\ be defined as above. Then }$Mp\left(
t\right)  \leq Np\left(  t\right)  $\textit{\ on }$\left[  0,1\right]  .$
\end{theorem}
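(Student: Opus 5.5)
The plan is to compare $Mp$ and $Np$ pointwise, after rewriting both as integrals over the same short interval $\left[x,\frac{x+y}{2}\right]$ with the same weight, and then to apply Lemma \ref{l2} to the integrands.

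\textbf{Rewriting $Mp$ and $Np$.} For $Mp$, the last expression in the identity $\left(\ref{2.51}\right)$ already has the required form. It equals
\[
\int_{x}^{\frac{x+y}{2}}\frac{1}{\alpha}\Bigl[f\bigl(tx+(1-t)s\bigr)+f\bigl(tx^{\prime}+(1-t)(x+x^{\prime}-s)\bigr)+f\bigl(ty^{\prime}+(1-t)(s+y^{\prime}-x)\bigr)+f\bigl(ty+(1-t)(x+y-s)\bigr)\Bigr]p\Bigl(\frac{1}{\alpha}(s-x)+x\Bigr)ds.
\]
For $Np$, I start from the last expression in the identity $\left(\ref{2.54}\right)$, which is an integral over $[x,y]$. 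I split it at $\frac{x+y}{2}$ and substitute $s\mapsto x+y-s$ on the upper half.

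The weight is unchanged by this substitution. Since $y-x=\alpha(x^{\prime}-x)$, we have $\frac{1}{\alpha}(y-s)+x=x+x^{\prime}-\bigl(\frac{1}{\alpha}(s-x)+x\bigr)$. The symmetry of $p$ about $\frac{x+x^{\prime}}{2}$ then turns $p\bigl(\frac{1}{\alpha}(y-s)+x\bigr)$ back into $p\bigl(\frac{1}{\alpha}(s-x)+x\bigr)$.

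Using also $x+x^{\prime}-(x+y-s)=x^{\prime}-y+s$, this gives
\[
Np(t)=\int_{x}^{\frac{x+y}{2}}\frac{1}{\alpha}\Bigl[f\bigl(tx+(1-t)s\bigr)+f\bigl(tx^{\prime}+(1-t)(x+x^{\prime}-s)\bigr)+f\bigl(tx+(1-t)(x+y-s)\bigr)+f\bigl(tx^{\prime}+(1-t)(x^{\prime}-y+s)\bigr)\Bigr]p\Bigl(\frac{1}{\alpha}(s-x)+x\Bigr)ds.
\]

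\textbf{Comparing the integrands.} The first two terms of the two integrands coincide. It therefore suffices to show, for every $t\in[0,1]$ and $s\in\left[x,\frac{x+y}{2}\right]$, that
\[
\Bigl[f(C)+f(D)\Bigr]p(E)\leq\Bigl[f(A)+f(B)\Bigr]p(E),
\]
where
\[
C=ty+(1-t)(x+y-s),\qquad D=ty^{\prime}+(1-t)(s+y^{\prime}-x),\qquad A=tx+(1-t)(x+y-s),\qquad B=tx^{\prime}+(1-t)(x^{\prime}-y+s),
\]
and $E=\frac{1}{\alpha}(s-x)+x\in[x,x^{\prime}]$.

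The two pairs have the same sum. Indeed $C+D=y+y^{\prime}$ and $A+B=x+x^{\prime}$, and these are equal by hypothesis; here $x^{\prime}-y=y^{\prime}-x$ is used.

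For Lemma \ref{l2} I also need $A\leq C\leq B$. The inequality $A\leq C$ holds because $x\leq y$. For $C\leq B$, note that $x+y-s\leq y$, so $C\leq y\leq\frac{x+x^{\prime}}{2}$. Since $A+B=x+x^{\prime}$ and $A\leq\frac{x+x^{\prime}}{2}$, we get $B\geq\frac{x+x^{\prime}}{2}\geq C$. Lemma \ref{l2} now gives the pointwise inequality.

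\textbf{Conclusion.} Integrating this inequality over $s\in\left[x,\frac{x+y}{2}\right]$ and adding back the common terms yields $Mp(t)\leq Np(t)$ on $[0,1]$.

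The main obstacle is the rewriting of $Np$ onto the half-interval. One must check that the reflected weight $p\bigl(\frac{1}{\alpha}(y-s)+x\bigr)$ really equals the original weight, and this is exactly where the relation $y=(1-\alpha)x+\alpha x^{\prime}$ is needed. Once both functions are written over the same interval with the same weight, the comparison is a single application of Lemma \ref{l2}.
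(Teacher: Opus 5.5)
Your proposal is correct and is essentially the paper's own proof. Both write $Mp$ and $Np$ (via (\ref{2.51}) and (\ref{2.55})) as integrals over $\left[x,\frac{x+y}{2}\right]$ against the common weight $p\bigl(\frac{1}{\alpha}(s-x)+x\bigr)$, cancel the two shared terms, and compare the remaining pair with Lemma \ref{l2}, while your explicit checks of the weight reflection (the paper's (\ref{2.96})) and of the ordering $A\le C\le B$ fill in details the paper leaves unstated.
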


\begin{proof}
Using simple integration techniques, the identity $\left(  \ref{2.54}\right)
$ and under the hypothesis of $p$, the following identity holds on $\left[
0,1\right]  :$%
\begin{align}
Np\left(  t\right)   &  =%
{\displaystyle\int\nolimits_{x}^{\frac{x+y}{2}}}
\frac{1}{\alpha}\left[  f\left(  tx+\left(  1-t\right)  s\right)  p\left(
\frac{1}{\alpha}\left(  s-x\right)  +x\right)  \right. \label{2.55}\\
&  +f\left(  tx+\left(  1-t\right)  \left(  x+y-s\right)  \right)  p\left(
\frac{1}{\alpha}\left(  y-s\right)  +x\right) \nonumber\\
&  +f\left(  tx^{\prime}+\left(  1-t\right)  \left(  x+x^{\prime}-s\right)
\right)  p\left(  \frac{1}{\alpha}\left(  s-x\right)  +x\right) \nonumber\\
&  \left.  +f\left(  tx^{\prime}+\left(  1-t\right)  \left(  s+x^{\prime
}-y\right)  \right)  p\left(  \frac{1}{\alpha}\left(  y-s\right)  +x\right)
\right]  ds\nonumber\\
&  =%
{\displaystyle\int\nolimits_{x}^{\frac{x+y}{2}}}
\frac{1}{\alpha}\left[  f\left(  tx+\left(  1-t\right)  s\right)  +f\left(
tx+\left(  1-t\right)  \left(  x+y-s\right)  \right)  \right. \nonumber\\
&  +f\left(  tx^{\prime}+\left(  1-t\right)  \left(  x+x^{\prime}-s\right)
\right) \nonumber\\
&  \left.  +f\left(  tx^{\prime}+\left(  1-t\right)  \left(  s+x^{\prime
}-y\right)  \right)  \right]  p\left(  \frac{1}{\alpha}\left(  s-x\right)
+x\right)  ds.\nonumber
\end{align}

Using Lemma \ref{l2} and $y^{\prime}-x=x^{\prime}-y,$ the following inequality
holds for all $t\in\left[  0,1\right]  $ and $s\in\left[  x,\frac{x+y}%
{2}\right]  :$%
\begin{align*}
&  \frac{1}{\alpha}\left[  f\left(  ty+\left(  1-t\right)  \left(
x+y-s\right)  \right)  \right. \\
&  +\left.  f\left(  ty^{\prime}+\left(  1-t\right)  \left(  s+y^{\prime
}-x\right)  \right)  \right]  p\left(  \frac{1}{\alpha}\left(  s-x\right)
+x\right) \\
&  \leq\frac{1}{\alpha}\left[  \left(  tx+\left(  1-t\right)  \left(
x+y-s\right)  \right)  \right. \\
&  +\left.  f\left(  tx^{\prime}+\left(  1-t\right)  \left(  s+x^{\prime
}-y\right)  \right)  \right]  p\left(  \frac{1}{\alpha}\left(  s-x\right)
+x\right)  .
\end{align*}

Integrating the above inequality over $s$ on $\left[  x,\frac{x+y}{2}\right]
$ and using the identities $\left(  \ref{2.51}\right)  $\ and $\left(
\ref{2.55}\right)  $, we derive $Mp\left(  t\right)  \leq Np\left(  t\right)
\mathit{\ }$on $\left[  0,1\right]  .$

This completes the proof.
\end{proof}

\begin{remark}
\label{r18}\textit{Using Remark \ref{r11}, Theorem \ref{t14}} \textit{reduces
to Theorem \ref{A31} as }$x=a,$ $y=y^{\prime}=\frac{a+b}{2},x^{\prime}=b$
\textit{and }$p\left(  s\right)  \equiv\frac{g\left(  s\right)  }{2}%
$\ $\left(  s\in\left[  a,b\right]  \right)  .$
\end{remark}

The following corollary is a natural consequence of Theorems \ref{t8} --
\ref{t14}.

\begin{corollary}
\label{c2}\textit{Let }$x,y,y^{\prime},x^{\prime},f,p,Ip_{1},Ip_{2},Jp,Mp,Np$
\textit{be defined as above. }The inequalities%
\begin{align}
&  \left[  f\left(  y\right)  +f\left(  y^{\prime}\right)  \right]
\int\nolimits_{x}^{x^{\prime}}p\left(  s\right)  ds\label{2.56}\\
&  \leq\mathit{\ }Ip_{1}\left(  t\right)  \leq Jp\left(  t\right)  \leq
Mp\left(  t\right)  \leq Np\left(  t\right) \nonumber\\
&  \leq\left[  f\left(  x\right)  +f\left(  x^{\prime}\right)  \right]
\int\nolimits_{x}^{x^{\prime}}p\left(  s\right)  ds\nonumber
\end{align}%
\begin{align}
&  2f\left(  \frac{y+y^{\prime}}{2}\right)  \int\nolimits_{x}^{x^{\prime}%
}p\left(  s\right)  ds\label{2.57}\\
&  \leq\mathit{\ }Ip_{2}\left(  t\right)  \leq\mathit{\ }Ip_{1}\left(
t\right)  \leq Jp\left(  t\right)  \leq Mp\left(  t\right)  \leq Np\left(
t\right) \nonumber\\
&  \leq\left[  f\left(  x\right)  +f\left(  x^{\prime}\right)  \right]
\int\nolimits_{x}^{x^{\prime}}p\left(  s\right)  ds\nonumber
\end{align}
and%
\begin{align}
&  \left[  f\left(  y\right)  +f\left(  y^{\prime}\right)  \right]
\int\nolimits_{x}^{x^{\prime}}p\left(  s\right)  ds\label{2.58}\\
&  \leq\left[  f\left(  \frac{x+y}{2}\right)  +f\left(  \frac{x^{\prime
}+y^{\prime}}{2}\right)  \right]  \int\nolimits_{x}^{x^{\prime}}p\left(
s\right)  ds\nonumber\\
&  \leq\int\nolimits_{x}^{x^{\prime}}\left[  f\left(  \left(  1-\alpha\right)
x+\alpha s\right)  +f\left(  \left(  1-\alpha\right)  x^{\prime}+\alpha
s\right)  \right]  p\left(  s\right)  ds\nonumber\\
&  \leq\frac{f\left(  x\right)  +f\left(  y\right)  +f\left(  y^{\prime
}\right)  +f\left(  x^{\prime}\right)  }{2}\int\nolimits_{x}^{x^{\prime}%
}p\left(  s\right)  ds\nonumber\\
&  \leq\left[  f\left(  x\right)  +f\left(  x^{\prime}\right)  \right]
\int\nolimits_{x}^{x^{\prime}}p\left(  s\right)  ds\nonumber
\end{align}
hold for all $t\in\left[  0,1\right]  .$
\end{corollary}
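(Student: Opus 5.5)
The corollary is assembled by chaining the endpoint and comparison results of Theorems \ref{t8}--\ref{t14}, with one extra chain between $Jp$ and $Mp$ that the theorems do not state directly.

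\textbf{Chain (\ref{2.56}).} The lower bound $\left[f(y)+f(y')\right]\int_x^{x'}p \le Ip_1(t)$ is the left part of (\ref{2.41}). The step $Ip_1(t)\le Jp(t)$ is Theorem \ref{t11}. The final step $Np(t)\le [f(x)+f(x')]\int p$ is (\ref{2.52}) together with the monotonicity of $Np$. For the middle step $Mp\le Np$ we use Theorem \ref{t14}.

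\textbf{The step $Jp\le Mp$.} This is the one genuinely new link. By Theorem \ref{t10}, $Jp$ is increasing, so $Jp(t)\le Jp(1)$. By (\ref{2.49}),
\[
Jp(1)=\int_x^{x'}\left[f((1-\alpha)x+\alpha s)+f((1-\alpha)x'+\alpha s)\right]p(s)\,ds=Mp(0)\le Mp(t).
\]
Hence $Jp(t)\le Mp(t)$ for all $t$.

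\textbf{Chain (\ref{2.57}).} It remains to insert $Ip_2$. The bound $Ip_2\le Ip_1$ is (\ref{2.45}). For the lower bound on $Ip_2$ there is a mismatch: (\ref{2.44}) gives $2f\left(\frac{x+x'}{2}\right)\int p\le Ip_2(t)$, while the corollary states $2f\left(\frac{y+y'}{2}\right)$. These agree because $y+y'=(1-\alpha)x+\alpha x'+\alpha x+(1-\alpha)x'=x+x'$. With that identity, (\ref{2.57}) follows by prefixing the chain (\ref{2.56}).

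\textbf{Chain (\ref{2.58}).} This is obtained by evaluating the monotone functions at their endpoints:
\[
Ip_1(0)\le Jp(0)\le Jp(1)=Mp(0)\le Mp(1)\le Np(1).
\]
Each piece is justified as follows. $Ip_1(0)\le Jp(0)$ is Theorem \ref{t11} at $t=0$. Alternatively, Lemma \ref{l1} gives $f(y)+f(y')\le f\left(\frac{x+y}{2}\right)+f\left(\frac{x'+y'}{2}\right)$, since $\frac{x+y}{2}\le y$ and the sums match. The values $Jp(0)$ and $Jp(1)$ come from (\ref{2.47}), read with the evident bracket so that $Jp(0)=\left[f\left(\frac{x+y}{2}\right)+f\left(\frac{x'+y'}{2}\right)\right]\int p$. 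The values $Mp(0)$ and $Mp(1)$ come from (\ref{2.49}). The last inequality, $\frac{f(x)+f(y)+f(y')+f(x')}{2}\le f(x)+f(x')$, is the final line of (\ref{2.50}); equivalently it is Lemma \ref{l1} applied to $f(y)+f(y')\le f(x)+f(x')$.

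\textbf{Main obstacle.} The only points needing care are the $Jp\le Mp$ link, which must be routed through the shared endpoint value $Jp(1)=Mp(0)$, and the identification $y+y'=x+x'$ in (\ref{2.57}). Everything else is direct citation.
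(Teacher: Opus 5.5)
Your proposal is correct and takes the same route as the paper. The paper only remarks that Corollary~\ref{c2} is a natural consequence of Theorems~\ref{t8}--\ref{t14}. Your chaining fills in exactly what that remark leaves to the reader: the link $Jp(t)\le Jp(1)=Mp(0)\le Mp(t)$, and the identity $y+y'=x+x'$ that reconciles (\ref{2.44}) with (\ref{2.57}).
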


\begin{remark}
\label{r19}In \textit{Corollary \ref{c2}}, let\textit{\ }$x=a,$ $y=y^{\prime
}=\frac{a+b}{2},x^{\prime}=b$ \textit{and }$p\left(  s\right)  =\frac{g\left(
s\right)  }{2}$\ $\left(  s\in\left[  a,b\right]  \right)  .$ Then we have
the\textit{\ Bullen-type inequality }\cite{13b}%
\begin{align*}
&  f\left(  \frac{a+b}{2}\right)  \int\nolimits_{a}^{b}g\left(  s\right)  ds\\
&  \leq\frac{1}{2}\left[  f\left(  \frac{3a+b}{4}\right)  +f\left(
\frac{a+3b}{4}\right)  \right]  \int\nolimits_{a}^{b}g\left(  s\right)  ds\\
&  \leq\int\nolimits_{a}^{b}\left[  f\left(  \frac{x+a}{2}\right)  +f\left(
\frac{x+b}{2}\right)  \right]  g\left(  s\right)  ds\\
&  \leq\frac{1}{2}\left[  f\left(  \frac{a+b}{2}\right)  +\frac{f\left(
a\right)  +f\left(  b\right)  }{2}\right]  \int\nolimits_{a}^{b}g\left(
s\right)  ds\\
&  \leq\frac{f\left(  a\right)  +f\left(  b\right)  }{2}\int\nolimits_{a}%
^{b}g\left(  s\right)  ds
\end{align*}
which refines Fej\'{e}r inequality $\left(  \ref{1.2}\right)  .$
\end{remark}

\begin{theorem}
\label{t15}Let $x,y,y^{\prime},x^{\prime},\Omega,f,p,Ip_{1}$\ be defined as
above. Then:

\begin{enumerate}
\item The inequality
\begin{align}
&  \left[  f\left(  y\right)  +f\left(  y^{\prime}\right)  \right]
\int\nolimits_{x}^{x^{\prime}}p\left(  s\right)  ds\label{2.59}\\
&  \leq\frac{2}{\alpha}\left[  \int_{\frac{x+y}{2}}^{y}f\left(  s\right)
p\left(  \frac{1}{\alpha}\left(  2s-x-y\right)  +x\right)  ds\right.
\nonumber\\
&  \text{ \ \ \ \ \ \ \ \ \ }+\left.  \int_{y^{\prime}}^{\frac{x^{\prime
}+y^{\prime}}{2}}f\left(  s\right)  p\left(  \frac{1}{\alpha}\left(
2s-y^{\prime}-x^{\prime}\right)  +x^{\prime}\right)  ds\right] \nonumber\\
&  \leq\int_{0}^{1}Ip_{1}\left(  t\right)  dt\nonumber\\
&  \leq\frac{1}{2}\left[  \left(  f\left(  y\right)  +f\left(  y^{\prime
}\right)  \right)  \int\nolimits_{x}^{x^{\prime}}p\left(  s\right)  ds\right.
\nonumber\\
&  \text{ \ \ \ \ \ \ \ \ }+\left.  \int\nolimits_{x}^{x^{\prime}}\left[
f\left(  \left(  1-\alpha\right)  x+\alpha s\right)  +f\left(  \left(
1-\alpha\right)  x^{\prime}+\alpha s\right)  \right]  p\left(  s\right)
ds\right] \nonumber
\end{align}
holds.

\item If $f$\ is differentiable on $\left[  x,x^{\prime}\right]  $ and $p$ is
bounded on $\left[  x,x^{\prime}\right]  ,$ then the inequality%
\begin{align}
0  &  \leq\int\nolimits_{x}^{x^{\prime}}\left[  f\left(  \left(
1-\alpha\right)  x+\alpha s\right)  +f\left(  \left(  1-\alpha\right)
x^{\prime}+\alpha s\right)  \right]  p\left(  s\right)  ds-Ip_{1}\left(
t\right) \label{2.60}\\
&  \leq\frac{1-t}{\alpha}\left[  \left(  f\left(  x\right)  +f\left(
x^{\prime}\right)  \right)  \left(  y-x\right)  -\int_{\Omega}f\left(
s\right)  ds\right]  \left\Vert p\right\Vert _{\infty}\nonumber
\end{align}
holds for all $t\in\left[  0,1\right]  .$

\item If $f$\ is differentiable on $\left[  x,x^{\prime}\right]  ,$ then the
inequalities%
\begin{align}
0  &  \leq\left[  f\left(  x\right)  +f\left(  x^{\prime}\right)  \right]
\int\nolimits_{x}^{x^{\prime}}p\left(  s\right)  ds-Ip_{1}\left(  t\right)
\label{2.61}\\
&  \leq\left(  y-x\right)  \left(  f^{\prime}\left(  x^{\prime}\right)
-f^{\prime}\left(  x\right)  \right)  \int\nolimits_{x}^{x^{\prime}}p\left(
s\right)  ds\nonumber
\end{align}
and%
\begin{align}
0  &  \leq Ip_{1}\left(  t\right)  -\left[  f\left(  y\right)  +f\left(
y^{\prime}\right)  \right]  \int\nolimits_{x}^{x^{\prime}}p\left(  s\right)
ds\label{2.62}\\
&  \leq\left(  y-x\right)  \left(  f^{\prime}\left(  x^{\prime}\right)
-f^{\prime}\left(  x\right)  \right)  \int\nolimits_{x}^{x^{\prime}}p\left(
s\right)  ds\nonumber
\end{align}
hold for all $t\in\left[  0,1\right]  .$
\end{enumerate}
\end{theorem}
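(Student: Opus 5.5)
The plan is to reduce everything to the machinery of Theorem \ref{t2}, using the representation (\ref{2.43}) established in the proof of Theorem \ref{t8}. Set $q(s)=\frac{1}{\alpha}p\left(\frac{1}{\alpha}(s-x)+x\right)$ for $s\in[x,y]$. Then (\ref{2.43}) says
\[
Ip_{1}(t)=\int_{x}^{y}\left[f(ts+(1-t)y)+f(t(x+x^{\prime}-s)+(1-t)y^{\prime})\right]q(s)\,ds .
\]
Since $x+x^{\prime}=y+y^{\prime}$, this has exactly the form of $Hp_{1}$, with $q$ in place of $p$ on $[x,y]$. Two auxiliary identities follow from the symmetry of $p$ and the substitution $s\mapsto\frac{1}{\alpha}(s-x)+x$:
\[
\int_{x}^{x^{\prime}}p(s)\,ds=2\int_{x}^{y}q(s)\,ds,
\]
\[
\int_{x}^{x^{\prime}}\left[f((1-\alpha)x+\alpha s)+f((1-\alpha)x^{\prime}+\alpha s)\right]p(s)\,ds=\int_{x}^{y}\left[f(s)+f(x+x^{\prime}-s)\right]q(s)\,ds .
\]
These play the roles of $\frac{1}{2}\int_\Omega p$ and $\int_\Omega fp$ in Theorem \ref{t2}.

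For part (1), I will copy the proof of (\ref{2.5}) line by line, with $q$ in place of $p$. Split $\int_0^1 Ip_1\,dt$ into $t\in[0,\frac12]$ and $t\mapsto 1-t$, so that the integrand over $[x,y]\times[0,\frac12]$ becomes
\[
\left[f(ty+(1-t)s)+f(ts+(1-t)y)+f(ty^{\prime}+(1-t)(x+x^{\prime}-s))+f(t(x+x^{\prime}-s)+(1-t)y^{\prime})\right]q(s).
\]
The same four pointwise inequalities from Lemma \ref{l2} (midpoint lower bounds $2f(\frac{s+y}{2})$ and $2f(\frac{y+y^{\prime}+y^{\prime}+... }{2})$, i.e.\ the reflected midpoint, and upper bounds $f(s)+f(y)$ and the like) then give the chain once integrated. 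The one genuinely new computation is to identify the middle term. I will substitute $u=\frac{s+y}{2}$, which gives
\[
\int_x^y 2f\!\left(\tfrac{s+y}{2}\right)q(s)\,ds=\frac{2}{\alpha}\int_{\frac{x+y}{2}}^{y}f(u)\,p\!\left(\tfrac{1}{\alpha}(2u-x-y)+x\right)du ,
\]
up to the factor conventions. For the reflected piece I will use $u=\frac{2y^{\prime}+y-s}{2}$ together with the symmetry of $p$ to reach the stated form $p\left(\frac{1}{\alpha}(2u-y^{\prime}-x^{\prime})+x^{\prime}\right)$. I expect the main obstacle to be checking these substitutions and the constants $\frac{2}{\alpha}$ and $\frac12$ exactly. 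I would first verify them in the case $\alpha=\frac12$, $p$ constant, against Theorem \ref{A16}.

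For part (2), I will follow the proof of (\ref{2.7}). Convexity gives
\[
f(s)-f(ts+(1-t)y)\le(1-t)(s-y)f^{\prime}(s),
\]
together with the analogous inequality at $x+x^{\prime}-s$. Integrating against $q$ yields
\[
(1-t)\int_x^y (y-s)\left(f^{\prime}(x+x^{\prime}-s)-f^{\prime}(s)\right)q(s)\,ds .
\]
The bracket is nonnegative because $f^{\prime}$ is monotone. So I bound $q\le\frac{1}{\alpha}\|p\|_\infty$ and apply the integration-by-parts identity from Theorem \ref{t2}, which produces the right side of (\ref{2.60}). The left inequality is (\ref{2.41}).

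For part (3), (\ref{2.61}) and (\ref{2.62}) follow from (\ref{2.41}) and (\ref{2.42}) combined with the tangent-line estimate (\ref{2.10}), multiplied by $2$ since $\int_x^{x^{\prime}}p=2\int_x^y q$. This follows the proof of (\ref{2.8})--(\ref{2.9}).
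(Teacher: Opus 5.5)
Your route is the paper's. Your $q(s)=\frac{1}{\alpha}p\left(\frac{1}{\alpha}(s-x)+x\right)$ is exactly the weight in (\ref{2.43}), and the paper uses it as follows:
\begin{itemize}
\item it proves (\ref{2.59}) by rerunning the proof of (\ref{2.5}) with this weight, integrating five pointwise inequalities over $t\in[0,\frac12]$ and $s\in[x,y]$;
\item it proves (\ref{2.60}) by the same tangent-line estimates together with $q\le\frac{1}{\alpha}\Vert p\Vert_{\infty}$;
\item it obtains (\ref{2.61})--(\ref{2.62}) from (\ref{2.41})--(\ref{2.42}) and $f(x)+f(x^{\prime})-f(y)-f(y^{\prime})\le(y-x)\left(f^{\prime}(x^{\prime})-f^{\prime}(x)\right)$.
\end{itemize}
Your part (2) and your reflected substitution $u=\frac{y+2y^{\prime}-s}{2}$ are correct as stated.

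Your first auxiliary identity is wrong, and it matters for the constants. The map $u=\frac{1}{\alpha}(s-x)+x$ sends $[x,y]$ onto $[x,x^{\prime}]$ with $ds=\alpha\,du$. So $\int_{x}^{y}q(s)\,ds=\int_{x}^{x^{\prime}}p(u)\,du$, which is (\ref{2.71}), not $\frac{1}{2}\int_{x}^{x^{\prime}}p$.

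Taken literally, your version gives $Ip_{1}(0)=[f(y)+f(y^{\prime})]\int_{x}^{y}q=\frac12[f(y)+f(y^{\prime})]\int_{x}^{x^{\prime}}p$, which contradicts the left end of (\ref{2.59}). The ``multiply by $2$'' repair you propose in part (3) is not legitimate either. $Ip_{1}(t)$ is transported exactly by (\ref{2.43}), so it would get doubled too.

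With the correct identity nothing needs adjusting:
\begin{itemize}
\item $\int_{x}^{y}q$ plays the role of $\int_{x}^{y}p=\frac{1}{2}\int_{\Omega}p$ in Theorem \ref{t2}.
\item The middle term of (\ref{2.59}) is exactly $\int_{x}^{y}\left[f\left(\frac{s+y}{2}\right)+f\left(\frac{y+2y^{\prime}-s}{2}\right)\right]q(s)\,ds$. Your substitution $u=\frac{s+y}{2}$ gives the factor $\frac{2}{\alpha}$ for this. With the extra $2$ you put in front of $f$ it becomes $\frac{4}{\alpha}$; in the proof of (\ref{2.5}) that $2$ is absorbed by $\int_{0}^{1/2}dt$.
\item In part (3) you just multiply the pointwise tangent estimate by $\int_{x}^{x^{\prime}}p$.
\end{itemize}

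Also include the leftmost pointwise inequality $f(y)+f(y^{\prime})\le f\left(\frac{s+y}{2}\right)+f\left(\frac{y+2y^{\prime}-s}{2}\right)$. This is Lemma \ref{l1} applied to $\frac{s+y}{2}\le y\le y^{\prime}\le\frac{y+2y^{\prime}-s}{2}$.

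Your proposed check at $\alpha=\frac12$ forces $y=y^{\prime}$, so it compares with Theorem \ref{A6} rather than \ref{A16}. With $p\equiv c$ and any $\alpha$, the check immediately shows $\int_{x}^{y}q=c(x^{\prime}-x)=\int_{x}^{x^{\prime}}p$.
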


\begin{proof}
$\left(  1\right)  $ Using simple techniques of integration, $x+x^{\prime
}=y+y^{\prime},$ $2x+x^{\prime}-y^{\prime}=x+y$ and the hypothesis of $p$%
\begin{align*}
&  p\left(  \frac{1}{\alpha}\left(  2s-x-y\right)  +x\right) \\
&  =p\left(  \frac{1}{\alpha}\left(  x+y-2s\right)  +x^{\prime}\right)  \text{
\ }\left(  s\in\left[  \frac{x+y}{2},y\right]  \right)  ,
\end{align*}
we have the following identities
\begin{align*}
&  \left[  f\left(  y\right)  +f\left(  y^{\prime}\right)  \right]
\int\nolimits_{x}^{x^{\prime}}p\left(  s\right)  ds\\
&  =\int_{x}^{y}\int_{0}^{\frac{1}{2}}\frac{2}{\alpha}\left[  f\left(
y\right)  +f\left(  y^{\prime}\right)  \right]  p\left(  \frac{1}{\alpha
}\left(  s-x\right)  +x\right)  dtds,
\end{align*}%
\begin{align*}
&  \frac{2}{\alpha}\left[  \int_{\frac{x+y}{2}}^{y}f\left(  s\right)  p\left(
\frac{1}{\alpha}\left(  2s-x-y\right)  +x\right)  ds\right. \\
&  \left.  +\int_{y^{\prime}}^{\frac{x^{\prime}+y^{\prime}}{2}}f\left(
s\right)  p\left(  \frac{1}{\alpha}\left(  2s-y^{\prime}-x^{\prime}\right)
+x^{\prime}\right)  ds\right] \\
&  =\frac{2}{\alpha}\left[  \int_{\frac{x+y}{2}}^{y}f\left(  s\right)
p\left(  \frac{1}{\alpha}\left(  2s-x-y\right)  +x\right)  ds\right. \\
&  \left.  +\int_{\frac{x+y}{2}}^{y}f\left(  x+x^{\prime}-s\right)  p\left(
\frac{1}{\alpha}\left(  x+y-2s\right)  +x^{\prime}\right)  ds\right] \\
&  =\frac{2}{\alpha}\int_{\frac{x+y}{2}}^{y}\left[  f\left(  s\right)
+f\left(  x+x^{\prime}-s\right)  \right]  p\left(  \frac{1}{\alpha}\left(
2s-x-y\right)  +x\right)  ds\\
&  =\int_{x}^{y}\int_{0}^{\frac{1}{2}}\frac{2}{\alpha}\left[  f\left(
\frac{s+y}{2}\right)  +f\left(  \frac{y+2y^{\prime}-s}{2}\right)  \right]
p\left(  \frac{1}{\alpha}\left(  s-x\right)  +x\right)  dtds,
\end{align*}%
\begin{align*}
&  \int_{0}^{1}Ip_{1}\left(  t\right)  dt\\
&  =\int_{x}^{x^{\prime}}\int_{0}^{\frac{1}{2}}\left[  f\left(  t\left(
\left(  1-\alpha\right)  x+\alpha s\right)  +\left(  1-t\right)  y\right)
\right. \\
&  +\left.  f\left(  \left(  1-t\right)  \left(  \left(  1-\alpha\right)
x+\alpha s\right)  +ty\right)  \right]  p\left(  s\right)  dtds\\
&  +\int_{x}^{x^{\prime}}\int_{0}^{\frac{1}{2}}\left[  f\left(  t\left(
\left(  1-\alpha\right)  x^{\prime}+\alpha s\right)  +\left(  1-t\right)
y^{\prime}\right)  \right. \\
&  +\left.  f\left(  \left(  1-t\right)  \left(  \left(  1-\alpha\right)
x^{\prime}+\alpha s\right)  +ty^{\prime}\right)  \right]  p\left(  s\right)
dtds\text{ \ }\\
&  =\int_{x}^{x^{\prime}}\int_{0}^{\frac{1}{2}}\left[  f\left(  t\left(
\left(  1-\alpha\right)  x+\alpha s\right)  +\left(  1-t\right)  y\right)
\right. \\
&  +\left.  f\left(  \left(  1-t\right)  \left(  \left(  1-\alpha\right)
x+\alpha s\right)  +ty\right)  \right]  p\left(  s\right)  dtds\\
&  +\int_{x}^{x^{\prime}}\int_{0}^{\frac{1}{2}}\left[  f\left(  t\left(
x^{\prime}+\alpha x-\alpha s\right)  +\left(  1-t\right)  y^{\prime}\right)
\right. \\
&  \text{\ }+\left.  f\left(  \left(  1-t\right)  \left(  x^{\prime}+\alpha
x-\alpha s\right)  +ty^{\prime}\right)  \right]  p\left(  x+x^{\prime
}-s\right)  dtds
\end{align*}%
\begin{align*}
&  =\int_{x}^{x^{\prime}}\int_{0}^{\frac{1}{2}}\left[  f\left(  t\left(
\left(  1-\alpha\right)  x+\alpha s\right)  +\left(  1-t\right)  y\right)
\right. \\
&  +\left.  f\left(  \left(  1-t\right)  \left(  \left(  1-\alpha\right)
x+\alpha s\right)  +ty\right)  \right]  p\left(  s\right)  dtds\text{
\ \ \ \ \ \ \ \ \ \ \ \ \ \ \ }\\
&  +\int_{x}^{x^{\prime}}\int_{0}^{\frac{1}{2}}\left[  f\left(  t\left(
x^{\prime}+\alpha x-\alpha s\right)  +\left(  1-t\right)  y^{\prime}\right)
\right. \\
&  +\left.  f\left(  \left(  1-t\right)  \left(  x^{\prime}+\alpha x-\alpha
s\right)  +ty^{\prime}\right)  \right]  p\left(  s\right)  dtds
\end{align*}%
\begin{align*}
&  =\int_{x}^{y}\int_{0}^{\frac{1}{2}}\frac{1}{\alpha}\left[  f\left(
ts+\left(  1-t\right)  y\right)  +f\left(  \left(  1-t\right)  s+ty\right)
\right. \\
&  +f\left(  t\left(  x+x^{\prime}-s\right)  +\left(  1-t\right)  y^{\prime
}\right) \\
&  \text{\ }+\left.  f\left(  \left(  1-t\right)  \left(  x+x^{\prime
}-s\right)  +ty^{\prime}\right)  \right]  p\left(  \frac{1}{\alpha}\left(
s-x\right)  +x\right)  dtds
\end{align*}
and%
\begin{align*}
&  \frac{1}{2}\left[  \left(  f\left(  y\right)  +f\left(  y^{\prime}\right)
\right)  \int\nolimits_{x}^{x^{\prime}}p\left(  s\right)  ds\right. \\
&  +\left.  \int\nolimits_{x}^{x^{\prime}}\left[  f\left(  \left(
1-\alpha\right)  x+\alpha s\right)  +f\left(  \left(  1-\alpha\right)
x^{\prime}+\alpha s\right)  \right]  p\left(  s\right)  ds\right] \\
&  =\frac{1}{2}\left[  \left(  f\left(  y\right)  +f\left(  y^{\prime}\right)
\right)  \int\nolimits_{x}^{x^{\prime}}p\left(  s\right)  ds\right. \\
&  +\left.  \int\nolimits_{x}^{x^{\prime}}\left[  f\left(  \left(
1-\alpha\right)  x+\alpha s\right)  +f\left(  x^{\prime}+\alpha x-\alpha
s\right)  \right]  p\left(  s\right)  ds\right] \\
&  =\int_{x}^{y}\int_{0}^{\frac{1}{2}}\frac{1}{\alpha}\left[  f\left(
s\right)  +f\left(  y\right)  \right]  p\left(  \frac{1}{\alpha}\left(
s-x\right)  +x\right)  dtds\\
&  +\int_{x}^{y}\int_{0}^{\frac{1}{2}}\frac{1}{\alpha}\left[  f\left(
y^{\prime}\right)  +f\left(  x+x^{\prime}-s\right)  \right]  p\left(  \frac
{1}{\alpha}\left(  s-x\right)  +x\right)  dtds.
\end{align*}

By Lemma \ref{l2}, the following inequalities hold for all $t\in\left[
0,\frac{1}{2}\right]  $ and $s\in\left[  x,y\right]  :$%
\begin{align*}
&  \frac{2}{\alpha}\left[  f\left(  y\right)  +f\left(  y^{\prime}\right)
\right]  p\left(  \frac{1}{\alpha}\left(  s-x\right)  +x\right) \\
&  \leq\frac{2}{\alpha}\left[  f\left(  \frac{s+y}{2}\right)  +f\left(
\frac{y+2y^{\prime}-s}{2}\right)  \right]  p\left(  \frac{1}{\alpha}\left(
s-x\right)  +x\right)  ,
\end{align*}%
\begin{align*}
&  \frac{2}{\alpha}f\left(  \frac{s+y}{2}\right)  p\left(  \frac{1}{\alpha
}\left(  s-x\right)  +x\right) \\
&  \leq\frac{1}{\alpha}\left[  f\left(  \left(  1-t\right)  s+ty\right)
+f\left(  ts+\left(  1-t\right)  y\right)  \right]  p\left(  \frac{1}{\alpha
}\left(  s-x\right)  +x\right)  ,
\end{align*}%
\begin{align*}
&  \frac{2}{\alpha}f\left(  \frac{y+2y^{\prime}-s}{2}\right)  p\left(
\frac{1}{\alpha}\left(  s-x\right)  +x\right) \\
&  \leq\frac{1}{\alpha}\left[  f\left(  t\left(  y+\text{ }y^{\prime
}-s\right)  +\left(  1-t\right)  y^{\prime}\right)  \right. \\
&  +\left.  f\left(  \left(  1-t\right)  \left(  y+y^{\prime}-s\right)
+ty^{\prime}\right)  \right]  p\left(  \frac{1}{\alpha}\left(  s-x\right)
+x\right)  ,
\end{align*}%
\begin{align*}
&  \frac{1}{\alpha}\left[  f\left(  ty+\left(  1-t\right)  s\right)  +f\left(
ts+\left(  1-t\right)  y\right)  \right]  p\left(  \frac{1}{\alpha}\left(
s-x\right)  +x\right) \\
&  \leq\frac{1}{\alpha}\left[  f\left(  s\right)  +f\left(  y\right)  \right]
p\left(  \frac{1}{\alpha}\left(  s-x\right)  +x\right)
\end{align*}
and%
\begin{align*}
&  \frac{1}{\alpha}\left[  f\left(  t\left(  y+y^{\prime}-s\right)  +\left(
1-t\right)  y^{\prime}\right)  \right. \\
&  +\left.  f\left(  \left(  1-t\right)  \left(  y+y^{\prime}-s\right)
+ty^{\prime}\right)  \right]  p\left(  \frac{1}{\alpha}\left(  s-x\right)
+x\right) \\
&  \leq\frac{1}{\alpha}\left[  f\left(  y^{\prime}\right)  +f\left(
y+y^{\prime}-s\right)  \right]  p\left(  \frac{1}{\alpha}\left(  s-x\right)
+x\right)  .
\end{align*}

Integrating the above inequalities over $t$ on $\left[  0,\frac{1}{2}\right]
, $ over $s$ on $\left[  x,y\right]  $ and using the above identities, we
derive $\left(  \ref{2.59}\right)  .$

$\left(  2\right)  $ By integration by parts and $\Omega=\left[  x,y\right]
\cup\left[  y^{\prime},x^{\prime}\right]  ,$ we have the following identity%
\begin{align*}
&  \int_{x}^{y}\left[  \left(  s-y\right)  f^{\prime}\left(  s\right)
+\left(  y-s\right)  f^{\prime}\left(  x+x^{\prime}-s\right)  \right]  ds\\
&  =\left(  f\left(  x\right)  +f\left(  x^{\prime}\right)  \right)  \left(
y-x\right)  -\int_{\Omega}f\left(  s\right)  ds.
\end{align*}

Now, using the convexity of $f$ and $x+x^{\prime}=y+y^{\prime},$ the
inequalities
\[
f\left(  s\right)  -f\left(  ts+\left(  1-t\right)  y\right)  \leq\left(
1-t\right)  \left(  s-y\right)  f^{\prime}\left(  s\right)
\]
and%
\[
f\left(  x+x^{\prime}-s\right)  -f\left(  t\left(  x+x^{\prime}-s\right)
+\left(  1-t\right)  y^{\prime}\right)  \leq\left(  1-t\right)  \left(
y-s\right)  f^{\prime}\left(  x+x^{\prime}-s\right)
\]
hold for all $t\in\left[  0,1\right]  $ and $s\in\left[  x,y\right]  $. Using
simple techniques of integration, the identity $\left(  \ref{2.43}\right)  ,$
the hypothesis of $p,$ the above inequalities, the convexity of $f$ and the
above identity, we have%
\begin{align*}
&  \int\nolimits_{x}^{x^{\prime}}\left[  f\left(  \left(  1-\alpha\right)
x+\alpha s\right)  +f\left(  \left(  1-\alpha\right)  x^{\prime}+\alpha
s\right)  \right]  p\left(  s\right)  ds-Ip_{1}\left(  t\right) \\
&  =\int_{x}^{y}\frac{1}{\alpha}\left[  \left(  f\left(  s\right)  -f\left(
ts+\left(  1-t\right)  y\right)  \right)  \right. \\
&  +\left.  \left(  f\left(  x+x^{\prime}-s\right)  -f\left(  t\left(
x+x^{\prime}-s\right)  +\left(  1-t\right)  y^{\prime}\right)  \right)
\right]  p\left(  \frac{1}{\alpha}\left(  s-x\right)  +x\right)  ds\\
&  \leq\int_{x}^{y}\frac{1}{\alpha}\left[  \left(  1-t\right)  \left(
s-y\right)  f^{\prime}\left(  s\right)  \right. \\
&  \text{ \ \ \ \ \ \ \ \ \ \ \ \ \ \ \ \ \ \ \ }+\left.  \left(  1-t\right)
\left(  y-s\right)  f^{\prime}\left(  x+x^{\prime}-s\right)  p\left(  \frac
{1}{\alpha}\left(  s-x\right)  +x\right)  \right]  ds\\
&  =\frac{1-t}{\alpha}\int_{x}^{y}\left(  y-s\right)  \left(  f^{\prime
}\left(  x+x^{\prime}-s\right)  -f^{\prime}\left(  s\right)  \right)  p\left(
\frac{1}{\alpha}\left(  s-x\right)  +x\right)  ds\\
&  \leq\frac{1-t}{\alpha}\int_{x}^{y}\left(  y-s\right)  \left(  f^{\prime
}\left(  x+x^{\prime}-s\right)  -f^{\prime}\left(  s\right)  \right)
ds\left\Vert p\right\Vert _{\infty}\\
&  =\frac{1-t}{\alpha}\left[  \left(  f\left(  x\right)  +f\left(  x^{\prime
}\right)  \right)  \left(  y-x\right)  -\int_{\Omega}f\left(  s\right)
ds\right]  \left\Vert p\right\Vert _{\infty}.
\end{align*}
\ 

Using the above inequality and $\left(  \ref{2.41}\right)  $, we derive
$\left(  \ref{2.60}\right)  .$

$\left(  3\right)  $ Using the convexity of $f$ and the hypothesis of $p$, we
have the inequality
\begin{align}
&  \left(  f\left(  x\right)  +f\left(  x^{\prime}\right)  \right)  -\left(
f\left(  y\right)  +f\left(  y^{\prime}\right)  \right) \label{2.63}\\
&  =\left(  f\left(  x\right)  -f\left(  y\right)  \right)  +\left(  f\left(
x^{\prime}\right)  -f\left(  y^{\prime}\right)  \right) \nonumber\\
&  \leq\left(  x-y\right)  f^{\prime}\left(  x\right)  +\left(  x^{\prime
}-y^{\prime}\right)  f^{\prime}\left(  x^{\prime}\right) \nonumber\\
&  =\left(  y-x\right)  \left(  f^{\prime}\left(  x^{\prime}\right)
-f^{\prime}\left(  x\right)  \right)  .\nonumber
\end{align}

The inequalities $\left(  \ref{2.61}\right)  $ and $\left(  \ref{2.62}\right)
$ follow from the inequalities $\left(  \ref{2.56}\right)  $ and $\left(
\ref{2.63}\right)  .$

This completes the proof.
\end{proof}

\begin{remark}
\label{r20}\textit{Using Remark \ref{r11}} we have the following results:
\end{remark}

\begin{enumerate}
\item \textit{Theorem \ref{t15}} \textit{reduces to Theorem \ref{A6} as
}$x=a,$\textit{\ }$y=y^{\prime}=\frac{a+b}{2},$\textit{\ }$x^{\prime}=b$
\textit{and }$p\left(  s\right)  \equiv\frac{1}{2\left(  b-a\right)  }$
$\left(  s\in\left[  a,b\right]  \right)  .$

\item \textit{Theorem \ref{t15}} \textit{reduces to Theorem \ref{A32} as
}$x=a,$ $y=y^{\prime}=\frac{a+b}{2},x^{\prime}=b$ \textit{and }$p\left(
s\right)  =\frac{g\left(  s\right)  }{2}$\ $\left(  s\in\left[  a,b\right]
\right)  .$
\end{enumerate}

\begin{theorem}
\label{t16}Let $x,y,y^{\prime},x^{\prime},\Omega,f,p,Ip_{2}$\ be defined as
above. Then:

\begin{enumerate}
\item The inequality%
\begin{align}
&  2f\left(  \frac{y+y^{\prime}}{2}\right)  \int\nolimits_{x}^{x^{\prime}%
}p\left(  s\right)  ds\label{2.64}\\
&  \leq\frac{2}{\alpha}\left[  \int_{\frac{x+y^{\prime}}{2}}^{\frac
{y+y^{\prime}}{2}}f\left(  s\right)  p\left(  \frac{1}{\alpha}\left(
2s-x-y^{\prime}\right)  +x\right)  ds\right. \nonumber\\
&  \text{ \ \ \ \ \ \ \ \ \ }+\left.  \int_{\frac{y+y^{\prime}}{2}}%
^{\frac{x^{\prime}+y}{2}}f\left(  s\right)  p\left(  \frac{1}{\alpha}\left(
2s-y-x^{\prime}\right)  +x^{\prime}\right)  ds\right] \nonumber\\
&  \leq\int_{0}^{1}Ip_{2}\left(  t\right)  dt\nonumber\\
&  \leq\frac{1}{2}\left[  \left(  f\left(  y\right)  +f\left(  y^{\prime
}\right)  \right)  \int\nolimits_{x}^{x^{\prime}}p\left(  s\right)  ds\right.
\nonumber\\
&  \text{ \ \ \ \ \ \ \ \ }+\left.  \int\nolimits_{x}^{x^{\prime}}\left[
f\left(  \left(  1-\alpha\right)  x+\alpha s\right)  +f\left(  \left(
1-\alpha\right)  x^{\prime}+\alpha s\right)  \right]  p\left(  s\right)
ds\right] \nonumber
\end{align}
holds.

\item If $f$\ is differentiable on $\left[  x,x^{\prime}\right]  $ and $p$ is
bounded on $\left[  x,x^{\prime}\right]  ,$ then the inequality%
\begin{align}
0  &  \leq\int\nolimits_{x}^{x^{\prime}}\left[  f\left(  \left(
1-\alpha\right)  x+\alpha s\right)  +f\left(  \left(  1-\alpha\right)
x^{\prime}+\alpha s\right)  \right]  p\left(  s\right)  ds-Ip_{2}\left(
t\right) \label{2.65}\\
&  \leq\frac{1-t}{\alpha}\left[  \left(  f\left(  x\right)  +f\left(
x^{\prime}\right)  \right)  \left(  y^{\prime}-x\right)  -\left(  f\left(
y\right)  +f\left(  y^{\prime}\right)  \right)  \left(  y^{\prime}-y\right)
\right. \nonumber\\
&  \text{
\ \ \ \ \ \ \ \ \ \ \ \ \ \ \ \ \ \ \ \ \ \ \ \ \ \ \ \ \ \ \ \ \ \ \ \ \ \ \ \ \ \ \ \ \ \ \ \ \ \ }%
-\left.  \int_{\Omega}f\left(  s\right)  ds\right]  \left\Vert p\right\Vert
_{\infty}\nonumber
\end{align}
holds for all $t\in\left[  0,1\right]  .$

\item If $f$\ is differentiable on $\left[  x,x^{\prime}\right]  ,$ then the
inequalities%
\begin{align}
0  &  \leq Ip_{2}\left(  t\right)  -2f\left(  \frac{y+y^{\prime}}{2}\right)
\int\nolimits_{x}^{x^{\prime}}p\left(  s\right)  ds\label{2.66}\\
&  \leq\frac{2x^{\prime}-y-y^{\prime}}{2}\left(  f^{\prime}\left(  x^{\prime
}\right)  -f^{\prime}\left(  x\right)  \right)  \int\nolimits_{x}^{x^{\prime}%
}p\left(  s\right)  ds\nonumber
\end{align}
and%
\begin{align}
0  &  \leq\left[  f\left(  x\right)  +f\left(  x^{\prime}\right)  \right]
\int\nolimits_{x}^{x^{\prime}}p\left(  s\right)  ds-Ip_{2}\left(  t\right)
\label{2.67}\\
&  \leq\frac{2x^{\prime}-y-y^{\prime}}{2}\left(  f^{\prime}\left(  x^{\prime
}\right)  -f^{\prime}\left(  x\right)  \right)  \int\nolimits_{x}^{x^{\prime}%
}p\left(  s\right)  ds\nonumber
\end{align}
hold for all $t\in\left[  0,1\right]  .$
\end{enumerate}
\end{theorem}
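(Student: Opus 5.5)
The plan follows the proof of Theorem \ref{t15} step by step, with $y$ and $y^{\prime}$ interchanged in the inner points. Everything rests on the reduction (\ref{2.46}), which writes
\[
Ip_{2}\left(  t\right)  =\int_{x}^{y}\frac{1}{\alpha}\left[  f\left(  ts+\left(  1-t\right)  y^{\prime}\right)  +f\left(  t\left(  x+x^{\prime}-s\right)  +\left(  1-t\right)  y\right)  \right]  p\left(  \frac{1}{\alpha}\left(  s-x\right)  +x\right)  ds .
\]
Set $w\left(  s\right)  =p\left(  \frac{1}{\alpha}\left(  s-x\right)  +x\right)$.

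\textbf{Part (1).} Each of the four quantities in (\ref{2.64}) will be written as a double integral $\int_{x}^{y}\int_{0}^{1/2}(\cdot)\,w(s)\,dt\,ds$.

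\begin{itemize}
\item The leftmost term becomes the integrand $\frac{4}{\alpha}f\left(\frac{y+y^{\prime}}{2}\right)$.
\item In the second term, substitute $s\mapsto\frac{s+y^{\prime}}{2}$ in the first integral and $s\mapsto\frac{x+x^{\prime}-s+y}{2}$ in the second. Using the symmetry of $p$ about $\frac{x+x^{\prime}}{2}$, this gives the integrand $\frac{2}{\alpha}\left[f\left(\frac{s+y^{\prime}}{2}\right)+f\left(\frac{2y+y^{\prime}-s}{2}\right)\right]$.
\item For $\int_{0}^{1}Ip_{2}$, split $[0,1]$ into $[0,\tfrac12]$ and $[\tfrac12,1]$ and send $t\mapsto1-t$ on the second half. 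This gives $\frac{1}{\alpha}\bigl[f(ts+(1-t)y^{\prime})+f((1-t)s+ty^{\prime})+f(t(y+y^{\prime}-s)+(1-t)y)+f((1-t)(y+y^{\prime}-s)+ty)\bigr]$.
\item The right-hand side becomes $\frac{1}{\alpha}\left[f(s)+f(y^{\prime})+f(y)+f(y+y^{\prime}-s)\right]$.
\end{itemize}

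The four pointwise comparisons then follow from Lemma \ref{l2}, exactly as in the proof of (\ref{2.6}) in Theorem \ref{t2}:
\begin{itemize}
\item The midpoint pair $\frac{s+y^{\prime}}{2},\frac{2y+y^{\prime}-s}{2}$ is symmetric about $\frac{y+y^{\prime}}{2}$, which gives the first inequality.
\item Each of $\frac{s+y^{\prime}}{2}$ and $\frac{2y+y^{\prime}-s}{2}$ is the midpoint of the corresponding $t$, $1-t$ pair, which gives the second inequality.
\item Those pairs lie inside $[s,y^{\prime}]$ and $[y,y+y^{\prime}-s]$ respectively, which gives the third inequality.
\end{itemize}
Integrating over $t\in[0,\frac12]$ and $s\in[x,y]$ yields (\ref{2.64}).

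\textbf{Part (2).} The left inequality of (\ref{2.65}) is (\ref{2.44}). For the right one, rewrite $Ip_{2}(1)$ via (\ref{2.46}) as $\int_{x}^{y}\frac1\alpha[f(s)+f(x+x^{\prime}-s)]w(s)\,ds$. Then use the gradient inequalities
\[
f(s)-f(ts+(1-t)y^{\prime})\le(1-t)(s-y^{\prime})f^{\prime}(s),\qquad f(x+x^{\prime}-s)-f(t(x+x^{\prime}-s)+(1-t)y)\le(1-t)(x^{\prime}-s-y)f^{\prime}(x+x^{\prime}-s).
\]
Since $x^{\prime}-y=y^{\prime}-x$, the difference is bounded by
\[
\frac{1-t}{\alpha}\int_{x}^{y}\left[(y^{\prime}-s)f^{\prime}(x+x^{\prime}-s)-(y^{\prime}-s)f^{\prime}(s)+(x^{\prime}-y-y^{\prime}+s)f^{\prime}(x+x^{\prime}-s)\right]w\,ds .
\]
This is the step I expect to be the main obstacle. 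Unlike Theorem \ref{t15}, the weights are not a single nonnegative factor times $f^{\prime}(x+x^{\prime}-s)-f^{\prime}(s)$. My first attempt is to use $y^{\prime}-s\ge0$ and the monotonicity of $f^{\prime}$ to bound the main part by $\|p\|_{\infty}$, and to handle the remainder term with coefficient $(s-x)$ separately. Integration by parts then gives
\[
\int_{x}^{y}(y^{\prime}-s)\bigl(f^{\prime}(x+x^{\prime}-s)-f^{\prime}(s)\bigr)ds=(f(x)+f(x^{\prime}))(y^{\prime}-x)-(f(y)+f(y^{\prime}))(y^{\prime}-y)-\int_{\Omega}f .
\]
This is precisely the bracket in (\ref{2.65}). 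The sign issue of the leftover term must be checked; if needed, I would regroup using $s\le y\le y^{\prime}$.

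\textbf{Part (3).} The inequalities follow from (\ref{2.57}) together with two gradient estimates. For (\ref{2.67}), combine $Ip_{2}(t)\ge2f(\frac{y+y^{\prime}}{2})\int p$ with
\[
f(x)+f(x^{\prime})-2f\left(\tfrac{x+x^{\prime}}{2}\right)\le\frac{x^{\prime}-x}{2}\bigl(f^{\prime}(x^{\prime})-f^{\prime}(x)\bigr),
\]
where $\frac{x^{\prime}-x}{2}\le\frac{2x^{\prime}-y-y^{\prime}}{2}$ holds in the setting of (\ref{2.63}) since $x+x^{\prime}=y+y^{\prime}$. For (\ref{2.66}), use $Ip_{2}\le[f(x)+f(x^{\prime})]\int p$ and the same estimate.
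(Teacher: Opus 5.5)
Your parts (1) and (3) coincide with the paper's proof. Part (1) uses the same four representations as $\int_x^y\int_0^{1/2}(\cdot)\,w(s)\,dt\,ds$, compared pointwise via Lemma~\ref{l2}. Part (3) combines (\ref{2.57}) with the tangent-line estimate (\ref{2.68}); there $\frac{x'-x}{2}=\frac{2x'-y-y'}{2}$ holds with equality, since $y+y'=x+x'$.

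The gap is in part (2). The ``main obstacle'' you identify comes from an arithmetic slip, and as written the argument does not close. In the second tangent-line inequality the displacement is
\[
(x+x'-s)-\bigl(t(x+x'-s)+(1-t)y\bigr)=(1-t)(x+x'-y-s)=(1-t)(y'-s),
\]
because $x+x'-y=y'$. You dropped the $x$ and wrote $(1-t)(x'-s-y)$. That inequality is false in general, since it is not translation invariant: for $f(u)=u$ its left side exceeds its right side by $(1-t)x$, so it fails whenever $x>0$ and $t<1$. Your next display does not match it either, because there the coefficients of $f'(x+x'-s)$ add up to $x'-y$. The resulting remainder $\frac{1-t}{\alpha}\int_x^y(s-x)f'(x+x'-s)\,w(s)\,ds$ has no definite sign and no counterpart in (\ref{2.65}). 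No regrouping using $s\le y\le y'$ can dispose of it.

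With the correct coefficient there is no remainder. The two bounds add to $(1-t)(y'-s)\bigl[f'(x+x'-s)-f'(s)\bigr]$. This is a single nonnegative factor times a nonnegative difference: $y'-s\ge 0$, and $x+x'-s\ge s$ for $s\le y\le\frac{x+x'}{2}$, so monotonicity of $f'$ applies. This is exactly the structure of part (2) of Theorem~\ref{t15}, with $y$ replaced by $y'$ as the weight. Now bound $w(s)\le\|p\|_{\infty}$, integrate over $[x,y]$, and apply the integration-by-parts identity you already derived. This gives precisely the right-hand side of (\ref{2.65}), which is how the paper argues. The left inequality of (\ref{2.65}) comes from (\ref{2.44}), as you say.
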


\begin{proof}
$\left(  1\right)  $ Using simple techniques of integration, $2x+x^{\prime
}-y=x+y^{\prime},$ $x+x^{\prime}+y=2y+y^{\prime},$ $2x+2x^{\prime}-y^{\prime
}=2y+y^{\prime}$\ and the hypothesis of $p$%
\begin{align*}
&  p\left(  \frac{1}{\alpha}\left(  2s-x-y^{\prime}\right)  +x\right) \\
&  =p\left(  \frac{1}{\alpha}\left(  x+y^{\prime}-2s\right)  +x^{\prime
}\right)  \text{ \ }\left(  s\in\left[  \frac{x+y^{\prime}}{2},\frac
{y+y^{\prime}}{2}\right]  \right)  ,
\end{align*}
we have the following identities
\begin{align*}
&  2f\left(  \frac{y+y^{\prime}}{2}\right)  \int\nolimits_{x}^{x^{\prime}%
}p\left(  s\right)  ds\\
&  =\int_{x}^{y}\int_{0}^{\frac{1}{2}}\frac{4}{\alpha}f\left(  \frac
{y+y^{\prime}}{2}\right)  p\left(  \frac{1}{\alpha}\left(  s-x\right)
+x\right)  dtds,
\end{align*}%
\begin{align*}
&  \frac{2}{\alpha}\left[  \int_{\frac{x+y^{\prime}}{2}}^{\frac{y+y^{\prime}%
}{2}}f\left(  s\right)  p\left(  \frac{1}{\alpha}\left(  2s-x-y^{\prime
}\right)  +x\right)  ds\right. \\
&  +\left.  \int_{\frac{y+y^{\prime}}{2}}^{\frac{x^{\prime}+y}{2}}f\left(
s\right)  p\left(  \frac{1}{\alpha}\left(  2s-y-x^{\prime}\right)  +x^{\prime
}\right)  ds\right] \\
&  =\frac{2}{\alpha}\left[  \int_{\frac{x+y^{\prime}}{2}}^{\frac{y+y^{\prime}%
}{2}}f\left(  s\right)  p\left(  \frac{1}{\alpha}\left(  2s-x-y^{\prime
}\right)  +x\right)  ds\right. \\
&  +\left.  \int_{\frac{x+y^{\prime}}{2}}^{\frac{y+y^{\prime}}{2}}f\left(
x+x^{\prime}-s\right)  p\left(  \frac{1}{\alpha}\left(  x+y^{\prime
}-2s\right)  +x^{\prime}\right)  ds\right] \\
&  =\frac{2}{\alpha}\int_{\frac{x+y^{\prime}}{2}}^{\frac{y+y^{\prime}}{2}%
}\left[  f\left(  s\right)  +f\left(  x+x^{\prime}-s\right)  \right]  p\left(
\frac{1}{\alpha}\left(  2s-x-y^{\prime}\right)  +x\right)  ds\\
&  =\int_{x}^{y}\int_{0}^{\frac{1}{2}}\frac{2}{\alpha}\left[  f\left(
\frac{s+y^{\prime}}{2}\right)  +f\left(  \frac{2y+y^{\prime}-s}{2}\right)
\right]  p\left(  \frac{1}{\alpha}\left(  s-x\right)  +x\right)  dtds
\end{align*}%
\begin{align*}
&  \int_{0}^{1}Ip_{2}\left(  t\right)  dt\\
&  =\int_{x}^{x^{\prime}}\int_{0}^{\frac{1}{2}}\left[  f\left(  t\left(
\left(  1-\alpha\right)  x+\alpha s\right)  +\left(  1-t\right)  y^{\prime
}\right)  \right. \\
&  +\left.  f\left(  \left(  1-t\right)  \left(  \left(  1-\alpha\right)
x+\alpha s\right)  +ty^{\prime}\right)  \right]  p\left(  s\right)  dtds\\
&  +\int_{x}^{x^{\prime}}\int_{0}^{\frac{1}{2}}\left[  f\left(  t\left(
\left(  1-\alpha\right)  x^{\prime}+\alpha s\right)  +\left(  1-t\right)
y\right)  \right. \\
&  +\left.  f\left(  \left(  1-t\right)  \left(  \left(  1-\alpha\right)
x^{\prime}+\alpha s\right)  +ty\right)  \right]  p\left(  s\right)  dtds\\
&  =\int_{x}^{x^{\prime}}\int_{0}^{\frac{1}{2}}\left[  f\left(  t\left(
\left(  1-\alpha\right)  x+\alpha s\right)  +\left(  1-t\right)  y^{\prime
}\right)  \right. \\
&  +\left.  f\left(  \left(  1-t\right)  \left(  \left(  1-\alpha\right)
x+\alpha s\right)  +ty^{\prime}\right)  \right]  p\left(  s\right)  dtds\\
&  +\int_{x}^{x^{\prime}}\int_{0}^{\frac{1}{2}}\left[  f\left(  t\left(
x^{\prime}+\alpha x-\alpha s\right)  +\left(  1-t\right)  y\right)  \right. \\
&  +\left.  f\left(  \left(  1-t\right)  \left(  x^{\prime}+\alpha x-\alpha
s\right)  +ty\right)  \right]  p\left(  x+x^{\prime}-s\right)  dtds\text{
\ \ }%
\end{align*}%
\begin{align*}
&  =\int_{x}^{x^{\prime}}\int_{0}^{\frac{1}{2}}\left[  f\left(  t\left(
\left(  1-\alpha\right)  x+\alpha s\right)  +\left(  1-t\right)  y^{\prime
}\right)  \right. \\
&  +\left.  f\left(  \left(  1-t\right)  \left(  \left(  1-\alpha\right)
x+\alpha s\right)  +ty^{\prime}\right)  \right] \\
&  +f\left(  t\left(  x^{\prime}+\alpha x-\alpha s\right)  +\left(
1-t\right)  y\right) \\
&  +\left.  f\left(  \left(  1-t\right)  \left(  x^{\prime}+\alpha x-\alpha
s\right)  +ty\right)  \right]  p\left(  s\right)  dtds\text{
\ \ \ \ \ \ \ \ \ \ \ \ \ \ }%
\end{align*}%
\begin{align*}
&  =\int_{x}^{y}\int_{0}^{\frac{1}{2}}\frac{1}{\alpha}\left[  f\left(
ts+\left(  1-t\right)  y^{\prime}\right)  +f\left(  \left(  1-t\right)
s+ty^{\prime}\right)  \right. \\
&  +f\left(  t\left(  x+x^{\prime}-s\right)  +\left(  1-t\right)  y\right) \\
&  +\left.  f\left(  \left(  1-t\right)  \left(  x+x^{\prime}-s\right)
+ty\right)  \right]  p\left(  \frac{1}{\alpha}\left(  s-x\right)  +x\right)
dtds
\end{align*}
and%
\begin{align*}
&  \frac{1}{2}\left[  \left(  f\left(  y\right)  +f\left(  y^{\prime}\right)
\right)  \int\nolimits_{x}^{x^{\prime}}p\left(  s\right)  ds\right. \\
&  +\left.  \int\nolimits_{x}^{x^{\prime}}\left[  f\left(  \left(
1-\alpha\right)  x+\alpha s\right)  +f\left(  \left(  1-\alpha\right)
x^{\prime}+\alpha s\right)  \right]  p\left(  s\right)  ds\right] \\
&  =\frac{1}{2}\left[  \left(  f\left(  y\right)  +f\left(  y^{\prime}\right)
\right)  \int\nolimits_{x}^{x^{\prime}}p\left(  s\right)  ds\right. \\
&  +\left.  \int\nolimits_{x}^{x^{\prime}}\left[  f\left(  \left(
1-\alpha\right)  x+\alpha s\right)  +f\left(  x^{\prime}+\alpha x-\alpha
s\right)  \right]  p\left(  s\right)  ds\right] \\
&  =\int_{x}^{y}\int_{0}^{\frac{1}{2}}\frac{1}{\alpha}\left[  f\left(
s\right)  +f\left(  y^{\prime}\right)  \right]  p\left(  \frac{1}{\alpha
}\left(  s-x\right)  +x\right)  dtds\\
&  +\int_{x}^{y}\int_{0}^{\frac{1}{2}}\frac{1}{\alpha}\left[  f\left(
y\right)  +f\left(  x+x^{\prime}-s\right)  \right]  p\left(  \frac{1}{\alpha
}\left(  s-x\right)  +x\right)  dtds.
\end{align*}

By Lemma \ref{l2}, the following inequalities hold for all $t\in\left[
0,\frac{1}{2}\right]  $ and $s\in\left[  x,y\right]  :$%
\begin{align*}
&  \frac{4}{\alpha}f\left(  \frac{y+y^{\prime}}{2}\right)  p\left(  \frac
{1}{\alpha}\left(  s-x\right)  +x\right) \\
&  \leq\frac{2}{\alpha}\left[  f\left(  \frac{s+y^{\prime}}{2}\right)
+f\left(  \frac{2y+y^{\prime}-s}{2}\right)  \right]  p\left(  \frac{1}{\alpha
}\left(  s-x\right)  +x\right)  ,
\end{align*}%
\begin{align*}
&  \frac{2}{\alpha}f\left(  \frac{s+y^{\prime}}{2}\right)  p\left(  \frac
{1}{\alpha}\left(  s-x\right)  +x\right) \\
&  \leq\frac{1}{\alpha}\left[  f\left(  ty^{\prime}+\left(  1-t\right)
s\right)  +f\left(  ts+\left(  1-t\right)  y^{\prime}\right)  \right]
p\left(  \frac{1}{\alpha}\left(  s-x\right)  +x\right)  ,
\end{align*}%
\begin{align*}
&  \frac{2}{\alpha}f\left(  \frac{2y+y^{\prime}-s}{2}\right)  p\left(
\frac{1}{\alpha}\left(  s-x\right)  +x\right) \\
&  \leq\frac{1}{\alpha}\left[  f\left(  t\left(  x+\text{ }x^{\prime
}-s\right)  +\left(  1-t\right)  y\right)  \right. \\
&  \left.  +f\left(  \left(  1-t\right)  \left(  x+x^{\prime}-s\right)
+ty\right)  \right]  p\left(  \frac{1}{\alpha}\left(  s-x\right)  +x\right)  ,
\end{align*}%
\begin{align*}
&  \frac{1}{\alpha}\left[  f\left(  ty^{\prime}+\left(  1-t\right)  s\right)
+f\left(  ts+\left(  1-t\right)  y^{\prime}\right)  \right]  p\left(  \frac
{1}{\alpha}\left(  s-x\right)  +x\right) \\
&  \leq\frac{1}{\alpha}\left[  f\left(  s\right)  +f\left(  y^{\prime}\right)
\right]  p\left(  \frac{1}{\alpha}\left(  s-x\right)  +x\right)
\end{align*}
and%
\begin{align*}
&  \frac{1}{\alpha}\left[  f\left(  t\left(  x+x^{\prime}-s\right)  +\left(
1-t\right)  y\right)  \right. \\
&  \left.  +f\left(  \left(  1-t\right)  \left(  x+x^{\prime}-s\right)
+ty\right)  \right]  p\left(  \frac{1}{\alpha}\left(  s-x\right)  +x\right) \\
&  \leq\frac{1}{\alpha}\left[  f\left(  y\right)  +f\left(  x+x^{\prime
}-s\right)  \right]  p\left(  \frac{1}{\alpha}\left(  s-x\right)  +x\right)  .
\end{align*}

Integrating the above inequalities over $t$ on $\left[  0,\frac{1}{2}\right]
, $ over $s$ on $\left[  x,y\right]  $ and using the above identities, we
derive $\left(  \ref{2.64}\right)  .$

$\left(  2\right)  $ By integration by parts and $\Omega=\left[  x,y\right]
\cup\left[  y^{\prime},x^{\prime}\right]  ,$ we have the following identity%
\begin{align*}
&  \int_{x}^{y}\left[  \left(  s-y^{\prime}\right)  f^{\prime}\left(
s\right)  +\left(  y^{\prime}-s\right)  f^{\prime}\left(  x+x^{\prime
}-s\right)  \right]  ds\\
&  =\left(  f\left(  x\right)  +f\left(  x^{\prime}\right)  \right)  \left(
y^{\prime}-x\right)  -\left(  f\left(  y\right)  +f\left(  y^{\prime}\right)
\right)  \left(  y^{\prime}-y\right)  -\int_{\Omega}f\left(  s\right)  ds.
\end{align*}

Now, using the convexity of $f$ and $x+x^{\prime}=y+y^{\prime},$ the
inequalities
\[
f\left(  s\right)  -f\left(  ts+\left(  1-t\right)  y^{\prime}\right)
\leq\left(  1-t\right)  \left(  s-y^{\prime}\right)  f^{\prime}\left(
s\right)
\]
and%
\[
f\left(  x+x^{\prime}-s\right)  -f\left(  t\left(  x+x^{\prime}-s\right)
+\left(  1-t\right)  y\right)  \leq\left(  1-t\right)  \left(  y^{\prime
}-s\right)  f^{\prime}\left(  x+x^{\prime}-s\right)
\]
hold for all $t\in\left[  0,1\right]  $ and $s\in\left[  x,y\right]  $. Using
the identity $\left(  \ref{2.46}\right)  ,$ the hypothesis of $p,$ the above
inequalities, the convexity of $f$ and the above identity, we have%
\begin{align*}
&  \int\nolimits_{x}^{x^{\prime}}\left[  f\left(  \left(  1-\alpha\right)
x+\alpha s\right)  +f\left(  \left(  1-\alpha\right)  x^{\prime}+\alpha
s\right)  \right]  p\left(  s\right)  ds-Ip_{2}\left(  t\right) \\
&  =\int_{x}^{y}\frac{1}{\alpha}\left[  \left(  f\left(  s\right)  -f\left(
ts+\left(  1-t\right)  y^{\prime}\right)  \right)  \right. \\
&  +\left.  \left(  f\left(  x+x^{\prime}-s\right)  -f\left(  t\left(
x+x^{\prime}-s\right)  +\left(  1-t\right)  y\right)  \right)  \right]
p\left(  \frac{1}{\alpha}\left(  s-x\right)  +x\right)  ds\\
&  \leq\int_{x}^{y}\frac{1}{\alpha}\left[  \left(  1-t\right)  \left(
s-y^{\prime}\right)  f^{\prime}\left(  s\right)  \right. \\
&  \text{ \ \ \ \ \ \ \ \ \ \ \ \ \ \ \ \ \ \ \ \ \ }+\left.  \left(
1-t\right)  \left(  y^{\prime}-s\right)  f^{\prime}\left(  x+x^{\prime
}-s\right)  \right]  p\left(  \frac{1}{\alpha}\left(  s-x\right)  +x\right)
ds\\
&  =\frac{1-t}{\alpha}\int_{x}^{y}\left(  y^{\prime}-s\right)  \left(
f^{\prime}\left(  x+x^{\prime}-s\right)  -f^{\prime}\left(  s\right)  \right)
p\left(  \frac{1}{\alpha}\left(  s-x\right)  +x\right)  ds\\
&  \leq\frac{1-t}{\alpha}\int_{x}^{y}\left(  y^{\prime}-s\right)  \left(
f^{\prime}\left(  x+x^{\prime}-s\right)  -f^{\prime}\left(  s\right)  \right)
ds\left\Vert p\right\Vert _{\infty}\\
&  =\frac{1-t}{\alpha}\left[  \left(  f\left(  x\right)  +f\left(  x^{\prime
}\right)  \right)  \left(  y^{\prime}-x\right)  -\left(  f\left(  y\right)
+f\left(  y^{\prime}\right)  \right)  \left(  y^{\prime}-y\right)  \right. \\
&  \text{
\ \ \ \ \ \ \ \ \ \ \ \ \ \ \ \ \ \ \ \ \ \ \ \ \ \ \ \ \ \ \ \ \ \ \ \ \ \ \ \ \ \ \ \ \ \ \ \ \ \ }%
-\left.  \int_{\Omega}f\left(  s\right)  ds\right]  \left\Vert p\right\Vert
_{\infty}.
\end{align*}
\ 

Using the above inequality and $\left(  \ref{2.44}\right)  $, we have derive
$\left(  \ref{2.65}\right)  .$

$\left(  3\right)  $ Using the convexity of $f,$ $y+y^{\prime}-2x=2x^{\prime
}-y-y^{\prime}$\ and the hypothesis of $p$, we have the inequality
\begin{align}
&  \left[  f\left(  x\right)  +f\left(  x^{\prime}\right)  \right]  -2f\left(
\frac{y+y^{\prime}}{2}\right) \label{2.68}\\
&  =\left[  f\left(  x\right)  -f\left(  \frac{y+y^{\prime}}{2}\right)
\right]  +\left[  f\left(  x^{\prime}\right)  -f\left(  \frac{y+y^{\prime}}%
{2}\right)  \right] \nonumber\\
&  \leq\frac{2x-y-y^{\prime}}{2}f^{\prime}\left(  x\right)  +\frac{2x^{\prime
}-y-y^{\prime}}{2}f^{\prime}\left(  x^{\prime}\right) \nonumber\\
&  =\frac{2x^{\prime}-y-y^{\prime}}{2}\left[  f^{\prime}\left(  x^{\prime
}\right)  -f^{\prime}\left(  x\right)  \right]  .\nonumber
\end{align}

The inequalities $\left(  \ref{2.66}\right)  $ and $\left(  \ref{2.67}\right)
$ follow from the inequalities $\left(  \ref{2.57}\right)  ,$ and $\left(
\ref{2.68}\right)  .$

This completes the proof.
\end{proof}

\begin{remark}
\label{r21}\textit{Using Remark \ref{r11}} we have the following results:
\end{remark}

\begin{enumerate}
\item \textit{Theorem \ref{t16}} \textit{reduces to Theorem \ref{A6} as
}$x=a,$\textit{\ }$y=y^{\prime}=\frac{a+b}{2},$\textit{\ }$x^{\prime}=b$
\textit{and }$p\left(  s\right)  \equiv\frac{1}{2\left(  b-a\right)  }$
$\left(  s\in\left[  a,b\right]  \right)  .$

\item \textit{Theorem \ref{t16}} \textit{reduces to Theorem \ref{A32} as
}$x=a,$ $y=y^{\prime}=\frac{a+b}{2},x^{\prime}=b$ \textit{and }$p\left(
s\right)  =\frac{g\left(  s\right)  }{2}$\ $\left(  s\in\left[  a,b\right]
\right)  .$
\end{enumerate}

\begin{remark}
\label{r22}\textit{Using Remark \ref{r11}, Theorems \ref{t15}} and
\textit{\ref{t16}}\ \textit{reduce to Theorem \ref{A16} as }$p\left(
s\right)  \equiv\frac{\alpha}{2\left(  y-x\right)  }$\ $\left(  s\in
\Omega\right)  .$
\end{remark}

The following corollary is a natural consequence of the inequalities $\left(
\ref{2.56}\right)  ,$ $\left(  \ref{2.57}\right)  ,$ $\left(  \ref{2.63}%
\right)  $ and $\left(  \ref{2.68}\right)  .$

\begin{corollary}
\label{c3}\textit{Let }$x,y,y^{\prime},x^{\prime},f,p,Ip_{1},Ip_{2},Jp,Mp,Np$
\textit{be defined as above. }If $f$\ is differentiable on $\left[
x,x^{\prime}\right]  ,$ then we have the following inequalities%
\begin{equation}
0\leq\Gamma_{1}-\Theta_{1}\leq\frac{2x^{\prime}-y-y^{\prime}}{2}\left(
f^{\prime}\left(  x^{\prime}\right)  -f^{\prime}\left(  x\right)  \right)
\int\nolimits_{x}^{x^{\prime}}p\left(  s\right)  ds\nonumber
\end{equation}
and%
\begin{equation}
0\leq\Gamma_{2}-\Theta_{2}\leq\left(  y-x\right)  \left(  f^{\prime}\left(
x^{\prime}\right)  -f^{\prime}\left(  x\right)  \right)  \int\nolimits_{x}%
^{x^{\prime}}p\left(  s\right)  ds\nonumber
\end{equation}
where
\[
\Gamma_{1},\Theta_{1}\in\left\{  Ip_{1},Ip_{2},Jp,Mp,Np,2f\left(
\frac{y+y^{\prime}}{2}\right)  ,\left[  f\left(  x\right)  +f\left(
x^{\prime}\right)  \right]  \right\}
\]
and%
\[
\Gamma_{2},\Theta_{2}\in\left\{  Ip_{1},Jp,Mp,Np,\left[  f\left(  y\right)
+f\left(  y^{\prime}\right)  \right]  ,\left[  f\left(  x\right)  +f\left(
x^{\prime}\right)  \right]  \right\}
\]
with $\Theta_{1}\leq\Gamma_{1}$ and $\Theta_{2}\leq\Gamma_{2}.$
\end{corollary}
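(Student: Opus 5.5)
Corollary \ref{c3} can be read as a statement about two totally ordered chains, so the plan is to place every admissible pair $\Theta\le\Gamma$ inside a chain whose extreme elements differ by at most the stated bound. As in Corollary \ref{c2}, the constants $2f\left(\frac{y+y^{\prime}}{2}\right)$, $f(x)+f(x^{\prime})$ and $f(y)+f(y^{\prime})$ are understood as multiplied by $\int_{x}^{x^{\prime}}p(s)\,ds$.

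For the first family we use the chain (\ref{2.57}), which gives, for every $t\in[0,1]$,
\[
2f\left(\tfrac{y+y^{\prime}}{2}\right)\int_{x}^{x^{\prime}}p\le Ip_{2}\le Ip_{1}\le Jp\le Mp\le Np\le\left[f(x)+f(x^{\prime})\right]\int_{x}^{x^{\prime}}p .
\]
All seven elements of the set for $\Gamma_{1},\Theta_{1}$ occur in this chain. Hence for any $\Theta_{1}\le\Gamma_{1}$ we have
\[
0\le\Gamma_{1}-\Theta_{1}\le\left[f(x)+f(x^{\prime})-2f\left(\tfrac{y+y^{\prime}}{2}\right)\right]\int_{x}^{x^{\prime}}p ,
\]
because both elements lie between the two endpoints of the chain. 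Since $p\ge0$, (\ref{2.68}) bounds the bracket by $\frac{2x^{\prime}-y-y^{\prime}}{2}\left(f^{\prime}(x^{\prime})-f^{\prime}(x)\right)$, and this gives the first inequality.

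For the second family we use (\ref{2.56}) instead:
\[
\left[f(y)+f(y^{\prime})\right]\int_{x}^{x^{\prime}}p\le Ip_{1}\le Jp\le Mp\le Np\le\left[f(x)+f(x^{\prime})\right]\int_{x}^{x^{\prime}}p .
\]
This chain contains exactly the six elements allowed for $\Gamma_{2},\Theta_{2}$; we exclude $Ip_{2}$ precisely because it need not dominate $f(y)+f(y^{\prime})$. Any ordered pair therefore has difference at most $\left[f(x)+f(x^{\prime})-f(y)-f(y^{\prime})\right]\int_{x}^{x^{\prime}}p$. By (\ref{2.63}) this is at most $(y-x)\left(f^{\prime}(x^{\prime})-f^{\prime}(x)\right)\int_{x}^{x^{\prime}}p$, which is the second inequality.

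The one step needing care is the use of the hypothesis $\Theta\le\Gamma$. Every pair in each set is comparable through the chain, so it simply selects which element is subtracted, and nonnegativity follows at once. The argument then reduces to the elementary fact that if $A\le\Theta\le\Gamma\le B$ then $0\le\Gamma-\Theta\le B-A$. I expect no real obstacle beyond confirming that (\ref{2.56}) and (\ref{2.57}) hold uniformly in $t$, which Corollary \ref{c2} already gives.
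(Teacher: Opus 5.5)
Your proposal is correct and is essentially the paper's own argument: the paper obtains Corollary \ref{c3} directly from the chains (\ref{2.56}) and (\ref{2.57}), combined with the endpoint estimates (\ref{2.63}) and (\ref{2.68}). Your reading of the constant terms as multiplied by $\int_{x}^{x^{\prime}}p(s)\,ds$, as in Corollary \ref{c2}, is also the intended one.
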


\begin{theorem}
\label{t17}\textit{Let }$x,y,y^{\prime},x^{\prime},f,p,H_{1},G_{1},Ip_{1}%
$\textit{\ be defined as above. Then:}
\end{theorem}

\begin{enumerate}
\item \textit{The following inequality holds for all }$t\in\left[  0,1\right]
$:%
\begin{equation}
Ip_{1}\left(  t\right)  \leq2G_{1}\left(  t\right)  \int\nolimits_{x}%
^{x^{\prime}}p\left(  s\right)  ds. \label{2.69}%
\end{equation}

\item \textit{If }$f$ \textit{is differentiable on }$\Omega$ \textit{and }$p
$\textit{\ is bounded on }$\left[  x,x^{\prime}\right]  ,$\textit{\ then, for
all }$t\in\left[  0,1\right]  ,$ \textit{we have the inequality}
\begin{align}
0  &  \leq Ip_{1}\left(  t\right)  -\left[  f\left(  y\right)  +f\left(
y^{\prime}\right)  \right]  \int\nolimits_{x}^{x^{\prime}}p\left(  s\right)
ds\label{2.70}\\
&  \leq2\left(  x^{\prime}-x\right)  \left[  G_{1}\left(  t\right)
-H_{1}\left(  t\right)  \right]  \left\Vert p\right\Vert _{\infty}.\nonumber
\end{align}

\end{enumerate}

\begin{proof}
$\left(  1\right)  $ Using simple integration techniques, the following
identities hold:%
\begin{equation}
\int\nolimits_{x}^{x^{\prime}}p\left(  s\right)  ds=\int_{x}^{y}\frac
{1}{\alpha}p\left(  \frac{1}{\alpha}\left(  s-x\right)  +x\right)  ds.
\label{2.71}%
\end{equation}%
\begin{align*}
&  2G_{1}\left(  t\right)  \int\nolimits_{x}^{x^{\prime}}p\left(  s\right)
ds\\
&  =\int_{x}^{y}\frac{1}{\alpha}\left[  f\left(  tx+\left(  1-t\right)
y\right)  \right. \\
&  +\left.  f\left(  tx^{\prime}+\left(  1-t\right)  y^{\prime}\right)
\right]  p\left(  \frac{1}{\alpha}\left(  s-x\right)  +x\right)  ds\text{
\ }\left(  t\in\left[  0,1\right]  \right)  .
\end{align*}

By Lemma \ref{l2}, the following inequalities hold for all $t\in\left[
0,1\right]  $ and $s\in\left[  x,y\right]  :$%
\begin{align*}
&  \frac{1}{\alpha}\left[  f\left(  ts+\left(  1-t\right)  y\right)  +f\left(
t\left(  x+x^{\prime}-s\right)  +\left(  1-t\right)  y^{\prime}\right)
\right]  p\left(  \frac{1}{\alpha}\left(  s-x\right)  +x\right) \\
&  \leq\frac{1}{\alpha}\left[  f\left(  tx+\left(  1-t\right)  y\right)
+f\left(  tx^{\prime}+\left(  1-t\right)  y^{\prime}\right)  \right]  p\left(
\frac{1}{\alpha}\left(  s-x\right)  +x\right)
\end{align*}

Integrating the above inequalities over $s$ on $\left[  x,y\right]  $ and
using the above identities and $\left(  \ref{2.43}\right)  ,$ we derive
$\left(  \ref{2.69}\right)  .\medskip$

\noindent$\left(  2\right)  $ By integration by parts and $x+x^{\prime
}=y+y^{\prime},$ the following identity%
\begin{align*}
&  \frac{t}{\alpha}\int\nolimits_{x}^{y}\left(  y-s\right)  \left[  f^{\prime
}\left(  t\left(  x+x^{\prime}-s\right)  +\left(  1-t\right)  y^{\prime
}\right)  -f^{\prime}\left(  ts+\left(  1-t\right)  y\right)  \right]  ds\\
&  =\frac{t}{\alpha}\int\nolimits_{x}^{y}\left(  s-y\right)  f^{\prime}\left(
ts+\left(  1-t\right)  y\right)  ds\\
&  \text{ \ }+\frac{t}{\alpha}\int\nolimits_{x}^{y}\left(  y-s\right)
f^{\prime}\left(  t\left(  x+x^{\prime}-s\right)  +\left(  1-t\right)
y^{\prime}\right)  ds\\
&  =\frac{y-x}{\alpha}\left[  f\left(  tx+\left(  1-t\right)  y\right)
+f\left(  tx^{\prime}+\left(  1-t\right)  y^{\prime}\right)  \right] \\
&  \text{ \ \ \ \ }-\frac{1}{\alpha}%
{\displaystyle\int\nolimits_{x}^{y}}
\left[  f\left(  ts+\left(  1-t\right)  y\right)  +f\left(  t\left(
x+x^{\prime}-s\right)  +\left(  1-t\right)  y^{\prime}\right)  \right]  ds\\
&  =\frac{2\left(  y-x\right)  }{\alpha}\left[  G_{1}\left(  t\right)
-H_{1}\left(  t\right)  \right]  ,
\end{align*}%
\[
f^{\prime}\left(  t\left(  x+x^{\prime}-s\right)  +\left(  1-t\right)
y^{\prime}\right)  -f^{\prime}\left(  ts+\left(  1-t\right)  y\right)  \geq0
\]
hold for all $t\in\left[  0,1\right]  .$ Now, using the convexity of $f,$ the
inequalities
\[
\left[  f\left(  ts+\left(  1-t\right)  y\right)  -f\left(  y\right)  \right]
\leq t\left(  s-y\right)  f^{\prime}\left(  ts+\left(  1-t\right)  y\right)
\]
and%
\[
\left[  f\left(  t\left(  x+x^{\prime}-s\right)  +\left(  1-t\right)
y^{\prime}\right)  -f\left(  y^{\prime}\right)  \right]  \leq t\left(
y-s\right)  f^{\prime}\left(  t\left(  x+x^{\prime}-s\right)  +\left(
1-t\right)  y^{\prime}\right)
\]
hold for all $t\in\left[  0,1\right]  $ and $s\in\left[  x,y\right]  $. Using
the convexity of $f,$ $\left(  \ref{2.43}\right)  ,$ $\left(  \ref{2.71}%
\right)  ,$ the above inequalities and the above identity, we have%
\begin{align*}
&  Ip_{1}\left(  t\right)  -\left[  f\left(  y\right)  +f\left(  y^{\prime
}\right)  \right]  \int\nolimits_{x}^{x^{\prime}}p\left(  s\right)  ds\\
&  =\int_{x}^{y}\frac{1}{\alpha}\left[  f\left(  ts+\left(  1-t\right)
y\right)  -f\left(  y\right)  \right. \\
&  +\left.  f\left(  t\left(  x+x^{\prime}-s\right)  +\left(  1-t\right)
y^{\prime}\right)  -f\left(  y^{\prime}\right)  \right]  p\left(  \frac
{1}{\alpha}\left(  s-x\right)  +x\right)  ds\\
&  \leq\int_{x}^{y}\frac{1}{\alpha}\left[  t\left(  s-y\right)  f^{\prime
}\left(  ts+\left(  1-t\right)  y\right)  \right. \\
&  +\left.  t\left(  y-s\right)  f^{\prime}\left(  t\left(  x+x^{\prime
}-s\right)  +\left(  1-t\right)  y^{\prime}\right)  p\left(  \frac{1}{\alpha
}\left(  s-x\right)  +x\right)  \right]  ds\\
&  =\frac{t}{\alpha}\int_{x}^{y}\left(  y-s\right)  \left[  f^{\prime}\left(
t\left(  x+x^{\prime}-s\right)  +\left(  1-t\right)  y^{\prime}\right)
\right. \\
&  \text{ \ \ \ \ \ \ \ \ \ \ \ \ \ \ \ \ \ }-\left.  f^{\prime}\left(
ts+\left(  1-t\right)  y\right)  \right]  p\left(  \frac{1}{\alpha}\left(
s-x\right)  +x\right)  ds\\
&  \leq\frac{t}{\alpha}\int_{x}^{y}\left(  y-s\right)  \left[  f^{\prime
}\left(  t\left(  x+x^{\prime}-s\right)  +\left(  1-t\right)  y^{\prime
}\right)  \right. \\
&  \text{\ \ \ \ \ \ \ \ \ \ \ \ \ \ \ \ }-\left.  f^{\prime}\left(
ts+\left(  1-t\right)  y\right)  \right]  ds\left\Vert p\right\Vert _{\infty
}\\
&  =\frac{2\left(  y-x\right)  }{\alpha}\left[  G_{1}\left(  t\right)
-H_{1}\left(  t\right)  \right]  \left\Vert p\right\Vert _{\infty}\\
&  =2\left(  x^{\prime}-x\right)  \left[  G_{1}\left(  t\right)  -H_{1}\left(
t\right)  \right]  \left\Vert p\right\Vert _{\infty}.
\end{align*}
\ 

Using the above inequality and $\left(  \ref{2.41}\right)  $, we derive
$\left(  \ref{2.70}\right)  .$

This completes the proof.
\end{proof}

\begin{remark}
\label{r23}\textit{Using Remark \ref{r11}} we have the following results:
\end{remark}

\begin{enumerate}
\item \textit{Theorem \ref{t17}} \textit{reduces to the inequalities }
$\left(  \ref{1.3}\right)  $\textit{\ and } $\left(  \ref{1.5}\right)
$\textit{\ as }$x=a,$\textit{\ }$y=y^{\prime}=\frac{a+b}{2},$\textit{\ }%
$x^{\prime}=b$ \textit{and }$p\left(  s\right)  \equiv\frac{1}{2\left(
b-a\right)  }$ $\left(  s\in\left[  a,b\right]  \right)  .$

\item \textit{Theorem \ref{t17}} \textit{reduces to Theorem \ref{A33} as
}$x=a,$ $y=y^{\prime}=\frac{a+b}{2},x^{\prime}=b$ \textit{and }$p\left(
s\right)  =\frac{g\left(  s\right)  }{2}$\ $\left(  s\in\left[  a,b\right]
\right)  .$
\end{enumerate}

\begin{theorem}
\label{t18}\textit{Let }$m,$ $m^{\prime}$\textit{\ be defined as in Theorem
\ref{t9} and let }$x,y,y^{\prime},x^{\prime},f,p,H_{2},G_{2},Ip_{2}%
$\textit{\ be defined as above. Then:}
\end{theorem}

\begin{enumerate}
\item \textit{The following inequality holds for all }$t\in\left[  0,m\right]
\cup\left[  m^{\prime},1\right]  $:%
\begin{equation}
Ip_{2}\left(  t\right)  \leq\left(  \geq\right)  2G_{2}\left(  t\right)
\int\nolimits_{x}^{x^{\prime}}p\left(  s\right)  ds\text{ \ \ \textit{as }%
}t\in\left[  m^{\prime},1\right]  \text{ \ }\left(  t\in\left[  0,m\right]
\right)  . \label{2.72}%
\end{equation}

\item \textit{If }$f$ \textit{is differentiable on }$\Omega$ \textit{and }$p
$\textit{\ is bounded on }$\left[  x,x^{\prime}\right]  ,$\textit{\ then, for
all }$t\in\left[  m^{\prime},1\right]  ,$ \textit{we have the inequality}
\begin{align}
0  &  \leq Ip_{2}\left(  t\right)  -2f\left(  \frac{y+y^{\prime}}{2}\right)
\int\nolimits_{x}^{x^{\prime}}p\left(  s\right)  ds\label{2.73}\\
&  \leq\frac{2}{\alpha}\left[  \left(  y^{\prime}-x\right)  G_{2}\left(
t\right)  -2\left(  y^{\prime}-y\right)  G_{3}\left(  t\right)  -\left(
y-x\right)  H_{2}\left(  t\right)  \right]  \left\Vert p\right\Vert _{\infty
},\nonumber
\end{align}
\textit{where }%
\[
G_{3}\left(  t\right)  =\frac{1}{2}\left[  f\left(  ty+\left(  1-t\right)
y^{\prime}\right)  +f\left(  ty^{\prime}+\left(  1-t\right)  y\right)
\right]  \text{ \ \ }\left(  t\in\left[  0,1\right]  \right)  .
\]

\end{enumerate}

\begin{proof}
$\left(  1\right)  $ Using the identity $\left(  \ref{2.71}\right)  ,$ the
following identity holds on $\left[  0,m\right]  \cup\left[  m^{\prime
},1\right]  :$%
\begin{align*}
2G_{2}\left(  t\right)  \int\nolimits_{x}^{x^{\prime}}p\left(  s\right)  ds
&  =\int_{x}^{y}\frac{1}{\alpha}\left[  f\left(  tx+\left(  1-t\right)
y^{\prime}\right)  \right. \\
&  +\left.  f\left(  tx^{\prime}+\left(  1-t\right)  y\right)  \right]
p\left(  \frac{1}{\alpha}\left(  s-x\right)  +x\right)  ds.
\end{align*}

As a simple calculation shows us that:%
\[
t\left(  x^{\prime}-y-x+y^{\prime}\right)  =2t\left(  y^{\prime}-x\right)
\leq2m\left(  y^{\prime}-x\right)  =y^{\prime}-y\text{ \ as }t\in\left[
0,m\right]  .
\]%
\begin{align*}
&  t\left(  x+x^{\prime}-2s+y^{\prime}-y\right) \\
&  =2t\left(  y^{\prime}-s\right)  \geq2t\left(  y^{\prime}-y\right)
\geq2m^{\prime}\left(  y^{\prime}-y\right)  =y^{\prime}-y\text{ \ as }%
t\in\left[  m^{\prime},1\right]  .
\end{align*}
for all $s\in\left[  x,y\right]  .$ Then we have the following inequalities
for all $s\in\left[  x,y\right]  $:%
\begin{align*}
t\left(  x+x^{\prime}-s\right)  +\left(  1-t\right)  y  &  \leq tx^{\prime
}+\left(  1-t\right)  y\\
&  \leq tx+\left(  1-t\right)  y^{\prime}\\
&  \leq ts+\left(  1-t\right)  y^{\prime}\text{ \ as }t\in\left[  0,m\right]
\end{align*}
and%
\begin{align}
tx+\left(  1-t\right)  y^{\prime}  &  \leq ts+\left(  1-t\right)  y^{\prime
}\label{2.74}\\
&  \leq t\left(  x+x^{\prime}-s\right)  +\left(  1-t\right)  y\nonumber\\
&  \leq tx^{\prime}+\left(  1-t\right)  y\text{ \ as }t\in\left[  m^{\prime
},1\right]  .\nonumber
\end{align}

Now, using Lemma \ref{l2} under the above inequalities, the following
inequalities hold for all $t\in\left[  0,m\right]  \cup\left[  m^{\prime
},1\right]  $ and $s\in\left[  x,y\right]  :$
\begin{align*}
&  \frac{1}{\alpha}\left[  f\left(  ts+\left(  1-t\right)  y^{\prime}\right)
+f\left(  t\left(  x+x^{\prime}-s\right)  +\left(  1-t\right)  y\right)
\right]  p\left(  \frac{1}{\alpha}\left(  s-x\right)  +x\right) \\
&  \leq\left(  \geq\right)  \frac{1}{\alpha}\left[  f\left(  tx+\left(
1-t\right)  y^{\prime}\right)  +f\left(  tx^{\prime}+\left(  1-t\right)
y\right)  \right]  p\left(  \frac{1}{\alpha}\left(  s-x\right)  +x\right)
\end{align*}
\ as $t\in\left[  m^{\prime},1\right]  $ \ $\left(  \left[  0,m\right]
\right)  .$ Integrating the above inequalities over $s$ on $\left[
x,y\right]  $ and using the above identity and $\left(  \ref{2.46}\right)  ,$
we derive $\left(  \ref{2.72}\right)  .\medskip$

\noindent$\left(  2\right)  $ By integration by parts and $x+x^{\prime
}=y+y^{\prime},$ the following identity%
\begin{align*}
&  \frac{t}{\alpha}\int\nolimits_{x}^{y}\left(  y^{\prime}-s\right)  \left[
f^{\prime}\left(  t\left(  x+x^{\prime}-s\right)  +\left(  1-t\right)
y\right)  -f^{\prime}\left(  ts+\left(  1-t\right)  y^{\prime}\right)
\right]  ds\\
&  =\frac{t}{\alpha}\int\nolimits_{x}^{y}\left(  s-y^{\prime}\right)
f^{\prime}\left(  ts+\left(  1-t\right)  y^{\prime}\right)  ds\\
&  \text{ \ }+\frac{t}{\alpha}\int\nolimits_{x}^{y}\left(  y^{\prime
}-s\right)  f^{\prime}\left(  t\left(  x+x^{\prime}-s\right)  +\left(
1-t\right)  y\right)  ds\\
&  =\frac{y^{\prime}-x}{\alpha}\left[  f\left(  tx+\left(  1-t\right)
y^{\prime}\right)  +f\left(  tx^{\prime}+\left(  1-t\right)  y\right)  \right]
\\
&  \text{ \ \ \ \ \ }-\frac{y^{\prime}-y}{\alpha}\left[  f\left(  ty+\left(
1-t\right)  y^{\prime}\right)  +f\left(  ty^{\prime}+\left(  1-t\right)
y\right)  \right] \\
&  \text{ \ \ \ \ \ \ }-\frac{1}{\alpha}%
{\displaystyle\int\nolimits_{x}^{y}}
\left[  f\left(  ts+\left(  1-t\right)  y^{\prime}\right)  +f\left(  t\left(
x+x^{\prime}-s\right)  +\left(  1-t\right)  y\right)  \right]  ds\\
&  =\frac{2}{\alpha}\left[  \left(  y^{\prime}-x\right)  G_{2}\left(
t\right)  -\left(  y^{\prime}-y\right)  G_{3}\left(  t\right)  -\left(
y-x\right)  H_{2}\left(  t\right)  \right]
\end{align*}
holds for all $t\in\left[  m^{\prime},1\right]  .$ Now, using the convexity of
$f$ and $\left(  \ref{2.74}\right)  ,$ the inequalities%
\[
f^{\prime}\left(  t\left(  x+x^{\prime}-s\right)  +\left(  1-t\right)
y\right)  -f^{\prime}\left(  ts+\left(  1-t\right)  y^{\prime}\right)  \geq0,
\]%
\begin{align*}
&  f\left(  ts+\left(  1-t\right)  y^{\prime}\right)  -f\left(  \frac
{y+y^{\prime}}{2}\right) \\
&  \leq\left[  t\left(  s-y^{\prime}\right)  +\frac{y^{\prime}-y}{2}\right]
f^{\prime}\left(  ts+\left(  1-t\right)  y^{\prime}\right)
\end{align*}
and%
\begin{align*}
&  f\left(  t\left(  x+x^{\prime}-s\right)  +\left(  1-t\right)  y\right)
-f\left(  \frac{y+y^{\prime}}{2}\right) \\
&  \leq\left[  t\left(  y^{\prime}-s\right)  -\frac{y^{\prime}-y}{2}\right]
f^{\prime}\left(  t\left(  x+x^{\prime}-s\right)  +\left(  1-t\right)
y^{\prime}\right)
\end{align*}
hold for all $t\in\left[  0,1\right]  $ and $s\in\left[  x,y\right]  $. Using
$\left(  \ref{2.46}\right)  ,$ $\left(  \ref{2.71}\right)  ,$ the above
inequalities and the above identity, we have%
\begin{align*}
&  Ip_{2}\left(  t\right)  -2f\left(  \frac{y+y^{\prime}}{2}\right)
\int\nolimits_{x}^{x^{\prime}}p\left(  s\right)  ds\\
&  =\int_{x}^{y}\frac{1}{\alpha}\left[  \left(  f\left(  ts+\left(
1-t\right)  y^{\prime}\right)  -f\left(  \frac{y+y^{\prime}}{2}\right)
\right)  \right. \\
&  +\left.  f\left(  t\left(  x+x^{\prime}-s\right)  +\left(  1-t\right)
y\right)  -f\left(  \frac{y+y^{\prime}}{2}\right)  \right]  p\left(  \frac
{1}{\alpha}\left(  s-x\right)  +x\right)  ds\\
&  \leq\frac{t}{\alpha}\int_{x}^{y}\left(  y^{\prime}-s\right)  \left[
f^{\prime}\left(  t\left(  x+x^{\prime}-s\right)  +\left(  1-t\right)
y\right)  \right. \\
&  \text{ \ \ \ \ \ \ \ \ \ \ \ \ \ \ \ \ \ }-\left.  f^{\prime}\left(
ts+\left(  1-t\right)  y\right)  \right]  p\left(  \frac{1}{\alpha}\left(
s-x\right)  +x\right)  ds\\
&  -\frac{y^{\prime}-y}{2\alpha}\int_{x}^{y}\left[  f^{\prime}\left(  t\left(
x+x^{\prime}-s\right)  +\left(  1-t\right)  y\right)  \right. \\
&  \text{ \ \ \ \ \ \ \ \ \ \ \ }-\left.  f^{\prime}\left(  ts+\left(
1-t\right)  y^{\prime}\right)  \right]  p\left(  \frac{1}{\alpha}\left(
s-x\right)  +x\right)  ds\\
&  \leq\frac{t}{\alpha}\int_{x}^{y}\left(  y^{\prime}-s\right)  \left[
f^{\prime}\left(  t\left(  x+x^{\prime}-s\right)  +\left(  1-t\right)
y^{\prime}\right)  \right. \\
&  \text{\ \ \ \ \ \ \ \ \ \ \ \ \ \ \ \ }-\left.  f^{\prime}\left(
ts+\left(  1-t\right)  y\right)  \right]  ds\left\Vert p\right\Vert _{\infty
}\\
&  =\frac{2}{\alpha}\left[  \left(  y^{\prime}-x\right)  G_{2}\left(
t\right)  -2\left(  y^{\prime}-y\right)  G_{3}\left(  t\right)  -\left(
y-x\right)  H_{2}\left(  t\right)  \right]  \left\Vert p\right\Vert _{\infty}.
\end{align*}
\ 

Using the above inequality and $\left(  \ref{2.44}\right)  $, we derive
$\left(  \ref{2.73}\right)  .$

This completes the proof.
\end{proof}

The following corollary is a natural consequence of \textit{Theorem \ref{t18}
as }$p\left(  s\right)  \equiv\frac{\alpha}{2\left(  y-x\right)  }$\ $\left(
s\in\Omega\right)  .$

\begin{corollary}
\label{c4}\textit{Let }$x,y,y^{\prime},x^{\prime},\Omega,m,m^{\prime}%
,f,H_{2},G_{2},G_{3}$\textit{\ be defined as in Theorem \ref{t9}. Then:}
\end{corollary}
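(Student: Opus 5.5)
The plan is to specialize Theorem \ref{t18} to the constant weight $p(s)\equiv\frac{\alpha}{2(y-x)}$ on $[x,x^{\prime}]$, as the sentence before the corollary indicates. This $p$ is integrable, nonnegative, bounded and symmetric about $\frac{x+x^{\prime}}{2}$, so all hypotheses of Theorem \ref{t18} hold. The work is then to evaluate each weighted quantity in (\ref{2.72}) and (\ref{2.73}) for this $p$ and simplify. The resulting statements should be: $H_{2}(t)\leq G_{2}(t)$ for $t\in[m^{\prime},1]$ and $H_{2}(t)\geq G_{2}(t)$ for $t\in[0,m]$; and, when $f$ is differentiable on $\Omega$, for $t\in[m^{\prime},1]$,
\[
0\leq H_{2}(t)-f\left(\frac{y+y^{\prime}}{2}\right)\leq\frac{1}{y-x}\left[(y^{\prime}-x)G_{2}(t)-2(y^{\prime}-y)G_{3}(t)-(y-x)H_{2}(t)\right].
\]

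The first step is to record the three substitutions.
\begin{enumerate}
\item Since $y=(1-\alpha)x+\alpha x^{\prime}$, we have $y-x=\alpha(x^{\prime}-x)$. Hence $\int_{x}^{x^{\prime}}p(s)\,ds=\frac{\alpha}{2(y-x)}\cdot\frac{y-x}{\alpha}=\frac{1}{2}$.
\item $\Vert p\Vert_{\infty}=\frac{\alpha}{2(y-x)}$.
\item $Ip_{2}(t)=H_{2}(t)$, which is item (1) of Remark \ref{r11}. I would check it directly from the last form of (\ref{2.46}): the constant factor $\frac{1}{\alpha}\cdot\frac{\alpha}{2(y-x)}=\frac{1}{2(y-x)}$ multiplies $\int_{x}^{y}\left[f(ts+(1-t)y^{\prime})+f(t(x+x^{\prime}-s)+(1-t)y)\right]ds$. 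Since $x+x^{\prime}=y+y^{\prime}$, this is exactly the definition of $H_{2}(t)$.
\end{enumerate}

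Inserting these into (\ref{2.72}) gives $H_{2}(t)\leq(\geq)\,2G_{2}(t)\cdot\frac{1}{2}=G_{2}(t)$ on $[m^{\prime},1]$ (respectively $[0,m]$). Inserting them into (\ref{2.73}), the left-hand expression becomes $H_{2}(t)-f\left(\frac{y+y^{\prime}}{2}\right)$, and the right-hand factor is $\frac{2}{\alpha}\cdot\frac{\alpha}{2(y-x)}=\frac{1}{y-x}$. This yields the displayed bound with $G_{3}$ as defined in Theorem \ref{t18}.

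No real obstacle is expected; the step needing care is the identification $Ip_{2}=H_{2}$ together with the normalization $\int_{x}^{x^{\prime}}p=\frac{1}{2}$. It relies on the relation $y-x=\alpha(x^{\prime}-x)$ and on $x+x^{\prime}=y+y^{\prime}$ in the change of variables $s\mapsto(1-\alpha)x+\alpha s$ behind (\ref{2.46}). As a consistency check, I would observe that the lower bound $f\left(\frac{y+y^{\prime}}{2}\right)\leq H_{2}(t)$ agrees with Theorem \ref{A13}. I would also check that in the classical case $x=a$, $y=y^{\prime}=\frac{a+b}{2}$, $x^{\prime}=b$ (so $m=0$, $m^{\prime}=0$, and the $G_{3}$ term vanishes) the corollary reduces to (\ref{1.3}) and (\ref{1.5}).
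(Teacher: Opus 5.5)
Your overall route is the paper's own: specialize Theorem \ref{t18} to $p\equiv\frac{\alpha}{2(y-x)}$ on $[x,x^{\prime}]$. Your substitutions $\int_{x}^{x^{\prime}}p=\frac{1}{2}$, $\Vert p\Vert_{\infty}=\frac{\alpha}{2(y-x)}$ and $Ip_{2}=H_{2}$ are correct, and so is your derivation of part (1).

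The gap is in part (2). You insert into (\ref{2.73}) as printed, so you get the upper bound $\frac{1}{y-x}[(y^{\prime}-x)G_{2}(t)-2(y^{\prime}-y)G_{3}(t)-(y-x)H_{2}(t)]$. The corollary instead asserts $\frac{y^{\prime}-x}{y-x}G_{2}(t)-\frac{y^{\prime}-y}{y-x}G_{3}(t)-H_{2}(t)$. Your version is not merely different but false. Take $f\equiv 1$, $y<y^{\prime}$ and $t\in[m^{\prime},1]$: then $H_{2}=G_{2}=G_{3}=f(\frac{y+y^{\prime}}{2})=1$, so the middle term is $0$, while your right-hand side equals $\frac{y-y^{\prime}}{y-x}<0$. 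Your sanity check at $y=y^{\prime}$ cannot detect this, because the $G_{3}$ term vanishes there. An affine $f$ with $y<y^{\prime}$, for which both sides must be $0$, would have exposed it.

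The factor $2$ in front of $(y^{\prime}-y)G_{3}$ in (\ref{2.73}) is a misprint in the last line of the proof of Theorem \ref{t18}. Put $U=ts+(1-t)y^{\prime}$, $V=t(x+x^{\prime}-s)+(1-t)y$ and $c=\frac{y+y^{\prime}}{2}$. Integrate by parts: the boundary terms give $2(y^{\prime}-x)G_{2}(t)-2(y^{\prime}-y)G_{3}(t)$, and the remaining integral is $2(y-x)H_{2}(t)$. This yields the identity actually stated in that proof,
\begin{align*}
&\frac{t}{\alpha}\int_{x}^{y}\left(y^{\prime}-s\right)\left[f^{\prime}\left(V\right)-f^{\prime}\left(U\right)\right]ds\\
&=\frac{2}{\alpha}\left[\left(y^{\prime}-x\right)G_{2}\left(t\right)-\left(y^{\prime}-y\right)G_{3}\left(t\right)-\left(y-x\right)H_{2}\left(t\right)\right],
\end{align*}
which has coefficient $1$ on $(y^{\prime}-y)G_{3}$.

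So you should not cite (\ref{2.73}) as a black box; redo its final step instead.
\begin{itemize}
\item Convexity gives $f(U)-f(c)+f(V)-f(c)\leq t(y^{\prime}-s)[f^{\prime}(V)-f^{\prime}(U)]-\frac{y^{\prime}-y}{2}[f^{\prime}(V)-f^{\prime}(U)]$.
\item The second term is $\leq 0$ on $[m^{\prime},1]$, because $U\leq V$ by (\ref{2.74}) and $f^{\prime}$ is nondecreasing.
\item Integrating the first term against the constant weight $\frac{1}{2(y-x)}$ and using the identity above gives the bound $\frac{2}{\alpha}[(y^{\prime}-x)G_{2}-(y^{\prime}-y)G_{3}-(y-x)H_{2}]\Vert p\Vert_{\infty}$.
\end{itemize}
With this corrected bound, your substitution produces exactly the right-hand side stated in the corollary.
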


\begin{enumerate}
\item \textit{The following inequality holds for all }$t\in\left[  0,m\right]
\cup\left[  m^{\prime},1\right]  :$%
\[
H_{2}\left(  t\right)  \leq\left(  \geq\right)  G_{2}\left(  t\right)
\text{\ \ \textit{as }}t\in\left[  m^{\prime},1\right]  \text{ \ }\left(
t\in\left[  0,m\right]  \right)  .
\]

\item \textit{If }$f$ \textit{is differentiable on }$\Omega,$\textit{\ then,
for all }$t\in\left[  m^{\prime},1\right]  ,$ \textit{we have the inequality}
\begin{align*}
0  &  \leq H_{2}\left(  t\right)  -f\left(  \frac{y+y^{\prime}}{2}\right) \\
&  \leq\frac{y^{\prime}-x}{y-x}G_{2}\left(  t\right)  -\frac{y^{\prime}%
-y}{y-x}G_{3}\left(  t\right)  -H_{2}\left(  t\right)  .
\end{align*}

\end{enumerate}

\begin{remark}
\label{r24}\textit{Using Remark \ref{r11}} we have the following results:
\end{remark}

\begin{enumerate}
\item \textit{Theorem \ref{t18}} \textit{reduces to the inequalities }
$\left(  \ref{1.3}\right)  $\textit{\ and } $\left(  \ref{1.5}\right)
$\textit{\ as }$x=a,$\textit{\ }$y=y^{\prime}=\frac{a+b}{2},$\textit{\ }%
$x^{\prime}=b$ \textit{and }$p\left(  s\right)  \equiv\frac{1}{2\left(
b-a\right)  }$ $\left(  s\in\left[  a,b\right]  \right)  .$

\item \textit{Theorem \ref{t18}} \textit{reduces to Theorem \ref{A33} as
}$x=a,$ $y=y^{\prime}=\frac{a+b}{2},x^{\prime}=b$ \textit{and }$p\left(
s\right)  =\frac{g\left(  s\right)  }{2}$\ $\left(  s\in\left[  a,b\right]
\right)  .$
\end{enumerate}

\begin{theorem}
\label{t19}\textit{Let }$x,y,y^{\prime},x^{\prime},f,p,G_{1},G_{2}%
,Ip_{1},Ip_{2},Sp$\textit{\ be defined as above. Then} \textit{we have the
following results:}
\end{theorem}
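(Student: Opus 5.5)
The statement of Theorem \ref{t19} breaks off before its conclusions, so I will assume they are the natural analogues of Theorem \ref{A34} and of Theorem \ref{t7}(1)--(3) for $Sp$. That is, I expect the following: $Sp$ is convex on $\left[0,1\right]$; a chain
\[
\left[ G_{1}\left( t\right) +G_{2}\left( t\right) \right] \int_{x}^{x^{\prime}}p\left( s\right) ds\leq Sp\left( t\right) \leq \left( 1-t\right) \,Ip_{1}\left( 1\right) +t\left[ f\left( x\right) +f\left( x^{\prime}\right) \right] \int_{x}^{x^{\prime}}p\left( s\right) ds\leq \left[ f\left( x\right) +f\left( x^{\prime}\right) \right] \int_{x}^{x^{\prime}}p\left( s\right) ds;
\]
lower bounds of $Sp\left( t\right)$ by $\frac{1}{2}\left[ Ip_{1}\left( 1-t\right) +Ip_{2}\left( 1-t\right) \right]$ and by the symmetrised average of $Ip_{1},Ip_{2}$ at $t$ and $1-t$; and $\sup_{t}Sp\left( t\right) =Sp\left( 1\right)$. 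If the actual conclusions differ, the method below should still adapt.

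The first step, as in the proofs of Theorems \ref{t8}--\ref{t14}, is to rewrite every function in a common normal form on $\left[ x,y\right]$. The substitutions $u=\left( 1-\alpha\right) x+\alpha s$ and $u=\left( 1-\alpha\right) x^{\prime }+\alpha s$ map $\left[ x,x^{\prime}\right]$ onto $\left[ x,y\right]$ and $\left[ y^{\prime},x^{\prime}\right]$. Using the symmetry $p\left( s\right) =p\left( x+x^{\prime}-s\right)$, I will write
\begin{align*}
Sp\left( t\right) &=\int_{x}^{y}\frac{1}{2\alpha }\left[ f\left( tx+\left( 1-t\right) s\right) +f\left( tx+\left( 1-t\right) \left( x+x^{\prime}-s\right) \right) \right. \\
&\quad \left. +f\left( tx^{\prime}+\left( 1-t\right) s\right) +f\left( tx^{\prime}+\left( 1-t\right) \left( x+x^{\prime}-s\right) \right) \right] q\left( s\right) ds,
\end{align*}
where $q\left( s\right) =p\left( \frac{1}{\alpha }\left( s-x\right) +x\right)$. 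The same reduction applies to $Ip_{1}$ and $Ip_{2}$, as in (\ref{2.43}) and (\ref{2.46}). For $2G_{i}\left( t\right) \int_{x}^{x^{\prime}}p$ I will use (\ref{2.71}).

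Convexity of $Sp$ is immediate from convexity of $f$ and $p\geq 0$.

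For the first inequality of the chain, I will apply Lemma \ref{l2} with weight $q\left( s\right)$ pointwise in $s\in \left[ x,y\right]$, twice. Here $f\left( tx+\left( 1-t\right) y\right) +f\left( tx+\left( 1-t\right) y^{\prime}\right)$ is dominated by the two terms of $Sp$ containing $tx$, because both pairs have the same midpoint $tx+\left( 1-t\right) \frac{x+x^{\prime}}{2}$ and the inner pair lies inside the outer one. The $tx^{\prime}$ terms are handled the same way. Integrating over $\left[ x,y\right]$ gives the claim.

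For the upper bounds, convexity of $f$ in $t$ gives
\[
f\left( tx+\left( 1-t\right) u\right) \leq tf\left( x\right) +\left( 1-t\right) f\left( u\right) .
\]
The mixed terms $f\left( tx+\left( 1-t\right) \left( x+x^{\prime}-s\right) \right) +f\left( tx^{\prime}+\left( 1-t\right) s\right)$ are first compared, again by Lemma \ref{l2}, with the unmixed pair. Both pairs have midpoint $\frac{x+x^{\prime}}{2}$, and the unmixed pair is the outer one, exactly as in the proof of (\ref{2.33}). Undoing the substitution identifies $\int \left[ f\left( s\right) +f\left( x+x^{\prime}-s\right) \right] q$ with $Ip_{1}\left( 1\right)$ via (\ref{2.41}). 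The last bound then comes from (\ref{2.42}). Since the $t$-endpoint of the chain equals $\left[ f\left( x\right) +f\left( x^{\prime}\right) \right] \int p=Sp\left( 1\right)$, this also gives the supremum.

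The comparison with $Ip_{i}\left( 1-t\right)$ should follow the proof of (\ref{2.39}) in Theorem \ref{t7}. Fixing $s\in \left[ x,y\right]$, I will pair $f\left( \left( 1-t\right) s+ty\right)$ (from $Ip_{1}\left( 1-t\right)$) with $f\left( \left( 1-t\right) s+ty^{\prime}\right)$ (from $Ip_{2}\left( 1-t\right)$). Both have midpoint $\left( 1-t\right) s+t\frac{x+x^{\prime}}{2}$, and they are dominated by the pair $f\left( tx+\left( 1-t\right) s\right) +f\left( tx^{\prime}+\left( 1-t\right) s\right)$, which has the same midpoint. The reflected terms at $x+x^{\prime}-s$ are treated likewise. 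Averaging this with the inequality at $t$, using the symmetric form of $Sp$, gives the four-term version.

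I expect the main obstacle to be the bookkeeping rather than any conceptual point. Each application of Lemma \ref{l2} requires checking that the pairs share a midpoint, which uses $x+x^{\prime}=y+y^{\prime}$, and that they are nested. In addition, the factors $\frac{1}{\alpha }$ and $\frac{1}{2}$ in the normal forms must match exactly the constants in the stated bounds, and I will check these most carefully.
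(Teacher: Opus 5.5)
Your treatment of convexity, of the chain (\ref{2.75}), of the supremum and of (\ref{2.76}) is correct and essentially follows the paper. The paper uses the same normal form (\ref{2.79}) on $[x,y]$, Lemma \ref{l2} for the lower bound by $G_1+G_2$, and convexity plus (\ref{2.52}) for the upper chain. Your preliminary Lemma \ref{l2} step on the mixed terms is harmless but unnecessary, since $f(tx+(1-t)u)\le tf(x)+(1-t)f(u)$ and its $x'$-analogue applied to all four terms already give the middle bound. For (\ref{2.76}) the paper uses weighted convexity with $(1-t)s+ty=(1-\alpha)[(1-t)s+tx]+\alpha[(1-t)s+tx']$ and the analogous identity for $y'$; after summing, this is exactly your pairing.

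The genuine gap is the four-term bound (\ref{2.77}), which the theorem asserts only for $t\in[m',1]$; your list of conclusions drops that restriction. Averaging with ``the inequality at $t$'' presupposes $\frac{1}{2}[Ip_1(t)+Ip_2(t)]\le Sp(t)$, which you never prove.

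\begin{itemize}
\item It does not follow from (\ref{2.76}) by symmetry, because $Sp$ is not symmetric about $\frac{1}{2}$: $Sp(0)=Ip_1(1)$ and $Sp(1)=[f(x)+f(x')]\int_x^{x'}p$ differ in general. Replacing $t$ by $1-t$ in (\ref{2.76}) only bounds $\frac{1}{2}[Ip_1(t)+Ip_2(t)]$ by $Sp(1-t)$.
\item Your pairing does not transfer either. The pair $f(ts+(1-t)y)+f(ts+(1-t)y')$ has argument sum $2ts+(1-t)(x+x')$, and in general no pair of terms of $Sp(t)$ has that sum.
\end{itemize}

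The ingredient you are missing is the paper's detour through $G_1,G_2$. By (\ref{2.69}), $Ip_1(t)\le 2G_1(t)\int_x^{x'}p$ for all $t$. By (\ref{2.72}), $Ip_2(t)\le 2G_2(t)\int_x^{x'}p$, but this holds only on $[m',1]$ and reverses on $[0,m]$. Together with the first inequality of (\ref{2.75}), these give $\frac{1}{2}[Ip_1(t)+Ip_2(t)]\le[G_1(t)+G_2(t)]\int_x^{x'}p\le Sp(t)$ on $[m',1]$. Averaging with (\ref{2.76}) then yields (\ref{2.77}) there, which is where the restriction comes from.

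The bound does in fact hold for every $t\in[0,1]$, but proving that needs a four-point majorization argument, not a single pairing. Put $c=\frac{x+x'}{2}$, $x=c-d$, $y=c-e$, $s=c-\sigma$, with $0\le e\le\sigma\le d$.
\begin{itemize}
\item In normal form, the arguments of $Ip_1(t),Ip_2(t)$ are $c\pm(t\sigma+(1-t)e)$ and $c\pm(t\sigma-(1-t)e)$.
\item The arguments of $Sp(t)$ are $c\pm(td+(1-t)\sigma)$ and $c\pm(td-(1-t)\sigma)$.
\item The latter majorize the former, because $td+(1-t)\sigma\ge t\sigma+(1-t)e$ and $\max(td,(1-t)\sigma)\ge\max(t\sigma,(1-t)e)$.
\end{itemize}
Karamata's inequality then applies pointwise in $s$.
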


\begin{enumerate}
\item $Sp$\textit{\ is convex on }$\left[  0,1\right]  .$

\item \textit{The following inequalities hold for all }$t\in\left[
0,1\right]  :$%
\begin{align}
&  \left[  G_{1}\left(  t\right)  +G_{2}\left(  t\right)  \right]
\int\nolimits_{x}^{x^{\prime}}p\left(  s\right)  ds\label{2.75}\\
&  \leq Sp\left(  t\right) \nonumber\\
&  \leq\left(  1-t\right)  \int\nolimits_{x}^{x^{\prime}}\left[  f\left(
\left(  1-\alpha\right)  x+\alpha s\right)  +f\left(  \left(  1-\alpha\right)
x^{\prime}+\alpha s\right)  \right]  p\left(  s\right)  ds\nonumber\\
&  \qquad\qquad+t\left[  f\left(  x\right)  +f\left(  x^{\prime}\right)
\right]  \int\nolimits_{x}^{x^{\prime}}p\left(  s\right)  ds\nonumber\\
&  \leq\left[  f\left(  x\right)  +f\left(  x^{\prime}\right)  \right]
\int\nolimits_{x}^{x^{\prime}}p\left(  s\right)  ds.\nonumber
\end{align}%
\begin{equation}
\frac{Ip_{1}\left(  1-t\right)  +Ip_{2}\left(  1-t\right)  }{2}\leq Sp\left(
t\right)  . \label{2.76}%
\end{equation}

\item \textit{The following inequality and the identity hold:}%
\begin{equation}
\frac{Ip_{1}\left(  t\right)  +Ip_{2}\left(  t\right)  +Ip_{1}\left(
1-t\right)  +Ip_{2}\left(  1-t\right)  }{4}\leq Sp\left(  t\right)  \text{
\ \ }\left(  t\in\left[  m^{\prime},1\right]  \right)  \label{2.77}%
\end{equation}
\textit{where }$m^{\prime}$\textit{\ be defined as in Theorem \ref{t9}.}%
\begin{equation}
\sup\limits_{t\in\left[  0,1\right]  }Sp\left(  t\right)  =\left[  f\left(
x\right)  +f\left(  x^{\prime}\right)  \right]  \int\nolimits_{x}^{x^{\prime}%
}p\left(  s\right)  ds. \label{2.78}%
\end{equation}

\end{enumerate}

\begin{proof}
$\left(  1\right)  $ It is easily observed from the convexity of $f$ and the
hypothesis of $p$ that $Sp$ is convex on $\left[  0,1\right]  .$

$\left(  2\right)  $ Using simple integration techniques, $\left(
\ref{2.71}\right)  $ and under the hypothesis of $p$, the following identities
hold on $\left[  0,1\right]  :$%
\begin{align*}
&  \left[  G_{1}\left(  t\right)  +G_{2}\left(  t\right)  \right]
\int\nolimits_{x}^{x^{\prime}}p\left(  s\right)  ds\\
&  =%
{\displaystyle\int\nolimits_{x}^{y}}
\frac{1}{2\alpha}\left[  f\left(  tx+\left(  1-t\right)  y\right)  +f\left(
tx^{\prime}+\left(  1-t\right)  y^{\prime}\right)  \right. \\
&  +\left.  f\left(  tx+\left(  1-t\right)  y^{\prime}\right)  +f\left(
tx^{\prime}+\left(  1-t\right)  y\right)  \right]  p\left(  \frac{1}{\alpha
}\left(  s-x\right)  +x\right)  ds.
\end{align*}%
\begin{align}
&  Sp\left(  t\right) \label{2.79}\\
&  =%
{\displaystyle\int\nolimits_{x}^{x^{\prime}}}
\frac{1}{2}\left[  f\left(  tx+\left(  1-t\right)  \left(  \left(
1-\alpha\right)  x+\alpha s\right)  \right)  p\left(  s\right)  \right.
\nonumber\\
&  +f\left(  tx+\left(  1-t\right)  \left(  x^{\prime}+\alpha x-\alpha
s\right)  \right)  p\left(  x+x^{\prime}-s\right) \nonumber\\
&  +f\left(  tx^{\prime}+\left(  1-t\right)  \left(  \left(  1-\alpha\right)
x+\alpha s\right)  \right)  p\left(  s\right) \nonumber\\
&  +\left.  f\left(  tx^{\prime}+\left(  1-t\right)  \left(  x^{\prime}+\alpha
x-\alpha s\right)  \right)  p\left(  x+x^{\prime}-s\right)  \right]  ds\text{
\ \ \ \ }\nonumber
\end{align}%
\begin{align*}
&  =%
{\displaystyle\int\nolimits_{x}^{x^{\prime}}}
\frac{1}{2}\left[  f\left(  tx+\left(  1-t\right)  \left(  \left(
1-\alpha\right)  x+\alpha s\right)  \right)  \right. \\
&  +f\left(  tx+\left(  1-t\right)  \left(  x^{\prime}+\alpha x-\alpha
s\right)  \right) \\
&  +f\left(  tx^{\prime}+\left(  1-t\right)  \left(  \left(  1-\alpha\right)
x+\alpha s\right)  \right) \\
&  +\left.  f\left(  tx^{\prime}+\left(  1-t\right)  \left(  x^{\prime}+\alpha
x-\alpha s\right)  \right)  \right]  p\left(  s\right)  ds\text{
\ \ \ \ \ \ \ \ \ \ \ \ \ \ \ \ \ }%
\end{align*}%
\begin{align*}
&  =%
{\displaystyle\int\nolimits_{x}^{y}}
\frac{1}{2\alpha}\left[  f\left(  tx+\left(  1-t\right)  s\right)  \right. \\
&  +f\left(  tx+\left(  1-t\right)  \left(  x+x^{\prime}-s\right)  \right) \\
&  +f\left(  tx^{\prime}+\left(  1-t\right)  s\right) \\
&  +\left.  f\left(  tx^{\prime}+\left(  1-t\right)  \left(  x+x^{\prime
}-s\right)  \right)  \right]  p\left(  \frac{1}{\alpha}\left(  s-x\right)
+x\right)  ds.
\end{align*}

By Lemma \ref{l2}, the following inequalities hold for all $t\in\left[
0,1\right]  $ and $s\in\left[  x,y\right]  :$%
\begin{align*}
&  \frac{1}{2\alpha}\left[  f\left(  tx+\left(  1-t\right)  y\right)
+f\left(  tx+\left(  1-t\right)  y^{\prime}\right)  \right]  p\left(  \frac
{1}{\alpha}\left(  s-x\right)  +x\right) \\
&  \leq\frac{1}{2\alpha}\left[  f\left(  tx+\left(  1-t\right)  s\right)
\right. \\
&  \text{ \ \ \ }+\left.  f\left(  tx+\left(  1-t\right)  \left(  x+x^{\prime
}-s\right)  \right)  \right]  p\left(  \frac{1}{\alpha}\left(  s-x\right)
+x\right)  .
\end{align*}%
\begin{align*}
&  \frac{1}{2\alpha}\left[  f\left(  tx^{\prime}+\left(  1-t\right)  y\right)
+f\left(  tx^{\prime}+\left(  1-t\right)  y^{\prime}\right)  \right]  p\left(
\frac{1}{\alpha}\left(  s-x\right)  +x\right) \\
&  \leq\frac{1}{2\alpha}\left[  f\left(  tx^{\prime}+\left(  1-t\right)
s\right)  \right. \\
&  \text{ \ \ \ }+\left.  f\left(  tx^{\prime}+\left(  1-t\right)  \left(
x+x^{\prime}-s\right)  \right)  \right]  p\left(  \frac{1}{\alpha}\left(
s-x\right)  +x\right)  .
\end{align*}

Integrating the above inequalities over $s$ on $\left[  x,y\right]  $ and
using the above identities, we obtain the first inequality of $\left(
\ref{2.75}\right)  .$ Using the convexity of $f$ and the inequality $\left(
\ref{2.52}\right)  ,$ we obtain the second and third inequalities of $\left(
\ref{2.75}\right)  .$ Next, by the convexity of $f,$ $y=\left(  1-\alpha
\right)  x+\alpha x^{\prime},$ $y^{\prime}=\alpha x+\left(  1-\alpha\right)
x^{\prime}$ and the identities $\left(  \ref{2.43}\right)  ,$ $\left(
\ref{2.46}\right)  ,$ $\left(  \ref{2.79}\right)  ,$\ we get%
\begin{align*}
&  \frac{Ip_{1}\left(  1-t\right)  +Ip_{2}\left(  1-t\right)  }{2}\\
&  =%
{\displaystyle\int\nolimits_{x}^{y}}
\frac{1}{2\alpha}\left[  f\left(  \left(  1-t\right)  s+ty\right)  \right. \\
&  +f\left(  \left(  1-t\right)  \left(  x+x^{\prime}-s\right)  +ty^{\prime
}\right)  +f\left(  \left(  1-t\right)  s+ty^{\prime}\right) \\
&  +\left.  f\left(  \left(  1-t\right)  \left(  x+x^{\prime}-s\right)
+ty\right)  \right]  p\left(  \frac{1}{\alpha}\left(  s-x\right)  +x\right)
ds\text{ \ \ \ \ \ \ \ \ \ \ \ \ \ \ \ \ \ \ }%
\end{align*}%
\begin{align*}
&  =%
{\displaystyle\int\nolimits_{x}^{y}}
\frac{1}{2\alpha}\left\{  f\left(  \left(  1-\alpha\right)  \left[  \left(
1-t\right)  s+tx\right]  +\alpha\left[  \left(  1-t\right)  s+tx^{\prime
}\right]  \right)  \right. \\
&  +f\left(  \left(  1-\alpha\right)  \left[  \left(  1-t\right)  \left(
x+x^{\prime}-s\right)  +tx\right]  \right. \\
&  \text{ \ \ \ \ \ \ }\left.  +\alpha\left[  \left(  1-t\right)  \left(
x+x^{\prime}-s\right)  +tx^{\prime}\right]  \right) \\
&  +f\left(  \alpha\left[  \left(  1-t\right)  s+tx\right]  +\left(
1-\alpha\right)  \left[  \left(  1-t\right)  s+tx^{\prime}\right]  \right) \\
&  +f\left(  \left[  \alpha\left(  1-t\right)  \left(  x+x^{\prime}-s\right)
+tx\right]  \right. \\
&  \text{\ \ }\left.  \text{\ \ \ \ }\left.  +\left(  1-\alpha\right)  \left[
\left(  1-t\right)  \left(  x+x^{\prime}-s\right)  +tx^{\prime}\right]
\right)  \right\}  p\left(  \frac{1}{\alpha}\left(  s-x\right)  +x\right)
ds\\
&  \leq\left(  1-\alpha\right)  Sp\left(  t\right)  +\alpha Sp\left(
t\right)  =Sp\left(  t\right)
\end{align*}
and the inequality $\left(  \ref{2.76}\right)  $ is proved.

$\left(  3\right)  $ $\left(  \ref{2.77}\right)  $ and $\left(  \ref{2.78}%
\right)  $ follow from the inequalities $\left(  \ref{2.69}\right)  ,$
$\left(  \ref{2.72}\right)  ,$ $\left(  \ref{2.75}\right)  $ and $\left(
\ref{2.76}\right)  .$

This completes the proof.
\end{proof}

\begin{remark}
\label{r25}\textit{Using Remark \ref{r11}} we have the following results:
\end{remark}

\begin{enumerate}
\item \textit{Theorem \ref{t19}} \textit{reduces to Theorem \ref{A8} as
}$x=a,$\textit{\ }$y=y^{\prime}=\frac{a+b}{2},$\textit{\ }$x^{\prime}=b$
\textit{and }$p\left(  s\right)  \equiv\frac{1}{2\left(  b-a\right)  }$
$\left(  s\in\left[  a,b\right]  \right)  .$

\item \textit{Theorem \ref{t19}} \textit{reduces to Theorem \ref{A25} as
}$p\left(  s\right)  \equiv\frac{\alpha}{2\left(  y-x\right)  }$\ $\left(
s\in\Omega\right)  .$

\item \textit{Theorem \ref{t19}} \textit{reduces to Theorem \ref{A34} as
}$x=a,$ $y=y^{\prime}=\frac{a+b}{2},x^{\prime}=b$ \textit{and }$p\left(
s\right)  \equiv\frac{g\left(  s\right)  }{2}$\ $\left(  s\in\left[
a,b\right]  \right)  .$
\end{enumerate}

\begin{theorem}
\label{t20}Let $x,y,y^{\prime},x^{\prime},f,G_{1},Q_{1}$\ be defined as above. Then
\end{theorem}

\begin{enumerate}
\item $Q_{1}$\textit{\ is symmetric about }$\frac{1}{2},$\textit{\ }$Q_{1}%
$\textit{\ is decreasing on }$\left[  0,\frac{1}{2}\right]  $\textit{\ and
increasing on }$\left[  \frac{1}{2},1\right]  .$

\item \textit{The following inequalities hold:}%
\begin{equation}
G_{1}\left(  \frac{t}{\alpha}\right)  \leq Q_{1}\left(  t\right)  \text{
\quad}\left(  t\in\left[  0,\frac{\alpha}{2}\right]  \right)  . \label{2.80}%
\end{equation}%
\begin{equation}
G_{1}\left(  \frac{t}{\alpha}\right)  \geq Q_{1}\left(  t\right)  \quad\left(
t\in\left[  \frac{\alpha}{2},\alpha\right]  \right)  . \label{2.81}%
\end{equation}%
\begin{equation}
G_{1}\left(  \frac{1-t}{\alpha}\right)  \geq Q_{1}\left(  t\right)  \text{
\quad}\left(  t\in\left[  1-\alpha,1-\frac{\alpha}{2}\right]  \right)  .
\label{2.82}%
\end{equation}%
\begin{equation}
G_{1}\left(  \frac{1-t}{\alpha}\right)  \leq Q_{1}\left(  t\right)
\quad\left(  t\in\left[  1-\frac{\alpha}{2},1\right]  \right)  . \label{2.83}%
\end{equation}

\end{enumerate}

\begin{proof}
$\left(  1\right)  $ It is obvious that $Q_{1}$ is symmetric about $\frac
{1}{2}.$

By Lemma \ref{l2}, the following inequalities hold for all $0\leq s<t\leq
\frac{1}{2}\leq u<v\leq1.$%
\begin{align*}
&  \frac{1}{2}\left[  f\left(  tx^{\prime}+\left(  1-t\right)  x\right)
+f\left(  tx+\left(  1-t\right)  x^{\prime}\right)  \right] \\
&  \leq\frac{1}{2}\left[  f\left(  sx^{\prime}+\left(  1-s\right)  x\right)
+f\left(  sx+\left(  1-s\right)  x^{\prime}\right)  \right]  .
\end{align*}%
\begin{align*}
&  \frac{1}{2}\left[  f\left(  ux+\left(  1-u\right)  x^{\prime}\right)
+f\left(  ux^{\prime}+\left(  1-u\right)  x\right)  \right] \\
&  \leq\frac{1}{2}\left[  f\left(  vx+\left(  1-v\right)  x^{\prime}\right)
+f\left(  vx^{\prime}+\left(  1-v\right)  x\right)  \right]  .
\end{align*}

Thus, $Q_{1}$ is decreasing on\textit{\ }$\left[  0,\frac{1}{2}\right]  $
and\textit{\ }increasing on\textit{\ }$\left[  \frac{1}{2},1\right]  .$

$\left(  2\right)  $ Let $t_{1}\in\left[  0,\frac{\alpha}{2}\right]  ,$
$t_{2}\in\left[  \frac{\alpha}{2},\alpha\right]  ,$ $t_{3}\in\left[
1-\alpha,1-\frac{\alpha}{2}\right]  $ and $t_{4}\in\left[  1-\frac{\alpha}%
{2},1\right]  .$ As a simple calculation shows us that:%
\begin{align*}
\left(  1-t_{1}\right)  x+t_{1}x^{\prime}  &  \leq\frac{t_{1}}{\alpha
}x+\left(  1-\frac{t_{1}}{\alpha}\right)  y\\
&  \leq\frac{t_{1}}{\alpha}x^{\prime}+\left(  1-\frac{t_{1}}{\alpha}\right)
y^{\prime}\\
&  \leq t_{1}x+\left(  1-t_{1}\right)  x^{\prime}.
\end{align*}%
\begin{align*}
\frac{t_{2}}{\alpha}x+\left(  1-\frac{t_{2}}{\alpha}\right)  y  &  \leq\left(
1-t_{2}\right)  x+t_{2}x^{\prime}\\
&  \leq t_{2}x+\left(  1-t_{2}\right)  x^{\prime}\\
&  \leq\frac{t_{2}}{\alpha}x^{\prime}+\left(  1-\frac{t_{2}}{\alpha}\right)
y^{\prime}.
\end{align*}%
\begin{align*}
\frac{1-t_{3}}{\alpha}x+\left(  1-\frac{1-t_{3}}{\alpha}\right)  y  &  \leq
t_{3}x+\left(  1-t_{3}\right)  x^{\prime}\\
&  \leq\left(  1-t_{3}\right)  x+t_{3}x^{\prime}\\
&  \leq\frac{1-t_{3}}{\alpha}x^{\prime}+\left(  1-\frac{1-t_{3}}{\alpha
}\right)  y^{\prime}.
\end{align*}%
\begin{align*}
t_{4}x+\left(  1-t_{4}\right)  x^{\prime}  &  \leq\frac{1-t_{4}}{\alpha
}x+\left(  1-\frac{1-t_{4}}{\alpha}\right)  y\\
&  \leq\frac{1-t_{4}}{\alpha}x^{\prime}+\left(  1-\frac{1-t_{4}}{\alpha
}\right)  y^{\prime}\\
&  \leq\left(  1-t_{4}\right)  x+t_{4}x^{\prime}.
\end{align*}

Now, using Lemma \ref{l2} under the above inequalities, we derive $\left(
\ref{2.80}\right)  -\left(  \ref{2.83}\right)  .$

This completes the proof.
\end{proof}

\begin{remark}
\label{r26}\textit{Using Remark \ref{r11}, Theorem \ref{t20}} \textit{reduces
to Theorem \ref{A10} as }$x=a,$ $y=y^{\prime}=\frac{a+b}{2},x^{\prime}=b$ and
$\alpha=\frac{1}{2}.$
\end{remark}

\begin{theorem}
\label{t21}\textit{Let }$x,y,y^{\prime},x^{\prime},\Omega,m,m^{\prime
},f,p,G_{1},G_{2},Hp_{1},Hp_{2},Pp_{1},Ip_{1},Ip_{2},Np,Sp$\textit{\ be
defined as above. }
\end{theorem}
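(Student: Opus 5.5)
Since Theorem \ref{t21} is the analogue, in the present setting, of Theorems \ref{A35} and \ref{A36} (and the earlier Theorems \ref{A9}, \ref{A10}), I expect its conclusions to be of three kinds. First, comparisons of the form $Hp_{1}\left(t\right)\leq Q_{1}\left(t\right)\int_{\Omega}p\left(s\right)ds$ on an initial $t$-interval, and $Q_{1}\left(t\right)\int_{\Omega}p\left(s\right)ds\leq Pp_{1}\left(t\right)$ (or $\leq Np\left(t\right)$) on the complementary interval. Second, two-sided estimates such as $0\leq Sp\left(t\right)-\left[G_{1}\left(t\right)+G_{2}\left(t\right)\right]\int_{x}^{x^{\prime}}p\left(s\right)ds\leq\left[f\left(x\right)+f\left(x^{\prime}\right)+2Q_{1}\left(t\right)\right]\int_{x}^{x^{\prime}}p\left(s\right)ds-Sp\left(t\right)$. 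Third, derivative-type bounds $0\leq\Gamma-\Theta\leq\left(y-x\right)\left(f^{\prime}\left(x^{\prime}\right)-f^{\prime}\left(x\right)\right)\int_{x}^{x^{\prime}}p\left(s\right)ds$ for pairs drawn from $Hp_{1},Pp_{1},Ip_{1},Ip_{2},Np,Sp$. I would prove all three by the method used throughout Section 2.

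The first step is to put every function appearing in the statement into the normal form
\[
\int_{x}^{y}\frac{1}{\alpha}\left[\cdots\right]p\left(\frac{1}{\alpha}\left(s-x\right)+x\right)ds .
\]
This uses the symmetry of $p$ and the substitution $s\mapsto\left(1-\alpha\right)x+\alpha s$, exactly as in the identities $\left(\ref{2.43}\right)$, $\left(\ref{2.46}\right)$, $\left(\ref{2.54}\right)$, $\left(\ref{2.71}\right)$ and $\left(\ref{2.79}\right)$. For $Hp_{1},Pp_{1}$ I would either keep the $\int_{x}^{y}\cdots p\left(s\right)ds$ form of Subsection 2.1, or use Remark \ref{r3} with Remark \ref{r11} to pass between the two families. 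Constants such as $Q_{1}\left(t\right)\int p$ and $G_{i}\left(t\right)\int p$ are written in the same form using $\left(\ref{2.71}\right)$. Each claimed inequality then reduces to a pointwise inequality of bracketed sums for fixed $t$ and $s\in\left[x,y\right]$. I would establish each such inequality by Lemma \ref{l2}: check that the four arguments satisfy $A\leq C\leq D\leq B$ with $A+B=C+D$, which holds because $x+x^{\prime}=y+y^{\prime}$ and $x^{\prime}-y=y^{\prime}-x$. Integrating over $s$ then gives the result.

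The two-sided estimates of the second kind I would obtain by adding such pointwise inequalities, in the manner of $\left(\ref{2.31}\right)$ and $\left(\ref{2.32}\right)$. They are then combined with the monotone chains already available: $\left(\ref{2.56}\right)$, $\left(\ref{2.57}\right)$, $\left(\ref{2.75}\right)$, Corollary \ref{c1}, and Theorem \ref{t20} for the behaviour of $Q_{1}$. For the derivative bounds I would not integrate by parts again. Instead I would sandwich each pair between $\left[f\left(y\right)+f\left(y^{\prime}\right)\right]\int p$ (or $2f\left(\frac{y+y^{\prime}}{2}\right)\int p$) and $\left[f\left(x\right)+f\left(x^{\prime}\right)\right]\int p$ using Corollary \ref{c2}, and then apply $\left(\ref{2.63}\right)$ or $\left(\ref{2.68}\right)$, in exactly the form of Corollary \ref{c3}. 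For $Sp$ I would use $\left(\ref{2.75}\right)$ to place it inside the same sandwich.

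The main obstacle is the comparison involving $Q_{1}$, which holds only on a restricted $t$-range; in Theorem \ref{A36} this is the threshold $t=\frac{1}{3}$. With general $x,y,y^{\prime},x^{\prime}$ I must find the $t$ for which the point $tx+\left(1-t\right)x^{\prime}$ lies outside, respectively inside, the interval of arguments $\left[ts+\left(1-t\right)y,\,t\left(x+x^{\prime}-s\right)+\left(1-t\right)y^{\prime}\right]$ uniformly for $s\in\left[x,y\right]$. I would first compute the extreme cases $s=x$ and $s=y$ as linear inequalities in $t$, expecting a threshold expressed through $\alpha$ (or through $m,m^{\prime}$ of Theorem \ref{t9}), and verify that it reduces to $\frac{1}{3}$ when $\alpha=\frac{1}{2}$. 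If no single uniform threshold exists, I would split the $s$-integral at the point where the ordering changes, as was done at $\frac{x+y}{2}$ in $\left(\ref{2.48}\right)$.
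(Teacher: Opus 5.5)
Since you were given only the header of Theorem \ref{t21}, you had to guess its content, and most of your guess belongs to Theorem \ref{t22}. The comparisons with $Q_{1}(t)\int_{\Omega}p$ (threshold $\frac{\alpha}{1+\alpha}$, which is indeed $\frac{1}{3}$ at $\alpha=\frac{1}{2}$) and the two-sided estimate for $Sp$ involving $Q_{1}$ are proved there. Note that $Q_{1}$ is not among the data of Theorem \ref{t21}, whereas $m,m^{\prime}$ are.

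Theorem \ref{t21} consists of (\ref{2.84})--(\ref{2.88}):
\begin{itemize}
\item $0\leq Np(t)-2G_{1}(t)\int_{x}^{x^{\prime}}p\leq[f(x)+f(x^{\prime})]\int_{x}^{x^{\prime}}p-Np(t)$;
\item for differentiable $f$, $Lp_{1}-\frac{Hp_{1}+Hp_{2}}{2}$ and $Pp_{1}-Lp_{1}$ are bounded by $\frac{4x^{\prime}-y-3y^{\prime}}{8}(f^{\prime}(x^{\prime})-f^{\prime}(x))\int_{\Omega}p$;
\item $Np-Ip_{1}$ is bounded by $(y-x)(f^{\prime}(x^{\prime})-f^{\prime}(x))\int_{x}^{x^{\prime}}p$;
\item for $t\in[m^{\prime},1]$, $Sp-\frac{Ip_{1}+Ip_{2}}{2}$ is bounded by $\frac{4x^{\prime}-y-3y^{\prime}}{4}(f^{\prime}(x^{\prime})-f^{\prime}(x))\int_{x}^{x^{\prime}}p$.
\end{itemize}

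Your generic machinery covers (\ref{2.84}) and (\ref{2.87}) along the paper's own route. Lemma \ref{l2}, applied pointwise against $f(tx+(1-t)y)+f(tx^{\prime}+(1-t)y^{\prime})$, gives $2G_{1}\int p\leq Np$. Splitting the four-term form (\ref{2.55}) at $\frac{x+y}{2}$ and using (\ref{2.89}) gives $Np\leq G_{1}\int p+\frac{f(x)+f(x^{\prime})}{2}\int p$, exactly as in (\ref{2.32}). Finally, (\ref{2.87}) is your sandwich (\ref{2.56}) combined with (\ref{2.63}).

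The genuine gap is in (\ref{2.85}), (\ref{2.86}) and (\ref{2.88}), where your sandwich cannot produce the stated constant. The averages $\frac{Hp_{1}+Hp_{2}}{2}$ and $\frac{Ip_{1}+Ip_{2}}{2}$ are not bounded below at the $f(y)+f(y^{\prime})$ level, because $Hp_{2}$ and $Ip_{2}$ can sink to the $2f(\frac{y+y^{\prime}}{2})$ level. So (\ref{2.63}) is unavailable. Falling back on the lower end $2f(\frac{y+y^{\prime}}{2})$ with (\ref{2.68}) gives the constant $\frac{2x^{\prime}-y-y^{\prime}}{2}$, which exceeds $\frac{4x^{\prime}-y-3y^{\prime}}{4}$ by $\frac{y^{\prime}-y}{4}$.

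The missing idea is to bound the two members separately: use (\ref{2.1}) and (\ref{2.3}) for $Hp_{1},Hp_{2}$, and (\ref{2.41}) and (\ref{2.44}) for $Ip_{1},Ip_{2}$. This gives the mixed lower end $f(\frac{y+y^{\prime}}{2})+\frac{f(y)+f(y^{\prime})}{2}$, suitably normalized. Then prove the averaged convexity estimate (\ref{2.91}),
\[
[f(x)+f(x^{\prime})]-[f(\tfrac{y+y^{\prime}}{2})+\tfrac{f(y)+f(y^{\prime})}{2}]\leq\tfrac{4x^{\prime}-y-3y^{\prime}}{4}\,(f^{\prime}(x^{\prime})-f^{\prime}(x)),
\]
which is the mean of the two estimates behind (\ref{2.63}) and (\ref{2.68}).

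You must also supply the middle of each chain:
\begin{itemize}
\item $\frac{Hp_{1}+Hp_{2}}{2}\leq Lp_{1}\leq Pp_{1}$, from (\ref{2.36}) and (\ref{2.33}); $Lp_{1}$ does not appear in your plan.
\item $\frac{Ip_{1}+Ip_{2}}{2}\leq(G_{1}+G_{2})\int_{x}^{x^{\prime}}p\leq Sp$, from (\ref{2.69}), (\ref{2.72}), (\ref{2.75}). This holds only on $[m^{\prime},1]$, because (\ref{2.72}) does. Corollary \ref{c2} does not place $Sp$ anywhere, so it cannot supply this step.
\end{itemize}

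Finally, your guessed $(y-x)$ bound for pairs involving $Ip_{2}$ is false. Take $f(s)=s^{2}$, $x=0$, $x^{\prime}=1$. Then $Ip_{2}(m)\leq\frac{(1+\alpha)^{2}}{2}\int_{x}^{x^{\prime}}p$, while $Np(1)=\int_{x}^{x^{\prime}}p$. So for small $\alpha$, $Np(1)-Ip_{2}(m)$ exceeds $2\alpha\int_{x}^{x^{\prime}}p=(y-x)(f^{\prime}(x^{\prime})-f^{\prime}(x))\int_{x}^{x^{\prime}}p$.
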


\begin{enumerate}
\item \textit{The inequality}%
\begin{align}
0  &  \leq Np\left(  t\right)  -2G_{1}\left(  t\right)  \int\nolimits_{x}%
^{x^{\prime}}p\left(  s\right)  ds\label{2.84}\\
&  \leq\left[  f\left(  x\right)  +f\left(  x^{\prime}\right)  \right]
\int\nolimits_{x}^{x^{\prime}}p\left(  s\right)  ds-Np\left(  t\right)
\nonumber
\end{align}
\textit{holds for all }$t\in\left[  0,1\right]  .$

\item \textit{If }$f$\textit{\ is differentiable on }$\Omega,$\textit{\ then
the inequalities}%
\begin{align}
0  &  \leq Lp_{1}\left(  t\right)  -\frac{Hp_{1}\left(  t\right)
+Hp_{2}\left(  t\right)  }{2}\label{2.85}\\
&  \leq\frac{4x^{\prime}-y-3y^{\prime}}{8}\left(  f^{\prime}\left(  x^{\prime
}\right)  -f^{\prime}\left(  x\right)  \right)  \int_{\Omega}p\left(
s\right)  ds\nonumber
\end{align}
\textit{and}%
\begin{align}
0  &  \leq Pp_{1}-Lp_{1}\left(  t\right) \label{2.86}\\
&  \leq\frac{4x^{\prime}-y-3y^{\prime}}{8}\left(  f^{\prime}\left(  x^{\prime
}\right)  -f^{\prime}\left(  x\right)  \right)  \int_{\Omega}p\left(
s\right)  ds\nonumber
\end{align}
\textit{hold for all }$t\in\left[  0,1\right]  .$

\item \textit{If }$f$\textit{\ is differentiable on }$\left[  x,x^{\prime
}\right]  ,$\textit{\ then the inequalities}%
\begin{align}
0  &  \leq Np\left(  t\right)  -Ip_{1}\left(  t\right) \label{2.87}\\
&  \leq\left(  y-x\right)  \left(  f^{\prime}\left(  x^{\prime}\right)
-f^{\prime}\left(  x\right)  \right)  \int_{x}^{x^{\prime}}p\left(  s\right)
ds\text{ \ }\left(  t\in\left[  0,1\right]  \right) \nonumber
\end{align}
\textit{and}%
\begin{align}
0  &  \leq Sp\left(  t\right)  -\frac{Ip_{1}\left(  t\right)  +Ip_{2}\left(
t\right)  }{2}\label{2.88}\\
&  \leq\frac{4x^{\prime}-y-3y^{\prime}}{4}\left(  f^{\prime}\left(  x^{\prime
}\right)  -f^{\prime}\left(  x\right)  \right)  \int_{x}^{x^{\prime}}p\left(
s\right)  ds\text{ \ }\left(  t\in\left[  m^{\prime},1\right]  \right)
\nonumber
\end{align}

\end{enumerate}

\begin{proof}
$\left(  1\right)  $ By Lemma \ref{l2}, the following inequality hold for all
$t\in\left[  0,1\right]  $ and $s\in\left[  x,y\right]  :$%
\begin{align*}
&  \frac{1}{\alpha}\left[  f\left(  tx+\left(  1-t\right)  y\right)  +f\left(
tx^{\prime}+\left(  1-t\right)  y^{\prime}\right)  \right]  p\left(  \frac
{1}{\alpha}\left(  s-x\right)  +x\right) \\
&  \leq\frac{1}{\alpha}\left[  f\left(  tx+\left(  1-t\right)  s\right)
\right. \\
&  \left.  +f\left(  tx^{\prime}+\left(  1-t\right)  \left(  x+x^{\prime
}-s\right)  \right)  \right]  p\left(  \frac{1}{\alpha}\left(  s-x\right)
+x\right)  .
\end{align*}

Integrating the above inequality over $s$ on $\left[  x,y\right]  $ and using
the above identities $\left(  \ref{2.55}\right)  $ and $\left(  \ref{2.71}%
\right)  ,$ we obtain the inequality%
\[
2G_{1}\left(  t\right)  \int\nolimits_{x}^{x^{\prime}}p\left(  s\right)
ds\leq Np\left(  t\right)  \ \left(  \text{ }t\in\left[  0,1\right]  \right)
\]
and from which the first inequality of $\left(  \ref{2.84}\right)  $ holds.

Again, using Lemma \ref{l2}, the following inequality hold for all
$t\in\left[  0,1\right]  $ and $s\in\left[  x,\frac{x+y}{2}\right]  :$%
\begin{align*}
&  \frac{1}{\alpha}\left[  f\left(  tx+\left(  1-t\right)  s\right)  \right.
\\
&  \text{\ \ }\left.  +f\left(  tx+\left(  1-t\right)  \left(  x+y-s\right)
\right)  \right]  p\left(  \frac{1}{\alpha}\left(  s-x\right)  +x\right) \\
&  \leq\frac{1}{\alpha}\left[  f\left(  x\right)  +f\left(  tx+\left(
1-t\right)  y\right)  \right]  p\left(  \frac{1}{\alpha}\left(  s-x\right)
+x\right)  .
\end{align*}%
\begin{align*}
&  \frac{1}{\alpha}\left[  f\left(  tx^{\prime}+\left(  1-t\right)  \left(
s+x^{\prime}-y\right)  \right)  \right. \\
&  \text{\ \ }\left.  +f\left(  tx^{\prime}+\left(  1-t\right)  \left(
x+x^{\prime}-s\right)  \right)  \right]  p\left(  \frac{1}{\alpha}\left(
s-x\right)  +x\right) \\
&  \leq\frac{1}{\alpha}\left[  f\left(  tx^{\prime}+\left(  1-t\right)
y^{\prime}\right)  +f\left(  x^{\prime}\right)  \right]  p\left(  \frac
{1}{\alpha}\left(  s-x\right)  +x\right)  .
\end{align*}

Integrating the above inequalities over $s$ on $\left[  x,\frac{x+y}%
{2}\right]  $ and using the identity $\left(  \ref{2.55}\right)  $ and the
identity%
\begin{align}
&  \int_{x}^{\frac{x+y}{2}}\frac{1}{\alpha}p\left(  \frac{1}{\alpha}\left(
s-x\right)  +x\right)  ds\label{2.89}\\
&  =\int_{x}^{\frac{x+y}{2}}\frac{1}{2\alpha}p\left(  \frac{1}{\alpha}\left(
s-x\right)  +x\right)  ds+\int_{x}^{\frac{x+y}{2}}\frac{1}{2\alpha}p\left(
\frac{1}{\alpha}\left(  y-s\right)  +x\right)  ds\nonumber\\
&  =\int_{x}^{y}\frac{1}{2\alpha}p\left(  \frac{1}{\alpha}\left(  s-x\right)
+x\right)  ds\nonumber\\
&  =\frac{1}{2}\int\nolimits_{x}^{x^{\prime}}p\left(  s\right)  ds,\nonumber
\end{align}
we obtain the inequality
\[
Np\left(  t\right)  \leq G_{1}\left(  t\right)  \int\nolimits_{x}^{x^{\prime}%
}p\left(  s\right)  ds+\frac{f\left(  x\right)  +f\left(  x^{\prime}\right)
}{2}\int\nolimits_{x}^{x^{\prime}}p\left(  s\right)  ds\ \left(  \text{ }%
t\in\left[  0,1\right]  \right)
\]
\ and from which the second inequality of $\left(  \ref{2.84}\right)  $ holds.
This proves the inequality $\left(  \ref{2.84}\right)  .$

$\left(  2\right)  $ From $\left(  \ref{2.1}\right)  ,$ $\left(
\ref{2.3}\right)  ,$ $\left(  \ref{2.11}\right)  ,$ $\left(  \ref{2.33}%
\right)  $\ and $\left(  \ref{2.36}\right)  $, the following inequality holds
on $\left[  0,1\right]  :$
\begin{align}
&  \frac{1}{2}\left[  f\left(  \frac{y+y^{\prime}}{2}\right)  +\frac{f\left(
y\right)  +f\left(  y^{\prime}\right)  }{2}\right]  \int_{\Omega}p\left(
s\right)  ds\label{2.90}\\
&  \leq\frac{Hp_{1}\left(  t\right)  +Hp_{2}\left(  t\right)  }{2}\leq
Lp_{1}\left(  t\right)  \leq Pp_{1}\left(  t\right)  \leq\frac{f\left(
x\right)  +f\left(  x^{\prime}\right)  }{2}\int_{\Omega}p\left(  s\right)
ds.\nonumber
\end{align}

By the convexity of $f$ and $x+x^{\prime}=y+y^{\prime},$\ we have the
inequality%
\begin{align}
&  \left[  f\left(  x\right)  +f\left(  x^{\prime}\right)  \right]  -\left[
f\left(  \frac{y+y^{\prime}}{2}\right)  +\frac{f\left(  y\right)  +f\left(
y^{\prime}\right)  }{2}\right] \label{2.91}\\
&  =\frac{1}{2}\left[  \left(  f\left(  x\right)  -f\left(  \frac{y+y^{\prime
}}{2}\right)  \right)  +\left(  f\left(  x\right)  -f\left(  y\right)
\right)  \right. \nonumber\\
&  \text{ \ \ \ \ \ \ \ \ }+\left.  \left(  f\left(  x^{\prime}\right)
-f\left(  \frac{y+y^{\prime}}{2}\right)  \right)  +\left(  f\left(  x^{\prime
}\right)  -f\left(  y^{\prime}\right)  \right)  \right] \nonumber\\
&  \leq\frac{1}{2}\left[  \frac{2x-y-y^{\prime}}{2}f^{\prime}\left(  x\right)
+\left(  x-y\right)  f^{\prime}\left(  x\right)  \right. \nonumber\\
&  \text{ \ \ \ \ \ \ \ }+\left.  \frac{2x^{\prime}-y-y^{\prime}}{2}f^{\prime
}\left(  x^{\prime}\right)  +\left(  x^{\prime}-y^{\prime}\right)  f^{\prime
}\left(  x^{\prime}\right)  \right] \nonumber\\
&  =\frac{1}{2}\left[  \frac{2x^{\prime}-y-y^{\prime}}{2}\left(  f^{\prime
}\left(  x^{\prime}\right)  -f^{\prime}\left(  x\right)  \right)  +\left(
x^{\prime}-y^{\prime}\right)  \left(  f^{\prime}\left(  x^{\prime}\right)
-f^{\prime}\left(  x\right)  \right)  \right] \\
&  =\frac{4x^{\prime}-y-3y^{\prime}}{4}\left(  f^{\prime}\left(  x^{\prime
}\right)  -f^{\prime}\left(  x\right)  \right)  .\nonumber
\end{align}

Using the inequalities $\left(  \ref{2.90}\right)  -\left(  \ref{2.91}\right)
,$ we derive the inequalities $\left(  \ref{2.85}\right)  -\left(
\ref{2.86}\right)  .$

$\left(  3\right)  $ The inequality $\left(  \ref{2.87}\right)  $ follows from
the inequalities $\left(  \ref{2.56}\right)  $ and$\ \left(  \ref{2.63}%
\right)  . $\ From the inequalities $\left(  \ref{2.41}\right)  ,$ $\left(
\ref{2.44}\right)  ,$ $\left(  \ref{2.69}\right)  ,$ $\left(  \ref{2.72}%
\right)  $\ and $\left(  \ref{2.75}\right)  $, the following inequality holds
on $\left[  m^{\prime},1\right]  :$%
\begin{align*}
&  \frac{1}{2}\left[  f\left(  \frac{y+y^{\prime}}{2}\right)  +\frac{f\left(
y\right)  +f\left(  y^{\prime}\right)  }{2}\right]  \int_{x}^{x^{\prime}%
}p\left(  s\right)  ds\\
&  \leq\frac{Ip_{1}\left(  t\right)  +Ip_{2}\left(  t\right)  }{2}\leq\left(
G_{1}\left(  t\right)  +G_{2}\left(  t\right)  \right)  \int_{x}^{x^{\prime}%
}p\left(  s\right)  ds\\
&  \leq Sp\left(  t\right)  \leq\frac{f\left(  x\right)  +f\left(  x^{\prime
}\right)  }{2}\int_{x}^{x^{\prime}}p\left(  s\right)  ds.
\end{align*}

Using the above inequality and $\left(  \ref{2.91}\right)  ,$ we derive the
inequality $\left(  \ref{2.88}\right)  .$ This completes the proof.
\end{proof}

\begin{remark}
\label{r27}\textit{Using Remark \ref{r11}} we have the following results:
\end{remark}

\begin{enumerate}
\item \textit{As }$x=a,$\textit{\ }$y=y^{\prime}=\frac{a+b}{2},$%
\textit{\ }$x^{\prime}=b$ \textit{and }$p\left(  s\right)  \equiv\frac
{1}{2\left(  b-a\right)  }$ \textit{on} $\left[  a,b\right]  ,$\textit{Theorem
\ref{A9}} \textit{follows from a combination of the inequalities }$\left(
\ref{2.12}\right)  ,$ $\left(  \ref{2.14}\right)  ,$ $\left(  \ref{2.33}%
\right)  $ \textit{and }$\left(  \ref{2.84}\right)  .$

\item \textit{As }$p\left(  s\right)  \equiv\frac{\alpha}{2\left(  y-x\right)
}$\ $\left(  s\in\Omega\right)  ,$ \textit{Theorem \ref{A17}} \textit{follows
from a combination of the inequalities }$\left(  \ref{2.14}\right)  -\left(
\ref{2.17}\right)  $ \textit{and }$\left(  \ref{2.85}\right)  .$

\item \textit{As }$x=a,$ $y=y^{\prime}=\frac{a+b}{2},x^{\prime}=b$ \textit{and
}$p\left(  s\right)  \equiv\frac{g\left(  s\right)  }{2}$\ \textit{on}%
$\ \left[  a,b\right]  ,$\textit{Theorem \ref{A35}} \textit{follows from a
combination of the inequalities }$\left(  \ref{2.12}\right)  ,$ $\left(
\ref{2.14}\right)  -\left(  \ref{2.16}\right)  ,$ $\left(  \ref{2.33}\right)
$ \textit{and }$\left(  \ref{2.84}\right)  -\left(  \ref{2.88}\right)  .$
\end{enumerate}

\begin{theorem}
\label{t22}\textit{Let }$x,y,y^{\prime},x^{\prime},\Omega,f,p,G_{1}%
,G_{2},Q_{1},Hp_{1},Pp_{1},Sp$\textit{\ be defined as above.} Then:
\end{theorem}

\begin{enumerate}
\item \textit{The inequalities}%
\begin{align}
Hp_{1}\left(  t\right)   &  \leq Q_{1}\left(  t\right)  \int_{\Omega}p\left(
s\right)  ds\label{2.92}\\
&  \leq\frac{f\left(  x\right)  +f\left(  x^{\prime}\right)  }{2}\text{ }%
\int_{\Omega}p\left(  s\right)  ds\qquad\left(  t\in\left[  0,\frac{\alpha
}{1+\alpha}\right]  \right) \nonumber
\end{align}
\textit{and}%
\begin{align}
f\left(  \frac{x+x^{\prime}}{2}\right)  \int_{\Omega}p\left(  s\right)  dx  &
\leq Q_{1}\left(  t\right)  \int_{\Omega}p\left(  s\right)  ds\label{2.93}\\
&  \leq Pp_{1}\left(  t\right)  \text{ \qquad}\left(  t\in\left[  \frac
{\alpha}{1+\alpha},1\right]  \right) \nonumber
\end{align}
\textit{hold.}

\item \textit{The inequality}
\begin{align}
0  &  \leq Sp\left(  t\right)  -\left(  G_{1}\left(  t\right)  +G_{2}\left(
t\right)  \right)  \int_{x}^{x^{\prime}}p\left(  s\right)  ds\label{2.94}\\
&  \leq\left[  \frac{f\left(  x\right)  +f\left(  x^{\prime}\right)  }%
{2}+Q_{1}\left(  t\right)  \right]  \int_{x}^{x^{\prime}}p\left(  s\right)
ds-Sp\left(  t\right) \nonumber
\end{align}
\textit{holds for all }$t\in\left[  0,1\right]  .$
\end{enumerate}

\begin{proof}
$\left(  1\right)  $ We discuss the following two cases.

\textbf{Case 1.} $t\in\left[  0,\frac{\alpha}{1+\alpha}\right]  .$

By Lemma \ref{l2}, the following inequality holds for all $t\in\left[
0,\frac{\alpha}{1+\alpha}\right]  $ and $s\in\left[  x,y\right]  :$%
\begin{align*}
&  \left[  f\left(  ts+\left(  1-t\right)  y\right)  +f\left(  t\left(
y+y^{\prime}-s\right)  +\left(  1-t\right)  y^{\prime}\right)  \right]
p\left(  s\right) \\
&  \leq\left[  f\left(  tx^{\prime}+\left(  1-t\right)  x\right)  +f\left(
tx+\left(  1-t\right)  x^{\prime}\right)  \right]  p\left(  s\right)  .
\end{align*}
Integrating the above inequality over $s$ on $\left[  x,y\right]  $ and using
the identity%
\begin{equation}
\int_{x}^{y}p\left(  s\right)  ds=\frac{1}{2}\int_{\Omega}p\left(  s\right)
ds, \label{2.95}%
\end{equation}
we obtain the first inequality of $\left(  \ref{2.92}\right)  .$ From Theorem
\ref{t20}, we have%
\[
\sup\limits_{t\in\left[  0,\frac{\alpha}{1+\alpha}\right]  }Q_{1}\left(
t\right)  =Q_{1}\left(  0\right)  =\frac{f\left(  x\right)  +f\left(
x^{\prime}\right)  }{2}%
\]
and from which the second inequality of $\left(  \ref{2.92}\right)  $ holds.
This proves the inequality $\left(  \ref{2.92}\right)  .$

\textbf{Case 2.} $t\in\left[  \frac{\alpha}{1+\alpha},1\right]  .$

By Lemma \ref{l2}, the following inequality holds for all $t\in\left[
\frac{\alpha}{1+\alpha},1\right]  $ and $s\in\left[  x,y\right]  :$%
\begin{align*}
&  \left[  f\left(  tx+\left(  1-t\right)  x^{\prime}\right)  +f\left(
tx^{\prime}+\left(  1-t\right)  x\right)  \right]  p\left(  s\right) \\
&  \leq\left[  f\left(  tx+\left(  1-t\right)  s\right)  +f\left(  tx^{\prime
}+\left(  1-t\right)  \left(  x+x^{\prime}-s\right)  \right)  \right]
p\left(  s\right)  .
\end{align*}

Integrating the above inequality over $s$ on $\left[  x,y\right]  $ and using
the identity $\left(  \ref{2.95}\right)  ,$ we obtain the second inequality of
$\left(  \ref{2.93}\right)  .$ From Theorem \ref{t20}, we have%
\[
\inf\limits_{t\in\left[  \frac{\alpha}{1+\alpha},1\right]  }Q_{1}\left(
t\right)  =Q_{1}\left(  \frac{1}{2}\right)  =f\left(  \frac{x+x^{\prime}}%
{2}\right)
\]
and from which the first inequality of $\left(  \ref{2.93}\right)  $ holds.
This proves the inequality $\left(  \ref{2.93}\right)  .$

$\left(  2\right)  $ Using simple integration techniques, $\left(
\ref{2.79}\right)  $ and under the hypothesis of $p$%
\begin{equation}
p\left(  \frac{1}{\alpha}\left(  s-x\right)  +x\right)  =p\left(  \frac
{1}{\alpha}\left(  y-s\right)  +x\right)  , \label{2.96}%
\end{equation}
the following identity holds on $\left[  0,1\right]  :$%
\begin{align*}
2Sp\left(  t\right)   &  =%
{\displaystyle\int\nolimits_{x}^{y}}
\frac{1}{\alpha}\left[  f\left(  tx+\left(  1-t\right)  s\right)  \right. \\
&  +f\left(  tx+\left(  1-t\right)  \left(  x+x^{\prime}-s\right)  \right) \\
&  +f\left(  tx^{\prime}+\left(  1-t\right)  s\right) \\
&  +\left.  f\left(  tx^{\prime}+\left(  1-t\right)  \left(  x+x^{\prime
}-s\right)  \right)  \right]  p\left(  \frac{1}{\alpha}\left(  s-x\right)
+x\right)  ds\\
&  =%
{\displaystyle\int\nolimits_{x}^{\frac{x+y}{2}}}
\frac{1}{\alpha}\left[  f\left(  tx+\left(  1-t\right)  s\right)  +f\left(
tx+\left(  1-t\right)  \left(  x+y-s\right)  \right)  \right. \\
&  +f\left(  tx+\left(  1-t\right)  \left(  x+x^{\prime}-s\right)  \right)
+f\left(  tx+\left(  1-t\right)  \left(  x^{\prime}-y+s\right)  \right) \\
&  +f\left(  tx^{\prime}+\left(  1-t\right)  s\right)  ++f\left(  tx^{\prime
}+\left(  1-t\right)  \left(  x+y-s\right)  \right) \\
&  +f\left(  tx^{\prime}+\left(  1-t\right)  \left(  x+x^{\prime}-s\right)
\right) \\
&  +\left.  f\left(  tx^{\prime}+\left(  1-t\right)  \left(  x^{\prime
}-y+s\right)  \right)  \right]  p\left(  \frac{1}{\alpha}\left(  s-x\right)
+x\right)  ds.
\end{align*}

By Lemma \ref{l2}, the following inequalities hold for all $t\in\left[
0,1\right]  $ and $s\in\left[  x,\frac{x+y}{2}\right]  :$%
\begin{align*}
&  \frac{1}{\alpha}\left[  f\left(  tx+\left(  1-t\right)  s\right)  \right.
\\
&  +\left.  f\left(  tx+\left(  1-t\right)  \left(  x+y-s\right)  \right)
\right]  p\left(  \frac{1}{\alpha}\left(  s-x\right)  +x\right) \\
&  \leq\frac{1}{\alpha}\left[  f\left(  x\right)  +f\left(  tx+\left(
1-t\right)  y\right)  \right]  p\left(  \frac{1}{\alpha}\left(  s-x\right)
+x\right)  .
\end{align*}%
\begin{align*}
&  \frac{1}{\alpha}\left[  f\left(  tx+\left(  1-t\right)  \left(  x^{\prime
}-y+s\right)  \right)  \right. \\
&  +\left.  f\left(  tx+\left(  1-t\right)  \left(  x+x^{\prime}-s\right)
\right)  \right]  p\left(  \frac{1}{\alpha}\left(  s-x\right)  +x\right) \\
&  \leq\frac{1}{\alpha}\left[  f\left(  tx+\left(  1-t\right)  y^{\prime
}\right)  +f\left(  tx+\left(  1-t\right)  x^{\prime}\right)  \right]
p\left(  \frac{1}{\alpha}\left(  s-x\right)  +x\right)  .
\end{align*}%
\begin{align*}
&  \frac{1}{\alpha}\left[  f\left(  tx^{\prime}+\left(  1-t\right)  s\right)
\right. \\
&  +\left.  f\left(  tx^{\prime}+\left(  1-t\right)  \left(  x+y-s\right)
\right)  \right]  p\left(  \frac{1}{\alpha}\left(  s-x\right)  +x\right) \\
&  \leq\frac{1}{\alpha}\left[  f\left(  tx^{\prime}+\left(  1-t\right)
x\right)  +f\left(  tx^{\prime}+\left(  1-t\right)  y\right)  \right]
p\left(  \frac{1}{\alpha}\left(  s-x\right)  +x\right)  .
\end{align*}%
\begin{align*}
&  \frac{1}{\alpha}\left[  f\left(  tx^{\prime}+\left(  1-t\right)  \left(
x^{\prime}-y+s\right)  \right)  \right. \\
&  +\left.  f\left(  tx^{\prime}+\left(  1-t\right)  \left(  x+x^{\prime
}-s\right)  \right)  \right]  p\left(  \frac{1}{\alpha}\left(  s-x\right)
+x\right) \\
&  \leq\frac{1}{\alpha}\left[  f\left(  tx^{\prime}+\left(  1-t\right)
y^{\prime}\right)  +f\left(  x^{\prime}\right)  \right]  p\left(  \frac
{1}{\alpha}\left(  s-x\right)  +x\right)  .
\end{align*}

Integrating the above inequalities over $s$ on $\left[  x,\frac{x+y}%
{2}\right]  $ and using the above identity and $\left(  \ref{2.89}\right)  $,
we obtain the inequality%
\begin{align*}
2Sp\left(  t\right)   &  \leq\left(  G_{1}\left(  t\right)  +G_{2}\left(
t\right)  \right)  \int_{x}^{x^{\prime}}p\left(  s\right)  ds\\
&  +\left[  \frac{f\left(  x\right)  +f\left(  x^{\prime}\right)  }{2}%
+Q_{1}\left(  t\right)  \right]  \int_{x}^{x^{\prime}}p\left(  s\right)
ds\ \left(  \text{ }t\in\left[  0,1\right]  \right)  .
\end{align*}

Using the above inequality and $\left(  \ref{2.75}\right)  ,$ we derive the
inequality $\left(  \ref{2.94}\right)  .$ This completes the proof.
\end{proof}

\begin{remark}
\label{r28}\textit{Using Remark \ref{r11}, Theorem \ref{t22}} \textit{reduces
to Theorem \ref{A36} as }$x=a,$ $y=y^{\prime}=\frac{a+b}{2},x^{\prime}=b,$
$\alpha=\frac{1}{2}$\ \textit{and }$p\left(  s\right)  =\frac{g\left(
s\right)  }{2}$ $\left(  s\in\left[  a,b\right]  \right)  .$
\end{remark}

\begin{theorem}
\label{t23}\textit{Let }$x,y,y^{\prime},x^{\prime},\Omega,f,p,G_{1}%
,G_{2},Q_{1},Ip_{1},Ip_{2},Kp$\textit{\ be defined as above.} Then:

\begin{enumerate}
\item $Kp$\ is convex on $\left[  0,1\right]  $\ and symmetric about $\frac
{1}{2}.$

\item $Kp$ is decreasing on $\left[  0,\frac{1}{2}\right]  $ and increasing on
$\left[  \frac{1}{2},1\right]  ,$%
\begin{align}
&  \sup\limits_{t\in\left[  0,1\right]  }Kp\left(  t\right) \label{2.97}\\
&  =Kp\left(  0\right)  =Kp\left(  1\right) \nonumber\\
&  =%
{\displaystyle\int\nolimits_{x}^{x^{\prime}}}
2\left[  f\left(  \left(  \left(  1-\alpha\right)  x+\alpha s\right)  \right)
+f\left(  \left(  \left(  1-\alpha\right)  x^{\prime}+\alpha s\right)
\right)  \right]  p\left(  s\right)  ds\ \int_{x}^{x^{\prime}}p\left(
s\right)  ds\ \nonumber
\end{align}
and
\begin{align}
\inf\limits_{t\in\left[  0,1\right]  }Kp\left(  t\right)   &  =Kp\left(
\frac{1}{2}\right) \label{2.98}\\
&  =%
{\displaystyle\int\nolimits_{x}^{x^{\prime}}}
{\displaystyle\int\nolimits_{x}^{x^{\prime}}}
\left[  f\left(  \left(  1-\alpha\right)  x+\alpha\frac{s+u}{2}\right)
\right. \nonumber\\
&  +2f\left(  \left(  1-\alpha\right)  \frac{x+x^{\prime}}{2}+\alpha\frac
{s+u}{2}\right) \nonumber\\
&  +\left.  f\left(  \left(  1-\alpha\right)  x^{\prime}+\alpha\frac{s+u}%
{2}\right)  \right]  p\left(  s\right)  p\left(  u\right)  dsdu.\nonumber
\end{align}

\item We have:%
\begin{equation}
\left[  Ip_{1}\left(  t\right)  +Ip_{2}\left(  t\right)  \right]  \int%
_{x}^{x^{\prime}}p\left(  s\right)  ds\leq Kp\left(  t\right)  \text{
\ }\left(  t\in\left[  0,1\right]  \right)  \label{2.99}%
\end{equation}
and%
\begin{equation}
\left[  f\left(  y\right)  +2f\left(  \frac{x+x^{\prime}}{2}\right)  +f\left(
y^{\prime}\right)  \right]  \left[  \int_{x}^{x^{\prime}}p\left(  s\right)
ds\right]  ^{2}\leq Kp\left(  \frac{1}{2}\right)  . \label{2.100}%
\end{equation}

\end{enumerate}
\end{theorem}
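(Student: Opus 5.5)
The plan is to reduce Theorem \ref{t23} to Theorem \ref{t5} by a change of variables that turns $Kp$ into a function of type $Fp_{1}$ with a new weight. Define $q:\left[  x,x^{\prime}\right]  \rightarrow\left[  0,\infty\right)  $ by
\[
q\left(  \sigma\right)  =\frac{1}{\alpha}p\left(  \frac{1}{\alpha}\left(
\sigma-x\right)  +x\right)  \ \left(  \sigma\in\left[  x,y\right]  \right)
,\qquad q\left(  \sigma\right)  =\frac{1}{\alpha}p\left(  \frac{1}{\alpha
}\left(  \sigma-x^{\prime}\right)  +x^{\prime}\right)  \ \left(  \sigma
\in\left[  y^{\prime},x^{\prime}\right]  \right)  ,
\]
and $q\equiv0$ on $\left(  y,y^{\prime}\right)  $. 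Since $y=\left(  1-\alpha\right)  x+\alpha x^{\prime}$ and $y^{\prime}=\alpha x+\left(  1-\alpha\right)  x^{\prime}$, the map $s\mapsto\left(  1-\alpha\right)  x+\alpha s$ sends $\left[  x,x^{\prime}\right]  $ onto $\left[  x,y\right]  $, and $s\mapsto\left(  1-\alpha\right)  x^{\prime}+\alpha s$ sends it onto $\left[  y^{\prime},x^{\prime}\right]  $. The symmetry $p\left(  s\right)  =p\left(  x+x^{\prime}-s\right)  $ then gives $q\left(  \sigma\right)  =q\left(  x+x^{\prime}-\sigma\right)  $, so $q$ is an admissible weight in the sense of Subsection 2.1, and
\[
\int_{\Omega}q\left(  \sigma\right)  d\sigma=2\int_{x}^{x^{\prime}}p\left(  s\right)  ds .
\]

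Next I substitute $\sigma=\left(  1-\alpha\right)  x+\alpha s$ or $\sigma=\left(  1-\alpha\right)  x^{\prime}+\alpha s$, and likewise $\tau$ for $u$, in each of the four terms of $Kp$. The four terms cover exactly the four blocks of $\Omega\times\Omega$, which gives
\[
Kp\left(  t\right)  =\int_{\Omega}\int_{\Omega}f\left(  t\sigma+\left(  1-t\right)  \tau\right)  q\left(  \sigma\right)  q\left(  \tau\right)  d\sigma d\tau ,
\]
that is, $Kp=Fq_{1}$. In the same way, identity (\ref{2.43}) together with $y+y^{\prime}=x+x^{\prime}$ shows $Ip_{1}=Hq_{1}$, and (\ref{2.46}) shows $Ip_{2}=Hq_{2}$.

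With these identifications, parts (1) and (2) follow directly from Theorem \ref{t5}: convexity, symmetry about $\frac{1}{2}$, and the monotonicity on $\left[  0,\frac{1}{2}\right]  $ and $\left[  \frac{1}{2},1\right]  $. For the extreme values, $Fq_{1}\left(  0\right)  =\int_{\Omega}fq\cdot\int_{\Omega}q$; undoing the substitution gives the integral in (\ref{2.97}) multiplied by $2\int_{x}^{x^{\prime}}p$. For $Fq_{1}\left(  \frac{1}{2}\right)  $, expanding over the four blocks produces the two pure terms $f\left(  \left(  1-\alpha\right)  x+\alpha\frac{s+u}{2}\right)  $ and $f\left(  \left(  1-\alpha\right)  x^{\prime}+\alpha\frac{s+u}{2}\right)  $. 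The two mixed blocks both give $f\left(  \left(  1-\alpha\right)  \frac{x+x^{\prime}}{2}+\alpha\frac{s+u}{2}\right)  $, which accounts for the coefficient $2$ in (\ref{2.98}).

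For part (3), inequality (\ref{2.21}) applied to $q$ reads
\[
\frac{Hq_{1}\left(  t\right)  +Hq_{2}\left(  t\right)  }{2}\cdot2\int_{x}^{x^{\prime}}p\left(  s\right)  ds\leq Fq_{1}\left(  t\right)  ,
\]
which is exactly (\ref{2.99}). Inequality (\ref{2.22}) applied to $q$, using $\left[  \int_{\Omega}q\right]  ^{2}=4\left[  \int_{x}^{x^{\prime}}p\right]  ^{2}$ and $\frac{y+y^{\prime}}{2}=\frac{x+x^{\prime}}{2}$, gives (\ref{2.100}).

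The main point needing care is that Theorem \ref{t5} really applies to $q$. Its proof uses only Lemma \ref{l2}, the symmetry of the weight about $\frac{x+x^{\prime}}{2}$, and integration over $\Omega$, so neither the vanishing of $q$ on $\left(  y,y^{\prime}\right)  $ nor any possible discontinuity of $q$ matters. Beyond that, the remaining work is the bookkeeping of constants: the Jacobian factor $\frac{1}{\alpha}$ produced by each substitution is absorbed into the definition of $q$. If this reduction were considered too indirect, I would instead repeat the proof of Theorem \ref{t5} verbatim, with $p\left(  s\right)  $ replaced by $\frac{1}{\alpha}p\left(  \frac{1}{\alpha}\left(  s-x\right)  +x\right)  $ on $\left[  x,y\right]  $.
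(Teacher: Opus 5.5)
Your reduction is correct, and it takes a genuinely different route from the paper.

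\textbf{Your route.} You transport $p$ to the weight $q$ on $\Omega$ and observe that, block by block over $\Omega\times\Omega$,
$Kp=Fq_{1}$, $Ip_{1}=Hq_{1}$ and $Ip_{2}=Hq_{2}$.
Since $q$ is nonnegative, integrable on $\left[x,x^{\prime}\right]$ and symmetric about $\frac{x+x^{\prime}}{2}$, it satisfies the hypotheses of Subsection 2.1 verbatim. So Theorem \ref{t5} applies as stated, as a black box. You do not need to re-inspect its proof, and the values of $q$ on $\left(y,y^{\prime}\right)$ never enter.

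\textbf{The paper's route.} The paper re-proves each item directly in the original $(s,u)$ variables:
\begin{enumerate}
\item symmetry by swapping $s$ and $u$;
\item monotonicity by applying the lemma term by term to $\frac{1}{2}\left[Kp(t)+Kp(1-t)\right]$;
\item $(\ref{2.99})$ from identity $(\ref{2.104})$, obtained by reflecting $u\mapsto x+x^{\prime}-u$ in two of the four terms, followed by Lemma \ref{l2} with $y,y^{\prime}$ squeezed between $(1-\alpha)x+\alpha u$ and $(1-\alpha)x^{\prime}+\alpha(x+x^{\prime}-u)$;
\item $(\ref{2.100})$ from $(\ref{2.41})$, $(\ref{2.44})$ and $(\ref{2.99})$.
\end{enumerate}

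\textbf{What each buys.} Your version is shorter and explains the structure: Theorem \ref{t23} is literally Theorem \ref{t5} for the weight $q$. This is the same weight $\frac{1}{\alpha}p\left(\frac{1}{\alpha}(s-x)+x\right)$ the paper uses implicitly in $(\ref{2.43})$ and $(\ref{2.46})$. The same substitution also gives $Np=Pq_{1}$ and $Sp=Lq_{1}$, so much of Subsection 2.2 could be read off from Subsection 2.1 in the same way. The paper's direct argument is self-contained and keeps every formula in the original parametrization, at the cost of duplicating the computations behind Theorem \ref{t5}.
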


\begin{proof}
$\left(  1\right)  $ It is easily observed from the convexity of $f$ and the
hypothesis of $p$ that $Kp$ is convex on $\left[  0,1\right]  .$

By changing variables, we have
\[
Kp\left(  t\right)  =Kp\left(  1-t\right)  ,\text{ \qquad\ }t\in\left[
0,1\right]
\]
and from which we get that $Kp$ is symmetric about $\frac{1}{2}.$

$\left(  2\right)  $ Let $t_{1}<t_{2}$ in $\left[  0,\frac{1}{2}\right]  .$
Using the symmetry of $F_{1}$, we have%
\begin{equation}
Kp\left(  t_{1}\right)  =\frac{1}{2}\left[  Kp\left(  t_{1}\right)  +Kp\left(
1-t_{1}\right)  \right]  , \label{2.101}%
\end{equation}%
\begin{equation}
Kp\left(  t_{2}\right)  =\frac{1}{2}\left[  Kp\left(  t_{2}\right)  +Kp\left(
1-t_{2}\right)  \right]  \label{2.102}%
\end{equation}
and, by Lemma \ref{l2}, we obtain%
\begin{equation}
\frac{1}{2}\left[  Kp\left(  t_{2}\right)  +Kp\left(  1-t_{2}\right)  \right]
\leq\frac{1}{2}\left[  Kp\left(  t_{1}\right)  +Kp\left(  1-t_{1}\right)
\right]  . \label{2.103}%
\end{equation}

From $\left(  \ref{2.101}\right)  -\left(  \ref{2.103}\right)  ,$ we obtain
that $Kp$ is decreasing on $\left[  0,\frac{1}{2}\right]  .$ Since $Kp$ is
symmetric about $\frac{1}{2}$ and $Kp$ is decreasing on $\left[  0,\frac{1}%
{2}\right]  $, we get that $Kp$ is increasing on $\left[  \frac{1}%
{2},1\right]  .$ Using the symmetry and monotonicity of $Kp,$ we derive the
inequalities $\left(  \ref{2.97}\right)  $ and $\left(  \ref{2.98}\right)  .$

$\left(  3\right)  $ Using simple integration techniques and under the
hypothesis of $p,$ the following identity holds on $\left[  0,1\right]  :$%
\begin{align}
&  Kp\left(  t\right) \label{2.104}\\
&  =%
{\displaystyle\int\nolimits_{x}^{x^{\prime}}}
{\displaystyle\int\nolimits_{x}^{x^{\prime}}}
\left[  f\left(  t\left(  \left(  1-\alpha\right)  x+\alpha s\right)  +\left(
1-t\right)  \left(  \left(  1-\alpha\right)  x+\alpha u\right)  \right)
\right. \nonumber\\
&  +f\left(  t\left(  \left(  1-\alpha\right)  x+\alpha s\right)  +\left(
1-t\right)  \left(  \left(  1-\alpha\right)  x^{\prime}+\alpha\left(
x+x^{\prime}-u\right)  \right)  \right) \nonumber\\
&  +f\left(  t\left(  \left(  1-\alpha\right)  x^{\prime}+\alpha s\right)
+\left(  1-t\right)  \left(  \left(  1-\alpha\right)  x+\alpha u\right)
\right) \nonumber\\
&  +\left.  f\left(  t\left(  \left(  1-\alpha\right)  x^{\prime}+\alpha
s\right)  +\left(  1-t\right)  \left(  \left(  1-\alpha\right)  x^{\prime
}+\alpha\left(  x+x^{\prime}-u\right)  \right)  \right)  \right] \nonumber\\
&  \text{
\ \ \ \ \ \ \ \ \ \ \ \ \ \ \ \ \ \ \ \ \ \ \ \ \ \ \ \ \ \ \ \ \ \ \ \ \ \ \ \ \ \ \ \ \ \ \ \ \ \ \ \ \ \ \ \ \ \ \ \ \ \ \ \ \ \ \ \ \ \ \ \ \ \ }%
p\left(  s\right)  p\left(  u\right)  dsdu.\nonumber
\end{align}

By Lemma \ref{l2} and $x+x^{\prime}=y+y^{\prime},$ the following inequalities
hold for all $t\in\left[  0,1\right]  ,$ $u\in\left[  x,x^{\prime}\right]
$\ and $s\in\left[  x,x^{\prime}\right]  :$%
\begin{align*}
&  \left[  f\left(  t\left(  \left(  1-\alpha\right)  x+\alpha s\right)
+\left(  1-t\right)  y\right)  \right. \\
&  +\left.  f\left(  t\left(  \left(  1-\alpha\right)  x+\alpha s\right)
+\left(  1-t\right)  y^{\prime}\right)  \right]  p\left(  s\right)  p\left(
u\right) \\
&  \leq\left[  f\left(  t\left(  \left(  1-\alpha\right)  x+\alpha s\right)
+\left(  1-t\right)  \left(  \left(  1-\alpha\right)  x+\alpha u\right)
\right)  \right. \\
&  +\left.  f\left(  t\left(  \left(  1-\alpha\right)  x+\alpha s\right)
+\left(  1-t\right)  \left(  \left(  1-\alpha\right)  x^{\prime}+\alpha\left(
x+x^{\prime}-u\right)  \right)  \right)  \right]  p\left(  s\right)  p\left(
u\right)  .
\end{align*}%
\begin{align*}
&  \left[  f\left(  t\left(  \left(  1-\alpha\right)  x^{\prime}+\alpha
s\right)  +\left(  1-t\right)  y\right)  \right. \\
&  +\left.  f\left(  t\left(  \left(  1-\alpha\right)  x^{\prime}+\alpha
s\right)  +\left(  1-t\right)  y^{\prime}\right)  \right]  p\left(  s\right)
p\left(  u\right) \\
&  \leq\left[  f\left(  t\left(  \left(  1-\alpha\right)  x^{\prime}+\alpha
s\right)  +\left(  1-t\right)  \left(  \left(  1-\alpha\right)  x+\alpha
u\right)  \right)  \right. \\
&  +\left.  f\left(  t\left(  \left(  1-\alpha\right)  x^{\prime}+\alpha
s\right)  +\left(  1-t\right)  \left(  \left(  1-\alpha\right)  x^{\prime
}+\alpha\left(  x+x^{\prime}-u\right)  \right)  \right)  \right]  p\left(
s\right)  p\left(  u\right)  .
\end{align*}

Integrating the above inequalities over $u$ on $\left[  x,x^{\prime}\right]
$, over $s$ on $\left[  x,x^{\prime}\right]  $ and using the above identity,
we derive the inequality $\left(  \ref{2.99}\right)  .$

From the inequalities $\left(  \ref{2.41}\right)  ,$ $\left(  \ref{2.44}%
\right)  $ and $\left(  \ref{2.99}\right)  $, we have%
\begin{align*}
&  \left[  f\left(  y\right)  +2f\left(  \frac{x+x^{\prime}}{2}\right)
+f\left(  y^{\prime}\right)  \right]  \left[
{\displaystyle\int\nolimits_{x}^{x^{\prime}}}
p\left(  s\right)  ds\right]  ^{2}\\
&  \leq\left[  Ip_{1}\left(  \frac{1}{2}\right)  +Ip_{2}\left(  \frac{1}%
{2}\right)  \right]
{\displaystyle\int\nolimits_{x}^{x^{\prime}}}
p\left(  s\right)  ds\\
&  \leq Kp\left(  \frac{1}{2}\right)
\end{align*}
and from which we derive the inequality $\left(  \ref{2.100}\right)  .$
\end{proof}

\begin{remark}
\label{r29}\textit{Using Remark \ref{r11}, Theorem \ref{t23}} \textit{reduces
to Theorem \ref{A37} as }$x=a,$ $y=y^{\prime}=\frac{a+b}{2},x^{\prime}=b,$
$\alpha=\frac{1}{2}$\ \textit{and }$p\left(  s\right)  =\frac{g\left(
s\right)  }{2}$ $\left(  s\in\left[  a,b\right]  \right)  .$
\end{remark}

\begin{theorem}
\label{t24}\textit{Let }$x,y,y^{\prime},x^{\prime},\Omega,f,p,G_{1}%
,G_{2},Q_{1},Ip_{1},Ip_{2},Kp,Sp$\textit{\ be defined as above.} Then we have
the inequality%
\begin{align}
0  &  \leq Kp\left(  t\right)  -\left[  Ip_{1}\left(  t\right)  +Ip_{2}\left(
t\right)  \right]  \int_{x}^{x^{\prime}}p\left(  s\right)  ds\label{2.105}\\
&  \leq2Sp\left(  1-t\right)  \int_{x}^{x^{\prime}}p\left(  s\right)
ds-Kp\left(  t\right) \nonumber
\end{align}
for all $t\in\left[  0,1\right]  .$
\end{theorem}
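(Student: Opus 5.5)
The left inequality of (\ref{2.105}) is exactly (\ref{2.99}) from Theorem \ref{t23}. So the real content is the right inequality. It is equivalent to
\[
Kp(t)\leq\frac{1}{2}\left[  2Sp(1-t)+Ip_{1}(t)+Ip_{2}(t)\right]  \int_{x}^{x^{\prime}}p(s)\,ds .
\]
This mirrors the way (\ref{2.40}) was used to get (\ref{2.38}) in Theorem \ref{t7}. I would prove it by the same scheme: write $Kp(t)$ as a single integral, split the inner integration domain in half, and pair points with Lemma \ref{l2}.

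First I rewrite $Kp(t)$ via (\ref{2.104}) and the substitutions $(1-\alpha)x+\alpha s\mapsto s$, $(1-\alpha)x^{\prime}+\alpha s\mapsto x+x^{\prime}-s$. These are the substitutions already used to obtain (\ref{2.43}) and (\ref{2.79}). After them, the outer variable ("$s$", weighted by $t$) runs over $[x,y]\cup[y^{\prime},x^{\prime}]$ and the inner variable ("$u$", weighted by $1-t$) runs over $[x,y]\cup[y^{\prime},x^{\prime}]$. The weight is $\frac{1}{\alpha^{2}}p(\frac{1}{\alpha}(s-x)+x)p(\frac{1}{\alpha}(u-x)+x)$. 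Next I split the inner variable over $[x,\frac{x+y}{2}]$ and use the reflections $u\mapsto x+y-u$, $u\mapsto x^{\prime}-y+u$, $u\mapsto x+x^{\prime}-u$. This needs the symmetry (\ref{2.96}) of the rescaled weight, which holds exactly as in Theorem \ref{t22}(2). The result is that $Kp(t)$ becomes an integral over $s$ and over $u\in[x,\frac{x+y}{2}]$ of a sum of $f$ at the four points $ts+(1-t)u$, $ts+(1-t)(x+y-u)$, $ts+(1-t)(x^{\prime}-y+u)$, $ts+(1-t)(x+x^{\prime}-u)$, with $s$ ranging over both pieces. This is the analogue of the identity in the proof of Theorem \ref{t7}(4).

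Now I pair the terms with Lemma \ref{l2}, as in the proof of (\ref{2.40}):
\[
f(ts+(1-t)u)+f(ts+(1-t)(x+y-u))\leq f(ts+(1-t)x)+f(ts+(1-t)y),
\]
\[
f(ts+(1-t)(x^{\prime}-y+u))+f(ts+(1-t)(x+x^{\prime}-u))\leq f(ts+(1-t)y^{\prime})+f(ts+(1-t)x^{\prime}).
\]
Integrating over $u\in[x,\frac{x+y}{2}]$ with (\ref{2.89}) gives a factor $\frac{1}{2}\int_{x}^{x^{\prime}}p$. Then I integrate over $s$. The terms $f(ts+(1-t)x)$ and $f(ts+(1-t)x^{\prime})$ reassemble, through (\ref{2.79}) with $t$ replaced by $1-t$, into $2Sp(1-t)$. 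The terms $f(ts+(1-t)y)$ and $f(ts+(1-t)y^{\prime})$ reassemble, through (\ref{2.43}) and (\ref{2.46}), into $Ip_{1}(t)+Ip_{2}(t)$. The two reassemblies should be run with the original variables $(1-\alpha)x+\alpha s$ and $(1-\alpha)x^{\prime}+\alpha s$ and with the symmetry of $p$ about $\frac{x+x^{\prime}}{2}$. Together these give the displayed bound, and subtracting $Kp(t)$ from both sides yields the right inequality of (\ref{2.105}).

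The main obstacle is bookkeeping the constants and the ranges. In particular, the reflected inner points must actually lie between the endpoints where Lemma \ref{l2} is applied. For example, $x^{\prime}-y+u\in[y^{\prime},\frac{y^{\prime}+x^{\prime}}{2}]$ needs $x^{\prime}-y=y^{\prime}-x$, which holds because $x+x^{\prime}=y+y^{\prime}$. Also, the normalisations $\frac{1}{\alpha}$, $\frac{1}{2}$ and the factor $2$ in front of $Sp$ must match the definitions of $Sp$, $Ip_{i}$ and $Kp$. I would check these constants first against the case $x=a$, $y=y^{\prime}=\frac{a+b}{2}$, $x^{\prime}=b$, $\alpha=\frac{1}{2}$, $p=\frac{g}{2}$. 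There the claim must reduce to Theorem \ref{A38}, and I would adjust the normalisations until it does.
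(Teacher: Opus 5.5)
Your proposal is correct and is essentially the paper's proof. The paper also rewrites $Kp(t)$ via (\ref{2.104}) and (\ref{2.96}) with the inner variable reduced to $[x,\frac{x+y}{2}]$ and evaluated at the four points $u$, $x+y-u$, $x^{\prime}-y+u$, $x+x^{\prime}-u$. It then applies Lemma \ref{l2} in exactly your two pairings, integrates with (\ref{2.89}) to get $2Kp(t)\leq\left[Ip_{1}(t)+Ip_{2}(t)+2Sp(1-t)\right]\int_{x}^{x^{\prime}}p(s)\,ds$, and concludes with (\ref{2.99}).

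The only difference is cosmetic. The paper substitutes only the inner variable and keeps the outer one as $(1-\alpha)x+\alpha s$, $(1-\alpha)x^{\prime}+\alpha s$ with $s\in[x,x^{\prime}]$. That sidesteps a slightly loose point in your write-up: on $[y^{\prime},x^{\prime}]$ the rescaled weight must be taken as the reflection $s\mapsto x+x^{\prime}-s$ of the weight on $[x,y]$, not literally $\frac{1}{\alpha}p\left(\frac{1}{\alpha}(s-x)+x\right)$.
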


\begin{proof}
Using simple integration techniques, $\left(  \ref{2.104}\right)  $ and
$\left(  \ref{2.96}\right)  ,$ the following identity holds on $\left[
0,1\right]  :$%
\begin{align*}
&  2Kp\left(  t\right) \\
&  =%
{\displaystyle\int\nolimits_{x}^{y}}
{\displaystyle\int\nolimits_{x}^{x^{\prime}}}
\frac{2}{\alpha}\left[  f\left(  t\left(  \left(  1-\alpha\right)  x+\alpha
s\right)  +\left(  1-t\right)  u\right)  \right. \\
&  +f\left(  t\left(  \left(  1-\alpha\right)  x+\alpha s\right)  +\left(
1-t\right)  \left(  x+x^{\prime}-u\right)  \right) \\
&  +f\left(  t\left(  \left(  1-\alpha\right)  x^{\prime}+\alpha s\right)
+\left(  1-t\right)  u\right) \\
&  +\left.  f\left(  t\left(  \left(  1-\alpha\right)  x^{\prime}+\alpha
s\right)  +\left(  1-t\right)  \left(  x+x^{\prime}-u\right)  \right)
\right]  p\left(  s\right)  p\left(  \frac{1}{\alpha}\left(  u-x\right)
+x\right)  dsdu\\
&  =%
{\displaystyle\int\nolimits_{x}^{\frac{x+y}{2}}}
{\displaystyle\int\nolimits_{x}^{x^{\prime}}}
\frac{2}{\alpha}\left[  f\left(  t\left(  \left(  1-\alpha\right)  x+\alpha
s\right)  +\left(  1-t\right)  u\right)  \right. \\
&  +f\left(  t\left(  \left(  1-\alpha\right)  x+\alpha s\right)  +\left(
1-t\right)  \left(  x+y-u\right)  \right)
\end{align*}%
\begin{align*}
&  \text{ \ \ \ }+f\left(  t\left(  \left(  1-\alpha\right)  x+\alpha
s\right)  +\left(  1-t\right)  \left(  x+x^{\prime}-u\right)  \right) \\
&  \text{ \ \ \ }+f\left(  t\left(  \left(  1-\alpha\right)  x+\alpha
s\right)  +\left(  1-t\right)  \left(  u+x^{\prime}-y\right)  \right) \\
&  \text{ \ \ \ }+f\left(  t\left(  \left(  1-\alpha\right)  x^{\prime}+\alpha
s\right)  +\left(  1-t\right)  u\right) \\
&  \text{ \ \ \ }+f\left(  t\left(  \left(  1-\alpha\right)  x^{\prime}+\alpha
s\right)  +\left(  1-t\right)  \left(  x+y-u\right)  \right) \\
&  \text{ \ \ \ }+f\left(  t\left(  \left(  1-\alpha\right)  x^{\prime}+\alpha
s\right)  +\left(  1-t\right)  \left(  x+x^{\prime}-u\right)  \right) \\
&  \text{ \ \ \ }+\left.  f\left(  t\left(  \left(  1-\alpha\right)
x^{\prime}+\alpha s\right)  +\left(  1-t\right)  \left(  u+x^{\prime
}-y\right)  \right)  \right]  p\left(  s\right)  p\left(  \frac{1}{\alpha
}\left(  u-x\right)  +x\right)  dsdu.
\end{align*}

By Lemma \ref{l2} and $x+x^{\prime}=y+y^{\prime},$ the following inequalities
hold for all $t\in\left[  0,1\right]  ,$ $u\in\left[  x,\frac{x+y}{2}\right]
$\ and $s\in\left[  x,x^{\prime}\right]  :$%
\begin{align*}
&  \frac{2}{\alpha}\left[  f\left(  t\left(  \left(  1-\alpha\right)  x+\alpha
s\right)  +\left(  1-t\right)  u\right)  \right. \\
&  +\left.  f\left(  t\left(  \left(  1-\alpha\right)  x+\alpha s\right)
+\left(  1-t\right)  \left(  x+y-u\right)  \right)  \right]  p\left(
s\right)  p\left(  \frac{1}{\alpha}\left(  u-x\right)  +x\right) \\
&  \leq\frac{2}{\alpha}\left[  f\left(  t\left(  \left(  1-\alpha\right)
x+\alpha s\right)  +\left(  1-t\right)  x\right)  \right. \\
&  +\left.  f\left(  t\left(  \left(  1-\alpha\right)  x+\alpha s\right)
+\left(  1-t\right)  y\right)  \right]  p\left(  s\right)  p\left(  \frac
{1}{\alpha}\left(  u-x\right)  +x\right)  .
\end{align*}%
\begin{align*}
&  \frac{2}{\alpha}\left[  f\left(  t\left(  \left(  1-\alpha\right)  x+\alpha
s\right)  +\left(  1-t\right)  \left(  u+x^{\prime}-y\right)  \right)  \right.
\\
&  +\left.  f\left(  t\left(  \left(  1-\alpha\right)  x+\alpha s\right)
+\left(  1-t\right)  \left(  x+x^{\prime}-u\right)  \right)  \right]  p\left(
s\right)  p\left(  \frac{1}{\alpha}\left(  u-x\right)  +x\right) \\
&  \leq\frac{2}{\alpha}\left[  f\left(  t\left(  \left(  1-\alpha\right)
x+\alpha s\right)  +\left(  1-t\right)  y^{\prime}\right)  \right. \\
&  +\left.  f\left(  t\left(  \left(  1-\alpha\right)  x+\alpha s\right)
+\left(  1-t\right)  x^{\prime}\right)  \right]  p\left(  s\right)  p\left(
\frac{1}{\alpha}\left(  u-x\right)  +x\right)  .
\end{align*}%
\begin{align*}
&  \frac{2}{\alpha}\left[  f\left(  t\left(  \left(  1-\alpha\right)
x^{\prime}+\alpha s\right)  +\left(  1-t\right)  u\right)  \right. \\
&  +\left.  f\left(  t\left(  \left(  1-\alpha\right)  x^{\prime}+\alpha
s\right)  +\left(  1-t\right)  \left(  x+y-u\right)  \right)  \right]
p\left(  s\right)  p\left(  \frac{1}{\alpha}\left(  u-x\right)  +x\right) \\
&  \leq\frac{2}{\alpha}\left[  f\left(  t\left(  \left(  1-\alpha\right)
x^{\prime}+\alpha s\right)  +\left(  1-t\right)  x\right)  \right. \\
&  +\left.  f\left(  t\left(  \left(  1-\alpha\right)  x^{\prime}+\alpha
s\right)  +\left(  1-t\right)  y\right)  \right]  p\left(  s\right)  p\left(
\frac{1}{\alpha}\left(  u-x\right)  +x\right)  .
\end{align*}%
\begin{align*}
&  \frac{2}{\alpha}\left[  f\left(  t\left(  \left(  1-\alpha\right)
x^{\prime}+\alpha s\right)  +\left(  1-t\right)  \left(  u+x^{\prime
}-y\right)  \right)  \right. \\
&  +\left.  f\left(  t\left(  \left(  1-\alpha\right)  x^{\prime}+\alpha
s\right)  +\left(  1-t\right)  \left(  x+x^{\prime}-u\right)  \right)
\right]  p\left(  s\right)  p\left(  \frac{1}{\alpha}\left(  u-x\right)
+x\right) \\
&  \leq\frac{2}{\alpha}\left[  f\left(  t\left(  \left(  1-\alpha\right)
x^{\prime}+\alpha s\right)  +\left(  1-t\right)  y^{\prime}\right)  \right. \\
&  +\left.  f\left(  t\left(  \left(  1-\alpha\right)  x^{\prime}+\alpha
s\right)  +\left(  1-t\right)  x^{\prime}\right)  \right]  p\left(  s\right)
p\left(  \frac{1}{\alpha}\left(  u-x\right)  +x\right)  .
\end{align*}

Integrating the above inequalities over $u$ on $\left[  x,\frac{x+y}%
{2}\right]  ,$ over $s$ on $\left[  x,x^{\prime}\right]  $ and using the above
identity and $\left(  \ref{2.89}\right)  ,$ we obtain the inequality%
\begin{align*}
2Kp\left(  t\right)   &  \leq\left[  Ip_{1}\left(  t\right)  +Ip_{2}\left(
t\right)  \right]  \int_{x}^{x^{\prime}}p\left(  s\right)  ds\\
&  +2Sp\left(  1-t\right)  \int_{x}^{x^{\prime}}p\left(  s\right)  ds\ \left(
\text{ }t\in\left[  0,1\right]  \right)  .
\end{align*}

Using the above inequality and $\left(  \ref{2.99}\right)  ,$ we derive the
inequality $\left(  \ref{2.105}\right)  .$ This completes the proof.
\end{proof}

\begin{remark}
\label{r30}\textit{Using Remark \ref{r11}, Theorem \ref{t24}} \textit{reduces
to Theorem \ref{A38} as }$x=a,$ $y=y^{\prime}=\frac{a+b}{2},x^{\prime}=b,$
$\alpha=\frac{1}{2}$\ \textit{and }$p\left(  s\right)  =\frac{g\left(
s\right)  }{2}$ $\left(  s\in\left[  a,b\right]  \right)  .$
\end{remark}

\section{Applications}

In this section, we shall establish a large number of applications for the
extended Beta function and the extended special means.

\subsection{Applications for the Extended Beta Function}

Throughout this subsection, let $\rho,\beta>0,r\geq1,a=0,b=1,x+x^{\prime
}=y+y^{\prime}=1$ $\left(  x,y,y^{\prime},x^{\prime}\in\left(  0,1\right)
\right)  ,y=\left(  1-\alpha\right)  x+\alpha x^{\prime}$ $\left(
0<\alpha\leq\frac{1}{2}\right)  ,f\left(  s\right)  =s^{r}$ $\left(
s\in\left[  0,1\right]  \right)  $ and $p\left(  s\right)  =s^{\rho-1}\left(
1-s\right)  ^{\rho-1}$ $\left(  s\in\left[  x,x^{\prime}\right]  \right)  .$
Then $f$ is convex on $\left[  0,1\right]  $ and $p$ is symmetic to
$\frac{x+x^{\prime}}{2}.$

Let us recall the \textit{Beta function}%
\[
B\left(  \rho,\beta\right)  =\int_{0}^{1}s^{\rho-1}\left(  1-s\right)
^{\beta-1}ds.
\]

It is natural to consider the \textit{extended incomplete} \textit{Beta
function}%
\[
B\left(  \rho,\beta;c,d\right)  =\int_{c}^{d}s^{\rho-1}\left(  1-s\right)
^{\beta-1}ds,\text{ \ }\left(  \left[  c,d\right]  \subset\left[  0,1\right]
\right)  .
\]

We have for all $\left[  c,d\right]  \subset\left[  0,1\right]  ,$ that%
\[
B\left(  \rho+r,\rho;c,d\right)  =\int_{c}^{d}s^{\rho-1}\left(  1-s\right)
^{\rho-1}s^{r}ds.
\]

\begin{remark}
\label{r31}From the Section 2, we get%
\[
\int_{\Omega}f\left(  s\right)  p\left(  s\right)  ds=B\left(  \rho
+r,\rho;x,y\right)  +B\left(  \rho+r,\rho;y^{\prime},x^{\prime}\right)  ,
\]%
\[
\int_{\Omega}p\left(  s\right)  ds=2B\left(  \rho,\rho;x,y\right)  ,\text{
}\int_{x}^{x^{\prime}}p\left(  s\right)  ds=2B\left(  \rho,\rho;x,\frac{1}%
{2}\right)
\]
and the following convex functions on $\left[  0,1\right]  $:%
\[
G_{1}\left(  t\right)  =\frac{1}{2}\left[  \left(  tx+\left(  1-t\right)
y\right)  ^{r}+\left(  tx^{\prime}+\left(  1-t\right)  y^{\prime}\right)
^{r}\right]  .
\]%
\[
G_{2}\left(  t\right)  =\frac{1}{2}\left[  \left(  tx+\left(  1-t\right)
y^{\prime}\right)  ^{r}+\left(  tx^{\prime}+\left(  1-t\right)  y\right)
^{r}\right]  .
\]%
\[
G_{3}\left(  t\right)  =\frac{1}{2}\left[  \left(  ty+\left(  1-t\right)
y^{\prime}\right)  ^{r}+\left(  ty^{\prime}+\left(  1-t\right)  y\right)
^{r}\right]  .
\]%
\[
Hp_{1}\left(  t\right)  =\int_{x}^{y}\left(  s-s^{2}\right)  ^{\rho-1}\left[
\left(  ts+\left(  1-t\right)  y\right)  ^{r}+\left(  t\left(  1-s\right)
+\left(  1-t\right)  y^{\prime}\right)  ^{r}\right]  ds.
\]%
\[
Hp_{2}\left(  t\right)  =\int_{x}^{y}\left(  s-s^{2}\right)  ^{\rho-1}\left[
\left(  ts+\left(  1-t\right)  y^{\prime}\right)  ^{r}+\left(  t\left(
1-s\right)  +\left(  1-t\right)  y\right)  ^{r}\right]  ds.
\]%
\[
H_{1}\left(  t\right)  =\frac{1}{2\left(  y-x\right)  }\int_{x}^{y}\left[
\left(  ts+\left(  1-t\right)  y\right)  ^{r}+\left(  t\left(  1-s\right)
+\left(  1-t\right)  y^{\prime}\right)  ^{r}\right]  ds.
\]%
\[
H_{2}\left(  t\right)  =\frac{1}{2\left(  y-x\right)  }\int_{x}^{y}\left[
\left(  ts+\left(  1-t\right)  y^{\prime}\right)  ^{r}+\left(  t\left(
1-s\right)  +\left(  1-t\right)  y\right)  ^{r}\right]  ds.
\]%
\[
Fp_{1}\left(  t\right)  =\int_{\Omega}\int_{\Omega}\left(  s-s^{2}\right)
^{\rho-1}\left(  u-u^{2}\right)  ^{\rho-1}\left(  ts+\left(  1-t\right)
u\right)  ^{r}dsdu.
\]%
\[
Pp_{1}\left(  t\right)  =\int_{x}^{y}\left(  s-s^{2}\right)  ^{\rho-1}\left[
\left(  tx+\left(  1-t\right)  s\right)  ^{r}+\left(  tx^{\prime}+\left(
1-t\right)  \left(  1-s\right)  \right)  ^{r}\right]  ds.
\]%
\[
Lp_{1}\left(  t\right)  =\frac{1}{2}%
{\displaystyle\int\nolimits_{\Omega}}
\left(  s-s^{2}\right)  ^{\rho-1}\left[  \left(  tx+\left(  1-t\right)
s\right)  ^{r}+\left(  tx^{\prime}+\left(  1-t\right)  s\right)  ^{r}\right]
ds.
\]%
\[
Q_{1}\left(  t\right)  =\frac{1}{2}\left[  \left(  tx+\left(  1-t\right)
x^{\prime}\right)  ^{r}+\left(  tx^{\prime}+\left(  1-t\right)  x\right)
^{r}\right]  .
\]%
\begin{align*}
Ip_{1}\left(  t\right)   &  =%
{\displaystyle\int\nolimits_{x}^{x^{\prime}}}
\left(  s-s^{2}\right)  ^{\rho-1}\left[  \left(  t\left(  \left(
1-\alpha\right)  x+\alpha s\right)  +\left(  1-t\right)  y\right)  ^{r}\right.
\\
&  \text{ \ \ \ \ \ \ \ \ \ \ \ \ \ \ }+\left.  \left(  t\left(  \left(
1-\alpha\right)  x^{\prime}+\alpha s\right)  +\left(  1-t\right)  y^{\prime
}\right)  ^{r}\right]  ds.
\end{align*}%
\begin{align*}
Ip_{2}\left(  t\right)   &  =%
{\displaystyle\int\nolimits_{x}^{x^{\prime}}}
\left(  s-s^{2}\right)  ^{\rho-1}\left[  \left(  t\left(  \left(
1-\alpha\right)  x+\alpha s\right)  +\left(  1-t\right)  y^{\prime}\right)
^{r}\right. \\
&  \text{ \ \ \ \ \ \ \ \ \ \ \ \ \ }+\left.  \left(  t\left(  \left(
1-\alpha\right)  x^{\prime}+\alpha s\right)  +\left(  1-t\right)  y\right)
^{r}\right]  ds.
\end{align*}%
\begin{align*}
Jp\left(  t\right)   &  =%
{\displaystyle\int\nolimits_{x}^{x^{\prime}}}
\left(  s-s^{2}\right)  ^{\rho-1}\left[  \left(  t\left(  \left(
1-\alpha\right)  x+\alpha s\right)  +\left(  1-t\right)  \frac{x+y}{2}\right)
^{r}\right. \\
&  \text{ \ \ \ \ \ \ \ \ \ \ \ }+\left.  \left(  t\left(  \left(
1-\alpha\right)  x^{\prime}+\alpha s\right)  +\left(  1-t\right)
\frac{x^{\prime}+y^{\prime}}{2}\right)  ^{r}\right]  ds.
\end{align*}%
\begin{align*}
Mp\left(  t\right)   &  =%
{\displaystyle\int\nolimits_{x}^{\frac{1}{2}}}
\left(  s-s^{2}\right)  ^{\rho-1}\left[  \left(  tx+\left(  1-t\right)
\left(  \left(  1-\alpha\right)  x+\alpha s\right)  \right)  ^{r}\right. \\
&  \text{ \ \ \ \ \ \ \ \ \ \ \ \ \ \ \ \ \ \ \ \ }+\left.  \left(
ty^{\prime}+\left(  1-t\right)  \left(  \left(  1-\alpha\right)  x^{\prime
}+\alpha s\right)  \right)  ^{r}\right]  ds\\
&  +%
{\displaystyle\int\nolimits_{\frac{1}{2}}^{x^{\prime}}}
\left(  s-s^{2}\right)  ^{\rho-1}\left[  \left(  ty+\left(  1-t\right)
\left(  \left(  1-\alpha\right)  x+\alpha s\right)  \right)  ^{r}\right. \\
&  \text{ \ \ \ \ \ \ \ \ \ \ \ \ \ \ \ \ \ \ \ \ \ }+\left.  \left(
tx^{\prime}+\left(  1-t\right)  \left(  \left(  1-\alpha\right)  x^{\prime
}+\alpha s\right)  \right)  ^{r}\right]  ds.
\end{align*}%
\begin{align*}
Np\left(  t\right)   &  =%
{\displaystyle\int\nolimits_{x}^{x^{\prime}}}
\left(  s-s^{2}\right)  ^{\rho-1}\left[  \left(  tx+\left(  1-t\right)
\left(  \left(  1-\alpha\right)  x+\alpha s\right)  \right)  ^{r}\right. \\
&  \text{ \ \ \ \ \ \ \ \ \ \ \ \ \ \ \ \ \ }+\left.  \left(  tx^{\prime
}+\left(  1-t\right)  \left(  \left(  1-\alpha\right)  x^{\prime}+\alpha
s\right)  \right)  ^{r}\right]  ds.
\end{align*}%
\begin{align*}
Sp\left(  t\right)   &  =%
{\displaystyle\int\nolimits_{x}^{x^{\prime}}}
\frac{1}{2}\left(  s-s^{2}\right)  ^{\rho-1}\left[  \left(  tx+\left(
1-t\right)  \left(  \left(  1-\alpha\right)  x+\alpha s\right)  \right)
^{r}\right. \\
&  \text{ \ \ \ \ \ \ \ \ \ \ \ \ \ \ \ \ \ \ \ \ \ }+\left(  tx+\left(
1-t\right)  \left(  \left(  1-\alpha\right)  x^{\prime}+\alpha s\right)
\right)  ^{r}\\
&  \text{ \ \ \ \ \ \ \ \ \ \ \ \ \ \ \ \ \ \ \ \ \ }+\left(  tx^{\prime
}+\left(  1-t\right)  \left(  \left(  1-\alpha\right)  x+\alpha s\right)
\right)  ^{r}\\
&  \text{ \ \ \ \ \ \ \ \ \ \ \ \ \ \ \ \ \ \ \ }+\left.  \left(  tx^{\prime
}+\left(  1-t\right)  \left(  \left(  1-\alpha\right)  x^{\prime}+\alpha
s\right)  \right)  ^{r}\right]  ds.
\end{align*}%
\begin{align*}
&  Kp\left(  t\right)  =%
{\displaystyle\int\nolimits_{x}^{x^{\prime}}}
{\displaystyle\int\nolimits_{x}^{x^{\prime}}}
\left(  s-s^{2}\right)  ^{\rho-1}\left(  u-u^{2}\right)  ^{\rho-1}\\
&  \text{ \ \ \ \ \ \ \ \ \ }\times\left[  \left(  t\left(  \left(
1-\alpha\right)  x+\alpha s\right)  +\left(  1-t\right)  \left(  \left(
1-\alpha\right)  x+\alpha u\right)  \right)  ^{r}\right. \\
&  \text{ \ \ \ \ \ \ \ \ \ \ \ \ }+\left(  t\left(  \left(  1-\alpha\right)
x+\alpha s\right)  +\left(  1-t\right)  \left(  \left(  1-\alpha\right)
x^{\prime}+\alpha u\right)  \right)  ^{r}\\
&  \text{ \ \ \ \ \ \ \ \ \ \ \ \ }+\left(  t\left(  \left(  1-\alpha\right)
x^{\prime}+\alpha s\right)  +\left(  1-t\right)  \left(  \left(
1-\alpha\right)  x+\alpha u\right)  \right)  ^{r}\\
&  \text{ \ \ \ \ \ \ \ \ \ \ \ \ }+\left.  \left(  t\left(  \left(
1-\alpha\right)  x^{\prime}+\alpha s\right)  +\left(  1-t\right)  \left(
\left(  1-\alpha\right)  x^{\prime}+\alpha u\right)  \right)  ^{r}\right]
dsdu.
\end{align*}

\end{remark}

Using Theorems \ref{t1} --\ \ref{t24} and Remark \ref{r31}, we have the
following propositions of the extended Beta function:

\begin{proposition}
\label{p1}$Hp_{1}$\ is increasing on $\left[  0,1\right]  $\ and the following
inequalities hold for all $t\in\left[  0,1\right]  :$
\begin{align*}
&  \left[  y^{r}+\left(  y^{\prime}\right)  ^{r}\right]  B\left(  \rho
,\rho;x,y\right) \\
&  =Hp_{1}\left(  0\right)  \leq Hp_{1}\left(  t\right)  \leq Hp_{1}\left(
1\right) \\
&  =B\left(  \rho+r,\rho;x,y\right)  +B\left(  \rho+r,\rho;y^{\prime
},x^{\prime}\right)  \ .
\end{align*}%
\begin{align*}
&  Hp_{1}\left(  t\right) \\
&  \leq t\left[  B\left(  \rho+r,\rho;x,y\right)  +B\left(  \rho
+r,\rho;y^{\prime},x^{\prime}\right)  \right] \\
&  \text{ \ \ \ }+\left(  1-t\right)  \cdot\left[  y^{r}+\left(  y^{\prime
}\right)  ^{r}\right]  B\left(  \rho,\rho;x,y\right) \\
&  \leq B\left(  \rho+r,\rho;x,y\right)  +B\left(  \rho+r,\rho;y^{\prime
},x^{\prime}\right) \\
&  \leq\left[  x^{r}+\left(  x^{\prime}\right)  ^{r}\right]  B\left(
\rho,\rho;x,y\right)  .
\end{align*}%
\begin{align*}
&  2^{1-r}B\left(  \rho,\rho;x,y\right) \\
&  \leq Hp_{2}\left(  t\right) \\
&  \leq t\left[  B\left(  \rho+r,\rho;x,y\right)  +B\left(  \rho
+r,\rho;y^{\prime},x^{\prime}\right)  \right] \\
&  \text{ \ \ \ }+\left(  1-t\right)  \cdot\left[  y^{r}+\left(  y^{\prime
}\right)  ^{r}\right]  B\left(  \rho,\rho;x,y\right) \\
&  \leq B\left(  \rho+r,\rho;x,y\right)  +B\left(  \rho+r,\rho;y^{\prime
},x^{\prime}\right)  .
\end{align*}%
\[
Hp_{2}\left(  t\right)  \leq Hp_{1}\left(  t\right)  .
\]

\end{proposition}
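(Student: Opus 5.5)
This proposition is Theorem \ref{t1} specialised to the Beta setting of Remark \ref{r31}. The plan is therefore to verify the hypotheses of Theorem \ref{t1} and then translate each quantity appearing in (\ref{2.1})--(\ref{2.4}).

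\textbf{Hypotheses.} Take $a=0$, $b=1$ and $f(s)=s^{r}$ with $r\geq 1$, which is convex on $[0,1]$. Take $p(s)=s^{\rho-1}(1-s)^{\rho-1}$, which is nonnegative on $[x,x']$. Since $x+x'=1$, we have $p(x+x'-s)=p(s)$, so $p$ is symmetric to $\frac{x+x'}{2}=\frac12$. Integrability holds because $[x,x']\subset(0,1)$. Monotonicity of $Hp_1$ and the inequality $Hp_2(t)\leq Hp_1(t)$ then come directly from Theorem \ref{t1}(2) and (\ref{2.4}).

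\textbf{Translating the quantities.}
\begin{itemize}
\item By Remark \ref{r31}, $\int_\Omega f(s)p(s)\,ds=B(\rho+r,\rho;x,y)+B(\rho+r,\rho;y',x')$.
\item Also $\int_\Omega p(s)\,ds=2B(\rho,\rho;x,y)$. This follows from $\int_{y'}^{x'}p=\int_x^y p$, obtained by the substitution $s\mapsto 1-s$ using $y'=1-y$ and $x'=1-x$.
\item Hence $\frac{f(y)+f(y')}{2}\int_\Omega p=\left[y^r+(y')^r\right]B(\rho,\rho;x,y)$.
\item Likewise $\frac{f(x)+f(x')}{2}\int_\Omega p=\left[x^r+(x')^r\right]B(\rho,\rho;x,y)$.
\item For the lower bound in (\ref{2.3}): $y+y'=1$ gives $f\left(\frac{y+y'}{2}\right)=2^{-r}$. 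Multiplying by $\int_\Omega p=2B(\rho,\rho;x,y)$ yields $2^{1-r}B(\rho,\rho;x,y)$.
\end{itemize}
Substituting these into (\ref{2.1}), (\ref{2.2}) and (\ref{2.3}) gives the three displayed chains verbatim.

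\textbf{Endpoint values.} The identities $Hp_1(0)=\left[y^r+(y')^r\right]B(\rho,\rho;x,y)$ and $Hp_1(1)=\int_\Omega fp$ are read off from the definition. At $t=1$, the substitution $u=y+y'-s=1-s$ in the second term maps $[x,y]$ onto $[y',x']$. Because $p(1-s)=p(s)$, this produces the integral over $[y',x']$.

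\textbf{Main obstacle.} The only step needing care is this symmetry bookkeeping, namely that $\int_{y'}^{x'}p=\int_x^y p$ and that the second term of $Hp_1(1)$ becomes $B(\rho+r,\rho;y',x')$. Both reduce to the reflection $s\mapsto 1-s$, which swaps the two Beta exponents; here they are equal ($\rho,\rho$), so the reflection leaves the weight unchanged. Everything else is direct substitution into Theorem \ref{t1}.
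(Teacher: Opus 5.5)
Your proposal is correct and follows the paper's route: the paper obtains this proposition by specialising Theorem~\ref{t1} to $f(s)=s^{r}$ and $p(s)=s^{\rho-1}(1-s)^{\rho-1}$ with $x+x^{\prime}=y+y^{\prime}=1$, using the identities $\int_{\Omega}fp=B(\rho+r,\rho;x,y)+B(\rho+r,\rho;y^{\prime},x^{\prime})$ and $\int_{\Omega}p=2B(\rho,\rho;x,y)$ from Remark~\ref{r31}. Your explicit checks — convexity of $s^{r}$, the symmetry $p(1-s)=p(s)$, and the evaluation $f((y+y^{\prime})/2)=2^{-r}$ — spell out what the paper leaves to the reader.
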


\begin{proposition}
\label{p2}The following inequalities hold:%
\begin{align*}
&  \left[  y^{r}+\left(  y^{\prime}\right)  ^{r}\right]  B\left(  \rho
,\rho;x,y\right) \\
&  \leq\int_{\frac{x+y}{2}}^{y}2s^{r}\left(  2s-y\right)  ^{\rho-1}\left(
1-2s+y\right)  ^{\rho-1}ds\\
&  \text{ \ \ \ \ }+\int_{y^{\prime}}^{\frac{x^{\prime}+y^{\prime}}{2}}%
2s^{r}\left(  2s-y^{\prime}\right)  ^{\rho-1}\left(  1-2s+y^{\prime}\right)
^{\rho-1}ds\\
&  \leq\int_{0}^{1}Hp_{1}\left(  t\right)  dt\\
&  \leq\frac{1}{2}\left[  \left[  y^{r}+\left(  y^{\prime}\right)
^{r}\right]  B\left(  \rho,\rho;x,y\right)  +B\left(  \rho+r,\rho;x,y\right)
+B\left(  \rho+r,\rho;y^{\prime},x^{\prime}\right)  \right]  .
\end{align*}%
\begin{align*}
&  2^{1-r}B\left(  \rho,\rho;x,y\right) \\
&  \leq\int_{\frac{x+y^{\prime}}{2}}^{\frac{1}{2}}2s^{r}\left(  2s-y^{\prime
}\right)  ^{\rho-1}\left(  1-2s+y^{\prime}\right)  ^{\rho-1}ds\\
&  \text{ \ \ }+\int_{\frac{1}{2}}^{\frac{y+x^{\prime}}{2}}2s^{r}\left(
2s-y\right)  ^{\rho-1}\left(  1-2s+y\right)  ^{\rho-1}ds\\
&  \leq\int_{0}^{1}Hp_{2}\left(  t\right)  dt\\
&  \leq\frac{\left[  y^{r}+\left(  y^{\prime}\right)  ^{r}\right]  B\left(
\rho,\rho;x,y\right)  +B\left(  \rho+r,\rho;x,y\right)  +B\left(  \rho
+r,\rho;y^{\prime},x^{\prime}\right)  }{2}.
\end{align*}%
\begin{align*}
0  &  \leq B\left(  \rho+r,\rho;x,y\right)  +B\left(  \rho+r,\rho;y^{\prime
},x^{\prime}\right)  -Hp_{1}\left(  t\right) \\
&  \leq\left(  1-t\right)  \left[  \left(  x^{r}+\left(  x^{\prime}\right)
^{r}\right)  \left(  y-x\right)  \right. \\
&  \left.  -\frac{1}{r+1}\left(  \left(  x^{\prime}\right)  ^{r+1}%
+y^{r+1}-x^{r+1}-\left(  y^{\prime}\right)  ^{r+1}\right)  \right]
\max\left\{  \left(  x-x^{2}\right)  ^{\rho-1},\left(  y-y^{2}\right)
^{\rho-1}\right\}
\end{align*}
for all $t\in\left[  0,1\right]  .$%
\begin{align*}
0  &  \leq\left[  x^{r}+\left(  x^{\prime}\right)  ^{r}\right]  B\left(
\rho,\rho;x,y\right)  -Hp_{1}\left(  t\right) \\
&  \leq r\left(  y-x\right)  \left(  \left(  x^{\prime}\right)  ^{r-1}%
-x^{r-1}\right)  B\left(  \rho,\rho;x,y\right)  \text{ \ \ \ }\left(
t\in\left[  0,1\right]  \right)  .
\end{align*}%
\begin{align*}
0  &  \leq Hp_{1}\left(  t\right)  -\left[  y^{r}+\left(  y^{\prime}\right)
^{r}\right]  B\left(  \rho,\rho;x,y\right) \\
&  \leq r\left(  y-x\right)  \left(  \left(  x^{\prime}\right)  ^{r-1}%
-x^{r-1}\right)  B\left(  \rho,\rho;x,y\right)  \text{ \ \ \ }\left(
t\in\left[  0,1\right]  \right)  .
\end{align*}

\end{proposition}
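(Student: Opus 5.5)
The plan is to specialize Theorem \ref{t2} to $f(s)=s^{r}$, $p(s)=(s-s^{2})^{\rho-1}$ with $x+x^{\prime}=y+y^{\prime}=1$, and translate each quantity through Remark \ref{r31}. The setting of this subsection supplies all hypotheses: $f$ is convex and differentiable on $[0,1]$ (since $r\geq1$), $p$ is nonnegative and symmetric about $\frac{1}{2}=\frac{x+x^{\prime}}{2}$, and $p$ is bounded on $\Omega\subset(0,1)$. First I would record the dictionary. We have $\int_{\Omega}p=2B(\rho,\rho;x,y)$, so $\frac{f(y)+f(y^{\prime})}{2}\int_{\Omega}p=[y^{r}+(y^{\prime})^{r}]B(\rho,\rho;x,y)$. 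Next, $\int_{\Omega}fp=B(\rho+r,\rho;x,y)+B(\rho+r,\rho;y^{\prime},x^{\prime})$. Since $\frac{y+y^{\prime}}{2}=\frac{1}{2}$, we get $f\left(\frac{y+y^{\prime}}{2}\right)\int_{\Omega}p=2^{1-r}B(\rho,\rho;x,y)$. Finally, $p(2s-y)=(2s-y)^{\rho-1}(1-2s+y)^{\rho-1}$, and similarly with $y^{\prime}$.

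With this dictionary, the first two displayed chains are exactly (\ref{2.5}) and (\ref{2.6}), term by term. In (\ref{2.6}) the breakpoint $\frac{y+y^{\prime}}{2}$ becomes $\frac{1}{2}$, which matches the limits written in the proposition.

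For the third inequality I would apply (\ref{2.7}). Here $\int_{\Omega}s^{r}ds=\frac{1}{r+1}\left(y^{r+1}-x^{r+1}+(x^{\prime})^{r+1}-(y^{\prime})^{r+1}\right)$. The remaining step, and the only one needing real care, is computing $\sup_{\Omega}p$. The map $s\mapsto s-s^{2}$ is symmetric about $\frac{1}{2}$ and increasing on $[x,y]$, so on $\Omega$ it takes values between $x-x^{2}$ and $y-y^{2}$. Hence $(s-s^{2})^{\rho-1}$ is monotone on $\Omega$ whichever sign $\rho-1$ has, and its supremum is $\max\{(x-x^{2})^{\rho-1},(y-y^{2})^{\rho-1}\}$. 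This covers both $\rho\geq1$ and $\rho<1$.

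For the last two inequalities I would apply (\ref{2.8}) and (\ref{2.9}) with $f^{\prime}(s)=rs^{r-1}$. This turns $(y-x)\frac{f^{\prime}(x^{\prime})-f^{\prime}(x)}{2}\int_{\Omega}p$ into $r(y-x)((x^{\prime})^{r-1}-x^{r-1})B(\rho,\rho;x,y)$. Together with the identities above, this yields the stated bounds for all $t\in[0,1]$.

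I expect no genuine obstacle. The work is bookkeeping: keeping the factor $2$ in $\int_{\Omega}p$ consistent, and doing the case check on $\rho$ for the supremum.
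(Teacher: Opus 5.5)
Your proposal is correct and follows the paper's own route: the paper obtains Proposition~\ref{p2} by specializing Theorem~\ref{t2}, i.e.\ (\ref{2.5})--(\ref{2.9}), to $f(s)=s^{r}$ and $p(s)=(s-s^{2})^{\rho-1}$ with $x+x^{\prime}=y+y^{\prime}=1$, and translating the quantities through Remark~\ref{r31}, exactly as you do. One wording fix: $(s-s^{2})^{\rho-1}$ is not monotone on all of $\Omega$, since it moves in opposite directions on $[x,y]$ and on $[y^{\prime},x^{\prime}]$; it is a monotone function of $s-s^{2}$, whose range on $\Omega$ is $[x-x^{2},y-y^{2}]$, and that is all your supremum computation needs.
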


\begin{proposition}
\label{p3}$Pp_{1}$\ is increasing on $\left[  0,1\right]  $\ and the following
inequalities hold for all $t\in\left[  0,1\right]  :$%
\begin{align*}
&  B\left(  \rho+r,\rho;x,y\right)  +B\left(  \rho+r,\rho;y^{\prime}%
,x^{\prime}\right) \\
&  =Pp_{1}\left(  0\right)  \leq Pp_{1}\left(  t\right)  \leq Pp_{1}\left(
1\right) \\
&  =\left[  x^{r}+\left(  x^{\prime}\right)  ^{r}\right]  B\left(  \rho
,\rho;x,y\right)  .
\end{align*}%
\begin{align*}
Pp_{1}\left(  t\right)   &  \leq\left(  1-t\right)  \left[  B\left(
\rho+r,\rho;x,y\right)  +B\left(  \rho+r,\rho;y^{\prime},x^{\prime}\right)
\right] \\
&  \text{ \ \ \ }+t\cdot\left[  x^{r}+\left(  x^{\prime}\right)  ^{r}\right]
B\left(  \rho,\rho;x,y\right) \\
&  \leq\left[  x^{r}+\left(  x^{\prime}\right)  ^{r}\right]  B\left(
\rho,\rho;x,y\right)  .
\end{align*}%
\[
Hp_{1}\left(  t\right)  \leq Pp_{1}\left(  t\right)  .\text{ }%
\]

\end{proposition}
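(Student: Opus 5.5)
This is Theorem \ref{t3} specialized to $f(s)=s^{r}$ and $p(s)=s^{\rho-1}(1-s)^{\rho-1}$, with $a=0$, $b=1$ and $x+x^{\prime}=y+y^{\prime}=1$. The plan is to check the hypotheses, apply Theorem \ref{t3}, and then rewrite each endpoint value through Remark \ref{r31} in terms of the extended incomplete Beta function.

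\textbf{Hypotheses.} Since $r\geq1$, $f$ is convex on $[0,1]$. The weight $p$ is nonnegative and integrable on $[x,x^{\prime}]$ because $0<x<x^{\prime}<1$. It is symmetric about $\frac{x+x^{\prime}}{2}=\frac12$, since $p(1-s)=p(s)$. So Theorem \ref{t3} applies, and it gives at once that $Pp_{1}$ is increasing on $[0,1]$, together with the inequalities (\ref{2.11}), (\ref{2.12}) and (\ref{2.13}).

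\textbf{Translation of the endpoint terms.} By Remark \ref{r31},
\[
\int_{\Omega}f(s)p(s)\,ds=B(\rho+r,\rho;x,y)+B(\rho+r,\rho;y^{\prime},x^{\prime}).
\]
The substitution $s\mapsto1-s$ maps $[y^{\prime},x^{\prime}]$ onto $[x,y]$ and preserves $p$, so
\[
\int_{\Omega}p(s)\,ds=2B(\rho,\rho;x,y).
\]
Hence
\[
\frac{f(x)+f(x^{\prime})}{2}\int_{\Omega}p(s)\,ds=\left[x^{r}+(x^{\prime})^{r}\right]B(\rho,\rho;x,y).
\]
Substituting these into (\ref{2.11}) and (\ref{2.12}) gives the first two displays of the proposition. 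Inequality (\ref{2.13}) gives $Hp_{1}(t)\leq Pp_{1}(t)$ verbatim.

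\textbf{Expected obstacle.} The only point needing care is the endpoint identifications $Pp_{1}(0)$ and $Pp_{1}(1)$ in this concrete setting. At $t=0$, the change of variables $s\mapsto x+x^{\prime}-s=1-s$ in the second summand of $Pp_{1}$, together with $p(1-s)=p(s)$, turns $\int_{x}^{y}f(1-s)p(s)\,ds$ into $\int_{y^{\prime}}^{x^{\prime}}f(s)p(s)\,ds$. At $t=1$ the integrand reduces to $\bigl(x^{r}+(x^{\prime})^{r}\bigr)p(s)$, integrated over $[x,y]$. Both are routine consequences of the same symmetry of $p$, so the proof is a direct specialization of Theorem \ref{t3}.
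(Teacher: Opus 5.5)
Your proposal is correct and follows the paper's route exactly: Proposition \ref{p3} is Theorem \ref{t3} specialized to $f(s)=s^{r}$ and $p(s)=s^{\rho-1}(1-s)^{\rho-1}$, with $\int_{\Omega}f(s)p(s)\,ds$ and $\int_{\Omega}p(s)\,ds$ rewritten through Remark \ref{r31}. Your hypothesis check could also state the ordering $x<y\le y'<x'$ that Theorem \ref{t3} requires, which follows from $y=(1-\alpha)x+\alpha x'$ with $0<\alpha\le\frac12$.
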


\begin{proposition}
\label{p4}The following inequalities hold:%
\begin{align*}
&  B\left(  \rho+r,\rho;x,y\right)  +B\left(  \rho+r,\rho;y^{\prime}%
,x^{\prime}\right) \\
&  \leq%
{\displaystyle\int\nolimits_{x}^{\frac{x+y}{2}}}
2s^{r}\left(  2s-x\right)  ^{\rho-1}\left(  1-2s+x\right)  ^{\rho-1}ds\\
&  \text{ \ \ \ \ }+\int_{\frac{x^{\prime}+y^{\prime}}{2}}^{x^{\prime}}%
2s^{r}\left(  2s-x^{\prime}\right)  ^{\rho-1}\left(  1-2s+x^{\prime}\right)
^{\rho-1}ds\\
&  \leq\int_{0}^{1}Pp_{1}\left(  t\right)  dt\\
&  \leq\frac{\left[  x^{r}+\left(  x^{\prime}\right)  ^{r}\right]  B\left(
\rho,\rho;x,y\right)  +B\left(  \rho+r,\rho;x,y\right)  +B\left(  \rho
+r,\rho;y^{\prime},x^{\prime}\right)  }{2}.
\end{align*}%
\begin{align*}
0  &  \leq t\left[  \frac{1}{r+1}\left(  \left(  x^{\prime}\right)
^{r+1}+y^{r+1}-x^{r+1}-\left(  y^{\prime}\right)  ^{r+1}\right)  \right. \\
&  \text{ \ \ \ \ \ }\left.  -\left(  y^{r}+\left(  y^{\prime}\right)
^{r}\right)  \left(  y-x\right)  \right]  \cdot\min\left\{  \left(
x-x^{2}\right)  ^{\rho-1},\left(  y-y^{2}\right)  ^{\rho-1}\right\} \\
&  \leq Pp_{1}\left(  t\right)  -B\left(  \rho+r,\rho;x,y\right)  -B\left(
\rho+r,\rho;y^{\prime},x^{\prime}\right)  \text{ \ \ \ }\left(  t\in\left[
0,1\right]  \right)  .
\end{align*}%
\begin{align*}
0  &  \leq Pp_{1}\left(  t\right)  -\left[  y^{r}+\left(  y^{\prime}\right)
^{r}\right]  B\left(  \rho,\rho;x,y\right) \\
&  \leq r\left(  y-x\right)  \left(  \left(  x^{\prime}\right)  ^{r-1}%
-x^{r-1}\right)  B\left(  \rho,\rho;x,y\right)  \text{ \ \ \ }\left(
t\in\left[  0,1\right]  \right)  .
\end{align*}%
\begin{align*}
0  &  \leq\left[  x^{r}+\left(  x^{\prime}\right)  ^{r}\right]  B\left(
\rho,\rho;x,y\right)  -Pp_{1}\left(  t\right) \\
&  \leq r\left(  y-x\right)  \left(  \left(  x^{\prime}\right)  ^{r-1}%
-x^{r-1}\right)  B\left(  \rho,\rho;x,y\right)  \text{ \ \ \ }\left(
t\in\left[  0,1\right]  \right)  .
\end{align*}%
\begin{align*}
0  &  \leq Pp_{1}\left(  t\right)  -Hp_{1}\left(  t\right) \\
&  \leq r\left(  y-x\right)  \left(  \left(  x^{\prime}\right)  ^{r-1}%
-x^{r-1}\right)  B\left(  \rho,\rho;x,y\right)  \text{ \ \ \ }\left(
t\in\left[  0,1\right]  \right)  .
\end{align*}

\end{proposition}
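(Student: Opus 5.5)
Proposition \ref{p4} is the specialization of Theorem \ref{t4} to $f(s)=s^{r}$ and $p(s)=s^{\rho-1}(1-s)^{\rho-1}$, with $x+x^{\prime}=y+y^{\prime}=1$. The plan is to substitute these choices into (\ref{2.14})--(\ref{2.18}) and translate every quantity through Remark \ref{r31}. This uses $\int_{\Omega}f p=B(\rho+r,\rho;x,y)+B(\rho+r,\rho;y^{\prime},x^{\prime})$ and $\int_{\Omega}p=2B(\rho,\rho;x,y)$. The factors $\frac12$ in $\frac{f(x)+f(x^{\prime})}{2}\int_\Omega p$ cancel against the $2$, which gives the terms $[x^{r}+(x^{\prime})^{r}]B(\rho,\rho;x,y)$ and $[y^{r}+(y^{\prime})^{r}]B(\rho,\rho;x,y)$.

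For the first chain, I apply (\ref{2.14}) directly. The middle terms become $\int 2s^{r}p(2s-x)\,ds$ and $\int 2s^{r}p(2s-x^{\prime})\,ds$. Since $x^{\prime}=1-x$, we have $1-(2s-x)=1-2s+x$ and $1-(2s-x^{\prime})=1-2s+x^{\prime}$, which gives the stated integrands.

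For the second chain, I use (\ref{2.15}). The bracket $\int_{\Omega}s^{r}ds-(y^{r}+(y^{\prime})^{r})(y-x)$ evaluates to $\frac{1}{r+1}\bigl((x^{\prime})^{r+1}+y^{r+1}-x^{r+1}-(y^{\prime})^{r+1}\bigr)-(y^{r}+(y^{\prime})^{r})(y-x)$. The key remaining point is $\inf_{\Omega}p$. The function $s(1-s)$ is symmetric about $\frac12$ and increases on $[0,\frac12]$, and $\Omega=[x,y]\cup[y^{\prime},x^{\prime}]$ lies on the two sides of $\frac12$. Hence $s-s^{2}$ ranges over $[x-x^{2},y-y^{2}]$ on $\Omega$. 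Since $u\mapsto u^{\rho-1}$ is monotone in either direction depending on whether $\rho\gtrless1$, the infimum is $\min\{(x-x^{2})^{\rho-1},(y-y^{2})^{\rho-1}\}$. I expect this step, handling $\rho<1$ versus $\rho\ge1$ uniformly through the min, to be the only place needing care.

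For the last three chains, I use (\ref{2.16})--(\ref{2.18}) with $f^{\prime}(s)=rs^{r-1}$. The common bound $(y-x)\frac{f^{\prime}(x^{\prime})-f^{\prime}(x)}{2}\int_\Omega p$ becomes $r(y-x)((x^{\prime})^{r-1}-x^{r-1})B(\rho,\rho;x,y)$. Differentiability of $f$ on $\Omega\subset(0,1)$ holds because $r\ge1$, and boundedness of $p$ on $\Omega$ holds because $x,x^{\prime}\in(0,1)$. With these checks the hypotheses of Theorem \ref{t4} are met, and the proposition follows.
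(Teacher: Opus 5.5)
Your proposal is correct and follows the paper's route: Proposition \ref{p4} is Theorem \ref{t4} with $f(s)=s^{r}$ and $p(s)=s^{\rho-1}(1-s)^{\rho-1}$, translated through Remark \ref{r31}, and the only non-mechanical step is your computation $\inf_{\Omega}p=\min\{(x-x^{2})^{\rho-1},(y-y^{2})^{\rho-1}\}$. One small correction: $r\ge1$ is what makes $f$ convex on $[0,1]$, whereas differentiability of $s^{r}$ on $\Omega\subset(0,1)$ holds for every real $r$.
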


\begin{proposition}
\label{p5}$Fp_{1}$\textit{\ is symmetric about }$\frac{1}{2},$%
\textit{\ decreasing on }$\left[  0,\frac{1}{2}\right]  $\textit{\ and
increasing on }$\left[  \frac{1}{2},1\right]  .$ \textit{The following
identities and inequalities hold:}%
\begin{align*}
&  \sup\limits_{t\in\left[  0,1\right]  }Fp_{1}\left(  t\right) \\
&  =2\left[  B\left(  \rho+r,\rho;x,y\right)  +B\left(  \rho+r,\rho;y^{\prime
},x^{\prime}\right)  \right]  B\left(  \rho,\rho;x,y\right)  .
\end{align*}%
\[
\inf\limits_{t\in\left[  0,1\right]  }Fp_{1}\left(  t\right)  =\int_{\Omega
}\int_{\Omega}\left(  s-s^{2}\right)  ^{\rho-1}\left(  u-u^{2}\right)
^{\rho-1}\left(  \frac{s+u}{2}\right)  ^{r}dsdu.
\]%
\[
\left[  Hp_{1}\left(  t\right)  +Hp_{2}\left(  t\right)  \right]  B\left(
\rho,\rho;x,y\right)  \leq Fp_{1}\left(  t\right)  \text{\ \ \ }\left(
t\in\left[  0,1\right]  \right)  .
\]%
\begin{align*}
&  \left[  y^{r}+2^{1-r}+\left(  y^{\prime}\right)  ^{r}\right]  B^{2}\left(
\rho,\rho;x,y\right) \\
&  \leq\int_{\Omega}\int_{\Omega}\left(  s-s^{2}\right)  ^{\rho-1}\left(
u-u^{2}\right)  ^{\rho-1}\left(  \frac{s+u}{2}\right)  ^{r}dsdu.
\end{align*}

\end{proposition}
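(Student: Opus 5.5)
This proposition is Theorem \ref{t5} specialised to $f(s)=s^{r}$ and $p(s)=(s-s^{2})^{\rho-1}$ on $[x,x^{\prime}]$, with $x+x^{\prime}=y+y^{\prime}=1$. The plan is to check the hypotheses, invoke Theorem \ref{t5}, and translate each conclusion into Beta-function notation using Remark \ref{r31}.

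First, the hypotheses. Since $r\geq1$, $f$ is convex on $[0,1]$. The weight $p$ is nonnegative and integrable on $[x,x^{\prime}]\subset(0,1)$, and it is symmetric about $\frac{x+x^{\prime}}{2}=\frac12$ because $p(1-s)=p(s)$. So Theorem \ref{t5} applies and gives the following.
\begin{itemize}
\item $Fp_{1}$ is symmetric about $\frac12$, decreasing on $[0,\frac12]$ and increasing on $[\frac12,1]$.
\item $\sup Fp_{1}=\int_{\Omega}fp\int_{\Omega}p$, by (\ref{2.19}).
\item $\inf Fp_{1}=Fp_{1}(\frac12)$, by (\ref{2.20}).
\item (\ref{2.21}) and (\ref{2.22}) hold.
\end{itemize}

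Next, the translation. By Remark \ref{r31}, $\int_{\Omega}f p=B(\rho+r,\rho;x,y)+B(\rho+r,\rho;y^{\prime},x^{\prime})$ and $\int_{\Omega}p=2B(\rho,\rho;x,y)$. Substituting these into (\ref{2.19}) gives the supremum formula.

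Formula (\ref{2.20}) becomes the infimum formula once $f(\frac{s+u}{2})=(\frac{s+u}{2})^{r}$ is written out.

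In (\ref{2.21}) the left side is $\frac{Hp_{1}+Hp_{2}}{2}\cdot 2B(\rho,\rho;x,y)=[Hp_{1}(t)+Hp_{2}(t)]B(\rho,\rho;x,y)$, which is the third claim.

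For the last claim I use (\ref{2.22}). Since $y+y^{\prime}=1$, we have $f(\frac{y+y^{\prime}}{2})=2^{-r}$. The left side of (\ref{2.22}) is therefore
\[
\frac{y^{r}+2^{1-r}+(y^{\prime})^{r}}{4}\cdot4B^{2}(\rho,\rho;x,y)=\bigl[y^{r}+2^{1-r}+(y^{\prime})^{r}\bigr]B^{2}(\rho,\rho;x,y).
\]
The right side of (\ref{2.22}) is $Fp_{1}(\frac12)$, which is the double integral above.

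There is no real obstacle, only bookkeeping. The constant $2^{1-r}$ arises as $2f(\frac12)$, and the factor $2$ from $\int_{\Omega}p$ gets squared to $4$, cancelling the $\frac14$. I will also check, as in Remark \ref{r31}, that $\int_{x}^{y}p=\int_{y^{\prime}}^{x^{\prime}}p$ by the symmetry $s\mapsto1-s$.
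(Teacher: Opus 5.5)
Your proposal is correct and follows the paper's own route. The paper obtains Proposition~\ref{p5} by specializing Theorem~\ref{t5} to $f(s)=s^{r}$ and $p(s)=(s-s^{2})^{\rho-1}$ with $x+x^{\prime}=y+y^{\prime}=1$, then rewriting (\ref{2.19})--(\ref{2.22}) through Remark~\ref{r31}, and your bookkeeping checks out: $\int_{\Omega}p=2B(\rho,\rho;x,y)$, $2f\left(\frac{1}{2}\right)=2^{1-r}$, and the factor $4$ cancels the $\frac{1}{4}$.
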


\begin{proposition}
\label{p6}The following inequalities hold:%
\[
Hp_{1}\left(  t\right)  \leq2G_{1}\left(  t\right)  B\left(  \rho
,\rho;x,y\right)  \leq Pp_{1}\left(  t\right)  \text{\ \ \ }\left(
t\in\left[  0,1\right]  \right)  .
\]%
\begin{align*}
&  \int_{\frac{x+y}{2}}^{y}2s^{r}\left(  2s-y\right)  ^{\rho-1}\left(
1-2s+y\right)  ^{\rho-1}ds\\
&  +\int_{y^{\prime}}^{\frac{x^{\prime}+y^{\prime}}{2}}2s^{r}\left(
2s-y^{\prime}\right)  ^{\rho-1}\left(  1-2s+y^{\prime}\right)  ^{\rho-1}ds\\
&  \leq\left[  \left(  \frac{x+y}{2}\right)  ^{r}+\left(  \frac{x^{\prime
}+y^{\prime}}{2}\right)  ^{r}\right]  B\left(  \rho,\rho;x,y\right) \\
&  \leq\frac{1}{2}\int_{x}^{y}\left(  s-s^{2}\right)  ^{\rho-1}\left[  \left(
\frac{s+x}{2}\right)  ^{r}+\left(  \frac{x+2y-s}{2}\right)  ^{r}\right]  ds\\
&  +\frac{1}{2}\int_{y^{\prime}}^{x^{\prime}}\left(  s-s^{2}\right)  ^{\rho
-1}\left[  \left(  \frac{s+x^{\prime}}{2}\right)  ^{r}+\left(  \frac
{x^{\prime}+2y^{\prime}-s}{2}\right)  ^{r}\right]  ds\\
&  \leq\frac{x^{r}+y^{r}+\left(  y^{\prime}\right)  ^{r}+\left(  x^{\prime
}\right)  ^{r}}{2}B\left(  \rho,\rho;x,y\right)  .
\end{align*}%
\begin{align*}
&  \int_{\frac{x+y^{\prime}}{2}}^{\frac{1}{2}}2s^{r}\left(  2s-y^{\prime
}\right)  ^{\rho-1}\left(  1-2s+y^{\prime}\right)  ^{\rho-1}ds\\
&  +\int_{\frac{1}{2}}^{\frac{x^{\prime}+y}{2}}2s^{r}\left(  2s-y\right)
^{\rho-1}\left(  1-2s+y\right)  ^{\rho-1}ds\\
&  \leq\left[  \left(  \frac{x+y^{\prime}}{2}\right)  ^{r}+\left(
\frac{x^{\prime}+y}{2}\right)  ^{r}\right]  B\left(  \rho,\rho;x,y\right) \\
&  \leq\frac{1}{2}\int_{x}^{y}\left(  s-s^{2}\right)  ^{\rho-1}\left[  \left(
\frac{s+x}{2}\right)  ^{r}+\left(  \frac{x+2y^{\prime}-s}{2}\right)
^{r}\right]  ds\\
&  +\frac{1}{2}\int_{y^{\prime}}^{x^{\prime}}\left(  s-s^{2}\right)  ^{\rho
-1}\left[  \left(  \frac{s+x^{\prime}}{2}\right)  ^{r}+\left(  \frac
{x^{\prime}+2y-s}{2}\right)  ^{r}\right]  ds\\
&  \leq\frac{x^{r}+y^{r}+\left(  y^{\prime}\right)  ^{r}+\left(  x^{\prime
}\right)  ^{r}}{2}B\left(  \rho,\rho;x,y\right)  .
\end{align*}%
\begin{align*}
0  &  \leq Hp_{1}\left(  t\right)  -\left[  y^{r}+\left(  y^{\prime}\right)
^{r}\right]  B\left(  \rho,\rho;x,y\right) \\
&  \leq2G_{1}\left(  t\right)  B\left(  \rho,\rho;x,y\right)  -Hp_{1}\left(
t\right)  \text{\ \ \ }\left(  t\in\left[  0,1\right]  \right)  .
\end{align*}%
\begin{align*}
0  &  \leq Pp_{1}\left(  t\right)  -2G_{1}\left(  t\right)  B\left(  \rho
,\rho;x,y\right) \\
&  \leq\left[  x^{r}+\left(  x^{\prime}\right)  ^{r}\right]  B\left(
\rho,\rho;x,y\right)  -Pp_{1}\left(  t\right)  \text{\ \ \ }\left(
t\in\left[  0,1\right]  \right)  .
\end{align*}

\end{proposition}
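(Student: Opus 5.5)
This proposition is Theorem \ref{t6} specialized to the data of this subsection, with the quantities rewritten via Remark \ref{r31}. The proof is therefore substitution, line by line, into (\ref{2.26})--(\ref{2.30}) with $f(s)=s^{r}$ and $p(s)=s^{\rho-1}(1-s)^{\rho-1}$ on $[x,x^{\prime}]$. First I check the hypotheses of Section 2:
\begin{itemize}
\item $f$ is convex on $[0,1]$ because $r\geq1$;
\item $p$ is nonnegative and integrable on $[x,x^{\prime}]$ (note $x>0$ and $x^{\prime}<1$);
\item $p$ is symmetric about $\frac{x+x^{\prime}}{2}=\frac12$, since $p(1-s)=p(s)$;
\item the ordering $x<y\leq y^{\prime}<x^{\prime}$ with $x+x^{\prime}=y+y^{\prime}=1$ holds.
\end{itemize}
So Theorem \ref{t6} applies.

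Next I translate the constants. By symmetry, $\int_{\Omega}p(s)\,ds=2B(\rho,\rho;x,y)$. Hence:
\begin{itemize}
\item in (\ref{2.26}), $G_{1}(t)\int_{\Omega}p=2G_{1}(t)B(\rho,\rho;x,y)$, which gives the first line;
\item in (\ref{2.29}), $\frac{f(y)+f(y^{\prime})}{2}\int_{\Omega}p=[y^{r}+(y^{\prime})^{r}]B(\rho,\rho;x,y)$;
\item in (\ref{2.30}), $\frac{f(x)+f(x^{\prime})}{2}\int_{\Omega}p=[x^{r}+(x^{\prime})^{r}]B(\rho,\rho;x,y)$.
\end{itemize}
For (\ref{2.27}) I expand $p(2s-y)=(2s-y)^{\rho-1}(1-2s+y)^{\rho-1}$, and likewise $p(2s-y^{\prime})$, to get the stated integrands. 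Each term $\frac12[f(\cdot)+f(\cdot)]\int_{\Omega}p$ becomes $[(\cdot)^{r}+(\cdot)^{r}]B(\rho,\rho;x,y)$. The last bound $\frac14[\ldots]\int_{\Omega}p$ becomes $\frac{x^{r}+y^{r}+(y^{\prime})^{r}+(x^{\prime})^{r}}{2}B(\rho,\rho;x,y)$.

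For (\ref{2.28}) I do the same. Here $\frac{y+y^{\prime}}{2}=\frac12$, so the integration limits become $\frac{x+y^{\prime}}{2},\frac12,\frac{x^{\prime}+y}{2}$, matching the statement.

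No step is a real obstacle. The only care needed is bookkeeping: correctly converting the halving factor $\frac12\cdot 2B$ in each term, and writing the explicit form $p(2s-c)=(2s-c)^{\rho-1}(1-2s+c)^{\rho-1}$ that appears in the printed integrands.
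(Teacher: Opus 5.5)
Your route is the paper's own (it obtains Proposition~\ref{p6} by plugging Remark~\ref{r31} into Theorem~\ref{t6}), and it works for the first three displays. Inequalities (\ref{2.26})--(\ref{2.28}) are proved by pointwise uses of Lemma~\ref{l2} together with the reflection $s\mapsto x+x^{\prime}-s$, so the hypotheses you checked suffice. Your bookkeeping is also right: $\int_{\Omega}p=2B(\rho,\rho;x,y)$, $\frac{y+y^{\prime}}{2}=\frac12$, and the explicit form of $p(2s-c)$.

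The gap is in the last two displays. You treat (\ref{2.29})--(\ref{2.30}) as consequences of the hypotheses you verified, but they are not. The paper derives them from (\ref{2.31})--(\ref{2.32}). The identities given there for $2Hp_{1}(t)$ and $2Pp_{1}(t)$ substitute $s\mapsto x+y-s$ in $\int_{x}^{y}\cdots p(s)\,ds$ while keeping the weight $p(s)$. That tacitly uses $p(s)=p(x+y-s)$ on $[x,y]$; compare the extra hypothesis stated explicitly in Theorem~\ref{t7}(4). Concretely, put $u=y-s$ and $\phi(u)=f(y-tu)+f(y^{\prime}+tu)$, which is convex and nondecreasing with $\phi(0)=f(y)+f(y^{\prime})$ and $\phi(y-x)=2G_{1}(t)$. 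Then (\ref{2.31}) reads
\[
\int_{0}^{y-x}\phi(u)\,p(y-u)\,du\leq\frac{\phi(0)+\phi(y-x)}{2}\int_{0}^{y-x}p(y-u)\,du .
\]
This is the right half of Fej\'{e}r's inequality, which needs a symmetric weight. But $(s-s^{2})^{\rho-1}$ is not symmetric about $\frac{x+y}{2}$ unless $\rho=1$.

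This is not just a missing justification: the two displays are false in general. Take $r=2$, $y=y^{\prime}=\frac12$ (so $\alpha=\frac12$), $x^{\prime}=1-x$, and $t=1$ in the fourth display; the fifth display at $t=0$ gives the same inequality. With $q(s)=s(1-s)$, the right-hand inequality becomes $\int_{x}^{1/2}q^{\rho}\,ds\geq\frac{q(x)+q(\frac12)}{2}\int_{x}^{1/2}q^{\rho-1}\,ds$. As $x\to0^{+}$ the two sides tend to $\frac{\rho}{2(2\rho+1)}\cdot\frac12B(\rho,\rho)$ and $\frac18\cdot\frac12B(\rho,\rho)$. So it fails for every $\rho<\frac12$ once $x$ is small. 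For example, with $\rho=0.1$, $x=0.001$ and $B=B(\rho,\rho;x,\frac12)$, one gets $Hp_{1}(1)-[y^{2}+(y^{\prime})^{2}]B\approx1.60$ but $2G_{1}(1)B-Hp_{1}(1)\approx0.81$.

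What survives is the following:
\begin{itemize}
\item The left parts $0\leq\cdots$ of both displays, which come from (\ref{2.1}) and (\ref{2.26}).
\item For $\rho\geq1$, the full displays. Then $p$ is nondecreasing on $[x,y]\subset(0,\frac12]$, so $u\mapsto p(y-u)$ is nonincreasing while $\phi$ is nondecreasing. Chebyshev's integral inequality followed by Hermite--Hadamard for $\phi$ gives (\ref{2.31}). Likewise (\ref{2.32}) follows with $\psi(v)=f(x+(1-t)v)+f(x^{\prime}-(1-t)v)$, which is nonincreasing with $\psi(y-x)=2G_{1}(t)$, against the nondecreasing weight $p(x+v)$.
\end{itemize}
A correct proof of the last two displays therefore needs the added hypothesis $\rho\geq1$, or some substitute.
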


\begin{proposition}
\label{p7}The following inequalities hold:%
\[
\left[  G_{1}\left(  t\right)  +G_{2}\left(  t\right)  \right]  B\left(
\rho,\rho;x,y\right)  \leq Lp_{1}\left(  t\right)  \leq Pp_{1}\left(
t\right)  \text{\ }\left(  t\in\left[  0,1\right]  \right)  .\text{ }%
\]%
\[
\sup\limits_{t\in\left[  0,1\right]  }Lp_{1}\left(  t\right)  =Lp_{1}\left(
1\right)  =\left[  x^{r}+\left(  x^{\prime}\right)  ^{r}\right]  B\left(
\rho,\rho;x,y\right)  .
\]%
\begin{align*}
&  \left[  Hp_{1}\left(  1-t\right)  +Hp_{2}\left(  1-t\right)  \right]
B\left(  \rho,\rho;x,y\right) \\
&  \leq Fp_{1}\left(  t\right)  \leq2Lp_{1}\left(  t\right)  B\left(
\rho,\rho;x,y\right)  \text{\ \ \ }\left(  t\in\left[  0,1\right]  \right)  .
\end{align*}%
\begin{align*}
&  \left[  Hp_{1}\left(  t\right)  +Hp_{2}\left(  t\right)  \right]  B\left(
\rho,\rho;x,y\right) \\
&  \leq Fp_{1}\left(  t\right)  \leq2Lp_{1}\left(  t\right)  B\left(
\rho,\rho;x,y\right)  \text{\ \ \ }\left(  t\in\left[  0,1\right]  \right)  .
\end{align*}%
\begin{align*}
&  \frac{Hp_{1}\left(  t\right)  +Hp_{2}\left(  t\right)  +Hp_{1}\left(
1-t\right)  +Hp_{2}\left(  1-t\right)  }{2}B\left(  \rho,\rho;x,y\right) \\
&  \leq Fp_{1}\left(  t\right)  \leq2Lp_{1}\left(  t\right)  B\left(
\rho,\rho;x,y\right)  \text{\ \ \ \ \ \ \ \ \ \ \ \ \ \ \ }\left(  t\in\left[
0,1\right]  \right)  .
\end{align*}

\end{proposition}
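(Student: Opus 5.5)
The plan is to read Proposition \ref{p7} directly off Theorem \ref{t7}, specialised to $f(s)=s^{r}$ and $p(s)=(s-s^{2})^{\rho-1}$ on $[x,x^{\prime}]$, with the constants rewritten by Remark \ref{r31}. First we check the hypotheses of Section 2.1. Since $r\geq1$, $f$ is convex on $[0,1]$. Since $x+x^{\prime}=1$, we have $p(1-s)=p(s)$, so $p$ is nonnegative, integrable on $[x,x^{\prime}]\subset(0,1)$, and symmetric about $\frac{x+x^{\prime}}{2}=\frac12$. The condition $x+x^{\prime}=y+y^{\prime}$ holds by assumption. Hence Theorem \ref{t7} applies verbatim.

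The key conversion is
\[
\int_{\Omega}p(s)\,ds=2B(\rho,\rho;x,y),
\]
from Remark \ref{r31}. It holds because the substitution $s\mapsto1-s$ maps $[y^{\prime},x^{\prime}]$ onto $[x,y]$ and preserves $p$. With this, we substitute into each conclusion of Theorem \ref{t7}:
\begin{enumerate}
\item Inequality (\ref{2.33}) becomes $\frac{G_{1}+G_{2}}{2}\cdot2B(\rho,\rho;x,y)\leq Lp_{1}\leq Pp_{1}$. This is the first line of the proposition.
\item Identity (\ref{2.34}) gives $Lp_{1}(1)=\frac{x^{r}+(x^{\prime})^{r}}{2}\cdot2B(\rho,\rho;x,y)$.
\item In (\ref{2.35})--(\ref{2.37}) the factor $\int_{\Omega}p$ on both sides turns into $2B(\rho,\rho;x,y)$. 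So $\frac{Hp_{1}+Hp_{2}}{2}\int_{\Omega}p$ becomes $[Hp_{1}+Hp_{2}]B(\rho,\rho;x,y)$, the four-term average becomes $\frac{\sum}{2}B(\rho,\rho;x,y)$, and $Lp_{1}\int_{\Omega}p$ becomes $2Lp_{1}B(\rho,\rho;x,y)$.
\end{enumerate}

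No real analytic obstacle arises; the proof is purely a matter of bookkeeping. The only step needing care is the normalisation of constants, namely making sure the factors $\frac12$ and $2$ land as stated. One subtle point is that $Lp_{1}$ already carries a $\frac12$ in its definition, while $G_{1}$ and $G_{2}$ carry their own $\frac12$. We will track these factors explicitly when writing each line rather than invoking the corollary loosely.
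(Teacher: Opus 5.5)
Your proposal is correct and follows the paper's own route: the paper obtains Proposition~\ref{p7} by specialising Theorem~\ref{t7} (inequalities (\ref{2.33})--(\ref{2.37})) to $f(s)=s^{r}$ and $p(s)=(s-s^{2})^{\rho-1}$, then substituting $\int_{\Omega}p(s)\,ds=2B(\rho,\rho;x,y)$ from Remark~\ref{r31}. Your tracking of the factors $\frac{1}{2}$ and $2$ is correct in every line.
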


\begin{proposition}
\label{p8}$Ip_{1}$\ is increasing on $\left[  0,1\right]  $\ and the following
inequalities hold for all $t\in\left[  0,1\right]  :$%
\begin{align*}
&  2\left[  y^{r}+\left(  y^{\prime}\right)  ^{r}\right]  B\left(  \rho
,\rho;x,\frac{1}{2}\right) \\
&  =Ip_{1}\left(  0\right)  \leq Ip_{1}\left(  t\right)  \leq Ip_{1}\left(
1\right) \\
&  =\int\nolimits_{x}^{x^{\prime}}\left(  s-s^{2}\right)  ^{\rho-1}\left[
\left(  \left(  1-\alpha\right)  x+\alpha s\right)  ^{r}+\left(  \left(
1-\alpha\right)  x^{\prime}+\alpha s\right)  ^{r}\right]  ds.
\end{align*}%
\begin{align*}
Ip_{1}\left(  t\right)   &  \leq2\left(  1-t\right)  \left[  y^{r}+\left(
y^{\prime}\right)  ^{r}\right]  B\left(  \rho,\rho;x,\frac{1}{2}\right) \\
&  +t\int\nolimits_{x}^{x^{\prime}}\left(  s-s^{2}\right)  ^{\rho-1}\left[
\left(  \left(  1-\alpha\right)  x+\alpha s\right)  ^{r}+\left(  \left(
1-\alpha\right)  x^{\prime}+\alpha s\right)  ^{r}\right]  ds\\
&  \leq\int\nolimits_{x}^{x^{\prime}}\left(  s-s^{2}\right)  ^{\rho-1}\left[
\left(  \left(  1-\alpha\right)  x+\alpha s\right)  ^{r}+\left(  \left(
1-\alpha\right)  x^{\prime}+\alpha s\right)  ^{r}\right]  ds\\
&  \leq2\left[  x^{r}+\left(  x^{\prime}\right)  ^{r}\right]  B\left(
\rho,\rho;x,\frac{1}{2}\right)  .
\end{align*}

\end{proposition}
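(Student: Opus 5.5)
Proposition \ref{p8} is Theorem \ref{t8} specialized to $f(s)=s^{r}$ and $p(s)=(s-s^{2})^{\rho-1}$ on $[x,x^{\prime}]$, with $x+x^{\prime}=1$. So the plan is to check the hypotheses of Theorem \ref{t8} and then rewrite each quantity in (\ref{2.41}) and (\ref{2.42}) using the extended incomplete Beta function, as in Remark \ref{r31}.

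First, the hypotheses. Since $r\geq1$, $f(s)=s^{r}$ is convex on $[0,1]$. Since $p(1-s)=p(s)$ and $x+x^{\prime}=1$, $p$ is nonnegative and symmetric about $\frac{x+x^{\prime}}{2}=\frac{1}{2}$. It is integrable on $[x,x^{\prime}]\subset(0,1)$ because it is continuous there, even when $\rho<1$. The parameters satisfy $y=(1-\alpha)x+\alpha x^{\prime}$ with $0<\alpha\leq\frac12$. Because $x+x^{\prime}=y+y^{\prime}=1$, this forces $y^{\prime}=\alpha x+(1-\alpha)x^{\prime}$, so we are exactly in the setting of Subsection 2.2. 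Theorem \ref{t8} then gives that $Ip_{1}$ is increasing on $[0,1]$, together with (\ref{2.41}) and (\ref{2.42}).

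Next, the rewriting. By symmetry of $p$ about $\frac12$,
\[
\int_{x}^{x^{\prime}}p(s)\,ds=2\int_{x}^{1/2}s^{\rho-1}(1-s)^{\rho-1}ds=2B\left(\rho,\rho;x,\tfrac12\right).
\]
Substituting this into $Ip_{1}(0)=[f(y)+f(y^{\prime})]\int_{x}^{x^{\prime}}p$ gives $2\left[y^{r}+(y^{\prime})^{r}\right]B(\rho,\rho;x,\frac12)$. Likewise the final bound $[f(x)+f(x^{\prime})]\int_{x}^{x^{\prime}}p$ becomes $2\left[x^{r}+(x^{\prime})^{r}\right]B(\rho,\rho;x,\frac12)$. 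The middle quantity $Ip_{1}(1)$ is copied verbatim with $f(\cdot)=(\cdot)^{r}$ and $p(s)=(s-s^{2})^{\rho-1}$; it has no closed Beta form, and the proposition leaves it as an integral.

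The only point needing care is the symmetry identity for $\int_{x}^{x^{\prime}}p$, which is a one-line substitution $s\mapsto1-s$. Everything else is direct substitution into (\ref{2.41}) and (\ref{2.42}), so I expect no real obstacle.
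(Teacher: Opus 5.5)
Your proposal is correct and follows exactly the paper's route: Proposition \ref{p8} is Theorem \ref{t8} specialized to $f(s)=s^{r}$ and $p(s)=(s-s^{2})^{\rho-1}$, with the identity $\int_{x}^{x^{\prime}}p(s)\,ds=2B(\rho,\rho;x,\frac{1}{2})$ from Remark \ref{r31} putting $Ip_{1}(0)$ and the final bound into Beta form. Your hypothesis checks are all sound: convexity of $s^{r}$ for $r\geq1$, symmetry of $p$ about $\frac{1}{2}$, integrability on $[x,x^{\prime}]\subset(0,1)$, and $y^{\prime}=\alpha x+(1-\alpha)x^{\prime}$ being forced by $x+x^{\prime}=y+y^{\prime}=1$.
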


\begin{proposition}
\label{p9}\textit{Let }$m=\frac{y^{\prime}-y}{2\left(  y^{\prime}-x\right)  }
$ \ and $m^{\prime}=\left\{
\begin{array}
[c]{cc}%
\frac{1}{2}, & y\neq y^{\prime}\\
0, & y=y^{\prime}%
\end{array}
\right.  .$ Then:
\end{proposition}

\begin{enumerate}
\item $Ip_{2}$\textit{\ is decreasing on }$\left[  0,m\right]  $\textit{\ and
increasing on }$\left[  m^{\prime},1\right]  .$

\item \textit{The following inequalities hold for all }$t\in\left[
0,1\right]  :$%
\begin{align*}
&  2^{2-r}B\left(  \rho,\rho;x,\frac{1}{2}\right) \\
&  \leq Ip_{2}\left(  t\right)  \leq2\left(  1-t\right)  \left[  y^{r}+\left(
y^{\prime}\right)  ^{r}\right]  B\left(  \rho,\rho;x,\frac{1}{2}\right) \\
&  +t\int\nolimits_{x}^{x^{\prime}}\left(  s-s^{2}\right)  ^{\rho-1}\left[
\left(  \left(  1-\alpha\right)  x+\alpha s\right)  ^{r}+\left(  \left(
1-\alpha\right)  x^{\prime}+\alpha s\right)  ^{r}\right]  ds\\
&  \leq\int\nolimits_{x}^{x^{\prime}}\left(  s-s^{2}\right)  ^{\rho-1}\left[
\left(  \left(  1-\alpha\right)  x+\alpha s\right)  ^{r}+\left(  \left(
1-\alpha\right)  x^{\prime}+\alpha s\right)  ^{r}\right]  ds\\
&  \leq2\left[  x^{r}+\left(  x^{\prime}\right)  ^{r}\right]  B\left(
\rho,\rho;x,\frac{1}{2}\right)  .
\end{align*}%
\[
Ip_{2}\left(  t\right)  \leq Ip_{1}\left(  t\right)  .
\]

\end{enumerate}

\begin{proposition}
\label{p10}$Jp$\ is increasing on $\left[  0,1\right]  $\ and the following
inequality holds for all $t\in\left[  0,1\right]  :$%
\begin{align*}
&  2\left[  \left(  \frac{x+y}{2}\right)  ^{r}+\left(  \frac{x^{\prime
}+y^{\prime}}{2}\right)  ^{r}\right]  B\left(  \rho,\rho;x,\frac{1}{2}\right)
\\
&  =Jp\left(  0\right)  \leq Jp\left(  t\right)  \leq Jp\left(  1\right) \\
&  =\int\nolimits_{x}^{x^{\prime}}\left(  s-s^{2}\right)  ^{\rho-1}\left[
\left(  \left(  1-\alpha\right)  x+\alpha s\right)  ^{r}+\left(  \left(
1-\alpha\right)  x^{\prime}+\alpha s\right)  ^{r}\right]  ds.
\end{align*}

\end{proposition}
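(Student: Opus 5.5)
Proposition \ref{p10} is the specialization of Theorem \ref{t10} to the data of this subsection, namely $f(s)=s^{r}$, $p(s)=s^{\rho-1}(1-s)^{\rho-1}$, $a=0$, $b=1$, $x+x^{\prime}=y+y^{\prime}=1$. The plan is to check the hypotheses of Theorem \ref{t10}, invoke it, and rewrite each term through Remark \ref{r31}.

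\emph{Hypotheses.} Since $r\geq1$, the function $f$ is convex on $[0,1]$. The weight $p$ is nonnegative and integrable on $[x,x^{\prime}]\subset(0,1)$. It is symmetric about $\frac{x+x^{\prime}}{2}=\frac{1}{2}$, because
\[
p(1-s)=(1-s)^{\rho-1}s^{\rho-1}=p(s).
\]
The relations $y=(1-\alpha)x+\alpha x^{\prime}$ and $y^{\prime}=\alpha x+(1-\alpha)x^{\prime}$ with $0<\alpha\leq\frac12$ are assumed in the setting, so Theorem \ref{t10} applies. It gives directly that $Jp$ is increasing on $[0,1]$, together with the chain
\[
Jp(0)\leq Jp(t)\leq Jp(1).
\]

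\emph{Evaluating the endpoints.} At $t=1$, the integrand of $Jp$ in Remark \ref{r31} reduces to
\[
(s-s^{2})^{\rho-1}\left[\left((1-\alpha)x+\alpha s\right)^{r}+\left((1-\alpha)x^{\prime}+\alpha s\right)^{r}\right],
\]
which is the stated right-hand side. At $t=0$, the bracket is constant in $s$, so
\[
Jp(0)=\left[\left(\tfrac{x+y}{2}\right)^{r}+\left(\tfrac{x^{\prime}+y^{\prime}}{2}\right)^{r}\right]\int_{x}^{x^{\prime}}p(s)\,ds .
\]

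\emph{The weight integral.} By the symmetry of $p$ about $\frac12$,
\[
\int_{x}^{x^{\prime}}p(s)\,ds=2\int_{x}^{1/2}s^{\rho-1}(1-s)^{\rho-1}\,ds=2B\!\left(\rho,\rho;x,\tfrac12\right),
\]
as already recorded in Remark \ref{r31}. This produces the factor $2$ in front of the bracket.

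\emph{Main obstacle.} The only real point of care is notational. Equation \eqref{2.47} as printed omits the parentheses around the sum $f\left(\frac{x+y}{2}\right)+f\left(\frac{x^{\prime}+y^{\prime}}{2}\right)$. From the identity for $Jp(0)$ above, the intended meaning is that the whole sum multiplies $\int_{x}^{x^{\prime}}p(s)\,ds$. That reading is exactly the form stated in the proposition, so no further argument is needed.
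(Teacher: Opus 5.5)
Your proposal is correct and follows the same route as the paper. The paper obtains Proposition~\ref{p10} by specializing Theorem~\ref{t10} to $f(s)=s^{r}$ and $p(s)=s^{\rho-1}(1-s)^{\rho-1}$, then rewriting the endpoint values through Remark~\ref{r31}, including $\int_{x}^{x^{\prime}}p(s)\,ds=2B\left(\rho,\rho;x,\frac{1}{2}\right)$. Your hypothesis check ($r\geq1$ for convexity, and $p(1-s)=p(s)$ about $\frac{x+x^{\prime}}{2}=\frac{1}{2}$) and your reading of the missing parentheses in $\left(\ref{2.47}\right)$ are exactly what that specialization requires.
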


\begin{proposition}
\label{p11}\textit{For all }$t\in\left[  0,1\right]  ,$ we have $Ip_{1}\left(
t\right)  \leq Jp\left(  t\right)  .$
\end{proposition}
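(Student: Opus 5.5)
This is Theorem \ref{t11} specialized to the data of this subsection, so the plan is to verify the hypotheses and invoke it. With $a=0$, $b=1$, the function $f(s)=s^{r}$ ($r\geq 1$) is convex on $[0,1]$. The weight $p(s)=(s-s^{2})^{\rho-1}$ is nonnegative and integrable on $[x,x^{\prime}]$ because $x,x^{\prime}\in(0,1)$; even for $\rho<1$ it stays bounded away from the singular endpoints $0,1$. It is symmetric about $\frac{x+x^{\prime}}{2}=\frac12$, since $p(1-s)=p(s)$.

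Next I check the geometric constraints of Subsection 2.2. We need $y=(1-\alpha)x+\alpha x^{\prime}$ and $y^{\prime}=\alpha x+(1-\alpha)x^{\prime}$ with $0<\alpha\leq\frac12$. The second relation follows from the first together with $y+y^{\prime}=x+x^{\prime}=1$. These give $x\leq y\leq y^{\prime}\leq x^{\prime}$, and $x<y$ because $\alpha>0$ and $x<\frac12<x^{\prime}$, so all standing assumptions hold.

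Under these hypotheses, the functions $Ip_{1}$ and $Jp$ listed in Remark \ref{r31} are exactly the general $Ip_{1}$ and $Jp$ of Subsection 2.2 with $f(s)=s^{r}$ and $p(s)=(s-s^{2})^{\rho-1}$ substituted. Theorem \ref{t11} therefore gives $Ip_{1}(t)\leq Jp(t)$ on $[0,1]$ directly.

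If a self-contained argument is preferred, I would repeat the proof of Theorem \ref{t11}. The first step is to rewrite both functions through the identities (\ref{2.43}) and (\ref{2.48}) as integrals over $[x,y]$ with the common weight $\frac{1}{\alpha}p\left(\frac{1}{\alpha}(s-x)+x\right)$. The second step is to compare the integrands pointwise using Lemma \ref{l2}. The point set $ts+(1-t)\frac{x+y}{2}$, $t(1-s)+(1-t)\frac{x^{\prime}+y^{\prime}}{2}$ lies outside the pair $ts+(1-t)y$, $t(1-s)+(1-t)y^{\prime}$, and both pairs have the same sum. The final step is to integrate in $s$.

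The only point needing care is the ordering check in that pointwise comparison, namely $ts+(1-t)\frac{x+y}{2}\leq ts+(1-t)y$ together with the symmetric inequality. Both follow from $\frac{x+y}{2}\leq y$ and $\frac{x^{\prime}+y^{\prime}}{2}\geq y^{\prime}$, so no real obstacle is expected.
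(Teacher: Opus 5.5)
Your proposal is correct and follows the paper's route exactly: Proposition~\ref{p11} is Theorem~\ref{t11} specialized, via Remark~\ref{r31}, to $f(s)=s^{r}$ and $p(s)=(s-s^{2})^{\rho-1}$. Your check of the hypotheses (convexity of $f$ for $r\geq 1$, integrability of $p$ and its symmetry about $\frac12$, and $x<y\leq y^{\prime}<x^{\prime}$ from $0<\alpha\leq\frac12$) is exactly what that specialization needs. One small refinement to your optional self-contained sketch: since the two pairs have equal sums, the two inequalities you flag are equivalent, and Lemma~\ref{l2} additionally requires $ts+(1-t)y\leq t(1-s)+(1-t)\frac{x^{\prime}+y^{\prime}}{2}$, which holds at once because $s\leq y\leq\frac12\leq 1-s$ and $y\leq y^{\prime}\leq\frac{x^{\prime}+y^{\prime}}{2}$.
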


\begin{proposition}
\label{p12}$Mp$\ is increasing on $\left[  0,1\right]  $\ and the following
inequalities hold for all $t\in\left[  0,1\right]  :$%
\begin{align*}
&  \int\nolimits_{x}^{x^{\prime}}\left(  s-s^{2}\right)  ^{\rho-1}\left[
\left(  \left(  1-\alpha\right)  x+\alpha s\right)  ^{r}+\left(  \left(
1-\alpha\right)  x^{\prime}+\alpha s\right)  ^{r}\right]  ds\\
&  =Mp\left(  0\right)  \leq Mp\left(  t\right)  \leq Mp\left(  1\right) \\
&  =\left[  x^{r}+y^{r}+\left(  y^{\prime}\right)  ^{r}+\left(  x^{\prime
}\right)  ^{r}\right]  B\left(  \rho,\rho;x,\frac{1}{2}\right)  .
\end{align*}%
\begin{align*}
&  Mp\left(  t\right) \\
&  \leq\left(  1-t\right)  \int\nolimits_{x}^{x^{\prime}}\left(
s-s^{2}\right)  ^{\rho-1}\left[  \left(  \left(  1-\alpha\right)  x+\alpha
s\right)  ^{r}+\left(  \left(  1-\alpha\right)  x^{\prime}+\alpha s\right)
^{r}\right]  ds\\
&  +t\left[  x^{r}+y^{r}+\left(  y^{\prime}\right)  ^{r}+\left(  x^{\prime
}\right)  ^{r}\right]  B\left(  \rho,\rho;x,\frac{1}{2}\right) \\
&  \leq\left[  x^{r}+y^{r}+\left(  y^{\prime}\right)  ^{r}+\left(  x^{\prime
}\right)  ^{r}\right]  B\left(  \rho,\rho;x,\frac{1}{2}\right) \\
&  \leq2\left[  x^{r}+\left(  x^{\prime}\right)  ^{r}\right]  B\left(
\rho,\rho;x,\frac{1}{2}\right)  .
\end{align*}

\end{proposition}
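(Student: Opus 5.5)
Proposition \ref{p12} is Theorem \ref{t12} specialised to the data of this subsection. I take $a=0$, $b=1$, $x+x^{\prime}=y+y^{\prime}=1$, $f(s)=s^{r}$ with $r\geq1$, and $p(s)=(s-s^{2})^{\rho-1}$ on $[x,x^{\prime}]$. Here $f$ is convex on $[0,1]$ and $p$ is nonnegative, integrable and symmetric about $\frac{x+x^{\prime}}{2}=\frac12$, because $p(1-s)=p(s)$. So the hypotheses of Theorem \ref{t12} hold, and that theorem gives that $Mp$ is increasing on $[0,1]$, together with the chains (\ref{2.49}) and (\ref{2.50}). The proof therefore amounts to rewriting each quantity in (\ref{2.49}) and (\ref{2.50}) in Beta-function form.

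First, $Mp(0)$ is by definition the integral $\int_{x}^{x^{\prime}}(s-s^{2})^{\rho-1}\bigl[((1-\alpha)x+\alpha s)^{r}+((1-\alpha)x^{\prime}+\alpha s)^{r}\bigr]ds$. This is exactly the left-hand expression in the proposition, so nothing needs converting there. Second, (\ref{2.49}) gives $Mp(1)=\frac{f(x)+f(y)+f(y^{\prime})+f(x^{\prime})}{2}\int_{x}^{x^{\prime}}p(s)\,ds$. Remark \ref{r31} gives $\int_{x}^{x^{\prime}}p(s)\,ds=2B(\rho,\rho;x,\frac12)$, which follows from the symmetry of $p$ about $\frac12$ by splitting $[x,x^{\prime}]$ at $\frac12$. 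Substituting, the factor $\frac12\cdot2$ cancels, and $Mp(1)=[x^{r}+y^{r}+(y^{\prime})^{r}+(x^{\prime})^{r}]B(\rho,\rho;x,\frac12)$, as claimed.

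The second display of the proposition is (\ref{2.50}) with the same two substitutions. Its final bound is $[f(x)+f(x^{\prime})]\int_{x}^{x^{\prime}}p(s)\,ds$, which becomes $2[x^{r}+(x^{\prime})^{r}]B(\rho,\rho;x,\frac12)$. The convex-combination middle term is handled by the identical replacement.

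I see no real obstacle. The one point to check is the normalisation $\int_{x}^{x^{\prime}}p=2B(\rho,\rho;x,\frac12)$, so that the factors of $2$ match those in (\ref{2.49}) and (\ref{2.50}). It is also worth noting that $y=(1-\alpha)x+\alpha x^{\prime}$ together with $y^{\prime}=1-y$ is exactly the setup in which Theorem \ref{t12} was proved.
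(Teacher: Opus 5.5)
Your proposal is correct and follows the paper's own route: Proposition \ref{p12} is obtained by specialising Theorem \ref{t12} to $f(s)=s^{r}$ and $p(s)=(s-s^{2})^{\rho-1}$, and then using Remark \ref{r31} for $\int_{x}^{x^{\prime}}p(s)\,ds=2B(\rho,\rho;x,\frac12)$. Your check that the factor $\frac12\cdot 2$ cancels in $Mp(1)$, and that the final bound becomes $2[x^{r}+(x^{\prime})^{r}]B(\rho,\rho;x,\frac12)$, is the only bookkeeping the derivation needs.
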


\begin{proposition}
\label{p13}$Np$\ is increasing on $\left[  0,1\right]  $\ and the following
inequalities hold for all $t\in\left[  0,1\right]  :$%
\begin{align*}
&  \int\nolimits_{x}^{x^{\prime}}\left(  s-s^{2}\right)  ^{\rho-1}\left[
\left(  \left(  1-\alpha\right)  x+\alpha s\right)  ^{r}+\left(  \left(
1-\alpha\right)  x^{\prime}+\alpha s\right)  ^{r}\right]  ds\\
&  =Np\left(  0\right)  \leq Np\left(  t\right)  \leq Np\left(  1\right)
=2\left[  x^{r}+\left(  x^{\prime}\right)  ^{r}\right]  B\left(  \rho
,\rho;x,\frac{1}{2}\right)  .
\end{align*}%
\begin{align*}
&  Np\left(  t\right) \\
&  \leq\left(  1-t\right)  \int\nolimits_{x}^{x^{\prime}}\left(
s-s^{2}\right)  ^{\rho-1}\left[  \left(  \left(  1-\alpha\right)  x+\alpha
s\right)  ^{r}+\left(  \left(  1-\alpha\right)  x^{\prime}+\alpha s\right)
^{r}\right]  ds\\
&  +2t\left[  x^{r}+\left(  x^{\prime}\right)  ^{r}\right]  B\left(  \rho
,\rho;x,\frac{1}{2}\right) \\
&  \leq2\left[  x^{r}+\left(  x^{\prime}\right)  ^{r}\right]  B\left(
\rho,\rho;x,\frac{1}{2}\right)  .
\end{align*}

\end{proposition}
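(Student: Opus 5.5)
This is a direct specialization of Theorem \ref{t13} to the data fixed at the start of this subsection: $f(s)=s^{r}$ with $r\geq1$, $p(s)=(s-s^{2})^{\rho-1}$ on $[x,x^{\prime}]$, and $x+x^{\prime}=y+y^{\prime}=1$. The first step is to confirm that the hypotheses of Theorem \ref{t13} hold. Since $r\geq1$, $f$ is convex on $[0,1]$. Since $x+x^{\prime}=1$, we have $p(1-s)=p(s)$, so $p$ is nonnegative, integrable and symmetric to $\frac{x+x^{\prime}}{2}=\frac12$. The relations $y=(1-\alpha)x+\alpha x^{\prime}$ and $y^{\prime}=\alpha x+(1-\alpha)x^{\prime}$ with $0<\alpha\leq\frac12$ hold by the standing assumptions. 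Hence Theorem \ref{t13} applies, giving that $Np$ is increasing on $[0,1]$ together with (\ref{2.52}) and (\ref{2.53}).

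The second step is to identify each quantity in (\ref{2.52})--(\ref{2.53}) through Remark \ref{r31}.

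\begin{itemize}
\item $Np(t)$ is exactly the displayed integral with $(s-s^{2})^{\rho-1}$ and $r$-th powers.
\item $Np(0)$ is the integral $\int_{x}^{x^{\prime}}(s-s^{2})^{\rho-1}\left[((1-\alpha)x+\alpha s)^{r}+((1-\alpha)x^{\prime}+\alpha s)^{r}\right]ds$.
\item $Np(1)=\left[x^{r}+(x^{\prime})^{r}\right]\int_{x}^{x^{\prime}}p(s)\,ds$.
\end{itemize}

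The one point needing care is $\int_{x}^{x^{\prime}}p(s)\,ds=2B\left(\rho,\rho;x,\frac12\right)$. This follows from splitting the integral at $\frac12$ and substituting $s\mapsto1-s$ on $[\frac12,x^{\prime}]$, using $x^{\prime}=1-x$ and the symmetry of $s^{\rho-1}(1-s)^{\rho-1}$. Remark \ref{r31} already records this.

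Substituting these expressions into (\ref{2.52}) gives the first displayed chain. Substituting them into (\ref{2.53}) gives the second chain, where the factor $2$ yields the term $2t\left[x^{r}+(x^{\prime})^{r}\right]B(\rho,\rho;x,\frac12)$. Monotonicity of $Np$ is inherited verbatim from Theorem \ref{t13}.

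There is no real obstacle here. The only thing to watch is the bookkeeping: the constant $2$ must be placed correctly when $\int_{x}^{x^{\prime}}p$ is converted into incomplete Beta form, and the notation $(s-s^{2})^{\rho-1}$ must be matched with $s^{\rho-1}(1-s)^{\rho-1}$.
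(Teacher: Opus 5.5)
Your proposal is correct and takes the same route as the paper. The paper obtains Proposition~\ref{p13} by specializing Theorem~\ref{t13} to $f(s)=s^{r}$ and $p(s)=s^{\rho-1}(1-s)^{\rho-1}$, and then uses $\int_{x}^{x^{\prime}}p(s)\,ds=2B\left(\rho,\rho;x,\frac{1}{2}\right)$ from Remark~\ref{r31}, exactly as you do. Your hypothesis checks are the right ones: convexity of $s^{r}$ for $r\geq1$, and symmetry of $p$ about $\frac12$ from $x+x^{\prime}=1$. Note that $y^{\prime}=\alpha x+(1-\alpha)x^{\prime}$ is not itself a standing assumption; it follows from $y+y^{\prime}=x+x^{\prime}$, which is what you need.
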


\begin{proposition}
\label{p14}\textit{For all }$t\in\left[  0,1\right]  ,$ we have $Mp\left(
t\right)  \leq Np\left(  t\right)  .$
\end{proposition}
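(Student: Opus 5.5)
This follows by specializing Theorem \ref{t14} to the data of this subsection: $a=0$, $b=1$, $x+x^{\prime}=1$, $f(s)=s^{r}$ with $r\geq1$, and $p(s)=\left(s-s^{2}\right)^{\rho-1}$ on $\left[x,x^{\prime}\right]$. Theorem \ref{t14} holds for every convex $f$ and every nonnegative integrable $p$ on $\left[x,x^{\prime}\right]$ that is symmetric about $\frac{x+x^{\prime}}{2}$. So the work is to check these hypotheses and then match the abstract functions with their concrete forms in Remark \ref{r31}.

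\textbf{Step 1: convexity of $f$.} For $r\geq1$ we have $f^{\prime\prime}(s)=r(r-1)s^{r-2}\geq0$ on $(0,1]$, and $f$ is continuous on $[0,1]$. Hence $f$ is convex on $[0,1]\supset\left[x,x^{\prime}\right]$.

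\textbf{Step 2: hypotheses on $p$.} Since $x,x^{\prime}\in(0,1)$, we have $s-s^{2}>0$ on $\left[x,x^{\prime}\right]$. So $p$ is continuous and positive there, and in particular integrable and nonnegative. Because $x+x^{\prime}=1$, we get
\[
p(1-s)=\left((1-s)s\right)^{\rho-1}=p(s),
\]
which is symmetry about $\frac{1}{2}=\frac{x+x^{\prime}}{2}$.

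\textbf{Step 3: the remaining parameters.} We also need $y=(1-\alpha)x+\alpha x^{\prime}$ and $y^{\prime}=\alpha x+(1-\alpha)x^{\prime}$ with $0<\alpha\leq\frac{1}{2}$, which is exactly the standing assumption of this subsection. It gives $x<y\leq y^{\prime}<x^{\prime}$ and $y+y^{\prime}=1$.

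\textbf{Step 4: identification.} With these substitutions, $Mp$ and $Np$ become exactly the expressions written in Remark \ref{r31}. This uses $\frac{x+x^{\prime}}{2}=\frac{1}{2}$ as the splitting point of the integral in $Mp$. Theorem \ref{t14} then gives $Mp(t)\leq Np(t)$ for all $t\in[0,1]$.

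There is no real obstacle. The only point needing care is Step 2: positivity and integrability of $p$ hold for every $\rho>0$, including $\rho<1$, because the endpoints $0$ and $1$ lie outside $\left[x,x^{\prime}\right]$.
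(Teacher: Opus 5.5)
Your proposal is correct and follows the paper's route: the paper obtains Proposition \ref{p14} directly by specializing Theorem \ref{t14}, via Remark \ref{r31}, to $f(s)=s^{r}$ with $r\geq1$ and $p(s)=(s-s^{2})^{\rho-1}$ on $[x,x^{\prime}]$, where $x+x^{\prime}=1$. Your explicit checks are exactly the hypotheses the paper leaves implicit: convexity of $f$, positivity and integrability of $p$, symmetry of $p$ about $\frac{1}{2}$, $y^{\prime}=\alpha x+(1-\alpha)x^{\prime}$, and the splitting point $\frac{x+x^{\prime}}{2}=\frac{1}{2}$ in $Mp$.
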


\begin{proposition}
\label{p15}The following inequalities hold:%
\begin{align*}
&  2\left[  y^{r}+\left(  y^{\prime}\right)  ^{r}\right]  B\left(  \rho
,\rho;x,\frac{1}{2}\right) \\
&  \leq\frac{2}{\alpha}\left[  \int_{\frac{x+y}{2}}^{y}s^{r}\left(  \frac
{1}{\alpha}\left(  2s-x-y\right)  +x\right)  ^{\rho-1}\left(  1-\frac
{1}{\alpha}\left(  2s-x-y\right)  -x\right)  ^{\rho-1}ds\right. \\
&  \text{ \ }+\left.  \int_{y^{\prime}}^{\frac{x^{\prime}+y^{\prime}}{2}}%
s^{r}\left(  \frac{1}{\alpha}\left(  2s-y^{\prime}-x^{\prime}\right)
+x^{\prime}\right)  ^{\rho-1}\left(  1-\frac{1}{\alpha}\left(  2s-y^{\prime
}-x^{\prime}\right)  -x^{\prime}\right)  ^{\rho-1}ds\right] \\
&  \leq\int_{0}^{1}Ip_{1}\left(  t\right)  dt\\
&  \leq\frac{1}{2}\left[  2\left[  y^{r}+\left(  y^{\prime}\right)
^{r}\right]  B\left(  \rho,\rho;x,\frac{1}{2}\right)  \right. \\
&  \text{ }+\left.  \int\nolimits_{x}^{x^{\prime}}\left(  s-s^{2}\right)
^{\rho-1}\left[  \left(  \left(  1-\alpha\right)  x+\alpha s\right)
^{r}+\left(  \left(  1-\alpha\right)  x^{\prime}+\alpha s\right)  ^{r}\right]
ds\right]  .
\end{align*}%
\begin{align*}
0  &  \leq\int\nolimits_{x}^{x^{\prime}}\left(  s-s^{2}\right)  ^{\rho
-1}\left[  \left(  \left(  1-\alpha\right)  x+\alpha s\right)  ^{r}+\left(
\left(  1-\alpha\right)  x^{\prime}+\alpha s\right)  ^{r}\right]
ds-Ip_{1}\left(  t\right) \\
&  \leq\frac{1-t}{\alpha}\left[  \left[  x^{r}+\left(  x^{\prime}\right)
^{r}\right]  \left(  y-x\right)  -\frac{\left(  x^{\prime}\right)
^{r+1}+y^{r+1}-x^{r+1}-\left(  y^{\prime}\right)  ^{r+1}}{r+1}\right] \\
&  \text{ \ \ \ \ \ \ }\times\max\left\{  \left(  x-x^{2}\right)  ^{\rho
-1},\left(  \frac{1}{4}\right)  ^{\rho-1}\right\}  \text{ \ \ }\left(
t\in\left[  0,1\right]  \right)  .
\end{align*}%
\begin{align*}
0  &  \leq2\left[  x^{r}+\left(  x^{\prime}\right)  ^{r}\right]  B\left(
\rho,\rho;x,\frac{1}{2}\right)  -Ip_{1}\left(  t\right) \\
&  \leq2r\left(  y-x\right)  \left(  \left(  x^{\prime}\right)  ^{r-1}%
-x^{r-1}\right)  B\left(  \rho,\rho;x,\frac{1}{2}\right)  \text{ \ \ }\left(
t\in\left[  0,1\right]  \right)  .
\end{align*}%
\begin{align*}
0  &  \leq Ip_{1}\left(  t\right)  -2\left[  y^{r}+\left(  y^{\prime}\right)
^{r}\right]  B\left(  \rho,\rho;x,\frac{1}{2}\right) \\
&  \leq2r\left(  y-x\right)  \left(  \left(  x^{\prime}\right)  ^{r-1}%
-x^{r-1}\right)  B\left(  \rho,\rho;x,\frac{1}{2}\right)  \text{ \ \ }\left(
t\in\left[  0,1\right]  \right)  .
\end{align*}

\end{proposition}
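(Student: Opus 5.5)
Proposition \ref{p15} is Theorem \ref{t15} specialized to the data of this subsection: $f(s)=s^{r}$, $p(s)=(s-s^{2})^{\rho-1}$ on $[x,x^{\prime}]$, and $x+x^{\prime}=y+y^{\prime}=1$. So the plan is to substitute these into (\ref{2.59})--(\ref{2.62}) and evaluate each quantity using Remark \ref{r31}.

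\textbf{Basic identifications.} Since $x+x^{\prime}=1$ and $p$ is symmetric about $\tfrac12$,
\[
\int_{x}^{x^{\prime}}p(s)\,ds=2B\left(\rho,\rho;x,\tfrac12\right).
\]
This turns $[f(y)+f(y^{\prime})]\int_x^{x^{\prime}}p$ into $2[y^{r}+(y^{\prime})^{r}]B(\rho,\rho;x,\frac12)$, and similarly for $[f(x)+f(x^{\prime})]\int_x^{x^{\prime}}p$. The middle term of (\ref{2.59}) is obtained by writing out $p\bigl(\frac1\alpha(2s-x-y)+x\bigr)$ explicitly as $w^{\rho-1}(1-w)^{\rho-1}$ with $w=\frac1\alpha(2s-x-y)+x$, and likewise for the $x^{\prime}$ term. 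The quantity $Ip_1(1)$ is kept as the integral $\int_x^{x^{\prime}}(s-s^2)^{\rho-1}[((1-\alpha)x+\alpha s)^r+((1-\alpha)x^{\prime}+\alpha s)^r]\,ds$. With these, the first block of the proposition is exactly (\ref{2.59}).

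\textbf{The estimate (\ref{2.60}).} Two ingredients are needed.
\begin{itemize}
\item First, compute the integral of $f$ over $\Omega$:
\[
\int_{\Omega}s^{r}\,ds=\frac{y^{r+1}-x^{r+1}+(x^{\prime})^{r+1}-(y^{\prime})^{r+1}}{r+1}.
\]
\item Second, bound $\|p\|_\infty$ on $[x,x^{\prime}]$. The function $s-s^{2}$ increases from $x-x^{2}$ at $s=x$ to $\frac14$ at $s=\frac12$. Hence if $\rho\ge1$ the supremum of $p$ is $(\frac14)^{\rho-1}$, and if $\rho<1$ it is $(x-x^{2})^{\rho-1}$. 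In either case the supremum equals $\max\{(x-x^{2})^{\rho-1},(\frac14)^{\rho-1}\}$, which is the factor appearing in the statement.
\end{itemize}

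\textbf{The estimates (\ref{2.61}) and (\ref{2.62}).} Use $f^{\prime}(s)=rs^{r-1}$, so $f^{\prime}(x^{\prime})-f^{\prime}(x)=r((x^{\prime})^{r-1}-x^{r-1})$. Substituting this into (\ref{2.61}) and (\ref{2.62}), with $\int_x^{x^{\prime}}p=2B(\rho,\rho;x,\frac12)$, gives the last two displays.

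\textbf{Expected difficulty.} There is no real obstacle, since every step is a direct substitution. The only point needing care is the $\|p\|_\infty$ evaluation, because the answer depends on the sign of $\rho-1$; stating it as a maximum of the two candidate values covers both cases at once. One should also check that $f$ is differentiable on $[x,x^{\prime}]\subset(0,1)$ and that $p$ is bounded there, which holds because $x>0$.
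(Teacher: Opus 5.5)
Your proposal is correct and takes the same route as the paper: Proposition~\ref{p15} is obtained there simply by specializing Theorem~\ref{t15} via Remark~\ref{r31}. Your evaluations of $\int_x^{x^{\prime}}p(s)\,ds=2B(\rho,\rho;x,\frac12)$, of $\int_\Omega s^r\,ds$, of $f^{\prime}(x^{\prime})-f^{\prime}(x)=r\bigl((x^{\prime})^{r-1}-x^{r-1}\bigr)$, and of $\|p\|_\infty=\max\{(x-x^2)^{\rho-1},(\frac14)^{\rho-1}\}$ reproduce exactly the constants in the statement.
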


\begin{proposition}
\label{p16}The following inequalities hold:%
\begin{align*}
&  2^{2-r}B\left(  \rho,\rho;x,\frac{1}{2}\right) \\
&  \leq\frac{2}{\alpha}\left[  \int_{\frac{x+y^{\prime}}{2}}^{\frac{1}{2}%
}s^{r}\left(  \frac{1}{\alpha}\left(  2s-x-y^{\prime}\right)  +x\right)
^{\rho-1}\left(  1-\frac{1}{\alpha}\left(  2s-x-y^{\prime}\right)  -x\right)
^{\rho-1}ds\right. \\
&  \text{ \ }+\left.  \int_{\frac{1}{2}}^{\frac{x^{\prime}+y}{2}}s^{r}\left(
\frac{1}{\alpha}\left(  2s-y-x^{\prime}\right)  +x^{\prime}\right)  ^{\rho
-1}\left(  1-\frac{1}{\alpha}\left(  2s-y-x^{\prime}\right)  -x^{\prime
}\right)  ^{\rho-1}ds\right] \\
&  \leq\int_{0}^{1}Ip_{2}\left(  t\right)  dt\\
&  \leq\frac{1}{2}\left[  2\left[  y^{r}+\left(  y^{\prime}\right)
^{r}\right]  B\left(  \rho,\rho;x,\frac{1}{2}\right)  \right. \\
&  \text{ }+\left.  \int\nolimits_{x}^{x^{\prime}}\left(  s-s^{2}\right)
^{\rho-1}\left[  \left(  \left(  1-\alpha\right)  x+\alpha s\right)
^{r}+\left(  \left(  1-\alpha\right)  x^{\prime}+\alpha s\right)  ^{r}\right]
ds\right]  .
\end{align*}%
\begin{align*}
0  &  \leq\int\nolimits_{x}^{x^{\prime}}\left(  s-s^{2}\right)  ^{\rho
-1}\left[  \left(  \left(  1-\alpha\right)  x+\alpha s\right)  ^{r}+\left(
\left(  1-\alpha\right)  x^{\prime}+\alpha s\right)  ^{r}\right]
ds-Ip_{2}\left(  t\right) \\
&  \leq\frac{1-t}{\alpha}\left[  \left[  x^{r}+\left(  x^{\prime}\right)
^{r}\right]  \left(  y^{\prime}-x\right)  -\left[  y^{r}+\left(  y^{\prime
}\right)  ^{r}\right]  \left(  y^{\prime}-y\right)  \right. \\
&  -\left.  \frac{\left(  x^{\prime}\right)  ^{r+1}+y^{r+1}-x^{r+1}-\left(
y^{\prime}\right)  ^{r+1}}{r+1}\right]  \times\max\left\{  \left(
x-x^{2}\right)  ^{\rho-1},\left(  \frac{1}{4}\right)  ^{\rho-1}\right\}
\text{\ }\left(  t\in\left[  0,1\right]  \right)  .
\end{align*}%
\begin{align*}
0  &  \leq Ip_{2}\left(  t\right)  -2^{2-r}B\left(  \rho,\rho;x,\frac{1}%
{2}\right) \\
&  \leq r\left(  2x^{\prime}-1\right)  \left(  \left(  x^{\prime}\right)
^{r-1}-x^{r-1}\right)  B\left(  \rho,\rho;x,\frac{1}{2}\right)  \text{
\ \ }\left(  t\in\left[  0,1\right]  \right)  .
\end{align*}%
\begin{align*}
0  &  \leq2\left[  x^{r}+\left(  x^{\prime}\right)  ^{r}\right]  B\left(
\rho,\rho;x,\frac{1}{2}\right)  -Ip_{2}\left(  t\right) \\
&  \leq r\left(  2x^{\prime}-1\right)  \left(  \left(  x^{\prime}\right)
^{r-1}-x^{r-1}\right)  B\left(  \rho,\rho;x,\frac{1}{2}\right)  \text{
\ \ }\left(  t\in\left[  0,1\right]  \right)  .
\end{align*}

\end{proposition}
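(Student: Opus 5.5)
Proposition \ref{p16} is the specialization of Theorem \ref{t16} to $a=0$, $b=1$, $f(s)=s^{r}$ ($r\geq1$) and $p(s)=(s-s^{2})^{\rho-1}$ on $[x,x^{\prime}]$, with $x+x^{\prime}=y+y^{\prime}=1$. The plan is to substitute these data into (\ref{2.64})--(\ref{2.67}) and rewrite every quantity through the dictionary of Remark \ref{r31}. Throughout, $f$ is convex and differentiable on $[0,1]$, and $p$ is bounded on $[x,x^{\prime}]$ and symmetric about $\frac{1}{2}$, so all hypotheses of Theorem \ref{t16} hold.

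\emph{The chain (\ref{2.64}).} Since $\frac{y+y^{\prime}}{2}=\frac12$, we have $2f\left(\frac{y+y^{\prime}}{2}\right)=2^{1-r}$. Also $\int_{x}^{x^{\prime}}p(s)\,ds=2B(\rho,\rho;x,\frac12)$, so the left end becomes $2^{2-r}B(\rho,\rho;x,\frac12)$.

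In the middle term, the limits $\frac{y+y^{\prime}}{2}$ simplify to $\frac12$. The weight $p$ evaluated at $\frac{1}{\alpha}(2s-x-y^{\prime})+x$ equals $v^{\rho-1}(1-v)^{\rho-1}$ with $v$ that argument, which is exactly the integrand displayed in Proposition \ref{p16}.

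The right end uses $(f(y)+f(y^{\prime}))\int_x^{x^{\prime}}p=2[y^{r}+(y^{\prime})^{r}]B(\rho,\rho;x,\frac12)$, together with the integral of $[((1-\alpha)x+\alpha s)^{r}+((1-\alpha)x^{\prime}+\alpha s)^{r}](s-s^{2})^{\rho-1}$.

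\emph{The bound (\ref{2.65}).} Here $\int_\Omega s^{r}\,ds=\frac{y^{r+1}-x^{r+1}+(x^{\prime})^{r+1}-(y^{\prime})^{r+1}}{r+1}$. For $\Vert p\Vert_\infty$ on $[x,x^{\prime}]$: the function $s-s^{2}$ ranges over $[x-x^{2},\frac14]$. Hence $(s-s^{2})^{\rho-1}$ attains its maximum at $s=\frac12$ when $\rho\geq1$, and at $s=x$ when $\rho<1$. In both cases $\Vert p\Vert_\infty=\max\{(x-x^{2})^{\rho-1},(\frac14)^{\rho-1}\}$.

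\emph{The bounds (\ref{2.66})--(\ref{2.67}).} Use $f^{\prime}(s)=rs^{r-1}$ and $\frac{2x^{\prime}-y-y^{\prime}}{2}=\frac{2x^{\prime}-1}{2}$. Multiplying by $\int_x^{x^{\prime}}p=2B(\rho,\rho;x,\frac12)$ gives $r(2x^{\prime}-1)((x^{\prime})^{r-1}-x^{r-1})B(\rho,\rho;x,\frac12)$, as stated.

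The only step needing care is the sup-norm computation, since the location of the maximum switches with the sign of $\rho-1$; the $\max$ in the statement covers both cases at once. Everything else is direct substitution.
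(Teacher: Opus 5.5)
Your proposal is correct and follows the same route as the paper: the paper obtains Proposition~\ref{p16} by specializing Theorem~\ref{t16}, i.e.\ (\ref{2.64})--(\ref{2.67}), to $f(s)=s^{r}$ and $p(s)=(s-s^{2})^{\rho-1}$ on $[x,x^{\prime}]$ via the identities of Remark~\ref{r31}, just as you do. Your case analysis giving $\Vert p\Vert_\infty=\max\{(x-x^{2})^{\rho-1},(\frac14)^{\rho-1}\}$ fills in the one detail the paper leaves implicit.
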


\begin{proposition}
\label{p17}The following inequalities hold for all $t\in\left[  0,1\right]  :$%
\[
Ip_{1}\left(  t\right)  \leq4G_{1}\left(  t\right)  B\left(  \rho,\rho
;x,\frac{1}{2}\right)  .
\]%
\begin{align*}
0  &  \leq Ip_{1}\left(  t\right)  -2\left[  y^{r}+\left(  y^{\prime}\right)
^{r}\right]  B\left(  \rho,\rho;x,\frac{1}{2}\right) \\
&  \leq2\left(  x^{\prime}-x\right)  \left[  G_{1}\left(  t\right)
-H_{1}\left(  t\right)  \right]  \max\left\{  \left(  x-x^{2}\right)
^{\rho-1},\left(  \frac{1}{4}\right)  ^{\rho-1}\right\}  .
\end{align*}

\end{proposition}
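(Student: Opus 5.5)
Proposition \ref{p17} is the specialization of Theorem \ref{t17} to $f(s)=s^{r}$ and $p(s)=(s-s^{2})^{\rho-1}$ on $[x,x^{\prime}]$, under the standing assumptions $x+x^{\prime}=y+y^{\prime}=1$ and $y=(1-\alpha)x+\alpha x^{\prime}$. The plan is therefore to check the hypotheses of Theorem \ref{t17}, then translate each of its quantities into Beta-function language using Remark \ref{r31}.

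First, the hypotheses. The function $f$ is convex on $[0,1]$ because $r\ge1$, and it is differentiable on $\Omega$. The weight $p$ is nonnegative, integrable, and symmetric about $\frac{1}{2}=\frac{x+x^{\prime}}{2}$. It is continuous on the compact interval $[x,x^{\prime}]\subset(0,1)$, hence bounded there. So both parts of Theorem \ref{t17} apply.

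Next, the translation. By Remark \ref{r31},
\[
\int_{x}^{x^{\prime}}p(s)\,ds=2B\left(\rho,\rho;x,\tfrac12\right).
\]
Substituting this into \eqref{2.69} gives $Ip_{1}(t)\le 4G_{1}(t)B(\rho,\rho;x,\frac12)$. In \eqref{2.70} the term $[f(y)+f(y^{\prime})]\int_{x}^{x^{\prime}}p$ becomes $2[y^{r}+(y^{\prime})^{r}]B(\rho,\rho;x,\frac12)$. The functions $G_{1}$ and $H_{1}$ are exactly those listed in Remark \ref{r31}.

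The only step needing real work is evaluating $\|p\|_{\infty}$ on $[x,x^{\prime}]$. The map $s\mapsto s-s^{2}$ is symmetric about $\frac12$, increasing on $[x,\frac12]$, and takes values between $x-x^{2}$ and $\frac14$. Raising to the power $\rho-1$ gives two cases. If $\rho\ge1$, the supremum is $(\frac14)^{\rho-1}$, attained at $s=\frac12$. If $\rho<1$, the supremum is $(x-x^{2})^{\rho-1}$, attained at the endpoints $s=x$ and $s=x^{\prime}$. In either case
\[
\|p\|_{\infty}=\max\left\{(x-x^{2})^{\rho-1},\left(\tfrac14\right)^{\rho-1}\right\},
\]
which is the factor in the statement.

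Inserting these identifications into \eqref{2.69} and \eqref{2.70} yields both displayed inequalities of Proposition \ref{p17}. No obstacle beyond this bookkeeping is expected. The one point to be careful about is that $x>0$, which keeps $p$ bounded even when $\rho<1$.
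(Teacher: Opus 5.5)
Your proposal is correct and follows the paper's route exactly: Proposition~\ref{p17} is obtained by specializing Theorem~\ref{t17} through Remark~\ref{r31}. The needed substitutions are $\int_x^{x'}p(s)\,ds=2B(\rho,\rho;x,\frac12)$ and $\|p\|_\infty=\max\{(x-x^2)^{\rho-1},(\frac14)^{\rho-1}\}$. Your case split $\rho\ge1$ versus $\rho<1$ for the supremum of $p$ is a useful check that the paper leaves implicit.
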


\begin{proposition}
\label{p18}Let $m,$ $m^{\prime}$\ \textit{be defined as in Proposition
\ref{p9}. Then }the following inequalities hold:%
\[
Ip_{2}\left(  t\right)  \leq\left(  \geq\right)  4G_{2}\left(  t\right)
B\left(  \rho,\rho;x,\frac{1}{2}\right)  \text{ \ \ \textit{as }}t\in\left[
m^{\prime},1\right]  \text{ \ }\left(  t\in\left[  0,m\right]  \right)  .
\]%
\begin{align*}
0  &  \leq Ip_{2}\left(  t\right)  -2^{2-r}B\left(  \rho,\rho;x,\frac{1}%
{2}\right) \\
&  \leq\frac{2}{\alpha}\left[  \left(  y^{\prime}-x\right)  G_{2}\left(
t\right)  -\left(  y^{\prime}-y\right)  G_{3}\left(  t\right)  -\left(
y-x\right)  H_{2}\left(  t\right)  \right] \\
&  \text{ \ \ \ \ \ \ \ \ \ }\times\max\left\{  \left(  x-x^{2}\right)
^{\rho-1},\left(  \frac{1}{4}\right)  ^{\rho-1}\right\}  \text{ \ \ \ \ }%
\left(  t\in\left[  m^{\prime},1\right]  \right)  .
\end{align*}

\end{proposition}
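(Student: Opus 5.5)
Proposition~\ref{p18} is Theorem~\ref{t18} specialized to the Beta setting of this subsection. I take $a=0$, $b=1$, $x+x^{\prime}=y+y^{\prime}=1$, $f(s)=s^{r}$ with $r\geq1$, and $p(s)=\left(s-s^{2}\right)^{\rho-1}$ on $[x,x^{\prime}]$. First I record that the hypotheses of Theorem~\ref{t18} hold. The function $f$ is convex and differentiable on $[0,1]\supset\Omega$. The weight $p$ is nonnegative, integrable and symmetric about $\frac{x+x^{\prime}}{2}=\frac12$, since $p(1-s)=p(s)$. It is also bounded on $[x,x^{\prime}]$ because $x,x^{\prime}\in(0,1)$. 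The numbers $m,m^{\prime}$ agree with those of Theorem~\ref{t9}.

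Next I translate each quantity using Remark~\ref{r31}:
\[
\int_{x}^{x^{\prime}}p(s)\,ds=2B\left(\rho,\rho;x,\tfrac12\right).
\]
With this, $2G_{2}(t)\int_{x}^{x^{\prime}}p(s)\,ds$ becomes $4G_{2}(t)B\left(\rho,\rho;x,\frac12\right)$. Hence (\ref{2.72}) gives the first claim directly, with "$\leq$" on $[m^{\prime},1]$ and "$\geq$" on $[0,m]$. For the second claim I use $\frac{y+y^{\prime}}{2}=\frac12$, so $2f\left(\frac12\right)\int_{x}^{x^{\prime}}p=2\cdot2^{-r}\cdot2B\left(\rho,\rho;x,\frac12\right)=2^{2-r}B\left(\rho,\rho;x,\frac12\right)$. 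This turns the left-hand side of (\ref{2.73}) into the stated one. The functions $G_{2},G_{3},H_{2}$ are exactly those written in Remark~\ref{r31}.

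It remains to evaluate $\Vert p\Vert_{\infty}$ on $[x,x^{\prime}]$. The function $s-s^{2}$ increases on $[x,\frac12]$ and is symmetric about $\frac12$, so it takes values in $[x-x^{2},\frac14]$ there. Hence $\sup p$ is $\left(\frac14\right)^{\rho-1}$ when $\rho\geq1$ and $(x-x^{2})^{\rho-1}$ when $\rho<1$. In both cases $\sup p=\max\left\{(x-x^{2})^{\rho-1},\left(\frac14\right)^{\rho-1}\right\}$. Substituting this into (\ref{2.73}) gives the claimed bound on $[m^{\prime},1]$.

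The main point needing care is the coefficient of $G_{3}$. In (\ref{2.73}) it reads $2(y^{\prime}-y)$, whereas the proof's identity and the Proposition use $(y^{\prime}-y)$. I would follow the proof's final identity, which yields the coefficient $(y^{\prime}-y)$ as stated. If the factor $2$ were really present, the claimed bound would still follow whenever $G_{3}\geq0$, because $f\geq0$ here. With that reconciled, the proposition is a direct transcription and no further estimates are needed.
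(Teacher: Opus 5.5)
Your proposal is correct and follows the paper's route: Proposition~\ref{p18} is Theorem~\ref{t18} specialized to $f(s)=s^{r}$ and $p(s)=(s-s^{2})^{\rho-1}$, translated through Remark~\ref{r31} using $\int_{x}^{x^{\prime}}p=2B(\rho,\rho;x,\frac12)$, $f(\frac12)=2^{-r}$ and $\|p\|_\infty=\max\{(x-x^{2})^{\rho-1},(\frac14)^{\rho-1}\}$. You are also right about the $G_{3}$ coefficient: the integration-by-parts identity in the proof of Theorem~\ref{t18} yields $(y^{\prime}-y)G_{3}(t)$, so the factor $2$ in (\ref{2.73}) is a typo, and your fallback via $G_{3}\geq0$ settles the proposition either way.
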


\begin{proposition}
\label{p19}Let $m^{\prime}$\ \textit{be defined as in Proposition \ref{p9}.
Then} we have:%
\begin{align*}
2  &  \left[  G_{1}\left(  t\right)  +G_{2}\left(  t\right)  \right]  B\left(
\rho,\rho;x,\frac{1}{2}\right) \\
&  \leq Sp\left(  t\right) \\
&  \leq\left(  1-t\right)  \int\nolimits_{x}^{x^{\prime}}\left(
s-s^{2}\right)  ^{\rho-1}\left[  \left(  \left(  1-\alpha\right)  x+\alpha
s\right)  ^{r}+\left(  \left(  1-\alpha\right)  x^{\prime}+\alpha s\right)
^{r}\right]  ds\\
&  \qquad\qquad+2t\left[  x^{r}+\left(  x^{\prime}\right)  ^{r}\right]
B\left(  \rho,\rho;x,\frac{1}{2}\right) \\
&  \leq2\left[  x^{r}+\left(  x^{\prime}\right)  ^{r}\right]  B\left(
\rho,\rho;x,\frac{1}{2}\right)  \text{ \ }\left(  t\in\left[  0,1\right]
\right)  .
\end{align*}%
\[
\frac{Ip_{1}\left(  1-t\right)  +Ip_{2}\left(  1-t\right)  }{2}\leq Sp\left(
t\right)  \text{ \ }\left(  t\in\left[  0,1\right]  \right)  .
\]%
\[
\frac{Ip_{1}\left(  t\right)  +Ip_{2}\left(  t\right)  +Ip_{1}\left(
1-t\right)  +Ip_{2}\left(  1-t\right)  }{4}\leq Sp\left(  t\right)  \text{
\ \ }\left(  t\in\left[  m^{\prime},1\right]  \right)  .
\]%
\[
\sup\limits_{t\in\left[  0,1\right]  }Sp\left(  t\right)  =2\left[
x^{r}+\left(  x^{\prime}\right)  ^{r}\right]  B\left(  \rho,\rho;x,\frac{1}%
{2}\right)  .
\]

\end{proposition}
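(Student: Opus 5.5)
Proposition \ref{p19} is Theorem \ref{t19} specialised to $f(s)=s^{r}$ and $p(s)=(s-s^{2})^{\rho-1}$ on $[x,x^{\prime}]$, with $x+x^{\prime}=y+y^{\prime}=1$. The plan is simply to check the hypotheses of Theorem \ref{t19} and then rewrite each quantity in Beta-function form using Remark \ref{r31}.

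First, the hypotheses. The function $f(s)=s^{r}$ with $r\geq1$ is convex on $[0,1]$. The weight $p(s)=s^{\rho-1}(1-s)^{\rho-1}$ is nonnegative and integrable on $[x,x^{\prime}]\subset(0,1)$. It is symmetric about $\frac{x+x^{\prime}}{2}=\frac12$, because $p(1-s)=p(s)$. The relations $y=(1-\alpha)x+\alpha x^{\prime}$ and $y^{\prime}=\alpha x+(1-\alpha)x^{\prime}$ with $0<\alpha\le\frac12$ are part of the standing setup. So Theorem \ref{t19} applies, and $m^{\prime}$ is the same number in Theorem \ref{t9} and in Proposition \ref{p9}.

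Second, the only nontrivial translation is the total weight:
\[
\int_{x}^{x^{\prime}}p(s)\,ds=2\int_{x}^{1/2}s^{\rho-1}(1-s)^{\rho-1}\,ds=2B\left(\rho,\rho;x,\tfrac12\right),
\]
which holds by the symmetry of $p$ about $\frac12$ and is recorded in Remark \ref{r31}.

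Third, substitute this into the conclusions of Theorem \ref{t19}:
\begin{itemize}
\item $[G_{1}(t)+G_{2}(t)]\int_{x}^{x^{\prime}}p$ becomes $2[G_{1}(t)+G_{2}(t)]B(\rho,\rho;x,\frac12)$.
\item $[f(x)+f(x^{\prime})]\int_{x}^{x^{\prime}}p$ becomes $2[x^{r}+(x^{\prime})^{r}]B(\rho,\rho;x,\frac12)$.
\item The middle integral becomes $\int_{x}^{x^{\prime}}(s-s^{2})^{\rho-1}[((1-\alpha)x+\alpha s)^{r}+((1-\alpha)x^{\prime}+\alpha s)^{r}]\,ds$.
\end{itemize}
With these substitutions, (\ref{2.75}) gives the first chain, (\ref{2.76}) gives the second inequality, (\ref{2.77}) gives the third inequality on $[m^{\prime},1]$, and (\ref{2.78}) gives the supremum. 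The functions $Sp$, $Ip_{1}$, $Ip_{2}$, $G_{1}$, $G_{2}$ appearing in Proposition \ref{p19} are exactly the specialisations listed in Remark \ref{r31}.

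The argument is mostly bookkeeping. The only step that could be an obstacle is the weight identity in the second step, and it follows directly from the symmetry $p(1-s)=p(s)$. I expect no real difficulty.
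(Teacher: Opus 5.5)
Your proposal is correct and follows the paper's own route: Proposition~\ref{p19} is obtained by specialising Theorem~\ref{t19} to $f(s)=s^{r}$ and $p(s)=(s-s^{2})^{\rho-1}$, then rewriting $\int_{x}^{x^{\prime}}p(s)\,ds=2B(\rho,\rho;x,\frac12)$ via Remark~\ref{r31}. Your explicit hypothesis check is a useful addition that the paper leaves implicit: convexity of $s^{r}$ for $r\geq1$, the symmetry $p(1-s)=p(s)$, and $y^{\prime}=1-y=\alpha x+(1-\alpha)x^{\prime}$.
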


\begin{proposition}
\label{p20}
\end{proposition}

\begin{enumerate}
\item $Q_{1}$\textit{\ is symmetric about }$\frac{1}{2},$\textit{\ decreasing
on }$\left[  0,\frac{1}{2}\right]  $\textit{\ and increasing on }$\left[
\frac{1}{2},1\right]  .$

\item \textit{The following inequalities hold:}%
\[
G_{1}\left(  \frac{t}{\alpha}\right)  \leq Q_{1}\left(  t\right)  \text{
\quad}\left(  t\in\left[  0,\frac{\alpha}{2}\right]  \right)  .
\]%
\[
G_{1}\left(  \frac{t}{\alpha}\right)  \geq Q_{1}\left(  t\right)  \quad\left(
t\in\left[  \frac{\alpha}{2},\alpha\right]  \right)  .
\]%
\[
G_{1}\left(  \frac{1-t}{\alpha}\right)  \geq Q_{1}\left(  t\right)  \text{
\quad}\left(  t\in\left[  1-\alpha,1-\frac{\alpha}{2}\right]  \right)  .
\]%
\[
G_{1}\left(  \frac{1-t}{\alpha}\right)  \leq Q_{1}\left(  t\right)
\quad\left(  t\in\left[  1-\frac{\alpha}{2},1\right]  \right)  .
\]

\end{enumerate}

\begin{proposition}
\label{p21}Let $m^{\prime}$\ \textit{be defined as in Proposition \ref{p9}.
Then} we have:%
\begin{align*}
0  &  \leq Np\left(  t\right)  -4G_{1}\left(  t\right)  B\left(  \rho
,\rho;x,\frac{1}{2}\right) \\
&  \leq2\left[  x^{r}+\left(  x^{\prime}\right)  ^{r}\right]  B\left(
\rho,\rho;x,\frac{1}{2}\right)  -Np\left(  t\right)  \text{ \ }\left(
t\in\left[  0,1\right]  \right)  .
\end{align*}%
\begin{align*}
0  &  \leq Lp_{1}\left(  t\right)  -\frac{Hp_{1}\left(  t\right)
+Hp_{2}\left(  t\right)  }{2}\\
&  \leq\frac{r\left(  4x^{\prime}-y-3y^{\prime}\right)  }{4}\left(  \left(
x^{\prime}\right)  ^{r-1}-x^{r-1}\right)  B\left(  \rho,\rho;x,y\right)
\text{ \ }\left(  t\in\left[  0,1\right]  \right)  .
\end{align*}%
\begin{align*}
0  &  \leq Pp_{1}-Lp_{1}\left(  t\right) \\
&  \leq\frac{r\left(  4x^{\prime}-y-3y^{\prime}\right)  }{4}\left(  \left(
x^{\prime}\right)  ^{r-1}-x^{r-1}\right)  B\left(  \rho,\rho;x,y\right)
\text{ \ }\left(  t\in\left[  0,1\right]  \right)  .
\end{align*}%
\begin{align*}
0  &  \leq Np\left(  t\right)  -Ip_{1}\left(  t\right) \\
&  \leq2r\left(  y-x\right)  \left(  \left(  x^{\prime}\right)  ^{r-1}%
-x^{r-1}\right)  B\left(  \rho,\rho;x,\frac{1}{2}\right)  \text{ \ }\left(
t\in\left[  0,1\right]  \right)  .
\end{align*}%
\begin{align*}
0  &  \leq Sp\left(  t\right)  -\frac{Ip_{1}\left(  t\right)  +Ip_{2}\left(
t\right)  }{2}\\
&  \leq\frac{r\left(  4x^{\prime}-y-3y^{\prime}\right)  }{2}\left(  \left(
x^{\prime}\right)  ^{r-1}-x^{r-1}\right)  B\left(  \rho,\rho;x,\frac{1}%
{2}\right)  \text{ \ }\left(  t\in\left[  m^{\prime},1\right]  \right)  .
\end{align*}

\end{proposition}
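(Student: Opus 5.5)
Proposition \ref{p21} is Theorem \ref{t21} specialised to the data of this subsection: $a=0$, $b=1$, $x+x^{\prime}=y+y^{\prime}=1$, $f(s)=s^{r}$ and $p(s)=(s-s^{2})^{\rho-1}$. The plan is to read off each of the five inequalities from (\ref{2.84})--(\ref{2.88}) and translate every quantity through Remark \ref{r31}. The hypotheses of Theorem \ref{t21} hold here: $f$ is convex and differentiable on $[0,1]$ since $r\geq1$, and $p$ is nonnegative, integrable and symmetric about $\frac{x+x^{\prime}}{2}=\frac{1}{2}$. One also needs $y=(1-\alpha)x+\alpha x^{\prime}$ and $y^{\prime}=\alpha x+(1-\alpha)x^{\prime}$, which is consistent with $y+y^{\prime}=1$.

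The translation uses three substitutions:
\[
\int_{x}^{x^{\prime}}p(s)\,ds=2B\left(\rho,\rho;x,\tfrac12\right),\qquad
\int_{\Omega}p(s)\,ds=2B(\rho,\rho;x,y),\qquad
f^{\prime}(x^{\prime})-f^{\prime}(x)=r\left((x^{\prime})^{r-1}-x^{r-1}\right).
\]
Since $f(x)+f(x^{\prime})=x^{r}+(x^{\prime})^{r}$, inequality (\ref{2.84}) becomes the first display. Inequality (\ref{2.87}) becomes the fourth: the factor $2$ from the first substitution combines with $(y-x)\,r$ to give $2r(y-x)$. Inequality (\ref{2.88}) becomes the fifth, because $\frac{4x^{\prime}-y-3y^{\prime}}{4}\cdot r\cdot 2B(\rho,\rho;x,\tfrac12)=\frac{r(4x^{\prime}-y-3y^{\prime})}{2}\left((x^{\prime})^{r-1}-x^{r-1}\right)B(\rho,\rho;x,\tfrac12)$. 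Inequalities (\ref{2.85}) and (\ref{2.86}) become the second and third displays in the same way, with $\frac{4x^{\prime}-y-3y^{\prime}}{8}\cdot 2B(\rho,\rho;x,y)=\frac{4x^{\prime}-y-3y^{\prime}}{4}B(\rho,\rho;x,y)$.

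The main thing to watch is this constant bookkeeping, together with a normalisation issue in (\ref{2.85}) and (\ref{2.90}). The chain (\ref{2.90}) behind them compares $\frac{Hp_{1}+Hp_{2}}{2}$ and $Lp_{1}$ with $\frac{f(x)+f(x^{\prime})}{2}\int_{\Omega}p$, and by Theorem \ref{t1} and Theorem \ref{t7} these bounds are stated against $\int_{\Omega}p$. So the bound in the proposition carries exactly $\int_{\Omega}p=2B(\rho,\rho;x,y)$ times the derivative bound, matching what was written above. To be safe I would recheck the estimate (\ref{2.91}) directly for $f(s)=s^{r}$ using the tangent-line inequality $f(u)-f(v)\leq(u-v)f^{\prime}(u)$, and confirm that $4x^{\prime}-y-3y^{\prime}\geq0$; it equals $3(x^{\prime}-y^{\prime})+(x^{\prime}-y)$. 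With these checks done, every display follows by direct substitution.
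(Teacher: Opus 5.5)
Your proposal is correct and follows the paper's route exactly: Proposition \ref{p21} is Theorem \ref{t21} read through Remark \ref{r31}. Your three substitutions $\int_{x}^{x^{\prime}}p=2B(\rho,\rho;x,\frac{1}{2})$, $\int_{\Omega}p=2B(\rho,\rho;x,y)$ and $f^{\prime}(x^{\prime})-f^{\prime}(x)=r((x^{\prime})^{r-1}-x^{r-1})$ reproduce all five displays, including the range $t\in[m^{\prime},1]$ in the last one.

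The normalisation you worried about behind (\ref{2.85}) is fine. The one worth rechecking is the chain used in the paper's proof of (\ref{2.88}). Since $Ip_{1}$, $Ip_{2}$ and $Sp$ each have mass $2\int_{x}^{x^{\prime}}p$, its end terms should read $[f(\frac{y+y^{\prime}}{2})+\frac{f(y)+f(y^{\prime})}{2}]\int_{x}^{x^{\prime}}p$ and $[f(x)+f(x^{\prime})]\int_{x}^{x^{\prime}}p$, not half of each as printed. With these corrected ends, (\ref{2.88}) holds as stated with the factor $\frac{4x^{\prime}-y-3y^{\prime}}{4}$, so your reading of it stands.
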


\begin{proposition}
\label{p22}The following inequalities hold:%
\begin{align*}
Hp_{1}\left(  t\right)   &  \leq2Q_{1}\left(  t\right)  B\left(  \rho
,\rho;x,y\right) \\
&  \leq\left[  x^{r}+\left(  x^{\prime}\right)  ^{r}\right]  \text{ }B\left(
\rho,\rho;x,y\right)  \qquad\left(  t\in\left[  0,\frac{\alpha}{1+\alpha
}\right]  \right)  .
\end{align*}%
\begin{align*}
2^{1-r}B\left(  \rho,\rho;x,y\right)   &  \leq2Q_{1}\left(  t\right)  B\left(
\rho,\rho;x,y\right) \\
&  \leq Pp_{1}\left(  t\right)  \text{ \qquad}\left(  t\in\left[  \frac
{\alpha}{1+\alpha},1\right]  \right)  .
\end{align*}%
\begin{align*}
0  &  \leq Sp\left(  t\right)  -2\left(  G_{1}\left(  t\right)  +G_{2}\left(
t\right)  \right)  B\left(  \rho,\rho;x,\frac{1}{2}\right) \\
&  \leq\left[  x^{r}+\left(  x^{\prime}\right)  ^{r}+2Q_{1}\left(  t\right)
\right]  B\left(  \rho,\rho;x,\frac{1}{2}\right)  -Sp\left(  t\right)  \text{
\ }\left(  t\in\left[  0,1\right]  \right)  .
\end{align*}

\end{proposition}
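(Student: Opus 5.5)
Proposition \ref{p22} is the specialization of Theorem \ref{t22} to the Beta setting of this subsection. I will not reprove anything; I will substitute and identify each quantity. With $f(s)=s^{r}$ ($r\geq1$, convex on $[0,1]$) and $p(s)=(s-s^{2})^{\rho-1}$ on $[x,x^{\prime}]$, the function $p$ is nonnegative, integrable and symmetric about $\frac{x+x^{\prime}}{2}=\frac12$, since $x+x^{\prime}=1$. Also $y=(1-\alpha)x+\alpha x^{\prime}$ and $y^{\prime}=1-y=\alpha x+(1-\alpha)x^{\prime}$, so all hypotheses of Theorem \ref{t22} hold.

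First I record the identifications from Remark \ref{r31}:
\[
\int_{\Omega}p(s)\,ds=2B(\rho,\rho;x,y),\qquad \int_{x}^{x^{\prime}}p(s)\,ds=2B\left(\rho,\rho;x,\tfrac12\right).
\]
The first follows from the symmetry $s\mapsto1-s$, which maps $[y^{\prime},x^{\prime}]$ onto $[x,y]$. The second follows from the same symmetry on $[x,x^{\prime}]$ about $\frac12$. Also $f\left(\frac{x+x^{\prime}}{2}\right)=2^{-r}$ and $\frac{f(x)+f(x^{\prime})}{2}=\frac{x^{r}+(x^{\prime})^{r}}{2}$. The functions $Hp_{1},Pp_{1},Sp,G_{1},G_{2},Q_{1}$ are exactly those listed in Remark \ref{r31}.

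Next I substitute these into (\ref{2.92}). Multiplying $Q_{1}(t)\int_{\Omega}p$ and $\frac{f(x)+f(x^{\prime})}{2}\int_{\Omega}p$ by $2B(\rho,\rho;x,y)$ gives the first chain on $\left[0,\frac{\alpha}{1+\alpha}\right]$. Substituting into (\ref{2.93}) gives the second chain on $\left[\frac{\alpha}{1+\alpha},1\right]$, with left end $2^{-r}\cdot2B(\rho,\rho;x,y)=2^{1-r}B(\rho,\rho;x,y)$. Finally, (\ref{2.94}) with $\int_{x}^{x^{\prime}}p=2B(\rho,\rho;x,\frac12)$ becomes the third statement, after expanding $\left[\frac{x^{r}+(x^{\prime})^{r}}{2}+Q_{1}(t)\right]\cdot2B=\left[x^{r}+(x^{\prime})^{r}+2Q_{1}(t)\right]B$.

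The only step needing care is the bookkeeping of constants, that is, checking the factor $2$ in each integral identity so that the stated coefficients match. The first identity needs the change of variables on $\Omega$, and the second uses $x^{\prime}=1-x$. There is no analytic obstacle beyond this.
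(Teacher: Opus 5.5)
Your proposal is correct and matches the paper's route. The paper obtains Proposition~\ref{p22} simply by reading Theorem~\ref{t22} through Remark~\ref{r31}: it substitutes $\int_{\Omega}p(s)\,ds=2B(\rho,\rho;x,y)$, $\int_{x}^{x^{\prime}}p(s)\,ds=2B\left(\rho,\rho;x,\frac{1}{2}\right)$ and $f\left(\frac{x+x^{\prime}}{2}\right)=2^{-r}$ into (\ref{2.92})--(\ref{2.94}), and your bookkeeping of the factors of $2$ and of the hypotheses ($p$ symmetric about $\frac12$, $y^{\prime}=\alpha x+(1-\alpha)x^{\prime}$) checks out.
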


\begin{proposition}
\label{p23}$Kp$\textit{\ is symmetric about }$\frac{1}{2},$%
\textit{\ decreasing on }$\left[  0,\frac{1}{2}\right]  $\textit{\ and
increasing on }$\left[  \frac{1}{2},1\right]  .$ The following identities and
inequalities hold:
\end{proposition}

\begin{align*}
&  \sup\limits_{t\in\left[  0,1\right]  }Kp\left(  t\right)  =Kp\left(
0\right)  =Kp\left(  1\right) \\
&  =%
{\displaystyle\int\nolimits_{x}^{x^{\prime}}}
4\left(  s-s^{2}\right)  ^{\rho-1}\left[  \left(  \left(  1-\alpha\right)
x+\alpha s\right)  ^{r}+\left(  \left(  1-\alpha\right)  x^{\prime}+\alpha
s\right)  ^{r}\right]  ds\ B\left(  \rho,\rho;x,\frac{1}{2}\right)  .
\end{align*}%
\begin{align*}
\inf\limits_{t\in\left[  0,1\right]  }Kp\left(  t\right)   &  =Kp\left(
\frac{1}{2}\right) \\
&  =%
{\displaystyle\int\nolimits_{x}^{x^{\prime}}}
{\displaystyle\int\nolimits_{x}^{x^{\prime}}}
\left(  s-s^{2}\right)  ^{\rho-1}\left(  u-u^{2}\right)  ^{\rho-1}\left[
\left(  \left(  1-\alpha\right)  x+\alpha\frac{s+u}{2}\right)  ^{r}\right. \\
&  +2\left(  \frac{1-\alpha}{2}+\alpha\frac{s+u}{2}\right)  ^{r}\\
&  +\left.  \left(  \left(  1-\alpha\right)  x^{\prime}+\alpha\frac{s+u}%
{2}\right)  ^{r}\right]  dsdu.
\end{align*}%
\[
2\left[  Ip_{1}\left(  t\right)  +Ip_{2}\left(  t\right)  \right]  B\left(
\rho,\rho;x,\frac{1}{2}\right)  \leq Kp\left(  t\right)  \text{ \ }\left(
t\in\left[  0,1\right]  \right)  .
\]%
\[
4\left[  y^{r}+\left(  \frac{1}{2}\right)  ^{1-r}+\left(  y^{\prime}\right)
^{r}\right]  B^{2}\left(  \rho,\rho;x,\frac{1}{2}\right)  \leq Kp\left(
\frac{1}{2}\right)  .
\]

\begin{proposition}
\label{p24}The following inequality holds for all $t\in\left[  0,1\right]  :$%
\begin{align*}
0  &  \leq Kp\left(  t\right)  -2\left[  Ip_{1}\left(  t\right)
+Ip_{2}\left(  t\right)  \right]  B\left(  \rho,\rho;x,\frac{1}{2}\right) \\
&  \leq4Sp\left(  1-t\right)  B\left(  \rho,\rho;x,\frac{1}{2}\right)
-Kp\left(  t\right)  .
\end{align*}

\end{proposition}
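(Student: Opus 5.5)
Proposition \ref{p24} is the specialization of Theorem \ref{t24} to the Beta-function setting of this subsection, so the plan is to apply inequality $\left(\ref{2.105}\right)$ with the data fixed at the start of the subsection and then rewrite the single weight integral in Beta form.

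First, check the hypotheses of Theorem \ref{t24}:
\begin{itemize}
\item $a=0$, $b=1$;
\item $f(s)=s^{r}$ with $r\geq1$, which is convex on $[0,1]$;
\item $x+x^{\prime}=y+y^{\prime}=1$ and $y=(1-\alpha)x+\alpha x^{\prime}$ with $0<\alpha\leq\frac12$, which gives $y^{\prime}=\alpha x+(1-\alpha)x^{\prime}$;
\item $p(s)=s^{\rho-1}(1-s)^{\rho-1}$, which is nonnegative and integrable on $[x,x^{\prime}]$ and symmetric about $\frac{x+x^{\prime}}{2}=\frac12$, since $p(1-s)=p(s)$.
\end{itemize}
Hence $\left(\ref{2.105}\right)$ holds for all $t\in[0,1]$, with $Kp$, $Ip_{1}$, $Ip_{2}$, $Sp$ exactly as written out in Remark \ref{r31}.

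Next, compute the weight integral. By the symmetry of $p$ about $\frac12$,
\[
\int_{x}^{x^{\prime}}p(s)\,ds=2\int_{x}^{1/2}s^{\rho-1}(1-s)^{\rho-1}\,ds=2B\left(\rho,\rho;x,\tfrac12\right),
\]
as recorded in Remark \ref{r31}. Substituting this into $\left(\ref{2.105}\right)$:
\begin{itemize}
\item the middle term $\left[Ip_{1}(t)+Ip_{2}(t)\right]\int_{x}^{x^{\prime}}p$ becomes $2\left[Ip_{1}(t)+Ip_{2}(t)\right]B(\rho,\rho;x,\frac12)$;
\item the term $2Sp(1-t)\int_{x}^{x^{\prime}}p$ becomes $4Sp(1-t)B(\rho,\rho;x,\frac12)$.
\end{itemize}
This is precisely the stated double inequality.

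No step here is a real obstacle. The only thing to be careful about is bookkeeping: $Sp$ in Remark \ref{r31} carries the factor $\frac12$ from its general definition, so the constant $4$ comes out consistently. It is also worth confirming that the left-hand inequality $0\leq Kp(t)-\dots$ is exactly $\left(\ref{2.99}\right)$ under the same substitution, which is how Theorem \ref{t24} itself was obtained.
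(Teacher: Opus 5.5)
Your proposal is correct and follows the paper's own route: Proposition \ref{p24} is obtained by specializing inequality (\ref{2.105}) of Theorem \ref{t24} to the Beta data of this subsection, and then substituting $\int_{x}^{x^{\prime}}p(s)\,ds=2B\left(\rho,\rho;x,\frac{1}{2}\right)$ from Remark \ref{r31}. Your hypothesis check is also sound, since $p$ is bounded and symmetric about $\frac{1}{2}$ on $[x,x^{\prime}]\subset(0,1)$; the constant $4$ is just the coefficient $2$ of $Sp(1-t)$ in (\ref{2.105}) multiplied by that factor $2$.
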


\subsection{Applications for the Extended Special Means}

Throughout this subsection, let $0<\alpha\leq\frac{1}{2},0<a\leq x<y\leq
y^{\prime}<x^{\prime}\leq b,$ $x+x^{\prime}=y+y^{\prime},\ \Omega=\left[
x,y\right]  \cup\left[  y^{\prime},x^{\prime}\right]  ,$ $y=\left(
1-\alpha\right)  x+\alpha x^{\prime}$ and let $p:\Omega\rightarrow\left(
0,\infty\right)  $ be integrable and symmetic to $\frac{x+x^{\prime}}{2}.$

\bigskip Let us recall the following special means of the two real numbers $u
$ and $v:$

\begin{enumerate}
\item \textit{The Arithmetic mean}%
\[
A\left(  u,v\right)  :=\frac{u+v}{2}.
\]

\item The Harmonic mean%
\[
H\left(  u,v\right)  :=\frac{2uv}{u+v}\text{ \ }\left(  u,v>0\right)  .
\]

\item \textit{The Identric mean}%
\[
I\left(  u,v\right)  :=\frac{1}{e}\left(  \frac{v^{v}}{u^{u}}\right)
^{\frac{1}{v-u}}\text{ \ }\left(  0<u<v\right)  .
\]

\item \textit{The Logarithmic mean}%
\[
L\left(  u,v\right)  :=\frac{v-u}{\ln v-\ln u}\text{ \ }\left(  0<u<v\right)
.
\]

\item \textit{The }$r$\textit{-Logarithmic mean}%
\[
L_{r}\left(  u,v\right)  :=\left[  \frac{v^{r+1}-u^{r+1}}{\left(  r+1\right)
\left(  v-u\right)  }\right]  ^{\frac{1}{r}}\text{ \ }\left(  0<u<v,\text{
}r\in R\backslash\left\{  -1,0\right\}  \right)  .
\]

\end{enumerate}

It is well known that $L_{r}\left(  u,v\right)  $ is monotonically increasing
over $r\in R,$ denoting $L_{0}\left(  u,v\right)  =I\left(  u,v\right)  $ and
$L_{-1}\left(  u,v\right)  =L\left(  u,v\right)  .$

Let the function $q:\left[  u,v\right]  \rightarrow\left(  0,\infty\right)  $
be integrable. It is natural to consider the following extended means:

\begin{enumerate}
\item \textit{The extended Identric mean}%
\[
I\left(  q\left(  s\right)  ;u,v\right)  =I\left(  q;u,v\right)  :=e^{\int%
_{u}^{v}\ln sq\left(  s\right)  ds}\text{ }\left(  0<u<v\right)  .
\]

\item \textit{The extended }$r$\textit{-Logarithmic mean}%
\[
L_{r}\left(  q\left(  s\right)  ;u,v\right)  =L_{r}\left(  q;u,v\right)
:=\left[  \int_{u}^{v}s^{r}q\left(  s\right)  ds\right]  ^{\frac{1}{r}}\left(
0<u<v,\text{ }r\in R\backslash\left\{  0\right\}  \right)  .
\]

\end{enumerate}

It is clear that%
\[
I\left(  \frac{1}{v-u};u,v\right)  =I\left(  u,v\right)  ,\text{ }%
L_{-1}\left(  \frac{1}{v-u};u,v\right)  =L\left(  u,v\right)
\]
and%
\[
L_{r}\left(  \frac{1}{v-u};u,v\right)  =L_{r}\left(  u,v\right)  \left(  r\in
R\backslash\left\{  -1,0\right\}  \right)  .
\]

\subsubsection{Applications for the Extended Identric Mean}

Throughout this subsubsection, let $f\left(  s\right)  =-\ln s$ $\left(
s\in\left[  a,b\right]  \right)  .$

\begin{remark}
\label{r32}From the Section 2, we get%
\[
\int_{\Omega}f\left(  s\right)  p\left(  s\right)  ds=-\ln I\left(
p;x,y\right)  -\ln I\left(  p;y^{\prime},x^{\prime}\right)  ,
\]%
\[
\frac{1}{2\left(  y-x\right)  }\int_{\Omega}f\left(  s\right)  ds=-A\left(
\ln I\left(  x,y\right)  ,\ln I\left(  y^{\prime},x^{\prime}\right)  \right)
,
\]%
\begin{align*}
&  \frac{1}{y-x}\int_{x}^{y}\left[  f\left(  \frac{s+x}{2}\right)  +f\left(
\frac{x+2y-s}{2}\right)  \right]  ds\\
&  =\frac{1}{y-x}\int_{x}^{y}f\left(  s\right)  ds=-\ln I\left(  x,y\right)  ,
\end{align*}%
\begin{align*}
&  \frac{1}{y-x}\int_{y^{\prime}}^{x^{\prime}}\left[  f\left(  \frac
{s+x^{\prime}}{2}\right)  +f\left(  \frac{x^{\prime}+2y^{\prime}-s}{2}\right)
\right]  ds\\
&  =\frac{1}{y-x}\int_{y^{\prime}}^{x^{\prime}}f\left(  s\right)  ds=-\ln
I\left(  y^{\prime},x^{\prime}\right)  ,
\end{align*}%
\begin{align*}
&  \frac{2}{y-x}\left[  \int_{x}^{y}f\left(  \frac{s+x}{2}\right)  +f\left(
\frac{x+2y^{\prime}-s}{2}\right)  ds\right] \\
&  =\frac{2}{y-x}\left[  \int_{x}^{\frac{x+y}{2}}f\left(  s\right)
ds+\int_{\frac{x+2y^{\prime}-y}{2}}^{y^{\prime}}f\left(  s\right)  ds\right]
\\
&  =-\ln I\left(  x,\frac{y-x}{2}\right)  -\ln I\left(  \frac{x+2y^{\prime}%
-y}{2},y^{\prime}\right)  ,
\end{align*}%
\begin{align*}
&  \frac{2}{y-x}\int_{y^{\prime}}^{x^{\prime}}\left[  f\left(  \frac
{s+x^{\prime}}{2}\right)  +f\left(  \frac{x^{\prime}+2y-s}{2}\right)  \right]
ds\\
&  =\frac{2}{y-x}\left[  \int_{\frac{x^{\prime}+y^{\prime}}{2}}^{x^{\prime}%
}f\left(  s\right)  ds+\int_{y}^{\frac{x^{\prime}+2y-y^{\prime}}{2}}f\left(
s\right)  ds\right] \\
&  =-\ln I\left(  \frac{x^{\prime}+y^{\prime}}{2},x^{\prime}\right)  -\ln
I\left(  y,\frac{x^{\prime}+2y-y^{\prime}}{2}\right)
\end{align*}
and the following convex functions on $\left[  0,1\right]  $:%
\[
G_{1}\left(  t\right)  =-A\left(  \ln\left(  tx+\left(  1-t\right)  y\right)
,\ln\left(  tx^{\prime}+\left(  1-t\right)  y^{\prime}\right)  \right)  .
\]%
\[
G_{2}\left(  t\right)  =-A\left(  \ln\left(  tx+\left(  1-t\right)  y^{\prime
}\right)  ,\ln\left(  tx^{\prime}+\left(  1-t\right)  y\right)  \right)  .
\]%
\[
G_{3}\left(  t\right)  =-A\left(  \ln\left(  ty+\left(  1-t\right)  y^{\prime
}\right)  ,\ln\left(  ty^{\prime}+\left(  1-t\right)  y\right)  \right)  .
\]%
\[
Hp_{1}\left(  t\right)  =-\int_{x}^{y}\left[  \ln\left(  ts+\left(
1-t\right)  y\right)  +\ln\left(  t\left(  y+y^{\prime}-s\right)  +\left(
1-t\right)  y^{\prime}\right)  \right]  p\left(  s\right)  ds.
\]%
\[
Hp_{2}\left(  t\right)  =-\int_{x}^{y}\left[  \ln\left(  ts+\left(
1-t\right)  y^{\prime}\right)  +\ln\left(  t\left(  y+y^{\prime}-s\right)
+\left(  1-t\right)  y\right)  \right]  p\left(  s\right)  ds.
\]%
\[
H_{1}\left(  t\right)  =\frac{-1}{2\left(  y-x\right)  }\int_{x}^{y}\left[
\ln\left(  ts+\left(  1-t\right)  y\right)  +\ln\left(  t\left(  y+y^{\prime
}-s\right)  +\left(  1-t\right)  y^{\prime}\right)  \right]  ds.
\]%
\[
H_{2}\left(  t\right)  =\frac{-1}{2\left(  y-x\right)  }\int_{x}^{y}\left[
\ln\left(  ts+\left(  1-t\right)  y^{\prime}\right)  +\ln\left(  t\left(
y+y^{\prime}-s\right)  +\left(  1-t\right)  y\right)  \right]  ds.
\]%
\[
Fp_{1}\left(  t\right)  =-\int_{\Omega}\int_{\Omega}\ln\left(  ts+\left(
1-t\right)  u\right)  p\left(  s\right)  p\left(  u\right)  dsdu.
\]%
\[
F_{1}\left(  t\right)  =\frac{-1}{4\left(  y-x\right)  ^{2}}\int_{\Omega}%
\int_{\Omega}\ln\left(  ts+\left(  1-t\right)  u\right)  dsdu.
\]%
\[
Pp_{1}\left(  t\right)  =-\int_{x}^{y}\left[  \ln\left(  tx+\left(
1-t\right)  s\right)  +\ln\left(  tx^{\prime}+\left(  1-t\right)  \left(
x+x^{\prime}-s\right)  \right)  \right]  p\left(  s\right)  ds.
\]%
\[
P_{1}\left(  t\right)  =\frac{-1}{2\left(  y-x\right)  }\int_{x}^{y}\left[
\ln\left(  tx+\left(  1-t\right)  s\right)  +\ln\left(  tx^{\prime}+\left(
1-t\right)  \left(  x+x^{\prime}-s\right)  \right)  \right]  ds.
\]%
\[
Lp_{1}\left(  t\right)  =\frac{-1}{2}%
{\displaystyle\int\nolimits_{\Omega}}
\left[  \ln\left(  tx+\left(  1-t\right)  s\right)  +\ln\left(  tx^{\prime
}+\left(  1-t\right)  s\right)  \right]  p\left(  s\right)  ds.
\]%
\[
L_{1}\left(  t\right)  =\frac{-1}{4\left(  y-x\right)  }%
{\displaystyle\int\nolimits_{\Omega}}
\left[  \ln\left(  tx+\left(  1-t\right)  s\right)  +\ln\left(  tx^{\prime
}+\left(  1-t\right)  s\right)  \right]  ds.
\]%
\[
Q_{1}\left(  t\right)  =-A\left(  \ln\left(  tx+\left(  1-t\right)  x^{\prime
}\right)  ,\ln\left(  tx^{\prime}+\left(  1-t\right)  x\right)  \right)  .
\]%
\begin{align*}
Ip_{1}\left(  t\right)   &  =-%
{\displaystyle\int\nolimits_{x}^{x^{\prime}}}
\left[  \ln\left(  t\left(  \left(  1-\alpha\right)  x+\alpha s\right)
+\left(  1-t\right)  y\right)  \right. \\
&  \text{ \ \ \ \ }+\left.  \ln\left(  t\left(  \left(  1-\alpha\right)
x^{\prime}+\alpha s\right)  +\left(  1-t\right)  y^{\prime}\right)  \right]
p\left(  s\right)  ds.
\end{align*}%
\begin{align*}
Ip_{2}\left(  t\right)   &  =-%
{\displaystyle\int\nolimits_{x}^{x^{\prime}}}
\left[  \ln\left(  t\left(  \left(  1-\alpha\right)  x+\alpha s\right)
+\left(  1-t\right)  y^{\prime}\right)  \right. \\
&  \text{ \ \ \ \ \ }+\left.  \ln\left(  t\left(  \left(  1-\alpha\right)
x^{\prime}+\alpha s\right)  +\left(  1-t\right)  y\right)  \right]  p\left(
s\right)  ds.
\end{align*}%
\begin{align*}
Jp\left(  t\right)   &  =-%
{\displaystyle\int\nolimits_{x}^{x^{\prime}}}
\left[  \ln\left(  t\left(  \left(  1-\alpha\right)  x+\alpha s\right)
+\left(  1-t\right)  \frac{x+y}{2}\right)  \right. \\
&  \text{\ }+\left.  \ln\left(  t\left(  \left(  1-\alpha\right)  x^{\prime
}+\alpha s\right)  +\left(  1-t\right)  \frac{x^{\prime}+y^{\prime}}%
{2}\right)  \right]  p\left(  s\right)  ds.
\end{align*}%
\begin{align*}
Mp\left(  t\right)   &  =-%
{\displaystyle\int\nolimits_{x}^{\frac{x+x^{\prime}}{2}}}
\left[  \ln\left(  tx+\left(  1-t\right)  \left(  \left(  1-\alpha\right)
x+\alpha s\right)  \right)  \right. \\
&  \text{ \ \ \ \ \ \ \ \ \ \ }+\left.  \ln\left(  ty^{\prime}+\left(
1-t\right)  \left(  \left(  1-\alpha\right)  x^{\prime}+\alpha s\right)
\right)  \right]  p\left(  s\right)  ds\\
&  +%
{\displaystyle\int\nolimits_{\frac{x+x^{\prime}}{2}}^{x^{\prime}}}
\left[  \ln\left(  ty+\left(  1-t\right)  \left(  \left(  1-\alpha\right)
x+\alpha s\right)  \right)  \right. \\
&  \text{ \ \ \ \ \ \ \ \ \ \ \ }+\left.  \ln\left(  tx^{\prime}+\left(
1-t\right)  \left(  \left(  1-\alpha\right)  x^{\prime}+\alpha s\right)
\right)  \right]  p\left(  s\right)  ds.
\end{align*}%
\begin{align*}
Np\left(  t\right)   &  =-%
{\displaystyle\int\nolimits_{x}^{x^{\prime}}}
\left[  \ln\left(  tx+\left(  1-t\right)  \left(  \left(  1-\alpha\right)
x+\alpha s\right)  \right)  \right. \\
&  \text{ \ \ \ \ \ }+\left.  \ln\left(  tx^{\prime}+\left(  1-t\right)
\left(  \left(  1-\alpha\right)  x^{\prime}+\alpha s\right)  \right)  \right]
p\left(  s\right)  ds.
\end{align*}%
\begin{align*}
Sp\left(  t\right)   &  =-%
{\displaystyle\int\nolimits_{x}^{x^{\prime}}}
\frac{1}{2}\left[  \ln\left(  tx+\left(  1-t\right)  \left(  \left(
1-\alpha\right)  x+\alpha s\right)  \right)  \right. \\
&  \text{ \ \ \ \ \ \ \ \ \ }+\ln\left(  tx+\left(  1-t\right)  \left(
\left(  1-\alpha\right)  x^{\prime}+\alpha s\right)  \right) \\
&  \text{ \ \ \ \ \ \ \ \ \ }+\ln\left(  tx^{\prime}+\left(  1-t\right)
\left(  \left(  1-\alpha\right)  x+\alpha s\right)  \right) \\
&  \text{ \ \ \ \ \ \ \ \ \ }+\left.  \ln\left(  tx^{\prime}+\left(
1-t\right)  \left(  \left(  1-\alpha\right)  x^{\prime}+\alpha s\right)
\right)  \right]  p\left(  s\right)  ds.
\end{align*}

\end{remark}

Using Theorems \ref{t1} --\ \ref{t24}, Remark \ref{r3} and Remark \ref{r32},
we have the following propositions of the extended identric mean:

\begin{proposition}
\label{p25}$Hp_{1},H_{1}$\ is increasing on $\left[  0,1\right]  $\ and the
following inequalities hold for all $t\in\left[  0,1\right]  :$
\begin{align*}
&  -A\left(  \ln y,\ln y^{\prime}\right)
{\displaystyle\int\nolimits_{\Omega}}
p\left(  s\right)  ds\\
&  =Hp_{1}\left(  0\right)  \leq Hp_{1}\left(  t\right)  \text{\ }\leq
Hp_{1}\left(  1\right) \\
&  =-\ln I\left(  p;x,y\right)  -\ln I\left(  p;y^{\prime},x^{\prime}\right)
\ .
\end{align*}%
\begin{align*}
&  -A\left(  \ln y,\ln y^{\prime}\right) \\
&  =H_{1}\left(  0\right)  \leq H_{1}\left(  t\right)  \leq H_{1}\left(
1\right) \\
&  =-A\left(  \ln I\left(  x,y\right)  ,\ln I\left(  y^{\prime},x^{\prime
}\right)  \right)
\end{align*}
for $p\left(  s\right)  =\frac{1}{2\left(  y-x\right)  }$ on $\left[
x.x^{\prime}\right]  .$%
\begin{align*}
&  Hp_{1}\left(  t\right)  \leq-t\left[  \ln I\left(  p;x,y\right)  +\ln
I\left(  p;y^{\prime},x^{\prime}\right)  \right] \\
&  \text{ \ \ \ \ \ \ \ \ \ \ \ }-\left(  1-t\right)  A\left(  \ln y,\ln
y^{\prime}\right)
{\displaystyle\int\nolimits_{\Omega}}
p\left(  s\right)  ds\\
&  \leq-\ln I\left(  p;x,y\right)  -\ln I\left(  p;y^{\prime},x^{\prime
}\right)  \ \\
&  \leq-A\left(  \ln x,\ln x^{\prime}\right)
{\displaystyle\int\nolimits_{\Omega}}
p\left(  s\right)  ds.
\end{align*}%
\begin{align*}
&  H_{1}\left(  t\right)  \leq-tA\left(  \ln I\left(  x,y\right)  ,\ln
I\left(  y^{\prime},x^{\prime}\right)  \right)  -\left(  1-t\right)  A\left(
\ln y,\ln y^{\prime}\right) \\
&  \leq-A\left(  \ln I\left(  x,y\right)  ,\ln I\left(  y^{\prime},x^{\prime
}\right)  \right)  \leq-A\left(  \ln x,\ln x^{\prime}\right)
\end{align*}
for $p\left(  s\right)  =\frac{1}{2\left(  y-x\right)  }$ on $\left[
x.x^{\prime}\right]  .$%
\begin{align*}
&  -\ln A\left(  y,y^{\prime}\right)
{\displaystyle\int\nolimits_{\Omega}}
p\left(  s\right)  ds\\
&  \leq Hp_{2}\left(  t\right) \\
&  \leq-t\left[  \ln I\left(  p;x,y\right)  +\ln I\left(  p;y^{\prime
},x^{\prime}\right)  \right] \\
&  \text{ \ \ \ }-\left(  1-t\right)  A\left(  \ln y,\ln y^{\prime}\right)
{\displaystyle\int\nolimits_{\Omega}}
p\left(  s\right)  ds\\
&  \leq-\ln I\left(  p;x,y\right)  -\ln I\left(  p;y^{\prime},x^{\prime
}\right)  .
\end{align*}%
\begin{align*}
&  -\ln A\left(  y,y^{\prime}\right)  \leq H_{2}\left(  t\right) \\
&  \leq-tA\left(  \ln I\left(  x,y\right)  ,\ln I\left(  y^{\prime},x^{\prime
}\right)  \right)  -\left(  1-t\right)  A\left(  \ln y,\ln y^{\prime}\right)
\\
&  \leq-A\left(  \ln I\left(  x,y\right)  ,\ln I\left(  y^{\prime},x^{\prime
}\right)  \right)
\end{align*}
for $p\left(  s\right)  =\frac{1}{2\left(  y-x\right)  }$ on $\left[
x.x^{\prime}\right]  .$%
\[
Hp_{2}\left(  t\right)  \leq Hp_{1}\left(  t\right)  .
\]%
\[
H_{2}\left(  t\right)  \leq H_{1}\left(  t\right)  \text{\ \ as }p\left(
s\right)  =\frac{1}{2\left(  y-x\right)  }\text{ on }\left[  x.x^{\prime
}\right]  .
\]

\end{proposition}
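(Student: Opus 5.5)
Proposition \ref{p25} is a direct specialization of Theorem \ref{t1}. I would apply it with $f(s)=-\ln s$ on $[a,b]$, which is convex since $a>0$, and with the given symmetric weight $p$. Then I would translate each term using Remark \ref{r32}, and use Remark \ref{r3} for the unweighted case $p\equiv\frac{1}{2(y-x)}$.

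The first step is to compute the endpoint values. Directly, $Hp_{1}(0)=[f(y)+f(y^{\prime})]\int_{x}^{y}p(s)\,ds$. By Remark \ref{r3}(1), $\int_{x}^{y}p=\frac12\int_{\Omega}p$, so
\[
Hp_{1}(0)=\frac{f(y)+f(y^{\prime})}{2}\int_{\Omega}p(s)\,ds=-A(\ln y,\ln y^{\prime})\int_{\Omega}p(s)\,ds .
\]
For the other endpoint, (\ref{2.1}) gives $Hp_{1}(1)=\int_{\Omega}f p$. By the definition of the extended identric mean this equals $-\ln I(p;x,y)-\ln I(p;y^{\prime},x^{\prime})$; here $\ln I(q;u,v)=\int_{u}^{v}\ln s\,q(s)\,ds$, applied with $q=p$ restricted to each piece of $\Omega$. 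The monotonicity of $Hp_{1}$ and the chain (\ref{2.1}) then give the first displayed line.

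The next step is to substitute the same identifications into (\ref{2.2}). The last term is
\[
\frac{f(x)+f(x^{\prime})}{2}\int_{\Omega}p=-A(\ln x,\ln x^{\prime})\int_{\Omega}p .
\]
For (\ref{2.3}), the lower bound $f\bigl(\frac{y+y^{\prime}}{2}\bigr)\int_{\Omega}p$ becomes $-\ln A(y,y^{\prime})\int_{\Omega}p$. Inequality (\ref{2.4}) yields $Hp_{2}\le Hp_{1}$ verbatim.

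The unweighted statements for $H_{1}$ and $H_{2}$ follow by setting $p\equiv\frac{1}{2(y-x)}$. Then $\int_{\Omega}p=1$, and by Remark \ref{r3}(2), $Hp_{i}=H_{i}$. Remark \ref{r32} gives $\frac{1}{2(y-x)}\int_{\Omega}f=-A(\ln I(x,y),\ln I(y^{\prime},x^{\prime}))$. This works because $x^{\prime}-y^{\prime}=y-x$, so each piece of $\Omega$ has length $y-x$ and $\frac{1}{y-x}\int_{x}^{y}\ln s\,ds=\ln I(x,y)$ (and likewise on $[y^{\prime},x^{\prime}]$).

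There is no real obstacle. The only point needing care is bookkeeping: the factor $\frac12$ relating $\int_{x}^{y}p$ to $\int_{\Omega}p$, and the fact that the paper's extended identric mean $I(p;u,v)$ carries no normalization. With both handled, all constants match the stated inequalities.
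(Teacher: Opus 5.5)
Your proposal is correct and follows the paper's route: the paper obtains Proposition \ref{p25} simply by specializing Theorem \ref{t1} to $f(s)=-\ln s$ and translating each term via Remarks \ref{r3} and \ref{r32}. The bookkeeping you flag is exactly what is needed: the identity $\int_{x}^{y}p=\frac12\int_{\Omega}p$, the absence of normalization in $I(p;u,v)$, and $\int_{\Omega}p=1$ for $p\equiv\frac{1}{2(y-x)}$ (which uses $x^{\prime}-y^{\prime}=y-x$).
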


\begin{proposition}
\label{p26}The following inequalities hold:%
\begin{align*}
&  -A\left(  \ln y,\ln y^{\prime}\right)
{\displaystyle\int\nolimits_{\Omega}}
p\left(  s\right)  ds\\
&  \leq-\ln I\left(  p\left(  2s-y\right)  ;\frac{x+y}{2},y\right)  ^{2}-\ln
I\left(  p\left(  2s-y^{\prime}\right)  ;y^{\prime},\frac{x^{\prime}%
+y^{\prime}}{2}\right)  ^{2}\\
&  \leq\int_{0}^{1}Hp_{1}\left(  t\right)  dt\\
&  \leq-A\left(  A\left(  \ln y,\ln y^{\prime}\right)
{\displaystyle\int\nolimits_{\Omega}}
p\left(  s\right)  ds,\ln I\left(  p;x,y\right)  +\ln I\left(  p;y^{\prime
},x^{\prime}\right)  \right)  .
\end{align*}%
\begin{align*}
&  -A\left(  \ln y,\ln y^{\prime}\right) \\
&  \leq-A\left(  \ln I\left(  \frac{x+y}{2},y\right)  ,\ln I\left(  y^{\prime
},\frac{x^{\prime}+y^{\prime}}{2}\right)  \right) \\
&  \leq\int_{0}^{1}H_{1}\left(  t\right)  dt\leq-A\left(  A\left(  \ln y,\ln
y^{\prime}\right)  ,A\left(  \ln I\left(  x,y\right)  ,\ln I\left(  y^{\prime
},x^{\prime}\right)  \right)  \right)
\end{align*}
for $p\left(  s\right)  =\frac{1}{2\left(  y-x\right)  }$ on $\left[
x.x^{\prime}\right]  .$%
\begin{align*}
-  &  \ln A\left(  y,y^{\prime}\right)
{\displaystyle\int\nolimits_{\Omega}}
p\left(  s\right)  ds\\
&  \leq-\ln I\left(  p\left(  2s-y^{\prime}\right)  ;\frac{x+y^{\prime}}%
{2},\frac{y+y^{\prime}}{2}\right)  ^{2}+\ln I\left(  p\left(  2s-y\right)
;\frac{y+y^{\prime}}{2},\frac{y+x^{\prime}}{2}\right)  ^{2}\\
&  \leq\int_{0}^{1}Hp_{2}\left(  t\right)  dt\\
&  \leq-A\left(  A\left(  \ln y,\ln y^{\prime}\right)
{\displaystyle\int\nolimits_{\Omega}}
p\left(  s\right)  ds,\ln I\left(  p;x,y\right)  +\ln I\left(  p;y^{\prime
},x^{\prime}\right)  \right)  .
\end{align*}%
\begin{align*}
&  -\ln A\left(  y,y^{\prime}\right) \\
&  \leq-A\left(  \ln I\left(  \frac{x+y^{\prime}}{2},\frac{y+y^{\prime}}%
{2}\right)  ,\ln I\left(  \frac{y+y^{\prime}}{2},\frac{y+x^{\prime}}%
{2}\right)  \right) \\
&  \leq\int_{0}^{1}H_{2}\left(  t\right)  dt\leq-A\left(  A\left(  \ln y,\ln
y^{\prime}\right)  ,A\left(  \ln I\left(  x,y\right)  ,\ln I\left(  y^{\prime
},x^{\prime}\right)  \right)  \right)
\end{align*}
for $p\left(  s\right)  =\frac{1}{2\left(  y-x\right)  }$ on $\left[
x.x^{\prime}\right]  .$%
\begin{align*}
0  &  \leq-\ln I\left(  p;x,y\right)  -\ln I\left(  p;y^{\prime},x^{\prime
}\right)  -Hp_{1}\left(  t\right) \\
&  \leq2\left(  y-x\right)  \left(  1-t\right)  \left[  A\left(  \ln I\left(
x,y\right)  ,\ln I\left(  y^{\prime},x^{\prime}\right)  \right)  -A\left(  \ln
x,\ln x^{\prime}\right)  \right]  \sup\limits_{s\in\Omega}p\left(  s\right)
\end{align*}
for all $t\in\left[  0,1\right]  .$%
\begin{align*}
0  &  \leq-A\left(  \ln I\left(  x,y\right)  ,\ln I\left(  y^{\prime
},x^{\prime}\right)  \right)  -H_{1}\left(  t\right) \\
&  \leq\left(  1-t\right)  \left[  A\left(  \ln I\left(  x,y\right)  ,\ln
I\left(  y^{\prime},x^{\prime}\right)  \right)  -A\left(  \ln x,\ln x^{\prime
}\right)  \right]
\end{align*}
\ for $t\in\left[  0,1\right]  $ and $p\left(  s\right)  =\frac{1}{2\left(
y-x\right)  }$ on $\left[  x.x^{\prime}\right]  .$%
\begin{align*}
0  &  \leq-A\left(  \ln x,\ln x^{\prime}\right)  \int_{\Omega}p\left(
s\right)  ds-Hp_{1}\left(  t\right) \\
&  \leq\frac{\left(  y-x\right)  \left(  x^{\prime}-x\right)  }{2xx^{\prime}%
}\int_{\Omega}p\left(  s\right)  ds\text{ \ }\left(  t\in\left[  0,1\right]
\right)  .
\end{align*}%
\[
0\leq-A\left(  \ln x,\ln x^{\prime}\right)  -H_{1}\left(  t\right)  \leq
\frac{\left(  y-x\right)  \left(  x^{\prime}-x\right)  }{2xx^{\prime}}%
\]
for $t\in\left[  0,1\right]  $ and $p\left(  s\right)  =\frac{1}{2\left(
y-x\right)  }$ on $\left[  x.x^{\prime}\right]  .$%
\begin{align*}
0  &  \leq Hp_{1}\left(  t\right)  +A\left(  \ln y,\ln y^{\prime}\right)
\int_{\Omega}p\left(  s\right)  ds\\
&  \leq\frac{\left(  y-x\right)  \left(  x^{\prime}-x\right)  }{2xx^{\prime}%
}\int_{\Omega}p\left(  s\right)  ds\text{ \ }\left(  t\in\left[  0,1\right]
\right)  .
\end{align*}%
\[
0\leq H_{1}\left(  t\right)  +A\left(  \ln y,\ln y^{\prime}\right)  \leq
\frac{\left(  y-x\right)  \left(  x^{\prime}-x\right)  }{2xx^{\prime}}%
\]
for $t\in\left[  0,1\right]  $ and $p\left(  s\right)  =\frac{1}{2\left(
y-x\right)  }$ on $\left[  x.x^{\prime}\right]  .$
\end{proposition}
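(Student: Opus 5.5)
Proposition~\ref{p26} is obtained by specializing Theorem~\ref{t2} to $f(s)=-\ln s$ on $[a,b]$ with $0<a$, then reading off the same statements with $p\equiv\frac{1}{2(y-x)}$ via Remark~\ref{r3}(2). Since $-\ln$ is convex and differentiable on $[a,b]$, and $p$ is positive, integrable and symmetric to $\frac{x+x'}{2}$, all hypotheses of Theorem~\ref{t2} hold. It remains to translate each quantity using Remark~\ref{r32}:
\[
\int_\Omega f p=-\ln I(p;x,y)-\ln I(p;y',x'),\qquad \frac{f(y)+f(y')}{2}=-A(\ln y,\ln y'),\qquad f\Bigl(\frac{y+y'}{2}\Bigr)=-\ln A(y,y').
\]

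\textbf{The chains (\ref{2.5}) and (\ref{2.6}).} The middle terms $\int_{\frac{x+y}{2}}^{y}2f(s)p(2s-y)\,ds$ and its companions become, by the definition of the extended identric mean, $-2\ln I(p(2s-y);\frac{x+y}{2},y)$, which is the squared-$I$ notation used in the statement. The outer terms are handled by the identities above. For constant $p$ the middle terms collapse to ordinary identric means, e.g.
\[
\frac{1}{y-x}\int_{\frac{x+y}{2}}^{y}(-\ln s)\,ds=-\tfrac12\ln I\Bigl(\frac{x+y}{2},y\Bigr),
\]
which yields the arithmetic-mean expressions $A(\cdot,\cdot)$ and $\int_\Omega p=1$.

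\textbf{The inequality (\ref{2.7}).} We need $(f(x)+f(x'))(y-x)-\int_\Omega f=2(y-x)\bigl[A(\ln I(x,y),\ln I(y',x'))-A(\ln x,\ln x')\bigr]$. This uses $\int_x^y\ln s\,ds=(y-x)\ln I(x,y)$ and the equality of the lengths $x'-y'=y-x$. With constant $p$, $\sup p=\frac{1}{2(y-x)}$ gives the stated simplified bound.

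\textbf{The inequalities (\ref{2.8})--(\ref{2.9}).} Since $f'(s)=-1/s$,
\[
(y-x)\frac{f'(x')-f'(x)}{2}=\frac{(y-x)(x'-x)}{2xx'},
\]
and the claimed bounds follow directly.

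The only delicate step is the bookkeeping of constants: the factor $2$ and the normalization $\int_\Omega p=1$ in the constant-weight cases, and the sign conventions. In particular, each term in the statement must match the corresponding term of Theorem~\ref{t2}. I expect no genuine obstacle, since everything reduces to substitution and the two identities $\int\ln s=(\text{length})\ln I$ and $f'=-1/s$.
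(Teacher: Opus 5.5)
Your proposal is correct and follows the paper's route: the paper obtains Proposition~\ref{p26} by specializing Theorem~\ref{t2} to $f(s)=-\ln s$, translating via Remark~\ref{r32}, and using Remark~\ref{r3}(2) for the constant weight $p\equiv\frac{1}{2(y-x)}$. One correction to your claim that every term matches the statement: in the third chain the substitution gives $-\ln I\bigl(p(2s-y');\tfrac{x+y'}{2},\tfrac{y+y'}{2}\bigr)^{2}-\ln I\bigl(p(2s-y);\tfrac{y+y'}{2},\tfrac{y+x'}{2}\bigr)^{2}$, so the ``$+$'' printed there is a misprint (with ``$+$'' the chain fails, e.g.\ for $x=1$, $y=2$, $y'=3$, $x'=4$ and constant $p$), and what you prove is the corrected inequality.
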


\begin{proposition}
\label{p27}$Pp_{1}$\ is increasing on $\left[  0,1\right]  $\ and the
following inequalities hold for all $t\in\left[  0,1\right]  :$%
\begin{align*}
&  -\ln I\left(  p;x,y\right)  -\ln I\left(  p;y^{\prime},x^{\prime}\right) \\
&  =Pp_{1}\left(  0\right)  \leq Pp_{1}\left(  t\right)  \leq Pp_{1}\left(
1\right)  =-A\left(  \ln x,\ln x^{\prime}\right)  \int_{\Omega}p\left(
s\right)  ds.
\end{align*}%
\begin{align*}
&  -\ln I\left(  x,y\right)  -\ln I\left(  y^{\prime},x^{\prime}\right)
=P_{1}\left(  0\right)  \leq P_{1}\left(  t\right) \\
&  \leq P_{1}\left(  1\right)  =-A\left(  \ln x,\ln x^{\prime}\right)
\end{align*}
for $p\left(  s\right)  =\frac{1}{2\left(  y-x\right)  }$ on $\left[
x.x^{\prime}\right]  .$%
\begin{align*}
Pp_{1}\left(  t\right)   &  \leq-\left(  1-t\right)  \left(  \ln I\left(
p;x,y\right)  +\ln I\left(  p;y^{\prime},x^{\prime}\right)  \right) \\
&  \text{ \ \ }-tA\left(  \ln x,\ln x^{\prime}\right)  \int_{\Omega}p\left(
s\right)  ds\leq-A\left(  \ln x,\ln x^{\prime}\right)  \int_{\Omega}p\left(
s\right)  ds.
\end{align*}%
\begin{align*}
P_{1}\left(  t\right)   &  \leq-\left(  1-t\right)  \left(  \ln I\left(
x,y\right)  +\ln I\left(  y^{\prime},x^{\prime}\right)  \right)  -tA\left(
\ln x,\ln x^{\prime}\right) \\
&  \leq-A\left(  \ln x,\ln x^{\prime}\right)
\end{align*}
for $p\left(  s\right)  =\frac{1}{2\left(  y-x\right)  }$ on $\left[
x.x^{\prime}\right]  .$%
\[
Hp_{1}\left(  t\right)  \leq Pp_{1}\left(  t\right)  .\text{ }%
\]%
\[
H_{1}\left(  t\right)  \leq P_{1}\left(  t\right)  \text{\ \ as }p\left(
s\right)  =\frac{1}{2\left(  y-x\right)  }\text{ on }\left[  x.x^{\prime
}\right]  .\text{ }%
\]

\end{proposition}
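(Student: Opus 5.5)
Proposition \ref{p27} is Theorem \ref{t3} specialized to $f(s)=-\ln s$, combined with Remark \ref{r32} to rewrite the endpoint values as identric means. I would follow the structure of Theorem \ref{t3} line by line.

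First, note that $f(s)=-\ln s$ is convex on $[a,b]$, since $a>0$, and that $p$ is nonnegative, integrable and symmetric about $\frac{x+x^{\prime}}{2}$. So all hypotheses of Theorem \ref{t3} hold. Theorem \ref{t3} then gives directly that $Pp_{1}$ is increasing on $[0,1]$, together with (\ref{2.11}), (\ref{2.12}) and (\ref{2.13}).

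The only work is identifying the endpoint quantities.
\begin{itemize}
\item \textbf{$t=0$:} $Pp_{1}(0)=\int_{\Omega}f(s)p(s)\,ds$. By Remark \ref{r32} this equals $-\ln I(p;x,y)-\ln I(p;y^{\prime},x^{\prime})$. Since $I(q;u,v)=e^{\int_{u}^{v}\ln s\,q(s)\,ds}$, this is immediate; the integral over $[y^{\prime},x^{\prime}]$ needs no reflection.
\item \textbf{$t=1$:} $Pp_{1}(1)=\frac{f(x)+f(x^{\prime})}{2}\int_{\Omega}p(s)\,ds=-A(\ln x,\ln x^{\prime})\int_{\Omega}p(s)\,ds$.
\end{itemize}
Substituting these into (\ref{2.11}) and (\ref{2.12}) gives the first and third displayed chains, and (\ref{2.13}) gives $Hp_{1}(t)\leq Pp_{1}(t)$.

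For the unweighted versions, take $p\equiv\frac{1}{2(y-x)}$. By Remark \ref{r3}(2), $Pp_{1}=P_{1}$ and $Hp_{1}=H_{1}$, and $\int_{\Omega}p=1$. Also $\int_{x}^{y}\frac{\ln s}{2(y-x)}\,ds=\frac{1}{2}\ln I(x,y)$, and similarly on $[y^{\prime},x^{\prime}]$. So the left endpoint is $-A(\ln I(x,y),\ln I(y^{\prime},x^{\prime}))$, consistent with Remark \ref{r32}.

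The only real obstacle is bookkeeping with this factor $\frac{1}{2}$. The statement writes $P_{1}(0)=-\ln I(x,y)-\ln I(y^{\prime},x^{\prime})$ without it. I would check it against Theorem \ref{A14}: there $P_{1}(0)=\frac{1}{2(y-x)}\int_{\Omega}f$, which gives the averaged form. I would therefore state the result with $A(\cdot,\cdot)$, or equivalently read the statement's expression as carrying that factor. No other inequalities need to be proved beyond the citations.
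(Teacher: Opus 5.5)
Your proposal is correct and follows the paper's own route: Proposition \ref{p27} is Theorem \ref{t3} specialized to $f(s)=-\ln s$ and read through Remarks \ref{r3} and \ref{r32}, and your endpoint identifications $Pp_{1}(0)=\int_{\Omega}f p=-\ln I(p;x,y)-\ln I(p;y^{\prime},x^{\prime})$ and $Pp_{1}(1)=-A(\ln x,\ln x^{\prime})\int_{\Omega}p$ are exactly what is needed. The factor $\frac{1}{2}$ you flagged is a genuine typo in the statement: since $x^{\prime}-y^{\prime}=y-x$, one has $P_{1}(0)=\frac{1}{2(y-x)}\int_{\Omega}f(s)\,ds=-A\left(\ln I(x,y),\ln I(y^{\prime},x^{\prime})\right)$, as in Remark \ref{r32} and Propositions \ref{p25} and \ref{p28}, so the second and fourth displays should carry $-A(\ln I(x,y),\ln I(y^{\prime},x^{\prime}))$ in place of $-\ln I(x,y)-\ln I(y^{\prime},x^{\prime})$.
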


\begin{proposition}
\label{p28}The following inequalities hold:%
\begin{align*}
&  -\ln I\left(  p;x,y\right)  -\ln I\left(  p;y^{\prime},x^{\prime}\right) \\
&  \leq-\ln I\left(  p\left(  2s-x\right)  ;x,\frac{x+y}{2}\right)  ^{2}-\ln
I\left(  p\left(  2s-x^{\prime}\right)  ;\frac{x^{\prime}+y^{\prime}}%
{2},x^{\prime}\right)  ^{2}\\
&  \leq\int_{0}^{1}Pp_{1}\left(  t\right)  dt\\
&  \leq-A\left(  A\left(  \ln x,\ln x^{\prime}\right)
{\displaystyle\int\nolimits_{\Omega}}
p\left(  s\right)  ds,\ln I\left(  p;x,y\right)  +\ln I\left(  p;y^{\prime
},x^{\prime}\right)  \right)  .
\end{align*}%
\begin{align*}
&  -A\left(  \ln I\left(  x,y\right)  +\ln I\left(  y^{\prime},x^{\prime
}\right)  \right) \\
&  \leq-A\left(  \ln I\left(  x,\frac{x+y}{2}\right)  ,\ln I\left(
\frac{x^{\prime}+y^{\prime}}{2},x^{\prime}\right)  \right) \\
&  \leq\int_{0}^{1}P_{1}\left(  t\right)  dt\leq-A\left(  A\left(  \ln x,\ln
x^{\prime}\right)  ,A\left(  \ln I\left(  x,y\right)  ,\ln I\left(  y^{\prime
},x^{\prime}\right)  \right)  \right)
\end{align*}
for $p\left(  s\right)  =\frac{1}{2\left(  y-x\right)  }$ on $\left[
x.x^{\prime}\right]  .$%
\begin{align*}
0  &  \leq2\left(  y-x\right)  t\left[  A\left(  \ln y,\ln y^{\prime}\right)
-A\left(  \ln I\left(  x,y\right)  ,\ln I\left(  y^{\prime},x^{\prime}\right)
\right)  \right]  \cdot\inf\limits_{s\in\Omega}p\left(  s\right) \\
&  \leq Pp_{1}\left(  t\right)  +\ln I\left(  p;x,y\right)  +\ln I\left(
p;y^{\prime},x^{\prime}\right)  \text{\ \ \ \ \ }\left(  t\in\left[
0,1\right]  \right)  .
\end{align*}%
\begin{align*}
0  &  \leq t\left[  A\left(  \ln y,\ln y^{\prime}\right)  -A\left(  \ln
I\left(  x,y\right)  ,\ln I\left(  y^{\prime},x^{\prime}\right)  \right)
\right] \\
&  \leq P_{1}\left(  t\right)  +A\left(  \ln I\left(  x,y\right)  ,\ln
I\left(  y^{\prime},x^{\prime}\right)  \right)
\end{align*}
for $t\in\left[  0,1\right]  $ and $p\left(  s\right)  =\frac{1}{2\left(
y-x\right)  }$ on $\left[  x.x^{\prime}\right]  .$%
\begin{align*}
0  &  \leq Pp_{1}\left(  t\right)  +A\left(  \ln y,\ln y^{\prime}\right)
\int_{\Omega}p\left(  s\right)  ds\\
&  \leq\frac{\left(  y-x\right)  \left(  x^{\prime}-x\right)  }{2xx^{\prime}%
}\int_{\Omega}p\left(  s\right)  ds\text{ \ }\left(  t\in\left[  0,1\right]
\right)  .
\end{align*}%
\[
0\leq P_{1}\left(  t\right)  +A\left(  \ln y,\ln y^{\prime}\right)  \leq
\frac{\left(  y-x\right)  \left(  x^{\prime}-x\right)  }{2xx^{\prime}}%
\]
for $t\in\left[  0,1\right]  $ and $p\left(  s\right)  =\frac{1}{2\left(
y-x\right)  }$ on $\left[  x.x^{\prime}\right]  .$%
\begin{align*}
0  &  \leq-A\left(  \ln x,\ln x^{\prime}\right)  \int_{\Omega}p\left(
s\right)  ds-Pp_{1}\left(  t\right) \\
&  \leq\frac{\left(  y-x\right)  \left(  x^{\prime}-x\right)  }{2xx^{\prime}%
}\int_{\Omega}p\left(  s\right)  ds\text{ \ }\left(  t\in\left[  0,1\right]
\right)  .
\end{align*}%
\[
0\leq-A\left(  \ln x,\ln x^{\prime}\right)  -P_{1}\left(  t\right)  \leq
\frac{\left(  y-x\right)  \left(  x^{\prime}-x\right)  }{2xx^{\prime}}%
\]
for $t\in\left[  0,1\right]  $ and $p\left(  s\right)  =\frac{1}{2\left(
y-x\right)  }$ on $\left[  x.x^{\prime}\right]  .$%
\[
0\leq Pp_{1}\left(  t\right)  -Hp_{1}\left(  t\right)  \leq\frac{\left(
y-x\right)  \left(  x^{\prime}-x\right)  }{2xx^{\prime}}\int_{\Omega}p\left(
s\right)  ds\text{ \ }\left(  t\in\left[  0,1\right]  \right)  .
\]%
\[
0\leq P_{1}\left(  t\right)  -H_{1}\left(  t\right)  \leq\frac{\left(
y-x\right)  \left(  x^{\prime}-x\right)  }{2xx^{\prime}}%
\]
for $t\in\left[  0,1\right]  $ and $p\left(  s\right)  =\frac{1}{2\left(
y-x\right)  }$ on $\left[  x.x^{\prime}\right]  .$
\end{proposition}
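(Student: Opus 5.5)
The plan is to specialize Theorems \ref{t3}, \ref{t4} and \ref{t6} (and Remark \ref{r3}) to $f(s)=-\ln s$, which is convex and differentiable on $[a,b]\subset(0,\infty)$. Each line of Proposition \ref{p28} is then the corresponding line of Theorem \ref{t4} rewritten in the notation of Remark \ref{r32}. The weighted statements come from general $p$. The unweighted statements come from $p\equiv\frac{1}{2(y-x)}$, for which $\int_\Omega p=1$ and $Pp_1=P_1$, $Hp_1=H_1$.

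First I will translate the endpoint quantities. $\int_\Omega f p=-\ln I(p;x,y)-\ln I(p;y',x')$ by definition of the extended identric mean. Also $\frac{f(x)+f(x')}{2}=-A(\ln x,\ln x')$ and $\frac{f(y)+f(y')}{2}=-A(\ln y,\ln y')$, and $\frac{1}{2(y-x)}\int_\Omega f=-A(\ln I(x,y),\ln I(y',x'))$. For the middle term of (\ref{2.14}), note that $\int_x^{(x+y)/2}2f(s)p(2s-x)\,ds=-\ln I\bigl(p(2s-x);x,\tfrac{x+y}{2}\bigr)^2$, and similarly on the right half. Substituting these into (\ref{2.14}) gives the first chain. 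With constant $p$, the middle term becomes $\frac{2}{y-x}\int_x^{(x+y)/2}(-\ln s)\,ds$ plus its mirror, i.e. $-A\bigl(\ln I(x,\frac{x+y}{2}),\ln I(\frac{x'+y'}{2},x')\bigr)$, giving the second chain.

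Next comes (\ref{2.15}). Here $\int_\Omega f-(f(y)+f(y'))(y-x)=2(y-x)\bigl[A(\ln y,\ln y')-A(\ln I(x,y),\ln I(y',x'))\bigr]$. The constant case then divides by $2(y-x)$, since $\inf p=\frac{1}{2(y-x)}$. For (\ref{2.16})--(\ref{2.18}) I will compute $f'(s)=-1/s$, so
\[
(y-x)\frac{f'(x')-f'(x)}{2}=\frac{(y-x)}{2}\Bigl(\frac1x-\frac1{x'}\Bigr)=\frac{(y-x)(x'-x)}{2xx'}.
\]
This yields the remaining four pairs of bounds, each in weighted and unweighted form.

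The only real obstacle is bookkeeping. I must check the factor-of-two conventions between $\ln I(\cdot)^2$ and the integrals $\int 2f(s)p(2s-x)\,ds$, and check the sign flips from $f=-\ln$ in (\ref{2.15}), where $\inf$ must remain attached to a nonnegative bracket. The nonnegativity of $A(\ln y,\ln y')-A(\ln I(x,y),\ln I(y',x'))$ follows from (\ref{2.2}) applied to $-\ln$ (equivalently, from Theorem \ref{A12}). The second chain's leftmost expression (written $-A(\ln I(x,y)+\ln I(y',x'))$ in the statement) should be read as $-A(\ln I(x,y),\ln I(y',x'))$.
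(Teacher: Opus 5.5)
Your proposal is correct and follows the paper's route. The paper simply substitutes $f(s)=-\ln s$ into Theorem~\ref{t4}, with (\ref{2.11}) and (\ref{2.13}) behind it, and uses the identities of Remark~\ref{r32}, both for general $p$ and for $p\equiv\frac{1}{2(y-x)}$ via Remark~\ref{r3}. One small slip: for constant $p$ the middle term of (\ref{2.14}) is $\frac{1}{y-x}\int_x^{(x+y)/2}(-\ln s)\,ds$ plus its mirror, not $\frac{2}{y-x}$. That coefficient is what actually gives $-A\bigl(\ln I(x,\frac{x+y}{2}),\ln I(\frac{x'+y'}{2},x')\bigr)$, so your final expression is nonetheless right.
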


\begin{proposition}
\label{p29}$Fp_{1}$\textit{\ is symmetric about }$\frac{1}{2},$%
\textit{\ decreasing on }$\left[  0,\frac{1}{2}\right]  $\textit{\ and
increasing on }$\left[  \frac{1}{2},1\right]  .$ \textit{The following
identities and inequalities hold:}%
\[
\sup\limits_{t\in\left[  0,1\right]  }Fp_{1}\left(  t\right)  =-\left[  \ln
I\left(  p;x,y\right)  +\ln I\left(  p;y^{\prime},x^{\prime}\right)  \right]
\int_{\Omega}p\left(  s\right)  ds.
\]%
\[
\sup\limits_{t\in\left[  0,1\right]  }F_{1}\left(  t\right)  =-A\left(  \ln
I\left(  x,y\right)  ,\ln I\left(  y^{\prime},x^{\prime}\right)  \right)
\]
for $p\left(  s\right)  =\frac{1}{2\left(  y-x\right)  }$ on $\left[
x.x^{\prime}\right]  .$%
\[
\inf\limits_{t\in\left[  0,1\right]  }Fp_{1}\left(  t\right)  =-\int_{\Omega
}\int_{\Omega}\ln A\left(  s,u\right)  p\left(  s\right)  p\left(  u\right)
dsdu.
\]%
\[
\inf\limits_{t\in\left[  0,1\right]  }F_{1}\left(  t\right)  =\frac
{-1}{4\left(  y-x\right)  ^{2}}\int_{\Omega}\int_{\Omega}\ln A\left(
s,u\right)  dsdu
\]
for $p\left(  s\right)  =\frac{1}{2\left(  y-x\right)  }$ on $\left[
x.x^{\prime}\right]  .$%
\[
A\left(  Hp_{1}\left(  t\right)  ,Hp_{2}\left(  t\right)  \right)
\int_{\Omega}p\left(  s\right)  ds\leq Fp_{1}\left(  t\right)  \text{\ \ \ }%
\left(  t\in\left[  0,1\right]  \right)  .
\]%
\[
A\left(  H_{1}\left(  t\right)  ,H_{2}\left(  t\right)  \right)  \leq
F_{1}\left(  t\right)
\]
for $t\in\left[  0,1\right]  $ and $p\left(  s\right)  =\frac{1}{2\left(
y-x\right)  }$ on $\left[  x.x^{\prime}\right]  .$%
\begin{align*}
&  -A\left(  A\left(  \ln y,\ln y\right)  ,\ln A\left(  y,y^{\prime}\right)
\right)  \left(  \int_{\Omega}p\left(  s\right)  ds\right)  ^{2}\\
&  \leq-\int_{\Omega}\int_{\Omega}\ln A\left(  s,u\right)  p\left(  s\right)
p\left(  u\right)  dsdu.
\end{align*}%
\[
-A\left(  A\left(  \ln y,\ln y\right)  ,\ln A\left(  y,y^{\prime}\right)
\right)  \leq\frac{-1}{4\left(  y-x\right)  ^{2}}\int_{\Omega}\int_{\Omega}\ln
A\left(  s,u\right)  dsdu
\]
for and $p\left(  s\right)  =\frac{1}{2\left(  y-x\right)  }$ on $\left[
x.x^{\prime}\right]  .$
\end{proposition}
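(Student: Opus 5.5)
Proposition \ref{p29} is Theorem \ref{t5} specialized to $f(s)=-\ln s$, which is convex on $[a,b]\subset(0,\infty)$. For the unweighted versions we add Remark \ref{r3}(2), i.e.\ $p\equiv\frac{1}{2(y-x)}$, under which $Fp_1=F_1$, $Hp_1=H_1$, $Hp_2=H_2$ and $\int_\Omega p=1$. So I would not prove anything new. The plan is to check that each displayed expression in Proposition \ref{p29} equals the corresponding expression in Theorem \ref{t5}, using the dictionary of Remark \ref{r32}.

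First, the symmetry about $\frac12$, the monotonicity, and the identifications $\sup=Fp_1(0)=Fp_1(1)$ and $\inf=Fp_1(\frac12)$ are read off from parts (1)--(2) of Theorem \ref{t5}. For the supremum, (\ref{2.19}) gives $\int_\Omega f p\cdot\int_\Omega p$. Remark \ref{r32} gives $\int_\Omega fp=-\ln I(p;x,y)-\ln I(p;y',x')$, which yields the stated formula. With uniform $p$ this becomes $\frac{1}{2(y-x)}\int_\Omega(-\ln s)\,ds=-A(\ln I(x,y),\ln I(y',x'))$, again by Remark \ref{r32}. For the infimum, (\ref{2.20}) with $f(\frac{s+u}{2})=-\ln A(s,u)$ gives both stated formulas; the uniform case picks up the factor $\frac{1}{4(y-x)^2}$.

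Second, (\ref{2.21}) is exactly $A(Hp_1,Hp_2)\int_\Omega p\le Fp_1$, and it specializes to $A(H_1,H_2)\le F_1$. Third, (\ref{2.22}) with $f=-\ln$ has left side $-\frac14[\ln y+2\ln\frac{y+y'}{2}+\ln y'](\int_\Omega p)^2=-A(A(\ln y,\ln y'),\ln A(y,y'))(\int_\Omega p)^2$. Its right side is $Fp_1(\frac12)$, which we have already identified.

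The only real obstacle is bookkeeping. The statement as printed writes $A(\ln y,\ln y)$ where $A(\ln y,\ln y')$ should appear. I would prove the correct version coming from (\ref{2.22}) and note that the printed symbol is a typo. I would also check that $\int_\Omega p$ is squared in the last two inequalities and that it equals $1$ in the uniform case.
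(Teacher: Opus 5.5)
Your proposal is correct and follows the paper's own route: Proposition~\ref{p29} is obtained by specializing Theorem~\ref{t5} to $f(s)=-\ln s$ through Remarks~\ref{r3} and~\ref{r32}, and the uniform weight $p\equiv\frac{1}{2(y-x)}$ (for which $\int_\Omega p=1$) gives the unweighted versions. You are also right to read $A(\ln y,\ln y)$ as a misprint for $A(\ln y,\ln y')$, and this matters: the printed form is strictly stronger than what (\ref{2.22}) yields, and it fails when $y<y'$ (take uniform $p$ and let $x\to y$, $x'\to y'$; the printed claim then reduces to $\ln y'\le\ln y$).
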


\begin{proposition}
\label{p30}The following inequalities hold:%
\[
Hp_{1}\left(  t\right)  \leq G_{1}\left(  t\right)  \int_{\Omega}p\left(
s\right)  ds\leq Pp_{1}\left(  t\right)  \text{\ }\left(  t\in\left[
0,1\right]  \right)  .
\]%
\[
H_{1}\left(  t\right)  \leq G_{1}\left(  t\right)  \leq P_{1}\left(  t\right)
\]
for $t\in\left[  0,1\right]  $ and $p\left(  s\right)  =\frac{1}{2\left(
y-x\right)  }$ on $\left[  x.x^{\prime}\right]  .$%
\begin{align*}
&  -\ln I\left(  p\left(  2s-y\right)  ;\frac{x+y}{2},y\right)  ^{2}-\ln
I\left(  p\left(  2s-y^{\prime}\right)  ;y^{\prime},\frac{x^{\prime}%
+y^{\prime}}{2}\right)  ^{2}\\
&  \leq-A\left(  \ln A\left(  x,y\right)  ,\ln A\left(  x^{\prime},y^{\prime
}\right)  \right)  \int_{\Omega}p\left(  s\right)  ds\\
&  \leq-\int_{x}^{y}A\left(  \ln\left(  \frac{s+x}{2}\right)  ,\ln\left(
\frac{x+2y-s}{2}\right)  \right)  p\left(  s\right)  ds\\
&  -\int_{y^{\prime}}^{x^{\prime}}A\left(  \ln\left(  \frac{s+x^{\prime}}%
{2}\right)  ,\ln\left(  \frac{x^{\prime}+2y^{\prime}-s}{2}\right)  \right)
p\left(  s\right)  ds\\
&  \leq-A\left(  A\left(  \ln x,\ln y\right)  ,A\left(  \ln y^{\prime},\ln
x^{\prime}\right)  \right)  \int_{\Omega}p\left(  s\right)  ds.
\end{align*}%
\begin{align*}
&  -A\left(  \ln I\left(  \frac{x+y}{2},y\right)  ,\ln I\left(  y^{\prime
},\frac{x^{\prime}+y^{\prime}}{2}\right)  \right) \\
&  \leq-A\left(  \ln A\left(  x,y\right)  ,\ln A\left(  x^{\prime},y^{\prime
}\right)  \right) \\
&  \leq-A\left(  \ln I\left(  x,y\right)  ,\ln I\left(  y^{\prime},x^{\prime
}\right)  \right) \\
&  \leq-A\left(  A\left(  \ln x,\ln y\right)  ,A\left(  \ln y^{\prime},\ln
x^{\prime}\right)  \right)
\end{align*}
for $p\left(  s\right)  =\frac{1}{2\left(  y-x\right)  }$ on $\left[
x.x^{\prime}\right]  .$%
\begin{align*}
&  -\ln I\left(  p\left(  2s-y^{\prime}\right)  ;\frac{x+y^{\prime}}{2}%
,\frac{y+y^{\prime}}{2}\right)  ^{2}-\ln I\left(  p\left(  2s-y\right)
;\frac{y+y^{\prime}}{2},\frac{x^{\prime}+y}{2}\right)  ^{2}\\
&  \leq-A\left(  \ln A\left(  x,y^{\prime}\right)  ,\ln A\left(  x^{\prime
},y\right)  \right)  \int_{\Omega}p\left(  s\right)  ds\\
&  \leq-\int_{x}^{y}A\left(  \ln\left(  \frac{s+x}{2}\right)  ,\ln\left(
\frac{x+2y^{\prime}-s}{2}\right)  \right)  p\left(  s\right)  ds\\
&  -\int_{y^{\prime}}^{x^{\prime}}A\left(  \ln\left(  \frac{s+x^{\prime}}%
{2}\right)  ,\ln\left(  \frac{x^{\prime}+2y-s}{2}\right)  \right)  p\left(
s\right)  ds\\
&  \leq-A\left(  A\left(  \ln x,\ln y\right)  ,A\left(  \ln y^{\prime},\ln
x^{\prime}\right)  \right)  \int_{\Omega}p\left(  s\right)  ds.
\end{align*}%
\begin{align*}
&  -A\left(  \ln I\left(  \frac{x+y^{\prime}}{2},\frac{y+y^{\prime}}%
{2}\right)  ,\ln I\left(  \frac{y+y^{\prime}}{2},\frac{x^{\prime}+y}%
{2}\right)  \right) \\
&  \leq-A\left(  \ln A\left(  x,y^{\prime}\right)  ,\ln A\left(  x^{\prime
},y\right)  \right) \\
&  \leq\frac{-1}{4}\left[  \ln I\left(  x,\frac{y-x}{2}\right)  +\ln I\left(
\frac{x+2y^{\prime}-y}{2},y^{\prime}\right)  \right. \\
&  +\left.  \ln I\left(  \frac{x^{\prime}+y^{\prime}}{2},x^{\prime}\right)
+\ln I\left(  y,\frac{x^{\prime}+2y-y^{\prime}}{2}\right)  \right] \\
&  \leq-A\left(  A\left(  \ln x,\ln y\right)  ,A\left(  \ln y^{\prime},\ln
x^{\prime}\right)  \right)
\end{align*}
for $p\left(  s\right)  =\frac{1}{2\left(  y-x\right)  }$ on $\left[
x.x^{\prime}\right]  .$%
\begin{align*}
0  &  \leq Hp_{1}\left(  t\right)  +A\left(  \ln y,\ln y^{\prime}\right)
\int_{\Omega}p\left(  s\right)  ds\\
&  \leq G_{1}\left(  t\right)  \int_{\Omega}p\left(  s\right)  ds-Hp_{1}%
\left(  t\right)  \text{\ }\left(  t\in\left[  0,1\right]  \right)  .
\end{align*}
\textit{\ }%
\[
0\leq H_{1}\left(  t\right)  +A\left(  \ln y,\ln y^{\prime}\right)  \leq
G_{1}\left(  t\right)  -H_{1}\left(  t\right)
\]
for $t\in\left[  0,1\right]  $ and $p\left(  s\right)  =\frac{1}{2\left(
y-x\right)  }$ on $\left[  x.x^{\prime}\right]  .$%
\begin{align*}
0  &  \leq Pp_{1}\left(  t\right)  -G_{1}\left(  t\right)  \int_{\Omega
}p\left(  s\right)  ds\\
&  \leq-A\left(  \ln x,\ln x^{\prime}\right)  \int_{\Omega}p\left(  s\right)
ds-Pp_{1}\left(  t\right)  \text{\ }\left(  t\in\left[  0,1\right]  \right)  .
\end{align*}%
\begin{align*}
0  &  \leq P_{1}\left(  t\right)  -G_{1}\left(  t\right) \\
&  \leq-A\left(  \ln x,\ln x^{\prime}\right)  -P_{1}\left(  t\right)
\end{align*}
for $t\in\left[  0,1\right]  $ and $p\left(  s\right)  =\frac{1}{2\left(
y-x\right)  }$ on $\left[  x.x^{\prime}\right]  .$
\end{proposition}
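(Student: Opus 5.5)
The plan is to specialize the earlier results to $f(s)=-\ln s$, which is convex on $[a,b]\subset(0,\infty)$, and to translate each quantity with Remark \ref{r32}. The weight is either the general symmetric $p$ or the constant $p(s)\equiv\frac{1}{2(y-x)}$. The first chain of Proposition \ref{p30} is then (\ref{2.26}) of Theorem \ref{t6} with $G_1$ read off Remark \ref{r32}. For constant $p$ we have $\int_\Omega p=1$, and by Remark \ref{r3}, $Hp_1=H_1$ and $Pp_1=P_1$, which gives $H_1\le G_1\le P_1$. The last four displays are (\ref{2.29}) and (\ref{2.30}) of Theorem \ref{t6}. For these we use $\frac{f(y)+f(y')}{2}=-A(\ln y,\ln y')$ and $\frac{f(x)+f(x')}{2}=-A(\ln x,\ln x')$, and again put in the constant weight to get the $H_1,P_1$ versions.

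The middle chains come from (\ref{2.27}) and (\ref{2.28}). We rewrite each term in mean notation. First,
\[
\int_{\frac{x+y}{2}}^{y}2f(s)p(2s-y)\,ds=-2\int_{\frac{x+y}{2}}^{y}\ln s\,p(2s-y)\,ds=-\ln I\Bigl(p(2s-y);\tfrac{x+y}{2},y\Bigr)^{2},
\]
which follows from the definition $I(q;u,v)=e^{\int_u^v\ln s\,q(s)ds}$; the analogous identity holds on $[y',\frac{x'+y'}{2}]$. Next, $\frac12[f(\frac{x+y}{2})+f(\frac{x'+y'}{2})]=-A(\ln A(x,y),\ln A(x',y'))$. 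The third term of (\ref{2.27}) stays as an integral of $A(\ln\cdot,\ln\cdot)$ against $p$. The last term is $\frac14[f(x)+f(y)+f(y')+f(x')]=-A(A(\ln x,\ln y),A(\ln y',\ln x'))$. The same translation applies verbatim to (\ref{2.28}).

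For the constant-weight versions, the integrals must be converted into identric means of the unweighted kind. This is where the substitution identities of Remark \ref{r32} are used. With $p\equiv\frac{1}{2(y-x)}$, we substitute $u=2s-y$ in $\frac{1}{y-x}\int_{\frac{x+y}{2}}^y 2f(s)\,ds\cdot\frac12$, which gives $-A(\ln I(\frac{x+y}{2},y),\ln I(y',\frac{x'+y'}{2}))$. The third term of (\ref{2.27}) becomes $-A(\ln I(x,y),\ln I(y',x'))$ by the second pair of identities in Remark \ref{r32}. For (\ref{2.28}) we use the last two identities of Remark \ref{r32}, giving the four-term bracket.

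The main obstacle is bookkeeping of normalizations: the factor $\frac12$ versus $\frac14$ in front of the sum of four $\ln I$ terms, and the fact that $I(u,v)$ has length $v-u$ while the integrals have length $\frac{y-x}{2}$. I would verify each constant by checking the case $f\equiv$ const (all terms equal $-\ln c$). If a coefficient or an interval endpoint in the stated chain disagrees with that check, I would follow the version forced by (\ref{2.28}). I would also flag that the arguments of the form $(x,\frac{y-x}{2})$ in the stated chain should read $(x,\frac{x+y}{2})$; this is the interval the substitution in Remark \ref{r32} actually produces, so I would correct it rather than derive the endpoint as stated. All inequalities are inherited from Theorem \ref{t6}, so nothing new needs proving beyond these identifications.
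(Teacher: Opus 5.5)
Your route is the paper's own. Its proof is just the citation of Theorems \ref{t1}--\ref{t24} with Remarks \ref{r3} and \ref{r32}, i.e.\ (\ref{2.26})--(\ref{2.30}) with $f=-\ln$. Your translations of (\ref{2.26}), (\ref{2.27}), (\ref{2.28}) are valid for every $p$ symmetric about $\frac{x+x'}{2}$, and your correction $I(x,\frac{y-x}{2})\to I(x,\frac{x+y}{2})$ is right.

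The gap is the claim that the two general-$p$ displays coming from (\ref{2.29}) and (\ref{2.30}) are simply inherited: their right-hand inequalities are false under the standing hypotheses. At $t=1$ in the $Hp_1$ display and at $t=0$ in the $Pp_1$ display one has $Hp_1(1)=Pp_1(0)=\int_\Omega fp$, $G_1(1)=\frac{f(x)+f(x')}{2}$ and $G_1(0)=\frac{f(y)+f(y')}{2}$. Both right-hand inequalities then become
\[
\frac{\int_\Omega f(s)p(s)\,ds}{\int_\Omega p(s)\,ds}\le\frac12\left[\frac{f(x)+f(x')}{2}+\frac{f(y)+f(y')}{2}\right],\qquad f(s)=-\ln s .
\]
Take a positive $p$, symmetric about $\frac{x+x'}{2}$, that carries almost all its mass near $x$ and $x'$. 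The left side then tends to $\frac{f(x)+f(x')}{2}=-\ln\sqrt{xx'}$, which exceeds the right side because $xx'<yy'$. For instance, with $x=1$, $y=2$, $y'=3$, $x'=4$ one gets $-\ln 2>-\frac14\ln 24$.

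The error already sits in the paper's proof of (\ref{2.31})--(\ref{2.32}). The four-term rewriting of $2Hp_1(t)$ and $2Pp_1(t)$ is the substitution $s\mapsto x+y-s$ on $[x,y]$, which requires $p(s)=p(x+y-s)$ there. Symmetry about $\frac{x+x'}{2}$ gives no information about $p$ inside $[x,y]$; this is precisely the extra hypothesis the paper imposes in part (4) of Theorem \ref{t7}. So those two displays need that additional assumption, or must be dropped. The constant weight satisfies it, so the $H_1$ and $P_1$ versions are fine.

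Separately, do not quote Remark \ref{r32} verbatim, because its first equalities lose a factor $2$. For example,
\[
\frac{1}{y-x}\int_x^y\left[f\left(\frac{s+x}{2}\right)+f\left(\frac{x+2y-s}{2}\right)\right]ds=\frac{2}{y-x}\int_x^y f(s)\,ds=-2\ln I(x,y),
\]
and the last two identities should have $\frac{1}{y-x}$ instead of $\frac{2}{y-x}$ on the left. Used literally, these identities would give $-\frac12A(\ln I(x,y),\ln I(y',x'))$ and $-\frac18[\cdots]$. Your constant-function check produces the stated $-A(\ln I(x,y),\ln I(y',x'))$ and $\frac{-1}{4}[\cdots]$, which are the correct values.
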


\begin{proposition}
\label{p31}The following inequalities hold:%
\[
A\left(  G_{1}\left(  t\right)  ,G_{2}\left(  t\right)  \right)  \int_{\Omega
}p\left(  s\right)  ds\leq Lp_{1}\left(  t\right)  \leq Pp_{1}\left(
t\right)  \text{\ }\left(  t\in\left[  0,1\right]  \right)  .\text{ }%
\]%
\[
A\left(  G_{1}\left(  t\right)  ,G_{2}\left(  t\right)  \right)  \leq
L_{1}\left(  t\right)  \leq P_{1}\left(  t\right)  \text{ }%
\]
for $t\in\left[  0,1\right]  $ and $p\left(  s\right)  =\frac{1}{2\left(
y-x\right)  }$ on $\left[  x.x^{\prime}\right]  .$%
\[
\sup\limits_{t\in\left[  0,1\right]  }Lp_{1}\left(  t\right)  =Lp_{1}\left(
1\right)  =-A\left(  \ln x,\ln x^{\prime}\right)  \int_{\Omega}p\left(
s\right)  ds.
\]%
\[
\sup\limits_{t\in\left[  0,1\right]  }L_{1}\left(  t\right)  =L_{1}\left(
1\right)  =-A\left(  \ln x,\ln x^{\prime}\right)
\]
for $p\left(  s\right)  =\frac{1}{2\left(  y-x\right)  }$ on $\left[
x.x^{\prime}\right]  .$%
\begin{align*}
&  A\left(  Hp_{1}\left(  1-t\right)  ,Hp_{2}\left(  1-t\right)  \right)
\int_{\Omega}p\left(  s\right)  ds\\
&  \leq Fp_{1}\left(  t\right)  \leq Lp_{1}\left(  t\right)  \int_{\Omega
}p\left(  s\right)  ds\text{\ \ \ }\left(  t\in\left[  0,1\right]  \right)  .
\end{align*}%
\[
A\left(  H_{1}\left(  1-t\right)  ,H_{2}\left(  1-t\right)  \right)  \leq
F_{1}\left(  t\right)  \leq L_{1}\left(  t\right)
\]
for $t\in\left[  0,1\right]  $ and $p\left(  s\right)  =\frac{1}{2\left(
y-x\right)  }$ on $\left[  x.x^{\prime}\right]  .$%
\begin{align*}
&  A\left(  Hp_{1}\left(  t\right)  ,Hp_{2}\left(  t\right)  \right)
\int_{\Omega}p\left(  s\right)  ds\\
&  \leq Fp_{1}\left(  t\right)  \leq Lp_{1}\left(  t\right)  \int_{\Omega
}p\left(  s\right)  ds\text{\ \ \ }\left(  t\in\left[  0,1\right]  \right)  .
\end{align*}%
\[
A\left(  H_{1}\left(  t\right)  ,H_{2}\left(  t\right)  \right)  \leq
F_{1}\left(  t\right)  \leq L_{1}\left(  t\right)  \text{\ \ }\left(
t\in\left[  0,1\right]  \right)
\]
for $t\in\left[  0,1\right]  $ and $p\left(  s\right)  =\frac{1}{2\left(
y-x\right)  }$ on $\left[  x.x^{\prime}\right]  .$%
\begin{align*}
&  A\left(  A\left(  Hp_{1}\left(  t\right)  ,Hp_{2}\left(  t\right)  \right)
,A\left(  Hp_{1}\left(  1-t\right)  ,Hp_{2}\left(  1-t\right)  \right)
\right)  \int_{\Omega}p\left(  s\right)  ds\\
&  \leq Fp_{1}\left(  t\right)  \leq Lp_{1}\left(  t\right)  \int_{\Omega
}p\left(  s\right)  ds\text{\ \ \ }\left(  t\in\left[  0,1\right]  \right)  .
\end{align*}%
\[
A\left(  A\left(  H_{1}\left(  t\right)  ,H_{2}\left(  t\right)  \right)
,A\left(  H_{1}\left(  1-t\right)  ,H_{2}\left(  1-t\right)  \right)  \right)
\leq F_{1}\left(  t\right)  \leq Lp_{1}\left(  t\right)
\]
for $t\in\left[  0,1\right]  $ and $p\left(  s\right)  =\frac{1}{2\left(
y-x\right)  }$ on $\left[  x.x^{\prime}\right]  .$%
\begin{align*}
0  &  \leq Fp_{1}\left(  t\right)  -A\left(  Hp_{1}\left(  t\right)
,Hp_{2}\left(  t\right)  \right)  \int_{\Omega}p\left(  s\right)  ds\\
&  \leq Lp_{1}\left(  1-t\right)  \int_{\Omega}p\left(  s\right)
ds-Fp_{1}\left(  t\right)  \text{\ \ }\left(  t\in\left[  0,1\right]  \right)
\end{align*}
as $p\left(  s\right)  =p\left(  x+x^{\prime}-s\right)  $ $\left(  s\in\left[
x,y\right]  \right)  $ and $p\left(  s\right)  =p\left(  x+y-s\right)  $
$\left(  s\in\left[  x,\frac{x+y}{2}\right]  \right)  .$%
\[
0\leq F_{1}\left(  t\right)  -A\left(  H_{1}\left(  t\right)  ,H_{2}\left(
t\right)  \right)  \leq L_{1}\left(  1-t\right)  -F_{1}\left(  t\right)
\]
for $t\in\left[  0,1\right]  $ and $p\left(  s\right)  =\frac{1}{2\left(
y-x\right)  }$ on $\left[  x.x^{\prime}\right]  .$
\end{proposition}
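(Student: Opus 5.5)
This proposition is the specialization of Theorem \ref{t7}, with $f(s)=-\ln s$, to the identric-mean language of Remark \ref{r32}. The plan is to go through the displays in order and, for each one, name the conclusion of Theorem \ref{t7} (or of Theorem \ref{t3}) that it specializes.

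First I check the hypotheses. The function $f(s)=-\ln s$ is convex on $[a,b]\subset(0,\infty)$, and $p$ is positive, integrable and symmetric about $\frac{x+x^{\prime}}{2}$. So Theorems \ref{t1}--\ref{t7} apply with $Hp_1,Hp_2,Fp_1,Pp_1,Lp_1,G_1,G_2$ as written in Remark \ref{r32}.

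The displays then follow one by one.
\begin{enumerate}
\item The first display, $A(G_1,G_2)\int_\Omega p\le Lp_1\le Pp_1$, is (\ref{2.33}) rewritten with $\frac{G_1+G_2}{2}=A(G_1,G_2)$.
\item The supremum of $Lp_1$ is (\ref{2.34}). Here $Lp_1(1)=\frac{f(x)+f(x^{\prime})}{2}\int_\Omega p=-A(\ln x,\ln x^{\prime})\int_\Omega p$.
\item The three chains with $Fp_1$ are (\ref{2.35}), (\ref{2.36}) and (\ref{2.37}), again writing averages as $A(\cdot,\cdot)$. For (\ref{2.37}) I use $\frac{a+b+c+d}{4}=A(A(a,b),A(c,d))$.
\item The next-to-last display is (\ref{2.38}). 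It is stated under exactly the extra symmetry hypotheses on $p$ that Theorem \ref{t7}(4) requires, so it transfers verbatim.
\end{enumerate}

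Each unweighted companion statement (with $H_1,H_2,F_1,L_1,P_1$) comes from Remark \ref{r3}(2). Taking $p\equiv\frac{1}{2(y-x)}$ gives $\int_\Omega p=1$ and turns $Hp_i,Fp_1,Lp_1,Pp_1$ into $H_i,F_1,L_1,P_1$. This constant $p$ also satisfies both symmetry conditions of Theorem \ref{t7}(4), so the final unweighted inequality follows as well.

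The only step needing care is bookkeeping, not analysis. I must confirm that each constant and normalization matches: $\int_\Omega p=1$ in the unweighted case, and no stray factor of $2$ appears when converting $\frac12(\cdot+\cdot)$ into $A$. The display ending in ``$\le Lp_1(t)$'' in the unweighted case should read $L_1(t)$. I would note this as a typo and prove the statement with $L_1$.
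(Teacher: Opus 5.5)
Your proposal is correct and matches the paper, which obtains Proposition \ref{p31} in the same way. It specializes Theorem \ref{t7} (with (\ref{2.34}) for the supremum) to $f(s)=-\ln s$ through Remark \ref{r32}, and gets the unweighted companions from Remark \ref{r3}(2) with $p\equiv\frac{1}{2(y-x)}$. The ``$Lp_1(t)$'' you flag is harmless, since for that constant $p$ one has $Lp_1=L_1$ anyway.
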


\begin{proposition}
\label{p32}$Ip_{1}$\ is increasing on $\left[  0,1\right]  $\ and the
following inequalities hold for all $t\in\left[  0,1\right]  :$%
\begin{align*}
&  -2A\left(  \ln y,\ln y^{\prime}\right)  \int_{x}^{x^{\prime}}p\left(
s\right)  ds\\
&  =Ip_{1}\left(  0\right)  \leq Ip_{1}\left(  t\right)  \leq Ip_{1}\left(
1\right) \\
&  =-\int\nolimits_{x}^{x^{\prime}}\left[  \ln\left(  \left(  1-\alpha\right)
x+\alpha s\right)  +\ln\left(  \left(  1-\alpha\right)  x^{\prime}+\alpha
s\right)  \right]  p\left(  s\right)  ds.
\end{align*}%
\begin{align*}
Ip_{1}\left(  t\right)   &  \leq-2\left(  1-t\right)  A\left(  \ln y,\ln
y^{\prime}\right)  \int_{x}^{x^{\prime}}p\left(  s\right)  ds\\
&  -t\int\nolimits_{x}^{x^{\prime}}\left[  \ln\left(  \left(  1-\alpha\right)
x+\alpha s\right)  +\ln\left(  \left(  1-\alpha\right)  x^{\prime}+\alpha
s\right)  \right]  p\left(  s\right)  ds\\
&  \leq-\int\nolimits_{x}^{x^{\prime}}\left[  \ln\left(  \left(
1-\alpha\right)  x+\alpha s\right)  +\ln\left(  \left(  1-\alpha\right)
x^{\prime}+\alpha s\right)  \right]  p\left(  s\right)  ds\\
&  \leq-2A\left(  \ln x,\ln x^{\prime}\right)  \int_{x}^{x^{\prime}}p\left(
s\right)  ds.
\end{align*}

\end{proposition}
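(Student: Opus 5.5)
This is a direct specialization of Theorem \ref{t8} to $f(s)=-\ln s$. I will not redo any convexity argument; I only check the hypotheses and then translate each endpoint term.

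\textbf{Hypotheses.} The standing assumptions of this subsection are $0<a\le x<y\le y^{\prime}<x^{\prime}\le b$, $x+x^{\prime}=y+y^{\prime}$, $y=(1-\alpha)x+\alpha x^{\prime}$ with $0<\alpha\le\frac12$, and $p$ positive, integrable and symmetric about $\frac{x+x^{\prime}}{2}$. These are exactly the hypotheses of Subsection 2.2. Since $a>0$, the function $f(s)=-\ln s$ is well defined and convex on $[a,b]$. Every argument of $f$ appearing in $Ip_{1}$ is a convex combination of points of $[x,x^{\prime}]\subset[a,b]$, so all logarithms are defined. By Remark \ref{r32}, the $Ip_{1}$ displayed there is precisely the function of Subsection 2.2 evaluated at this $f$.

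\textbf{Translation of the endpoint quantities.} Theorem \ref{t8}(2) gives that $Ip_{1}$ is increasing on $[0,1]$, together with \eqref{2.41} and \eqref{2.42}. I substitute $f=-\ln$ into each term.
\begin{itemize}
\item $\left[f(y)+f(y^{\prime})\right]\int_x^{x^{\prime}}p(s)\,ds=-(\ln y+\ln y^{\prime})\int_x^{x^{\prime}}p(s)\,ds=-2A(\ln y,\ln y^{\prime})\int_x^{x^{\prime}}p(s)\,ds$, which gives $Ip_{1}(0)$.
\item $Ip_{1}(1)$ becomes $-\int_x^{x^{\prime}}\left[\ln((1-\alpha)x+\alpha s)+\ln((1-\alpha)x^{\prime}+\alpha s)\right]p(s)\,ds$.
\item $\left[f(x)+f(x^{\prime})\right]\int_x^{x^{\prime}}p(s)\,ds=-2A(\ln x,\ln x^{\prime})\int_x^{x^{\prime}}p(s)\,ds$.
\end{itemize}
With these substitutions, \eqref{2.41} is exactly the first chain of the proposition and \eqref{2.42} is exactly the second.

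\textbf{Main obstacle.} There is no analytic difficulty. The only care needed is bookkeeping: matching the factor $2$ in $-2A(\cdot,\cdot)$ against $f(y)+f(y^{\prime})$, and confirming the positivity $a>0$ that makes $-\ln$ admissible.
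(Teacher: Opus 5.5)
Your proposal is correct and follows the paper's route: the paper obtains Proposition \ref{p32} by reading off Theorem \ref{t8} (monotonicity of $Ip_{1}$ together with \eqref{2.41} and \eqref{2.42}) for $f(s)=-\ln s$ via Remark \ref{r32}, using $f(y)+f(y^{\prime})=-2A(\ln y,\ln y^{\prime})$ and $f(x)+f(x^{\prime})=-2A(\ln x,\ln x^{\prime})$. One small point: Subsection 3.2 nominally declares $p$ on $\Omega$, but $Ip_{1}$ and $\int_{x}^{x^{\prime}}p(s)\,ds$ need $p$ on $[x,x^{\prime}]$, so reading the hypothesis as in Subsection 2.2, as you do, is the right interpretation.
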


\begin{proposition}
\label{p33}Let $m,$ $m^{\prime}$\ \textit{be defined as in Proposition
\ref{p9}. Then}
\end{proposition}

\begin{enumerate}
\item $Ip_{2}$\textit{\ is decreasing on }$\left[  0,m\right]  $\textit{\ and
increasing on }$\left[  m^{\prime},1\right]  .$

\item \textit{The following inequalities hold for all }$t\in\left[
0,1\right]  :$%
\begin{align*}
&  -2\ln A\left(  x,x^{\prime}\right)  \int_{x}^{x^{\prime}}p\left(  s\right)
ds\\
&  \leq Ip_{2}\left(  t\right)  \leq-2\left(  1-t\right)  A\left(  \ln y,\ln
y^{\prime}\right)  \int_{x}^{x^{\prime}}p\left(  s\right)  ds\\
&  -t\int\nolimits_{x}^{x^{\prime}}\left[  \ln\left(  \left(  1-\alpha\right)
x+\alpha s\right)  +\ln\left(  \left(  1-\alpha\right)  x^{\prime}+\alpha
s\right)  \right]  p\left(  s\right)  ds\\
&  \leq-\int\nolimits_{x}^{x^{\prime}}\left[  \ln\left(  \left(
1-\alpha\right)  x+\alpha s\right)  +\ln\left(  \left(  1-\alpha\right)
x^{\prime}+\alpha s\right)  \right]  p\left(  s\right)  ds\\
&  \leq-2A\left(  \ln x,\ln x^{\prime}\right)  \int_{x}^{x^{\prime}}p\left(
s\right)  ds.
\end{align*}%
\[
Ip_{2}\left(  t\right)  \leq Ip_{1}\left(  t\right)  .
\]

\end{enumerate}

\begin{proposition}
\label{p34}$Jp$\ is increasing on $\left[  0,1\right]  $\ and the following
inequality holds for all $t\in\left[  0,1\right]  :$%
\begin{align*}
&  -2A\left(  \ln A\left(  x,y\right)  ,\ln A\left(  x^{\prime},y^{\prime
}\right)  \right)  \int_{x}^{x^{\prime}}p\left(  s\right)  ds\\
&  =Jp\left(  0\right)  \leq Jp\left(  t\right)  \leq Jp\left(  1\right) \\
&  =-\int\nolimits_{x}^{x^{\prime}}\left[  \ln\left(  \left(  1-\alpha\right)
x+\alpha s\right)  +\ln\left(  \left(  1-\alpha\right)  x^{\prime}+\alpha
s\right)  \right]  p\left(  s\right)  ds.
\end{align*}

\end{proposition}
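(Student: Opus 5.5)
This is the specialization of Theorem \ref{t10} to the convex function $f(s)=-\ln s$ on $[a,b]$ with $0<a$. Here $p$ is integrable and symmetric to $\frac{x+x^{\prime}}{2}$, and $y=(1-\alpha)x+\alpha x^{\prime}$, $y^{\prime}=\alpha x+(1-\alpha)x^{\prime}$. The plan is to check that the hypotheses of Theorem \ref{t10} hold, and then to rewrite the endpoint values $Jp(0)$ and $Jp(1)$ using the notation of the means.

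First, $f(s)=-\ln s$ is convex on $[a,b]\subset(0,\infty)$ because $f^{\prime\prime}(s)=s^{-2}>0$. Every argument of $f$ appearing in $Jp$ is a convex combination of points of $[x,x^{\prime}]\subset[a,b]$, so all logarithms are defined. The hypotheses on $p$ and on $x,y,y^{\prime},x^{\prime},\alpha$ are the standing ones of Subsection 2.2. Hence Theorem \ref{t10} applies directly. It gives that $Jp$ (as written in Remark \ref{r32}) is convex and increasing on $[0,1]$, and that $Jp(0)\le Jp(t)\le Jp(1)$ for all $t\in[0,1]$.

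It then remains to identify the endpoints. At $t=1$ the integrand of $Jp$ is $-[\ln((1-\alpha)x+\alpha s)+\ln((1-\alpha)x^{\prime}+\alpha s)]p(s)$, which is the stated right-hand side. At $t=0$ the integrand is $-[\ln\frac{x+y}{2}+\ln\frac{x^{\prime}+y^{\prime}}{2}]p(s)$, a constant multiple of $p(s)$. So
\[
Jp(0)=-\left[\ln A(x,y)+\ln A(x^{\prime},y^{\prime})\right]\int_{x}^{x^{\prime}}p(s)\,ds=-2A\left(\ln A(x,y),\ln A(x^{\prime},y^{\prime})\right)\int_{x}^{x^{\prime}}p(s)\,ds .
\]
Note that (\ref{2.47}) as printed omits the bracket around $f\left(\frac{x+y}{2}\right)+f\left(\frac{x^{\prime}+y^{\prime}}{2}\right)$. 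The intended value, visible directly from the definition of $Jp$ at $t=0$, is the bracketed sum multiplied by $\int_{x}^{x^{\prime}}p(s)\,ds$, and that is the form used here.

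No step is a real obstacle. The only points needing care are that $\ln$ is defined on the whole range (guaranteed by $a>0$) and that the endpoint value at $t=0$ is read off correctly despite the missing bracket in (\ref{2.47}).
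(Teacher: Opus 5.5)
Your proposal is correct and follows the paper's own route: Proposition \ref{p34} is obtained by specializing Theorem \ref{t10} to $f(s)=-\ln s$, which is convex because $a>0$, and then reading $Jp(0)$ and $Jp(1)$ off the definition of $Jp$. You are also right that (\ref{2.47}) should have brackets around $f\left(\frac{x+y}{2}\right)+f\left(\frac{x^{\prime}+y^{\prime}}{2}\right)$, and one further detail should be stated explicitly: $p$ must be defined and symmetric on all of $[x,x^{\prime}]$, as in Section 2, because Subsection 3.2 declares $p$ only on $\Omega$ while $Jp$ integrates over $[x,x^{\prime}]$.
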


\begin{proposition}
\label{p35}\textit{For all }$t\in\left[  0,1\right]  ,$ we have $Ip_{1}\left(
t\right)  \leq Jp\left(  t\right)  .$
\end{proposition}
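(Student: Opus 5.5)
Proposition~\ref{p35} should be nothing more than Theorem~\ref{t11} with the choice $f(s)=-\ln s$ on $[a,b]$. So the plan is first to check that this $f$ and the weight $p$ satisfy the standing hypotheses of Subsection~2.2, and then to identify the two functions appearing in the proposition with $Ip_{1}$ and $Jp$ as defined in that subsection.

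\textbf{Hypotheses.} Since $0<a$, the function $f(s)=-\ln s$ is well defined on $[a,b]$ and convex there, because $f''(s)=s^{-2}>0$. By assumption in this subsubsection, $p$ is positive, integrable and symmetric about $\frac{x+x'}{2}$. The point configuration also matches: $y=(1-\alpha)x+\alpha x'$ with $0<\alpha\le\frac12$, and $x+x'=y+y'$ forces $y'=\alpha x+(1-\alpha)x'$.

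\textbf{Identification.} Substituting $f=-\ln$ into the definitions of $Ip_{1}$ and $Jp$ from Subsection~2.2 gives exactly the expressions listed in Remark~\ref{r32}. All the arguments of $\ln$ that appear there are convex combinations of points of $[x,x']\subset[a,b]$, so they are positive and the logarithms are defined.

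\textbf{Conclusion.} Theorem~\ref{t11} then yields $Ip_{1}(t)\le Jp(t)$ on $[0,1]$.

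The only point needing care is the hypothesis on $p$. Theorem~\ref{t11} works with $p$ on $[x,x']$, whereas this subsection writes $p:\Omega\to(0,\infty)$. Since $Ip_{1}$ and $Jp$ integrate over $[x,x']$, I would read $p$ as defined on $[x,x']$, as Section~2 requires. The proof of Theorem~\ref{t11} uses only the symmetry of $p$ through identities (2.43) and (2.48), so this reading is sufficient.
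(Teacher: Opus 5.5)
Your proposal is correct and matches the paper's own derivation. Proposition~\ref{p35} appears among the results obtained ``using Theorems~\ref{t1}--\ref{t24}'' together with Remark~\ref{r32}, so it is simply Theorem~\ref{t11} specialized to the convex function $f(s)=-\ln s$ on $[a,b]$ with $a>0$. Your added checks are sound and only make explicit what the paper leaves implicit: convexity of $-\ln$, positivity of every argument of $\ln$, and reading $p$ as a symmetric weight on $[x,x']$ rather than on $\Omega$.
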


\begin{proposition}
\label{p36}$Mp$\ is increasing on $\left[  0,1\right]  $\ and the following
inequalities hold for all $t\in\left[  0,1\right]  :$%
\begin{align*}
&  -\int\nolimits_{x}^{x^{\prime}}\left[  \ln\left(  \left(  1-\alpha\right)
x+\alpha s\right)  +\ln\left(  \left(  1-\alpha\right)  x^{\prime}+\alpha
s\right)  \right]  p\left(  s\right)  ds\\
&  =Mp\left(  0\right)  \leq Mp\left(  t\right)  \leq Mp\left(  1\right) \\
&  =-A\left(  A\left(  \ln x,\ln y\right)  ,A\left(  \ln y^{\prime},\ln
x^{\prime}\right)  \right)  \int\nolimits_{x}^{x^{\prime}}p\left(  s\right)
ds.
\end{align*}%
\begin{align*}
&  Mp\left(  t\right) \\
&  \leq-\left(  1-t\right)  \int\nolimits_{x}^{x^{\prime}}\left[  \ln\left(
\left(  1-\alpha\right)  x+\alpha s\right)  +\ln\left(  \left(  1-\alpha
\right)  x^{\prime}+\alpha s\right)  \right]  p\left(  s\right)  ds\\
&  -tA\left(  A\left(  \ln x,\ln y\right)  ,A\left(  \ln y^{\prime},\ln
x^{\prime}\right)  \right)  \int\nolimits_{x}^{x^{\prime}}p\left(  s\right)
ds\\
&  \leq-A\left(  A\left(  \ln x,\ln y\right)  ,A\left(  \ln y^{\prime},\ln
x^{\prime}\right)  \right)  \int\nolimits_{x}^{x^{\prime}}p\left(  s\right)
ds\\
&  \leq-2A\left(  \ln x,\ln x^{\prime}\right)  \int_{x}^{x^{\prime}}p\left(
s\right)  ds.
\end{align*}

\end{proposition}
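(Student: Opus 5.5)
Proposition \ref{p36} is Theorem \ref{t12} specialised to $f(s)=-\ln s$, which is convex on $[a,b]$ because $a>0$. The plan is to apply Theorem \ref{t12} with this $f$ and the given symmetric weight $p$, and then rewrite each term through the notation of Remark \ref{r32}. Before doing so, check that its hypotheses hold in the present setting. We have $x+x^{\prime}=y+y^{\prime}$ and $y=(1-\alpha)x+\alpha x^{\prime}$, which forces $y^{\prime}=\alpha x+(1-\alpha)x^{\prime}$. Also $0<\alpha\le\frac12$, and $p$ is integrable, positive and symmetric about $\frac{x+x^{\prime}}{2}$. The function $Mp$ of Remark \ref{r32} is $Mp$ of Subsection 2.2 with $f=-\ln$; the sign in front of the second integral in Remark \ref{r32} should read as a minus.

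First, Theorem \ref{t12} says $Mp$ is increasing on $[0,1]$, so $Mp(0)\le Mp(t)\le Mp(1)$. Evaluate the endpoints with $f=-\ln$. At $t=0$ the two integrals recombine into
\[
Mp(0)=-\int_x^{x^{\prime}}\left[\ln((1-\alpha)x+\alpha s)+\ln((1-\alpha)x^{\prime}+\alpha s)\right]p(s)\,ds,
\]
exactly as in (\ref{2.49}). At $t=1$, (\ref{2.49}) gives
\[
Mp(1)=\frac{f(x)+f(y)+f(y^{\prime})+f(x^{\prime})}{2}\int_x^{x^{\prime}}p(s)\,ds .
\]
Since $\frac{\ln x+\ln y+\ln y^{\prime}+\ln x^{\prime}}{2}=2A\left(A(\ln x,\ln y),A(\ln y^{\prime},\ln x^{\prime})\right)$, this equals $-2A\left(A(\ln x,\ln y),A(\ln y^{\prime},\ln x^{\prime})\right)\int_x^{x^{\prime}}p(s)\,ds$.

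Second, the chain of upper bounds comes directly from (\ref{2.50}) with the same substitutions. Its last link is $[f(x)+f(x^{\prime})]\int_x^{x^{\prime}}p(s)\,ds=-2A(\ln x,\ln x^{\prime})\int_x^{x^{\prime}}p(s)\,ds$.

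The main obstacle is bookkeeping of constants. With the normalisation of (\ref{2.49}), the identification with $A(A(\cdot,\cdot),A(\cdot,\cdot))$ produces a factor $2$, whereas the statement as printed has none. So I would recompute $Mp(1)$ directly. When $t=1$, the first integral of $Mp$ has integrand $f(x)+f(y^{\prime})$ over $[x,\frac{x+x^{\prime}}{2}]$, and the second has integrand $f(y)+f(x^{\prime})$ over $[\frac{x+x^{\prime}}{2},x^{\prime}]$. By the symmetry of $p$, each half-interval carries $\frac12\int_x^{x^{\prime}}p(s)\,ds$. This confirms the value of $Mp(1)$ above, with the factor $2$. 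I would then state the result with this factor, since everything else is immediate from Theorem \ref{t12}.
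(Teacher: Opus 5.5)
Your proposal is correct and follows the paper's route: Proposition \ref{p36} is Theorem \ref{t12} with $f(s)=-\ln s$, read through Remark \ref{r32}, whose second integral in $Mp$ should indeed carry a minus sign. You are also right that the printed value of $Mp(1)$ is missing a factor $2$ in all three places it appears. The correct term is $-2A\left(A(\ln x,\ln y),A(\ln y^{\prime},\ln x^{\prime})\right)\int_x^{x^{\prime}}p(s)\,ds$, and the statement as printed is false: for $x=1$, $y=y^{\prime}=2$, $x^{\prime}=3$, $p\equiv 1$, its last link reads $-\frac{1}{2}\ln 12\le -2\ln 3$, which fails.
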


\begin{proposition}
\label{p37}$Np$\ is increasing on $\left[  0,1\right]  $\ and the following
inequalities hold for all $t\in\left[  0,1\right]  :$%
\begin{align*}
&  -\int\nolimits_{x}^{x^{\prime}}\left[  \ln\left(  \left(  1-\alpha\right)
x+\alpha s\right)  +\ln\left(  \left(  1-\alpha\right)  x^{\prime}+\alpha
s\right)  \right]  p\left(  s\right)  ds\\
&  =Np\left(  0\right)  \leq Np\left(  t\right)  \leq Np\left(  1\right)
=-2A\left(  \ln x,\ln x^{\prime}\right)  \int_{x}^{x^{\prime}}p\left(
s\right)  ds.
\end{align*}%
\begin{align*}
&  Np\left(  t\right) \\
&  \leq-\left(  1-t\right)  \int\nolimits_{x}^{x^{\prime}}\left[  \ln\left(
\left(  1-\alpha\right)  x+\alpha s\right)  +\ln\left(  \left(  1-\alpha
\right)  x^{\prime}+\alpha s\right)  \right]  p\left(  s\right)  ds\\
&  -2tA\left(  \ln x,\ln x^{\prime}\right)  \int_{x}^{x^{\prime}}p\left(
s\right)  ds\\
&  \leq-2A\left(  \ln x,\ln x^{\prime}\right)  \int_{x}^{x^{\prime}}p\left(
s\right)  ds.
\end{align*}

\end{proposition}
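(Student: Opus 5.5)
This proposition is the specialization of Theorem \ref{t13} to $f(s)=-\ln s$. I would therefore prove it by substitution, and no new convexity argument should be needed. First I check the hypotheses. The function $f(s)=-\ln s$ is convex on $[a,b]$ because $a>0$. The weight $p$ is nonnegative, integrable and symmetric about $\frac{x+x^{\prime}}{2}$, which is exactly the standing assumption on $p$ in Subsection 2.2. Also $y=(1-\alpha)x+\alpha x^{\prime}$ and $y^{\prime}=\alpha x+(1-\alpha)x^{\prime}$ with $0<\alpha\leq\frac12$, as Theorem \ref{t13} requires.

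Next I would apply Theorem \ref{t13}. With this $f$, the function $Np$ defined in Subsection 2.2 becomes exactly the $Np$ listed in Remark \ref{r32}, since each $f(\cdot)$ turns into $-\ln(\cdot)$. Theorem \ref{t13} then says that $Np$ is increasing on $[0,1]$. It also gives the chain (\ref{2.52}) and the convexity bound (\ref{2.53}).

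It then remains to rewrite the endpoint values. At $t=0$ we have
\[
Np(0)=-\int_x^{x^{\prime}}\left[\ln\left((1-\alpha)x+\alpha s\right)+\ln\left((1-\alpha)x^{\prime}+\alpha s\right)\right]p(s)\,ds,
\]
which is the left end of the displayed chain. At $t=1$,
\[
Np(1)=\left[f(x)+f(x^{\prime})\right]\int_x^{x^{\prime}}p(s)\,ds=-(\ln x+\ln x^{\prime})\int_x^{x^{\prime}}p(s)\,ds=-2A(\ln x,\ln x^{\prime})\int_x^{x^{\prime}}p(s)\,ds,
\]
using $A(u,v)=\frac{u+v}{2}$. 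Substituting these two expressions into (\ref{2.52}) and (\ref{2.53}) gives both displayed inequalities of the proposition verbatim.

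There is no real obstacle here. The only care needed is bookkeeping: the factor $2$ that comes from writing $\ln x+\ln x^{\prime}$ as $2A(\ln x,\ln x^{\prime})$, and the placement of the minus signs introduced by $f=-\ln$.
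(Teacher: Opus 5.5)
Your proposal is correct and follows the paper's own route: Proposition \ref{p37} is obtained by specializing Theorem \ref{t13}, that is, inequalities (\ref{2.52}) and (\ref{2.53}), to $f(s)=-\ln s$ via Remark \ref{r32}, and $f(x)+f(x^{\prime})=-2A(\ln x,\ln x^{\prime})$ gives the right-hand endpoint. One small point: Subsection 3.2 declares $p$ on $\Omega$ rather than on $[x,x^{\prime}]$, whereas $Np$ and Theorem \ref{t13} need $p$ defined on all of $[x,x^{\prime}]$ and symmetric about $\frac{x+x^{\prime}}{2}$; you implicitly, and correctly, assume the latter.
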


\begin{proposition}
\label{p38}\textit{For all }$t\in\left[  0,1\right]  ,$ we have $Mp\left(
t\right)  \leq Np\left(  t\right)  .$
\end{proposition}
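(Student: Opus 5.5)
This is Theorem \ref{t14} (that is, $Mp(t)\le Np(t)$) specialized to $f(s)=-\ln s$, which is convex on $[a,b]\subset(0,\infty)$. So I will not prove anything new: I will check that the hypotheses of Section 2 hold in this setting and then quote Theorem \ref{t14}.

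\textbf{Hypotheses.} In the setting of the subsubsection:
\begin{itemize}
\item $0<a\le x<y\le y^{\prime}<x^{\prime}\le b$;
\item $x+x^{\prime}=y+y^{\prime}$ and $y=(1-\alpha)x+\alpha x^{\prime}$ with $0<\alpha\le\frac12$, so $y^{\prime}=\alpha x+(1-\alpha)x^{\prime}$, exactly as required in Subsection 2.2;
\item $p$ is nonnegative, integrable and symmetric about $\frac{x+x^{\prime}}{2}$;
\item $f=-\ln$ is convex on $[a,b]$.
\end{itemize}
There is one domain point to check. In Section 2, $p$ lives on $[x,x^{\prime}]$, whereas here it is stated on $\Omega$. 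The functions $Mp$ and $Np$ integrate over $[x,x^{\prime}]$, so I will read $p$ as defined and symmetric on $[x,x^{\prime}]$, which is what Remark \ref{r32} implicitly does.

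\textbf{Identification.} Substituting $f=-\ln$ into the Section 2 definitions of $Mp$ and $Np$ gives term by term the expressions listed in Remark \ref{r32}. The minus sign pulls out of each integral.

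There is one apparent discrepancy. In Remark \ref{r32}, $Mp$ is written with a minus sign only in front of the first integral. I will read this as a typo and use $Mp$ exactly as defined in Section 2, with $f=-\ln$ applied in both pieces. I will state this interpretation explicitly.

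\textbf{Conclusion.} Theorem \ref{t14} now gives $Mp(t)\le Np(t)$ for all $t\in[0,1]$ directly.

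For completeness I would also recall why Theorem \ref{t14} holds. By identities \eqref{2.51} and \eqref{2.55}, both functions are integrals over $s\in[x,\frac{x+y}{2}]$ of sums of four values of $f$, weighted by the same factor $p(\frac1\alpha(s-x)+x)$. Two of the four terms coincide in the two integrands. For the remaining pair, Lemma \ref{l2} applies: the arguments on the $Np$ side are further apart than those on the $Mp$ side and have the same sum, because $y^{\prime}-x=x^{\prime}-y$. So the $Mp$ pair is pointwise dominated by the $Np$ pair, and integrating gives the inequality.

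The only real obstacle is the sign bookkeeping in Remark \ref{r32}. The inequality itself is an immediate corollary of Theorem \ref{t14}.
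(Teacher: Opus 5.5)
Your proposal is correct and matches the paper: Proposition \ref{p38} is obtained there exactly by specializing Theorem \ref{t14} to the convex function $f(s)=-\ln s$, and the proof of Theorem \ref{t14} compares the integrands of \eqref{2.51} and \eqref{2.55} via Lemma \ref{l2}, just as you recall. Your readings of the two inconsistencies in Remark \ref{r32} are the right ones: the second integral of $Mp$ should also carry the minus sign, and $p$ should be taken on $[x,x^{\prime}]$ rather than on $\Omega$.
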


\begin{proposition}
\label{p39}The following inequalities hold:%
\begin{align*}
&  -2A\left(  \ln y,\ln y^{\prime}\right)  \int_{x}^{x^{\prime}}p\left(
s\right)  ds\\
&  \leq\frac{-2}{\alpha}\left[  \ln I\left(  p\left(  \frac{1}{\alpha}\left(
2s-x-y\right)  +x\right)  ;\frac{x+y}{2},y\right)  \right. \\
&  \text{ \ \ \ \ \ \ \ \ \ \ }+\left.  \ln I\left(  p\left(  \frac{1}{\alpha
}\left(  2s-y^{\prime}-x^{\prime}\right)  +x^{\prime}\right)  ;y^{\prime
},\frac{x^{\prime}+y^{\prime}}{2}\right)  \right] \\
&  \leq\int_{0}^{1}Ip_{1}\left(  t\right)  dt\\
&  \leq-A\left(  \ln y,\ln y^{\prime}\right)  \int_{x}^{x^{\prime}}p\left(
s\right)  ds\\
&  \text{ }-\int\nolimits_{x}^{x^{\prime}}A\left(  \ln\left(  \left(
1-\alpha\right)  x+\alpha s\right)  ,\ln\left(  \left(  1-\alpha\right)
x^{\prime}+\alpha s\right)  \right)  p\left(  s\right)  ds.
\end{align*}%
\begin{align*}
0  &  \leq-\int\nolimits_{x}^{x^{\prime}}\left[  \ln\left(  \left(
1-\alpha\right)  x+\alpha s\right)  +\ln\left(  \left(  1-\alpha\right)
x^{\prime}+\alpha s\right)  \right]  p\left(  s\right)  ds-Ip_{1}\left(
t\right) \\
&  \leq\frac{2\left(  1-t\right)  \left(  y-x\right)  }{\alpha}\left[
-A\left(  \ln x,\ln x^{\prime}\right)  +A\left(  \ln I\left(  x,y\right)  ,\ln
I\left(  y^{\prime},x^{\prime}\right)  \right)  \right]  \left\Vert
p\right\Vert _{\infty}%
\end{align*}
for $t\in\left[  0,1\right]  .$%
\begin{align*}
0  &  \leq-2A\left(  \ln x,\ln x^{\prime}\right)  \int_{x}^{x^{\prime}%
}p\left(  s\right)  ds-Ip_{1}\left(  t\right) \\
&  \leq\frac{\left(  y-x\right)  \left(  x^{\prime}-x\right)  }{xx^{\prime}%
}\int_{x}^{x^{\prime}}p\left(  s\right)  ds\text{ \ \ }\left(  t\in\left[
0,1\right]  \right)  .
\end{align*}%
\begin{align*}
0  &  \leq Ip_{1}\left(  t\right)  +2A\left(  \ln y,\ln y^{\prime}\right)
\int_{x}^{x^{\prime}}p\left(  s\right)  ds\\
&  \leq\frac{\left(  y-x\right)  \left(  x^{\prime}-x\right)  }{xx^{\prime}%
}\int_{x}^{x^{\prime}}p\left(  s\right)  ds\text{ \ \ }\left(  t\in\left[
0,1\right]  \right)  .
\end{align*}

\end{proposition}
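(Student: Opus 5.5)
Proposition \ref{p39} is Theorem \ref{t15} specialized to $f(s)=-\ln s$, which is convex and differentiable on $[a,b]\subset(0,\infty)$. The plan is to substitute this $f$ into \eqref{2.59}--\eqref{2.62} and translate each term with the identities of Remark \ref{r32} and the definitions of the extended identric mean. No new inequality is needed; the work is bookkeeping.

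\textbf{First block.} Substitute $f=-\ln$ into \eqref{2.59}.
\begin{itemize}
\item The left end $[f(y)+f(y')]\int_x^{x'}p$ becomes $-2A(\ln y,\ln y')\int_x^{x'}p$.
\item Each middle integral $\int f(s)\,q(s)\,ds$, with the weight $q$ of \eqref{2.59}, is rewritten as $-\ln I(q;\cdot,\cdot)$ by the definition $I(q;u,v)=e^{\int_u^v\ln s\, q(s)ds}$. This gives the bracketed $-\frac{2}{\alpha}[\ln I(\cdot)+\ln I(\cdot)]$ term.
\item In the right end, the factor $\frac12$ turns the sums $f(y)+f(y')$ and $f((1-\alpha)x+\alpha s)+f((1-\alpha)x'+\alpha s)$ into the arithmetic means $A(\cdot,\cdot)$.
\end{itemize}

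\textbf{Second block.} Use \eqref{2.60}. By the definition of the identric mean,
\[
\int_x^y \ln s\,ds=(y-x)\ln I(x,y),\qquad \int_{y'}^{x'}\ln s\,ds=(x'-y')\ln I(y',x')=(y-x)\ln I(y',x').
\]
Hence
\[
(f(x)+f(x'))(y-x)-\int_\Omega f=2(y-x)\bigl[-A(\ln x,\ln x')+A(\ln I(x,y),\ln I(y',x'))\bigr],
\]
which yields the stated bound with prefactor $\frac{2(1-t)(y-x)}{\alpha}\|p\|_\infty$.

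\textbf{Last two blocks.} Use \eqref{2.61} and \eqref{2.62}. Here $f'(s)=-1/s$, so
\[
f'(x')-f'(x)=\frac1x-\frac1{x'}=\frac{x'-x}{xx'},
\]
and $(y-x)(f'(x')-f'(x))=\frac{(y-x)(x'-x)}{xx'}$. The endpoint terms $f(x)+f(x')=-2A(\ln x,\ln x')$ and $f(y)+f(y')=-2A(\ln y,\ln y')$ give the two displayed two-sided bounds directly.

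The main (minor) obstacle is checking that the weighted middle term of \eqref{2.59} matches the extended identric notation, including the factor $\frac2\alpha$ and the sign convention. I would verify this by writing the integral explicitly before renaming it.
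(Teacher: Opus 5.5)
Your proposal is correct and is exactly the paper's route: Proposition \ref{p39} comes from specializing Theorem \ref{t15}, i.e. inequalities (\ref{2.59})--(\ref{2.62}), to $f(s)=-\ln s$ and rewriting terms with the identric-mean identities of Remark \ref{r32}. Your bookkeeping checks out, namely $\int_u^v\ln s\,ds=(v-u)\ln I(u,v)$, $x'-y'=y-x$, $f'(x')-f'(x)=(x'-x)/(xx')$, and the sign and $\frac{2}{\alpha}$ factor in the middle term of (\ref{2.59}); the only tacit point, as in the paper, is that the second block uses the boundedness of $p$ on $[x,x']$ assumed in Theorem \ref{t15}(2).
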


\begin{proposition}
\label{p40}The following inequalities hold:%
\begin{align*}
&  -2\ln A\left(  y,y^{\prime}\right)  \int_{x}^{x^{\prime}}p\left(  s\right)
ds\\
&  \leq\frac{-2}{\alpha}\left[  \ln I\left(  p\left(  \frac{1}{\alpha}\left(
2s-x-y^{\prime}\right)  +x\right)  ;\frac{x+y^{\prime}}{2},\frac{y+y^{\prime}%
}{2}\right)  \right. \\
&  \text{ \ \ \ \ \ \ \ \ }+\left.  \ln I\left(  p\left(  \frac{1}{\alpha
}\left(  2s-y-x^{\prime}\right)  +x^{\prime}\right)  ;\frac{y+y^{\prime}}%
{2},\frac{x^{\prime}+y}{2}\right)  \right] \\
&  \leq\int_{0}^{1}Ip_{2}\left(  t\right)  dt\\
&  \leq-A\left(  \ln y,\ln y^{\prime}\right)  \int_{x}^{x^{\prime}}p\left(
s\right)  ds\\
&  \text{ }-\int\nolimits_{x}^{x^{\prime}}A\left(  \ln\left(  \left(
1-\alpha\right)  x+\alpha s\right)  ,\ln\left(  \left(  1-\alpha\right)
x^{\prime}+\alpha s\right)  \right)  p\left(  s\right)  ds.
\end{align*}%
\begin{align*}
0  &  \leq-\int\nolimits_{x}^{x^{\prime}}\left[  \ln\left(  \left(
1-\alpha\right)  x+\alpha s\right)  +\ln\left(  \left(  1-\alpha\right)
x^{\prime}+\alpha s\right)  \right]  p\left(  s\right)  ds-Ip_{2}\left(
t\right) \\
&  \leq\frac{2\left(  1-t\right)  \left(  y-x\right)  }{\alpha}\left[
-A\left(  \ln x,\ln x^{\prime}\right)  \frac{y^{\prime}-x}{y-x}+A\left(  \ln
y,\ln y^{\prime}\right)  \frac{y^{\prime}-y}{y-x}\right. \\
&  \left.  +A\left(  \ln I\left(  x,y\right)  ,\ln I\left(  y^{\prime
},x^{\prime}\right)  \right)  \right]  \left\Vert p\right\Vert _{\infty
}\text{\ \ \ }\left(  t\in\left[  0,1\right]  \right)  .
\end{align*}%
\begin{align*}
0  &  \leq Ip_{2}\left(  t\right)  +2\ln A\left(  y,y^{\prime}\right)
\int_{x}^{x^{\prime}}p\left(  s\right)  ds\\
&  \leq\frac{\left(  2x^{\prime}-y-y^{\prime}\right)  \left(  x^{\prime
}-x\right)  }{2xx^{\prime}}\int_{x}^{x^{\prime}}p\left(  s\right)  ds\text{
\ \ }\left(  t\in\left[  0,1\right]  \right)  .
\end{align*}%
\begin{align*}
0  &  \leq-2A\left(  \ln x,\ln x^{\prime}\right)  \int_{x}^{x^{\prime}%
}p\left(  s\right)  ds-Ip_{2}\left(  t\right) \\
&  \leq\frac{\left(  2x^{\prime}-y-y^{\prime}\right)  \left(  x^{\prime
}-x\right)  }{2xx^{\prime}}\int_{x}^{x^{\prime}}p\left(  s\right)  ds\text{
\ \ }\left(  t\in\left[  0,1\right]  \right)  .
\end{align*}

\end{proposition}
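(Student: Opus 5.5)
Proposition \ref{p40} is the specialisation of Theorem \ref{t16} and of inequalities (\ref{2.66})--(\ref{2.67}) to the convex function $f(s)=-\ln s$, so the plan is to substitute $f=-\ln$ and translate each term into the notation of Remark \ref{r32}.

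\emph{First chain (from (\ref{2.64})).} The endpoints translate as follows.
\begin{itemize}
\item[(a)] $2f\bigl(\frac{y+y^{\prime}}{2}\bigr)=-2\ln A(y,y^{\prime})$.
\item[(b)] The upper endpoint has $\frac12\bigl(f(y)+f(y^{\prime})\bigr)=-\frac12\ln y-\frac12\ln y^{\prime}=-A(\ln y,\ln y^{\prime})$, and $\frac12\bigl[f(\cdot)+f(\cdot)\bigr]=-A\bigl(\ln(\cdot),\ln(\cdot)\bigr)$.
\end{itemize}
The middle term of (\ref{2.64}) is $\frac{2}{\alpha}\int f(s)q(s)\,ds$ with weight $q(s)=p\bigl(\frac{1}{\alpha}(2s-x-y^{\prime})+x\bigr)$, and similarly on the second interval. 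By the definition $\ln I(q;u,v)=\int_u^v \ln s\,q(s)\,ds$, this becomes $-\frac{2}{\alpha}\bigl[\ln I(q;\cdot,\cdot)+\ln I(\cdot)\bigr]$, which is exactly the displayed expression.

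\emph{Second inequality (from (\ref{2.65})).} Take $\|p\|_\infty$ as given and compute the bracket with $f=-\ln$:
\[
(f(x)+f(x^{\prime}))(y^{\prime}-x)=-2(y^{\prime}-x)A(\ln x,\ln x^{\prime}),
\qquad
-(f(y)+f(y^{\prime}))(y^{\prime}-y)=2(y^{\prime}-y)A(\ln y,\ln y^{\prime}).
\]
For the integral term, $\int_\Omega \ln s\,ds=(y-x)\bigl[\ln I(x,y)+\ln I(y^{\prime},x^{\prime})\bigr]$, using $x^{\prime}-y^{\prime}=y-x$ and $\ln I(u,v)=\frac{1}{v-u}\int_u^v\ln s\,ds$. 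Hence $-\int_\Omega f=2(y-x)A\bigl(\ln I(x,y),\ln I(y^{\prime},x^{\prime})\bigr)$. Factoring out $2(y-x)$ reproduces the stated bound.

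\emph{Last two inequalities (from (\ref{2.66})--(\ref{2.67})).} Here $f^{\prime}(s)=-1/s$, so
\[
f^{\prime}(x^{\prime})-f^{\prime}(x)=\frac1x-\frac1{x^{\prime}}=\frac{x^{\prime}-x}{xx^{\prime}}.
\]
Multiplying by $\frac{2x^{\prime}-y-y^{\prime}}{2}$ gives the stated factor, and $[f(x)+f(x^{\prime})]=-2A(\ln x,\ln x^{\prime})$.

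The only real obstacle is bookkeeping in the middle term of the first chain, namely checking that the weights and integration limits match the extended-identric-mean notation. The hypotheses of Theorem \ref{t16} hold because $-\ln$ is convex and differentiable on $[a,b]\subset(0,\infty)$ and $p$ is symmetric.
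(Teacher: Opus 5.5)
Your proposal is correct and follows the paper's own route: Proposition \ref{p40} is obtained by substituting $f(s)=-\ln s$ into Theorem \ref{t16}, that is, into (\ref{2.64})--(\ref{2.67}), and rewriting each term in the identric-mean notation of Remark \ref{r32}. Your term-by-term translations all check out, including $-\int_{\Omega}f(s)\,ds=2(y-x)A\left(\ln I(x,y),\ln I(y^{\prime},x^{\prime})\right)$ (which uses $x^{\prime}-y^{\prime}=y-x$) and $f^{\prime}(x^{\prime})-f^{\prime}(x)=(x^{\prime}-x)/(xx^{\prime})$.
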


\begin{proposition}
\label{p41}The following inequalities hold for all $t\in\left[  0,1\right]  :$%
\[
Ip_{1}\left(  t\right)  \leq2G_{1}\left(  t\right)  \int_{x}^{x^{\prime}%
}p\left(  s\right)  ds.
\]%
\begin{align*}
0  &  \leq Ip_{1}\left(  t\right)  +2A\left(  \ln y,\ln y^{\prime}\right)
\int_{x}^{x^{\prime}}p\left(  s\right)  ds\\
&  \leq2\left(  x^{\prime}-x\right)  \left[  G_{1}\left(  t\right)
-H_{1}\left(  t\right)  \right]  \left\Vert p\right\Vert _{\infty}.
\end{align*}

\end{proposition}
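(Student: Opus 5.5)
Proposition \ref{p41} is the specialization of Theorem \ref{t17} to $f(s)=-\ln s$ on $[a,b]$ with $0<a$. The plan is to check the hypotheses, substitute $f$ into each side, and rewrite the constant terms with the special means recorded in Remark \ref{r32}.

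\textbf{Hypotheses.} The function $f(s)=-\ln s$ is convex and differentiable on $[a,b]\subset(0,\infty)$. The weight $p$ is positive, integrable and symmetric to $\frac{x+x^{\prime}}{2}$, and the standing relations $x+x^{\prime}=y+y^{\prime}$, $y=(1-\alpha)x+\alpha x^{\prime}$ with $0<\alpha\leq\frac12$ are in force throughout this subsubsection. For the second inequality, part (2) of Theorem \ref{t17} also needs $p$ bounded on $[x,x^{\prime}]$. This is implicit in the appearance of $\left\Vert p\right\Vert _{\infty}$, and we read the proposition as assuming it.

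\textbf{Substitution.} With this $f$, the functions $Ip_{1}$, $G_{1}$ and $H_{1}$ of Section 2 become exactly the expressions listed in Remark \ref{r32}, namely
\[
G_{1}(t)=-A\left(\ln(tx+(1-t)y),\ln(tx^{\prime}+(1-t)y^{\prime})\right),
\]
together with the displayed formulas there for $Ip_{1}$ and for $H_{1}$ (the latter with constant weight). The first claimed inequality is then literally (\ref{2.69}).

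For the second claim, note that $f(y)+f(y^{\prime})=-(\ln y+\ln y^{\prime})=-2A(\ln y,\ln y^{\prime})$. Hence
\[
Ip_{1}(t)-\left[f(y)+f(y^{\prime})\right]\int_{x}^{x^{\prime}}p(s)\,ds=Ip_{1}(t)+2A(\ln y,\ln y^{\prime})\int_{x}^{x^{\prime}}p(s)\,ds .
\]
Inserting this into (\ref{2.70}) gives the stated two-sided bound, with the same right-hand side $2(x^{\prime}-x)\left[G_{1}(t)-H_{1}(t)\right]\left\Vert p\right\Vert _{\infty}$.

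\textbf{Main obstacle.} There is no real analytic difficulty, since Theorem \ref{t17} does all the work. The only point needing care is the sign bookkeeping: $f=-\ln$ reverses the usual orientation of logarithms, so one must confirm that the lower bound $0$ comes from the monotonicity (\ref{2.41}), i.e.\ $Ip_{1}(0)\leq Ip_{1}(t)$, and that $Ip_{1}(0)$ equals $-2A(\ln y,\ln y^{\prime})\int_{x}^{x^{\prime}}p$.
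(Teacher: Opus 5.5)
Your proposal is correct and follows the paper's own route: Proposition \ref{p41} is Theorem \ref{t17} specialized to $f(s)=-\ln s$ through Remark \ref{r32}, and the identity $f(y)+f(y^{\prime})=-2A(\ln y,\ln y^{\prime})$ turns (\ref{2.70}) into the stated bound. You are right that the boundedness of $p$ is only implicit in the paper; the same goes for $p$ being defined on all of $[x,x^{\prime}]$ rather than only on $\Omega$, which is all the subsection's standing assumption provides.
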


\begin{proposition}
\label{p42}Let $m,$ $m^{\prime}$\ \textit{be defined as in Proposition
\ref{p9}. Then }the following inequalities hold:%
\[
Ip_{2}\left(  t\right)  \leq\left(  \geq\right)  2G_{2}\left(  t\right)
\int_{x}^{x^{\prime}}p\left(  s\right)  ds\text{ \ \ \textit{as }}t\in\left[
m^{\prime},1\right]  \text{ \ }\left(  t\in\left[  0,m\right]  \right)  .
\]%
\begin{align*}
0  &  \leq Ip_{2}\left(  t\right)  +2\ln A\left(  y,y^{\prime}\right)
\int_{x}^{x^{\prime}}p\left(  s\right)  ds\\
&  \leq\frac{2}{\alpha}\left[  \left(  y^{\prime}-x\right)  G_{2}\left(
t\right)  -2\left(  y^{\prime}-y\right)  G_{3}\left(  t\right)  -\left(
y-x\right)  H_{2}\left(  t\right)  \right]  \left\Vert p\right\Vert _{\infty
}\text{\ }\left(  t\in\left[  0,1\right]  \right)  .
\end{align*}

\end{proposition}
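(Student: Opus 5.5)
The plan is to specialize Theorem \ref{t18} to $f(s)=-\ln s$, which is convex on $[a,b]\subset(0,\infty)$ and differentiable, and to translate every term with Remark \ref{r32}. Under this choice the expressions $2f\left(\frac{y+y^{\prime}}{2}\right)\int_x^{x^{\prime}}p$ and $G_2,G_3,H_2$ of Theorem \ref{t18} become $-2\ln A(y,y^{\prime})\int_x^{x^{\prime}}p$ and the logarithmic $G_2,G_3,H_2$ of Remark \ref{r32}.

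\textbf{First display.} This is (\ref{2.72}) read off directly. It gives ``$\leq$'' on $[m^{\prime},1]$ and ``$\geq$'' on $[0,m]$. No extra work is needed, because the proof of (\ref{2.72}) uses only Lemma \ref{l2} together with the point orderings established there for $t\in[0,m]$ and for $t\in[m^{\prime},1]$.

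\textbf{Lower bound $0\leq\cdots$ in the second display.} This holds for every $t\in[0,1]$ by (\ref{2.44}), since $Ip_2(t)\geq 2f\left(\frac{x+x^{\prime}}{2}\right)\int_x^{x^{\prime}}p$ and $\frac{x+x^{\prime}}{2}=\frac{y+y^{\prime}}{2}$.

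\textbf{Upper bound, $t\in[m^{\prime},1]$.} Here I would reproduce the argument of Theorem \ref{t18}(2) with $f=-\ln$ and $f^{\prime}(u)=-1/u$. The steps are as follows.
\begin{enumerate}
\item Write $Ip_2(t)+2\ln A(y,y^{\prime})\int_x^{x^{\prime}}p$ through (\ref{2.46}) and (\ref{2.71}) as an integral over $s\in[x,y]$ with weight $\frac1\alpha p\left(\frac1\alpha(s-x)+x\right)$.
\item Apply the tangent inequalities for $f$ at $ts+(1-t)y^{\prime}$ and at $t(x+x^{\prime}-s)+(1-t)y$.
\item Use the ordering (\ref{2.74}) to get $D(s):=f^{\prime}(t(x+x^{\prime}-s)+(1-t)y)-f^{\prime}(ts+(1-t)y^{\prime})\geq0$. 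This lets me replace the weight by $\Vert p\Vert_\infty$ and drop the term $-\frac{y^{\prime}-y}{2\alpha}\int D\,p$, which is $\leq0$.
\item Evaluate the remaining integral by the integration-by-parts identity of Theorem \ref{t18}. It produces $(y^{\prime}-x)G_2-(y^{\prime}-y)G_3-(y-x)H_2$.
\end{enumerate}
I will check that this expression dominates the stated form carrying the coefficient $2(y^{\prime}-y)$ on $G_3$. This reduces to checking the sign of $G_3(t)$ for the logarithm, which I would handle separately, if necessary by noting that the dropped nonpositive term compensates.

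\textbf{Upper bound, $t\in[0,m^{\prime})$.} This range is nonempty only when $y\neq y^{\prime}$, and it is the main obstacle, because $D(s)$ may change sign there. My first attempt is to split $[x,y]$ into $\{D\geq0\}$ and $\{D<0\}$.
\begin{itemize}
\item On $\{D\geq0\}$, bound the weight by $\Vert p\Vert_\infty$ as before.
\item On $\{D<0\}$, the combined term $\left[t(y^{\prime}-s)-\frac{y^{\prime}-y}{2}\right]D\,p$ has the sign of $-D\left[\frac{y^{\prime}-y}{2}-t(y^{\prime}-s)\right]$. This is nonpositive exactly where $t(y^{\prime}-s)\leq\frac{y^{\prime}-y}{2}$, so I would use the second tangent term, the one with coefficient $-\frac{y^{\prime}-y}{2}$, to absorb the negative part of $t(y^{\prime}-s)D$.
\end{itemize}
If this pointwise absorption fails, I would fall back on a different route. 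Since $Ip_2$ is convex (Theorem \ref{t9}) and decreasing on $[0,m]$, the quantity $Ip_2(t)-2f\left(\frac{y+y^{\prime}}{2}\right)\int p$ is bounded on $[0,m^{\prime}]$ by its values at $t=0$ and $t=m^{\prime}$. I would then compare those values with the right-hand side using the convexity of $G_2$ and $H_2$ (Theorem \ref{A18}, Theorem \ref{A13}) and the explicit form of $G_3$ for $f=-\ln$.
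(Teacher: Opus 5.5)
Your treatment of the first display via (\ref{2.72}), the lower bound via (\ref{2.44}), and steps 1--4 on $[m^{\prime},1]$ are sound. They follow the paper's route, which is to specialize Theorem \ref{t18} to $f=-\ln$. Step 4 correctly gives the coefficient $(y^{\prime}-y)$ on $G_3$, which is also what the integration-by-parts identity in the proof of Theorem \ref{t18} gives. The gap is in the two points you defer. Neither can be closed, because the second display is false in those regimes.

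\textbf{The factor $2$ on $G_3$.} Upgrading $-(y^{\prime}-y)G_3$ to $-2(y^{\prime}-y)G_3$ needs $G_3(t)\le 0$, i.e.\ $(ty+(1-t)y^{\prime})(ty^{\prime}+(1-t)y)\ge 1$. That is merely a condition on the scale of the points. The dropped term cannot compensate. The middle quantity $Ip_2(t)-2f(\frac{y+y^{\prime}}{2})\int_x^{x^{\prime}}p$ is unchanged under $f\mapsto f+k$. The bracket $(y^{\prime}-x)G_2-2(y^{\prime}-y)G_3-(y-x)H_2$ instead shifts by $-k(y^{\prime}-y)$. For $f=-\ln$, rescaling all points by $\lambda$ produces exactly such a shift, with $k=-\ln\lambda$. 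A concrete counterexample: take $x=0.1$, $y=0.2$, $y^{\prime}=0.4$, $x^{\prime}=0.5$, $\alpha=\frac14$, $p\equiv 1$, and $t=1\in[m^{\prime},1]$.
\begin{itemize}
\item The middle quantity is $4\int_{\Omega}(-\ln u)\,du+0.8\ln 0.3\approx 0.124$.
\item The stated right-hand side is $8\bigl[0.3\,G_2(1)-0.4\,G_3(1)-0.1\,H_2(1)\bigr]\approx -1.53$.
\end{itemize}

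\textbf{The range $[0,m^{\prime})$.} This range is nonempty when $y\neq y^{\prime}$. Neither splitting into $\{D\ge 0\}$ and $\{D<0\}$ nor convex interpolation between $t=0$ and $t=m^{\prime}$ can work, because the bound already fails at $t=0$. There $Ip_2(0)=(f(y)+f(y^{\prime}))\int_x^{x^{\prime}}p$ and $G_2(0)=G_3(0)=H_2(0)=\frac12(f(y)+f(y^{\prime}))$. So the middle quantity equals $\bigl(f(y)+f(y^{\prime})-2f(\frac{y+y^{\prime}}{2})\bigr)\int_x^{x^{\prime}}p>0$. The right-hand side is exactly $0$ with your coefficient. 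With the stated coefficient it is $\frac{1}{\alpha}(y^{\prime}-y)\ln(yy^{\prime})\Vert p\Vert_\infty$, which is negative whenever $yy^{\prime}<1$.

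Theorem \ref{t18}(2) itself is asserted only for $t\in[m^{\prime},1]$. The range $[0,1]$ in the proposition is an error in the source. So is the factor $2$ on $G_3$, which contradicts the identity derived in the proof of Theorem \ref{t18}. What your steps 1--4 actually establish, and what you should state instead, is
\begin{align*}
0&\le Ip_2(t)+2\ln A(y,y^{\prime})\int_x^{x^{\prime}}p(s)\,ds\\
&\le \frac{2}{\alpha}\bigl[(y^{\prime}-x)G_2(t)-(y^{\prime}-y)G_3(t)-(y-x)H_2(t)\bigr]\Vert p\Vert_\infty,\qquad t\in[m^{\prime},1].
\end{align*}
This agrees with the claimed inequality when $y=y^{\prime}$, since then $m^{\prime}=0$ and the $G_3$ term vanishes.
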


\begin{proposition}
\label{p43}Let $m^{\prime}$\ \textit{be defined as in Proposition \ref{p9}.
Then} we have:%
\begin{align*}
&  2A\left(  G_{1}\left(  t\right)  ,G_{2}\left(  t\right)  \right)  \int%
_{x}^{x^{\prime}}p\left(  s\right)  ds\\
&  \leq Sp\left(  t\right) \\
&  \leq-\left(  1-t\right)  \int\nolimits_{x}^{x^{\prime}}\left[  \ln\left(
\left(  1-\alpha\right)  x+\alpha s\right)  +\ln\left(  \left(  1-\alpha
\right)  x^{\prime}+\alpha s\right)  \right]  p\left(  s\right)  ds\\
&  \qquad\qquad-2tA\left(  \ln x,\ln x^{\prime}\right)  \int_{x}^{x^{\prime}%
}p\left(  s\right)  ds\\
&  \leq-2A\left(  \ln x,\ln x^{\prime}\right)  \int_{x}^{x^{\prime}}p\left(
s\right)  ds\text{ \ }\left(  t\in\left[  0,1\right]  \right)  .
\end{align*}%
\[
A\left(  Ip_{1}\left(  1-t\right)  ,Ip_{2}\left(  1-t\right)  \right)  \leq
Sp\left(  t\right)  \text{ \ }\left(  t\in\left[  0,1\right]  \right)  .
\]%
\[
A\left(  A\left(  Ip_{1}\left(  t\right)  ,Ip_{2}\left(  t\right)  \right)
,A\left(  Ip_{1}\left(  1-t\right)  ,Ip_{2}\left(  1-t\right)  \right)
\right)  \leq Sp\left(  t\right)  \text{ \ \ }\left(  t\in\left[  m^{\prime
},1\right]  \right)  .
\]%
\[
\sup\limits_{t\in\left[  0,1\right]  }Sp\left(  t\right)  =-2A\left(  \ln
x,\ln x^{\prime}\right)  \int_{x}^{x^{\prime}}p\left(  s\right)  ds\text{ }.
\]

\end{proposition}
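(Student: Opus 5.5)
This is Theorem \ref{t19} specialised to the convex function $f(s)=-\ln s$, with $G_1,G_2$, $Ip_1,Ip_2$ and $Sp$ read off from Remark \ref{r32}. The plan is to check the hypotheses and then translate each inequality of Theorem \ref{t19} one at a time.

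First I verify the hypotheses. $f(s)=-\ln s$ is convex on $[a,b]\subset(0,\infty)$. The weight $p$ is nonnegative, integrable and symmetric about $\frac{x+x^{\prime}}{2}$. We also have $y=(1-\alpha)x+\alpha x^{\prime}$ with $0<\alpha\leq\frac12$, and this forces $y^{\prime}=x+x^{\prime}-y=\alpha x+(1-\alpha)x^{\prime}$. So we are exactly in the setting of Subsection 2.2, and Theorem \ref{t19} applies. Its $G_1(t)$ is $\frac12[f(tx+(1-t)y)+f(tx^{\prime}+(1-t)y^{\prime})]$, which equals $-A(\ln(tx+(1-t)y),\ln(tx^{\prime}+(1-t)y^{\prime}))$; $G_2$ is handled the same way. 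Hence $G_1(t)+G_2(t)=2A(G_1(t),G_2(t))$.

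Next I translate the individual statements. Inequality $(\ref{2.75})$ becomes the first chain of the proposition, using $f(x)+f(x^{\prime})=-2A(\ln x,\ln x^{\prime})$ and writing the integral term as $-\int_x^{x^{\prime}}[\ln((1-\alpha)x+\alpha s)+\ln((1-\alpha)x^{\prime}+\alpha s)]p(s)\,ds$. Inequality $(\ref{2.76})$ becomes $A(Ip_1(1-t),Ip_2(1-t))\leq Sp(t)$, since $\frac{Ip_1+Ip_2}{2}$ is just the arithmetic mean. In the same way $(\ref{2.77})$ becomes the nested-mean inequality on $[m^{\prime},1]$, because $\frac14(a+b+c+d)=A(A(a,b),A(c,d))$. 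Finally $(\ref{2.78})$ gives the supremum identity.

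There is no genuine analytic obstacle; the only care needed is bookkeeping of signs. The factor $\frac12$ in $Sp$ and the minus sign coming from $-\ln$ have to be carried correctly, so that $[f(x)+f(x^{\prime})]\int_x^{x^{\prime}}p$ turns into $-2A(\ln x,\ln x^{\prime})\int_x^{x^{\prime}}p$. The constants $m,m^{\prime}$ must also be taken from Proposition \ref{p9}, which defines them by the same formula as Theorem \ref{t9}.
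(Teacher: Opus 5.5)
Your proposal is correct and follows the paper's route exactly: Proposition~\ref{p43} is Theorem~\ref{t19} applied to the convex function $f(s)=-\ln s$, with $(\ref{2.75})$--$(\ref{2.78})$ rewritten as arithmetic means via Remark~\ref{r32}, using $f(x)+f(x^{\prime})=-2A(\ln x,\ln x^{\prime})$ and $\frac{G_1+G_2}{2}=A(G_1,G_2)$. Your bookkeeping of the signs, and your restriction of the nested-mean inequality to $t\in[m^{\prime},1]$, are both right.
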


\begin{proposition}
\label{p44}
\end{proposition}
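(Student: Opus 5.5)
Proposition~\ref{p44} is empty as displayed. Following the pattern of Propositions~\ref{p20}--\ref{p22} in the Beta-function subsection, I take it to be the identric-mean counterpart of Theorem~\ref{t20}: $Q_{1}$, with $f=-\ln$, is symmetric about $\frac{1}{2}$, decreasing on $\left[0,\frac{1}{2}\right]$ and increasing on $\left[\frac{1}{2},1\right]$, and the four comparisons between $G_{1}\left(\frac{t}{\alpha}\right)$, $G_{1}\left(\frac{1-t}{\alpha}\right)$ and $Q_{1}\left(t\right)$ of $\left(\ref{2.80}\right)$--$\left(\ref{2.83}\right)$ hold on the same subintervals. Here $G_{1}$ and $Q_{1}$ are the functions given in Remark~\ref{r32}.

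The plan is a direct specialization, with no new analysis. First I check the standing hypotheses of Section~2 for $f\left(s\right)=-\ln s$ on $\left[a,b\right]$. Since $0<a$, the function is well defined, and it is convex because $f^{\prime\prime}\left(s\right)=s^{-2}>0$. The points satisfy $x+x^{\prime}=y+y^{\prime}$ and $y=\left(1-\alpha\right)x+\alpha x^{\prime}$ with $0<\alpha\leq\frac{1}{2}$, as Subsection~3.2 assumes. Then $y^{\prime}=\alpha x+\left(1-\alpha\right)x^{\prime}$, which is exactly the configuration of Subsection~2.2 in which Theorem~\ref{t20} was proved. Theorem~\ref{t20} does not involve $p$, so no condition on the weight is needed.

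Next I substitute $f=-\ln$ into the definitions of $G_{1}$ and $Q_{1}$ from Subsection~2.2. This gives
\[
G_{1}\left(t\right)=-A\left(\ln\left(tx+\left(1-t\right)y\right),\ln\left(tx^{\prime}+\left(1-t\right)y^{\prime}\right)\right)
\]
and the analogous formula for $Q_{1}$, which are precisely the expressions recorded in Remark~\ref{r32}. Theorem~\ref{t20}(1) then yields the symmetry and monotonicity of $Q_{1}$ verbatim. Theorem~\ref{t20}(2) yields the four inequalities, with each $G_{1}\left(\frac{t}{\alpha}\right)$ and $G_{1}\left(\frac{1-t}{\alpha}\right)$ rewritten through the arithmetic mean $A$ of logarithms.

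The only point needing care is the domain of $G_{1}$. For $t\in\left[0,\alpha\right]$ we have $\frac{t}{\alpha}\in\left[0,1\right]$, and for $t\in\left[1-\alpha,1\right]$ we have $\frac{1-t}{\alpha}\in\left[0,1\right]$. So the arguments of the logarithms are convex combinations of points of $\left[x,x^{\prime}\right]\subset\left[a,b\right]$ and are positive. One could instead verify the sign of each inequality directly from the chain of point orderings in the proof of Theorem~\ref{t20}, combined with Lemma~\ref{l1}; but that is already done in general there, so I would cite Theorem~\ref{t20} rather than redo it.

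If Proposition~\ref{p44} instead denotes the next item in the list, the analogue of Proposition~\ref{p21} or Proposition~\ref{p22}, the same recipe applies: specialize Theorem~\ref{t21} or Theorem~\ref{t22}. In that case the derivative terms use $f^{\prime}\left(x^{\prime}\right)-f^{\prime}\left(x\right)=\frac{1}{x}-\frac{1}{x^{\prime}}=\frac{x^{\prime}-x}{xx^{\prime}}$, as was already done in Proposition~\ref{p26}.
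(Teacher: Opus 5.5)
Your reading is correct: the enumerated items after the empty environment of Proposition~\ref{p44} are exactly the case $f(s)=-\ln s$ of Theorem~\ref{t20}. That is, they give the symmetry and monotonicity of $Q_{1}$ together with $(\ref{2.80})$--$(\ref{2.83})$, for the $G_{1},Q_{1}$ of Remark~\ref{r32}. Your argument is exactly how the paper obtains it: check that $-\ln$ is convex on $[a,b]$ with $a>0$, note that $y=(1-\alpha)x+\alpha x^{\prime}$ is the configuration of Subsection~2.2, and substitute into Theorem~\ref{t20}. The closing hedge about Theorems~\ref{t21}--\ref{t22} is therefore unnecessary.
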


\begin{enumerate}
\item $Q_{1}$\textit{\ is symmetric about }$\frac{1}{2},$\textit{\ decreasing
on }$\left[  0,\frac{1}{2}\right]  $\textit{\ and increasing on }$\left[
\frac{1}{2},1\right]  .$

\item \textit{The following inequalities hold:}%
\[
G_{1}\left(  \frac{t}{\alpha}\right)  \leq Q_{1}\left(  t\right)  \text{
\quad}\left(  t\in\left[  0,\frac{\alpha}{2}\right]  \right)  .
\]%
\[
G_{1}\left(  \frac{t}{\alpha}\right)  \geq Q_{1}\left(  t\right)  \quad\left(
t\in\left[  \frac{\alpha}{2},\alpha\right]  \right)  .
\]%
\[
G_{1}\left(  \frac{1-t}{\alpha}\right)  \geq Q_{1}\left(  t\right)  \text{
\quad}\left(  t\in\left[  1-\alpha,1-\frac{\alpha}{2}\right]  \right)  .
\]%
\[
G_{1}\left(  \frac{1-t}{\alpha}\right)  \leq Q_{1}\left(  t\right)
\quad\left(  t\in\left[  1-\frac{\alpha}{2},1\right]  \right)  .
\]

\end{enumerate}

\begin{proposition}
\label{p45}Let $m^{\prime}$\ \textit{be defined as in Proposition \ref{p9}.
Then} we have:%
\begin{align*}
0  &  \leq Np\left(  t\right)  -2G_{1}\left(  t\right)  \int_{x}^{x^{\prime}%
}p\left(  s\right)  ds\\
&  \leq-2A\left(  \ln x,\ln x^{\prime}\right)  \int_{x}^{x^{\prime}}p\left(
s\right)  ds\text{ }-Np\left(  t\right)  \text{ \ }\left(  t\in\left[
0,1\right]  \right)  .
\end{align*}%
\begin{align*}
0  &  \leq Lp_{1}\left(  t\right)  -A\left(  Hp_{1}\left(  t\right)
,Hp_{2}\left(  t\right)  \right) \\
&  \leq\frac{\left(  4x^{\prime}-y-3y^{\prime}\right)  \left(  x^{\prime
}-x\right)  }{8xx^{\prime}}\int_{\Omega}p\left(  s\right)  ds\text{ \ }\left(
t\in\left[  0,1\right]  \right)  .
\end{align*}%
\begin{align*}
0  &  \leq Pp_{1}-Lp_{1}\left(  t\right) \\
&  \leq\frac{\left(  4x^{\prime}-y-3y^{\prime}\right)  \left(  x^{\prime
}-x\right)  }{8xx^{\prime}}\int_{\Omega}p\left(  s\right)  ds\text{ \ }\left(
t\in\left[  0,1\right]  \right)  .
\end{align*}%
\begin{align*}
0  &  \leq Np\left(  t\right)  -Ip_{1}\left(  t\right) \\
&  \leq\frac{\left(  y-x\right)  \left(  x^{\prime}-x\right)  }{xx^{\prime}%
}\int_{x}^{x^{\prime}}p\left(  s\right)  ds\text{ \ }\left(  t\in\left[
0,1\right]  \right)  .
\end{align*}%
\begin{align*}
0  &  \leq Sp\left(  t\right)  -A\left(  Ip_{1}\left(  t\right)
,Ip_{2}\left(  t\right)  \right) \\
&  \leq\frac{\left(  4x^{\prime}-y-3y^{\prime}\right)  \left(  x^{\prime
}-x\right)  }{4xx^{\prime}}\int_{x}^{x^{\prime}}p\left(  s\right)  ds\text{
\ }\left(  t\in\left[  m^{\prime},1\right]  \right)  .
\end{align*}

\end{proposition}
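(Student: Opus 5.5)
Proposition \ref{p45} is the specialization of Theorem \ref{t21} to $f(s)=-\ln s$ on $[a,b]$ with $0<a$. The plan is to verify the hypotheses, substitute $f$ into the five inequalities $(\ref{2.84})$--$(\ref{2.88})$, and compute the derivative factor.

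First, the hypotheses. The function $f$ is convex and differentiable on $[a,b]$, and the symmetric weight $p$ is the one fixed at the start of the subsection. So Theorem \ref{t21} applies directly, and by Remark \ref{r32} the functions $G_1,Hp_1,Hp_2,Lp_1,Pp_1,Ip_1,Ip_2,Np,Sp$ are exactly those written there with $-\ln$.

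Next, the constants. We have $f(x)+f(x^{\prime})=-2A(\ln x,\ln x^{\prime})$. Since $f^{\prime}(s)=-1/s$,
\[
f^{\prime}(x^{\prime})-f^{\prime}(x)=\frac{1}{x}-\frac{1}{x^{\prime}}=\frac{x^{\prime}-x}{xx^{\prime}}\geq 0 .
\]

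Now the five inequalities in turn.
\begin{itemize}
\item $(\ref{2.84})$ gives the first inequality after replacing $[f(x)+f(x^{\prime})]\int_x^{x^{\prime}}p$ by $-2A(\ln x,\ln x^{\prime})\int_x^{x^{\prime}}p$.
\item $(\ref{2.85})$ and $(\ref{2.86})$ give the second and third, since $\frac{4x^{\prime}-y-3y^{\prime}}{8}\cdot\frac{x^{\prime}-x}{xx^{\prime}}$ is the stated bound. The notation $A(Hp_1,Hp_2)$ just means $\frac{Hp_1+Hp_2}{2}$.
\item $(\ref{2.87})$ gives the fourth, with $(y-x)\frac{x^{\prime}-x}{xx^{\prime}}$.
\item $(\ref{2.88})$ gives the last on $[m^{\prime},1]$, with the factor $\frac{4x^{\prime}-y-3y^{\prime}}{4}$.
\end{itemize}

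The only real obstacle is bookkeeping: checking that the constant coefficients in $(\ref{2.91})$ and $(\ref{2.63})$ carry over correctly with the derivative difference inserted. If the $Sp$ bound inherits a factor $2$ from $\int_x^{x^{\prime}}p$ versus $\int_\Omega p$, I would re-derive it from the chain in the proof of Theorem \ref{t21}(3) together with $(\ref{2.91})$. Everything else is direct substitution.
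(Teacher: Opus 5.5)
Your proposal is correct and follows the paper's own route: Proposition \ref{p45} is Theorem \ref{t21} specialized to $f=-\ln$, with $f(x)+f(x^{\prime})=-2A(\ln x,\ln x^{\prime})$ and $f^{\prime}(x^{\prime})-f^{\prime}(x)=\frac{x^{\prime}-x}{xx^{\prime}}$ substituted into (\ref{2.84})--(\ref{2.88}). Your hedge about the $Sp$ constant is unnecessary, because (\ref{2.88}) is correct as stated: the valid chain runs from $[f(\frac{y+y^{\prime}}{2})+\frac{f(y)+f(y^{\prime})}{2}]\int_{x}^{x^{\prime}}p(s)\,ds$ (by (\ref{2.41}) and (\ref{2.44})) up to $[f(x)+f(x^{\prime})]\int_{x}^{x^{\prime}}p(s)\,ds$ (by (\ref{2.78})), so (\ref{2.91}) gives exactly the factor $\frac{4x^{\prime}-y-3y^{\prime}}{4}$; just do not re-derive it from the chain as printed in the proof of Theorem \ref{t21}(3), whose two endpoints carry spurious factors $\frac{1}{2}$.
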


\begin{proposition}
\label{p46}The following inequalities hold:%
\begin{align*}
Hp_{1}\left(  t\right)   &  \leq Q_{1}\left(  t\right)  \int_{\Omega}p\left(
s\right)  ds\\
&  \leq-A\left(  \ln x,\ln x^{\prime}\right)  \int_{\Omega}p\left(  s\right)
ds\qquad\left(  t\in\left[  0,\frac{\alpha}{1+\alpha}\right]  \right)  .
\end{align*}%
\begin{align*}
-\ln A\left(  x,x^{\prime}\right)  \int_{\Omega}p\left(  s\right)  ds  &  \leq
Q_{1}\left(  t\right)  \int_{\Omega}p\left(  s\right)  ds\\
&  \leq Pp_{1}\left(  t\right)  \text{ \qquad}\left(  t\in\left[  \frac
{\alpha}{1+\alpha},1\right]  \right)  .
\end{align*}%
\begin{align*}
0  &  \leq Sp\left(  t\right)  -2A\left(  G_{1}\left(  t\right)  ,G_{2}\left(
t\right)  \right)  \int_{x}^{x^{\prime}}p\left(  s\right)  ds\\
&  \leq\left[  -A\left(  \ln x,\ln x^{\prime}\right)  +Q_{1}\left(  t\right)
\right]  \int_{x}^{x^{\prime}}p\left(  s\right)  ds-Sp\left(  t\right)  \text{
\ }\left(  t\in\left[  0,1\right]  \right)  .
\end{align*}

\end{proposition}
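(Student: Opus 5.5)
Proposition \ref{p46} is the identric-mean specialization of Theorem \ref{t22}, so the plan is to apply that theorem with $f(s)=-\ln s$ and translate each quantity using Remark \ref{r32}. The function $-\ln$ is convex on $[a,b]\subset(0,\infty)$. The weight $p$ is integrable, positive and symmetric to $\frac{x+x^{\prime}}{2}$, and the standing hypotheses $x+x^{\prime}=y+y^{\prime}$, $y=(1-\alpha)x+\alpha x^{\prime}$ with $0<\alpha\leq\frac12$ are exactly those of Subsection 2.2. Hence all hypotheses of Theorem \ref{t22} hold. It remains to identify the objects $Hp_{1},Q_{1},Pp_{1},Sp,G_{1},G_{2}$ with their logarithmic forms listed in Remark \ref{r32}.

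First, (\ref{2.92}) with $f=-\ln$ gives $Hp_{1}(t)\leq Q_{1}(t)\int_{\Omega}p(s)ds\leq\frac{f(x)+f(x^{\prime})}{2}\int_{\Omega}p(s)ds$ on $\left[0,\frac{\alpha}{1+\alpha}\right]$. Since $\frac{f(x)+f(x^{\prime})}{2}=-\frac{\ln x+\ln x^{\prime}}{2}=-A(\ln x,\ln x^{\prime})$, this is the first displayed inequality.

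Next, (\ref{2.93}) gives $f\left(\frac{x+x^{\prime}}{2}\right)\int_{\Omega}p\leq Q_{1}(t)\int_{\Omega}p\leq Pp_{1}(t)$ on $\left[\frac{\alpha}{1+\alpha},1\right]$. Here $f\left(\frac{x+x^{\prime}}{2}\right)=-\ln A(x,x^{\prime})$, which yields the second display. (The stray ``$dx$'' in (\ref{2.93}) should simply be read as $ds$.)

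Finally, (\ref{2.94}) gives the third display after the same substitution $\frac{f(x)+f(x^{\prime})}{2}=-A(\ln x,\ln x^{\prime})$. One also rewrites $G_{1}(t)+G_{2}(t)=2A(G_{1}(t),G_{2}(t))$, with $G_{1},G_{2}$ understood as the $-\ln$-versions of Remark \ref{r32}.

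There is no genuine analytic obstacle. The only care needed is bookkeeping: checking that every special-mean expression matches $f=-\ln$ term by term, and confirming that the $t$-ranges $\left[0,\frac{\alpha}{1+\alpha}\right]$ and $\left[\frac{\alpha}{1+\alpha},1\right]$ are carried over unchanged from Theorem \ref{t22}. The latter in turn relies on the monotonicity of $Q_{1}$ from Theorem \ref{t20}, which holds for any convex $f$.
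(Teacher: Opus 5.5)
Your proposal is correct and follows the paper's route exactly. Proposition \ref{p46} is Theorem \ref{t22} specialized to the convex function $f(s)=-\ln s$ on $[a,b]\subset(0,\infty)$ and read through Remark \ref{r32}, using $\frac{f(x)+f(x^{\prime})}{2}=-A(\ln x,\ln x^{\prime})$, $f\left(\frac{x+x^{\prime}}{2}\right)=-\ln A(x,x^{\prime})$ and $G_{1}+G_{2}=2A(G_{1},G_{2})$; the $t$-ranges carry over because the comparisons in the proof of Theorem \ref{t22} (e.g.\ $tx^{\prime}+(1-t)x\leq tx+(1-t)y$ if and only if $t\leq\frac{\alpha}{1+\alpha}$) do not depend on $f$.
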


\begin{proposition}
\label{p47}$Kp$\textit{\ is symmetric about }$\frac{1}{2},$%
\textit{\ decreasing on }$\left[  0,\frac{1}{2}\right]  $\textit{\ and
increasing on }$\left[  \frac{1}{2},1\right]  .$ The following identities and
inequalities hold:
\end{proposition}

\begin{align*}
&  \sup\limits_{t\in\left[  0,1\right]  }Kp\left(  t\right)  =Kp\left(
0\right)  =Kp\left(  1\right) \\
&  =-2%
{\displaystyle\int\nolimits_{x}^{x^{\prime}}}
\left[  \ln\left(  \left(  1-\alpha\right)  x+\alpha s\right)  +\ln\left(
\left(  1-\alpha\right)  x^{\prime}+\alpha s\right)  \right]  p\left(
s\right)  ds\ \int_{x}^{x^{\prime}}p\left(  s\right)  ds.
\end{align*}%
\begin{align*}
\inf\limits_{t\in\left[  0,1\right]  }Kp\left(  t\right)   &  =Kp\left(
\frac{1}{2}\right) \\
&  =-%
{\displaystyle\int\nolimits_{x}^{x^{\prime}}}
{\displaystyle\int\nolimits_{x}^{x^{\prime}}}
\left[  \ln\left(  \left(  1-\alpha\right)  x+\alpha\frac{s+u}{2}\right)
\right. \\
&  +2\ln\left(  \left(  1-\alpha\right)  \frac{x+x^{\prime}}{2}+\alpha
\frac{s+u}{2}\right) \\
&  +\left.  \ln\left(  \left(  1-\alpha\right)  x^{\prime}+\alpha\frac{s+u}%
{2}\right)  \right]  p\left(  s\right)  p\left(  u\right)  dsdu.
\end{align*}%
\[
2A\left(  Ip_{1}\left(  t\right)  ,Ip_{2}\left(  t\right)  \right)  \int%
_{x}^{x^{\prime}}p\left(  s\right)  ds\leq Kp\left(  t\right)  \text{
\ }\left(  t\in\left[  0,1\right]  \right)  .
\]%
\[
-\left[  \ln y+\ln A\left(  x,x^{\prime}\right)  +\ln y^{\prime}\right]
\left[  \int_{x}^{x^{\prime}}p\left(  s\right)  ds\right]  ^{2}\leq Kp\left(
\frac{1}{2}\right)  .
\]

\begin{proposition}
\label{p48}The following inequality holds for all $t\in\left[  0,1\right]  :$%
\begin{align*}
0  &  \leq Kp\left(  t\right)  -2A\left(  Ip_{1}\left(  t\right)
,Ip_{2}\left(  t\right)  \right)  \int_{x}^{x^{\prime}}p\left(  s\right)  ds\\
&  \leq2Sp\left(  1-t\right)  \int_{x}^{x^{\prime}}p\left(  s\right)
ds-Kp\left(  t\right)  .
\end{align*}

\end{proposition}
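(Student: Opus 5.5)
Proposition~\ref{p48} is the specialization of Theorem~\ref{t24} to $f(s)=-\ln s$, so I plan to check the hypotheses of Theorem~\ref{t24} and then rewrite its conclusion $(\ref{2.105})$ in the notation of Remark~\ref{r32}.

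\textbf{Hypotheses.} On $[a,b]$ with $a>0$ we have $f''(s)=s^{-2}>0$, so $f$ is convex. The weight $p$ is positive, integrable and symmetric about $\frac{x+x^{\prime}}{2}$. The configuration $x<y\le y^{\prime}<x^{\prime}$, $x+x^{\prime}=y+y^{\prime}$, $y=(1-\alpha)x+\alpha x^{\prime}$ with $0<\alpha\le\frac12$ is exactly the one assumed throughout Subsection 2.2.

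One point needs care. In Subsection 3.2, $p$ is declared on $\Omega$, whereas $Ip_1,Ip_2,Kp,Sp$ integrate over $[x,x^{\prime}]$. I will read $p$ as defined on $[x,x^{\prime}]$ and symmetric there, as in Subsection 2.2, which is what the integrals in Remark~\ref{r32} implicitly require. With that reading, Theorem~\ref{t24} applies verbatim, with $Ip_1,Ip_2,Kp,Sp$ the logarithmic expressions listed in Remark~\ref{r32} and Proposition~\ref{p47}.

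\textbf{Rewriting the conclusion.} The only change from $(\ref{2.105})$ is notational: $Ip_1(t)+Ip_2(t)=2A\big(Ip_1(t),Ip_2(t)\big)$. Substituting this into $(\ref{2.105})$ gives exactly the stated double inequality:
\[
0\le Kp(t)-2A\big(Ip_1(t),Ip_2(t)\big)\int_x^{x^{\prime}}p(s)\,ds\le 2Sp(1-t)\int_x^{x^{\prime}}p(s)\,ds-Kp(t)
\quad\text{for all } t\in[0,1].
\]

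\textbf{Main obstacle.} The argument is routine. The one place to be careful is confirming that the $Kp$ and $Sp$ meant in the proposition are the $f=-\ln$ instances of the Section~2 definitions. In particular, the factor $\frac12$ inside $Sp$ and the overall minus signs must match, so that Theorem~\ref{t24} transfers without any rescaling.
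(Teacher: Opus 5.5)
Your proposal is correct and follows the paper's own route: Proposition~\ref{p48} is Theorem~\ref{t24} specialized to $f(s)=-\ln s$, which is convex on $[a,b]$ because $a>0$, with $Ip_1(t)+Ip_2(t)$ merely rewritten as $2A\big(Ip_1(t),Ip_2(t)\big)$. You are also right to read $p$ as defined and symmetric on all of $[x,x^{\prime}]$ rather than only on $\Omega$, since both the integrals and the symmetry identity $(\ref{2.96})$ used in the proof of Theorem~\ref{t24} require $p$ on the whole interval.
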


\subsubsection{Applications for the Extended $r$-Logarithmic Mean}

Throughout this subsubsection, let $r\in\left(  -\infty,0\right)  \cup\left[
1,\infty\right)  $ and $f\left(  s\right)  =s^{r}$ $\left(  s\in\left[
a,b\right]  \right)  .$

\begin{remark}
\label{r33}From the Section 2, we get%
\[
\int_{\Omega}f\left(  s\right)  p\left(  s\right)  ds=L_{r}^{r}\left(
p;x,y\right)  +L_{r}^{r}\left(  p;y^{\prime},x^{\prime}\right)  ,
\]%
\[
\frac{1}{2\left(  y-x\right)  }\int_{\Omega}f\left(  s\right)  ds=\left\{
\begin{array}
[c]{cc}%
A\left(  L_{r}^{r}\left(  x,y\right)  ,L_{r}^{r}\left(  y^{\prime},x^{\prime
}\right)  \right)  & \text{as }r\neq-1\text{ }\\
H^{-1}\left(  L\left(  x,y\right)  ,L\left(  y^{\prime},x^{\prime}\right)
\right)  & \text{as }r=-1\text{ }%
\end{array}
\right.  \text{ },
\]%
\begin{align*}
&  \frac{1}{y-x}\int_{x}^{y}\left[  f\left(  \frac{s+x}{2}\right)  +f\left(
\frac{x+2y-s}{2}\right)  \right]  ds\\
&  =\frac{1}{y-x}\int_{x}^{y}f\left(  s\right)  ds=\left\{
\begin{array}
[c]{cc}%
L_{r}^{r}\left(  x,y\right)  & \text{as }r\neq-1\text{ }\\
L^{-1}\left(  x,y\right)  & \text{as }r=-1
\end{array}
\right.  \text{ \ },
\end{align*}%
\begin{align*}
&  \frac{1}{y-x}\int_{y^{\prime}}^{x^{\prime}}\left[  f\left(  \frac
{s+x^{\prime}}{2}\right)  +f\left(  \frac{x^{\prime}+2y^{\prime}-s}{2}\right)
\right]  ds\\
&  =\frac{1}{y-x}\int_{y^{\prime}}^{x^{\prime}}f\left(  s\right)  ds=\left\{
\begin{array}
[c]{cc}%
L_{r}^{r}\left(  y^{\prime},x^{\prime}\right)  & \text{as }r\neq-1\text{ }\\
L^{-1}\left(  y^{\prime},x^{\prime}\right)  & \text{as }r=-1
\end{array}
\right.  ,
\end{align*}%
\begin{align*}
&  \frac{1}{y-x}\left[  \int_{x}^{y}f\left(  \frac{s+x}{2}\right)  +f\left(
\frac{x+2y^{\prime}-s}{2}\right)  ds\right] \\
&  =\frac{2}{y-x}\left[  \int_{x}^{\frac{x+y}{2}}f\left(  s\right)
ds+\int_{\frac{x+2y^{\prime}-y}{2}}^{y^{\prime}}f\left(  s\right)  ds\right]
\\
&  =\left\{
\begin{array}
[c]{cc}%
L_{r}^{r}\left(  x,\frac{x+y}{2}\right)  +L_{r}^{r}\left(  \frac{x+2y^{\prime
}-y}{2},y^{\prime}\right)  & \text{as }r\neq-1\text{ }\\
L^{-1}\left(  x,\frac{x+y}{2}\right)  +L^{-1}\left(  \frac{x+2y^{\prime}-y}%
{2},y^{\prime}\right)  & \text{as }r=-1
\end{array}
\right.  \text{ },
\end{align*}%
\begin{align*}
&  \frac{1}{y-x}\int_{y^{\prime}}^{x^{\prime}}\left[  f\left(  \frac
{s+x^{\prime}}{2}\right)  +f\left(  \frac{x^{\prime}+2y-s}{2}\right)  \right]
ds\\
&  =\frac{2}{y-x}\left[  \int_{\frac{x^{\prime}+y^{\prime}}{2}}^{x^{\prime}%
}f\left(  s\right)  ds+\int_{y}^{\frac{x^{\prime}+2y-y^{\prime}}{2}}f\left(
s\right)  ds\right] \\
&  =\left\{
\begin{array}
[c]{cc}%
L_{r}^{r}\left(  \frac{x^{\prime}+y^{\prime}}{2},x^{\prime}\right)  +L_{r}%
^{r}\left(  y,\frac{x^{\prime}+2y-y^{\prime}}{2}\right)  & \text{as }%
r\neq-1\text{ }\\
L^{-1}\left(  \frac{x^{\prime}+y^{\prime}}{2},x^{\prime}\right)
+L^{-1}\left(  y,\frac{x^{\prime}+2y-y^{\prime}}{2}\right)  & \text{as }r=-1
\end{array}
\right.
\end{align*}
and the following convex functions on $\left[  0,1\right]  $:%
\[
G_{1}\left(  t\right)  =A\left(  \left(  tx+\left(  1-t\right)  y\right)
^{r},\left(  tx^{\prime}+\left(  1-t\right)  y^{\prime}\right)  ^{r}\right)
.
\]%
\[
G_{2}\left(  t\right)  =A\left(  \left(  tx+\left(  1-t\right)  y^{\prime
}\right)  ^{r},\left(  tx^{\prime}+\left(  1-t\right)  y\right)  ^{r}\right)
.
\]%
\[
G_{3}\left(  t\right)  =A\left(  \left(  ty+\left(  1-t\right)  y^{\prime
}\right)  ^{r},\left(  ty^{\prime}+\left(  1-t\right)  y\right)  ^{r}\right)
.
\]%
\[
Hp_{1}\left(  t\right)  =\int_{x}^{y}\left[  \left(  ts+\left(  1-t\right)
y\right)  ^{r}+\left(  t\left(  y+y^{\prime}-s\right)  +\left(  1-t\right)
y^{\prime}\right)  ^{r}\right]  p\left(  s\right)  ds.
\]%
\[
Hp_{2}\left(  t\right)  =\int_{x}^{y}\left[  \left(  ts+\left(  1-t\right)
y^{\prime}\right)  ^{r}+\left(  t\left(  y+y^{\prime}-s\right)  +\left(
1-t\right)  y\right)  ^{r}\right]  p\left(  s\right)  ds.
\]%
\[
H_{1}\left(  t\right)  =\frac{1}{2\left(  y-x\right)  }\int_{x}^{y}\left[
\left(  ts+\left(  1-t\right)  y\right)  ^{r}+\left(  t\left(  y+y^{\prime
}-s\right)  +\left(  1-t\right)  y^{\prime}\right)  ^{r}\right]  ds.
\]%
\[
H_{2}\left(  t\right)  =\frac{1}{2\left(  y-x\right)  }\int_{x}^{y}\left[
\left(  ts+\left(  1-t\right)  y^{\prime}\right)  ^{r}+\left(  t\left(
y+y^{\prime}-s\right)  +\left(  1-t\right)  y\right)  ^{r}\right]  ds.
\]%
\[
Fp_{1}\left(  t\right)  =\int_{\Omega}\int_{\Omega}\left(  ts+\left(
1-t\right)  u\right)  ^{r}p\left(  s\right)  p\left(  u\right)  dsdu.
\]%
\[
F_{1}\left(  t\right)  =\frac{1}{4\left(  y-x\right)  ^{2}}\int_{\Omega}%
\int_{\Omega}\left(  ts+\left(  1-t\right)  u\right)  ^{r}dsdu.
\]%
\[
Pp_{1}\left(  t\right)  =\int_{x}^{y}\left[  \left(  tx+\left(  1-t\right)
s\right)  ^{r}+\left(  tx^{\prime}+\left(  1-t\right)  \left(  x+x^{\prime
}-s\right)  \right)  ^{r}\right]  p\left(  s\right)  ds.
\]%
\[
P_{1}\left(  t\right)  =\frac{1}{2\left(  y-x\right)  }\int_{x}^{y}\left[
\left(  tx+\left(  1-t\right)  s\right)  ^{r}+\left(  tx^{\prime}+\left(
1-t\right)  \left(  x+x^{\prime}-s\right)  \right)  ^{r}\right]  ds.
\]%
\[
Lp_{1}\left(  t\right)  =\frac{1}{2}%
{\displaystyle\int\nolimits_{\Omega}}
\left[  \left(  tx+\left(  1-t\right)  s\right)  ^{r}+\left(  tx^{\prime
}+\left(  1-t\right)  s\right)  ^{r}\right]  p\left(  s\right)  ds.
\]%
\[
L_{1}\left(  t\right)  =\frac{1}{4\left(  y-x\right)  }%
{\displaystyle\int\nolimits_{\Omega}}
\left[  \left(  tx+\left(  1-t\right)  s\right)  ^{r}+\left(  tx^{\prime
}+\left(  1-t\right)  s\right)  ^{r}\right]  ds.
\]%
\[
Q_{1}\left(  t\right)  =A\left(  \left(  tx+\left(  1-t\right)  x^{\prime
}\right)  ^{r},\left(  tx^{\prime}+\left(  1-t\right)  x\right)  ^{r}\right)
.
\]%
\begin{align*}
Ip_{1}\left(  t\right)   &  =%
{\displaystyle\int\nolimits_{x}^{x^{\prime}}}
\left[  \left(  t\left(  \left(  1-\alpha\right)  x+\alpha s\right)  +\left(
1-t\right)  y\right)  ^{r}\right. \\
&  \text{ \ \ \ \ }+\left.  \left(  t\left(  \left(  1-\alpha\right)
x^{\prime}+\alpha s\right)  +\left(  1-t\right)  y^{\prime}\right)
^{r}\right]  p\left(  s\right)  ds.
\end{align*}%
\begin{align*}
Ip_{2}\left(  t\right)   &  =%
{\displaystyle\int\nolimits_{x}^{x^{\prime}}}
\left[  \left(  t\left(  \left(  1-\alpha\right)  x+\alpha s\right)  +\left(
1-t\right)  y^{\prime}\right)  ^{r}\right. \\
&  \text{ \ \ \ \ \ }+\left.  \left(  t\left(  \left(  1-\alpha\right)
x^{\prime}+\alpha s\right)  +\left(  1-t\right)  y\right)  ^{r}\right]
p\left(  s\right)  ds.
\end{align*}%
\begin{align*}
Jp\left(  t\right)   &  =%
{\displaystyle\int\nolimits_{x}^{x^{\prime}}}
\left[  \left(  t\left(  \left(  1-\alpha\right)  x+\alpha s\right)  +\left(
1-t\right)  \frac{x+y}{2}\right)  ^{r}\right. \\
&  \text{\ }+\left.  \left(  t\left(  \left(  1-\alpha\right)  x^{\prime
}+\alpha s\right)  +\left(  1-t\right)  \frac{x^{\prime}+y^{\prime}}%
{2}\right)  ^{r}\right]  p\left(  s\right)  ds.
\end{align*}%
\begin{align*}
Mp\left(  t\right)   &  =%
{\displaystyle\int\nolimits_{x}^{\frac{x+x^{\prime}}{2}}}
\left[  \left(  tx+\left(  1-t\right)  \left(  \left(  1-\alpha\right)
x+\alpha s\right)  \right)  ^{r}\right. \\
&  \text{ \ \ \ \ \ \ \ \ \ \ }+\left.  \left(  ty^{\prime}+\left(
1-t\right)  \left(  \left(  1-\alpha\right)  x^{\prime}+\alpha s\right)
\right)  ^{r}\right]  p\left(  s\right)  ds\\
&  +%
{\displaystyle\int\nolimits_{\frac{x+x^{\prime}}{2}}^{x^{\prime}}}
\left[  \left(  ty+\left(  1-t\right)  \left(  \left(  1-\alpha\right)
x+\alpha s\right)  \right)  ^{r}\right. \\
&  \text{ \ \ \ \ \ \ \ \ \ \ \ }+\left.  \left(  tx^{\prime}+\left(
1-t\right)  \left(  \left(  1-\alpha\right)  x^{\prime}+\alpha s\right)
\right)  ^{r}\right]  p\left(  s\right)  ds.
\end{align*}%
\begin{align*}
Np\left(  t\right)   &  =%
{\displaystyle\int\nolimits_{x}^{x^{\prime}}}
\left[  \left(  tx+\left(  1-t\right)  \left(  \left(  1-\alpha\right)
x+\alpha s\right)  \right)  ^{r}\right. \\
&  \text{ \ \ \ \ \ }+\left.  \left(  tx^{\prime}+\left(  1-t\right)  \left(
\left(  1-\alpha\right)  x^{\prime}+\alpha s\right)  \right)  ^{r}\right]
p\left(  s\right)  ds.
\end{align*}%
\begin{align*}
Sp\left(  t\right)   &  =%
{\displaystyle\int\nolimits_{x}^{x^{\prime}}}
\frac{1}{2}\left[  \left(  tx+\left(  1-t\right)  \left(  \left(
1-\alpha\right)  x+\alpha s\right)  \right)  ^{r}\right. \\
&  \text{ \ \ \ \ \ \ \ \ \ }+\left(  tx+\left(  1-t\right)  \left(  \left(
1-\alpha\right)  x^{\prime}+\alpha s\right)  \right)  ^{r}\\
&  \text{ \ \ \ \ \ \ \ \ \ }+\left(  tx^{\prime}+\left(  1-t\right)  \left(
\left(  1-\alpha\right)  x+\alpha s\right)  \right)  ^{r}\\
&  \text{ \ \ \ \ \ \ \ \ \ }+\left.  \left(  tx^{\prime}+\left(  1-t\right)
\left(  \left(  1-\alpha\right)  x^{\prime}+\alpha s\right)  \right)
^{r}\right]  p\left(  s\right)  ds.
\end{align*}

\end{remark}

Using Theorems \ref{t1} --\ \ref{t24}, Remark \ref{r3} and Remark \ref{r33},
we have the following propositions of the extended identric mean:

\begin{proposition}
\label{p49}$Hp_{1},H_{1}$\ is increasing on $\left[  0,1\right]  $\ and the
following inequalities hold for all $t\in\left[  0,1\right]  :$
\begin{align*}
&  A\left(  y^{r},\left(  y^{\prime}\right)  ^{r}\right)
{\displaystyle\int\nolimits_{\Omega}}
p\left(  s\right)  ds\\
&  =Hp_{1}\left(  0\right)  \leq Hp_{1}\left(  t\right)  \text{\ }\leq
Hp_{1}\left(  1\right) \\
&  =L_{r}^{r}\left(  p;x,y\right)  +L_{r}^{r}\left(  p;y^{\prime},x^{\prime
}\right)  .
\end{align*}%
\begin{align*}
&  A\left(  y^{r},\left(  y^{\prime}\right)  ^{r}\right) \\
&  =H_{1}\left(  0\right)  \leq H_{1}\left(  t\right)  \leq H_{1}\left(
1\right) \\
&  =A\left(  L_{r}^{r}\left(  x,y\right)  ,L_{r}^{r}\left(  y^{\prime
},x^{\prime}\right)  \right)
\end{align*}
for $r\neq-1$ and $p\left(  s\right)  =\frac{1}{2\left(  y-x\right)  }$ on
$\left[  x.x^{\prime}\right]  .$%
\begin{align*}
&  H^{-1}\left(  y,y^{\prime}\right) \\
&  =H_{1}\left(  0\right)  \leq H_{1}\left(  t\right)  \leq H_{1}\left(
1\right) \\
&  =H^{-1}\left(  L\left(  x,y\right)  ,L\left(  y^{\prime},x^{\prime}\right)
\right)
\end{align*}
for $r=-1$ and $p\left(  s\right)  =\frac{1}{2\left(  y-x\right)  }$ on
$\left[  x.x^{\prime}\right]  .$%
\begin{align*}
&  Hp_{1}\left(  t\right)  \leq t\left[  L_{r}^{r}\left(  p;x,y\right)
^{r}+L_{r}^{r}\left(  p;y^{\prime},x^{\prime}\right)  \right] \\
&  \text{ \ \ \ \ \ \ \ \ \ \ \ }+\left(  1-t\right)  A\left(  y^{r},\left(
y^{\prime}\right)  ^{r}\right)
{\displaystyle\int\nolimits_{\Omega}}
p\left(  s\right)  ds\\
&  \leq L_{r}^{r}\left(  p;x,y\right)  +L_{r}^{r}\left(  p;y^{\prime
},x^{\prime}\right)  \ \\
&  \leq A\left(  x^{r},\left(  x^{\prime}\right)  ^{r}\right)
{\displaystyle\int\nolimits_{\Omega}}
p\left(  s\right)  ds.
\end{align*}%
\begin{align*}
&  H_{1}\left(  t\right)  \leq tA\left(  L_{r}^{r}\left(  x,y\right)
,L_{r}^{r}\left(  y^{\prime},x^{\prime}\right)  \right)  +\left(  1-t\right)
A\left(  y^{r},\left(  y^{\prime}\right)  ^{r}\right) \\
&  \leq A\left(  L_{r}^{r}\left(  x,y\right)  ,L_{r}^{r}\left(  y^{\prime
},x^{\prime}\right)  \right)  \leq A\left(  x^{r},\left(  x^{\prime}\right)
^{r}\right)
\end{align*}
for $r\neq-1$ and $p\left(  s\right)  =\frac{1}{2\left(  y-x\right)  }$ on
$\left[  x.x^{\prime}\right]  .$%
\begin{align*}
&  H_{1}\left(  t\right)  \leq tH^{-1}\left(  L\left(  x,y\right)  ,L\left(
y^{\prime},x^{\prime}\right)  \right)  +\left(  1-t\right)  H^{-1}\left(
y,y^{\prime}\right) \\
&  \leq H^{-1}\left(  L\left(  x,y\right)  ,L\left(  y^{\prime},x^{\prime
}\right)  \right)  \leq H^{-1}\left(  x,x^{\prime}\right)
\end{align*}
for $r=-1$ and $p\left(  s\right)  =\frac{1}{2\left(  y-x\right)  }$ on
$\left[  x.x^{\prime}\right]  .$%
\begin{align*}
&  A\left(  y^{r},\left(  y^{\prime}\right)  ^{r}\right)
{\displaystyle\int\nolimits_{\Omega}}
p\left(  s\right)  ds\\
&  \leq Hp_{2}\left(  t\right) \\
&  \leq t\left[  L_{r}^{r}\left(  p;x,y\right)  +L_{r}^{r}\left(  p;y^{\prime
},x^{\prime}\right)  \right] \\
&  \text{ \ \ \ }+\left(  1-t\right)  A\left(  y^{r},\left(  y^{\prime
}\right)  ^{r}\right)
{\displaystyle\int\nolimits_{\Omega}}
p\left(  s\right)  ds\\
&  \leq L_{r}^{r}\left(  p;x,y\right)  +L_{r}^{r}\left(  p;y^{\prime
},x^{\prime}\right)  \
\end{align*}%
\begin{align*}
&  A\left(  y^{r},\left(  y^{\prime}\right)  ^{r}\right)  \leq H_{2}\left(
t\right) \\
&  \leq tA\left(  L_{r}^{r}\left(  x,y\right)  ,L_{r}^{r}\left(  y^{\prime
},x^{\prime}\right)  \right)  +\left(  1-t\right)  A\left(  y^{r},\left(
y^{\prime}\right)  ^{r}\right) \\
&  \leq A\left(  L_{r}^{r}\left(  x,y\right)  ,L_{r}^{r}\left(  y^{\prime
},x^{\prime}\right)  \right)
\end{align*}
for $r\neq-1$ and $p\left(  s\right)  =\frac{1}{2\left(  y-x\right)  }$ on
$\left[  x.x^{\prime}\right]  .$%
\begin{align*}
&  H^{-1}\left(  y,y^{\prime}\right)  \leq H_{2}\left(  t\right) \\
&  \leq tH^{-1}\left(  L\left(  x,y\right)  ,L\left(  y^{\prime},x^{\prime
}\right)  \right)  +\left(  1-t\right)  H^{-1}\left(  y,y^{\prime}\right) \\
&  \leq H^{-1}\left(  L\left(  x,y\right)  ,L\left(  y^{\prime},x^{\prime
}\right)  \right)
\end{align*}
for $r=-1$ and $p\left(  s\right)  =\frac{1}{2\left(  y-x\right)  }$ on
$\left[  x.x^{\prime}\right]  .$%
\[
Hp_{2}\left(  t\right)  \leq Hp_{1}\left(  t\right)  .
\]%
\[
H_{2}\left(  t\right)  \leq H_{1}\left(  t\right)  \text{\ \ as }p\left(
s\right)  =\frac{1}{2\left(  y-x\right)  }\text{ on }\left[  x.x^{\prime
}\right]  .
\]

\end{proposition}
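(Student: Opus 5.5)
Proposition \ref{p49} is the specialization of Theorem \ref{t1} to $f(s)=s^{r}$, with $r\in(-\infty,0)\cup[1,\infty)$, on $[a,b]\subset(0,\infty)$. The plan is to check the hypotheses, translate each line of Theorem \ref{t1} into mean notation using Remark \ref{r33}, and then read off the unweighted versions by taking $p\equiv\frac{1}{2(y-x)}$ as in Remark \ref{r3}(2).

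First, the hypotheses. For these $r$ we have $f''(s)=r(r-1)s^{r-2}\ge0$ on $(0,\infty)$, so $f$ is convex on $[a,b]$. The weight $p$ is nonnegative, integrable and symmetric to $\frac{x+x^{\prime}}{2}$ by the standing assumption of the subsection. Hence Theorem \ref{t1} applies verbatim, and the functions $Hp_1,Hp_2$ of Remark \ref{r33} are exactly those of Section 2.

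Next, the translation of \eqref{2.1}--\eqref{2.4}. By definition of the extended $r$-logarithmic mean, $\int_{\Omega}f(s)p(s)\,ds=L_r^r(p;x,y)+L_r^r(p;y^{\prime},x^{\prime})$, and $\frac{f(y)+f(y^{\prime})}{2}=A(y^{r},(y^{\prime})^{r})$, $\frac{f(x)+f(x^{\prime})}{2}=A(x^{r},(x^{\prime})^{r})$. Substituting these into \eqref{2.1} and \eqref{2.2} gives the monotonicity of $Hp_1$ and the first weighted chains. Inequality \eqref{2.3} gives the chain for $Hp_2$. Its lower endpoint is $f\bigl(\frac{y+y^{\prime}}{2}\bigr)=A(y,y^{\prime})^{r}$, which convexity (Jensen) bounds from above by $A(y^{r},(y^{\prime})^{r})$. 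So the stated lower bound $A(y^{r},(y^{\prime})^{r})\int_\Omega p$ does \emph{not} follow from \eqref{2.3} by this route. Instead I will obtain it from \eqref{2.1} together with monotonicity, since $Hp_2\ge Hp_2(0)=Hp_1(0)$ in the relevant range. Otherwise I would read the printed lower bound as intended to be $A(y,y^{\prime})^{r}\int_\Omega p$, mirroring Proposition \ref{p25}, where the bound is $-\ln A(y,y^{\prime})$. I flag this step as the one genuine point to check. Finally, \eqref{2.4} gives $Hp_2\le Hp_1$ directly.

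For the unweighted statements, take $p\equiv\frac{1}{2(y-x)}$, so that $\int_\Omega p=1$ and $Hp_i=H_i$. The quantity $\frac{1}{2(y-x)}\int_\Omega s^{r}\,ds$ equals $A(L_r^r(x,y),L_r^r(y^{\prime},x^{\prime}))$ when $r\neq-1$. When $r=-1$ it equals $A(L^{-1}(x,y),L^{-1}(y^{\prime},x^{\prime}))=H^{-1}(L(x,y),L(y^{\prime},x^{\prime}))$, because the arithmetic mean of reciprocals is the reciprocal of the harmonic mean; similarly $A(y^{-1},(y^{\prime})^{-1})=H^{-1}(y,y^{\prime})$. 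These identities are exactly those recorded in Remark \ref{r33}. With them, each weighted chain becomes the corresponding $H_1,H_2$ chain in the two cases $r\neq-1$ and $r=-1$, which completes the plan. Apart from the $Hp_2$ lower bound discussed above, every step is pure substitution into Theorem \ref{t1}.
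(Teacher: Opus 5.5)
Your overall route is the paper's: Proposition~\ref{p49} is Theorem~\ref{t1} with $f(s)=s^{r}$, translated through Remark~\ref{r33} and specialized via Remark~\ref{r3}(2). Your treatment of \eqref{2.1}, \eqref{2.2}, \eqref{2.4} and of the $r=-1$ harmonic-mean identities is fine. The genuine gap is the step you flagged, and your proposed repair fails. Nothing gives $Hp_{2}(t)\geq Hp_{2}(0)=Hp_{1}(0)$: \eqref{2.1} concerns $Hp_{1}$ only, and \eqref{2.4} says $Hp_{2}\leq Hp_{1}$, which is the wrong direction. In fact $Hp_{2}$ decreases near $t=0$. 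The arguments $ts+(1-t)y^{\prime}$ and $t(y+y^{\prime}-s)+(1-t)y$ have constant sum $y+y^{\prime}$ and start at $y^{\prime}$ and $y$, moving toward each other. Indeed $Hp_{2}^{\prime}(0)=\int_{x}^{y}(y^{\prime}-s)\bigl(f^{\prime}(y)-f^{\prime}(y^{\prime})\bigr)p(s)\,ds<0$ whenever $y<y^{\prime}$ and $r\neq1$; this is the same phenomenon as the decreasing stretch $[0,m]$ of $Ip_{2}$ in Theorem~\ref{t9}. For a concrete case, take $r=2$, $x=1$, $y=2$, $y^{\prime}=4$, $x^{\prime}=5$ (so $\alpha=\frac{1}{4}$) and $p\equiv\frac{1}{2}=\frac{1}{2(y-x)}$. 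Then $\int_{\Omega}p(s)\,ds=1$ and $Hp_{2}(t)=9+\frac{1}{4}\int_{1}^{2}\bigl(t(2s-6)+2(1-t)\bigr)^{2}ds$. So $Hp_{2}(0)=10=A\bigl(y^{2},(y^{\prime})^{2}\bigr)$, while $Hp_{2}(\frac{1}{10})=\frac{2869}{300}<10$. The printed lower bound for $Hp_{2}$ is therefore false and cannot be proved by any route. Since this $p$ is the unweighted one, the printed bound for $H_{2}$ fails too.

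What is true is your fallback. Inequality \eqref{2.3} gives $A^{r}(y,y^{\prime})\int_{\Omega}p(s)\,ds=f\bigl(\frac{y+y^{\prime}}{2}\bigr)\int_{\Omega}p(s)\,ds\leq Hp_{2}(t)$. This is also the form the paper itself uses for $Hp_{2}$ in Proposition~\ref{p50}, and in Propositions~\ref{p1} and \ref{p25}. You should commit to this corrected statement rather than hedge, and apply the same correction to the unweighted chains. The lower bound for $H_{2}$ must be $A^{r}(y,y^{\prime})$ for $r\neq-1$ and $A^{-1}(y,y^{\prime})$ for $r=-1$, not $A\bigl(y^{r},(y^{\prime})^{r}\bigr)$ and $H^{-1}(y,y^{\prime})$ as printed. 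Both printed quantities are strictly larger when $y<y^{\prime}$, and both fail for the reason above. In particular, your identity $A\bigl(y^{-1},(y^{\prime})^{-1}\bigr)=H^{-1}(y,y^{\prime})$ is correct for $H_{1}(0)$ but must not be carried over to the $H_{2}$ lower bound. (The stray exponent in $L_{r}^{r}(p;x,y)^{r}$ in the $Hp_{1}$ chain is a harmless typo.)
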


\begin{proposition}
\label{p50}The following inequalities hold:%
\begin{align*}
&  A\left(  y^{r},\left(  y^{\prime}\right)  ^{r}\right)
{\displaystyle\int\nolimits_{\Omega}}
p\left(  s\right)  ds\\
&  \leq2L_{r}^{r}\left(  p\left(  2s-y\right)  ;\frac{x+y}{2},y\right)
+2L_{r}^{r}\left(  p\left(  2s-y^{\prime}\right)  ;y^{\prime},\frac{x^{\prime
}+y^{\prime}}{2}\right) \\
&  \leq\int_{0}^{1}Hp_{1}\left(  t\right)  dt\\
&  \leq A\left(  A\left(  y^{r},\left(  y^{\prime}\right)  ^{r}\right)
{\displaystyle\int\nolimits_{\Omega}}
p\left(  s\right)  ds,L_{r}^{r}\left(  p;x,y\right)  +L_{r}^{r}\left(
p;y^{\prime},x^{\prime}\right)  \right)  .
\end{align*}%
\begin{align*}
&  A\left(  y^{r},\left(  y^{\prime}\right)  ^{r}\right) \\
&  \leq A\left(  L_{r}^{r}\left(  \frac{x+y}{2},y\right)  ,L_{r}^{r}\left(
y^{\prime},\frac{x^{\prime}+y^{\prime}}{2}\right)  \right) \\
&  \leq\int_{0}^{1}H_{1}\left(  t\right)  dt\leq A\left(  A\left(
y^{r},\left(  y^{\prime}\right)  ^{r}\right)  ,A\left(  L_{r}^{r}\left(
x,y\right)  ,L_{r}^{r}\left(  y^{\prime},x^{\prime}\right)  \right)  \right)
\end{align*}
for $r\neq-1$ and $p\left(  s\right)  =\frac{1}{2\left(  y-x\right)  }$ on
$\left[  x.x^{\prime}\right]  .$%
\begin{align*}
&  H^{-1}\left(  y,y^{\prime}\right) \\
&  \leq H^{-1}\left(  L\left(  \frac{x+y}{2},y\right)  ,L\left(  y^{\prime
},\frac{x^{\prime}+y^{\prime}}{2}\right)  \right) \\
&  \leq\int_{0}^{1}H_{1}\left(  t\right)  dt\leq H^{-1}\left(  H\left(
y,y^{\prime}\right)  ,H\left(  L\left(  x,y\right)  ,L\left(  y^{\prime
},x^{\prime}\right)  \right)  \right)
\end{align*}
for $r=-1$ and $p\left(  s\right)  =\frac{1}{2\left(  y-x\right)  }$ on
$\left[  x.x^{\prime}\right]  .$%
\begin{align*}
&  A^{r}\left(  y,y^{\prime}\right)
{\displaystyle\int\nolimits_{\Omega}}
p\left(  s\right)  ds\\
&  \leq2L_{r}^{r}\left(  p\left(  2s-y^{\prime}\right)  ;\frac{x+y^{\prime}%
}{2},\frac{y+y^{\prime}}{2}\right)  +2L_{r}^{r}\left(  p\left(  2s-y\right)
;\frac{y+y^{\prime}}{2},\frac{y+x^{\prime}}{2}\right) \\
&  \leq\int_{0}^{1}Hp_{2}\left(  t\right)  dt\\
&  \leq A\left(  A\left(  y^{r},\left(  y^{\prime}\right)  ^{r}\right)
{\displaystyle\int\nolimits_{\Omega}}
p\left(  s\right)  ds,L_{r}^{r}\left(  p;x,y\right)  +L_{r}^{r}\left(
p;y^{\prime},x^{\prime}\right)  \right)  .
\end{align*}%
\begin{align*}
&  A^{r}\left(  y,y^{\prime}\right) \\
&  \leq A\left(  L_{r}^{r}\left(  \frac{x+y^{\prime}}{2},\frac{y+y^{\prime}%
}{2}\right)  ,L_{r}^{r}\left(  \frac{y+y^{\prime}}{2},\frac{y+x^{\prime}}%
{2}\right)  \right) \\
&  \leq\int_{0}^{1}H_{2}\left(  t\right)  dt\leq A\left(  A\left(
y^{r},\left(  y^{\prime}\right)  ^{r}\right)  ,A\left(  L_{r}^{r}\left(
x,y\right)  ,L_{r}^{r}\left(  y^{\prime},x^{\prime}\right)  \right)  \right)
\end{align*}
for $r\neq-1$ and $p\left(  s\right)  =\frac{1}{2\left(  y-x\right)  }$ on
$\left[  x.x^{\prime}\right]  .$%
\begin{align*}
&  A^{-1}\left(  y,y^{\prime}\right) \\
&  \leq H^{-1}\left(  L\left(  \frac{x+y^{\prime}}{2},\frac{y+y^{\prime}}%
{2}\right)  ,L\left(  \frac{y+y^{\prime}}{2},\frac{y+x^{\prime}}{2}\right)
\right) \\
&  \leq\int_{0}^{1}H_{2}\left(  t\right)  dt\leq H^{-1}\left(  H\left(
y,y^{\prime}\right)  ,H\left(  L\left(  x,y\right)  ,L\left(  y^{\prime
},x^{\prime}\right)  \right)  \right)
\end{align*}
for $r=-1$ and $p\left(  s\right)  =\frac{1}{2\left(  y-x\right)  }$ on
$\left[  x.x^{\prime}\right]  .$%
\begin{align*}
0  &  \leq L_{r}^{r}\left(  p;x,y\right)  +L_{r}^{r}\left(  p;y^{\prime
},x^{\prime}\right)  -Hp_{1}\left(  t\right) \\
&  \leq2\left(  y-x\right)  \left(  1-t\right)  \left[  A\left(  x^{r},\left(
x^{\prime}\right)  ^{r}\right)  -A\left(  L_{r}^{r}\left(  x,y\right)
,L_{r}^{r}\left(  y^{\prime},x^{\prime}\right)  \right)  \right]
\sup\limits_{s\in\Omega}p\left(  s\right)
\end{align*}
for $t\in\left[  0,1\right]  $ and $r\neq-1.$%
\begin{align*}
0  &  \leq L_{-1}^{-1}\left(  p;x,y\right)  +L_{-1}^{-1}\left(  p;y^{\prime
},x^{\prime}\right)  -Hp_{1}\left(  t\right) \\
&  \leq2\left(  y-x\right)  \left(  1-t\right)  \left[  H^{-1}\left(
x,x^{\prime}\right)  -H^{-1}\left(  L\left(  x,y\right)  ,L\left(  y^{\prime
},x^{\prime}\right)  \right)  \right]  \sup\limits_{s\in\Omega}p\left(
s\right)
\end{align*}
for $t\in\left[  0,1\right]  $ and $r=-1.$%
\begin{align*}
0  &  \leq A\left(  L_{r}^{r}\left(  x,y\right)  ,L_{r}^{r}\left(  y^{\prime
},x^{\prime}\right)  \right)  -H_{1}\left(  t\right) \\
&  \leq\left(  1-t\right)  \left[  A\left(  x^{r},\left(  x^{\prime}\right)
^{r}\right)  -A\left(  L_{r}^{r}\left(  x,y\right)  ,L_{r}^{r}\left(
y^{\prime},x^{\prime}\right)  \right)  \right]
\end{align*}
\ for $t\in\left[  0,1\right]  ,r\neq-1$ and $p\left(  s\right)  =\frac
{1}{2\left(  y-x\right)  }$ on $\left[  x.x^{\prime}\right]  .$%
\begin{align*}
0  &  \leq H^{-1}\left(  L\left(  x,y\right)  ,L\left(  y^{\prime},x^{\prime
}\right)  \right)  -H_{1}\left(  t\right) \\
&  \leq\left(  1-t\right)  \left[  H^{-1}\left(  x,x^{\prime}\right)
-H^{-1}\left(  L\left(  x,y\right)  ,L\left(  y^{\prime},x^{\prime}\right)
\right)  \right]
\end{align*}
\ for $t\in\left[  0,1\right]  ,r=-1$ and $p\left(  s\right)  =\frac
{1}{2\left(  y-x\right)  }$ on $\left[  x.x^{\prime}\right]  .$%
\begin{align*}
0  &  \leq A\left(  x^{r},\left(  x^{\prime}\right)  ^{r}\right)  \int%
_{\Omega}p\left(  s\right)  ds-Hp_{1}\left(  t\right) \\
&  \leq\frac{r\left(  y-x\right)  \left(  \left(  x^{\prime}\right)
^{r-1}-x^{r-1}\right)  }{2}\int_{\Omega}p\left(  s\right)  ds\text{ \ }\left(
t\in\left[  0,1\right]  \right)  .
\end{align*}%
\[
0\leq A\left(  x^{r},\left(  x^{\prime}\right)  ^{r}\right)  -H_{1}\left(
t\right)  \leq\frac{r\left(  y-x\right)  \left(  \left(  x^{\prime}\right)
^{r-1}-x^{r-1}\right)  }{2}%
\]
for $t\in\left[  0,1\right]  ,r\neq-1$ and $p\left(  s\right)  =\frac
{1}{2\left(  y-x\right)  }$ on $\left[  x.x^{\prime}\right]  .$%
\[
0\leq H^{-1}\left(  x,x^{\prime}\right)  -H_{1}\left(  t\right)  \leq
\frac{\left(  y-x\right)  \left(  x^{-2}-\left(  x^{\prime}\right)
^{-2}\right)  }{2}%
\]
for $t\in\left[  0,1\right]  ,r=-1$ and $p\left(  s\right)  =\frac{1}{2\left(
y-x\right)  }$ on $\left[  x.x^{\prime}\right]  .$%
\begin{align*}
0  &  \leq Hp_{1}\left(  t\right)  -A\left(  y^{r},\left(  y^{\prime}\right)
^{r}\right)  \int_{\Omega}p\left(  s\right)  ds\\
&  \leq\frac{r\left(  y-x\right)  \left(  \left(  x^{\prime}\right)
^{r-1}-x^{r-1}\right)  }{2}\int_{\Omega}p\left(  s\right)  ds\text{ \ }\left(
t\in\left[  0,1\right]  \right)  .
\end{align*}%
\[
0\leq H_{1}\left(  t\right)  -A\left(  y^{r},\left(  y^{\prime}\right)
^{r}\right)  \leq\frac{r\left(  y-x\right)  \left(  \left(  x^{\prime}\right)
^{r-1}-x^{r-1}\right)  }{2}%
\]
for $t\in\left[  0,1\right]  ,r\neq-1$ and $p\left(  s\right)  =\frac
{1}{2\left(  y-x\right)  }$ on $\left[  x.x^{\prime}\right]  .$%
\[
0\leq H_{1}\left(  t\right)  -H^{-1}\left(  y,y^{\prime}\right)  \leq
\frac{\left(  y-x\right)  \left(  x^{-2}-\left(  x^{\prime}\right)
^{-2}\right)  }{2}%
\]
for $t\in\left[  0,1\right]  ,r=-1$ and $p\left(  s\right)  =\frac{1}{2\left(
y-x\right)  }$ on $\left[  x.x^{\prime}\right]  .$
\end{proposition}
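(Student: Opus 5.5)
This proposition is a specialization of Theorem \ref{t2} (and, for the last two displays, of inequality (\ref{2.10}) combined with (\ref{2.1}) and (\ref{2.2})) to $f(s)=s^{r}$, using Remark \ref{r33} to translate each integral into the extended and classical $r$-logarithmic means. For $p\equiv\frac{1}{2(y-x)}$ on $[x,x']$ we also use Remark \ref{r3}(2), which identifies $Hp_{1},Hp_{2}$ with $H_{1},H_{2}$. Convexity of $s^{r}$ on $[a,b]\subset(0,\infty)$ for $r\in(-\infty,0)\cup[1,\infty)$ makes every theorem of Section 2 applicable.

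First we take (\ref{2.5}) and (\ref{2.6}). We substitute $\int_{\Omega}fp=L_{r}^{r}(p;x,y)+L_{r}^{r}(p;y',x')$. The middle terms are $\int_{\frac{x+y}{2}}^{y}2s^{r}p(2s-y)\,ds=2L_{r}^{r}(p(2s-y);\frac{x+y}{2},y)$ and its analogues, and $f(\frac{y+y'}{2})=A^{r}(y,y')$. The outer terms become $A(y^{r},(y')^{r})\int_{\Omega}p$ and an arithmetic mean $A(\cdot,\cdot)$ of the two endpoint quantities. Note that $\frac{f(y)+f(y')}{2}\int_\Omega p = A(y^r,(y')^r)\int_\Omega p$, so the left-hand constants match the statement. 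This gives the first and fourth displays.

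For uniform $p$ with $r\neq-1$, $\int_{\Omega}p=1$. Each weighted mean becomes $\frac{2}{y-x}\int_{(x+y)/2}^{y}s^{r}ds=L_{r}^{r}(\frac{x+y}{2},y)$, and we average. For $r=-1$, $\frac{1}{c}\int s^{-1}=L^{-1}$, and averages of reciprocals are reciprocals of harmonic means, $A(u^{-1},v^{-1})=H^{-1}(u,v)$. This yields the $H^{-1}$ versions, including the nested $H^{-1}(H(y,y'),H(L,L))$ form for the upper bound.

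Next we apply (\ref{2.7}) with $\sup_\Omega p$. The quantity $(f(x)+f(x'))(y-x)-\int_{\Omega}s^{r}ds$ equals $2(y-x)\bigl[A(x^{r},(x')^{r})-A(L_{r}^{r}(x,y),L_{r}^{r}(y',x'))\bigr]$, or the $H^{-1}$ analogue when $r=-1$. For uniform $p$, $\sup p=\frac{1}{2(y-x)}$ cancels the factor $2(y-x)$.

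Finally we apply (\ref{2.8}) and (\ref{2.9}) with $f'(s)=rs^{r-1}$. This gives $\frac{r(y-x)((x')^{r-1}-x^{r-1})}{2}\int_{\Omega}p$, and for $r=-1$ it becomes $\frac{(y-x)(x^{-2}-(x')^{-2})}{2}$.

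The main obstacle is bookkeeping, not analysis. Every factor of $2$ and $\frac12$ in the weighted means must match the statement, especially in the $2L_{r}^{r}(p(2s-y);\cdot,\cdot)$ terms, which are defined without normalization. The $r=-1$ nested harmonic-mean expressions must also be checked to correspond to $A$ of reciprocals. I would verify each line by direct substitution into the corresponding inequality of Theorem \ref{t2}.
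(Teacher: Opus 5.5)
Your proposal is correct and follows the paper's own route: the proposition comes from substituting $f(s)=s^{r}$ into Theorem~\ref{t2}, i.e.\ (\ref{2.5})--(\ref{2.9}), and translating with Remarks~\ref{r3} and~\ref{r33}, where the uniform weight $p\equiv\frac{1}{2(y-x)}$ gives $\int_{\Omega}p=1$ and $\sup_{\Omega}p=\frac{1}{2(y-x)}$. Your bookkeeping of the factors of $2$, the identity $A(u^{-1},v^{-1})=H^{-1}(u,v)$ for the $r=-1$ displays, and the use of $f'(s)=rs^{r-1}$ in (\ref{2.8})--(\ref{2.9}) all check out against the statement.
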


\begin{proposition}
\label{p51}$Pp_{1}$\ is increasing on $\left[  0,1\right]  $\ and the
following inequalities hold for all $t\in\left[  0,1\right]  :$%
\begin{align*}
&  L_{r}^{r}\left(  p;x,y\right)  +L_{r}^{r}\left(  p;y^{\prime},x^{\prime
}\right) \\
&  =Pp_{1}\left(  0\right)  \leq Pp_{1}\left(  t\right)  \leq Pp_{1}\left(
1\right)  =A\left(  x^{r},\left(  x^{\prime}\right)  ^{r}\right)  \int%
_{\Omega}p\left(  s\right)  ds.
\end{align*}%
\begin{align*}
&  A\left(  L_{r}^{r}\left(  x,y\right)  ,L_{r}^{r}\left(  y^{\prime
},x^{\prime}\right)  \right)  =P_{1}\left(  0\right)  \leq P_{1}\left(
t\right) \\
&  \leq P_{1}\left(  1\right)  =A\left(  x^{r},\left(  x^{\prime}\right)
^{r}\right)
\end{align*}
for $r\neq-1$ and $p\left(  s\right)  =\frac{1}{2\left(  y-x\right)  }$ on
$\left[  x.x^{\prime}\right]  .$%
\begin{align*}
&  H^{-1}\left(  L\left(  x,y\right)  ,L\left(  y^{\prime},x^{\prime}\right)
\right)  =P_{1}\left(  0\right)  \leq P_{1}\left(  t\right) \\
&  \leq P_{1}\left(  1\right)  =H^{-1}\left(  x,x^{\prime}\right)
\end{align*}
for $r=-1$ and $p\left(  s\right)  =\frac{1}{2\left(  y-x\right)  }$ on
$\left[  x.x^{\prime}\right]  .$%
\begin{align*}
Pp_{1}\left(  t\right)   &  \leq\left(  1-t\right)  \left(  L_{r}^{r}\left(
p;x,y\right)  +L_{r}^{r}\left(  p;y^{\prime},x^{\prime}\right)  \right) \\
&  \text{ \ \ }+tA\left(  x^{r},\left(  x^{\prime}\right)  ^{r}\right)
\int_{\Omega}p\left(  s\right)  ds\leq A\left(  x^{r},\left(  x^{\prime
}\right)  ^{r}\right)  \int_{\Omega}p\left(  s\right)  ds.
\end{align*}%
\begin{align*}
P_{1}\left(  t\right)   &  \leq\left(  1-t\right)  A\left(  L_{r}^{r}\left(
x,y\right)  ,L_{r}^{r}\left(  y^{\prime},x^{\prime}\right)  \right)
+tA\left(  x^{r},\left(  x^{\prime}\right)  ^{r}\right) \\
&  \leq A\left(  x^{r},\left(  x^{\prime}\right)  ^{r}\right)
\end{align*}
for $r\neq-1$ and $p\left(  s\right)  =\frac{1}{2\left(  y-x\right)  }$ on
$\left[  x.x^{\prime}\right]  .$%
\begin{align*}
P_{1}\left(  t\right)   &  \leq\left(  1-t\right)  H^{-1}\left(  L\left(
x,y\right)  ,L\left(  y^{\prime},x^{\prime}\right)  \right)  +tH^{-1}\left(
x,x^{\prime}\right) \\
&  \leq H^{-1}\left(  x,x^{\prime}\right)
\end{align*}
for $r=-1$ and $p\left(  s\right)  =\frac{1}{2\left(  y-x\right)  }$ on
$\left[  x.x^{\prime}\right]  .$%
\[
Hp_{1}\left(  t\right)  \leq Pp_{1}\left(  t\right)  .\text{ }%
\]%
\[
H_{1}\left(  t\right)  \leq P_{1}\left(  t\right)
\]
for $p\left(  s\right)  =\frac{1}{2\left(  y-x\right)  }$ on $\left[
x.x^{\prime}\right]  .$
\end{proposition}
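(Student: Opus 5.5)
Proposition \ref{p51} is the specialization of Theorem \ref{t3} to $f(s)=s^{r}$, with the remaining inequality being (\ref{2.13}). The plan is to specialize and then evaluate the endpoint values using Remark \ref{r33}.

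First I check the hypotheses. For $r\in(-\infty,0)\cup[1,\infty)$ and $0<a\leq s\leq b$, we have $f''(s)=r(r-1)s^{r-2}\geq0$, so $f$ is convex on $[a,b]$. The weight $p$ is positive, integrable and symmetric to $\frac{x+x^{\prime}}{2}$, as Section 2 requires. Theorem \ref{t3} then applies directly and gives that $Pp_{1}$ is increasing, together with (\ref{2.11}), (\ref{2.12}) and (\ref{2.13}) for the present $Pp_{1}$.

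Next I identify the endpoint quantities. By Remark \ref{r33},
\[
\int_{\Omega}f(s)p(s)ds=L_{r}^{r}(p;x,y)+L_{r}^{r}(p;y^{\prime},x^{\prime}),
\]
and $\frac{f(x)+f(x^{\prime})}{2}=A\left(x^{r},(x^{\prime})^{r}\right)$. Substituting these into (\ref{2.11}) and (\ref{2.12}) gives the first and fourth displays. The inequality $Hp_{1}(t)\leq Pp_{1}(t)$ is exactly (\ref{2.13}).

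For the unweighted versions I take $p(s)\equiv\frac{1}{2(y-x)}$. By Remark \ref{r3}, $Pp_{1}=P_{1}$ and $Hp_{1}=H_{1}$, and $\int_{\Omega}p(s)ds=1$. Remark \ref{r33} then gives $P_{1}(0)=\frac{1}{2(y-x)}\int_{\Omega}s^{r}ds$. This equals $A\left(L_{r}^{r}(x,y),L_{r}^{r}(y^{\prime},x^{\prime})\right)$ when $r\neq-1$. When $r=-1$ it equals $\frac{1}{2}\left(L^{-1}(x,y)+L^{-1}(y^{\prime},x^{\prime})\right)=H^{-1}\left(L(x,y),L(y^{\prime},x^{\prime})\right)$. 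In the same way, $A(x^{-1},(x^{\prime})^{-1})=H^{-1}(x,x^{\prime})$. Putting these into the specialized (\ref{2.11}) and (\ref{2.12}) yields the remaining displays.

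The argument has no real obstacle; it is bookkeeping. The one point needing care is the case $r=-1$. There the $r$-logarithmic mean is undefined, so I would verify the identity $\frac{1}{y-x}\int_{x}^{y}s^{-1}ds=L^{-1}(x,y)$ and the harmonic-mean rewriting explicitly.
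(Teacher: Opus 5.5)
Your proposal is correct and follows the paper's own route. The paper also obtains Proposition \ref{p51} by specializing Theorem \ref{t3} (inequalities (\ref{2.11})--(\ref{2.13})) to the convex function $f(s)=s^{r}$, then rewriting the endpoint values through Remarks \ref{r3} and \ref{r33}, including the identifications $\frac{1}{2}\left(L^{-1}(x,y)+L^{-1}(y^{\prime},x^{\prime})\right)=H^{-1}\left(L(x,y),L(y^{\prime},x^{\prime})\right)$ and $A\left(x^{-1},(x^{\prime})^{-1}\right)=H^{-1}(x,x^{\prime})$ when $r=-1$.
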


\begin{proposition}
\label{p52}The following inequalities hold:%
\begin{align*}
&  L_{r}^{r}\left(  p;x,y\right)  +L_{r}^{r}\left(  p;y^{\prime},x^{\prime
}\right) \\
&  \leq2L_{r}^{r}\left(  p\left(  2s-x\right)  ;x,\frac{x+y}{2}\right)
+2L_{r}^{r}\left(  p\left(  2s-x^{\prime}\right)  ;\frac{x^{\prime}+y^{\prime
}}{2},x^{\prime}\right) \\
&  \leq\int_{0}^{1}Pp_{1}\left(  t\right)  dt\\
&  \leq A\left(  A\left(  x^{r},\left(  x^{\prime}\right)  ^{r}\right)
{\displaystyle\int\nolimits_{\Omega}}
p\left(  s\right)  ds,L_{r}^{r}\left(  p;x,y\right)  +L_{r}^{r}\left(
p;y^{\prime},x^{\prime}\right)  \right)  .
\end{align*}%
\begin{align*}
&  A\left(  L_{r}^{r}\left(  x,y\right)  ,L_{r}\left(  y^{\prime},x^{\prime
}\right)  \right) \\
&  \leq A\left(  L_{r}^{r}\left(  x,\frac{x+y}{2}\right)  ,L_{r}^{r}\left(
\frac{x^{\prime}+y^{\prime}}{2},x^{\prime}\right)  \right) \\
&  \leq\int_{0}^{1}P_{1}\left(  t\right)  dt\leq A\left(  A\left(
x^{r},\left(  x^{\prime}\right)  ^{r}\right)  ,A\left(  L_{r}^{r}\left(
x,y\right)  ,L_{r}^{r}\left(  y^{\prime},x^{\prime}\right)  \right)  \right)
\end{align*}
for $r\neq-1$ and $p\left(  s\right)  =\frac{1}{2\left(  y-x\right)  }$ on
$\left[  x.x^{\prime}\right]  .$%
\begin{align*}
&  H^{-1}\left(  L\left(  x,y\right)  ,L\left(  y^{\prime},x^{\prime}\right)
\right) \\
&  \leq H^{-1}\left(  L\left(  x,\frac{x+y}{2}\right)  ,L\left(
\frac{x^{\prime}+y^{\prime}}{2},x^{\prime}\right)  \right) \\
&  \leq\int_{0}^{1}P_{1}\left(  t\right)  dt\leq H^{-1}\left(  H\left(
x,x^{\prime}\right)  ,H\left(  L\left(  x,y\right)  ,L\left(  y^{\prime
},x^{\prime}\right)  \right)  \right)
\end{align*}
for $r=-1$ and $p\left(  s\right)  =\frac{1}{2\left(  y-x\right)  }$ on
$\left[  x.x^{\prime}\right]  .$%
\begin{align*}
0  &  \leq2\left(  y-x\right)  t\left[  A\left(  L_{r}^{r}\left(  x,y\right)
,L_{r}^{r}\left(  y^{\prime},x^{\prime}\right)  \right)  -A\left(
y^{r},\left(  y^{\prime}\right)  ^{r}\right)  \right]  \cdot\inf
\limits_{s\in\Omega}p\left(  s\right) \\
&  \leq Pp_{1}\left(  t\right)  -L_{r}^{r}\left(  p;x,y\right)  -L_{r}%
^{r}\left(  p;y^{\prime},x^{\prime}\right)
\end{align*}
for $t\in\left[  0,1\right]  $ and $r\neq-1.$%
\begin{align*}
0  &  \leq2\left(  y-x\right)  t\left[  H^{-1}\left(  L\left(  x,y\right)
,L\left(  y^{\prime},x^{\prime}\right)  \right)  -H^{-1}\left(  y,y^{\prime
}\right)  \right]  \cdot\inf\limits_{s\in\Omega}p\left(  s\right) \\
&  \leq Pp_{1}\left(  t\right)  -L_{-1}^{-1}\left(  p;x,y\right)  -L_{-1}%
^{-1}\left(  p;y^{\prime},x^{\prime}\right)
\end{align*}
for $t\in\left[  0,1\right]  $ and $r=-1.$%
\begin{align*}
0  &  \leq t\left[  A\left(  L_{r}^{r}\left(  x,y\right)  ,L_{r}^{r}\left(
y^{\prime},x^{\prime}\right)  \right)  -A\left(  y^{r},\left(  y^{\prime
}\right)  ^{r}\right)  \right] \\
&  \leq P_{1}\left(  t\right)  -A\left(  L_{r}^{r}\left(  x,y\right)
,L_{r}^{r}\left(  y^{\prime},x^{\prime}\right)  \right)
\end{align*}
for $t\in\left[  0,1\right]  ,$ $r\neq-1$ and $p\left(  s\right)  =\frac
{1}{2\left(  y-x\right)  }$ on $\left[  x.x^{\prime}\right]  .$%
\begin{align*}
0  &  \leq t\left[  H^{-1}\left(  L\left(  x,y\right)  ,L\left(  y^{\prime
},x^{\prime}\right)  \right)  -H^{-1}\left(  y,y^{\prime}\right)  \right] \\
&  \leq P_{1}\left(  t\right)  -H^{-1}\left(  L\left(  x,y\right)  ,L\left(
y^{\prime},x^{\prime}\right)  \right)
\end{align*}
for $t\in\left[  0,1\right]  ,$ $r=-1$ and $p\left(  s\right)  =\frac
{1}{2\left(  y-x\right)  }$ on $\left[  x.x^{\prime}\right]  .$%
\begin{align*}
0  &  \leq Pp_{1}\left(  t\right)  -A\left(  y^{r},\left(  y^{\prime}\right)
^{r}\right)  \int_{\Omega}p\left(  s\right)  ds\\
&  \leq\frac{r\left(  y-x\right)  \left(  \left(  x^{\prime}\right)
^{r-1}-x^{r-1}\right)  }{2}\int_{\Omega}p\left(  s\right)  ds\text{ \ }\left(
t\in\left[  0,1\right]  \right)  .
\end{align*}%
\[
0\leq P_{1}\left(  t\right)  -A\left(  y^{r},\left(  y^{\prime}\right)
^{r}\right)  \leq\frac{r\left(  y-x\right)  \left(  \left(  x^{\prime}\right)
^{r-1}-x^{r-1}\right)  }{2}%
\]
for $t\in\left[  0,1\right]  ,$ $r\neq-1$ and $p\left(  s\right)  =\frac
{1}{2\left(  y-x\right)  }$ on $\left[  x.x^{\prime}\right]  .$%
\[
0\leq P_{1}\left(  t\right)  -H^{-1}\left(  y,y^{\prime}\right)  \leq
\frac{\left(  y-x\right)  \left(  x^{-2}-\left(  x^{\prime}\right)
^{-2}\right)  }{2}%
\]
for $t\in\left[  0,1\right]  ,$ $r=-1$ and $p\left(  s\right)  =\frac
{1}{2\left(  y-x\right)  }$ on $\left[  x.x^{\prime}\right]  .$%
\begin{align*}
0  &  \leq A\left(  x^{r},\left(  x^{\prime}\right)  ^{r}\right)  \int%
_{\Omega}p\left(  s\right)  ds-Pp_{1}\left(  t\right) \\
&  \leq\frac{r\left(  y-x\right)  \left(  \left(  x^{\prime}\right)
^{r-1}-x^{r-1}\right)  }{2}\int_{\Omega}p\left(  s\right)  ds\text{ \ }\left(
t\in\left[  0,1\right]  \right)  .
\end{align*}%
\[
0\leq A\left(  x^{r},\left(  x^{\prime}\right)  ^{r}\right)  -P_{1}\left(
t\right)  \leq\frac{r\left(  y-x\right)  \left(  \left(  x^{\prime}\right)
^{r-1}-x^{r-1}\right)  }{2}%
\]
for $t\in\left[  0,1\right]  ,$ $r\neq-1$ and $p\left(  s\right)  =\frac
{1}{2\left(  y-x\right)  }$ on $\left[  x.x^{\prime}\right]  .$%
\[
0\leq H^{-1}\left(  x,x^{\prime}\right)  -P_{1}\left(  t\right)  \leq
\frac{\left(  y-x\right)  \left(  x^{-2}-\left(  x^{\prime}\right)
^{-2}\right)  }{2}%
\]
for $t\in\left[  0,1\right]  ,$ $r=-1$ and $p\left(  s\right)  =\frac
{1}{2\left(  y-x\right)  }$ on $\left[  x.x^{\prime}\right]  .$%
\begin{align*}
0  &  \leq Pp_{1}\left(  t\right)  -Hp_{1}\left(  t\right) \\
&  \leq\frac{r\left(  y-x\right)  \left(  \left(  x^{\prime}\right)
^{r-1}-x^{r-1}\right)  }{2}\int_{\Omega}p\left(  s\right)  ds\text{ \ }\left(
t\in\left[  0,1\right]  \right)  .
\end{align*}%
\[
0\leq P_{1}\left(  t\right)  -H_{1}\left(  t\right)  \leq\frac{r\left(
y-x\right)  \left(  \left(  x^{\prime}\right)  ^{r-1}-x^{r-1}\right)  }{2}%
\]
for $t\in\left[  0,1\right]  $ and $p\left(  s\right)  =\frac{1}{2\left(
y-x\right)  }$ on $\left[  x.x^{\prime}\right]  .$
\end{proposition}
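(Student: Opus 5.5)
This is a specialization argument: take Theorem \ref{t4} with $f(s)=s^{r}$ and the weight $p$ of this subsubsection, and apply Remark \ref{r33} to rewrite each quantity in terms of the extended $r$-logarithmic mean. For $r\in(-\infty,0)\cup[1,\infty)$ the function $f$ is convex and differentiable on $[a,b]\subset(0,\infty)$, so Theorem \ref{t4} and the derivative estimates used in Theorem \ref{t2} all apply.

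\textbf{Step 1: the chain (\ref{2.14}).} Remark \ref{r33} gives $\int_\Omega f p=L_r^r(p;x,y)+L_r^r(p;y^{\prime},x^{\prime})$. Hence the endpoint $\frac{f(x)+f(x^{\prime})}{2}\int_\Omega p$ becomes $A(x^r,(x^{\prime})^r)\int_\Omega p$, and the last term of (\ref{2.14}) becomes $A\bigl(A(x^r,(x^{\prime})^r)\int_\Omega p,\ L_r^r(p;x,y)+L_r^r(p;y^{\prime},x^{\prime})\bigr)$. The middle term $\int_x^{(x+y)/2}2s^r p(2s-x)\,ds+\int_{(x^{\prime}+y^{\prime})/2}^{x^{\prime}}2s^rp(2s-x^{\prime})\,ds$ is, by definition, $2L_r^r(p(2s-x);x,\frac{x+y}{2})+2L_r^r(p(2s-x^{\prime});\frac{x^{\prime}+y^{\prime}}{2},x^{\prime})$. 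This yields the first chain.

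For the unweighted version, set $p\equiv\frac{1}{2(y-x)}$, which is allowed by Remark \ref{r3}. Then $\int_\Omega p=1$. The half-interval integrals give $A\bigl(L_r^r(x,\frac{x+y}{2}),L_r^r(\frac{x^{\prime}+y^{\prime}}{2},x^{\prime})\bigr)$, since each interval has length $\frac{y-x}{2}$. When $r=-1$, the identity $\frac1{v-u}\int_u^v s^{-1}ds=L^{-1}(u,v)$ replaces arithmetic means of reciprocals by $H^{-1}$, because $A(a^{-1},b^{-1})=H^{-1}(a,b)$.

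\textbf{Step 2: the inequalities with $\inf p$ (\ref{2.15}).} The bracket $\int_\Omega f-(f(y)+f(y^{\prime}))(y-x)$ equals $2(y-x)\bigl[A(L_r^r(x,y),L_r^r(y^{\prime},x^{\prime}))-A(y^r,(y^{\prime})^r)\bigr]$. For $r=-1$ it equals $2(y-x)\bigl[H^{-1}(L(x,y),L(y^{\prime},x^{\prime}))-H^{-1}(y,y^{\prime})\bigr]$. Dividing by $2(y-x)$ gives the $p\equiv\frac1{2(y-x)}$ cases.

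\textbf{Step 3: the derivative bounds (\ref{2.16})--(\ref{2.18}).} Use $f^{\prime}(s)=rs^{r-1}$, so $(y-x)\frac{f^{\prime}(x^{\prime})-f^{\prime}(x)}{2}=\frac{r(y-x)((x^{\prime})^{r-1}-x^{r-1})}{2}$. For $r=-1$ this equals $\frac{(y-x)(x^{-2}-(x^{\prime})^{-2})}{2}$, which is nonnegative, consistent with convexity. The endpoints $\frac{f(y)+f(y^{\prime})}{2}$ and $\frac{f(x)+f(x^{\prime})}{2}$ become $A(y^r,(y^{\prime})^r)$ and $A(x^r,(x^{\prime})^r)$, or $H^{-1}$ of the same pairs when $r=-1$. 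Substituting into (\ref{2.16})--(\ref{2.18}) produces the remaining displays, with the unweighted versions again obtained from $p\equiv\frac1{2(y-x)}$.

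\textbf{Main obstacle.} The logic is routine; the difficulty is the bookkeeping. Two points need care: keeping the $r\ne-1$ and $r=-1$ forms consistent, and checking that factors such as $2L_r^r(\cdot)$ match the paper's definition $L_r^r(q;u,v)=\int_u^v s^rq$. I would verify the unweighted normalization, in particular the factor of $2$ from the half-length intervals in Step 1, by direct computation.
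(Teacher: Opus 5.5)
Your proposal is correct and follows the paper's route. Proposition~\ref{p52} comes from specializing Theorem~\ref{t4} to $f(s)=s^{r}$, with (\ref{2.10}) supplying the bounds in (\ref{2.16})--(\ref{2.18}), and then rewriting each term via Remarks~\ref{r3} and~\ref{r33}, where $A(a^{-1},b^{-1})=H^{-1}(a,b)$ covers $r=-1$; your bookkeeping of the factor $2$ on the half-length intervals and of $\inf_{s\in\Omega}p(s)=\frac{1}{2(y-x)}$ in the unweighted case also checks out.
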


\begin{proposition}
\label{p53}$Fp_{1}$\textit{\ is symmetric about }$\frac{1}{2},$%
\textit{\ decreasing on }$\left[  0,\frac{1}{2}\right]  $\textit{\ and
increasing on }$\left[  \frac{1}{2},1\right]  .$ \textit{The following
identities and inequalities hold:}%
\[
\sup\limits_{t\in\left[  0,1\right]  }Fp_{1}\left(  t\right)  =\left(
L_{r}^{r}\left(  p;x,y\right)  +L_{r}^{r}\left(  p;y^{\prime},x^{\prime
}\right)  \right)  \int_{\Omega}p\left(  s\right)  ds.
\]%
\[
\sup\limits_{t\in\left[  0,1\right]  }F_{1}\left(  t\right)  =A\left(
L_{r}^{r}\left(  x,y\right)  ,L_{r}^{r}\left(  y^{\prime},x^{\prime}\right)
\right)
\]
for $r\neq-1$ and $p\left(  s\right)  =\frac{1}{2\left(  y-x\right)  }$ on
$\left[  x.x^{\prime}\right]  .$%
\[
\sup\limits_{t\in\left[  0,1\right]  }F_{1}\left(  t\right)  =H^{-1}\left(
L\left(  x,y\right)  ,L\left(  y^{\prime},x^{\prime}\right)  \right)
\]
for $r=-1$ and $p\left(  s\right)  =\frac{1}{2\left(  y-x\right)  }$ on
$\left[  x.x^{\prime}\right]  .$%
\[
\inf\limits_{t\in\left[  0,1\right]  }Fp_{1}\left(  t\right)  =\int_{\Omega
}\int_{\Omega}A^{r}\left(  s,u\right)  p\left(  s\right)  p\left(  u\right)
dsdu.
\]%
\[
\inf\limits_{t\in\left[  0,1\right]  }F_{1}\left(  t\right)  =\frac
{1}{4\left(  y-x\right)  ^{2}}\int_{\Omega}\int_{\Omega}A^{r}\left(
s,u\right)  dsdu
\]
for $p\left(  s\right)  =\frac{1}{2\left(  y-x\right)  }$ on $\left[
x.x^{\prime}\right]  .$%
\[
A\left(  Hp_{1}\left(  t\right)  ,Hp_{2}\left(  t\right)  \right)
\int_{\Omega}p\left(  s\right)  ds\leq Fp_{1}\left(  t\right)  \text{\ \ \ }%
\left(  t\in\left[  0,1\right]  \right)  .
\]%
\[
A\left(  H_{1}\left(  t\right)  ,H_{2}\left(  t\right)  \right)  \leq
F_{1}\left(  t\right)
\]
for $t\in\left[  0,1\right]  $ and $p\left(  s\right)  =\frac{1}{2\left(
y-x\right)  }$ on $\left[  x.x^{\prime}\right]  .$%
\begin{align*}
&  A\left(  A\left(  y^{r},\left(  y^{\prime}\right)  ^{r}\right)
,A^{r}\left(  y,y^{\prime}\right)  \right)  \left(  \int_{\Omega}p\left(
s\right)  ds\right)  ^{2}\\
&  \leq\int_{\Omega}\int_{\Omega}A^{r}\left(  s,u\right)  p\left(  s\right)
p\left(  u\right)  dsdu
\end{align*}%
\[
A\left(  A\left(  y^{r},\left(  y^{\prime}\right)  ^{r}\right)  ,A^{r}\left(
y,y^{\prime}\right)  \right)  \leq\frac{1}{4\left(  y-x\right)  ^{2}}%
\int_{\Omega}\int_{\Omega}A^{r}\left(  s,u\right)  dsdu
\]
for $r\neq-1$ and $p\left(  s\right)  =\frac{1}{2\left(  y-x\right)  }$ on
$\left[  x.x^{\prime}\right]  .$%
\[
H^{-1}\left(  H\left(  y,y^{\prime}\right)  ,A\left(  y,y^{\prime}\right)
\right)  \leq\frac{1}{4\left(  y-x\right)  ^{2}}\int_{\Omega}\int_{\Omega
}A^{-1}\left(  s,u\right)  dsdu
\]
for $r=-1$ and $p\left(  s\right)  =\frac{1}{2\left(  y-x\right)  }$ on
$\left[  x.x^{\prime}\right]  .$
\end{proposition}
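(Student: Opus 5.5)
The plan is to obtain Proposition \ref{p53} by specializing Theorem \ref{t5} to $f(s)=s^{r}$ on $[a,b]\subset(0,\infty)$, and then doing the same with $p(s)\equiv\frac{1}{2(y-x)}$ through Remark \ref{r3}. Since $r\in(-\infty,0)\cup[1,\infty)$ and $s>0$, $f$ is convex, so every hypothesis of Theorem \ref{t5} holds.

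First, symmetry of $Fp_{1}$ about $\frac12$ and its monotonicity on $[0,\frac12]$ and $[\frac12,1]$ are part (1) and the first half of part (2) of Theorem \ref{t5}. For the supremum, (\ref{2.19}) gives $\int_{\Omega}f p\,ds\int_{\Omega}p\,ds$. By Remark \ref{r33}, $\int_\Omega s^r p(s)\,ds=L_r^r(p;x,y)+L_r^r(p;y',x')$.

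For $p\equiv\frac{1}{2(y-x)}$ we have $\int_\Omega p=1$ and $Fp_1=F_1$. Remark \ref{r33} then gives $\frac{1}{2(y-x)}\int_\Omega s^r\,ds$ as $A(L_r^r(x,y),L_r^r(y',x'))$ when $r\ne-1$. When $r=-1$ it equals $\frac12\big(L^{-1}(x,y)+L^{-1}(y',x')\big)=H^{-1}(L(x,y),L(y',x'))$, using $A(u^{-1},v^{-1})=H^{-1}(u,v)$.

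The infimum comes from (\ref{2.20}) with $f(\frac{s+u}{2})=A^{r}(s,u)$, in both the weighted and the constant-weight forms. The inequality $A(Hp_1,Hp_2)\int_\Omega p\le Fp_1$ is (\ref{2.21}) rewritten with $A$, and the $H_1,H_2,F_1$ version is its specialization.

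The last group comes from (\ref{2.22}). There $\frac{f(y)+2f(\frac{y+y'}{2})+f(y')}{4}=A\big(A(y^r,(y')^r),A^r(y,y')\big)$, and (\ref{2.20}) identifies $Fp_1(\frac12)$ with the double integral. For $r=-1$ I would rewrite $A\big(A(y^{-1},y'^{-1}),A^{-1}(y,y')\big)=A\big(H^{-1}(y,y'),A^{-1}(y,y')\big)=H^{-1}(H(y,y'),A(y,y'))$, again by $A(u^{-1},v^{-1})=H^{-1}(u,v)$.

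The only real obstacle is this bookkeeping for $r=-1$, where the $L_r$ notation degenerates and the logarithmic and harmonic means must be matched correctly. Everything else is direct substitution into Theorem \ref{t5}.
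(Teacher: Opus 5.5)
Your proposal is correct and follows the paper's route: the paper obtains Proposition~\ref{p53} by substituting the convex function $f(s)=s^{r}$ (with $a>0$ and $r\in(-\infty,0)\cup[1,\infty)$) into Theorem~\ref{t5}, i.e.\ into (\ref{2.19})--(\ref{2.22}), and translating the result via Remarks~\ref{r3} and~\ref{r33}, exactly as you do. Your treatment of $r=-1$ through $A(u^{-1},v^{-1})=H^{-1}(u,v)$, which gives both $H^{-1}(L(x,y),L(y^{\prime},x^{\prime}))$ and $H^{-1}(H(y,y^{\prime}),A(y,y^{\prime}))$, is the bookkeeping this specialization requires.
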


\begin{proposition}
\label{p54}The following inequalities hold:%
\[
Hp_{1}\left(  t\right)  \leq G_{1}\left(  t\right)  \int_{\Omega}p\left(
s\right)  ds\leq Pp_{1}\left(  t\right)  \text{\ }\left(  t\in\left[
0,1\right]  \right)  .
\]%
\[
H_{1}\left(  t\right)  \leq G_{1}\left(  t\right)  \leq P_{1}\left(  t\right)
\]
for $t\in\left[  0,1\right]  $ and $p\left(  s\right)  =\frac{1}{2\left(
y-x\right)  }$ on $\left[  x.x^{\prime}\right]  .$%
\begin{align*}
&  2L_{r}^{r}\left(  p\left(  2s-y\right)  ;\frac{x+y}{2},y\right)
+2L_{r}^{r}\left(  p\left(  2s-y^{\prime}\right)  ;y^{\prime},\frac{x^{\prime
}+y^{\prime}}{2}\right) \\
&  \leq A\left(  A^{r}\left(  x,y\right)  ,A^{r}\left(  x^{\prime},y^{\prime
}\right)  \right)  \int_{\Omega}p\left(  s\right)  ds\\
&  \leq\int_{x}^{y}A\left(  \left(  \frac{s+x}{2}\right)  ^{r},\left(
\frac{x+2y-s}{2}\right)  ^{r}\right)  p\left(  s\right)  ds\\
&  +\int_{y^{\prime}}^{x^{\prime}}A\left(  \left(  \frac{s+x}{2}\right)
^{r},\left(  \frac{x^{\prime}+2y^{\prime}-s}{2}\right)  ^{r}\right)  p\left(
s\right)  ds\\
&  \leq A\left(  A\left(  x^{r},\left(  x^{\prime}\right)  ^{r}\right)
,A\left(  y^{r},\left(  y^{\prime}\right)  ^{r}\right)  \right)  \int_{\Omega
}p\left(  s\right)  ds.
\end{align*}%
\begin{align*}
&  A\left(  L_{r}^{r}\left(  \frac{x+y}{2},y\right)  ,L_{r}^{r}\left(
y^{\prime},\frac{x^{\prime}+y^{\prime}}{2}\right)  \right) \\
&  \leq A\left(  A^{r}\left(  x,y\right)  ,A^{r}\left(  x^{\prime},y^{\prime
}\right)  \right) \\
&  \leq A\left(  L_{r}^{r}\left(  x,y\right)  ,L_{r}^{r}\left(  x^{\prime
},y^{\prime}\right)  \right) \\
&  \leq A\left(  A\left(  x^{r},\left(  x^{\prime}\right)  ^{r}\right)
,A\left(  y^{r},\left(  y^{\prime}\right)  ^{r}\right)  \right)
\end{align*}
for $r\neq-1$ and $p\left(  s\right)  =\frac{1}{2\left(  y-x\right)  }$ on
$\left[  x.x^{\prime}\right]  .$%
\begin{align*}
&  H^{-1}\left(  L\left(  \frac{x+y}{2},y\right)  ,L\left(  y^{\prime}%
,\frac{x^{\prime}+y^{\prime}}{2}\right)  \right) \\
&  \leq H^{-1}\left(  A\left(  x,y\right)  ,A\left(  x^{\prime},y^{\prime
}\right)  \right) \\
&  \leq H^{-1}\left(  L\left(  x,y\right)  ,L\left(  x^{\prime},y^{\prime
}\right)  \right) \\
&  \leq H^{-1}\left(  H\left(  x,x^{\prime}\right)  ,H\left(  y,y^{\prime
}\right)  \right)
\end{align*}
for $r=-1$ and $p\left(  s\right)  =\frac{1}{2\left(  y-x\right)  }$ on
$\left[  x.x^{\prime}\right]  .$%
\begin{align*}
&  2L_{r}^{r}\left(  p\left(  2s-y^{\prime}\right)  ;\frac{x+y^{\prime}}%
{2},\frac{y+y^{\prime}}{2}\right)  +2L_{r}^{r}\left(  p\left(  2s-y\right)
;\frac{y+y^{\prime}}{2},\frac{x^{\prime}+y}{2}\right) \\
&  \leq A\left(  A^{r}\left(  x,y^{\prime}\right)  ,A^{r}\left(  x^{\prime
},y\right)  \right)  \int_{\Omega}p\left(  s\right)  ds\\
&  \leq\int_{x}^{y}A\left(  \left(  \frac{s+x}{2}\right)  ^{r},\left(
\frac{x+2y^{\prime}-s}{2}\right)  ^{r}\right)  p\left(  s\right)  ds\\
&  +\int_{y^{\prime}}^{x^{\prime}}A\left(  \left(  \frac{s+x^{\prime}}%
{2}\right)  ^{r},\left(  \frac{x^{\prime}+2y-s}{2}\right)  ^{r}\right)
p\left(  s\right)  ds\\
&  \leq A\left(  A\left(  x^{r},\left(  x^{\prime}\right)  ^{r}\right)
,A\left(  y^{r},\left(  y^{\prime}\right)  ^{r}\right)  \right)  \int_{\Omega
}p\left(  s\right)  ds.
\end{align*}%
\begin{align*}
&  A\left(  L_{r}^{r}\left(  \frac{x+y^{\prime}}{2},\frac{y+y^{\prime}}%
{2}\right)  ,L_{r}^{r}\left(  \frac{y+y^{\prime}}{2},\frac{x^{\prime}+y}%
{2}\right)  \right) \\
&  \leq A\left(  A^{r}\left(  x,y^{\prime}\right)  ,A^{r}\left(  x^{\prime
},y\right)  \right) \\
&  \leq\frac{1}{4}\left[  L_{r}^{r}\left(  x,\frac{x+y}{2}\right)  +L_{r}%
^{r}\left(  \frac{x+2y^{\prime}-y}{2},y^{\prime}\right)  \right. \\
&  +\left.  L_{r}^{r}\left(  \frac{x^{\prime}+y^{\prime}}{2},x^{\prime
}\right)  +L_{r}^{r}\left(  y,\frac{x^{\prime}+2y-y^{\prime}}{2}\right)
\right] \\
&  \leq A\left(  A\left(  x^{r},\left(  x^{\prime}\right)  ^{r}\right)
,A\left(  y^{r},\left(  y^{\prime}\right)  ^{r}\right)  \right)
\end{align*}
for $r\neq-1$ and $p\left(  s\right)  =\frac{1}{2\left(  y-x\right)  }$ on
$\left[  x.x^{\prime}\right]  .$%
\begin{align*}
&  H^{-1}\left(  L\left(  \frac{x+y^{\prime}}{2},\frac{y+y^{\prime}}%
{2}\right)  ,L\left(  \frac{y+y^{\prime}}{2},\frac{x^{\prime}+y}{2}\right)
\right) \\
&  \leq H^{-1}\left(  A\left(  x,y^{\prime}\right)  ,A\left(  x^{\prime
},y\right)  \right) \\
&  \leq\frac{1}{2}H\left(  L\left(  x,\frac{x+y}{2}\right)  ,L\left(
\frac{x+2y^{\prime}-y}{2},y^{\prime}\right)  \right) \\
&  +\frac{1}{2}H\left(  L\left(  \frac{x^{\prime}+y^{\prime}}{2},x^{\prime
}\right)  ,L\left(  y,\frac{x^{\prime}+2y-y^{\prime}}{2}\right)  \right) \\
&  \leq H^{-1}\left(  H\left(  x,x^{\prime}\right)  ,H\left(  y,y^{\prime
}\right)  \right)
\end{align*}
for $r=-1$ and $p\left(  s\right)  =\frac{1}{2\left(  y-x\right)  }$ on
$\left[  x.x^{\prime}\right]  .$%
\begin{align*}
0  &  \leq Hp_{1}\left(  t\right)  -A\left(  y^{r},\left(  y^{\prime}\right)
^{r}\right)  \int_{\Omega}p\left(  s\right)  ds\\
&  \leq G_{1}\left(  t\right)  \int_{\Omega}p\left(  s\right)  ds-Hp_{1}%
\left(  t\right)  \text{\ }\left(  t\in\left[  0,1\right]  \right)  .
\end{align*}
\textit{\ }%
\[
0\leq H_{1}\left(  t\right)  -A\left(  y^{r},\left(  y^{\prime}\right)
^{r}\right)  \leq G_{1}\left(  t\right)  -H_{1}\left(  t\right)
\]
for $t\in\left[  0,1\right]  ,$ $r\neq-1$ and $p\left(  s\right)  =\frac
{1}{2\left(  y-x\right)  }$ on $\left[  x.x^{\prime}\right]  .$%
\[
0\leq H_{1}\left(  t\right)  -H^{-1}\left(  y,y^{\prime}\right)  \leq
G_{1}\left(  t\right)  -H_{1}\left(  t\right)
\]
for $t\in\left[  0,1\right]  ,$ $r=-1$ and $p\left(  s\right)  =\frac
{1}{2\left(  y-x\right)  }$ on $\left[  x.x^{\prime}\right]  .$%
\begin{align*}
0  &  \leq Pp_{1}\left(  t\right)  -G_{1}\left(  t\right)  \int_{\Omega
}p\left(  s\right)  ds\\
&  \leq A\left(  x^{r},\left(  x^{\prime}\right)  ^{r}\right)  \int_{\Omega
}p\left(  s\right)  ds-Pp_{1}\left(  t\right)  \text{\ }\left(  t\in\left[
0,1\right]  \right)  .
\end{align*}%
\[
0\leq P_{1}\left(  t\right)  -G_{1}\left(  t\right)  \leq A\left(
x^{r},\left(  x^{\prime}\right)  ^{r}\right)  -P_{1}\left(  t\right)
\]
for $t\in\left[  0,1\right]  ,$ $r\neq-1$ and $p\left(  s\right)  =\frac
{1}{2\left(  y-x\right)  }$ on $\left[  x.x^{\prime}\right]  .$%
\[
0\leq P_{1}\left(  t\right)  -G_{1}\left(  t\right)  \leq H^{-1}\left(
x,x^{\prime}\right)  -P_{1}\left(  t\right)
\]
for $t\in\left[  0,1\right]  ,$ $r=-1$ and $p\left(  s\right)  =\frac
{1}{2\left(  y-x\right)  }$ on $\left[  x.x^{\prime}\right]  .$
\end{proposition}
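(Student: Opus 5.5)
Proposition \ref{p54} is obtained by specializing Theorem \ref{t6} to $f(s)=s^{r}$, which is convex on $[a,b]$ for $r\in(-\infty,0)\cup[1,\infty)$ because $0<a$. No new argument is needed. The work is to substitute $f$ into each inequality of Theorem \ref{t6} and rewrite the resulting integrals through the extended $r$-logarithmic means and the dictionary of Remark \ref{r33}.

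\textbf{The first chain.} This is (\ref{2.26}) verbatim, with $G_{1}(t)=A((tx+(1-t)y)^{r},(tx^{\prime}+(1-t)y^{\prime})^{r})$. Setting $p\equiv\frac{1}{2(y-x)}$ on $[x,x^{\prime}]$ gives $\int_{\Omega}p=1$, and Remark \ref{r3} turns $Hp_{1},Pp_{1}$ into $H_{1},P_{1}$. This yields $H_{1}\leq G_{1}\leq P_{1}$.

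\textbf{The weighted four-term chains.} These come from (\ref{2.27}) and (\ref{2.28}), rewritten as follows.
\begin{itemize}
\item The leftmost integral $\int_{\frac{x+y}{2}}^{y}2s^{r}p(2s-y)\,ds$ is $2L_{r}^{r}(p(2s-y);\frac{x+y}{2},y)$ by the definition of the extended $r$-logarithmic mean. The same applies to the integral over $[y^{\prime},\frac{x^{\prime}+y^{\prime}}{2}]$.
\item The middle term $\frac12[f(\frac{x+y}{2})+f(\frac{x^{\prime}+y^{\prime}}{2})]$ is $A(A^{r}(x,y),A^{r}(x^{\prime},y^{\prime}))$.
\item The third term is already an integral of an $A$ of $r$-th powers.
\item The last term is $\frac14\sum f=A(A(x^{r},x^{\prime r}),A(y^{r},y^{\prime r}))$.
\end{itemize}

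\textbf{The unweighted versions.} Set $p\equiv\frac{1}{2(y-x)}$ and evaluate each integral with Remark \ref{r33}.
\begin{itemize}
\item For $r\neq-1$, $\frac1{y-x}\int_{x}^{y}s^{r}\,ds=L_{r}^{r}(x,y)$. For $r=-1$ this becomes $L^{-1}(x,y)$, and averages of reciprocals become $H^{-1}$ of the corresponding quantities, since $A(u^{-1},v^{-1})=H^{-1}(u,v)$.
\item The change of variables $s\mapsto 2s-y$ converts the leftmost term into $A(L_{r}^{r}(\frac{x+y}{2},y),L_{r}^{r}(y^{\prime},\frac{x^{\prime}+y^{\prime}}{2}))$.
\item The third term of (\ref{2.27}) collapses by Remark \ref{r33} to $A(L_{r}^{r}(x,y),L_{r}^{r}(y^{\prime},x^{\prime}))$.
\item For (\ref{2.28}) the split-interval identities of Remark \ref{r33} produce the four-term bracket of $L_{r}^{r}$'s.
\end{itemize}

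\textbf{The last two displays.} These are (\ref{2.29}) and (\ref{2.30}) with $\frac{f(y)+f(y^{\prime})}{2}=A(y^{r},y^{\prime r})$ (equal to $H^{-1}(y,y^{\prime})$ when $r=-1$) and $\frac{f(x)+f(x^{\prime})}{2}=A(x^{r},x^{\prime r})$. Each is stated once with general $p$ and once with $p$ constant.

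\textbf{Main obstacle.} The only real difficulty is bookkeeping. One must track the normalizing constants $\frac1{y-x}$ against $\frac2{y-x}$ when converting the split integrals into $L_{r}^{r}$, and treat the $r=-1$ case correctly, where logarithmic means replace $L_{r}$ and $A$ of reciprocals is rewritten via $H^{-1}$. I would check each constant by testing against $f\equiv1$.
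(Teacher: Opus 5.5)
Your route is the paper's own. Proposition~\ref{p54} is stated there as a direct consequence of Theorem~\ref{t6} with $f(s)=s^{r}$, read through Remark~\ref{r33}. This is sound for the first display, for the chains (\ref{2.27})--(\ref{2.28}) (which use only the symmetry of $p$ about $\frac{x+x'}{2}$), and for every constant-weight version.

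The genuine gap is your step ``the last two displays are (\ref{2.29}) and (\ref{2.30})'' for a general weight. Those inequalities are false when $p$ is merely symmetric about $\frac{x+x'}{2}$, so citing Theorem~\ref{t6}(3) does not suffice; the paper's derivation has the same defect. The identities for $2Hp_{1}(t)$ and $2Pp_{1}(t)$ behind (\ref{2.31})--(\ref{2.32}) come from substituting $s\mapsto x+y-s$ on $[x,y]$. That substitution requires $p(s)=p(x+y-s)$ there, which is the extra hypothesis imposed only in Theorem~\ref{t7}(4). Concretely, take $t=1$ in the $Hp_{1}$-display, or $t=0$ in the $Pp_{1}$-display. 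The right-hand inequality then reads
\[
2\int_{\Omega}s^{r}p(s)\,ds\leq\Bigl[A\bigl(x^{r},(x')^{r}\bigr)+A\bigl(y^{r},(y')^{r}\bigr)\Bigr]\int_{\Omega}p(s)\,ds .
\]
This fails whenever $p$ is concentrated near $x$ and $x'$ and $y^{r}+(y')^{r}<x^{r}+(x')^{r}$. For instance, take $r=2$, $x=1$, $y=y'=2$, $x'=3$ (so $\alpha=\frac12$) and $p(s)=(s-2)^{2}+\varepsilon$. Then $\int_{\Omega}s^{2}p\big/\int_{\Omega}p\to4.6$, so the left side is about $9.2\int_{\Omega}p$ against $9\int_{\Omega}p$ on the right. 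Hence the weighted forms of the last two displays need the additional assumption $p(s)=p(x+y-s)$ on $[x,y]$, under which the Lemma~\ref{l2} argument is valid. The constant-$p$ forms are unaffected.

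You also assert a verbatim match in two places where the printed statement differs from what your specialization yields, and the printed versions are false.
\begin{itemize}
\item In the weighted chain from (\ref{2.27}), the integrand over $[y',x']$ must be $A\bigl((\frac{s+x'}{2})^{r},(\frac{x'+2y'-s}{2})^{r}\bigr)$. With the printed $\frac{s+x}{2}$, already $r=1$ breaks the chain: every correct term equals $\frac{y+y'}{2}\int_{\Omega}p$, while the printed third term is smaller by $\frac{x'-x}{8}\int_{\Omega}p$.
\item In the $r=-1$ chain from (\ref{2.28}), the third term must be $\frac12H^{-1}(\cdot,\cdot)+\frac12H^{-1}(\cdot,\cdot)$, not $\frac12H(\cdot,\cdot)+\frac12H(\cdot,\cdot)$. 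With $x=1$, $y=y'=2$, $x'=3$, the printed term is about $1.95$, which exceeds the final bound $H^{-1}(H(1,3),H(2,2))=\frac{7}{12}$.
\end{itemize}
Separately, the first two ``collapse'' identities of Remark~\ref{r33} are off by a factor $2$, since $\frac{1}{y-x}\int_{x}^{y}\bigl[f(\frac{s+x}{2})+f(\frac{x+2y-s}{2})\bigr]ds=\frac{2}{y-x}\int_{x}^{y}f$. Your value $A(L_{r}^{r}(x,y),L_{r}^{r}(y',x'))$ is nevertheless correct, and your $f\equiv1$ check is exactly what catches the discrepancy.
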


\begin{proposition}
\label{p55}The following inequalities hold:%
\[
A\left(  G_{1}\left(  t\right)  ,G_{2}\left(  t\right)  \right)  \int_{\Omega
}p\left(  s\right)  ds\leq Lp_{1}\left(  t\right)  \leq Pp_{1}\left(
t\right)  \text{\ }\left(  t\in\left[  0,1\right]  \right)  .\text{ }%
\]%
\[
A\left(  G_{1}\left(  t\right)  ,G_{2}\left(  t\right)  \right)  \leq
L_{1}\left(  t\right)  \leq P_{1}\left(  t\right)  \text{ }%
\]
for $t\in\left[  0,1\right]  $ and $p\left(  s\right)  =\frac{1}{2\left(
y-x\right)  }$ on $\left[  x.x^{\prime}\right]  .$%
\[
\sup\limits_{t\in\left[  0,1\right]  }Lp_{1}\left(  t\right)  =Lp_{1}\left(
1\right)  =A\left(  x^{r},\left(  x^{\prime}\right)  ^{r}\right)  \int%
_{\Omega}p\left(  s\right)  ds.
\]%
\[
\sup\limits_{t\in\left[  0,1\right]  }L_{1}\left(  t\right)  =L_{1}\left(
1\right)  =A\left(  x^{r},\left(  x^{\prime}\right)  ^{r}\right)
\]
for $r\neq-1$ and $p\left(  s\right)  =\frac{1}{2\left(  y-x\right)  }$ on
$\left[  x.x^{\prime}\right]  .$%
\[
\sup\limits_{t\in\left[  0,1\right]  }L_{1}\left(  t\right)  =L_{1}\left(
1\right)  =H^{-1}\left(  x,x^{\prime}\right)
\]
for $r=-1$ and $p\left(  s\right)  =\frac{1}{2\left(  y-x\right)  }$ on
$\left[  x.x^{\prime}\right]  .$%
\begin{align*}
&  A\left(  Hp_{1}\left(  1-t\right)  ,Hp_{2}\left(  1-t\right)  \right)
\int_{\Omega}p\left(  s\right)  ds\\
&  \leq Fp_{1}\left(  t\right)  \leq Lp_{1}\left(  t\right)  \int_{\Omega
}p\left(  s\right)  ds\text{\ \ \ }\left(  t\in\left[  0,1\right]  \right)  .
\end{align*}%
\[
A\left(  H_{1}\left(  1-t\right)  ,H_{2}\left(  1-t\right)  \right)  \leq
F_{1}\left(  t\right)  \leq L_{1}\left(  t\right)  \text{\ \ }\left(
t\in\left[  0,1\right]  \right)
\]
for $t\in\left[  0,1\right]  $ and $p\left(  s\right)  =\frac{1}{2\left(
y-x\right)  }$ on $\left[  x.x^{\prime}\right]  .$%
\begin{align*}
&  A\left(  Hp_{1}\left(  t\right)  ,Hp_{2}\left(  t\right)  \right)
\int_{\Omega}p\left(  s\right)  ds\\
&  \leq Fp_{1}\left(  t\right)  \leq Lp_{1}\left(  t\right)  \int_{\Omega
}p\left(  s\right)  ds\text{\ \ \ }\left(  t\in\left[  0,1\right]  \right)  .
\end{align*}%
\[
A\left(  H_{1}\left(  t\right)  ,H_{2}\left(  t\right)  \right)  \leq
F_{1}\left(  t\right)  \leq L_{1}\left(  t\right)  \text{\ \ }\left(
t\in\left[  0,1\right]  \right)
\]
for $t\in\left[  0,1\right]  $ and $p\left(  s\right)  =\frac{1}{2\left(
y-x\right)  }$ on $\left[  x.x^{\prime}\right]  .$%
\begin{align*}
&  A\left(  A\left(  Hp_{1}\left(  1-t\right)  ,Hp_{2}\left(  1-t\right)
\right)  ,A\left(  Hp_{1}\left(  t\right)  ,Hp_{2}\left(  t\right)  \right)
\right)  \int_{\Omega}p\left(  s\right)  ds\\
&  \leq Fp_{1}\left(  t\right)  \leq Lp_{1}\left(  t\right)  \int_{\Omega
}p\left(  s\right)  ds\text{\ \ \ }\left(  t\in\left[  0,1\right]  \right)  .
\end{align*}%
\[
A\left(  A\left(  H_{1}\left(  1-t\right)  ,H_{2}\left(  1-t\right)  \right)
,A\left(  H_{1}\left(  t\right)  ,H_{2}\left(  t\right)  \right)  \right)
\leq F_{1}\left(  t\right)  \leq L_{1}\left(  t\right)
\]
for $t\in\left[  0,1\right]  $ and $p\left(  s\right)  =\frac{1}{2\left(
y-x\right)  }$ on $\left[  x.x^{\prime}\right]  .$%
\begin{align*}
0  &  \leq Fp_{1}\left(  t\right)  -A\left(  Hp_{1}\left(  t\right)
,Hp_{2}\left(  t\right)  \right)  \int_{\Omega}p\left(  s\right)  ds\\
&  \leq Lp_{1}\left(  1-t\right)  \int_{\Omega}p\left(  s\right)
ds-Fp_{1}\left(  t\right)
\end{align*}
as $p\left(  s\right)  =p\left(  x+x^{\prime}-s\right)  $ $\left(  s\in\left[
x,y\right]  \right)  $ and $p\left(  s\right)  =p\left(  x+y-s\right)  $
$\left(  s\in\left[  x,\frac{x+y}{2}\right]  \right)  .$%
\[
0\leq F_{1}\left(  t\right)  -A\left(  H_{1}\left(  t\right)  ,H_{2}\left(
t\right)  \right)  \leq L_{1}\left(  1-t\right)  -F_{1}\left(  t\right)
\]
for $t\in\left[  0,1\right]  $ and $p\left(  s\right)  =\frac{1}{2\left(
y-x\right)  }$ on $\left[  x.x^{\prime}\right]  .$
\end{proposition}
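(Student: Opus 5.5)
This proposition is the specialization of Theorem \ref{t7} to $f(s)=s^{r}$, which is convex on $[a,b]\subset(0,\infty)$ for $r\in(-\infty,0)\cup[1,\infty)$. Each displayed line is read off from the corresponding conclusion of Theorem \ref{t7} (for general $p$) or of Theorem \ref{A19} (for $p\equiv\frac{1}{2(y-x)}$), using the dictionary of Remark \ref{r33}. No new convexity argument is needed; the work is bookkeeping.

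Concretely, I would proceed as follows.
\begin{enumerate}
\item Rewrite $\frac{G_{1}+G_{2}}{2}$ as $A\left(G_{1}(t),G_{2}(t)\right)$ and $\frac{Hp_{1}+Hp_{2}}{2}$ as $A(Hp_{1},Hp_{2})$. Then (\ref{2.33}) gives the first chain, and (\ref{2.34}) gives the supremum of $Lp_{1}$, since $\frac{f(x)+f(x^{\prime})}{2}=A\left(x^{r},(x^{\prime})^{r}\right)$.
\item The three sandwiches for $Fp_{1}$ come from (\ref{2.35})--(\ref{2.37}). The last one uses that $\frac14\left(Hp_{1}(t)+Hp_{2}(t)+Hp_{1}(1-t)+Hp_{2}(1-t)\right)$ is the arithmetic mean of the two pair-means $A(Hp_{1}(t),Hp_{2}(t))$ and $A(Hp_{1}(1-t),Hp_{2}(1-t))$.
\item Under the two extra symmetry hypotheses on $p$, the Dragomir-type inequality is exactly (\ref{2.38}).
\item The uniform-weight versions follow by putting $p\equiv\frac{1}{2(y-x)}$ and using Remark \ref{r3}(2), which turns $Hp_{i},Fp_{1},Lp_{1},Pp_{1}$ into $H_{i},F_{1},L_{1},P_{1}$ with $\int_{\Omega}p=1$. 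Equivalently, one can quote Theorem \ref{A19}, including (\ref{1.10})--(\ref{1.13}).
\end{enumerate}

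The only point needing care is the value of $L_{1}(1)=\frac{x^{r}+(x^{\prime})^{r}}{2}$, which must be rewritten in the notation of the means.
\begin{itemize}
\item For $r\neq-1$ it is $A\left(x^{r},(x^{\prime})^{r}\right)$.
\item For $r=-1$ it is $\frac12\left(\frac1x+\frac1{x^{\prime}}\right)=\frac{x+x^{\prime}}{2xx^{\prime}}=H^{-1}(x,x^{\prime})$, since $H(x,x^{\prime})=\frac{2xx^{\prime}}{x+x^{\prime}}$.
\end{itemize}
This is the only place the case split $r=-1$ versus $r\neq-1$ enters this proposition. All other lines are stated in terms of $G_{i},H_{i},F_{1},L_{1}$ themselves, so they need no conversion.

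The main obstacle is consistency, not mathematics: verifying that each line matches the hypotheses of the cited statement. In particular, the symmetry assumptions on $p$ are needed only for the final two-sided estimate, and the constant weight automatically satisfies them, so the last uniform-weight line holds unconditionally. I would check each displayed line against its source and note any typographical factors of $\int_{\Omega}p$ that are absorbed when $\int_{\Omega}p=1$.
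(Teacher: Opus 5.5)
Your proposal is correct and follows the paper's own route. Proposition~\ref{p55} is Theorem~\ref{t7}, i.e.\ (\ref{2.33})--(\ref{2.38}), specialized via Remark~\ref{r33} to the convex function $f(s)=s^{r}$, with the constant-weight lines coming from Remark~\ref{r3}(2) (equivalently Theorem~\ref{A19}) because $\int_{\Omega}p=1$ there; your one genuine conversion, $L_{1}(1)=\frac{x^{r}+(x^{\prime})^{r}}{2}=H^{-1}(x,x^{\prime})$ when $r=-1$, is exactly the bookkeeping the paper leaves implicit.
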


\begin{proposition}
\label{p56}$Ip_{1}$\ is increasing on $\left[  0,1\right]  $\ and the
following inequalities hold for all $t\in\left[  0,1\right]  :$%
\begin{align*}
&  2A\left(  y^{r},\left(  y^{\prime}\right)  ^{r}\right)  \int_{x}%
^{x^{\prime}}p\left(  s\right)  ds\\
&  =Ip_{1}\left(  0\right)  \leq Ip_{1}\left(  t\right)  \leq Ip_{1}\left(
1\right) \\
&  =\int\nolimits_{x}^{x^{\prime}}\left[  \left(  \left(  1-\alpha\right)
x+\alpha s\right)  ^{r}+\left(  \left(  1-\alpha\right)  x^{\prime}+\alpha
s\right)  ^{r}\right]  p\left(  s\right)  ds.
\end{align*}%
\begin{align*}
Ip_{1}\left(  t\right)   &  \leq2\left(  1-t\right)  A\left(  y^{r},\left(
y^{\prime}\right)  ^{r}\right)  \int_{x}^{x^{\prime}}p\left(  s\right)  ds\\
&  +t\int\nolimits_{x}^{x^{\prime}}\left[  \left(  \left(  1-\alpha\right)
x+\alpha s\right)  ^{r}+\left(  \left(  1-\alpha\right)  x^{\prime}+\alpha
s\right)  ^{r}\right]  p\left(  s\right)  ds\\
&  \leq\int\nolimits_{x}^{x^{\prime}}\left[  \left(  \left(  1-\alpha\right)
x+\alpha s\right)  ^{r}+\left(  \left(  1-\alpha\right)  x^{\prime}+\alpha
s\right)  ^{r}\right]  p\left(  s\right)  ds\\
&  \leq2A\left(  x^{r},\left(  x^{\prime}\right)  ^{r}\right)  \int%
_{x}^{x^{\prime}}p\left(  s\right)  ds.
\end{align*}

\end{proposition}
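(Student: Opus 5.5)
This proposition is the specialization of Theorem \ref{t8} to $f(s)=s^{r}$ on $[a,b]$ with the fixed symmetric weight $p$. The plan is to check the hypotheses, read off each displayed line from (\ref{2.41}) and (\ref{2.42}), and rewrite the resulting quantities in the notation of Remark \ref{r33}.

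First, the hypotheses. For $r\in(-\infty,0)\cup[1,\infty)$ the function $f(s)=s^{r}$ is convex on $[a,b]\subset(0,\infty)$, since $f''(s)=r(r-1)s^{r-2}\geq 0$. The weight $p$ is nonnegative, integrable and symmetric to $\frac{x+x^{\prime}}{2}$. The points satisfy $y=(1-\alpha)x+\alpha x^{\prime}$ with $0<\alpha\leq\frac12$ and $x+x^{\prime}=y+y^{\prime}$, so $y^{\prime}=\alpha x+(1-\alpha)x^{\prime}$. These are exactly the standing assumptions of Subsection 2.2, so Theorem \ref{t8} applies to this $f$ and $p$. It yields that $Ip_{1}$ is increasing on $[0,1]$, together with the chains (\ref{2.41}) and (\ref{2.42}).

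Second, the translation. Substituting $f(s)=s^{r}$ into the definition of $Ip_{1}$ gives precisely the function $Ip_{1}$ listed in Remark \ref{r33}. The end values of (\ref{2.41}) become
\[
Ip_{1}(0)=\left[y^{r}+(y^{\prime})^{r}\right]\int_{x}^{x^{\prime}}p(s)\,ds=2A\left(y^{r},(y^{\prime})^{r}\right)\int_{x}^{x^{\prime}}p(s)\,ds
\]
and
\[
Ip_{1}(1)=\int_{x}^{x^{\prime}}\left[\left((1-\alpha)x+\alpha s\right)^{r}+\left((1-\alpha)x^{\prime}+\alpha s\right)^{r}\right]p(s)\,ds.
\]
In the same way, the final term of (\ref{2.42}) is $\left[f(x)+f(x^{\prime})\right]\int_{x}^{x^{\prime}}p(s)\,ds=2A\left(x^{r},(x^{\prime})^{r}\right)\int_{x}^{x^{\prime}}p(s)\,ds$. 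With these identifications, the first displayed chain of the proposition is (\ref{2.41}) and the second is (\ref{2.42}), line for line.

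There is no real obstacle here; the argument is bookkeeping. The only points to watch are that each factor $2$ is correctly absorbed into the arithmetic mean $A$, and that convexity must hold for negative $r$ as well as for $r\geq1$, which the sign of $r(r-1)$ guarantees. No case split at $r=-1$ is needed, because unlike Remark \ref{r33} nothing in this proposition is expressed through logarithmic means.
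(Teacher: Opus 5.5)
Your proposal is correct and follows the paper's own route: Proposition \ref{p56} is just Theorem \ref{t8}, i.e.\ (\ref{2.41}) and (\ref{2.42}), specialized to the convex function $f(s)=s^{r}$ via Remark \ref{r33}, with each factor $2$ absorbed into the arithmetic mean $A$. One thing you use without saying it is that $p$ must be defined on all of $[x,x^{\prime}]$, as Theorem \ref{t8} requires, whereas Subsection 3.2 declares $p$ only on $\Omega$.
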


\begin{proposition}
\label{p57}Let $m,$ $m^{\prime}$\ \textit{be defined as in Proposition
\ref{p9}. Then}
\end{proposition}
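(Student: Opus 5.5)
Proposition \ref{p57} is the $r$-logarithmic-mean counterpart of Propositions \ref{p9} and \ref{p33}. It will be obtained by specializing Theorem \ref{t9} to $f(s)=s^{r}$ with $r\in(-\infty,0)\cup[1,\infty)$, on $[a,b]\subset(0,\infty)$, with $p$ as in the present subsubsection. The plan is therefore a substitution argument, not a new inequality.

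\textbf{Step 1: hypotheses.} We check that Theorem \ref{t9} applies. The function $f(s)=s^{r}$ is convex on $(0,\infty)$ for $r\ge 1$ and for $r<0$. The function $p$ is positive, integrable and symmetric about $\frac{x+x^{\prime}}{2}$. The relations $x+x^{\prime}=y+y^{\prime}$ and $y=(1-\alpha)x+\alpha x^{\prime}$ with $0<\alpha\le\frac12$ are standing assumptions. Hence part (1), the monotonicity of $Ip_{2}$ (decreasing on $[0,m]$, increasing on $[m^{\prime},1]$), transfers verbatim to the $Ip_{2}$ of Remark \ref{r33}.

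\textbf{Step 2: translate each term of (\ref{2.44}) and (\ref{2.45}).} The left end of (\ref{2.44}) gives
\[
2f\left(\frac{x+x^{\prime}}{2}\right)\int_{x}^{x^{\prime}}p(s)\,ds=2A^{r}(x,x^{\prime})\int_{x}^{x^{\prime}}p(s)\,ds.
\]
The term $(1-t)[f(y)+f(y^{\prime})]$ becomes $2(1-t)A(y^{r},(y^{\prime})^{r})$. The integral $\int_{x}^{x^{\prime}}[f((1-\alpha)x+\alpha s)+f((1-\alpha)x^{\prime}+\alpha s)]p(s)\,ds$ is left as written, with $f$ replaced by the $r$-th power. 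The right end becomes $2A(x^{r},(x^{\prime})^{r})\int_{x}^{x^{\prime}}p(s)\,ds$. Finally, (\ref{2.45}) gives $Ip_{2}(t)\le Ip_{1}(t)$.

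\textbf{Step 3: optional weighted-free version.} To parallel Propositions \ref{p49}--\ref{p56}, we would also record the unweighted case $p(s)=\frac{1}{2(y-x)}$, using Remark \ref{r11}. For $r\neq -1$ the middle quantities are expressed through $L_{r}^{r}$. For $r=-1$ they are expressed through $H^{-1}$ and $L$, as in Remark \ref{r33}.

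\textbf{Main difficulty.} The only real work is the bookkeeping in Step 2. First, we must track the factor $2$ that arises because $\int_{x}^{x^{\prime}}p$ replaces $\int_{\Omega}p$. Second, we must rewrite $f(\frac{x+x^{\prime}}{2})$ correctly as $A^{r}(x,x^{\prime})$, with the $r$-th power of the mean, not $A(x^{r},(x^{\prime})^{r})$. Beyond that, everything follows directly from Theorem \ref{t9}.
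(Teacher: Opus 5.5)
Your proposal is correct and follows the paper's route: Proposition \ref{p57} is obtained simply by substituting $f(s)=s^{r}$ into Theorem \ref{t9}. The monotonicity comes from part (2) of Theorem \ref{t9}, the chain from (\ref{2.44}) via $f\left(\frac{x+x^{\prime}}{2}\right)=A^{r}(x,x^{\prime})$, $f(y)+f(y^{\prime})=2A(y^{r},(y^{\prime})^{r})$ and $f(x)+f(x^{\prime})=2A(x^{r},(x^{\prime})^{r})$, and the comparison $Ip_{2}\le Ip_{1}$ from (\ref{2.45}).

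Two small remarks. Your Step 3 is unnecessary, since the statement contains no unweighted case. Also, Theorem \ref{t9} needs $p$ defined and symmetric on all of $[x,x^{\prime}]$, so the subsection's standing hypothesis $p:\Omega\rightarrow(0,\infty)$ has to be read that way.
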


\begin{enumerate}
\item $Ip_{2}$\textit{\ is decreasing on }$\left[  0,m\right]  $\textit{\ and
increasing on }$\left[  m^{\prime},1\right]  .$

\item \textit{The following inequalities hold for all }$t\in\left[
0,1\right]  :$%
\begin{align*}
&  2A^{r}\left(  x,x^{\prime}\right)  \int_{x}^{x^{\prime}}p\left(  s\right)
ds\\
&  \leq Ip_{2}\left(  t\right)  \leq2\left(  1-t\right)  A\left(
y^{r},\left(  y^{\prime}\right)  ^{r}\right)  \int_{x}^{x^{\prime}}p\left(
s\right)  ds\\
&  +t\int\nolimits_{x}^{x^{\prime}}\left[  \left(  \left(  1-\alpha\right)
x+\alpha s\right)  ^{r}+\left(  \left(  1-\alpha\right)  x^{\prime}+\alpha
s\right)  ^{r}\right]  p\left(  s\right)  ds\\
&  \leq\int\nolimits_{x}^{x^{\prime}}\left[  \left(  \left(  1-\alpha\right)
x+\alpha s\right)  ^{r}+\left(  \left(  1-\alpha\right)  x^{\prime}+\alpha
s\right)  ^{r}\right]  p\left(  s\right)  ds\\
&  \leq2A\left(  x^{r},\left(  x^{\prime}\right)  ^{r}\right)  \int%
_{x}^{x^{\prime}}p\left(  s\right)  ds.
\end{align*}%
\[
Ip_{2}\left(  t\right)  \leq Ip_{1}\left(  t\right)  .
\]

\end{enumerate}

\begin{proposition}
\label{p58}$Jp$\ is increasing on $\left[  0,1\right]  $\ and the following
inequality holds for all $t\in\left[  0,1\right]  :$%
\begin{align*}
&  2A\left(  A^{r}\left(  x,y\right)  ,A^{r}\left(  x^{\prime},y^{\prime
}\right)  \right)  \int_{x}^{x^{\prime}}p\left(  s\right)  ds\\
&  =Jp\left(  0\right)  \leq Jp\left(  t\right)  \leq Jp\left(  1\right) \\
&  =\int\nolimits_{x}^{x^{\prime}}\left[  \left(  \left(  1-\alpha\right)
x+\alpha s\right)  ^{r}+\left(  \left(  1-\alpha\right)  x^{\prime}+\alpha
s\right)  ^{r}\right]  p\left(  s\right)  ds.
\end{align*}

\end{proposition}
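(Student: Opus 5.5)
This is the specialization of Theorem \ref{t10} to $f(s)=s^{r}$, with $r\in(-\infty,0)\cup[1,\infty)$ and $s\in[a,b]\subset(0,\infty)$. So the plan is to check that Theorem \ref{t10} applies and then translate its endpoint values into the notation of Remark \ref{r33}.

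\textbf{Hypotheses of Theorem \ref{t10}.} For $r\geq1$ or $r<0$, the function $f(s)=s^{r}$ is convex on $[a,b]\subset(0,\infty)$, since $f''(s)=r(r-1)s^{r-2}\geq0$. The weight $p$ is nonnegative, integrable and symmetric about $\frac{x+x^{\prime}}{2}$, as assumed throughout this subsection. The relations $x+x^{\prime}=y+y^{\prime}$ and $y=(1-\alpha)x+\alpha x^{\prime}$ give $y^{\prime}=\alpha x+(1-\alpha)x^{\prime}$, which is the setting of Subsection 2.2. Therefore Theorem \ref{t10} gives that $Jp$ is convex and increasing on $[0,1]$, together with inequality (\ref{2.47}).

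\textbf{Endpoint values.} Substituting $f(s)=s^{r}$ into the definition of $Jp$, as listed in Remark \ref{r33}, gives the following.
\begin{itemize}
\item At $t=0$ the integrand is constant in $s$, so
\[
Jp(0)=\left[\left(\tfrac{x+y}{2}\right)^{r}+\left(\tfrac{x^{\prime}+y^{\prime}}{2}\right)^{r}\right]\int_{x}^{x^{\prime}}p(s)\,ds.
\]
Since $A^{r}(u,v)=\left(\frac{u+v}{2}\right)^{r}$, this equals $2A\left(A^{r}(x,y),A^{r}(x^{\prime},y^{\prime})\right)\int_{x}^{x^{\prime}}p(s)\,ds$.
\item At $t=1$,
\[
Jp(1)=\int_{x}^{x^{\prime}}\left[\left((1-\alpha)x+\alpha s\right)^{r}+\left((1-\alpha)x^{\prime}+\alpha s\right)^{r}\right]p(s)\,ds,
\]
which is the right-hand side of the stated inequality.
\end{itemize}
Monotonicity of $Jp$ then yields $Jp(0)\leq Jp(t)\leq Jp(1)$ for all $t\in[0,1]$.

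The only point needing care is the left-hand side of (\ref{2.47}). As printed there, the factor $\int_{x}^{x^{\prime}}p(s)\,ds$ appears to multiply only the second summand. The correct value, from $Jp(0)$ with the integrand constant in $s$, multiplies the whole bracket by that integral, which is the form used above. No further obstacle is expected.
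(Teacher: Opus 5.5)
Your proposal is correct and matches the paper's route: Proposition~\ref{p58} is obtained by specializing Theorem~\ref{t10} to the convex function $f(s)=s^{r}$ (via Remark~\ref{r33}) and reading off $Jp(0)$ and $Jp(1)$. You are also right that the left-hand side of (\ref{2.47}) is missing brackets: $\int_{x}^{x^{\prime}}p(s)\,ds$ should multiply the whole sum $f(\frac{x+y}{2})+f(\frac{x^{\prime}+y^{\prime}}{2})$.
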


\begin{proposition}
\label{p59}\textit{For all }$t\in\left[  0,1\right]  ,$ we have $Ip_{1}\left(
t\right)  \leq Jp\left(  t\right)  .$
\end{proposition}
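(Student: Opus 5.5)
This is the specialization of Theorem \ref{t11} to $f(s)=s^{r}$ in the setting of Remark \ref{r33}, so the plan is to check the hypotheses of that theorem and then invoke it.

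First, the hypotheses. In this subsubsection we have $0<a\leq x<y\leq y^{\prime}<x^{\prime}\leq b$, $x+x^{\prime}=y+y^{\prime}$, $y=(1-\alpha)x+\alpha x^{\prime}$ with $0<\alpha\leq\frac{1}{2}$, and $p$ is positive, integrable and symmetric to $\frac{x+x^{\prime}}{2}$. The function $f(s)=s^{r}$ is convex on $[a,b]\subset(0,\infty)$ whenever $r\in(-\infty,0)\cup[1,\infty)$, since then $f^{\prime\prime}(s)=r(r-1)s^{r-2}\geq0$ for $s>0$. This positivity of the domain is the only point that needs care, because $a>0$ is what makes negative powers admissible. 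Hence all hypotheses of Section 2.2 hold.

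Next, identify the functions. With this choice of $f$, the expressions for $Ip_{1}$ and $Jp$ listed in Remark \ref{r33} are exactly the general $Ip_{1}$ and $Jp$ of Section 2.2, with $f(\cdot)$ replaced by $(\cdot)^{r}$.

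Finally, apply Theorem \ref{t11}. It gives $Ip_{1}(t)\leq Jp(t)$ for all $t\in[0,1]$, which is the claim.

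For the reader, the mechanism inside Theorem \ref{t11} is as follows. Write both functions as integrals over $s\in[x,y]$ using the identities (\ref{2.43}) and (\ref{2.48}). For each $s$, the pairs $\{ts+(1-t)y,\ t(x+x^{\prime}-s)+(1-t)y^{\prime}\}$ and $\{ts+(1-t)\frac{x+y}{2},\ t(x+x^{\prime}-s)+(1-t)\frac{x^{\prime}+y^{\prime}}{2}\}$ have equal sums, and the first pair is nested inside the second. Lemma \ref{l2} then compares the two pairs pointwise, and integrating over $s$ yields the inequality.

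No genuine obstacle is expected. The only step requiring attention is confirming the convexity of $s^{r}$ on the stated range of $r$, which follows from $a>0$.
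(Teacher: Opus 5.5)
Your proposal is correct and follows the paper's route: Proposition \ref{p59} is obtained by specializing Theorem \ref{t11} to $f(s)=s^{r}$, which is convex on $[a,b]\subset(0,\infty)$ for $r\in(-\infty,0)\cup[1,\infty)$. Your sketch of the mechanism is also exactly the paper's proof of Theorem \ref{t11}: rewrite both functions via (\ref{2.43}) and (\ref{2.48}), then apply Lemma \ref{l2} pointwise to the nested pairs with equal sum $x+x^{\prime}$.
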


\begin{proposition}
\label{p60}$Mp$\ is increasing on $\left[  0,1\right]  $\ and the following
inequalities hold for all $t\in\left[  0,1\right]  :$%
\begin{align*}
&  \int\nolimits_{x}^{x^{\prime}}\left[  \left(  \left(  1-\alpha\right)
x+\alpha s\right)  ^{r}+\left(  \left(  1-\alpha\right)  x^{\prime}+\alpha
s\right)  ^{r}\right]  p\left(  s\right)  ds\\
&  =Mp\left(  0\right)  \leq Mp\left(  t\right)  \leq Mp\left(  1\right) \\
&  =A\left(  A\left(  x^{r},y^{r}\right)  ,A\left(  \left(  y^{\prime}\right)
^{r},\left(  x^{\prime}\right)  ^{r}\right)  \right)  \int\nolimits_{x}%
^{x^{\prime}}p\left(  s\right)  ds.
\end{align*}%
\begin{align*}
&  Mp\left(  t\right) \\
&  \leq\left(  1-t\right)  \int\nolimits_{x}^{x^{\prime}}\left[  \left(
\left(  1-\alpha\right)  x+\alpha s\right)  ^{r}+\left(  \left(
1-\alpha\right)  x^{\prime}+\alpha s\right)  ^{r}\right]  p\left(  s\right)
ds\\
&  +tA\left(  A\left(  x^{r},y^{r}\right)  ,A\left(  \left(  y^{\prime
}\right)  ^{r},\left(  x^{\prime}\right)  ^{r}\right)  \right)  \int%
\nolimits_{x}^{x^{\prime}}p\left(  s\right)  ds\\
&  \leq A\left(  A\left(  x^{r},y^{r}\right)  ,A\left(  \left(  y^{\prime
}\right)  ^{r},\left(  x^{\prime}\right)  ^{r}\right)  \right)  \int%
\nolimits_{x}^{x^{\prime}}p\left(  s\right)  ds\\
&  \leq2A\left(  x^{r},\left(  x^{\prime}\right)  ^{r}\right)  \int%
_{x}^{x^{\prime}}p\left(  s\right)  ds.
\end{align*}

\end{proposition}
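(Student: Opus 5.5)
Proposition \ref{p60} is Theorem \ref{t12} specialized to $f(s)=s^{r}$ with $r\in(-\infty,0)\cup[1,\infty)$ on $[a,b]\subset(0,\infty)$. The plan is to check the hypotheses of Theorem \ref{t12}, substitute, and rewrite the endpoint expressions as arithmetic means.

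First, the hypotheses. For $r\geq1$ or $r<0$, $f''(s)=r(r-1)s^{r-2}\geq0$ on $(0,\infty)$, so $f$ is convex on $[a,b]$. The weight $p$ is integrable and symmetric to $\frac{x+x^{\prime}}{2}$ by the standing assumptions of this subsection. The relations $y=(1-\alpha)x+\alpha x^{\prime}$, $y^{\prime}=\alpha x+(1-\alpha)x^{\prime}$ with $0<\alpha\leq\frac{1}{2}$ follow from $x+x^{\prime}=y+y^{\prime}$. Hence Theorem \ref{t12} applies with $Mp$ as written in Remark \ref{r33}.

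Second, monotonicity and the first chain. Theorem \ref{t12} gives that $Mp$ is increasing on $[0,1]$. Substituting $f(s)=s^{r}$ into $Mp(0)$ gives the integral $\int_{x}^{x^{\prime}}[((1-\alpha)x+\alpha s)^{r}+((1-\alpha)x^{\prime}+\alpha s)^{r}]p(s)\,ds$. Substituting into $Mp(1)$ gives $\frac{x^{r}+y^{r}+(y^{\prime})^{r}+(x^{\prime})^{r}}{2}\int_{x}^{x^{\prime}}p(s)\,ds$. By the definition $A(u,v)=\frac{u+v}{2}$,
\[
\frac{x^{r}+y^{r}+(y^{\prime})^{r}+(x^{\prime})^{r}}{2}=A\left(A(x^{r},y^{r}),A((y^{\prime})^{r},(x^{\prime})^{r})\right)\cdot 2 .
\]
This factor $2$ is the one bookkeeping point that needs care; I will check it against the paper's normalization. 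Taken literally, $A(A(x^{r},y^{r}),A((y^{\prime})^{r},(x^{\prime})^{r}))=\frac{x^{r}+y^{r}+(y^{\prime})^{r}+(x^{\prime})^{r}}{4}$. So the stated identity for $Mp(1)$ matches \eqref{2.49} only if the paper's convention absorbs the factor $2$. I will state the identity by direct substitution and record the normalization explicitly; if needed, I will write the right-hand side as $2A(A(\cdot),A(\cdot))$.

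Third, the second chain. It is \eqref{2.50} with the same substitutions. Its last step, $\frac{f(x)+f(y)+f(y^{\prime})+f(x^{\prime})}{2}\leq f(x)+f(x^{\prime})$, follows from Lemma \ref{l1} applied to $x\leq y\leq y^{\prime}\leq x^{\prime}$, giving $y^{r}+(y^{\prime})^{r}\leq x^{r}+(x^{\prime})^{r}$. The right-hand side $f(x)+f(x^{\prime})$ equals $2A(x^{r},(x^{\prime})^{r})$, which is what appears in the final bound.

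The main obstacle is this normalization bookkeeping of the factor in $A$, not any analytic difficulty. Everything else is immediate from Theorem \ref{t12}.
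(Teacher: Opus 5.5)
Your route is the paper's own. Proposition~\ref{p60} is stated there with no separate argument, as the specialization of Theorem~\ref{t12} to $f(s)=s^{r}$, and your checks are fine: convexity of $s^{r}$ on $[a,b]\subset(0,\infty)$, $y^{\prime}=\alpha x+(1-\alpha)x^{\prime}$, the convex-combination bound, and the last step via Lemma~\ref{l1}.

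However, the factor of $2$ you flagged should not be left as a hedge, because no normalization absorbs it. The paper defines $A(u,v)=\frac{u+v}{2}$ explicitly, so $A\bigl(A(x^{r},y^{r}),A((y^{\prime})^{r},(x^{\prime})^{r})\bigr)=\frac{1}{4}\bigl[x^{r}+y^{r}+(y^{\prime})^{r}+(x^{\prime})^{r}\bigr]$. On the other hand, the definition of $Mp$ together with $\int_{x}^{\frac{x+x^{\prime}}{2}}p(s)\,ds=\frac{1}{2}\int_{x}^{x^{\prime}}p(s)\,ds$ (symmetry of $p$) gives $Mp(1)=\frac{1}{2}\bigl[x^{r}+y^{r}+(y^{\prime})^{r}+(x^{\prime})^{r}\bigr]\int_{x}^{x^{\prime}}p(s)\,ds$, exactly as in \eqref{2.49}. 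So the proposition as printed is false.

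Concretely, take $r=1$. The integrands are affine, and $\int_{x}^{x^{\prime}}s\,p(s)\,ds=\frac{x+x^{\prime}}{2}\int_{x}^{x^{\prime}}p(s)\,ds$, so $Mp(t)=(x+x^{\prime})\int_{x}^{x^{\prime}}p(s)\,ds$ for every $t$. The statement instead asserts $Mp(1)=\frac{x+x^{\prime}}{2}\int_{x}^{x^{\prime}}p(s)\,ds$. In general, at $t=1$ the printed second chain would force $Mp(1)\leq\frac{1}{2}Mp(1)$, which is impossible since $Mp(1)>0$.

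What your argument actually proves, and what you should state, is the corrected proposition. There, $A\bigl(A(x^{r},y^{r}),A((y^{\prime})^{r},(x^{\prime})^{r})\bigr)\int_{x}^{x^{\prime}}p(s)\,ds$ is replaced by $2A\bigl(A(x^{r},y^{r}),A((y^{\prime})^{r},(x^{\prime})^{r})\bigr)\int_{x}^{x^{\prime}}p(s)\,ds$ wherever it occurs: in the value of $Mp(1)$ and in the two middle bounds of the second chain. The formula for $Mp(0)$ and the final bound $2A(x^{r},(x^{\prime})^{r})\int_{x}^{x^{\prime}}p(s)\,ds$ are correct as printed.

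This matches the rest of the paper. The Beta-function analogue, Proposition~\ref{p12}, has the right constant, since there $\int_{x}^{x^{\prime}}p(s)\,ds=2B(\rho,\rho;x,\frac{1}{2})$. The identric analogue, Proposition~\ref{p36}, carries the same slip.

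One smaller point in your hypothesis check: the standing assumptions of this subsection only give $p$ on $\Omega$. Since $Mp$ integrates $p$ over all of $[x,x^{\prime}]$, you need $p$ integrable and symmetric on $[x,x^{\prime}]$, as in Section~2.
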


\begin{proposition}
\label{p61}$Np$\ is increasing on $\left[  0,1\right]  $\ and the following
inequalities hold for all $t\in\left[  0,1\right]  :$%
\begin{align*}
&  \int\nolimits_{x}^{x^{\prime}}\left[  \left(  \left(  1-\alpha\right)
x+\alpha s\right)  ^{r}+\left(  \left(  1-\alpha\right)  x^{\prime}+\alpha
s\right)  ^{r}\right]  p\left(  s\right)  ds\\
&  =Np\left(  0\right)  \leq Np\left(  t\right)  \leq Np\left(  1\right)
=2A\left(  x^{r},\left(  x^{\prime}\right)  ^{r}\right)  \int_{x}^{x^{\prime}%
}p\left(  s\right)  ds.
\end{align*}%
\begin{align*}
&  Np\left(  t\right) \\
&  \leq\left(  1-t\right)  \int\nolimits_{x}^{x^{\prime}}\left[  \left(
\left(  1-\alpha\right)  x+\alpha s\right)  ^{r}+\left(  \left(
1-\alpha\right)  x^{\prime}+\alpha s\right)  ^{r}\right]  p\left(  s\right)
ds\\
&  +2tA\left(  x^{r},\left(  x^{\prime}\right)  ^{r}\right)  \int%
_{x}^{x^{\prime}}p\left(  s\right)  ds\\
&  \leq2A\left(  x^{r},\left(  x^{\prime}\right)  ^{r}\right)  \int%
_{x}^{x^{\prime}}p\left(  s\right)  ds.
\end{align*}

\end{proposition}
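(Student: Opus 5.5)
This proposition is a direct specialization of Theorem \ref{t13} to $f(s)=s^{r}$ on $[a,b]$ with $0<a$. The plan is to verify the hypotheses, invoke the theorem, and rewrite its endpoint values in the notation of Remark \ref{r33}.

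\textbf{Step 1: hypotheses.} We first check that $f(s)=s^{r}$ is convex on $[a,b]\subset(0,\infty)$ for $r\in(-\infty,0)\cup[1,\infty)$. This holds because $f''(s)=r(r-1)s^{r-2}\geq 0$ in both ranges. The weight $p$ is nonnegative, integrable, and symmetric about $\frac{x+x^{\prime}}{2}$ by the standing assumptions of this subsection. Since $y=(1-\alpha)x+\alpha x^{\prime}$ with $0<\alpha\leq\frac12$ and $x+x^{\prime}=y+y^{\prime}$, we also get $y^{\prime}=\alpha x+(1-\alpha)x^{\prime}$. This is exactly the setting of Subsection 2.2 in which $Np$ is defined. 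Remark \ref{r33} already lists $Np$ in the present specialization.

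\textbf{Step 2: apply Theorem \ref{t13}.} That theorem shows $Np$ is convex and increasing on $[0,1]$. It gives the chain (\ref{2.52}),
\[
Np(0)=\int_{x}^{x^{\prime}}\left[ f((1-\alpha)x+\alpha s)+f((1-\alpha)x^{\prime}+\alpha s)\right] p(s)\,ds\leq Np(t)\leq Np(1)=[f(x)+f(x^{\prime})]\int_{x}^{x^{\prime}}p(s)\,ds ,
\]
together with the convex-combination bound (\ref{2.53}). Substituting $f(s)=s^{r}$ gives $Np(0)$ directly in the displayed form. For $Np(1)$ we use $f(x)+f(x^{\prime})=x^{r}+(x^{\prime})^{r}=2A\left(x^{r},(x^{\prime})^{r}\right)$, which yields the stated right-hand value $2A\left(x^{r},(x^{\prime})^{r}\right)\int_{x}^{x^{\prime}}p(s)\,ds$.

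\textbf{Step 3: the second chain.} The second displayed chain is (\ref{2.53}) after the same substitution. Its middle term is $(1-t)Np(0)+t\,Np(1)$, which is bounded above by $Np(1)$ because $Np(0)\leq Np(1)$.

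The only point needing care is Step 1, namely convexity of $s^{r}$ for negative $r$. This requires $a>0$, which the subsection assumes. Everything else is substitution into Theorem \ref{t13}.
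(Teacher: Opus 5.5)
Your proposal is correct and follows the paper's own route: Proposition~\ref{p61} is obtained by specializing Theorem~\ref{t13} to $f(s)=s^{r}$, which is convex on $[a,b]$ because $a>0$ and $r\in(-\infty,0)\cup[1,\infty)$, and then rewriting $f(x)+f(x^{\prime})=2A\left(x^{r},(x^{\prime})^{r}\right)$. One point of precision: Subsection~3.2 formally takes $p$ on $\Omega$ only, whereas $Np$ and Theorem~\ref{t13} need $p$ integrable and symmetric on all of $[x,x^{\prime}]$ as in Subsection~2.1; the statement itself presupposes this through $\int_{x}^{x^{\prime}}p(s)\,ds$, and the paper leaves it implicit as well.
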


\begin{proposition}
\label{p62}\textit{For all }$t\in\left[  0,1\right]  ,$ we have $Mp\left(
t\right)  \leq Np\left(  t\right)  .$
\end{proposition}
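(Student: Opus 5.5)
Proposition \ref{p62} is the specialization of Theorem \ref{t14} to the convex function $f(s)=s^{r}$ on $[a,b]$, where $r\in(-\infty,0)\cup[1,\infty)$ and $0<a$. I would therefore verify that the hypotheses of Theorem \ref{t14} hold in the setting of this subsubsection and then read off the conclusion, using the explicit forms of $Mp$ and $Np$ recorded in Remark \ref{r33}.

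\textbf{Step 1 (hypotheses).} Since $f''(s)=r(r-1)s^{r-2}\geq0$ for $s>0$ whenever $r<0$ or $r\geq1$, the function $f$ is convex on $[a,b]\subset(0,\infty)$. The points satisfy $a\leq x<y\leq y^{\prime}<x^{\prime}\leq b$, $x+x^{\prime}=y+y^{\prime}$ and $y=(1-\alpha)x+\alpha x^{\prime}$ with $0<\alpha\leq\frac12$. The weight $p$ is positive, integrable and symmetric to $\frac{x+x^{\prime}}{2}$. These are exactly the standing assumptions of Subsection 2.2.

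\textbf{Step 2 (identification).} Substituting $f(s)=s^{r}$ into the definitions of $Mp$ and $Np$ from Subsection 2.2 reproduces the expressions listed in Remark \ref{r33}. Theorem \ref{t14} then gives $Mp(t)\leq Np(t)$ for all $t\in[0,1]$.

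The only point needing care is domain bookkeeping. For $p$ defined on $\Omega$ as in this subsubsection, the integrals defining $Mp$ and $Np$ run over $[x,x^{\prime}]$. The same slight mismatch already appears in Propositions \ref{p60}--\ref{p61}, so I would treat $p$ as extended to a symmetric integrable function on $[x,x^{\prime}]$, as the paper implicitly does.

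Beyond that, the argument requires no further work. The proof of Theorem \ref{t14} uses only Lemma \ref{l2} applied pointwise together with the identities (\ref{2.51}) and (\ref{2.55}), and all of these hold for any convex $f$.
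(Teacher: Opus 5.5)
Your proposal is correct and follows the paper's own route. Proposition \ref{p62} is obtained exactly by specializing Theorem \ref{t14} to $f(s)=s^{r}$, which is convex on $[a,b]\subset(0,\infty)$ for $r<0$ or $r\geq1$. Your remark about extending $p$ from $\Omega$ to a symmetric integrable function on $[x,x^{\prime}]$ makes explicit a domain inconsistency that the paper leaves implicit.
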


\begin{proposition}
\label{p63}The following inequalities hold:%
\begin{align*}
2  &  A\left(  y^{r},\left(  y^{\prime}\right)  ^{r}\right)  \int%
_{x}^{x^{\prime}}p\left(  s\right)  ds\\
&  \leq\frac{2}{\alpha}\left[  L_{r}^{r}\left(  p\left(  \frac{1}{\alpha
}\left(  2s-x-y\right)  +x\right)  ;\frac{x+y}{2},y\right)  \right. \\
&  \text{ \ \ \ \ \ \ \ \ \ \ }+\left.  L_{r}^{r}\left(  p\left(  \frac
{1}{\alpha}\left(  2s-y^{\prime}-x^{\prime}\right)  +x^{\prime}\right)
;y^{\prime},\frac{x^{\prime}+y^{\prime}}{2}\right)  \right] \\
&  \leq\int_{0}^{1}Ip_{1}\left(  t\right)  dt\\
&  \leq\frac{1}{2}\left[  2A\left(  y^{r},\left(  y^{\prime}\right)
^{r}\right)  \int_{x}^{x^{\prime}}p\left(  s\right)  ds\right. \\
&  \text{ }+\left.  \int\nolimits_{x}^{x^{\prime}}\left[  \left(  \left(
1-\alpha\right)  x+\alpha s\right)  ^{r}+\left(  \left(  1-\alpha\right)
x^{\prime}+\alpha s\right)  ^{r}\right]  p\left(  s\right)  ds\right]  .
\end{align*}%
\begin{align*}
0  &  \leq\int\nolimits_{x}^{x^{\prime}}\left[  \left(  \left(  1-\alpha
\right)  x+\alpha s\right)  ^{r}+\left(  \left(  1-\alpha\right)  x^{\prime
}+\alpha s\right)  ^{r}\right]  p\left(  s\right)  ds-Ip_{1}\left(  t\right)
\\
&  \leq\frac{2\left(  1-t\right)  \left(  y-x\right)  }{\alpha}\left[
A\left(  x^{r},\left(  x^{\prime}\right)  ^{r}\right)  -A\left(  L_{r}%
^{r}\left(  x,y\right)  ,L_{r}^{r}\left(  y^{\prime},x^{\prime}\right)
\right)  \right]  \left\Vert p\right\Vert _{\infty}%
\end{align*}
for $r\neq-1.$%
\begin{align*}
0  &  \leq\int\nolimits_{x}^{x^{\prime}}\left[  \left(  \left(  1-\alpha
\right)  x+\alpha s\right)  ^{-1}+\left(  \left(  1-\alpha\right)  x^{\prime
}+\alpha s\right)  ^{-1}\right]  p\left(  s\right)  ds-Ip_{1}\left(  t\right)
\\
&  \leq\frac{2\left(  1-t\right)  \left(  y-x\right)  }{\alpha}\left[
H^{-1}\left(  x,x^{\prime}\right)  -H^{-1}\left(  L\left(  x,y\right)
,L\left(  y^{\prime},x^{\prime}\right)  \right)  \right]  \left\Vert
p\right\Vert _{\infty}%
\end{align*}
for $r=-1.$%
\begin{align*}
0  &  \leq2A\left(  x^{r},\left(  x^{\prime}\right)  ^{r}\right)  \int%
_{x}^{x^{\prime}}p\left(  s\right)  ds-Ip_{1}\left(  t\right) \\
&  \leq r\left(  y-x\right)  \left(  \left(  x^{\prime}\right)  ^{r-1}%
-x^{r-1}\right)  \int_{x}^{x^{\prime}}p\left(  s\right)  ds\text{ \ \ }\left(
t\in\left[  0,1\right]  \right)  .
\end{align*}%
\begin{align*}
0  &  \leq Ip_{1}\left(  t\right)  -2A\left(  y^{r},\left(  y^{\prime}\right)
^{r}\right)  \int_{x}^{x^{\prime}}p\left(  s\right)  ds\\
&  \leq r\left(  y-x\right)  \left(  \left(  x^{\prime}\right)  ^{r-1}%
-x^{r-1}\right)  \int_{x}^{x^{\prime}}p\left(  s\right)  ds\text{ \ \ }\left(
t\in\left[  0,1\right]  \right)  .
\end{align*}

\end{proposition}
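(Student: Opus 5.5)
This is the specialization of Theorem \ref{t15} to $f(s)=s^{r}$ (convex on $[a,b]\subset(0,\infty)$ for $r\in(-\infty,0)\cup[1,\infty)$), with $Ip_1$, $\Omega$, $p$ as in Remark \ref{r33}. So the plan is to translate each conclusion of Theorem \ref{t15} into mean notation using the identities of Remark \ref{r33}. No new inequality is needed, only the bookkeeping of integrals.

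\textbf{First chain.} Substitute $f(s)=s^r$ into (\ref{2.59}). The endpoint $[f(y)+f(y')]\int_x^{x'}p$ becomes $2A(y^r,(y')^r)\int_x^{x'}p$. The middle terms are $\frac{2}{\alpha}\int s^r q(s)\,ds$ with weights $q(s)=p\left(\frac{1}{\alpha}(2s-x-y)+x\right)$ on $[\frac{x+y}{2},y]$, and the analogous weight on $[y',\frac{x'+y'}{2}]$. By the definition $L_r^r(q;u,v)=\int_u^v s^r q(s)\,ds$, these are exactly the two $L_r^r$ terms. The right endpoint is read off directly.

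\textbf{Second inequality.} Use (\ref{2.60}), with $f$ differentiable and $p$ bounded. The only computation is $\int_\Omega s^r\,ds=(y-x)\,[L_r^r(x,y)+L_r^r(y',x')]$ for $r\neq-1$, using $x'-y'=y-x$. This gives
\[
(f(x)+f(x'))(y-x)-\int_\Omega f=2(y-x)\left[A(x^r,(x')^r)-A(L_r^r(x,y),L_r^r(y',x'))\right].
\]
For $r=-1$, Remark \ref{r33} replaces the bracket by $H^{-1}(x,x')-H^{-1}(L(x,y),L(y',x'))$, since $\frac{1}{2}(x^{-1}+x'^{-1})=H^{-1}(x,x')$ and $\frac{1}{y-x}\int_x^y s^{-1}\,ds=L^{-1}(x,y)$. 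This explains the two separate cases in the statement.

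\textbf{Last two bounds.} These come from (\ref{2.61}) and (\ref{2.62}) with $f'(x')-f'(x)=r\left((x')^{r-1}-x^{r-1}\right)$. Here $f(x)+f(x')=2A(x^r,(x')^r)$ and $f(y)+f(y')=2A(y^r,(y')^r)$.

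The only delicate point is convexity of $s^r$ for $r<0$ on positive arguments, which holds since $0<a$. Otherwise the main obstacle is keeping the factors $2$ and $\frac{1}{\alpha}$ consistent between (\ref{2.59}) and the $L_r^r$ notation.
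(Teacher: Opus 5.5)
Your proposal is correct and follows the paper's own route: Proposition \ref{p63} is obtained there simply by specializing Theorem \ref{t15} to $f(s)=s^{r}$ and rewriting the integrals with the identities of Remark \ref{r33}. Your bookkeeping matches the statement, including $\int_\Omega s^{r}\,ds=(y-x)\left[L_r^r(x,y)+L_r^r(y',x')\right]$ via $x'-y'=y-x$, the harmonic-mean rewriting in the case $r=-1$, and $f'(x')-f'(x)=r\left((x')^{r-1}-x^{r-1}\right)$.
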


\begin{proposition}
\label{p64}The following inequalities hold:%
\begin{align*}
&  2A^{r}\left(  y,y^{\prime}\right)  \int_{x}^{x^{\prime}}p\left(  s\right)
ds\\
&  \leq\frac{2}{\alpha}\left[  L_{r}^{r}\left(  p\left(  \frac{1}{\alpha
}\left(  2s-x-y^{\prime}\right)  +x\right)  ;\frac{x+y^{\prime}}{2}%
,\frac{y+y^{\prime}}{2}\right)  \right. \\
&  \text{ \ \ \ \ \ \ \ \ }+\left.  L_{r}^{r}\left(  p\left(  \frac{1}{\alpha
}\left(  2s-y-x^{\prime}\right)  +x^{\prime}\right)  ;\frac{y+y^{\prime}}%
{2},\frac{x^{\prime}+y}{2}\right)  \right] \\
&  \leq\int_{0}^{1}Ip_{2}\left(  t\right)  dt\\
&  \leq A\left(  y^{r},\left(  y^{\prime}\right)  ^{r}\right)  \int%
_{x}^{x^{\prime}}p\left(  s\right)  ds\\
&  \text{ }+\frac{1}{2}\int\nolimits_{x}^{x^{\prime}}\left[  \left(  \left(
1-\alpha\right)  x+\alpha s\right)  ^{r}+\left(  \left(  1-\alpha\right)
x^{\prime}+\alpha s\right)  ^{r}\right]  p\left(  s\right)  ds.
\end{align*}%
\begin{align*}
0  &  \leq\int\nolimits_{x}^{x^{\prime}}\left[  \left(  \left(  1-\alpha
\right)  x+\alpha s\right)  ^{r}+\left(  \left(  1-\alpha\right)  x^{\prime
}+\alpha s\right)  ^{r}\right]  p\left(  s\right)  ds-Ip_{2}\left(  t\right)
\\
&  \leq\frac{2\left(  1-t\right)  \left(  y-x\right)  }{\alpha}\left[
A\left(  x^{r},\left(  x^{\prime}\right)  ^{r}\right)  \frac{y^{\prime}%
-x}{y-x}-A\left(  y^{r},\left(  y^{\prime}\right)  ^{r}\right)  \frac
{y^{\prime}-y}{y-x}\right. \\
&  \left.  -A\left(  L_{r}^{r}\left(  x,y\right)  ,L_{r}^{r}\left(  y^{\prime
},x^{\prime}\right)  \right)  \right]  \left\Vert p\right\Vert _{\infty
}\text{\ }%
\end{align*}
for $t\in\left[  0,1\right]  $ and $r\neq-1.$%
\begin{align*}
0  &  \leq\int\nolimits_{x}^{x^{\prime}}\left[  \left(  \left(  1-\alpha
\right)  x+\alpha s\right)  ^{-1}+\left(  \left(  1-\alpha\right)  x^{\prime
}+\alpha s\right)  ^{-1}\right]  p\left(  s\right)  ds-Ip_{2}\left(  t\right)
\\
&  \leq\frac{2\left(  1-t\right)  \left(  y-x\right)  }{\alpha}\left[
H^{-1}\left(  x,x^{\prime}\right)  \frac{y^{\prime}-x}{y-x}-H^{-1}\left(
y,y^{\prime}\right)  \frac{y^{\prime}-y}{y-x}\right. \\
&  \left.  -H^{-1}\left(  L\left(  x,y\right)  ,L\left(  y^{\prime},x^{\prime
}\right)  \right)  \right]  \left\Vert p\right\Vert _{\infty}\text{\ }%
\end{align*}
for $t\in\left[  0,1\right]  $ and $r=-1.$%
\begin{align*}
0  &  \leq Ip_{2}\left(  t\right)  -2A^{r}\left(  y,y^{\prime}\right)
\int_{x}^{x^{\prime}}p\left(  s\right)  ds\\
&  \leq\frac{r\left(  2x^{\prime}-y-y^{\prime}\right)  \left(  \left(
x^{\prime}\right)  ^{r-1}-x^{r-1}\right)  }{2}\int_{x}^{x^{\prime}}p\left(
s\right)  ds\text{ \ \ }\left(  t\in\left[  0,1\right]  \right)  .
\end{align*}%
\begin{align*}
0  &  \leq2A\left(  x^{r},\left(  x^{\prime}\right)  ^{r}\right)  \int%
_{x}^{x^{\prime}}p\left(  s\right)  ds-Ip_{2}\left(  t\right) \\
&  \leq\frac{r\left(  2x^{\prime}-y-y^{\prime}\right)  \left(  \left(
x^{\prime}\right)  ^{r-1}-x^{r-1}\right)  }{2}\int_{x}^{x^{\prime}}p\left(
s\right)  ds\text{ \ \ }\left(  t\in\left[  0,1\right]  \right)  .
\end{align*}

\end{proposition}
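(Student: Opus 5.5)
Proposition \ref{p64} is Theorem \ref{t16} applied to $f(s)=s^{r}$, with $r\in(-\infty,0)\cup[1,\infty)$ so that $f$ is convex on $[a,b]\subset(0,\infty)$. The plan is to check that the hypotheses hold, list the dictionary of Remark \ref{r33}, and translate (\ref{2.64})--(\ref{2.67}) line by line.

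\textbf{First chain.} We substitute into (\ref{2.64}).
\begin{itemize}
\item The left end is $2f\left(\frac{y+y^{\prime}}{2}\right)\int_x^{x^{\prime}}p(s)\,ds=2A^{r}(y,y^{\prime})\int_x^{x^{\prime}}p(s)\,ds$.
\item The two weighted integrals $\int s^{r}q(s)\,ds$ are exactly $L_r^r(q;\cdot,\cdot)$ for the stated weights $q$. This uses the definition of the extended $r$-logarithmic mean.
\item The right end is $\frac12\left[(f(y)+f(y^{\prime}))\int p+\int\left[\cdots\right]p\right]$. Since $f(y)+f(y^{\prime})=2A(y^{r},(y^{\prime})^{r})$, it equals $A(y^{r},(y^{\prime})^{r})\int_x^{x^{\prime}}p+\frac12\int[\cdots]p$.
\end{itemize}

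\textbf{Second chain.} We use (\ref{2.65}). The bracket there is $(f(x)+f(x^{\prime}))(y^{\prime}-x)-(f(y)+f(y^{\prime}))(y^{\prime}-y)-\int_\Omega s^{r}\,ds$. Pulling out $2(y-x)$ turns it into
\[
2(y-x)\left[A(x^{r},(x^{\prime})^{r})\tfrac{y^{\prime}-x}{y-x}-A(y^{r},(y^{\prime})^{r})\tfrac{y^{\prime}-y}{y-x}-\tfrac{1}{2(y-x)}\int_\Omega s^{r}\,ds\right].
\]
For $r\neq-1$, Remark \ref{r33} gives $\frac{1}{2(y-x)}\int_\Omega s^{r}\,ds=A\left(L_r^r(x,y),L_r^r(y^{\prime},x^{\prime})\right)$. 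This yields the stated $r\neq-1$ bound.

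For $r=-1$, we use $\frac{1}{2(y-x)}\int_\Omega s^{-1}\,ds=H^{-1}\left(L(x,y),L(y^{\prime},x^{\prime})\right)$. We also use $A(u^{-1},v^{-1})=H^{-1}(u,v)$, which is the identity $\frac{u^{-1}+v^{-1}}{2}=\frac{u+v}{2uv}$. The sign in front of the $y$-term must be copied faithfully from (\ref{2.65}), which has $-(f(y)+f(y^{\prime}))(y^{\prime}-y)$.

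\textbf{Last two inequalities.} These come from (\ref{2.66}) and (\ref{2.67}) with $f^{\prime}(s)=rs^{r-1}$. This gives $f^{\prime}(x^{\prime})-f^{\prime}(x)=r\left((x^{\prime})^{r-1}-x^{r-1}\right)$, which is nonnegative for both ranges of $r$. Combined with $f(x)+f(x^{\prime})=2A(x^{r},(x^{\prime})^{r})$, this gives the stated bounds with factor $\frac{r(2x^{\prime}-y-y^{\prime})}{2}$.

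\textbf{Main obstacle.} The only delicate point is the $r=-1$ bookkeeping in the second chain. There we must track the harmonic-mean reciprocals and their signs correctly. Differentiability and boundedness of $p$ are inherited from the assumptions of this subsection, so (\ref{2.65}) applies as stated.
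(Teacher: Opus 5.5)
Your proposal is correct and follows the paper's own route: Proposition~\ref{p64} is Theorem~\ref{t16} specialized to $f(s)=s^{r}$, read through the dictionary of Remark~\ref{r33}, with $A(u^{-1},v^{-1})=H^{-1}(u,v)$ handling the case $r=-1$. One small correction: boundedness of $p$ is not among the standing assumptions of that subsection, which only require integrability, positivity and symmetry, so it must be added as an implicit hypothesis for the $\left\Vert p\right\Vert_{\infty}$ estimate, just as in part (2) of Theorem~\ref{t16}.
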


\begin{proposition}
\label{p65}The following inequalities hold for all $t\in\left[  0,1\right]  :$%
\[
Ip_{1}\left(  t\right)  \leq2G_{1}\left(  t\right)  \int_{x}^{x^{\prime}%
}p\left(  s\right)  ds.
\]%
\begin{align*}
0  &  \leq Ip_{1}\left(  t\right)  -2A\left(  y^{r},\left(  y^{\prime}\right)
^{r}\right)  \int_{x}^{x^{\prime}}p\left(  s\right)  ds\\
&  \leq2\left(  x^{\prime}-x\right)  \left[  G_{1}\left(  t\right)
-H_{1}\left(  t\right)  \right]  \left\Vert p\right\Vert _{\infty
}\text{\ \ \ }\left(  t\in\left[  0,1\right]  \right)  .
\end{align*}

\end{proposition}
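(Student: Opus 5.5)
Proposition \ref{p65} is the specialization of Theorem \ref{t17} to the choice $f(s)=s^{r}$ fixed in this subsubsection, so the plan is simply to verify the hypotheses of Theorem \ref{t17} and read off its two conclusions using Remark \ref{r33}.

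\textbf{Checking the hypotheses.} For $r\in(-\infty,0)\cup[1,\infty)$ and $0<a\le s\le b$, the function $f(s)=s^{r}$ satisfies $f''(s)=r(r-1)s^{r-2}\ge0$, so $f$ is convex on $[a,b]$. It is also differentiable on $\Omega\subset[a,b]$. The weight $p$ is positive, integrable and symmetric to $\frac{x+x^{\prime}}{2}$ by the standing assumptions of this subsection. One caveat: Theorem \ref{t17} requires $p$ to be bounded on $[x,x^{\prime}]$. Here $p$ is only assumed integrable, so I would either add boundedness as an implicit hypothesis or interpret $\Vert p\Vert_{\infty}$ as possibly infinite, in which case the bound is trivial.

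\textbf{First inequality.} Substituting $f(s)=s^{r}$ into $Ip_{1}$ and $G_{1}$ gives exactly the expressions listed in Remark \ref{r33}. Hence inequality $(\ref{2.69})$ becomes $Ip_{1}(t)\le 2G_{1}(t)\int_{x}^{x^{\prime}}p(s)\,ds$ directly.

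\textbf{Second inequality.} In $(\ref{2.70})$ the term $[f(y)+f(y^{\prime})]\int_{x}^{x^{\prime}}p(s)\,ds$ equals $2A\big(y^{r},(y^{\prime})^{r}\big)\int_{x}^{x^{\prime}}p(s)\,ds$. The functions $G_{1}$ and $H_{1}$ are those of Remark \ref{r33}, so the second displayed chain follows verbatim.

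\textbf{Main point to check.} The only delicate step is confirming that $H_{1}$ in $(\ref{2.70})$ is the unweighted function $H_{1}$ of Subsection 1.2, evaluated at $f=s^{r}$, and not $Hp_{1}$. The proof of Theorem \ref{t17} produces $\frac{1}{\alpha}\int_{x}^{y}[\cdots]\,ds$. Matching this to $H_{1}$ requires the identity $(y-x)/\alpha=x^{\prime}-x$, which holds because $y=(1-\alpha)x+\alpha x^{\prime}$. With that identity in hand, the conversion of the constant factor is consistent and the proposition follows.
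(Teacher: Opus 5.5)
Your proposal is correct and follows the paper's route: Proposition \ref{p65} is obtained there simply by substituting $f(s)=s^{r}$ into Theorem \ref{t17} and reading off the functions listed in Remark \ref{r33}. Your check that $\frac{2(y-x)}{\alpha}=2(x^{\prime}-x)$ is exactly the last step of the proof of Theorem \ref{t17}. Your observation that the second chain needs $p$ bounded on $[x,x^{\prime}]$, which the proposition does not state, is a fair one.
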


\begin{proposition}
\label{p66}Let $m,$ $m^{\prime}$\ \textit{be defined as in Proposition
\ref{p9}. Then }the following inequalities hold:%
\[
Ip_{2}\left(  t\right)  \leq\left(  \geq\right)  2G_{2}\left(  t\right)
\int_{x}^{x^{\prime}}p\left(  s\right)  ds\text{ \ \ \textit{as }}t\in\left[
m^{\prime},1\right]  \text{ \ }\left(  t\in\left[  0,m\right]  \right)  .
\]%
\begin{align*}
0  &  \leq Ip_{2}\left(  t\right)  -2A^{r}\left(  y,y^{\prime}\right)
\int_{x}^{x^{\prime}}p\left(  s\right)  ds\\
&  \leq\frac{2}{\alpha}\left[  \left(  y^{\prime}-x\right)  G_{2}\left(
t\right)  -2\left(  y^{\prime}-y\right)  G_{3}\left(  t\right)  -\left(
y-x\right)  H_{2}\left(  t\right)  \right]  \left\Vert p\right\Vert _{\infty
}\text{\ }\left(  t\in\left[  0,1\right]  \right)  .
\end{align*}

\end{proposition}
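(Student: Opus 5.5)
The plan is to specialize Theorem \ref{t18} to $f(s)=s^{r}$, using Remark \ref{r33} as the dictionary. Since $f\left(\frac{y+y^{\prime}}{2}\right)=A^{r}\left(y,y^{\prime}\right)$, and $G_{2},G_{3},H_{2}$ become the power-type functions listed there, the first displayed inequality is just (\ref{2.72}) with $2f\left(\frac{y+y^{\prime}}{2}\right)$ replaced by $2A^{r}\left(y,y^{\prime}\right)$. So it holds for $t\in\left[m^{\prime},1\right]$ with ``$\leq$'' and for $t\in\left[0,m\right]$ with ``$\geq$''. The lower bound $0\leq Ip_{2}\left(t\right)-2A^{r}\left(y,y^{\prime}\right)\int_{x}^{x^{\prime}}p\left(s\right)ds$ for all $t\in\left[0,1\right]$ is the first inequality of (\ref{2.44}), because $\frac{x+x^{\prime}}{2}=\frac{y+y^{\prime}}{2}$.

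The real work is the upper bound on \emph{all} of $\left[0,1\right]$; Theorem \ref{t18}(2) only gives it on $\left[m^{\prime},1\right]$. I would redo the argument of Theorem \ref{t18}(2) and remove its only $t$-dependent step, namely the sign claim (\ref{2.74}). Write $c=\frac{y+y^{\prime}}{2}$, $z_{1}=ts+\left(1-t\right)y^{\prime}$ and $z_{2}=t\left(x+x^{\prime}-s\right)+\left(1-t\right)y$ for $s\in\left[x,y\right]$. Then $z_{1}+z_{2}=2c$, and
\[
z_{2}-c=c-z_{1}=d:=t\left(y^{\prime}-s\right)-\frac{y^{\prime}-y}{2}.
\]
Starting from (\ref{2.46}) and (\ref{2.71}), the tangent inequality $f\left(z\right)-f\left(c\right)\leq\left(z-c\right)f^{\prime}\left(z\right)$ holds for every $t$. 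It gives a pointwise integrand $d\left(f^{\prime}\left(z_{2}\right)-f^{\prime}\left(z_{1}\right)\right)$. This is nonnegative for \emph{every} $t$ and $s$: if $d\geq0$ then $z_{2}\geq z_{1}$, and if $d<0$ then $z_{2}<z_{1}$, so in both cases $d$ and $f^{\prime}\left(z_{2}\right)-f^{\prime}\left(z_{1}\right)$ have the same sign by monotonicity of $f^{\prime}$. Hence we may bound $p\left(\frac{1}{\alpha}\left(s-x\right)+x\right)\leq\left\Vert p\right\Vert_{\infty}$ with no restriction on $t$. This leaves
\[
\frac{\left\Vert p\right\Vert_{\infty}}{\alpha}\int_{x}^{y}d\left(f^{\prime}\left(z_{2}\right)-f^{\prime}\left(z_{1}\right)\right)ds .
\]

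Next I split $d$ into its $t\left(y^{\prime}-s\right)$ part and its constant part. The $t\left(y^{\prime}-s\right)$ part is evaluated by the integration-by-parts identity in the proof of Theorem \ref{t18}(2); that identity is valid for all $t$ and produces $\frac{2}{\alpha}\left[\left(y^{\prime}-x\right)G_{2}\left(t\right)-\left(y^{\prime}-y\right)G_{3}\left(t\right)-\left(y-x\right)H_{2}\left(t\right)\right]$. The constant part $-\frac{y^{\prime}-y}{2}\int_{x}^{y}\left(f^{\prime}\left(z_{2}\right)-f^{\prime}\left(z_{1}\right)\right)ds$ integrates exactly to $-\frac{y^{\prime}-y}{2t}\cdot2\left[G_{2}\left(t\right)-G_{3}\left(t\right)\right]$, where $G_{2}\left(t\right)\geq G_{3}\left(t\right)$ by Lemma \ref{l2}. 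So this part is $\leq0$ and can be dropped, or partly traded against $G_{3}$.

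The main obstacle is matching the exact constant in the proposition, which carries $-2\left(y^{\prime}-y\right)G_{3}\left(t\right)$ rather than $-\left(y^{\prime}-y\right)G_{3}\left(t\right)$. My first attempt is to use part of the nonpositive constant-part contribution, $\frac{y^{\prime}-y}{t}\left(G_{3}-G_{2}\right)$, to absorb the extra $-\left(y^{\prime}-y\right)G_{3}$ term, using $G_{2}\geq G_{3}$ together with $0\leq G_{3}$ for $r\geq1$ or the analogous sign for $r<0$. If that does not close for small $t$, the fallback is this. On $\left[0,m\right]$, $Ip_{2}$ is decreasing by Theorem \ref{t9}, so $Ip_{2}\left(t\right)\leq Ip_{2}\left(0\right)=2A\left(y^{r},\left(y^{\prime}\right)^{r}\right)\int_{x}^{x^{\prime}}p\left(s\right)ds$. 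It would then remain to check directly that the right-hand side at such $t$ dominates $A\left(y^{r},\left(y^{\prime}\right)^{r}\right)-A^{r}\left(y,y^{\prime}\right)$, times the appropriate factor.
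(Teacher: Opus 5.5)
Your first display and the lower bound $0\le Ip_2(t)-2A^r(y,y')\int_x^{x'}p(s)\,ds$ are fine. They are (\ref{2.72}) and the left end of (\ref{2.44}) with $f(s)=s^r$, and specializing Theorem \ref{t18} is all the paper does here. The upper bound, however, is false as written, so your ``main obstacle'' cannot be removed by any absorption argument.

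Take $r=1$, $p\equiv 1$ and $\alpha<\frac12$, so that $y<y'$. Then $G_2(t)=G_3(t)=H_2(t)=\frac{x+x'}{2}$. By the symmetry of $p$, $Ip_2(t)=(x+x')\int_x^{x'}p(s)\,ds=2A(y,y')\int_x^{x'}p(s)\,ds$, so the middle quantity is $0$ for every $t$. The claimed right-hand side, by contrast, equals $\frac{x+x'}{\alpha}(y-y')\|p\|_\infty<0$.

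The factor $2$ in $-2(y'-y)G_3(t)$ is a slip in the last line of the proof of Theorem \ref{t18}(2). The integration-by-parts identity there, which you reproduce correctly, gives $-(y'-y)G_3(t)$, and Corollary \ref{c4} indeed carries coefficient $1$. Consequently your first attempt, which would need $G_2(t)-G_3(t)\ge 2tG_3(t)$, fails already for $r=1$, where $G_2=G_3>0$. Your fallback fails as well: at $t=0$ the claimed right-hand side is $-\frac{y'-y}{\alpha}\bigl(y^r+(y')^r\bigr)\|p\|_\infty<0$, while the left side is nonnegative.

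The extension from $[m',1]$ to all of $[0,1]$ is a second, independent problem. Your observation that $d\,\bigl(f'(z_2)-f'(z_1)\bigr)\ge 0$ for all $t$ and $s$ is correct and improves on (\ref{2.74}), since it lets you replace $p$ by $\|p\|_\infty$ for every $t$. But $G_2(t)\ge G_3(t)$ is false for small $t$. The nodes of $G_2$ lie at distance $\bigl|\frac{y'-y}{2}-t(y'-x)\bigr|$ from $c=\frac{y+y'}{2}$, and those of $G_3$ at distance $\bigl|\frac12-t\bigr|(y'-y)$. So Lemma \ref{l1} gives $G_2(t)\le G_3(t)$ for $0\le t<t^{*}:=\frac{y'-y}{(y'-x)+(y'-y)}$, and there the term $-\frac{y'-y}{t}\bigl(G_2(t)-G_3(t)\bigr)$ is nonnegative and cannot be discarded.

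Even the corrected bound, with coefficient $1$ on $G_3$, fails at $t=0$ for $r=2$. The bracket $(y'-x)G_2(0)-(y'-y)G_3(0)-(y-x)H_2(0)$ vanishes, whereas $Ip_2(0)-2A^2(y,y')\int_x^{x'}p(s)\,ds=\frac{(y'-y)^2}{2}\int_x^{x'}p(s)\,ds>0$.

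What your argument actually establishes is twofold. First, the inequality with $-(y'-y)G_3(t)$ holds on $[t^{*},1]\supseteq[m',1]$. Second, on $(0,1]$ you get the bound $\frac{2}{\alpha}\bigl[(y'-x)G_2(t)-(y'-y)G_3(t)-(y-x)H_2(t)-\frac{y'-y}{2t}\bigl(G_2(t)-G_3(t)\bigr)\bigr]\|p\|_\infty$.
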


\begin{proposition}
\label{p67}Let $m^{\prime}$\ \textit{be defined as in Proposition \ref{p9}.
Then} we have:%
\begin{align*}
&  2A\left(  G_{1}\left(  t\right)  ,G_{2}\left(  t\right)  \right)  \int%
_{x}^{x^{\prime}}p\left(  s\right)  ds\\
&  \leq Sp\left(  t\right) \\
&  \leq\left(  1-t\right)  \int\nolimits_{x}^{x^{\prime}}\left[  \left(
\left(  1-\alpha\right)  x+\alpha s\right)  ^{r}+\left(  \left(
1-\alpha\right)  x^{\prime}+\alpha s\right)  ^{r}\right]  p\left(  s\right)
ds\\
&  \qquad\qquad+2tA\left(  x^{r},\left(  x^{\prime}\right)  ^{r}\right)
\int_{x}^{x^{\prime}}p\left(  s\right)  ds\\
&  \leq2A\left(  x^{r},\left(  x^{\prime}\right)  ^{r}\right)  \int%
_{x}^{x^{\prime}}p\left(  s\right)  ds\text{ \ }\left(  t\in\left[
0,1\right]  \right)  .
\end{align*}%
\[
A\left(  Ip_{1}\left(  1-t\right)  ,Ip_{2}\left(  1-t\right)  \right)  \leq
Sp\left(  t\right)  \text{ \ }\left(  t\in\left[  0,1\right]  \right)  .
\]%
\[
A\left(  A\left(  Ip_{1}\left(  t\right)  ,Ip_{2}\left(  t\right)  \right)
,A\left(  Ip_{1}\left(  1-t\right)  ,Ip_{2}\left(  1-t\right)  \right)
\right)  \leq Sp\left(  t\right)  \text{ \ \ }\left(  t\in\left[  m^{\prime
},1\right]  \right)  .
\]%
\[
\sup\limits_{t\in\left[  0,1\right]  }Sp\left(  t\right)  =2A\left(
x^{r},\left(  x^{\prime}\right)  ^{r}\right)  \int_{x}^{x^{\prime}}p\left(
s\right)  ds\text{ }.
\]

\end{proposition}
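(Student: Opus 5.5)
Proposition \ref{p67} is the specialization of Theorem \ref{t19} to $f(s)=s^{r}$ with $r\in(-\infty,0)\cup[1,\infty)$. The plan is to check the hypotheses, substitute, and rewrite each conclusion in the notation of Remark \ref{r33}.

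First I verify the hypotheses of Theorem \ref{t19}. On $[a,b]\subset(0,\infty)$ the function $f(s)=s^{r}$ is convex, since $f''(s)=r(r-1)s^{r-2}\geq0$ for $r\geq1$ and for $r<0$. The weight $p$ is nonnegative, integrable and symmetric to $\frac{x+x^{\prime}}{2}$ by the standing assumptions of this subsection. Also $y=(1-\alpha)x+\alpha x^{\prime}$ with $0<\alpha\leq\frac12$, and $y^{\prime}=x+x^{\prime}-y$. So Theorem \ref{t19} applies verbatim. With this $f$, the functions $G_{1},G_{2},Ip_{1},Ip_{2},Sp$ become exactly the expressions listed in Remark \ref{r33}. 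In particular $G_{1}(t)+G_{2}(t)=2A(G_{1}(t),G_{2}(t))$ and $f(x)+f(x^{\prime})=2A(x^{r},(x^{\prime})^{r})$.

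Next I transcribe each conclusion.
\begin{itemize}
\item Inequality (\ref{2.75}) becomes the first chain. The left end $[G_{1}+G_{2}]\int_{x}^{x^{\prime}}p$ is rewritten as $2A(G_{1},G_{2})\int_{x}^{x^{\prime}}p$. The middle bound uses $\int_{x}^{x^{\prime}}[((1-\alpha)x+\alpha s)^{r}+((1-\alpha)x^{\prime}+\alpha s)^{r}]p(s)\,ds$. The right end is $2A(x^{r},(x^{\prime})^{r})\int_{x}^{x^{\prime}}p$.
\item Inequality (\ref{2.76}) gives $A(Ip_{1}(1-t),Ip_{2}(1-t))\leq Sp(t)$, since $\frac{a+b}{2}=A(a,b)$.
\item Inequality (\ref{2.77}), valid for $t\in[m^{\prime},1]$, gives the nested-mean form, because $\frac{a+b+c+d}{4}=A(A(a,b),A(c,d))$.
\item Identity (\ref{2.78}) gives the supremum statement.
\end{itemize}

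No step is a genuine obstacle, since everything is a direct specialization. The only care needed is the sign conventions for $r<0$: convexity still holds there, and no case split at $r=-1$ arises, because none of these quantities involve $\int f$ over an interval divided by its length, which is where $L_{-1}$ and the mean $L$ appear. I will therefore check that each expression in the statement is a literal substitution, and not invoke the $L_{r}$ identities of Remark \ref{r33}.
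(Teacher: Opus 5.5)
Your proposal is correct and follows the paper's own route: Proposition \ref{p67} is obtained by substituting $f(s)=s^{r}$ into (\ref{2.75})--(\ref{2.78}) of Theorem \ref{t19} and rewriting with the arithmetic mean $A$ as in Remark \ref{r33}. One minor point is that Theorem \ref{t19} needs $p$ defined and symmetric on all of $[x,x^{\prime}]$, as the statement's $\int_{x}^{x^{\prime}}p$ already presupposes, whereas the subsection's standing assumption literally defines $p$ only on $\Omega$.
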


\begin{proposition}
\label{p68}
\end{proposition}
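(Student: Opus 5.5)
The statement of Proposition \ref{p68} is empty in the source. Following the pattern of Propositions \ref{p20} and \ref{p44}, I take it to be the $r$-logarithmic-mean counterpart of Theorem \ref{t20}, namely:
\begin{enumerate}
\item $Q_{1}$ is symmetric about $\frac{1}{2}$, decreasing on $\left[0,\frac{1}{2}\right]$ and increasing on $\left[\frac{1}{2},1\right]$;
\item the four comparisons between $G_{1}\left(\frac{t}{\alpha}\right)$, $G_{1}\left(\frac{1-t}{\alpha}\right)$ and $Q_{1}(t)$ hold on $\left[0,\frac{\alpha}{2}\right]$, $\left[\frac{\alpha}{2},\alpha\right]$, $\left[1-\alpha,1-\frac{\alpha}{2}\right]$ and $\left[1-\frac{\alpha}{2},1\right]$;
\end{enumerate}
where $G_{1}$ and $Q_{1}$ are as in Remark \ref{r33}. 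The plan is to obtain this as a direct specialization of Theorem \ref{t20} with $f(s)=s^{r}$, with no new estimates.

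First, I check that $f(s)=s^{r}$ is convex on $[a,b]$. Since $0<a$, we have $f''(s)=r(r-1)s^{r-2}\geq 0$ exactly when $r\in(-\infty,0]\cup[1,\infty)$, and this covers the standing range of $r$ in the subsubsection. Hence the hypotheses on $f$ in Theorem \ref{t20} are met. The hypotheses on the points are also met:
\begin{itemize}
\item $a\le x<y\le y'<x'\le b$;
\item $x+x'=y+y'$;
\item $y=(1-\alpha)x+\alpha x'$ with $0<\alpha\le\frac12$, which forces $y'=\alpha x+(1-\alpha)x'$.
\end{itemize}
The weight $p$ plays no role here, since Theorem \ref{t20} involves only $f$, $G_{1}$ and $Q_{1}$.

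Second, I identify the functions. With this $f$,
\[
Q_{1}(t)=A\left((tx+(1-t)x')^{r},(tx'+(1-t)x)^{r}\right),\qquad
G_{1}(t)=A\left((tx+(1-t)y)^{r},(tx'+(1-t)y')^{r}\right),
\]
as listed in Remark \ref{r33}. These are exactly the $Q_{1}$ and $G_{1}$ of Theorem \ref{t20}, so substituting gives conclusion (1) and inequalities \eqref{2.80}--\eqref{2.83} verbatim in the above notation.

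The only point needing care is the arguments $\frac{t}{\alpha}$ and $\frac{1-t}{\alpha}$ of $G_{1}$. On the stated intervals they lie in $[0,1]$ because $t\le\alpha$, respectively $1-t\le\alpha$. Consequently all evaluation points of $f$ stay inside $[x,x']\subset[a,b]\subset(0,\infty)$, and $s^{r}$ is well defined even for negative $r$. Beyond that, the case $r=-1$ needs no separate treatment, unlike in Propositions \ref{p49}--\ref{p55}, because no integral of $s^{r}$ (and hence no logarithmic mean) appears. I therefore expect no genuine obstacle; the proof is a one-line appeal to Theorem \ref{t20}.
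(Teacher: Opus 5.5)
Your reconstruction of the statement is correct: its content is the enumerate immediately following the empty environment. Your argument is exactly the paper's, namely Proposition \ref{p68} is Theorem \ref{t20} specialized to $f(s)=s^{r}$, which is convex on $[a,b]$ because $a>0$ and $r\in(-\infty,0)\cup[1,\infty)$, and your side remarks (that $p$ plays no role, that $t/\alpha$ and $(1-t)/\alpha$ stay in $[0,1]$, and that $r=-1$ needs no separate case) are accurate.
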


\begin{enumerate}
\item $Q_{1}$\textit{\ is symmetric about }$\frac{1}{2},$\textit{\ decreasing
on }$\left[  0,\frac{1}{2}\right]  $\textit{\ and increasing on }$\left[
\frac{1}{2},1\right]  .$

\item \textit{The following inequalities hold:}%
\[
G_{1}\left(  \frac{t}{\alpha}\right)  \leq Q_{1}\left(  t\right)  \text{
\quad}\left(  t\in\left[  0,\frac{\alpha}{2}\right]  \right)  .
\]%
\[
G_{1}\left(  \frac{t}{\alpha}\right)  \geq Q_{1}\left(  t\right)  \quad\left(
t\in\left[  \frac{\alpha}{2},\alpha\right]  \right)  .
\]%
\[
G_{1}\left(  \frac{1-t}{\alpha}\right)  \geq Q_{1}\left(  t\right)  \text{
\quad}\left(  t\in\left[  1-\alpha,1-\frac{\alpha}{2}\right]  \right)  .
\]%
\[
G_{1}\left(  \frac{1-t}{\alpha}\right)  \leq Q_{1}\left(  t\right)
\quad\left(  t\in\left[  1-\frac{\alpha}{2},1\right]  \right)  .
\]

\end{enumerate}

\begin{proposition}
\label{p69}Let $m^{\prime}$\ \textit{be defined as in Proposition \ref{p9}.
Then} we have:%
\begin{align*}
0  &  \leq Np\left(  t\right)  -2G_{1}\left(  t\right)  \int_{x}^{x^{\prime}%
}p\left(  s\right)  ds\\
&  \leq2A\left(  x^{r},\left(  x^{\prime}\right)  ^{r}\right)  \int%
_{x}^{x^{\prime}}p\left(  s\right)  ds\text{ }-Np\left(  t\right)  \text{
\ }\left(  t\in\left[  0,1\right]  \right)  .
\end{align*}%
\begin{align*}
0  &  \leq Lp_{1}\left(  t\right)  -A\left(  Hp_{1}\left(  t\right)
,Hp_{2}\left(  t\right)  \right) \\
&  \leq\frac{r\left(  4x^{\prime}-y-3y^{\prime}\right)  \left(  \left(
x^{\prime}\right)  ^{r-1}-x^{r-1}\right)  }{8}\int_{\Omega}p\left(  s\right)
ds\text{ \ }\left(  t\in\left[  0,1\right]  \right)  .
\end{align*}%
\begin{align*}
0  &  \leq Pp_{1}-Lp_{1}\left(  t\right) \\
&  \leq\frac{r\left(  4x^{\prime}-y-3y^{\prime}\right)  \left(  \left(
x^{\prime}\right)  ^{r-1}-x^{r-1}\right)  }{8}\int_{\Omega}p\left(  s\right)
ds\text{ \ }\left(  t\in\left[  0,1\right]  \right)  .
\end{align*}%
\begin{align*}
0  &  \leq Np\left(  t\right)  -Ip_{1}\left(  t\right) \\
&  \leq r\left(  y-x\right)  \left(  \left(  x^{\prime}\right)  ^{r-1}%
-x^{r-1}\right)  \int_{x}^{x^{\prime}}p\left(  s\right)  ds\text{ \ }\left(
t\in\left[  0,1\right]  \right)  .
\end{align*}%
\begin{align*}
0  &  \leq Sp\left(  t\right)  -A\left(  Ip_{1}\left(  t\right)
,Ip_{2}\left(  t\right)  \right) \\
&  \leq\frac{r\left(  4x^{\prime}-y-3y^{\prime}\right)  \left(  \left(
x^{\prime}\right)  ^{r-1}-x^{r-1}\right)  }{4}\int_{x}^{x^{\prime}}p\left(
s\right)  ds\text{ \ }\left(  t\in\left[  m^{\prime},1\right]  \right)  .
\end{align*}

\end{proposition}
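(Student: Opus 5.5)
Proposition \ref{p69} is obtained by specializing Theorem \ref{t21} to the convex function $f(s)=s^{r}$ with $r\in(-\infty,0)\cup[1,\infty)$, exactly as the earlier propositions of this subsubsection specialize Theorems \ref{t1}--\ref{t24}. The first step is to record that $f$ is convex and differentiable on $[a,b]\subset(0,\infty)$, with $f^{\prime}(s)=rs^{r-1}$. Hence
\[
f^{\prime}(x^{\prime})-f^{\prime}(x)=r\left((x^{\prime})^{r-1}-x^{r-1}\right)\geq0,
\]
which is the quantity entering all the upper bounds. By Remark \ref{r33}, the functions $G_{1},Hp_{1},Hp_{2},Lp_{1},Pp_{1},Ip_{1},Ip_{2},Np,Sp$ attached to this $f$ are precisely those appearing in the statement.

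The individual inequalities are then read off in order. The first inequality is (\ref{2.84}) with $f(x)+f(x^{\prime})=2A(x^{r},(x^{\prime})^{r})$. The second and third are (\ref{2.85}) and (\ref{2.86}), where $Hp_{1}+Hp_{2}$ over $2$ is written as $A(Hp_{1},Hp_{2})$ and the derivative factor is substituted. The fourth is (\ref{2.87}). The fifth is (\ref{2.88}) on $[m^{\prime},1]$, with $m^{\prime}$ as in Proposition \ref{p9}. The constants $\tfrac{4x^{\prime}-y-3y^{\prime}}{8}$ and $\tfrac{4x^{\prime}-y-3y^{\prime}}{4}$ carry over unchanged, each multiplied by $r\left((x^{\prime})^{r-1}-x^{r-1}\right)$.

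The only delicate point is bookkeeping in (\ref{2.85}). As stated in Theorem \ref{t21}, its left side omits the factor $\int_{\Omega}p$ while the right side carries it. This is consistent with the derivation from (\ref{2.90}) only if the $Hp$ terms are understood as normalized. I would follow the paper's own (\ref{2.85}) literally, so that the proposition is a direct transcription. If needed, I would re-derive it from (\ref{2.90}) and (\ref{2.91}) to confirm the factor.

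No genuine obstacle arises beyond checking that $r\in(-\infty,0)\cup[1,\infty)$ ensures convexity of $s^{r}$ on the positive interval, so that Theorem \ref{t21} applies.
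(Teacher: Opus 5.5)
Your proposal is correct and follows the paper's route: Proposition \ref{p69} is Theorem \ref{t21}, i.e.\ (\ref{2.84})--(\ref{2.88}), specialized to $f(s)=s^{r}$, using $f^{\prime}(x^{\prime})-f^{\prime}(x)=r\left((x^{\prime})^{r-1}-x^{r-1}\right)$ and $f(x)+f(x^{\prime})=2A(x^{r},(x^{\prime})^{r})$. The one thing to correct is your ``delicate point'': (\ref{2.85}) is not missing a factor, because $Lp_{1}$, $Hp_{1}$ and $Hp_{2}$ each already carry the weight $p$ once, so $Lp_{1}-\frac{Hp_{1}+Hp_{2}}{2}$ scales linearly in $p$ just like the bound $\propto\int_{\Omega}p$, and the link $\frac{Hp_{1}+Hp_{2}}{2}\leq Lp_{1}$ in (\ref{2.90}) is simply (\ref{2.36}) divided by $\int_{\Omega}p$ --- the unnormalized $Hp$'s are the right objects, and normalizing them would break that comparison.
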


\begin{proposition}
\label{p70}The following inequalities hold:%
\begin{align*}
Hp_{1}\left(  t\right)   &  \leq Q_{1}\left(  t\right)  \int_{\Omega}p\left(
s\right)  ds\\
&  \leq A\left(  x^{r},\left(  x^{\prime}\right)  ^{r}\right)  \int_{\Omega
}p\left(  s\right)  ds\qquad\left(  t\in\left[  0,\frac{\alpha}{1+\alpha
}\right]  \right)  .
\end{align*}%
\begin{align*}
A^{r}\left(  x,x^{\prime}\right)  \int_{\Omega}p\left(  s\right)  ds  &  \leq
Q_{1}\left(  t\right)  \int_{\Omega}p\left(  s\right)  ds\\
&  \leq Pp_{1}\left(  t\right)  \text{ \qquad}\left(  t\in\left[  \frac
{\alpha}{1+\alpha},1\right]  \right)  .
\end{align*}%
\begin{align*}
0  &  \leq Sp\left(  t\right)  -2A\left(  G_{1}\left(  t\right)  ,G_{2}\left(
t\right)  \right)  \int_{x}^{x^{\prime}}p\left(  s\right)  ds\\
&  \leq\left[  A\left(  x^{r},\left(  x^{\prime}\right)  ^{r}\right)
+Q_{1}\left(  t\right)  \right]  \int_{x}^{x^{\prime}}p\left(  s\right)
ds-Sp\left(  t\right)  \text{ \ }\left(  t\in\left[  0,1\right]  \right)  .
\end{align*}

\end{proposition}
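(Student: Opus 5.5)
Proposition \ref{p70} is the specialization of Theorem \ref{t22} to $f(s)=s^{r}$ with $r\in(-\infty,0)\cup[1,\infty)$ on $[a,b]\subset(0,\infty)$. The plan is to check the hypotheses of Theorem \ref{t22} and then translate its three conclusions term by term using Remark \ref{r33}. For these $r$ the function $s\mapsto s^{r}$ is convex on $(0,\infty)$: $r(r-1)\ge 0$, since both factors are negative for $r<0$ and both are nonnegative for $r\ge1$. The weight $p$ is integrable, positive and symmetric to $\frac{x+x^{\prime}}{2}$, and the parameters satisfy $y=(1-\alpha)x+\alpha x^{\prime}$ and $x+x^{\prime}=y+y^{\prime}$ with $0<\alpha\le\frac12$. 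So every hypothesis of Theorem \ref{t22} holds.

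Next I would identify the objects. By Remark \ref{r33}, $Q_{1}(t)=A\left((tx+(1-t)x^{\prime})^{r},(tx^{\prime}+(1-t)x)^{r}\right)$, and $Hp_{1},Pp_{1},Sp,G_{1},G_{2}$ are exactly the functions appearing in Theorem \ref{t22} for this $f$. For the endpoint constants:
\[
\frac{f(x)+f(x^{\prime})}{2}=A\left(x^{r},(x^{\prime})^{r}\right),\qquad f\left(\frac{x+x^{\prime}}{2}\right)=A^{r}(x,x^{\prime}).
\]
Substituting these into (\ref{2.92}) gives the first chain of the proposition for $t\in\left[0,\frac{\alpha}{1+\alpha}\right]$. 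Substituting into (\ref{2.93}) gives the second chain for $t\in\left[\frac{\alpha}{1+\alpha},1\right]$; the stray ``$dx$'' in (\ref{2.93}) is read as $ds$. Writing $\frac{G_{1}+G_{2}}{1}$ as $2A(G_{1},G_{2})$, inequality (\ref{2.94}) becomes the third displayed inequality verbatim.

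No new analysis is needed; all the inequality content is inherited from Theorem \ref{t22}, which in turn rests on Lemma \ref{l2}, Theorem \ref{t20} and (\ref{2.75}). The only point needing care is bookkeeping, so I will check the normalisations explicitly. The integral $\int_{\Omega}p$ is attached to $Hp_{1},Pp_{1}$, while $\int_{x}^{x^{\prime}}p$ is attached to $Sp$, as in Theorem \ref{t22}. The factor $2$ in $2A(G_{1},G_{2})$ matches $G_{1}+G_{2}$ exactly. I expect this normalisation check to be the main (mild) obstacle, and I will also confirm that the case $r=-1$ needs no separate treatment here, since no $L_{r}$-mean occurs in this proposition.
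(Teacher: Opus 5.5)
Your proposal is correct and matches the paper, which obtains Proposition \ref{p70} by specializing Theorem \ref{t22} to $f(s)=s^{r}$ via Remark \ref{r33}. The identifications are the ones you list: $\frac{f(x)+f(x^{\prime})}{2}=A(x^{r},(x^{\prime})^{r})$, $f(\frac{x+x^{\prime}}{2})=A^{r}(x,x^{\prime})$ and $G_{1}+G_{2}=2A(G_{1},G_{2})$. One small point: the third inequality (through $Sp$ and $\int_{x}^{x^{\prime}}p$) needs $p$ defined and symmetric on all of $[x,x^{\prime}]$, as in Section 2, not only on $\Omega$ as the subsection's standing assumption literally says.
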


\begin{proposition}
\label{p71}$Kp$\textit{\ is symmetric about }$\frac{1}{2},$%
\textit{\ decreasing on }$\left[  0,\frac{1}{2}\right]  $\textit{\ and
increasing on }$\left[  \frac{1}{2},1\right]  .$ The following identities hold:
\end{proposition}

\begin{align*}
&  \sup\limits_{t\in\left[  0,1\right]  }Kp\left(  t\right)  =Kp\left(
0\right)  =Kp\left(  1\right) \\
&  =2%
{\displaystyle\int\nolimits_{x}^{x^{\prime}}}
\left[  \left(  \left(  1-\alpha\right)  x+\alpha s\right)  ^{r}+\left(
\left(  1-\alpha\right)  x^{\prime}+\alpha s\right)  ^{r}\right]  p\left(
s\right)  ds\ \int_{x}^{x^{\prime}}p\left(  s\right)  ds.
\end{align*}%
\begin{align*}
\inf\limits_{t\in\left[  0,1\right]  }Kp\left(  t\right)   &  =Kp\left(
\frac{1}{2}\right) \\
&  =%
{\displaystyle\int\nolimits_{x}^{x^{\prime}}}
{\displaystyle\int\nolimits_{x}^{x^{\prime}}}
\left[  \left(  \left(  1-\alpha\right)  x+\alpha\frac{s+u}{2}\right)
^{r}\right. \\
&  +2\left(  \left(  1-\alpha\right)  \frac{x+x^{\prime}}{2}+\alpha\frac
{s+u}{2}\right)  ^{r}\\
&  +\left.  \left(  \left(  1-\alpha\right)  x^{\prime}+\alpha\frac{s+u}%
{2}\right)  ^{r}\right]  p\left(  s\right)  p\left(  u\right)  dsdu.
\end{align*}%
\[
2A\left(  Ip_{1}\left(  t\right)  ,Ip_{2}\left(  t\right)  \right)  \int%
_{x}^{x^{\prime}}p\left(  s\right)  ds\leq Kp\left(  t\right)  \text{
\ }\left(  t\in\left[  0,1\right]  \right)  .
\]%
\[
\left[  y^{r}+A^{r}\left(  x,x^{\prime}\right)  +\left(  y^{\prime}\right)
^{r}\right]  \left[  \int_{x}^{x^{\prime}}p\left(  s\right)  ds\right]
^{2}\leq Kp\left(  \frac{1}{2}\right)  .
\]

\begin{proposition}
\label{p72}The following inequality holds for all $t\in\left[  0,1\right]  :$%
\begin{align*}
0  &  \leq Kp\left(  t\right)  -2A\left(  Ip_{1}\left(  t\right)
,Ip_{2}\left(  t\right)  \right)  \int_{x}^{x^{\prime}}p\left(  s\right)  ds\\
&  \leq2Sp\left(  1-t\right)  \int_{x}^{x^{\prime}}p\left(  s\right)
ds-Kp\left(  t\right)  .
\end{align*}

\end{proposition}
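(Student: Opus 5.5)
Proposition \ref{p72} is Theorem \ref{t24} specialised to $f(s)=s^{r}$, with $r\in(-\infty,0)\cup[1,\infty)$. The plan is to check the hypotheses of Theorem \ref{t24} and then translate its conclusion using the notation of Remark \ref{r33}.

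\textbf{Step 1: hypotheses.} First, $f(s)=s^{r}$ is convex on $[a,b]\subset(0,\infty)$ for the stated $r$, since $f''(s)=r(r-1)s^{r-2}\ge0$ when $r<0$ or $r\ge1$. Next, $p$ is integrable, positive and symmetric to $\frac{x+x^{\prime}}{2}$. Finally, the points satisfy $x+x^{\prime}=y+y^{\prime}$ and $y=(1-\alpha)x+\alpha x^{\prime}$ with $0<\alpha\le\frac12$. These are exactly the standing assumptions of Subsection 2.2 (New Fej\'{e}r-type Inequalities II), under which $Ip_{1},Ip_{2},Kp,Sp$ are defined.

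\textbf{Step 2: apply Theorem \ref{t24}.} For every $t\in[0,1]$ it gives
\[
0\le Kp(t)-\left[Ip_{1}(t)+Ip_{2}(t)\right]\int_{x}^{x^{\prime}}p(s)\,ds\le 2Sp(1-t)\int_{x}^{x^{\prime}}p(s)\,ds-Kp(t).
\]

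\textbf{Step 3: rewrite in the notation of Remark \ref{r33}.} With $f(s)=s^{r}$ substituted, $Ip_{1},Ip_{2},Kp,Sp$ become the explicit power integrals listed in Remark \ref{r33}. We then write $Ip_{1}(t)+Ip_{2}(t)=2A\left(Ip_{1}(t),Ip_{2}(t)\right)$, where $A$ is the arithmetic mean. This yields the displayed inequality of Proposition \ref{p72} verbatim.

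There is no analytic obstacle, because the whole content sits in Theorem \ref{t24}. The only point needing care is bookkeeping of constants: the factor $2$ in front of $A(Ip_{1},Ip_{2})$ must match the sum $Ip_{1}+Ip_{2}$ in Theorem \ref{t24}, and the weight $\int_{x}^{x^{\prime}}p$ must not be confused with $\int_{\Omega}p$. In particular we keep $\int_{x}^{x^{\prime}}p(s)\,ds$ as the weight throughout and do not replace it by any $L_{r}$-type mean.
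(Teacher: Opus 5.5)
Your proposal is correct and follows the paper's own route exactly: Proposition \ref{p72} is Theorem \ref{t24} specialised to the convex function $f(s)=s^{r}$, with $Ip_{1}(t)+Ip_{2}(t)$ rewritten as $2A\left(Ip_{1}(t),Ip_{2}(t)\right)$. One point is only implicit in your Step 1. Subsection 3.2 declares $p$ on $\Omega$, but $Kp$, $Ip_{1}$, $Ip_{2}$ and $Sp$ integrate over all of $\left[x,x^{\prime}\right]$, so you need $p$ defined and symmetric on $\left[x,x^{\prime}\right]$. Your choice to work under the hypotheses of Subsection 2.2 already supplies this, so the argument stands.
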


\end{document}